\documentclass[reqno]{amsart}

\usepackage{tikz-cd}

\usepackage{fullpage,dsfont}

\usepackage{hyperref} 

\usepackage{parskip}
\makeatletter 
\def\thm@space@setup{%
 \thm@preskip=\parskip \thm@postskip=0pt
}
\def\th@remark{%
  \thm@headfont{\itshape}%
  \normalfont 
  \thm@preskip\parskip \thm@postskip=0pt
}
\makeatother

\usepackage[nobysame,alphabetic,initials,msc-links]{amsrefs}

\usepackage[final]{pdfpages}

\usepackage{multirow}

\usepackage{array}

\DefineSimpleKey{bib}{how}
\DefineSimpleKey{bib}{mrclass}
\DefineSimpleKey{bib}{mrnumber}
\DefineSimpleKey{bib}{fjournal}
\DefineSimpleKey{bib}{mrreviewer}

\renewcommand{\PrintDOI}[1]{%
  \href{http://dx.doi.org/#1}{{\tt DOI:#1}}%
}
\renewcommand{\eprint}[1]{#1}
\BibSpec{book}{%
    +{}  {\PrintPrimary}                {transition}
    +{.} { \PrintDate}                  {date}
    +{.} { \textit}                     {title}
    +{.} { }                            {part}
    +{:} { \textit}                     {subtitle}
    +{,} { \PrintEdition}               {edition}
    +{}  { \PrintEditorsB}              {editor}
    +{,} { \PrintTranslatorsC}          {translator}
    +{,} { \PrintContributions}         {contribution}
    +{,} { }                            {series}
    +{,} { \voltext}                    {volume}
    +{,} { }                            {publisher}
    +{,} { }                            {organization}
    +{,} { }                            {address}
    +{,} { }                            {status}
    +{,} { \PrintDOI}                   {doi}
    +{,} { \PrintISBNs}                 {isbn}
    +{}  { \parenthesize}               {language}
    +{}  { \PrintTranslation}           {translation}
    +{;} { \PrintReprint}               {reprint}
    +{.} { }                            {note}
    +{.} {}                             {transition}
    +{}  {\SentenceSpace \PrintReviews} {review}
}
\BibSpec{article}{%
    +{}  {\PrintAuthors}                {author}
    +{,} { \textit}                     {title}
    +{.} { }                            {part}
    +{:} { \textit}                     {subtitle}
    +{,} { \PrintContributions}         {contribution}
    +{.} { \PrintPartials}              {partial}
    +{,} { }                            {journal}
    +{}  { \textbf}                     {volume}
    +{}  { \PrintDatePV}                {date}
    +{,} { \issuetext}                  {number}
    +{,} { \eprintpages}                {pages}
    +{,} { }                            {status}
    +{,} { \PrintDOI}                   {doi}
    +{,} { \eprint}        {eprint}
    +{}  { \parenthesize}               {language}
    +{}  { \PrintTranslation}           {translation}
    +{;} { \PrintReprint}               {reprint}
    +{.} { }                            {note}
    +{.} {}                             {transition}
    +{}  {\SentenceSpace \PrintReviews} {review}
}
\BibSpec{collection.article}{%
    +{}  {\PrintAuthors}                {author}
    +{,} { \textit}                     {title}
    +{.} { }                            {part}
    +{:} { \textit}                     {subtitle}
    +{,} { \PrintContributions}         {contribution}
    +{,} { \PrintConference}            {conference}
    +{}  {\PrintBook}                   {book}
    +{,} { }                            {booktitle}
    +{,} { \PrintDateB}                 {date}
    +{,} { pp.~}                        {pages}
    +{,} { }                            {publisher}
    +{,} { }                            {organization}
    +{,} { }                            {address}
    +{,} { }                            {status}
    +{,} { \PrintDOI}                   {doi}
    +{,} { \eprint}        {eprint}
    +{}  { \parenthesize}               {language}
    +{}  { \PrintTranslation}           {translation}
    +{;} { \PrintReprint}               {reprint}
    +{.} { }                            {note}
    +{.} {}                             {transition}
    +{}  {\SentenceSpace \PrintReviews} {review}
}
\BibSpec{misc}{%
  +{}{\PrintAuthors}  {author}
  +{,}{ \textit}      {title}
  +{.}{ }             {how}
  +{}{ \parenthesize} {date}
  +{,} { available at \eprint}        {eprint}
  +{,}{ available at \url}{url}
  +{,}{ }             {note}
  +{.}{}              {transition}
}

\usepackage{amssymb, amsfonts, amsxtra, amsmath}
\usepackage{mathrsfs}
\usepackage{mathdots}
\usepackage{wasysym}
\usepackage[all]{xy}
\usepackage{bbm}
\usepackage{calc}
\usepackage{accents}

\numberwithin{equation}{section}

\newtheorem{Theorem}{Theorem}[section]
\newtheorem*{Theorem*}{Theorem}
\newtheorem{Def}[Theorem]{Definition}
\newtheorem{Lem}[Theorem]{Lemma}
\newtheorem{Prop}[Theorem]{Proposition}
\newtheorem{Cor}[Theorem]{Corollary}

\newtheorem{Rem}[Theorem]{Remark}

\newcommand\bp{\begin{proof}}
\newcommand\ep{\end{proof}}

\mathchardef\mhyph="2D

\DeclareMathOperator{\ad}{\mathrm{ad}}

\DeclareMathOperator{\Ad}{\mathrm{Ad}}

\DeclareMathOperator{\Aut}{\mathrm{Aut}}
\DeclareMathOperator{\End}{\mathrm{End}}

\DeclareMathOperator{\fin}{\mathrm{f}}
\DeclareMathOperator{\loc}{\mathrm{loc}}

\DeclareMathOperator{\Hom}{\mathrm{Hom}}
\DeclareMathOperator{\id}{\mathrm{id}}

\DeclareMathOperator{\Tr}{\mathrm{Tr}}

\DeclareMathOperator{\braid}{\mathrm{br}}

\DeclareMathOperator{\Char}{\mathrm{Char}}

\DeclareMathOperator{\Ker}{\mathrm{Ker}}
\DeclareMathOperator{\Spec}{\mathrm{Spec}}

\newcommand{\cop}{\mathrm{cop}}
\newcommand{\op}{\mathrm{op}}
\newcommand{\wt}{\mathrm{wt}}

\newcommand{\msB}{\mathscr{B}}

\newcommand{\msE}{\mathscr{E}}

\newcommand{\msK}{\mathscr{K}}

\newcommand{\msP}{\mathscr{P}}
\newcommand{\msQ}{\mathscr{Q}}
\newcommand{\msR}{\mathscr{R}}

\newcommand{\msS}{\mathscr{S}}

\newcommand{\mfa}{\mathfrak{a}}

\newcommand{\mfb}{\mathfrak{b}}

\newcommand{\mfg}{\mathfrak{g}}

\newcommand{\mfh}{\mathfrak{h}}

\newcommand{\mfk}{\mathfrak{k}}

\newcommand{\mfn}{\mathfrak{n}}

\newcommand{\mfp}{\mathfrak{p}}

\newcommand{\mfs}{\mathfrak{s}}
\newcommand{\mfsl}{\mathfrak{sl}}
\newcommand{\mfso}{\mathfrak{so}}
\newcommand{\mfsp}{\mathfrak{sp}}
\newcommand{\mfsu}{\mathfrak{su}}
\newcommand{\mft}{\mathfrak{t}}
\newcommand{\mfu}{\mathfrak{u}}

\newcommand{\mcH}{\mathcal{H}}

\newcommand{\mcK}{\mathcal{K}}

\newcommand{\mcO}{\mathcal{O}}

\newcommand{\mcS}{\mathcal{S}}

\newcommand{\mcU}{\mathcal{U}}

\newcommand{\mbr}{\mathbf{r}}

\newcommand{\C}{\mathbb{C}}

\newcommand{\N}{\mathbb{N}}
\newcommand{\Q}{\mathbb{Q}}
\newcommand{\R}{\mathbb{R}}

\newcommand{\Z}{\mathbb{Z}}

\newcommand{\opp}{\mathrm{op}}

\newcommand\lhdb{\blacktriangleleft}

\newcommand{\spher}{\mathrm{s}}

\newcommand{\Elw}{\msK}

\newcommand{\com}{\mathrm{com}}

\newcommand{\quasiK}{\mathfrak{X}}
\newcommand{\signaut}{\Ad(\mathcal{S})}

\newcommand{\dbwidetilde}[1]{\accentset{\textrm{\scalebox{1.4}[1]{$\approx$}}}{#1}}
\newcommand{\dbwidehat}[1]{\widehat{\vphantom{\rule{1pt}{9pt}}\smash{\widehat{\!#1}}}}
\newcommand{\vardbwidehat}[1]{\;\;\widehat{\vphantom{\rule{1pt}{9pt}}\smash{\widehat{\!\!#1}}}}
\newcommand{\widehattilde}[1]{\;\;\widehat{\vphantom{\rule{1pt}{9pt}}\smash{\widetilde{\!\!#1}}}}
\newcommand{\dbbackslash}{\backslash \! \backslash}

\title{Quantum flag manifolds, quantum symmetric spaces and their associated universal $K$-matrices}

\author{Kenny De Commer}
\address{Vrije Universiteit Brussel}
\email{kenny.de.commer@vub.be}

\author{Marco Matassa}
\address{Vrije Universiteit Brussel/OsloMet – storbyuniversitetet}
\email{marco.matassa@oslomet.no}

\thanks{The work of K.~De Commer was partially supported by the FWO grant G.0251.15N and the grant H2020-MSCA-RISE-2015-691246-QUANTUM DYNAMICS. The work of M.~Matassa was supported by the FWO grant G.0251.15N. K. De Commer would like to thank D. Jordan, A. Mudrov and T. Weelinck for discussions around the topics of this paper.}

\begin{document}
\maketitle

\begin{abstract}
Let $U$ be a connected, simply connected compact Lie group with complexification $G$. Let $\mathfrak{u}$ and $\mathfrak{g}$ be the associated Lie algebras.  Let $\Gamma$ be the Dynkin diagram of $\mfg$ with underlying set $I$, and let $U_q(\mfu)$ be the associated quantized universal enveloping $*$-algebra of $\mfu$ for some $0<q$ distinct from $1$. Let $\mcO_q(U)$ be the coquasitriangular quantized function Hopf $*$-algebra of $U$, whose Drinfeld double $\mcO_q(G_{\R})$ we view as the quantized function $*$-algebra of $G$ considered as a real algebraic group. We show how the datum $\nu = (\tau,\epsilon)$ of an involution $\tau$ of $\Gamma$ and a $\tau$-invariant function $\epsilon: I \rightarrow \R$ can be used to deform $\mcO_q(G_{\R})$ into a $*$-algebra $\mcO_q^{\nu,\id}(G_{\R})$ by a modification of the Drinfeld double construction. We then show how, by a generalized theory of universal $K$-matrices, a specific $*$-subalgebra $\mcO_q(G_{\nu}\dbbackslash G_{\R})$ of $\mcO_q^{\nu,\id}(G_{\R})$ admits $*$-homomorphisms into both $U_q(\mfu)$ and $\mcO_q(U)$, the images being coideal $*$-subalgebras of respectively $U_q(\mfu)$ and $\mcO_q(U)$. We illustrate the theory by showing that two main classes of examples arise by such coideals, namely quantum flag manifolds and quantum symmetric spaces (except possibly for certain exceptional cases). In the former case this connects to work of the first author and Neshveyev, while for the latter case we heavily rely on recent results of Balagovi\'{c} and Kolb.   
\end{abstract}

\section*{Introduction}

Let $U$ be a connected, simply connected compact Lie group with complexification $G$, and let $\mfu$ and $\mfg$ be the respective Lie algebras. Fix Chevalley-Serre generators for $\mfg$ which are compatible with the compact form $\mfu$ (see Section \ref{SubsecQUE}), and let $\mfb$ and $\mfh \subseteq \mfb$ be the respective positive Borel and Cartan subalgebra in $\mfg$. From this data a natural Poisson-Lie group structure $\{-,-\}$ can be constructed on $U$ \cite{LW90}. Let $\Gamma$ be the Dynkin diagram of $\mfg$, with underlying set $I$. Then one has the following two important classes of Poisson homogeneous spaces for $U$:
\begin{enumerate}
\item \emph{Flag manifolds} $K_S\backslash U = P_S\backslash G$ for $S \subseteq I$ a subset of the simple roots, $P_S\subseteq G$ the associated parabolic subgroup and $K_S = P_S\cap U$ the compact form of the reductive Levi factor of $P_S$.
\item \emph{Symmetric spaces} $U^{\theta}\backslash U$ for $U^{\theta}$ the set of fixed points of a Lie group involution $\theta$ of $U$ in maximally split position with respect to the fixed Cartan subalgebra of $\mfu$ \cite{FL04}. 
\end{enumerate}
Note that the behaviour of the above two classes is different: whereas in the first case $K_S \subseteq U$ will be a Poisson-Lie subgroup, one has in the symmetric case only that $U^{\theta}$ is coisotropic. 

This discrepancy persists when turning to their quantizations. Let $U_q(\mfg)$ be the standard Drinfeld-Jimbo quantized enveloping algebra of $\mfg$, where we take $q>0$ a fixed number distinct from 1. Dually, one has a Hopf algebra $\mcO_q(G)$ quantizing the algebra of regular functions on $G$ as a complex affine group variety. When endowed with appropriate $*$-structures, reflecting the choice of a compact real form, we will denote the resulting Hopf $*$-algebras as $U_q(\mfu)$ and $\mcO_q(U)$. For $P_S \subseteq G$ a parabolic subgroup with Lie algebra $\mfp_S$ and $\mfk_S = \mfp_S \cap \mfu$ the Lie algebra of $K_S = P_S\cap U$, we have a natural Hopf $*$-subalgebra $U_q(\mfk_S) \subseteq U_q(\mfu)$ and dually a surjection of Hopf $*$-algebras $\mcO_q(U) \twoheadrightarrow \mcO_q(K_S)$. This allows one to make sense immediately of the associated \emph{quantum flag manifold} through its coordinate $*$-algebra $\mcO_q(K_S\backslash U)$. These quantum flag manifolds are studied intensively both from the algebraic and the operator algebraic viewpoint, see e.g.~ \cites{SV91, Soi92, Do94a, Do94b, DK94, DT99, DS99, HK04, Kr04, CFG08, NT12, DCN15, OB17}. 

Quantum symmetric spaces turn out to be harder to construct. After an initial period in which particular cases were studied \cites{NS95, Dij96, Nou96, BF97, DN98}, using for example the formalism of the \emph{reflection equation}, Letzter developed in a series of papers \cites{Let99,Let00,Let02,Let03,Let04,Let08} a uniform approach to quantum symmetric spaces through a concrete construction of their associated \emph{coideal subalgebras} $U_q(\mfu^{\theta}) \subseteq U_q(\mfu)$. This construction was extended to the Kac-Moody case by Kolb \cite{Kol14}, who also in joint work with Balagovi\'{c} elucidated the precise connection to the reflection equation in this full generality \cites{BK15b,Kol17} through the formalism of the \emph{universal $K$-matrix}. Associated quantized function algebras $\mcO_q(U^{\theta}\backslash U)$ for the corresponding homogeneous spaces can then be constructed by a general procedure. 

Our main aim will be to realize the above coideals $\mcO_q(K_S\backslash U)$ and $\mcO_q(U^{\theta}\backslash U)$ through the method of quantum characters \cite{DM03b}, building on and extending the techniques developed in \cite{KoSt09,Kol17}. Let $\nu = (\tau,\epsilon)$ be a couple consisting of an involution $\tau$ of $\Gamma$ and a $\tau$-invariant function $\epsilon: I \rightarrow \R$. Through a straightforward modification by $\nu$ of the commutation relations for $U_q(\mfu)$, one arrives at a quantized enveloping $*$-algebra $U_q(\mfg_{\nu})$ where $\mfg_{\nu}$ is a real Lie algebra determined directly in terms of $\nu$, see Section \ref{SubsecNuDef}. We note the following particular cases:
\begin{itemize}
\item When $\tau = \id$ and $\epsilon(I) \subseteq \{0,1\}$, one has $\mfg_{\nu} = \mfk_S \oplus \mfn_S^-$ where $\mfk_S$ and $\mfn_S^-$ are respectively the compact Levi part and the nilradical of the negative parabolic algebra $\mfp_S^-$ associated to the support $S$ of $\epsilon$. 
\item When on the other hand $\tau$ is an arbitrary involution and $\epsilon(I) \subseteq \{\pm1\}$, we have that $\mfg_{\nu}$ is the real form of $\mfg$ obtained by modifying the compact form $\mfu$ of $\mfg$ with $\nu$. The couple $(\tau,\epsilon)$ then encodes the so-called \emph{Vogan diagram} of $\mfg_{\nu}$.  
\end{itemize}
Returning back to the general case, $U_q(\mfg_{\nu})$ will still be a quasitriangular Hopf $*$-algebra, with a dual Hopf $*$-algebra $\mcO_q(G_{\nu})$. As $\mcO_q(G_{\nu})$ is coquasitriangular, one can consider its Drinfeld double $\mcO_q^{\nu}(G_{\R})$, which is a quantization of the function algebra of $G = G_{\R}$ as a real affine group along a Poisson-Lie group structure $\{-,-\}_{\nu}$. Now the precise form of the double construction allows the flexibility to let the latter depend rather on two \emph{distinct} data $\nu = (\tau,\epsilon)$ and $\mu = (\tau',\eta)$, creating a Poisson manifold $(G_{\R},\{-,-\}_{\nu,\mu})$ which is a Poisson bitorsor between $(G_{\R},\{-,-\}_{\nu})$ and $(G_{\R},\{-,-\}_{\mu})$ \cites{Wei90,Lu90}. Similarly, one can construct a $*$-algebra $\mcO_q^{\nu,\mu}(G_{\R})$ with a left coaction by $\mcO_q^{\nu}(G_{\R})$ and a right coaction by $\mcO_q^{\mu}(G_{\R})$. 

We will be interested in the case $\mu = \id$, where $\mcO_q^{\id}(G_{\R}) = \mcO_q(G_{\R})$ and $\mcO_q(G_{\id}) =\mcO_q(U)$. As we have a Hopf $*$-algebra surjection $\mcO_q^{\nu}(G_{\R}) \twoheadrightarrow \mcO_q(G_{\nu})$, we can consider the $*$-algebra $\mcO_q(G_{\nu}\dbbackslash G_{\R})$ of $\mcO_q(G_{\nu})$-coinvariant elements, together with its natural $\mcO_q(G_{\R})$-coaction. The method of quantum characters then gives a one-to-one correspondence between $*$-characters $\chi:\mcO_q(G_{\nu}\dbbackslash G_{\R}) \rightarrow \C$ and $\mcO_q(G_{\R})$-equivariant $*$-homomorphisms $\Phi: \mcO_q(G_{\nu}\dbbackslash G_{\R}) \rightarrow \mcO_q(G_{\R})$. The image of $\Phi$ will  be a coideal $*$-subalgebra $\mcO_q(L \dbbackslash G_{\R})$ of $\mcO_q(G_{\R})$, giving rise through appropriate projection maps to coideal $*$-subalgebras $\mcO_q(K\backslash U)$ and $U_q^{\fin}(\mfk')$ of respectively $\mcO_q(U)$ and $U_q(\mfu)$. Our main theorems can now be stated as follows.

\begin{Theorem*}[Theorem \ref{TheoFlagK}, Theorem \ref{TheoIdUnivFlag} and Theorem \ref{TheoMainFlag2}]
Let $\mcO_q(K_S\backslash U)$ be a quantum flag manifold, and let $S_0 = \tau_0(S)$ be the image of $S$ under the Dynkin diagram automorphism $\tau_0$ induced by the longest word in the Weyl group of $\mfg$. Let $\epsilon$ be the characteristic function of $S_0$, and let $\nu = (\id,\epsilon)$. Then there exists a $*$-character $\chi: \mcO_q(G_{\nu}\dbbackslash G_{\R}) \rightarrow \C$ such that:
\begin{itemize}
\item The equality $\mcO_q(K\backslash U) = \mcO_q(K_S\backslash U)$ holds. 
\item The inclusion $U_q^{\fin}(\mfk')\subseteq U_q(\mfk_S)$ holds, and moreover their completions coincide. 
\end{itemize}
\end{Theorem*}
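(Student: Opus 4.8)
The plan is to specialise the general correspondence between $*$-characters of $\mcO_q(G_\nu\dbbackslash G_\R)$ and $\mcO_q(G_\R)$-equivariant $*$-homomorphisms $\Phi$ to the datum $\nu=(\id,\un_{S_0})$, and then to identify the two coideals obtained by projecting the image of $\Phi$. Since here $\tau=\id$ and $\epsilon$ takes values in $\{0,1\}$ with support $S_0$, the structure recalled above gives $\mfg_\nu=\mfk_{S_0}\oplus\mfn_{S_0}^-$, so that $U_q(\mfg_\nu)$ contains the Levi factor $U_q(\mfk_{S_0})$ alongside the quantised nilradical. The argument therefore splits into producing the right character, computing $\Phi$ through the universal $K$-matrix, and matching its projections with the known descriptions of the quantum flag manifold.

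For the existence statement (Theorem~\ref{TheoIdUnivFlag}) I would take $\chi$ to be the $*$-character induced by the counit, that is, the restriction to $\mcO_q(G_\nu\dbbackslash G_\R)$ of the counit of the double $\mcO_q(G_\R)$; geometrically this is evaluation at the base point of $K_{S_0}\backslash G_\nu$. Because the counit is automatically a $*$-character on any Hopf $*$-algebra, its restriction to the coinvariant $*$-subalgebra is again a $*$-character, so no further positivity check is required. One must still verify that this $\chi$ is admissible for the $K$-matrix construction, which for $\tau=\id$ reduces to the quasi-split compatibility guaranteed by the shape of $\mfg_\nu$.

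Next I would write down $\Phi$ explicitly using the universal $K$-matrix, following \cite{Kol17}. The crucial input is that the $K$-matrix intertwines the coproduct with its twist by $\tau_0=-w_0$, the diagram automorphism induced by the longest Weyl group element. Consequently the coideal one lands on is governed by $\tau_0$ applied to the support of $\epsilon$; since that support is $S_0=\tau_0(S)$ and $\tau_0^2=\id$, this returns exactly $S$. On the function-algebra side I would then compare generators: the matrix coefficients in $\mcO_q(U)$ fixed by the $K$-equivariant coaction are precisely those generating $\mcO_q(K_S\backslash U)$, yielding the equality $\mcO_q(K\backslash U)=\mcO_q(K_S\backslash U)$ of Theorem~\ref{TheoFlagK}. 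Dually, transporting $\Phi$ through the pairing between $\mcO_q(G_\R)$ and its $U_q(\mfu)$-half produces the coideal $U_q^{\fin}(\mfk')$, generated by the $\tau_0$-twisted images of the Chevalley generators indexed by $S$; by the previous identification these lie in $U_q(\mfk_S)$, giving the inclusion $U_q^{\fin}(\mfk')\subseteq U_q(\mfk_S)$ of Theorem~\ref{TheoMainFlag2}. The coincidence of completions I would deduce from the fact that $U_q^{\fin}(\mfk')$ is the locally finite part of $U_q(\mfk_S)$ for the adjoint action, hence acts by the same operators on every integrable $*$-representation and generates the same closure.

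The step I expect to be the main obstacle is the precise identification in the third paragraph: showing that the image is neither too small nor too large, i.e.\ that it equals $\mcO_q(K_S\backslash U)$ exactly rather than a proper sub- or super-coideal. This rests on a careful analysis of how the universal $K$-matrix acts on the parabolic-type generators together with a weight count, and it is precisely here that the $\tau_0$-twist must be tracked faithfully. Once the generators are pinned down, the statement about the completions of $U_q^{\fin}(\mfk')$ follows by comparing the associated finite-dimensional representation categories.
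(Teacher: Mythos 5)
The central step of your proposal fails: the functional you propose is not a $*$-character of $\mcO_q(G_{\nu}\dbbackslash G_{\R})$ once $q\neq 1$. The algebra $\mcO_q^{\nu,\id}(G_{\R})$ is \emph{not} a Hopf $*$-algebra (it is only one component of a cogroupoid), so "the counit is automatically a $*$-character" does not apply; whether the restriction of the counit to the coinvariants is multiplicative for the twisted product is precisely the content of Lemma \ref{LemCharK}: unital $*$-characters of $\mcO_q(Z_{\nu})\cong\mcO_q(G_{\nu}\dbbackslash G_{\R})$ correspond to elements $\msK\in\mcU_q(\mfg)$ satisfying \eqref{EqDefModK}. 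Under the identification \eqref{EqDefjnu}, the restriction of the counit is evaluation at the element $\msE$ of Definition \ref{DefmsE} (this is exactly "evaluation at the classical base point"), and $\msE$ does not satisfy \eqref{EqDefModK}. Already for $\mfg=\mfsl_2$ and $S_0=\emptyset$, where $\msE$ is the projection onto the highest weight line, one computes on $V_{\varpi_1}\otimes V_{\varpi_1}$ with weight basis $e_{\pm}$ (here $\tau=\id$, so $\msR_{\tau}=\msR$):
\[
\msR^{-1}(\msE\otimes 1)\msR(1\otimes\msE)\,(e_-\otimes e_+) \;=\; (q^{-1}-q)\, e_+\otimes e_- \;\neq\; 0 \;=\; \Omega_{\epsilon}\Delta(\msE)\,(e_-\otimes e_+).
\]
The paper's character (Theorem \ref{TheoFlagK}) is instead evaluation at the element of Definition \ref{DefUniKFlag}, $\msK\xi = \epsilon_{\varpi - w_0(\wt(\xi))}\xi$, i.e.\ the projection built on the \emph{lowest}-weight side. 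The insertion of $w_0$ is not cosmetic: it is what makes \eqref{EqDefModK} hold (via the commutation relations of Lemma \ref{LemCommRel} with $\nu_0=\tau_0\nu\tau_0$ and the triangularity $\msR\in\mcU_q(\mfb^+)\hat{\otimes}\mcU_q(\mfb^-)$), and it is where the flip $S_0\mapsto S=\tau_0(S_0)$ actually enters. Your choice is moreover inconsistent with your own third paragraph even classically: the stabilizer of the base point is $K_{S_0}$, not $K_S$, so evaluation there would produce $\mcO(K_{S_0}\backslash U)$.

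Two further problems downstream. The mechanism you invoke — "the $K$-matrix intertwines the coproduct with its twist by $\tau_0$" — is a feature of the symmetric-pair (Balagovi\'{c}--Kolb) $K$-matrix of Section \ref{SecSymm}; in the flag case $\tau=\id$, no such intertwining property holds, and the $K$-matrix is simply a weight-space projection, so the identification of the image coideals cannot be outsourced to it. The paper instead argues by direct computation: from \eqref{EqHatPhi} and Lemma \ref{LemCommRel} one gets $\hat{\phi}(Z(\xi,\eta)) = (U(\msK\xi,\msK\eta)\otimes \id)(\msR_{\nu_0,21}\msR_{\nu_0})$, which lies in $U_q(\mfk_S)$ because $\msR_{\nu_0}$ only involves $E_r,F_r$ with $r\in S$; equality of the weak closures then follows by evaluating on $U_q(\mfk_S)$-highest weight vectors $\xi_{w_S(\varpi)}$, which produces the elements $K_{-2w_S(\varpi)}$ and nonzero multiples of $K_{\alpha_r-2w_S(\varpi)}F_r$ for $r\in S$, and taking bicommutants (Theorem \ref{TheoIdUnivFlag}). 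The equality $\mcO_q(K\backslash U)=\mcO_q(K_S\backslash U)$ then follows from Proposition \ref{PropInclusion} together with the fact that the elements $\phi(a_{\varpi})$ generate $\mcO_q(K_S\backslash U)$ as a $U_q(\mfu)$-module (Theorem \ref{TheoMainFlag2}). Your closing assertion that $U_q^{\fin}(\mfk')$ is "the locally finite part of $U_q(\mfk_S)$ for the adjoint action" is not proven and is not how the coincidence of completions is obtained.
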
 

Here `completion' is understood in the weak sense, and can equivalently be formulated as equality of their images in any admissible finite dimensional representation of $U_q(\mfu)$.  
 
\begin{Theorem*}[Theorem \ref{TheoSymmK}, Theorem \ref{TheoEqUnivEnv} and Theorem \ref{TheoMainSymmFunct}]
Let $\mcO_q(U^{\theta}\backslash U)$ be a quantum symmetric space. Let $\nu =(\tau,\epsilon)$ be such that $\epsilon(I) \subseteq \{\pm1\}$ with $\mfg_{\nu}$ inner equivalent to $\mfg_{\theta}$ inside $\mfg$, for $\mfg_{\theta}$ the real form of $\mfg$ associated to $\theta$. Then there exists a $*$-character $\chi: \mcO_q(G_{\nu}\dbbackslash G_{\R}) \rightarrow \C$ such that:
\begin{itemize}
\item The equality $\mcO_q(K\backslash U) = \mcO_q(U^{\theta}\backslash U)$ holds, except possibly for $U^{\theta}\subseteq U$ of type $EIII$, $EIV$,  $EVI$, $EVII$ or $EIX$, using notation as in \cite{Ar62}.
\item The inclusion $U_q^{\fin}(\mfk') \subseteq U_q(\mfu^{\theta})$ holds, and their completions coincide. 
\end{itemize}
\end{Theorem*}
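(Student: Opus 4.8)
The plan is to reduce the statement to Kolb's classification of quantum symmetric pairs and the Balagovi\'{c}--Kolb universal $K$-matrix, translated into the language of quantum characters. First I would fix the Letzter--Kolb coideal $U_q(\mfu^{\theta}) \subseteq U_q(\mfu)$ attached to the admissible pair underlying $\theta$. This coideal is generated by the Levi part $U_q(\mfk_X)$ indexed by the $\tau$-fixed subdiagram $X$, together with modified Chevalley generators of the form $B_i = F_i + c_i\, T_{w_X}(E_{\tau(i)}) K_i^{-1} + s_i K_i^{-1}$ for admissible parameters $(c_i,s_i)$. The hypothesis that $\mfg_{\nu}$ is inner equivalent to $\mfg_{\theta}$ inside $\mfg$ is what guarantees that the diagram involution $\tau$ and the sign function $\epsilon$ packaged in $\nu$ reproduce exactly the combinatorial datum $(X,\tau)$ of this pair, and that $\epsilon(I) \subseteq \{\pm 1\}$ places us in the regime deforming a genuine real form rather than a parabolic degeneration.

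Next I would construct the $*$-character $\chi$ by realizing the parameters $(c_i,s_i)$ as the structure constants of a one-dimensional corepresentation of $\mcO_q(G_{\nu}\dbbackslash G_{\R})$. Concretely, the coinvariant algebra is presented by matrix coefficients subject to reflection-equation-type relations, and evaluating these relations against the universal $K$-matrix $\mcK$ of Balagovi\'{c}--Kolb produces precisely the quadratic and linear constraints solved by the admissibility conditions on $(c_i,s_i)$. The requirement that $\chi$ be a $\ast$-character matches the self-adjointness constraints that single out Letzter's coideals as $\ast$-coideals of the compact form $U_q(\mfu)$. Here I expect the $\tau$-twist and the normalization of $T_{w_X}$ to be the delicate bookkeeping that has to be reconciled with the Cartan data of the double construction.

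With $\chi$ in hand, the remaining conclusions follow from the general machinery of the excerpt. The equivariant $\ast$-homomorphism $\Phi$ determined by $\chi$ has image a coideal $\ast$-subalgebra $\mcO_q(L\dbbackslash G_{\R})$, and I would identify its generators term-by-term with the $B_i$ above under the projection $\mcO_q(L\dbbackslash G_{\R}) \to U_q(\mfu)$, giving the inclusion $U_q^{\fin}(\mfk') \subseteq U_q(\mfu^{\theta})$. The coincidence of completions reduces to a generation statement: both algebras have the same image in every admissible finite-dimensional $U_q(\mfu)$-representation because the finitely many modified generators already generate the coideal weakly. On the function-algebra side, the equality $\mcO_q(K\backslash U) = \mcO_q(U^{\theta}\backslash U)$ follows by duality, the coideal of $\mcO_q(U)$ being the annihilator of the coideal of $U_q(\mfu)$.

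The main obstacle I expect is twofold. First, pinning down the exact dictionary between the quantum character $\chi$ and the Balagovi\'{c}--Kolb parameters requires checking that the reflection-equation relations defining $\mcO_q(G_{\nu}\dbbackslash G_{\R})$ coincide, after applying $\chi$, with the full admissibility system of Kolb, and this is where the normalizations genuinely have to be matched rather than merely asserted. Second, and more seriously, the exclusion of types $EIII$, $EIV$, $EVI$, $EVII$ and $EIX$ reflects that in these (Hermitian) cases the shift parameters $s_i$ needed to realize the coideal as a genuine $\ast$-character appear to lie outside the range reachable from the family parametrized by $\nu$ with $\epsilon(I)\subseteq\{\pm1\}$; handling them would require a separate analysis of the relevant rank-one reflection data, which I would isolate and treat by hand rather than expect to follow from the uniform argument.
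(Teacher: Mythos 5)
Your skeleton for the enveloping-algebra half is broadly the paper's: start from the Balagovi\'{c}--Kolb $K$-matrix, convert it into a $*$-character via the quantum-character correspondence, get the inclusion $U_q^{\fin}(\mfk')\subseteq U_q(\mfu^{\theta})$ from Kolb's coideal result, and prove coincidence of completions by exhibiting enough elements of the image of $\hat{\phi}$ (the paper does this through Lemma \ref{LemCart}, Lemma \ref{LemBor1} and Proposition \ref{PropBor2}). But your final step contains a genuine gap: the equality $\mcO_q(K\backslash U) = \mcO_q(U^{\theta}\backslash U)$ does \emph{not} follow by duality from the equality of completions. Duality (Proposition \ref{PropDualCoideal} together with Proposition \ref{PropInclusion} and Theorem \ref{TheoEqUnivEnv}) yields only the inclusion $\mcO_q(K\backslash U) \subseteq \widehat{U}_q^{\fin}(\mfk') = \mcO_q(U^{\theta}\backslash U)$. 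The reverse inclusion amounts to surjectivity of $\phi$ onto the invariant algebra, equivalently to the statement that the weakly closed coideal $\widehat{\mcO}_q(K\backslash U)$ -- the stabilizer of $\msK$ characterized in \eqref{EqCharDualO} -- is no larger than $\mcU_q(\mfu^{\theta})$, and this is not formal; the paper explicitly records that it cannot prove $\mcO_q(K\backslash U) = \widehat{U}_q^{\fin}(\mfk')$ in general. Instead, surjectivity is established via spherical-weight theory (multiplicity one, fundamental spherical weights, Lemma \ref{LemAppRang}) combined with genuinely case-by-case arguments: tensor-product decompositions for the B/D series, Noumi--Sugitani exterior-algebra computations for AII/CII/DIII (Appendix \ref{Ap2}), and an explicit computation for FII (Appendix \ref{Ap3}). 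The excluded types EIII, EIV, EVI, EVII, EIX are precisely the cases where these arguments could not be completed; if your duality argument were valid, the theorem would hold with no exceptions, so the very presence of exceptions shows the step fails.

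Two further corrections. First, your explanation of the exclusions via the shift parameters $s_i$ is off target: the paper works with Letzter coideals at $s=0$ throughout, and the obstruction in the E-cases is the failure of the surjectivity arguments just described (these Satake diagrams have $\tau$-fixed white nodes adjacent to black nodes, where Lemma \ref{LemAppRang} does not apply), not an unreachable parameter value. Second, constructing the $*$-character is far more than ``delicate bookkeeping'': besides the reflection-equation condition one must prove $*$-compatibility $\msK^* = \tau\tau_0(\msK)$, which in the paper requires a new identity for the quasi-$K$-matrix, $\quasiK^* = \Ad(T_{w_0})(\tau_0(\quasiK))$ (Theorem \ref{TheoStarX}), conjugation of Kolb's coideal by $K_{\omega_0}$ to make it $*$-invariant, passage through the unitary antipode and ribbon element, and -- this is exactly where the hypothesis that $\mfg_{\nu}$ is inner equivalent to $\mfg_{\theta}$ enters -- the existence of a character extension $\widetilde{\epsilon}\in T$ of $\epsilon$ satisfying $\widetilde{\epsilon}\,\tau\tau_0(\widetilde{\epsilon}) = \mcS_0\mcS_X\widetilde{z}\widetilde{z}_{\tau}^{-1}$, proved by the Satake-versus-Vogan diagram comparison of Appendix \ref{Ap1}. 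Note also that the diagram involution packaged in $\nu$ is $\tau\tau_0$, not $\tau$, and that $\epsilon$ encodes the Vogan rather than the Satake datum of $\theta$; your identification of $(\tau,\epsilon)$ with $(X,\tau)$ conflates the two.
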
 
 
Note that in both cases, the character $\chi$ will be constructed upon realising $\mcO_q(G_{\nu}\dbbackslash G_{\R})$ as a $\nu$-modified \emph{braided Hopf algebra} structure on $\mcO_q(G)$, for which the $*$-characters are then determined by a modified theory of universal $K$-matrices, see also \cite{KoSt09}*{Section 3.5}.

Let us end the introduction by motivating the above results from the Poisson-Lie point of view, without specifying precise details. Drinfeld has shown that a Poisson homogeneous manifold for a Poisson-Lie group is completely determined, up to local isomorphism, by a Lagrangian subalgebra in the Drinfeld double of its associated infinitesimal Lie bialgebra \cite{Dri93}. For our compact group $U$ in question, the Drinfeld double Lie bialgebra of $\mfu$ will be $\mfg$ with a particular real Lie bialgebra structure which integrates to the real Poisson-Lie group structure $\{-,-\} = \{-,-\}_{\id}$ on $G$ mentioned before. We then have the following:
\begin{itemize}
\item In the case of flag manifolds $K_S\backslash U$, the associated Lagrangian subgroup is $L = K_SN_S\subseteq G$, with $N_S$ the unipotent radical of the associated parabolic $P_S$.
\item In the case of symmetric spaces, the associated Lagrangian subgroup is $L = G_{\theta}\subseteq G$, with $G_{\theta}$ the real form of $G$ determined by $\theta$.
\end{itemize}
The given $U$-homogeneous space can then be reconstructed as the $U$-orbit at the unit of the space $L\backslash G$.  In \cite{STS85,STS94} one can find similar ideas in the purely complex Poisson and quantum setting, using however the (twisted) Heisenberg double instead of (twisted) Drinfeld double. We make a brief comparison with this alternative viewpoint in Appendix \ref{Ap0}.

In general, in its given position $L$ will not be a Poisson-Lie subgroup of $G$ with its usual Poisson bracket $\{-,-\}$. However, we can take an inner conjugate copy $L' = G_{\nu}$ of $L$ such that it will become a Poisson-Lie subgroup of $G$ with the Poisson-Lie group structure $\{-,-\}_{\nu}$, for $\nu$ suitably chosen. Denoting $G' = (G,\{-,-\}_{\nu})$ and $G'' = (G,\{-,-\}_{\nu,\id})$, one may then expect an isomorphism of Poisson $G$-spaces
\[
L'\backslash G'' \cong L\backslash G.
\] 
In our two specific settings, this isomorphism looks as follows, using notation as in the above theorems: 
\begin{itemize}
\item In the flag case, we have $K_{S_0}N_{S_0}^-\backslash G \cong K_SN_S \backslash G$.  
\item In the symmetric case, we have $G_{\nu} \backslash G \cong G_{\theta}\backslash G$. 
\end{itemize}
We note however that a complication results from establishing our results in the purely algebraic framework, for the quotient space $L\backslash G$ or $L'\backslash G''$ will not necessarily be a real affine variety, i.e.~ the fixed point set of a complex-conjugate involution on a complex affine variety. There will however be a natural real affine variety $L \dbbackslash G$ with a Zariski-dense embedding $L\backslash G \subseteq L \dbbackslash G$, allowing us to continue to work within the usual framework of unital algebras when quantizing varieties. One can view $L\dbbackslash  G$ as a GIT quotient in the setting of real algebraic geometry.

We end by observing that although these ideas are new when taking into account both the various twistings and $*$-structures, various of the basic techniques themselves, in particular the use of the reflection equation algebra, are well-established. We mention in this light especially \cite{STS85,STS94,KoSt09} and the work of Mudrov and collaborators \cites{DM02,DM03a,DM03b,DM04,Mud07a, Mud07b, Mud12, AM13, Mud13a, Mud13b, AM14, AM15}. We also mention that in a more restricted setting, closely related constructions were performed in \cite{DeC13}. Finally, we mention that this paper is a step towards proving part of the conjecture posed as \cite{DCNTY17}*{Conjecture 4.1}. However, to achieve this aim in full, one needs to complement the results of this paper with more refined representation-theoretic results, which falls outside the scope of the current paper. 

The precise structure of this paper is as follows. In the \emph{first section}, we establish the necessary preliminaries on quantized enveloping algebras and the associated quantized function algebras. In the \emph{second section}, we construct quantum bitorsors for complex quantum groups, and examine the quantum orbit spaces with respect to certain quantum subgroups. We then relate these to  quantum homogeneous spaces for the associated compact quantum groups through the method of quantum characters, establishing the connection to the twisted reflection equation and modified universal $K$-matrices. In the short \emph{third section}, we examine in more detail the case of quantum flag manifolds, and in the \emph{fourth section} we look at the connection to the quantum symmetric spaces of Letzter. In the \emph{Appendix} \ref{Ap0}, we consider some variations of the results in Section \ref{SecTwistBraid}, connecting to the work in \cite{STS85,STS94}. In the \emph{Appendix} \ref{Ap1}, we establish in detail a technical result concerning the relation between Satake and Vogan diagrams for involutions of semisimple compact Lie algebras. This result will be verified directly by diagram checking - it would however be nice to find a more conceptual proof. In \emph{Appendix} \ref{Ap2} we establish certain results on spherical vectors in quantized exterior products. In \emph{Appendix} \ref{Ap3}, we gather some explicit computations regarding the case of a symmetric pair of type $FII$. 

\section{Preliminaries}\label{SecPrelim}

\subsection{Quantized enveloping algebras}\label{SubsecQUE}

\begin{Def}
A \emph{Lie $*$-algebra} consists of a complex Lie algebra $\mfg$ together with an antilinear, antimultiplicative involution $*: \mfg \rightarrow \mfg$.  
\end{Def}

There is a one-to-one correspondence between Lie $*$-algebras and real Lie algebras by means of the correspondence
\[
(\mfg,*) \mapsto \mfg_* = \{X \in \mfg \mid X^* = -X\},
\]
\[
\mfs \mapsto (\mfs_{\C},*),\qquad (X+iY)^* = -X + iY,\quad X,Y\in \mfs,
\]
where $\mfs_{\C} = \mfs \underset{\R}{\otimes} \C$ is the complexification. One calls $\mfg_*$ the \emph{real form} of $\mfg$ associated to $*$. 

Let now $\mfg$ be a complex semisimple Lie algebra. We fix a Borel subalgebra $\mfb$ and associated Cartan subalgebra $\mfh$, and let $\mfb^-$ be the opposite Borel subalgebra. We write  $\{\alpha_r\mid r\in I\}$ for the set of simple positive roots, and let  $\Gamma$ be the corresponding Dynkin diagram on the set $I$. We write respectively $Q \supseteq \Delta \supseteq \Delta^+$ for the root lattice, the root system and the positive roots. We let $W$ be the Weyl group of $\mfg$, and we fix a $W$-invariant positive-definite bilinear real form  $(-,-)$ on $Q\otimes_{\Z}\R$. We write $d_r = (\alpha_r,\alpha_r)/2$, and use the standard notation $\alpha^{\vee} = \frac{2\alpha}{(\alpha,\alpha)}$ for coroots. We write  $P$ for the weight lattice, $P^+$ for its positive cone and $\{\varpi_r\mid r\in I\}$ for the set of fundamental weights. We let $A = (a_{rs})_{rs}$ be the associated Cartan matrix under the convention
\[
a_{rs} = (\alpha_r^{\vee},\alpha_s) = 2\frac{(\alpha_r,\alpha_s)}{(\alpha_r,\alpha_r)}.
\] 

We further fix Chevalley-Serre generators 
\[
h_r \in \mfh,\qquad e_r\in \mfb,\qquad f_r\in \mfb^-.
\]
Concretely, this means that we identify $\mfg$ with the abstract complex Lie algebra generated by 
\[
\msS = \{h_r,e_r,f_r \mid r\in I\}
\] 
such that 
\[
\lbrack h_r,h_s\rbrack =0,\quad \lbrack h_r,e_s\rbrack = a_{rs}e_s,\quad \lbrack h_r,f_s\rbrack = -a_{rs} f_s,\quad \lbrack e_r,f_s\rbrack = \delta_{rs} h_r
\]
and for $r\neq s$ the \emph{Serre relations}
\[
\ad_{e_r}^{1-a_{rs}}(e_s) = \ad_{f_r}^{1-a_{rs}}(f_s) = 0,
\]
where $\ad_x(y) = \lbrack x,y\rbrack$. We can then endow $\mfg$ with the unique Lie $*$-algebra structure such that 
\[
h_r^* = h_r,\qquad e_r^* = f_r.
\] 
The associated real Lie algebra $\mfu = \{X \in \mfg \mid X^* = -X\}$ is called the \emph{compact real form} of $\mfg$.   

We now introduce the quantized enveloping algebra of $\mfg$ and $\mfu$, see e.g.~ \cites{KS97,NT13} for details on the associated $*$-structures. Fix $0<q$ with $q\neq 1$. We denote by $U_q(\mfg)$ the quantized enveloping algebra of $\mfg$. Specifically, $U_q(\mfg)$ is generated by $K_{\omega},E_r,F_r$, where $r\in I$ and $\omega$ takes values in the integral weight lattice $P$, with commutation relations
\[
K_0 = 1,\quad K_{\omega}K_{\chi} = K_{\omega+ \chi},\quad K_{\omega} E_r = q^{(\omega,\alpha_r)} E_rK_{\omega}, \quad K_{\omega} F_r = q^{-(\omega,\alpha_r)}F_rK_{\omega},\quad \lbrack E_r,F_s\rbrack = \delta_{rs} \frac{K_{\alpha_r}-K_{\alpha_r}^{-1}}{q^{d_r}-q^{-d_r}}
\]
and the \emph{quantum Serre relations}, whose precise form we will not need in what follows. We will in the following use the shorthand $K_r = K_{\alpha_r}$ and $q_r = q^{d_r}$. We endow $U_q(\mfg)$ with the Hopf algebra structure 
\[
\Delta(E_r) = E_r\otimes 1+ K_r\otimes E_r,\qquad \Delta(F_r) = F_r\otimes K_r^{-1}+ 1\otimes F_r,\qquad \Delta(K_{\omega}) = K_{\omega}\otimes K_{\omega}.
\] 
We denote by $\varepsilon$ the counit, given by $\varepsilon(E_r) = \varepsilon(F_r) = 0$ and $\varepsilon(K_{\omega})=1$, and the antipode map by $S$, determined by 
\[
S(E_r) = - K_r^{-1}E_r,\qquad S(F_r) = - F_rK_r,\qquad S(K_{\omega}) = K_{\omega}^{-1}. 
\]
For $\alpha \in Q$ we write
\[
U_q(\mfg)_{\alpha} = \{X \in U_q(\mfg)\mid K_{\omega} X = q^{(\omega,\alpha)}XK_{\omega}\}.
\]

We write $U_q(\mfb) = U_q(\mfb^+)$ for the positive Borel part of $U_q(\mfg)$, generated by the $K_{\omega}$ and $E_r$, and $U_q(\mfb^-)$ for the negative Borel part generated by the $K_{\omega}$ and $F_r$. We write $U_q(\mfn) = U_q(\mfn^+)$ for the unital algebra generated by the $E_r$, $U_q(\mfn^-)$ for the unital algebra generated by the $F_r$, and $U_q(\mfh)$ for the algebra generated by the $K_{\omega}$. We denote $U_q(\mfu)$ for $U_q(\mfg)$ as a Hopf $*$-algebra with the $*$-structure
\[
K_{\omega}^* =K_{\omega},\qquad E_r^* = F_rK_r,\qquad F_r^* = K_r^{-1}E_r.
\]
Note that the antipode map $S$ is not $*$-preserving. To correct this, one introduces the \emph{unitary antipode} $R: U_q(\mfu) \rightarrow U_q(\mfu)$, which is a $*$-preserving, involutive, anti-multiplicative and anti-comultiplicative map determined on generators by 
\begin{equation}\label{EqUnitaryAntipode}
R(E_r)  = -q_rK_r^{-1}E_r,\qquad R(F_r) = -q_r^{-1}F_rK_r,\qquad R(K_{\omega}) = K_{\omega}^{-1}.
\end{equation}
We call \emph{admissible representation} of $U_q(\mfg)$ any representation on a finite dimensional complex vector space in which the $K_{\omega}$ assume positive values. Each admissible representation is spanned by joint eigenvectors of the $K_{\omega}$, called \emph{weight vectors}, and for $\xi$ a weight vector there exists a unique $\wt(\xi) \in P$ with 
\[
K_{\omega} \xi = q^{(\omega,\wt(\xi))}\xi,\qquad \forall \omega \in P. 
\]
Commonly we will use weight vectors when displaying a formula, with the implicit understanding that the formula extens (bi-)linearly to other vectors.

One can choose natural representatives $V_{\varpi}$ for the isomorphism classes of irreducible admissible representations, indexed by the positive integral weights $\varpi \in P^+$, and characterized by the existence of a one-dimensional space of highest weight vectors at weight $\varpi$, vanishing under the $E_r$. We can densily embed
\begin{equation}\label{EqCompletion}
U_q(\mfg) \subseteq \mcU_q(\mfg) := \prod_{\varpi} \End(V_{\varpi}),
\end{equation}
where $\mcU_q(\mfg)$ is endowed with the weak topology, that is $x_{\alpha} \rightarrow x$ if $\pi_{\varpi}(x_{\alpha})\rightarrow \pi_{\varpi}(x)$ for all $\varpi \in P^+$. The coproduct $\Delta$ then extends continuously to a homomorphism
\[
\Delta:  \mcU_q(\mfg) \rightarrow \mcU_q(\mfg) \hat{\otimes}\mcU_q(\mfg) := \prod_{\varpi,\varpi'} \End(V_{\varpi})\otimes \End(V_{\varpi'}),
\]
coassociative in the natural way, where the symbol $\hat{\otimes}$ denotes the weak closure of a tensor product. Similarly the antipode and unitary antipode can be extended to $\mcU_q(\mfg)$. In general, we will use also the notation $\mcU_q(\mfn)$ etc.~ to denote the weak closure of the respective subalgebra of $\mcU_q(\mfg)$. We note that if $H \subseteq U_q(\mfg)$ is a $*$-subalgebra, the weak closure of $H$ will coincide with its bicommutant inside $\mcU_q(\mfg)$.

We endow each $V_{\varpi}$ with a Hilbert space structure, unique up to a non-zero positive constant, such that it becomes a $*$-representation of $U_q(\mfu)$. The inclusion \eqref{EqCompletion} then becomes an embedding of $*$-algebras, and we will consequently write the right hand side $*$-algebra as $\mcU_q(\mfu)$. In the following, we write $V^*$ for the contragredient representation of $V$, where $V^* = \{\xi^*\mid \xi \in V\}$ is a copy of the conjugate-linear Hilbert space of $V$ realized as the dual of $V$ by the scalar product, $\xi^* = \langle \xi,-\rangle$, and endowed with the left $U_q(\mfg)$-module structure
\begin{equation}\label{EqContragredient}
X \cdot \xi^* = \xi^* \circ S(X).
\end{equation}
To make this a $*$-representation of $U_q(\mfu)$, the space $V^*$ has to be endowed with a Hilbert space structure different from the canonical one, but this will not come in to play in what follows. 

Let $\msR$ be the universal $R$-matrix for $U_q(\mfu)$, so 
\[
\msR \in \mcU_q(\mfb^+)\hat{\otimes} \mcU_q(\mfb^-) \subseteq \prod_{\varpi,\varpi'} \End(V_{\varpi})\otimes \End(V_{\varpi'})
\] 
and
\begin{equation}\label{EqPropR1}
(\Delta\otimes \id)\msR = \msR_{13}\msR_{23},\qquad (\id\otimes \Delta)\msR = \msR_{13}\msR_{12},\qquad \msR\Delta(-)\msR^{-1} = \Delta^{\opp},\qquad \msR^* = \msR_{21}. 
\end{equation}
It is completely determined by the above relations and the rule
\begin{equation}\label{EqPropR2}
\msR(\xi\otimes \eta) = q^{-(\wt(\xi),\wt(\eta))}\xi\otimes \eta
\end{equation}
for a highest weight vector $\xi$ and a lowest weight vector $\eta$. We then have that $\msR = \widetilde{\msR} \msQ$, where  for general weight vectors $\xi,\eta$
\begin{equation}\label{EqPropR3}
\msQ(\xi\otimes \eta) = q^{-(\wt(\xi),\wt(\eta))}\xi\otimes \eta,\qquad \widetilde{\msR} = \sum_{\alpha \in Q^+} \widetilde{\msR}_{\alpha}
\end{equation}
with $\widetilde{\msR}_{\alpha} \in U_q(\mfn)_{\alpha} \otimes U_q(\mfn^-)_{-\alpha}$ and where the sum converges weakly. We have for example
\begin{equation}\label{EqPropR4}
\widetilde{\msR}_{0} = 1\otimes 1,\qquad \widetilde{\msR}_{\alpha_r} = (q_r^{-1}-q_r) E_r \otimes F_r.
\end{equation}

\subsection{Quantized function algebras}\label{SubsecQFA}

Let $\mcO_q(G) = (\mcO_q(G),\Delta,\varepsilon,S)$ be the dual Hopf algebra of matrix coefficients for $U_q(\mfg)$ in admissible representations. We write the pairing as
\[
U_q(\mfg) \times \mcO_q(G) \rightarrow \C,\qquad (X,f) \mapsto (X,f) = (f,X) = X(f) = f(X).
\]
We equip $\mcO_q(G)$ with the $*$-structure dual to that of $U_q(\mfu)$,
\[
(X,f^*) = \overline{(S(X)^*,f)},\qquad X \in U_q(\mfu),f\in \mcO_q(G),
\]
and write the resulting Hopf $*$-algebra as $\mcO_q(U)$ in the appropriate contexts. 
\begin{Rem}
Performing the analogues of the above constructions at $q=1$, we obtain the Hopf algebra $\mcO(G)$ of regular functions on the connected, simply connected complex affine group $G$ having $\mfg$ as its complex Lie algebra. The given $*$-structure on $\mcO(G)$ endows $G \cong \Spec(\mcO(G))$ with a complex conjugate involution, determined by 
\[
f(\bar{g}) := \overline{f^*(g)},\qquad g\in G,f\in \mcO(G).
\]
The real affine group variety $U \subseteq G$ of $*$-preserving characters, i.e.~ of elements $g\in G$ with $g = \bar{g}$, is then the connected, simply connected compact Lie group with Lie algebra $\mfu$. 
\end{Rem}

When $(V_{\pi},\pi)$ is a $U_q(\mfu)$-representation, we consider
\[
Y_{\pi}  \in \End(V_{\pi})\otimes \mcO_q(G)
\]
for the associated corepresentation matrix of matrix coefficients, and 
\[
Y(\xi,\eta) = (\xi^*\otimes \id)Y_{\pi}(\eta\otimes \id) \in \mcO_q(G),\qquad \xi,\eta \in V_{\pi}
\]
for the matrix coefficients. When considering these as unitary corepresentations of $\mcO_q(U)$, we will rather write the coefficients as $U(\xi,\eta)$. 

We can consider the Hopf algebra pairings between $\mcO_q(G)$ and $U_q(\mfb^{\pm})$ obtained by restriction, and we let $\mcO_q(B) = \mcO_q(B^+)$ and $\mcO_q(B^-)$ be the respective coimages of $\mcO_q(G)$ under this restriction. We write the corresponding Hopf algebra quotient homomorphisms as
\[
\msP_{\pm}: \mcO_q(G) \twoheadrightarrow \mcO_q(B^{\pm}).
\]
We write
\[
T_{\pi}^{\pm} = (\id\otimes \msP_{\pm})Y_{\pi} \in \End(V_{\pi})\otimes \mcO_q(B^{\pm}).
\] 

We let 
\[
\mbr: \mcO_q(G) \times \mcO_q(G) \rightarrow \C,\qquad (f,g)\mapsto (\msR,f\otimes g)
\]
be the natural skew pairing of $\mcO_q(G)$ with itself. Then $\mbr$ factors over a skew pairing
\[
\mcO_q(B)\times \mcO_q(B^-) \rightarrow \C. 
\]
In fact, we get homomorphisms of Hopf algebras
\begin{equation}\label{EqDefiotapm}
\iota_{-}: \mcO_q(B) \rightarrow U_q(\mfb^-)^{\cop},\quad f \mapsto (f\otimes \id)\msR,\qquad \iota_+: \mcO_q(B^-) \rightarrow U_q(\mfb^+)^{\cop},\quad f \mapsto (\id\otimes f)\msR^{-1}.  
\end{equation}
The following result is well-known in the formal setting, and is an instance of Drinfeld's duality between quantized universal enveloping algebras and quantized function algebras \cite{Dri87,Gav02}. A proof in the non-formal setting can be found in \cite{Jos95}*{Corollary 9.2.12}. We repeat the proof, mainly to introduce notation. 

\begin{Prop}\label{PropiotIso}
The maps $\iota_{\pm}$ in \eqref{EqDefiotapm} are isomorphisms. 
\end{Prop}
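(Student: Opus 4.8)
The plan is to show that each $\iota_{\pm}$ is a bijective homomorphism of Hopf algebras. Since the two cases are symmetric (interchanging $\mfb^+\leftrightarrow\mfb^-$ and replacing $\msR$ by $\msR^{-1}$, as reflected in the two defining formulas in \eqref{EqDefiotapm}), I would treat $\iota_-$ in detail and deduce $\iota_+$ by the same argument. That $\iota_-$ is an algebra map follows from the first identity in \eqref{EqPropR1}, namely $(\Delta\otimes\id)\msR=\msR_{13}\msR_{23}$: writing $\msR=\sum R^{(1)}\otimes R^{(2)}$ and using that the product in the dual $\mcO_q(B)$ is convolution, one computes $\iota_-(fg)=(f\otimes g\otimes\id)\msR_{13}\msR_{23}=\iota_-(f)\iota_-(g)$. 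Dually, the identity $(\id\otimes\Delta)\msR=\msR_{13}\msR_{12}$ shows that $\iota_-$ intertwines the coproduct of $\mcO_q(B)$ with the \emph{opposite} coproduct of $U_q(\mfb^-)$, which is precisely why the target is $U_q(\mfb^-)^{\cop}$; compatibility with counit and antipode is then automatic. This part is routine bookkeeping.

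For surjectivity I would evaluate $\iota_-$ directly on matrix coefficients, using the factorisation $\msR=\wmsR\msQ$ of \eqref{EqPropR3} together with $\wmsR_0=1\otimes1$ and $\wmsR_{\alpha_r}=(q_r^{-1}-q_r)E_r\otimes F_r$. Computing $\iota_-(Y(\xi,\eta))$ amounts to sandwiching the first leg of $\msR$, acting on $V_\varpi$, between $\xi^*$ and $\eta$, leaving an operator built from the second leg. Taking $\xi=\eta$ a highest weight vector of weight $\varpi$, every $\wmsR_\alpha$ with $\alpha\neq0$ drops out (it would raise the weight of $\eta$ out of $V_\varpi$) and the $\msQ$-part contributes the Cartan element $K_{-\varpi}$; letting $\varpi$ range over $P^+$ places all $K_\omega$ in the image. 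Taking instead $\xi=v_{\varpi_r}$ a highest weight vector and $\eta=F_rv_{\varpi_r}$, only the term $\wmsR_{\alpha_r}$ reaches back to weight $\varpi_r$, and one finds $\iota_-(Y(v_{\varpi_r},F_rv_{\varpi_r}))$ equal to a nonzero multiple of $F_rK_{\alpha_r-\varpi_r}$; since the $K$'s are already invertible in the image this yields each $F_r$. As the $K_\omega$ and $F_r$ generate $U_q(\mfb^-)$ as an algebra, $\iota_-$ is onto (and this computation also confirms that the image lies in $U_q(\mfb^-)$ itself, only finitely many $\wmsR_\alpha$ contributing for a fixed matrix coefficient).

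For injectivity I would pass to the associated skew pairing. Pairing $\iota_-(f)\in\mcU_q(\mfb^-)$ against $g\in\mcO_q(B^-)$ gives $(\iota_-(f),g)=\mbr(f,g)$; since the matrix coefficients in $\mcO_q(B^-)$ separate the points of $U_q(\mfb^-)$, vanishing of $\iota_-(f)$ is equivalent to vanishing of $\mbr(f,-)$, so injectivity of $\iota_-$ amounts to left non-degeneracy of $\mbr:\mcO_q(B)\times\mcO_q(B^-)\to\C$. To establish this I would use the $P\times P$-grading coming from the left and right $U_q(\mfh)$-translations, with respect to which $\mbr$ is graded, and again the factorisation $\msR=\wmsR\msQ$ to decouple the Cartan and nilpotent contributions: the $\msQ$-part is manifestly non-degenerate because the form $(-,-)$ is, while on each graded piece the $\wmsR$-part restricts to the canonical pairing of $U_q(\mfn^+)_\alpha$ with $U_q(\mfn^-)_{-\alpha}$, which is non-degenerate since $q>0$, $q\neq1$ is not a root of unity.

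The main obstacle is this last step, the non-degeneracy of $\mbr$: everything else reduces to manipulating the explicit formulas for $\msR$, but controlling the induced form genuinely requires the triangular structure of $\msR$ and the non-degeneracy of the Drinfeld--Rosso pairing between the nilpotent parts, which is exactly where the hypothesis that $q$ is not a root of unity is used. Following \cite{Jos95}*{Corollary 9.2.12}, this is where the real work lies.
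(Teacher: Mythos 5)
Your proposal is correct in outline, and your surjectivity argument is essentially the one in the paper: the same evaluation of $\iota_{\pm}$ on matrix coefficients at highest weight vectors, using $\msR = \wmsR\msQ$ and \eqref{EqPropR4}. One minor repair is needed there: taking only highest weight vectors with $\varpi$ ranging over $P^+$ gives $K_{-\varpi}$ for dominant $\varpi$ only, so "all $K_\omega$" and the invertibility you invoke to extract $F_r$ from $F_rK_{\alpha_r-\varpi_r}$ are not yet in hand. The paper gets them either by evaluating $\iota_-(T^+_{\varpi}(\xi,\xi)) = K_{-\wt(\xi)}$ on \emph{arbitrary} weight vectors $\xi$ (weights of admissible representations exhaust $P$), or by noting that the image of a Hopf algebra map is closed under the antipode and $S^{\pm 1}(K_\omega)=K_{-\omega}$; since you have already established the Hopf-map property, this is a one-line patch.

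Where you genuinely diverge is injectivity. Your reduction of injectivity of $\iota_-$ to left non-degeneracy of $\mbr$ on $\mcO_q(B)\times\mcO_q(B^-)$ is valid (it uses only that admissible representations separate points, i.e., the embedding \eqref{EqCompletion}), but you then propose to prove that non-degeneracy by importing the non-degeneracy of the Drinfeld--Rosso pairing between $U_q(\mfn^+)$ and $U_q(\mfn^-)$. That is the traditional route and it does work, but it is the heavy part, and as sketched it has a circularity trap: to identify the graded pieces of the $\wmsR$-part of $\mbr$ with the canonical pairing of $U_q(\mfn^+)_\alpha$ with $U_q(\mfn^-)_{-\alpha}$ one needs a concrete model of the quotient $\mcO_q(B)$, and transporting structure along $\iota_{\pm}$ presupposes exactly the injectivity being proven, so that identification must be done by hand. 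The paper avoids all of this with a bootstrap: if $\iota_-(f)=0$, then for every $g\in\mcO_q(G)$ one has $(S^{-1}(g),\iota_-(f)) = (f,\iota_+(\msP_-(g))) = 0$; since $\iota_+\circ\msP_-$ is \emph{surjective} onto $U_q(\mfb)$ (the half already proven) and the pairing between $\mcO_q(B)$ and $U_q(\mfb)$ is non-degenerate \emph{by definition} of $\mcO_q(B)$ as a coimage, this forces $f=0$. So in the paper surjectivity of one map yields injectivity of the other for free, with no appeal to the Rosso pairing and no grading analysis; your route buys nothing extra here and costs the entire non-degeneracy theorem, which you would in effect be citing from \cite{Jos95} rather than proving.
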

\begin{proof}
By \eqref{EqPropR3} and \eqref{EqPropR4}, we have
\[
\iota_-(T^+_{\varpi}(\xi,\xi)) = K_{-\wt(\xi)},\qquad \iota_-(T^+_{\varpi_r}(\xi_{\varpi_r},F_r\xi_{\varpi_r})) = (q_r^{-1}-q_r) F_rK_{\alpha_r-\varpi_r},
\]
\[
\iota_+(T_{\varpi}^-(\xi,\xi)) = K_{\wt(\xi)},\qquad \iota_+(T^-_{\varpi_r}(F_r\xi_{\varpi_r},\xi_{\varpi_r})) = q_r^{-2}(q_r-q_r^{-1})E_rK_{\varpi_r-\alpha_r}.
\]
Since the image of $\iota_{\pm}$ will be closed under the antipode map, this proves that $\iota_{\pm}$ are surjective. If then $f\in \mcO_q(B)$ with $\iota_-(f) = 0$, we have for all $g\in \mcO_q(G)$ that 
\[
(S^{-1}(g),\iota_-(f)) = (f,\iota_+(\msP_-(g))) =0. 
\]
As $\msP_-$ and $\iota_+$ are surjective, and the pairing between $\mcO_q(B)$ and $U_q(\mfb)$ is non-degenerate by definition, it follows that $f=0$, and $\iota_-$ injective. Similarly one shows $\iota_+$ injective.
\end{proof}
In the following we write
\begin{equation}\label{EqNameImIota}
\iota_+^{-1}(E_r) = X_r^+,\qquad \iota_-^{-1}(F_r) = X_r^-\qquad \iota_{\pm}^{-1}(K_{\omega}) = L_{\omega}^{\pm}.
\end{equation}
As in the proof of Proposition \ref{PropiotIso}, we have
\[
T^+_{\varpi}(\xi,\xi) = L_{-\wt(\xi)}^-,\qquad T^+_{\varpi_r}(\xi_{\varpi_r},F_r\xi_{\varpi_r}) = (q_r^{-1}-q_r) X_r^-L_{\alpha_r-\varpi_r}^-,
\]
\[
T_{\varpi}^-(\xi,\xi) = L_{\wt(\xi)}^+,\qquad T^-_{\varpi_r}(F_r\xi_{\varpi_r},\xi_{\varpi_r}) = q_r^{-2}(q_r-q_r^{-1})X_r^+L_{\varpi_r-\alpha_r}^+,
\]
The skew pairing between generators is then determined by 
\[
\mbr(L_{\varpi}^-,L_{\chi}^+) = q^{(\varpi,\chi)},\qquad \mbr(X_r^-,X_s^+) = \frac{\delta_{rs}}{q_r-q_r^{-1}},\qquad \mbr(L_{\varpi}^-,X_r^+) = \mbr(X_r^-,L_{\varpi}^+)=0. 
\]

Let $\mcO_q(\overline{G})$ be an antilinear, anti-homomorphic, cohomomorphic copy of $\mcO_q(G)$, with the copy of the element $f$ written as $f^{\dag}$. Then we can view the tensor product Hopf algebra $ \mcO_q(G) \otimes \mcO_q(\overline{G})$ as a Hopf $*$-algebra, which we will denote by $\mcO_q^{\com}(G_{\R})$, by the $*$-structure
\[
(f\otimes g^{\dag})^{\dag}= g\otimes f^{\dag}. 
\]
In the following we will drop the tensor product symbol, and simply write elements of $\mcO_q^{\com}(G_{\R})$ as $fg^{\dag} = g^{\dag}f$.

\begin{Rem} 
At $q=1$, one has that $\mcO^{\com}(G_{\R})$ is the $*$-algebra of regular functions on $G$ viewed as a real algebraic variety by the embedding
\[
G \hookrightarrow G\times \bar{G},\quad x \mapsto (x,\bar{x}),
\]
where $\bar{G}$ is an anti-holomorphic copy of $G$ and where $G\times \bar{G}$ is endowed with the complex conjugation $\overline{(x,\bar{y})} = (y,\bar{x})$. We then have
\[
(fg^{\dag})(x,\bar{y}) = f(x) \overline{g(y)}.
\]
Alternatively, we may view $\mcO^{\com}(G_{\R})$ as generated by the holomorphic and anti-holomorphic regular functions on $G$. For the above particular quantization $\mcO_q^{\com}(G_{\R})$, the quasi-classical Poisson structure is such that the holomorphic and antiholomorphic functions Poisson commute. In the following section, we will consider deformations where there a is more interesting interaction between the holomorphic and antiholomorphic structures. 
\end{Rem}

Similarly, we can form $\mcO_q(\overline{B})$ and $\mcO_q^{\com}(B_{\R})$. In the latter case, we will identify $\mcO_q(\overline{B})$ with $\mcO_q(B^-)$ by the identification
\begin{equation}\label{EqIdentBorel}
\mcO_q(B^-) \rightarrow \mcO_q(\overline{B}),\qquad f \mapsto (f^*)^{\dag},
\end{equation}
where $*: \mcO_q(B^-) \rightarrow \mcO_q(B)$ is again determined by 
\[
(X,f^*) = \overline{(S(X)^*,f)},\qquad f\in \mcO_q(B^-),X \in U_q(\mfb),
\]
so in particular 
\begin{equation}\label{EqMsP+}
\msP_+(f^*) = \msP_-(f)^*.
\end{equation} 
Moreover, from
\[
((f^*\otimes \id)\msR)^* = (f\otimes \id)(((S\otimes \id)\msR)^*) = (f\otimes \id)((\msR^{-1})^*) = (\id\otimes f)(\msR^{-1})
\]
we see that $*$ is compatible with the $\iota_{\pm}$-maps,
\[
\iota_-(f^*) = \iota_+(f)^*,\qquad f \in \mcO_q(B^-).
\] 
We then also write $*$ for the inverse map $*:\mcO_q(B) \rightarrow \mcO_q(B^-)$. Note that as we are assuming our admissible representations $\pi$ to be $*$-preserving for the compact $*$-structure, we also have that
\begin{equation}\label{EqStarT}
(T_{\pi}^{\pm})^* = (T_{\pi}^{\mp})^{-1}. 
\end{equation}

\subsection{Lusztig braid operators}

The following results will only be needed from Section \ref{SecSymm} onwards. 

For $r \in I$, we let $T_{r}$ be the \emph{Lusztig braid operator}
\[
T_r\in \mcU_q(\mfu),\qquad T_r \xi = \underset{-a+b-c = (\wt(\xi),\alpha_r^{\vee})}{\sum_{a,b,c\geq 0}} (-1)^b q_r^{b-ac} E_r^{(a)}F_r^{(b)}E_r^{(c)}\xi,\qquad \xi \in V_{\pi},
\]
where 
\[
E_r^{(a)} = \frac{1}{[a]_{q_r}!}E_r^a
\] 
in standard notation with the convention 
\[
[a]_q = \frac{q^a-q^{-a}}{q-q^{-1}},\qquad [a]_q! = [1]_q [2]_q \ldots [a-1]_q [a]_q.
\] 
We will need to know the behaviour of the $T_r$ under the $*$-operation. In the following, we will interpret the maximal torus $T = \exp(i(\mfh \cap \mfu)) \subseteq U$ as the space of unitary characters of the integral weight lattice $P$, $\omega\mapsto t^{\omega}$, so that we have a natural embedding
\[
T \subseteq \mcU_q(\mfu),\qquad t\xi = t^{\wt(\xi)}\xi,\qquad \xi\in V_{\pi}.
\]
We can then consider
\[
\mcS_{r} = e^{\pi i \alpha_r^{\vee}}\in T \subseteq \mcU_q(\mfu),\qquad \mcS_r\xi = (-1)^{(\wt(\xi),\alpha^{\vee}_r)} \xi,\qquad \xi \in V_{\pi}.  
\]

\begin{Lem}\label{LemTstarR}
We have
\begin{equation}\label{EqForm}
T_r^* = R(T_r) = T_{r} \mcS_{r} =\mcS_rT_r,
\end{equation}
where we recall that $R$ is the unitary antipode of \eqref{EqUnitaryAntipode}, extended to $\mcU_q(\mfu)$. 
\end{Lem}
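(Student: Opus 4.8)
The plan is to treat the three asserted equalities in turn, reducing everything to two genuine computations. The identity $\mcS_r T_r = T_r \mcS_r$ is the easiest: since $T_r$ sends a weight vector $\xi$ of weight $\lambda$ to a vector of weight $s_r\lambda = \lambda - (\lambda,\alpha_r^\vee)\alpha_r$, and $(s_r\lambda,\alpha_r^\vee) = -(\lambda,\alpha_r^\vee)$, while $\mcS_r$ acts on any weight $\mu$ by the sign $(-1)^{(\mu,\alpha_r^\vee)}$, both $\mcS_r T_r\xi$ and $T_r\mcS_r\xi$ equal $(-1)^{(\lambda,\alpha_r^\vee)}T_r\xi$. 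It then remains to prove (i) $R(T_r) = T_r\mcS_r$ and (ii) $T_r^* = R(T_r)$; note that, granting (i), statement (ii) is equivalent to $T_r^* = T_r\mcS_r$.

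For (i) I would argue purely algebraically inside $\mcU_q(\mfu)$. A short induction using $R(E_r) = -q_r K_r^{-1}E_r$, $R(F_r) = -q_r^{-1}F_rK_r$ and the commutation $K_r E_r = q_r^2 E_r K_r$ gives the divided-power formulas $R(E_r^{(n)}) = (-1)^n q_r^{n^2}K_r^{-n}E_r^{(n)}$ and $R(F_r^{(n)}) = (-1)^n q_r^{-n^2}F_r^{(n)}K_r^n$, the real scalars $[n]_{q_r}!$ causing no trouble. Applying the linear, anti-multiplicative map $R$ to the defining sum for $T_r$ reverses each monomial, so that $R(E_r^{(a)}F_r^{(b)}E_r^{(c)}) = R(E_r^{(c)})R(F_r^{(b)})R(E_r^{(a)})$ becomes, after pushing all powers of $K_r$ to the right where they act as scalars on a weight vector, a scalar multiple of $E_r^{(c)}F_r^{(b)}E_r^{(a)}$. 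Since the constraint $-a+b-c = (\wt(\xi),\alpha_r^\vee)$ and the coefficient $q_r^{b-ac}$ are symmetric under $a\leftrightarrow c$, I would re-index the sum by swapping $a$ and $c$ to recover the monomials $E_r^{(a)}F_r^{(b)}E_r^{(c)}$ of $T_r$; collecting the accumulated powers of $q_r$ should leave exactly the sign $(-1)^{(\wt(\xi),\alpha_r^\vee)}$, i.e. the eigenvalue of $\mcS_r$.

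For (ii) there are two routes. The conceptual one observes that $\phi := {*}\circ R = R\circ {*}$ is an antilinear algebra automorphism of $\mcU_q(\mfu)$ with $\phi(E_r) = -q_rF_r$, $\phi(F_r) = -q_r^{-1}E_r$, $\phi(K_\omega) = K_\omega^{-1}$, and that $T_r^* = R(T_r)$ is equivalent to $\phi(T_r) = T_r$; since $\phi$ turns the $E$-$F$-$E$ expansion of $T_r$ into an $F$-$E$-$F$ expansion, this amounts to identifying $\phi(T_r)$ with one of the standard $F$-$E$-$F$ forms of the Lusztig operator. The more robust route is a reduction to rank one: all of $T_r,T_r^*,R(T_r),\mcS_r$ lie in the weak closure of the $*$-subalgebra generated by $E_r,F_r,K_r$, a copy of $U_{q_r}(\mfsu_2)$, and hence preserve every irreducible $U_{q_r}(\mfsl_2)$-submodule ($\alpha_r$-string) of $V_\pi$. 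As $V_\pi$ decomposes into an orthogonal direct sum of such submodules, each preserved by all four operators, the whole identity and the adjoint both localize, and on each irreducible $\mfsl_2$-module the equality follows from the explicit action of $T_r$ on a weight basis together with the norms of the weight vectors. I verified this directly on the two-dimensional representation, where $T^*=T\mcS$ while $T^*T = q\,\id$, confirming that $T_r$ is not unitary and that the sign $\mcS_r$ is genuinely needed.

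The main obstacle is the bookkeeping: in (i) the careful tracking of the $q_r$-powers produced by commuting the $K_r$'s past $E_r^{(a)}F_r^{(b)}E_r^{(c)}$ and checking that they conspire into the single sign $(-1)^{(\wt(\xi),\alpha_r^\vee)}$, and in (ii) either the precise sign- and degree-matching of $\phi(T_r)$ with the alternative form of the Lusztig operator (conceptual route) or the explicit norm computation on each $\mfsl_2$-string (robust route). The reduction step in the robust route — namely that $T_r$ preserves every $\mfsl_2$-string, so the adjoint may legitimately be computed string by string — is the point that makes the inner-product argument work and deserves to be stated explicitly.
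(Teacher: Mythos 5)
Your proposal is correct, and it differs from the paper's proof in a genuinely useful way, so let me compare the two. The paper also reduces to rank one and verifies $T^*=T\mcS$ on each irreducible $V_n$ via the explicit formula $Te_k=(-1)^{n-k}q^{n/2}q^{k(n-k)}e_{n-k}$ — this is exactly your ``robust route,'' so that half of your argument coincides with the paper's (and, like the paper, you do not rederive the formula for general $n$, only checking $n=1$; the level of detail is the same). The genuine difference is the treatment of the unitary antipode: the paper obtains $R(T)=T^*$ by representing the rank-one algebra on the conjugate Hilbert space $\overline{V}$ via $X\bar v=\overline{R(X)^*v}$ and then invoking Jantzen's formula 8.6(7) relating $T$ to ${}^{\omega}T$, whereas you compute $R(T_r)=T_r\mcS_r$ directly in the algebra. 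I checked that your computation closes: the divided-power formulas $R(E_r^{(n)})=(-1)^nq_r^{n^2}K_r^{-n}E_r^{(n)}$ and $R(F_r^{(n)})=(-1)^nq_r^{-n^2}F_r^{(n)}K_r^n$ are correct, the factor $q_r^{a^2-b^2+c^2}$ they produce cancels exactly against the factor $q_r^{b^2-a^2-c^2}$ obtained by moving all $K_r$'s across $E_r^{(c)}F_r^{(b)}E_r^{(a)}$ on a weight-$m$ vector, and the residual coefficient is $(-1)^{a+c}q_r^{b-ac}=(-1)^m(-1)^bq_r^{b-ac}$, so the $a\leftrightarrow c$ re-indexing gives $R(T_r)=\mcS_rT_r$ as you predicted. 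What each approach buys: yours is self-contained (no conjugate-representation trick, no appeal to Jantzen 8.6(7)) and stays at the level of the algebra; the paper's is shorter because it leans on a known identity. One step you should make explicit in a final write-up: since the defining sum for $T_r$ is weight-dependent, applying the anti-homomorphism $R$ ``term by term'' is only meaningful after writing $T_r=\sum_{a,b,c}(-1)^bq_r^{b-ac}E_r^{(a)}F_r^{(b)}E_r^{(c)}P_{b-a-c}$, where $P_m$ is the spectral projection of $K_r$ onto $\{\xi:(\wt(\xi),\alpha_r^{\vee})=m\}$, and then using $R(P_m)=P_{-m}$; one checks that the triples contributing to $R(T_r)$ on a weight-$m$ vector satisfy the same constraint $-a+b-c=m$ as for $T_r$ itself, which is what makes the re-indexing legitimate. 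Finally, since your ``conceptual route'' for $T_r^*=R(T_r)$ (matching $\phi(T_r)$ against an $F$-$E$-$F$ form of the Lusztig operator) is left open, your proof in the end still rests on the explicit rank-one computation for the adjoint — just as the paper's does.
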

\begin{proof}
As in \cite{Jan96}*{Section 8}, it is sufficient to consider the case $U_q(\mfsu(2))$ for $\mfsu(2)$ with its positive root $\alpha$ such that $(\alpha,\alpha) =2$ and in particular $\alpha^{\vee} = \alpha$. We then write the generators of $U_q(\mfsu(2))$ as $\{K,E,F\}$ with Lusztig braid operator $T$. It is also sufficient to verify \eqref{EqForm} in the Hilbert space $V=V_n$ with orthonormal basis $e_0,\ldots,e_n$ and with irreducible representation 
\[
Ke_k = q^{n-2k} e_k,\qquad Ee_k = q^{(n-2k+2)/2}[n+1-k]_q^{1/2}[k]_q^{1/2}e_{k-1},\qquad Fe_k = q^{-(n-2k)/2}[n-k]_q^{1/2}[k+1]_q^{1/2} e_{k+1},
\]
with the right hand expressions $=0$ if ill-defined.
Then
\begin{equation}\label{EqActT}
Te_k = (-1)^{n-k} q^{n/2}q^{k(n-k)}e_{n-k},
\end{equation}
and it follows that 
\[
T^*e_k = (-1)^n Te_k = (-1)^{n-2k}Te_k  = (-1)^{(\wt(v_k),\alpha)}Te_k .
\]
Let us now show that $R(T) = T^*$, so $R(T)^* = T$. Note that the conjugate Hilbert space $\overline{V}$ (with its usual Hilbert space structure) can be endowed with the $*$-representation $X \bar{v} = \overline{R(X)^*v}$. Since $\bar{v}$ has negative the weight of $v$, we then have, following again the notation of \cite{Jan96}*{Section 8},
\[
R(T)^*v = \overline{T \bar{v}} = \underset{-a+b-c =-m}{\sum_{a,b,c\geq 0}} (-1)^b q^{b-ac} (-q)^{a-b+c} F^{(a)}E^{(b)}F^{(c)} v = (-q)^m \;{}^{\omega}T v,
\]
where $m =  (\wt(v),\alpha)$. From \cite{Jan96}*{8.6.(7)} we now see that $R(T)^* = T$. 
\end{proof}

Recall that $W$ denotes the Weyl group of $\mfg$. For $r\in I$ we write $s_r$ for the simple root reflections generating $W$. Let $w_0$ be the longest element in $W$, and choose a specific reduced expression for $w_0$,
\begin{equation}\label{EqRedExpr}
w_0 = s_{r_1}\ldots s_{r_N}.
\end{equation}
We then write
\[
T_{w_0} = T_{r_1}\ldots T_{r_N} \in \mcU_q(\mfu),
\]
which is independent of the choice of reduced expression for $w_0$. Further write 
\begin{equation}\label{EqDefS}
\mcS_0 = e^{2\pi i \rho^{\vee}}\in T,
\end{equation}
where
\[
\rho^{\vee} = \frac{1}{2}\sum_{\alpha \in \Delta^{+}} \alpha^{\vee}.
\]

\begin{Prop}\label{PropStarT}
We have
\[
T_{w_0}^{*} = R(T_{w_0}) =  T_{w_0} \mcS_0 = \mcS_0 T_{w_0}.
\]
\end{Prop}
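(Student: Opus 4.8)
The plan is to reduce everything to the single-reflection Lemma \ref{LemTstarR} together with a bookkeeping of torus factors. First I would dispose of the identity $T_{w_0}^* = R(T_{w_0})$: since both $*$ and the unitary antipode $R$ are anti-multiplicative on $\mcU_q(\mfu)$, writing $T_{w_0} = T_{r_1}\cdots T_{r_N}$ gives $T_{w_0}^* = T_{r_N}^*\cdots T_{r_1}^*$ and $R(T_{w_0}) = R(T_{r_N})\cdots R(T_{r_1})$, and these agree factor by factor because $T_r^* = R(T_r)$ by Lemma \ref{LemTstarR}. So it remains to identify $T_{w_0}^*$ with $\mcS_0 T_{w_0}$ and to check that $\mcS_0$ commutes with $T_{w_0}$.

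Next I would compute $T_{w_0}^*$ by substituting $T_r^* = \mcS_r T_r$ and sweeping all torus factors to the left. Using the conjugation rule $T_r t = s_r(t)\,T_r$ for $t$ in the maximal torus $T \subseteq \mcU_q(\mfu)$ (which follows from $T_r K_{\omega} T_r^{-1} = K_{s_r\omega}$), one moves each $\mcS_{r_k}$ past the braid operators to its left and obtains
\[
T_{w_0}^* = \Big(\prod_{k=1}^{N} s_{r_N}s_{r_{N-1}}\cdots s_{r_{k+1}}(\mcS_{r_k})\Big)\, T_{r_N}T_{r_{N-1}}\cdots T_{r_1}.
\]
Because $w_0$ is an involution, the reversed word $s_{r_N}\cdots s_{r_1}$ is again a reduced expression for $w_0$, so by independence of the reduced expression one has $T_{r_N}\cdots T_{r_1} = T_{w_0}$. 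Writing $\mcS_{r_k} = e^{\pi i\alpha_{r_k}^{\vee}}$ and using that $W$ acts on $T$ through its action on coweights, the torus prefactor equals $e^{\pi i\sum_k \gamma_k^{\vee}}$, where $\gamma_k^{\vee} = s_{r_N}\cdots s_{r_{k+1}}(\alpha_{r_k}^{\vee})$ is the coroot of the positive root $\gamma_k = s_{r_N}\cdots s_{r_{k+1}}(\alpha_{r_k})$.

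The crux is then the standard combinatorial fact that, for the reduced word $s_{r_N}\cdots s_{r_1}$, the roots $\gamma_k$ run exactly once over all of $\Delta^+$; hence $\sum_k \gamma_k^{\vee} = \sum_{\alpha\in\Delta^+}\alpha^{\vee} = 2\rho^{\vee}$ and the prefactor is $e^{2\pi i\rho^{\vee}} = \mcS_0$, giving $T_{w_0}^* = \mcS_0 T_{w_0}$. Finally, to pass $\mcS_0$ through $T_{w_0}$ I would note $T_{w_0}\mcS_0 T_{w_0}^{-1} = e^{2\pi i w_0(\rho^{\vee})} = e^{-2\pi i\rho^{\vee}} = \mcS_0^{-1}$, using $w_0(\Delta^+) = \Delta^-$, together with $\mcS_0^2 = 1$ — which holds because $2\rho^{\vee} = \sum_{\alpha\in\Delta^+}\alpha^{\vee}$ lies in the coroot lattice and hence pairs integrally with every element of $P$. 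Thus $\mcS_0^{-1} = \mcS_0$ commutes with $T_{w_0}$, yielding $T_{w_0}^* = \mcS_0 T_{w_0} = T_{w_0}\mcS_0$. The main obstacle is the middle step: carefully tracking the Weyl-twisted torus elements and recognizing their accumulated product as precisely $e^{\pi i\cdot 2\rho^{\vee}}$ via the reduced-word enumeration of positive roots.
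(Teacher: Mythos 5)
Your proof is correct and follows essentially the same route as the paper: both rest on Lemma \ref{LemTstarR}, the conjugation rule $T_r e^{\pi i \alpha^{\vee}} T_r^{-1} = e^{\pi i s_r(\alpha^{\vee})}$, the enumeration of $\Delta^+$ from a reduced word, and the independence of $T_{w_0}$ of the reduced expression. Your explicit verification of the commutation $\mcS_0 T_{w_0} = T_{w_0}\mcS_0$ via $w_0(\rho^{\vee}) = -\rho^{\vee}$ and $\mcS_0^2 = 1$ is a harmless elaboration of what the paper obtains directly from the two-sided form of its computation.
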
 
\begin{proof}
An easy consideration of weight spaces shows that 
\[
T_{r} e^{\pi i \alpha^{\vee}} T_r^{-1} = e^{\pi i s_r(\alpha^{\vee})},\qquad \alpha \in \Delta. 
\]
The elements in $\Delta^+$ can be enumerated as
\[
\beta_1 = \alpha_{r_1}, \quad \beta_2 = s_{r_1}(\alpha_{r_2}), \quad \ldots, \quad \beta_N = s_{r_1}\ldots s_{r_{N-1}}(\alpha_{r_N}). 
\]
Then it follows from \eqref{EqForm} and the above observation, together with the fact that $T_{w_0}$ is independent of the choice of reduced decomposition of $w_0$, that
\[
T_{w_0}^* = T_{r_N}^*\ldots T_{r_1}^* =T_{w_0} e^{2\pi i \rho^{\vee}}  = e^{2\pi i \rho^{\vee}}  T_{w_0}. 
\]
The identity for $R$ follows immediately from the fact that $R(-)^*$ leaves each factor of $T_{w_0}$ invariant, and hence the whole of $T_{w_0}$. 
\end{proof}

Denote now by $\Ad(T_r)$ the Lusztig algebra automorphism of $U_q(\mfg)$, uniquely determined by 
\[
\Ad(T_r)(X) = T_rXT_r^{-1},\qquad X \in U_q(\mfg).
\]
From \cite{Kol14}*{Lemma 3.4}, we obtain 
\begin{equation}\label{EqIdAdT_0}
\Ad(T_{w_0})(E_r)^* = -E_{\tau_0(r)},\qquad \Ad(T_{w_0})(F_r)^* = -F_{\tau_0(r)},\qquad \Ad(T_{w_0})(K_{\omega})^* = K_{-\tau_0(\omega)}, 
\end{equation}
where $\tau_0$ is the automorphism induced on the Dynkin diagram by the action of $-w_0$. Together with the definition of $T_{r}$ and \eqref{EqForm}, we obtain the following lemma, which also follows from the fact that the $T_r$ satisfy the braid relations.

\begin{Lem}\label{LemCommTw0Tr}
The following identity holds in $\mcU_q(\mfu)$:
\[
T_{w_0}T_{r} T_{w_0}^{-1}  = T_{\tau_0(r)}.
\]
\end{Lem}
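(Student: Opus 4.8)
The plan is to prove the identity entirely at the level of the braid group of $W$, using only that the Lusztig operators satisfy the braid relations (so that $T_w$ is well-defined for every $w\in W$, as already invoked in the definition of $T_{w_0}$) together with elementary combinatorics of the longest element; I would not touch the explicit series defining $T_r$ at all.

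First I would record the multiplicativity rule for well-defined braid operators: whenever $\ell(ws_r) = \ell(w)+1$ one has $T_w T_r = T_{ws_r}$, and whenever $\ell(s_r w) = \ell(w)+1$ one has $T_r T_w = T_{s_r w}$. Both are immediate from the independence of $T_w$ on the choice of reduced word, since a reduced expression for $w$ then extends to one for $ws_r$ (resp. $s_r w$). Next I would recall the defining property of $\tau_0$: since $\tau_0$ is induced by $-w_0$, we have $w_0(\alpha_r) = -\alpha_{\tau_0(r)}$, and hence $w_0 s_r w_0^{-1} = s_{w_0(\alpha_r)} = s_{\tau_0(r)}$, equivalently $w_0 s_r = s_{\tau_0(r)} w_0$ in $W$.

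The argument then runs as follows. Put $w := w_0 s_r = s_{\tau_0(r)} w_0$; since $w_0$ is the longest element, $\ell(w) = N-1$ where $N = \ell(w_0)$. On one hand $ws_r = w_0$ with $\ell(ws_r) = \ell(w)+1$, so $T_w T_r = T_{w_0}$. On the other hand $s_{\tau_0(r)} w = w_0$ with $\ell(s_{\tau_0(r)}w) = \ell(w)+1$, so $T_{\tau_0(r)} T_w = T_{w_0}$. Eliminating $T_w = T_{w_0} T_r^{-1}$ from the first relation and substituting into the second gives $T_{\tau_0(r)} T_{w_0} T_r^{-1} = T_{w_0}$, which rearranges to exactly $T_{w_0} T_r T_{w_0}^{-1} = T_{\tau_0(r)}$.

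I expect no serious obstacle on this route: the whole content is the length bookkeeping and the standard identity $w_0 s_r w_0^{-1} = s_{\tau_0(r)}$, the one delicate point being to keep the direction of the length inequalities straight, so that one is in the regime $T_w T_r = T_{ws_r}$ (length increasing) rather than the variant with inverses. As an alternative I could instead start from \eqref{EqIdAdT_0}: applying $*$ and using $E_s^* = F_sK_s$, $F_s^* = K_s^{-1}E_s$ yields the explicit formulas $\Ad(T_{w_0})(E_r) = -F_{\tau_0(r)}K_{\tau_0(r)}$, $\Ad(T_{w_0})(F_r) = -K_{\tau_0(r)}^{-1}E_{\tau_0(r)}$ and $\Ad(T_{w_0})(K_\omega) = K_{w_0(\omega)}$; feeding these into the series defining $T_r$ and reordering the resulting $F^{(a)}E^{(b)}F^{(c)}$ terms via the $\mfsl_2$-identity already used in the proof of Lemma \ref{LemTstarR} reproduces the series for $T_{\tau_0(r)}$. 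This second route is the harder one, precisely because of that $E^{(a)}F^{(b)}E^{(c)}$ reordering, which is why I would present the braid-group computation as the main proof.
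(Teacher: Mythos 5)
Your main argument is correct. The length bookkeeping checks out: since $w_0(\alpha_r)<0$, the element $w=w_0s_r=s_{\tau_0(r)}w_0$ has $\ell(w)=\ell(w_0)-1$, both products $ws_r=w_0$ and $s_{\tau_0(r)}w=w_0$ are length-increasing, so the concatenation rule gives $T_wT_r=T_{w_0}=T_{\tau_0(r)}T_w$, and eliminating $T_w$ yields $T_{w_0}T_rT_{w_0}^{-1}=T_{\tau_0(r)}$. The only input beyond what the paper states explicitly is that $T_w$ is well defined for \emph{arbitrary} $w\in W$ (braid relations for the $T_r$ together with Matsumoto's theorem), which is standard and which you flag.

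As for the comparison: the paper's proof is a single sentence, and its \emph{primary} route is exactly the one you relegate to a harder alternative --- it deduces the lemma from \eqref{EqIdAdT_0} (quoted from Kolb), the defining series of $T_r$, and \eqref{EqForm}, and only remarks in a subordinate clause that the identity ``also follows from the fact that the $T_r$ satisfy the braid relations''. So your main proof is the paper's parenthetical one, and vice versa. Note also that the computational route is less painful than you anticipate: by \eqref{EqIdAdT_0} the map $X\mapsto \Ad(T_{w_0})(X)^*$ is an antilinear antihomomorphism sending $E_r\mapsto -E_{\tau_0(r)}$, $F_r\mapsto -F_{\tau_0(r)}$, $K_\omega\mapsto K_{-\tau_0(\omega)}$, so applied termwise to the series defining $T_r$ it reverses each monomial $E_r^{(a)}F_r^{(b)}E_r^{(c)}$ into $(-1)^{a+b+c}E_{\tau_0(r)}^{(c)}F_{\tau_0(r)}^{(b)}E_{\tau_0(r)}^{(a)}$; no $\mfsl_2$ reordering is required, since the weight constraint $-a+b-c=(\wt(\xi),\alpha_r^\vee)$ and the coefficient $(-1)^bq_r^{b-ac}$ are symmetric under $a\leftrightarrow c$, while the residual sign $(-1)^{a+b+c}=(-1)^{(\wt(\xi),\alpha_r^\vee)}$ is precisely an $\mcS$-type operator that \eqref{EqForm} absorbs. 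What your route buys is complete independence from the $*$-structure and from \eqref{EqIdAdT_0} --- it is pure Coxeter combinatorics --- at the cost of invoking well-definedness of all the $T_w$; the paper's route needs $T_w$ only for $w=w_0$ and stays entirely within identities it has already established.
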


\section{Twist-braided Hopf algebras and associated characters}\label{SecTwistBraid}

We resume the notation from Sections \ref{SubsecQUE} and \ref{SubsecQFA}.

\subsection{Endomorphisms of $U_q(\mfb)$}

Let $\tau$ be an involutive Dynkin diagram automorphism. We also write $\tau$ for the corresponding linear automorphism of the weight lattice $P$ determined by 
\[
\tau(\varpi_r) = \varpi_{\tau(r)}.
\]
We can extend $\tau$ to a Hopf algebra isomorphism of $U_q(\mfg)$, compatible with the compact $*$-structure, such that
\[
\tau(E_r) = E_{\tau(r)},\qquad \tau(F_r) = F_{\tau(r)},\qquad \tau(K_{\omega}) = K_{\tau(\omega)}. 
\]
By duality, we obtain a Hopf algebra automorphism $\tau$ of $\mcO_q(G)$.

Let $\epsilon: I \rightarrow \R$ be a real-valued $\tau$-invariant function on $I$. We can extend $\epsilon$ to a semigroup homomorphism 
\[
\epsilon: (Q^+,+) \rightarrow (\R,\cdot).
\]

From the couple $(\tau,\epsilon)$ we construct a Hopf algebra endomorphism 
\[
\nu: U_q(\mfb) \rightarrow U_q(\mfb),\qquad K_{\omega}\mapsto K_{\tau(\omega)},\quad E_r \mapsto \epsilon_r E_{\tau(r)}.
\]
Denote also by $\nu$ the corresponding Hopf algebra endomorphism of $U_q(\mfb^-)$ determined by 
\[
\nu(X) = \nu(X^*)^*,\qquad X\in U_q(\mfb^-),
\] 
where we restrict $*$ from $U_q(\mfu)$ to a conjugate-linear algebra anti-homomorphism from $U_q(\mfb^{\pm})$ to $U_q(\mfb^{\mp})$. Concretely,
\[
\nu: U_q(\mfb^-) \rightarrow U_q(\mfb^-),\qquad K_{\omega}\mapsto K_{\tau(\omega)},\quad F_r \mapsto \epsilon_r F_{\tau(r)}.
\]
Note that because of the involutivity of $\tau$ and $\tau$-invariance of $\epsilon$, we have 
\begin{equation}\label{EqInvNu}
(\nu\otimes \id)\msR = (\id\otimes \nu)\msR.
\end{equation}
We will write $\End_*(U_q(\mfb))$ for the class of all homomorphisms $\nu$ of $U_q(\mfb)$ of the above form. Then $\nu$ completely determines $\tau$ and $\epsilon$, and we write 
\[
\nu = \nu_{\tau,\epsilon},\qquad \tau = \tau_{\nu},\qquad \epsilon = \epsilon_{\nu}.
\] 
When $\tau = \id$, we will also write $\nu = \nu_{\epsilon}$. On the other hand, we have $\nu_{\tau,\epsilon} = \tau$ in case $\epsilon_r = 1$ for all $r$. 

\begin{Def}
We say that $\nu$ is of \emph{symmetric type} if $\epsilon_r^2 = 1$ for all $r$. We say $\nu$ is of \emph{flag type} if $\epsilon_r^2 = \epsilon_r$ for all $r$ and $\tau =\id$. 
\end{Def}

\begin{Rem}
It is not hard to show that a general endomorphism of $U_q(\mfb)$ satisfying \eqref{EqInvNu} must be of the form 
\[
E_r \mapsto \epsilon_r E_{\tau(r)},\qquad K_{\omega} \mapsto K_{\sigma(\omega)}
\]
where $\tau$ is an involution of $I$, $\epsilon_r \in \C$ satisfies $\epsilon_{\tau(r)} = \overline{\epsilon_r}$, and $\sigma$ is an endomorphism of $P$ with $\sigma^{\top} = \sigma$ and $\sigma(\alpha_r) = \alpha_{\tau(r)}$ for all $r\in I$ with $\epsilon_r\neq 0$. 
\end{Rem}

\begin{Rem}
Note that for $\nu \in \End_*(U_q(\mfb))$, the endomorphisms $\nu$ of $U_q(\mfb^{\pm})$ glue together to an algebra $*$-endomorphism of $U_q(\mfu)$ if and only if $\nu$ is of symmetric type, in which case it defines a Hopf $*$-algebra automorphism $\nu$ of $U_q(\mfu)$. By duality, we obtain in this case a Hopf $*$-algebra automorphism $\nu$ of $\mcO_q(U)$. 
\end{Rem}

\begin{Rem}
By rescaling, one can always reduce to the case $\epsilon_r \in \{-1,0,1\}$, but the extra flexibility of an arbitrary $\epsilon_r$ can be convenient with respect to the \emph{contraction method} \cite{IW53,DeC13}.
\end{Rem}

For $\nu \in \End_*(U_q(\mfb))$ we write
\[
\msR_{\nu} = (\nu\otimes \id)\msR, \qquad \mbr_{\nu}(f,g) = (\msR_{\nu},f\otimes g), \qquad f,g\in\mcO_q(G).
\]
From \eqref{EqInvNu}, $(S\otimes S)\msR = \msR$ and $\msR^* = \msR_{21}$ we find
\begin{equation}\label{EqrnuStar}
\mbr_{\nu}(f,g^*) = \overline{\mbr_{\nu}(g,f^*)}. 
\end{equation}
Consider the following bilinear functional
\[
\omega_{\nu}: \mcO_q^{\com}(G_{\R}) \times \mcO_q^{\com}(G_{\R}) \rightarrow \C,\quad \omega_{\nu}(fg^{\dag},hk^{\dag}) = \varepsilon(f) \mbr_{\nu}(h,g^*)\overline{\varepsilon(k)}.
\]
 Then $\omega_{\nu}$ is a convolution invertible real $2$-cocycle functional with 
\begin{equation}\label{EqomInv}
\omega_{\nu}^{-1}(fg^{\dag},hk^{\dag}) = \varepsilon(f) \mbr_{\nu}(h,S(g)^*)\overline{\varepsilon(k)},\qquad f,g,h,k\in \mcO_q(G).
\end{equation}
By \eqref{EqrnuStar} we also have
\begin{equation}\label{EqomStar}
\omega_{\nu}(f^{\dag},g^{\dag}) = \overline{\omega_{\nu}(g,f)},\qquad f,g\in \mcO_q^{\com}(G_{\R}).
\end{equation}
The following definition extends the usual `complexification' of a Hopf $*$-algebra \cite{Maj93b}, see also \cite[Section 7.3]{Maj95}.
\begin{Def}
For  $\nu,\mu\in \End_*(U_q(\mfb))$ the \emph{$(\nu,\mu)$-Drinfeld double} $\mcO_q^{\nu,\mu}(G_{\R})$ is defined as the vector space $\mcO_q^{\com}(G_{\R})$ endowed with the new multiplication
\[
m_{\nu,\mu}(f,g) = \omega_{\nu}(f_{(1)},g_{(1)}) f_{(2)}g_{(2)} \omega_{\mu}^{-1}(f_{(3)},g_{(3)}),\qquad f,g\in \mcO_q^{\com}(G_{\R}),
\]
and the original $*$-structure. 
\end{Def}

As the $\omega_{\nu}$ are $2$-cocycle functionals for $\mcO_q^{\com}(G_{\R})$ satisfying \eqref{EqomStar}, it follows that the $\mcO_q^{\nu,\mu}(G_{\R})$ are associative $*$-algebras, where by the particular form of $\omega_{\nu}$ one has
\[
m_{\nu,\mu}(f,g^{\dag}) = fg^{\dag},\qquad f,g\in \mcO_q(G).
\]
The $\mcO_q^{\nu,\mu}(G_{\R})$ form a connected cogroupoid with compatible $*$-structure \cite{Bic14}*{Definition 2.4 and Definition 3.14} by 
\[
\Delta_{\nu\mu}^{\kappa}: \mcO_q^{\nu,\mu}(G_{\R}) \rightarrow \mcO_q^{\nu,\kappa}(G_{\R}) \otimes \mcO_q^{\kappa,\mu}(G_{\R}),\quad fg^{\dag}\mapsto f_{(1)}g_{(1)}^{\dag}\otimes f_{(2)}g_{(2)}^{\dag},\qquad f,g\in \mcO_q(G),
\]
where we use Sweedler notation for $(\mcO_q(G),\Delta)$. In particular,
\[
(\mcO_q^{\nu}(G_{\R}),\Delta_{\nu}) = (\mcO_q^{\nu,\nu}(G_{\R}),\Delta_{\nu,\nu}^{\nu})
\] 
are Hopf $*$-algebras. We also write
\[
\mcO_q(G_{\R}) = \mcO_q^{\id}(G_{\R}).
\]
In the following, we will sometimes in general simply write $\Delta = \Delta_{\nu\mu}^{\kappa}$ as this will not lead to confusion.

\begin{Rem}
The above interchange relations lead to a family of Poisson structures $\{-,-\}_{\nu,\mu}$ on $G$ considered as a real manifold. They can be viewed as manifolds with an \emph{affine Poisson structure} \cites{Wei90,Lu90}.
\end{Rem}

For $\pi,\pi'$ representations of $U_q(\mfg)$, let us write
\[
\msR^{\pi,\pi'} = (\pi\otimes \pi')\msR.
\]

\begin{Lem}\label{LemFundInt}
Let $\nu,\mu \in \End_*(U_q(\mfb))$. In $\mcO_q^{\nu,\mu}(G_{\R})$ we have the following defining commutation relations between the holomorphic and the antiholomorphic part: with $Y = Y_{\pi}$ and $Y' = Y_{\pi'}$,  
\begin{equation}\label{EqDefHolAntihol}
Y_{13}' \msR_{\mu,12}^{\pi',\pi}Y_{23}^{\dag} = Y_{23}^{\dag} \msR_{\nu,12}^{\pi',\pi}Y_{13}'
\end{equation}
as identities in $\End(V_{\pi'})\otimes \End(V_{\pi})\otimes \mcO_q^{\nu,\mu}(G_{\R})$.
\end{Lem}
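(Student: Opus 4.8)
The plan is to verify \eqref{EqDefHolAntihol} by a direct computation, reducing it to the defining cocycle formula for $m_{\nu,\mu}$ together with the skew-pairing relations and the quasitriangularity of $\msR$. First I would dispose of one side. On the left of \eqref{EqDefHolAntihol} the two algebra factors occur in the order ``holomorphic then antiholomorphic'', so by the identity $m_{\nu,\mu}(f,g^{\dag}) = fg^{\dag}$ recorded after the definition of $\mcO_q^{\nu,\mu}(G_{\R})$ the leg-$3$ multiplication there is simply the undeformed product of $\mcO_q^{\com}(G_{\R})$; thus $Y'_{13}\msR_{\mu,12}Y_{23}^{\dag}$ is unambiguous. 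All the content sits on the right, where the reversed products $m_{\nu,\mu}((Y^{cj})^{\dag},Y'^{ai})$ appear. Expanding $m_{\nu,\mu}(g^{\dag},f)$ with the cocycle formula, the explicit shapes of $\omega_{\nu}$ and $\omega_{\mu}^{-1}$ in \eqref{EqomInv}, and the fact that $\dag$ is a coalgebra map (so $\mcO_q^{\com}(G_{\R})$ carries the tensor coproduct), I would obtain
\[
m_{\nu,\mu}(g^{\dag},f) = \mbr_{\nu}(f_{(1)},g_{(1)}^{*})\, f_{(2)}g_{(2)}^{\dag}\, \mbr_{\mu}(f_{(3)},S(g_{(3)})^{*}),
\]
the middle factor $f_{(2)}g_{(2)}^{\dag}$ being again the plain product.

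Next I would turn the two scalar skew-pairings into matrix data of $\msR$ on $\pi',\pi$. Substituting $f=Y'^{ai}$, $g=Y^{cj}$ and using that each $\pi$ is a $*$-representation—so that $Y_{\pi}$ is a unitary corepresentation of $\mcO_q(U)$, the global counterpart of \eqref{EqStarT}—I would rewrite $g^{*}$ and $S(g)^{*}$ through the antipode, and then evaluate $\mbr_{\nu}$ and $\mbr_{\mu}$ by \eqref{EqPropR1}--\eqref{EqPropR4} and \eqref{EqInvNu}. Here $\msR^{*}=\msR_{21}$ is used to absorb the $*$ on the second tensor leg, and $(S\otimes\id)\msR=\msR^{-1}$ together with $(S\otimes S)\msR=\msR$ to absorb the antipode; the outcome expresses $\mbr_{\nu}(f_{(1)},g_{(1)}^{*})$ and $\mbr_{\mu}(f_{(3)},S(g_{(3)})^{*})$ as entries of $\msR_{\nu}^{\pi',\pi}$, $\msR_{\mu}^{\pi',\pi}$ and their inverses, with the index placement dictated by these conjugations.

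Finally I would assemble the factors. After substitution the right-hand side of \eqref{EqDefHolAntihol} becomes a product of $R$-matrix blocks around the plain product $Y'_{13}Y_{23}^{\dag}$, and I would collapse it to the left-hand side using the coquasitriangularity (FRT) relation for the corepresentation matrices—itself a consequence of $\msR\Delta(-)\msR^{-1}=\Delta^{\op}$—in the forms appropriate to the holomorphic matrix $Y'$ and the antiholomorphic matrix $Y^{\dag}$, together with the hexagon identities $(\Delta\otimes\id)\msR=\msR_{13}\msR_{23}$, $(\id\otimes\Delta)\msR=\msR_{13}\msR_{12}$ and the invariance \eqref{EqInvNu}. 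An equivalent and perhaps cleaner route is to check \eqref{EqDefHolAntihol} on the algebra generators $L_{\omega}^{\pm}$, $X_{r}^{\pm}$ and their $\dag$-images, where the pairings are the explicit values listed after \eqref{EqNameImIota}, and then to propagate it to arbitrary $\pi,\pi'$ via the cogroupoid coproduct $\Delta_{\nu\mu}^{\kappa}$, which is multiplicative in the representations.

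The step I expect to be the main obstacle is the bookkeeping in the middle paragraph: correctly tracking how $S$, the $*$-operation and the twists $\nu,\mu$ act on the two legs of $\msR$. The subtlety is that $*$ and $S$ do not commute and that it is $(S\otimes\id)\msR$, not $(\id\otimes S)\msR$, that equals $\msR^{-1}$, so a naive substitution fails; one must route the computation through $\msR^{*}=\msR_{21}$ and the unitarity of $Y_{\pi}$ to land on the correctly transposed $R$-matrix entries. It is precisely the symmetry \eqref{EqInvNu} that guarantees the two sides carry $\msR_{\nu}$ and $\msR_{\mu}$ in the clean positions displayed in \eqref{EqDefHolAntihol}.
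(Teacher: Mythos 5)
Your setup is correct and coincides with the paper's starting point: the products on the left of \eqref{EqDefHolAntihol} are undeformed, all content sits in the reversed products, and your displayed formula
\[
m_{\nu,\mu}(g^{\dag},f) = \mbr_{\nu}(f_{(1)},g_{(1)}^{*})\, f_{(2)}g_{(2)}^{\dag}\, \mbr_{\mu}(f_{(3)},S(g_{(3)})^{*})
\]
is exactly right. The gap is in the assembly step, and it is precisely the obstacle you flag without resolving. If you expand the reversed products on entries of $Y^{\dag}$ itself, the last cocycle factor is harmless ($S(Y(\xi,\eta))^{*}=Y(\eta,\xi)$ turns it into honest entries of $\msR_{\mu}$), but the first factor involves $g_{(1)}^{*}=S(Y(\cdot,\cdot))$ with the antipode landing in the \emph{second} argument of $\mbr_{\nu}$, so it evaluates to entries of $(\id\otimes S)\msR_{\nu}=(\id\otimes S^{2})\msR_{\nu}^{-1}$ --- neither $\msR_{\nu}$ nor $\msR_{\nu}^{-1}$. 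None of the tools you name closes this: the FRT relations and hexagon identities are not what is at stake (the cross-relation needs no commutation inside the holomorphic part), and $\msR^{*}=\msR_{21}$ merely transfers the antipode onto the other matrix coefficient, reproducing the same object with the legs exchanged. What actually collapses the resulting expression is the fact that $(\id\otimes S)\msR$ is the inverse of $\msR$ with respect to $\mcU_q(\mfg)\hat{\otimes}\mcU_q(\mfg)^{\opp}$ (a fact the paper invokes only later, in the proof of the inverse formula \eqref{EqAlternateK}), interacting with the transpose implicit in $Y^{\dag}$; without importing this identity your computation stalls at a partially transposed $R$-matrix.

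The paper avoids the issue entirely by one move absent from your proposal: it first rewrites \eqref{EqDefHolAntihol} in the equivalent form
\[
(Y_{23}^{\dag})^{-1}Y_{13}' = \msR_{\nu,12}^{\pi',\pi}\, Y_{13}'\, (Y_{23}^{\dag})^{-1}\, (\msR_{\mu,12}^{\pi',\pi})^{-1},
\]
and expands the deformed products on entries of $(Y^{\dag})^{-1}=((\id\otimes S)Y)^{\dag}$, i.e.\ on elements $S(Y(\xi,\eta))^{\dag}$. Then \emph{both} cocycle factors evaluate cleanly: $\mbr_{\nu}\bigl(-,S(Y(\xi,\eta))^{*}\bigr)=\mbr_{\nu}\bigl(-,Y(\eta,\xi)\bigr)$ gives entries of $\msR_{\nu}$, and $\mbr_{\mu}\bigl(-,S(S(Y(\xi,\eta)))^{*}\bigr)=\mbr_{\mu}\bigl(-,S^{-1}(Y(\eta,\xi))\bigr)$ gives entries of $\msR_{\mu}^{-1}$ via $(\id\otimes S^{-1})\msR=\msR^{-1}$; no $S^{2}$-twisted or transposed $R$-matrices ever appear. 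This inversion of $Y^{\dag}$ is the missing idea. Finally, your fallback route is also not viable as stated: the elements $L_{\omega}^{\pm}$, $X_{r}^{\pm}$ generate $\mcO_q^{\nu,\mu}(B_{\R})$, which is a \emph{quotient} of $\mcO_q^{\nu,\mu}(G_{\R})$ under $\msP$, not a subalgebra, so relations verified there do not lift to $\mcO_q^{\nu,\mu}(G_{\R})$; one would instead have to check the fundamental corepresentations upstairs and propagate by the hexagon identities.
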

\begin{proof}
We can rewrite \eqref{EqDefHolAntihol} as
\[
(Y_{23}^{\dag})^{-1}Y_{13}' = \msR_{\nu,12}^{\pi',\pi} Y_{13}' (Y_{23}^{\dag})^{-1} (\msR_{\mu,12}^{\pi',\pi})^{-1}.
\]
Then this relation follows straightforwardly from the definition of the product in $\mcO_q^{\nu,\mu}(G_{\R})$, using that 
\[
(Y^{\dag})^{-1} = (Y^{-1})^{\dag} = ((\id\otimes S)Y)^{\dag},
\]
together with the fact that $(\id\otimes S^{-1})\msR = \msR^{-1}$ and $S(Y(\xi,\eta))^* = Y(\eta,\xi)$.
\end{proof}

As $\mbr_{\nu}$ factors over a skew pairing between $\mcO_q(B^+)$ and $\mcO_q(B^-)$, it follows that similarly as above we can form the $(\nu,\mu)$-Drinfeld double $\mcO_q^{\nu,\mu}(B_{\R})$. Using \eqref{EqIdentBorel} and \eqref{EqMsP+}, we then have a unique surjective $*$-homomorphism
\begin{equation}\label{EqMapP}
\msP: \mcO_q^{\nu,\mu}(G_{\R}) \rightarrow \mcO_q^{\nu,\mu}(B_{\R}),\qquad fg^{\dag} \mapsto \msP_+(f)\msP_-(g^*)
\end{equation}
extending the homomorphism $\mcO_q(G) \rightarrow \mcO_q(B)$. This map preserves also the comultiplications $\Delta_{\nu,\mu}^{\kappa}$. In particular, we have a surjective map of Hopf $*$-algebras $\mcO_q^{\nu}(G_{\R}) \rightarrow \mcO_q^{\nu}(B_{\R})$, where $\mcO_q^{\nu}(B_{\R}) = \mcO_q^{\nu,\nu}(B_{\R})$. 

Recall now the notation from \eqref{EqNameImIota}. Then we have for $\nu = (\tau_{\nu},\epsilon)$ and $\mu = (\tau_{\mu},\eta)$ that
\[
\omega_{\nu}(L_{\omega}^+,L_{\chi}^-) = q^{(\omega,\tau_{\nu}(\chi))},\quad \omega_{\nu}(X_r^+,X_s^-) = \frac{\epsilon_r \delta_{r,\tau_{\nu}(s)}}{q_r-q_r^{-1}},\quad \omega_{\nu}(L_{\omega}^+,X_s^-) = \omega_{\nu}(X_r^+,L_{\omega}^-)= 0,
\]
\[
\omega_{\mu}^{-1}(L_{\omega}^+,L_{\chi}^-) = q^{-(\omega,\tau_{\mu}(\chi))},\quad \omega_{\mu}^{-1}(X_r^+,X_s^-) = \frac{\eta_r \delta_{r,\tau_{\mu}(s)}}{q_r^{-1}-q_r},\quad \omega_{\mu}^{-1}(L_{\omega}^+,X_s^-) = \omega_{\mu}^{-1}(X_r^+,L_{\omega}^-)= 0.
\]
Moreover,
\[
\Delta(L_{\varpi}^{\pm}) = L_{\varpi}^{\pm}\otimes L_{\varpi}^{\pm},\qquad \Delta(X_r^+) = X_r^+ \otimes L_{r}^+ + 1\otimes X_r^+,\qquad \Delta(X_r^-) = X_r^- \otimes 1+ (L_r^-)^{-1}\otimes X_r^-.
\]
Hence in $\mcO_q^{\nu,\mu}(B_{\R})$  the  following defining interchange rules hold, see also \cite{DCNTY17}*{Section 3}:
\[
L_{\omega}^+ L_{\chi}^- = q^{(\omega,\tau_{\nu}(\chi)-\tau_{\mu}(\chi))}L_{\chi}^-L_{\omega}^+, 
\]
\begin{equation}\label{EqCommRelBR}
L_{\omega}^- X_r^+ = q^{(\alpha_r,\tau_{\mu}(\omega))}X_r^+L_{\omega}^-,\qquad L_{\omega}^+X_r^- =  q^{-(\alpha_r,\tau_{\nu}(\omega))} X_r^- L_{\omega}^+,
\end{equation}
\[
\lbrack X_r^+,X_s^-\rbrack = \frac{\delta_{r,\tau_{\nu}(s)} \epsilon_r L_{r}^+ - \delta_{r,\tau_{\mu}(s)} \eta_s (L_{s}^-)^{-1}}{q_r-q_r^{-1}}.
\]

\subsection{The quantized enveloping Lie $*$-algebra $U_q(\mfg_{\nu})$ and its dual $\mcO_q(G_{\nu})$}\label{SubsecNuDef}

For $\epsilon$ as above a real-valued function on $I$, let $U_q^{\epsilon}(\mfg)$ be the Hopf algebra obtained by changing in $U_q(\mfg)$ the commutation relation between $E_r,F_s$ to
\[
\lbrack E_r,F_s\rbrack = \delta_{rs} \epsilon_r \frac{K_r - K_r^{-1}}{q_r-q_r^{-1}}. 
\]
Then with $\tau$ an involutive automorphism of the Dynkin diagram preserving $\epsilon$, we have on $U_q^{\epsilon}(\mfg)$ the Hopf $*$-algebra structure
\[
K_{\omega}^\dag = K_{\tau(\omega)},\qquad E_r^{\dag} = F_{\tau(r)}K_{\tau(r)},\qquad F_r^{\dag} = K_{\tau(r)}^{-1}E_{\tau(r)}. 
\]
\begin{Def}
For $\nu = \nu_{\tau,\epsilon}$, we denote by $U_q(\mfg_{\nu})$ the Hopf $*$-algebra obtained by endowing $U_q^{\epsilon}(\mfg)$ with the $*$-structure $\dag$.
\end{Def}
 From \eqref{EqCommRelBR}, we see immediately that we  have a surjective Hopf $*$-algebra morphism 
\begin{equation}\label{EqDefiota}
\iota_{\nu}: \mcO_q^{\nu}(B_{\R}) \rightarrow U_q(\mfg_{\nu})^{\cop},\qquad X_r^+\mapsto E_{\tau(r)},\quad X_r^- \mapsto F_{r},\quad L_{\omega}^+ \mapsto K_{\tau(\omega)},\quad L_{\omega}^- \mapsto K_{\omega},
\end{equation}
with kernel generated by the central elements $L_{-\tau(\omega)}^+ L_{\omega}^- - 1$. 

\begin{Rem}
As in \cite{DeC13}*{Appendix B}, one sees that $U_q(\mfg_{\nu})$ is a quantization of the enveloping $*$-algebra of the real Lie algebra $\mfg_{\nu} \subseteq \mfg$ spanned (over the reals) by the compact Cartan algebra $\mft = \R[ih_r] \subseteq \mfh$ and the $f_{\alpha} - \epsilon_{\alpha}e_{\tau(\alpha)}$ and $i(f_{\alpha} + \epsilon_{\alpha}e_{\tau(\alpha)})$, where the $f_{\alpha}$ run through the negative root vectors.  In the flag case, we obtain the real Lie subalgebra $\mfk_S \oplus \mfn_S^-$ of the negative parabolic subalgebra $\mfp_S^-$ of $\mfg$ at $S = \{r\in I \mid \epsilon_r = 1\}$, consisting of the nilpotent part $\mfn_S^-$, generated by the $f_{r}$ with $r\notin S$, and the compact part $\mfk_S$ of the Levi factor.  In the symmetric case this gives the real semisimple Lie algebra associated to the involution $\nu$, consisting of all elements $X\in \mfg$ with $\nu(X)^* = -X$.
\end{Rem}

\begin{Prop}\label{PropExtPairing}
There is a unique pairing $(-,-)_{\epsilon}$ of Hopf algebras between $U_q^{\epsilon}(\mfg)$ and $\mcO_q(G)$ such that
\begin{equation}\label{EqRestrHol}
(K_{\omega},f)_{\epsilon} = (K_{\omega},f),\quad (E_r,f)_{\epsilon} = \epsilon_r (E_r,f),\quad (F_r,f)_{\epsilon} = (F_r,f). 
\end{equation}
Moreover, there is a unique pairing $(-,-)_{\nu}$ of Hopf $*$-algebras between $U_q(\mfg_{\nu})$ and $\mcO_q^{\nu}(G_{\R})$ extending the above pairing $(-,-)_{\epsilon}$.
\end{Prop}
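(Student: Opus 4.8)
The plan is to obtain both pairings from the canonical pairing $(-,-)$ between $U_q(\mfg)$ and $\mcO_q(G)$, the first by pullback along a Hopf algebra map and the second by extending it across the complexification $\mcO_q(G)\otimes\mcO_q(\overline{G})$ using the $\dag$-structures. For $(-,-)_\epsilon$, I claim that $E_r\mapsto\epsilon_r E_r$, $F_r\mapsto F_r$, $K_\omega\mapsto K_\omega$ defines a homomorphism of Hopf algebras $\phi\colon U_q^\epsilon(\mfg)\to U_q(\mfg)$: the modified relation $[E_r,F_s]=\delta_{rs}\epsilon_r(K_r-K_r^{-1})/(q_r-q_r^{-1})$ is exactly what makes $\phi$ respect the relation between the $E_r$ and $F_s$, the Cartan relations are scale invariant, and the quantum Serre relations are merely rescaled; since the coproduct of $U_q^\epsilon(\mfg)$ is unchanged, $\phi$ is automatically a coalgebra map intertwining the standard counit and antipode. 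Setting $(X,f)_\epsilon:=(\phi(X),f)$ then gives a Hopf pairing, and \eqref{EqRestrHol} holds by construction. Uniqueness is immediate, since $U_q^\epsilon(\mfg)$ is generated as an algebra by the $K_\omega,E_r,F_r$ and a Hopf pairing is determined by its values on algebra generators.

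For $(-,-)_\nu$, write $S$ and $\dag$ for the antipode and $*$-structure of $U_q(\mfg_\nu)$ (the antipode is still given by the standard formulas, the coproduct being unchanged), and define
\[
(X,fg^\dag)_\nu:=\sum (X_{(1)},f)_\epsilon\,\overline{(S(X_{(2)})^\dag,g)_\epsilon},\qquad f,g\in\mcO_q(G).
\]
As $\mcO_q^\nu(G_\R)$ equals $\mcO_q(G)\otimes\mcO_q(\overline{G})$ at the level of vector spaces, this is automatically a well-defined bilinear form, and setting $g=1$ recovers $(-,-)_\epsilon$ on the holomorphic part. The shape of the formula is forced: the $*$-pairing identity $(X,a^\dag)_\nu=\overline{(S(X)^\dag,a)_\nu}$, the analogue of $(X,f^*)=\overline{(S(X)^*,f)}$, fixes the pairing on the antiholomorphic generators $g^\dag$ once it is known on $\mcO_q(G)$, and the Hopf-pairing property fixes it on all $fg^\dag$; this simultaneously yields uniqueness.

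It remains to verify the Hopf $*$-pairing axioms. Multiplicativity in the $U_q(\mfg_\nu)$-variable, the counit identities, and the $\dag$-identity follow by formal manipulation from the corresponding facts for $(-,-)_\epsilon$, the anti(co)multiplicativity of $S$ and $\dag$, and the relation $\dag\circ S\circ\dag=S^{-1}$. The one substantial axiom---and the expected main obstacle---is compatibility with the twisted product of the double,
\[
(X,m_{\nu,\nu}(a,b))_\nu=\sum (X_{(1)},a)_\nu\,(X_{(2)},b)_\nu .
\]
By bilinearity and the identity $m_{\nu,\nu}(f,g^\dag)=fg^\dag$, the holomorphic--holomorphic, holomorphic--antiholomorphic, and antiholomorphic--antiholomorphic cases reduce to facts already in hand; the only case with genuine content is the antiholomorphic--holomorphic product $m_{\nu,\nu}(g^\dag,h)$, where the cocycle $\omega_\nu$ built from $\msR_\nu=(\nu\otimes\id)\msR$ (see \eqref{EqomInv}) genuinely intervenes and which is governed by the interchange relation \eqref{EqDefHolAntihol}. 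I expect this to come down to a direct generator-level computation matching the $\epsilon$-twist built into $\phi$ and the $\tau$-twist of the $\dag$-structure against the data of $\omega_\nu$, with \eqref{EqInvNu} and \eqref{EqrnuStar} as the key inputs; it is probably cleanest to run it after applying the surjection $\msP$ of \eqref{EqMapP} and the map $\iota_\nu$ of \eqref{EqDefiota}, where the relations \eqref{EqCommRelBR} among the $X_r^\pm,L_\omega^\pm$ make everything explicit.
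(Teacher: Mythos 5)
Your construction of $(-,-)_{\epsilon}$ by pulling back the canonical pairing along the rescaling Hopf algebra map $\phi\colon U_q^{\epsilon}(\mfg)\to U_q(\mfg)$, $E_r\mapsto \epsilon_r E_r$, is correct and is just an explicit form of the paper's ``rescaling argument''; likewise your uniqueness arguments and the forced formula $(X,fg^{\dag})_{\nu}=(X_{(1)},f)_{\epsilon}\overline{(S(X_{(2)})^{\dag},g)_{\epsilon}}$ coincide with the paper's. The gap is that the axiom you yourself isolate as ``the one substantial axiom'' --- compatibility with the reversed-order product, i.e.\ the identity $(X,m_{\nu,\nu}(f^{\dag},g))_{\nu}=\overline{(S(X_{(1)})^{\dag},f)_{\epsilon}}\,(X_{(2)},g)_{\epsilon}$ --- is never proved: you only record the expectation that a ``direct generator-level computation'' will settle it. That identity is the entire mathematical content of the second assertion. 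In the paper it is proved by (i) observing that the set of $X$ for which it holds for all $f,g$ is a subalgebra, so that one may take $X\in U_q(\mfb)\cup U_q(\mfb^-)$ (a reduction you would also need to justify); (ii) expanding the twisted product through $\omega_{\nu}$, so that for $X\in U_q(\mfb)$ the left-hand side becomes $(\msR_{\nu}\,(\nu_{\epsilon}\otimes\tau)\Delta(X)\,\msR_{\nu}^{-1},\,g\otimes f^*)$, and similarly with $\id\otimes\nu$ for $X\in U_q(\mfb^-)$; and (iii) using \eqref{EqInvNu} to rewrite this as $((\nu_{\epsilon}\otimes\tau)(\msR\Delta(X)\msR^{-1}),g\otimes f^*)$ and invoking quasi-triangularity $\msR\Delta(X)\msR^{-1}=\Delta^{\op}(X)$. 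This is a short but genuine universal-$R$-matrix computation, not formal bookkeeping of the kind you dispatch elsewhere.

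Moreover, the route you sketch for the missing step would fail as stated: the pairing $(-,-)_{\nu}$ does not factor through the surjection $\msP$ of \eqref{EqMapP}, so nothing about it can be verified ``after applying $\msP$ and $\iota_{\nu}$''. Concretely, choose $f\in\ker\msP_+$ with $(F_r,f)\neq 0$; such $f$ exist by finite-dimensional duality on a Peter--Weyl block where $\pi_{\varpi}(F_r)\neq 0$, since $\pi_{\varpi}(F_r)$ strictly lowers weights while elements of $\pi_{\varpi}(U_q(\mfb^+))$ do not. Then, using $\Delta(F_r)=F_r\otimes K_r^{-1}+1\otimes F_r$ and $\varepsilon(f)=(1,f)=0$, one gets $(F_r,f\cdot 1^{\dag})_{\nu}=(F_r,f)_{\epsilon}\neq 0$, whereas $\msP(f\cdot 1^{\dag})=\msP_+(f)=0$. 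Hence an identity between functionals checked in $\mcO_q^{\nu}(B_\R)$, in terms of the generators $X_r^{\pm},L_{\omega}^{\pm}$ and the relations \eqref{EqCommRelBR}, gives no information about the required identity on $\mcO_q^{\nu}(G_{\R})$; the verification must be carried out upstairs, which is exactly what the $R$-matrix computation described above accomplishes.
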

\begin{proof}
It is easily seen by a rescaling argument that there exists a unique pairing of Hopf algebras between $U_q^{\epsilon}(\mfg)$ and $\mcO_q(G)$ satisfying $\eqref{EqRestrHol}$. It follows that there can be at most one extension to a pairing of Hopf $*$-algebras between $U_q(\mfg_{\nu})$ and  $\mcO_q^{\nu}(G_{\R})$, defined by
\begin{equation}\label{EqDefLeft}
(X,fg^{\dag})_{\nu} = (X_{(1)},f)_{\epsilon}\overline{(S(X_{(2)})^{\dag},g)_{\epsilon}},\qquad f,g\in \mcO_q(G).
\end{equation}
To see that this is indeed a pairing of Hopf $*$-algebras, the only non-trivial relation to verify is that also
\begin{equation}\label{EqDefRight}
(X,f^{\dag}g)_{\nu} =\overline{(S(X_{(1)})^{\dag},f)_{\epsilon}}(X_{(2)},g)_{\epsilon}.
\end{equation}
Now the left hand side equals
\begin{align*}
(X,f^{\dag}g)_{\nu} &= \omega_{\nu}(f_{(1)}^{\dag},g_{(1)})(X,g_{(2)}f_{(2)}^{\dag})_{\nu}\omega_{\nu}^{-1}(f_{(3)}^{\dag},g_{(3)})\\
&= \mbr_{\nu}(g_{(1)},f_{(1)}^*)(X_{(1)},g_{(2)})_{\epsilon}\overline{(S(X_{(2)})^{\dag},f_{(2)})_{\epsilon}} \mbr_{\nu}(g_{(3)},S(f_{(3)})^*)
\end{align*}
It is then sufficient to verify that this equals \eqref{EqDefRight} for $X \in U_q(\mfb) \cup U_q(\mfb^-)$. Now for $X \in U_q(\mfb)$, we have
\begin{align*}
(X,f^{\dag}g)_{\nu} &= \mbr_{\nu}(g_{(1)},f_{(1)}^*)(\nu_{\epsilon}(X_{(1)}),g_{(2)})(\tau(X_{(2)}),f_{(2)}^*) \mbr_{\nu}(g_{(3)},S(f_{(3)})^*)\\
&= (\msR_{\nu}(\nu_{\epsilon}\otimes \tau)\Delta(X) \msR_{\nu}^{-1},g\otimes f^*) \\
&= ((\nu_{\epsilon}\otimes \tau)(\msR\Delta(X) \msR^{-1}),g\otimes f^*)\\
&= ((\nu_{\epsilon}\otimes \tau)\Delta^{\op}(X),g\otimes f^*)\\
&= (\tau(X_{(1)}),f^*)(\nu_{\epsilon}(X_{(2)}),g)\\
&= \overline{(S(X_{(1)})^{\dag},f)_{\epsilon}} (X_{(2)},g)_{\epsilon}.
\end{align*}
Similarly, for $X\in U_q(\mfb^-)$ we have 
\begin{align*}
(X,f^{\dag}g)_{\nu} & = \mbr_{\nu}(g_{(1)},f_{(1)}^*)(X_{(1)},g_{(2)})(\nu(X_{(2)}),f_{(2)}^*) \mbr_{\nu}(g_{(3)},S(f_{(3)})^*)\\
& = (\msR_{\nu}(\id\otimes \nu)\Delta(X) \msR_{\nu}^{-1},g\otimes f^*) \\
& = ((\id\otimes \nu)(\msR\Delta(X) \msR^{-1}),g\otimes f^*)\\
& = ((\id\otimes \nu)\Delta^{\op}(X),g\otimes f^*)\\
& = (\nu(X_{(1)}),f^*)(X_{(2)},g)\\
& = \overline{(S(X_{(1)})^{\dag},f)_{\epsilon}} (X_{(2)},g)_{\epsilon}. \qedhere
\end{align*}
\end{proof} 

The above pairing will of course not be non-degenerate. 

\begin{Def}
We define $\mcO_q(G_{\nu})$ to be the coimage Hopf $*$-algebra of $\mcO_q^{\nu}(G_{\R})$ under the $*$-homomorphism
\[
f \mapsto (f,-)_{\nu}
\]
into the dual of $U_q(\mfg_{\nu})$. We denote by $\pi_{\nu}$ the quotient map
\[
\pi_{\nu}: \mcO_q^{\nu}(G_{\R}) \rightarrow \mcO_q(G_{\nu})
\]
\end{Def}

To see which extra relations one is quotienting out by, we introduce the following definition. 

\begin{Def}\label{DefmsE}
Fixing  $\epsilon$ as above, we let $\msE = \msE_{\epsilon}$ be the unique element in $\mcU_q(\mfg) = \prod_{\varpi} \End(V_{\varpi})$ determined by 
\[
\msE \xi = \epsilon_{\varpi - \wt(\xi)}\xi,\qquad \xi \in V_{\varpi}.
\]
\end{Def}

We then let $\msE_{\pi}$ be the corresponding action of $\msE$ in a general representation, and $\msE_{\varpi}$ the action in a particular irreducible representation of highest weight $\varpi$. 

\begin{Def}\label{DefQuot}
We denote $\mcO_q(\widetilde{G}_{\nu})$ for the Hopf $*$-algebra obtained by quotienting out $\mcO_q^{\nu}(G_{\R})$ by the extra ($*$-compatible) relations
\begin{equation}\label{EqDefIdGnu}
\tau(Y_{\varpi}^{\dag})\msE_{\varpi} Y_{\varpi} = \msE_{\varpi} 
\end{equation}
for all $\varpi \in P^+$, where we use the shorthand $\tau(Y) = (\id\otimes \tau)(Y)$. 
\end{Def}
Note that we can rewrite the defining relations \eqref{EqDefIdGnu} of $\mcO_q(\widetilde{G}_{\nu})$ as
\begin{equation}\label{EqCommX}
\tau(Y_{\varpi}^{\dag})\msE_{\varpi}  = \msE_{\varpi}S(Y_{\varpi}),
\end{equation}
which makes it clear that we are dividing out by a Hopf ideal,
\[
Y_{13}^{\dag}Y_{12}'^{\dag} \msE  - \msE S(Y)_{13}S(Y')_{12} = Y_{13}^{\dag}(Y_{12}'^{\dag} \msE - \msE S(Y')_{12}) + (Y_{13}^{\dag}\msE - \msE S(Y)_{13})S(Y')_{12}.
\]

\begin{Rem}
As $\msE_{\varpi}\otimes \msE_{\varpi'}$ restricts to and equals $\Delta(\msE)$ on the irreducible module spanned by the tensor product of the highest weight vectors, it follows that we only need to impose the relations \eqref{EqCommX} on the highest weight representations for the fundamental weights. 
\end{Rem}

\begin{Lem}\label{LemFactt}
The pairing $(-,-)_{\nu}$ factors through $\mcO_q(\widetilde{G}_{\nu})$.
\end{Lem}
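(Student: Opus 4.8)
The plan is to show that the two‑sided ideal cut out by the relations \eqref{EqDefIdGnu} is contained in the right radical $N=\{f\in\mcO_q^{\nu}(G_{\R}):(X,f)_{\nu}=0\text{ for all }X\in U_q(\mfg_{\nu})\}$ of the pairing. Since $(-,-)_{\nu}$ is a pairing of Hopf algebras (Proposition \ref{PropExtPairing}), the radical $N$ is a two‑sided ideal, and the ideal defining $\mcO_q(\widetilde{G}_{\nu})$ is generated by the entries of $\tau(Y_{\varpi}^{\dag})\msE_{\varpi}Y_{\varpi}-\msE_{\varpi}$. Hence it suffices to prove the operator identity
\[
(\id\otimes(X,-)_{\nu})\big(\tau(Y_{\varpi}^{\dag})\msE_{\varpi}Y_{\varpi}\big)=\varepsilon(X)\,\msE_{\varpi}\qquad\text{in }\End(V_{\varpi})
\]
for every $X\in U_q(\mfg_{\nu})$, and by the remark preceding the lemma only for fundamental weights $\varpi$.

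First I would set up the dictionary with representations. Writing $\pi_{\varpi}^{\epsilon}$ for the rescaled action with $\pi_{\varpi}^{\epsilon}(E_r)=\epsilon_r\pi_{\varpi}(E_r)$, $\pi_{\varpi}^{\epsilon}(F_r)=\pi_{\varpi}(F_r)$ and $\pi_{\varpi}^{\epsilon}(K_{\omega})=\pi_{\varpi}(K_{\omega})$, the defining property of the matrix coefficients gives $(\id\otimes(X,-)_{\epsilon})Y_{\varpi}=\pi_{\varpi}^{\epsilon}(X)$. Expanding the twisted product through the Hopf‑pairing identity $(X,uv)_{\nu}=\sum(X_{(1)},u)_{\nu}(X_{(2)},v)_{\nu}$, evaluating the antiholomorphic factors by \eqref{EqDefLeft}, and using the $\tau$‑self‑duality $(\tau(Z),f)_{\epsilon}=(Z,\tau(f))_{\epsilon}$ (valid because $\epsilon$ is $\tau$‑invariant) together with the antipode compatibility $(X,S(f))_{\epsilon}=(S(X),f)_{\epsilon}$, the expression collapses — after using completeness of the weight basis of $V_{\varpi}$ — to the claim
\[
\sum_{(X)}\pi_{\varpi}^{\epsilon}\big(\tau(S(X_{(1)})^{\dag})\big)^{*}\,\msE_{\varpi}\,\pi_{\varpi}^{\epsilon}(X_{(2)})=\varepsilon(X)\,\msE_{\varpi},
\]
where $*$ denotes the Hilbert‑space adjoint on $V_{\varpi}$ coming from the $*$‑representation of $U_q(\mfu)$. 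The transpose implicit in pairing the $\dag$ of $\mcO_q(\overline{G})$ is what turns a naive entrywise conjugate into this genuine adjoint.

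Next I would prove this last identity by reduction to generators. The left‑hand side, as a function of $X$, transforms multiplicatively: writing it as $\Theta(X)$, one has $\Theta(XY)=\sum_{(Y)}\pi_{\varpi}^{\epsilon}(\tau(S(Y_{(1)})^{\dag}))^{*}\,\Theta(X)\,\pi_{\varpi}^{\epsilon}(Y_{(2)})$, so once $\Theta(X)=\varepsilon(X)\msE_{\varpi}$ is known on $K_{\omega},E_r,F_r$ it follows on all of $U_q(\mfg_{\nu})$ by induction. On generators the verification rests on two inputs: the $S$‑ and $\dag$‑images $\tau(S(E_r)^{\dag})=-F_r$, $\tau(S(F_r)^{\dag})=-E_r$, $\tau(S(K_{\omega})^{\dag})=K_{-\omega}$ combined with the $*$‑representation relations $\pi(E_r)^{*}=\pi(F_rK_r)$, $\pi(F_r)^{*}=\pi(K_r^{-1}E_r)$; and the grading property of $\msE_{\varpi}$, namely $\pi_{\varpi}(E_r)\msE_{\varpi}=\epsilon_r\,\msE_{\varpi}\pi_{\varpi}(E_r)$ and $\msE_{\varpi}\pi_{\varpi}(F_r)=\epsilon_r\,\pi_{\varpi}(F_r)\msE_{\varpi}$, immediate from $\msE\xi=\epsilon_{\varpi-\wt(\xi)}\xi$ and the fact that $\epsilon$ is a semigroup homomorphism. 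These precisely make the two summands coming from each coproduct (e.g. $\Delta(E_r)=E_r\otimes1+K_r\otimes E_r$) cancel.

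The main obstacle I anticipate is the bookkeeping reconciling the compact $*$‑structure, for which $V_{\varpi}$ is unitary, with the non‑compact $\dag$‑structure of $U_q(\mfg_{\nu})$, mediated simultaneously by the diagram automorphism $\tau$ and the grading operator $\msE_{\varpi}$; keeping the transpose/adjoint distinction straight so that $\pi(E_r)^{*}=\pi(F_rK_r)$ enters correctly is the delicate point. A second point needing separate treatment is the flag‑type degeneracy $\epsilon_r=0$, where $\msE_{\varpi}$ is not invertible and $\pi_{\varpi}^{\epsilon}(E_r)=0$: there the relevant terms vanish not by cancellation but because $\pi_{\varpi}(E_r)\msE_{\varpi}=0=\msE_{\varpi}\pi_{\varpi}(F_r)$, which follows from the observation that $\epsilon_{\varpi-\mu}\neq0$ forces the $\alpha_r$‑defect of $\mu$ to vanish, so that $\mu+\alpha_r$ is not a weight of $V_{\varpi}$.
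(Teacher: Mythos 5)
Your proposal is correct and follows essentially the same route as the paper: both reduce the factorization to pairing the defining relation \eqref{EqCommX} against the generators $K_{\omega},E_r,F_r$ of $U_q(\mfg_{\nu})$, convert via the compact $*$-relations $\pi_{\varpi}(E_r)^{*}=\pi_{\varpi}(F_rK_r)$, $\pi_{\varpi}(F_r)^{*}=\pi_{\varpi}(K_r^{-1}E_r)$, and boil everything down to the single commutation $\pi_{\varpi}(E_r)\msE_{\varpi}=\epsilon_r\msE_{\varpi}\pi_{\varpi}(E_r)$, proved by the same weight argument (which in fact covers $\epsilon_r=0$ uniformly, so your separate degenerate case is unnecessary). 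The only difference is presentational: you make explicit, through the twisted multiplicativity of $\Theta$ and the fact that the radical of the pairing is an ideal, the reduction from all of $U_q(\mfg_{\nu})$ to its generators, which the paper leaves implicit in its observation that the relations span a Hopf ideal.
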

\begin{proof}
We have to check that \eqref{EqCommX} is satisfied when applying the pairing $(-,-)_{\nu}$ to this equation with respect to the generators $E_r,F_r$ and $K_{\omega}$. Now for $f\in \mcO_q^{\nu}(G_{\R})$, we have 
\[
(K_{\omega},f^{\dag})_{\nu} = \overline{(K_{\tau(\omega)}^{-1},f)},\quad (E_r,f^{\dag})_{\nu} = -\overline{(F_{\tau(r)},f)},\quad (F_r,f^{\dag})_{\nu} = - \epsilon_r \overline{(E_{\nu(r)},f)}.
\]
Hence \eqref{EqCommX} translates to 
\[
\pi_{\varpi}(K_{\omega})\msE_{\varpi} = \msE_{\varpi} \pi_{\varpi}(K_{\omega}),\quad \pi_{\varpi}(F_r)^*\msE_{\varpi} = \epsilon_r\msE_{\varpi}\pi_{\varpi}(K_r^{-1}E_r),\qquad \epsilon_r\pi_{\varpi}(E_r)^*\msE_{\varpi} = \msE_{\varpi}\pi_{\varpi}(F_rK_r).
\]
The first relation is obviously true, while using $\pi_{\varpi}(X)^* = \pi_{\varpi}(X^*)$ and self-adjointness of $\msE_{\varpi}$ the last two relations reduce to 
\[
\pi_{\varpi}(E_r)\msE_{\varpi} = \epsilon_r \msE_{\varpi}\pi_{\varpi}(E_r). 
\]
This however follows from the fact that if $E_rV_{\varpi}(\varpi - \alpha) \neq 0$ for some $\alpha \in Q^+$, where $V_{\varpi}(\omega)$ is the weight space at weight $\omega$, then $\alpha -\alpha_r \in Q^+$ and hence $\epsilon_{\alpha} = \epsilon_{\alpha-\alpha_r}\epsilon_r$.
\end{proof}

\begin{Rem}
It follows that we obtain a surjective Hopf $*$-algebra homomorphism
\[
\mcO_q(\widetilde{G}_{\nu}) \twoheadrightarrow \mcO_q(G_{\nu}).
\]
It is easy to see that this will be an isomorphism when $\epsilon_r\neq 0$ for all $r$, as the defining relations for $\mcO_q(\widetilde{G}_{\nu})$ can then be written 
\[
\tau(Y_{\varpi}^{\dag}) = \msE_{\varpi}S(Y_{\varpi})\msE_{\varpi}^{-1},
\]
from which it follows that $\mcO_q(\widetilde{G}_{\nu})$ will be non-degenerately paired with $U_q(\mfg_{\nu})$. We suspect that this will be true in general, but were not able to prove this. 
\end{Rem}

\begin{Rem}
As we are only interested in the classical limit for motivational reasons, the classical limit $\mcO(G_{\nu})$ for $q \rightarrow 1$ will be interpreted without further justification as the algebra of regular functions on the real affine group $G_{\nu} = \Spec_{*}(\mcO(G_{\nu}))\subseteq G$ with Lie algebra $\mfg_{\nu}$.   For example, in the flag case we have that the group $G_{\nu}$ of $*$-preserving characters of $\mcO(G_{\nu})$ equals $K_SN_S^-$, with $N_S^-$ the unipotent part of the negative parabolic subgroup $P_S^-$ associated to the simple roots $S = \{r\in I\mid \epsilon_r = 1\}$, and with $K_S = U\cap P_S^-$. In the symmetric case, we have that the space of $*$-characters of $\mcO(G_{\nu})$ is the subgroup $G_{\nu} = \{g\in G\mid \nu(g)^* = g^{-1}\}$, which has Lie algebra $\mfg_{\nu}$ but is not necessarily connected or simply connected. 
\end{Rem}

\subsection{$\nu$-braided deformation of $\mcO_q(G)$}

Consider the left, resp.~ right coactions (= $*$-preserving comodule algebra structures)
\[
\lambda_{\nu,\mu} = (\pi_{\nu}\otimes \id)\Delta_{\nu,\mu}^{\nu}: \mcO_q^{\nu,\mu}(G_{\R}) \rightarrow  \mcO_q(G_{\nu}) \otimes \mcO_q^{\nu,\mu}(G_{\R}),
\]
\[
\rho_{\nu,\mu} = (\id\otimes \pi_{\mu})\Delta_{\nu,\mu}^{\mu}: \mcO_q^{\nu,\mu}(G_{\R}) \rightarrow \mcO_q^{\nu,\mu}(G_{\R}) \otimes  \mcO_q(G_{\mu}).
\]

In the following, we will be interested  in characterizing the fixed point $*$-subalgebra of $\lambda_{\nu,\mu}$ in case $\mu = \id$, which we will denote 
\[
\mcO_q(G_{\nu}\dbbackslash G_{\R}) = \mcO_q^{\nu,\id}(G_{\R})^{\lambda_{\nu,\id}} = \{f\in \mcO_q^{\nu,\id}(G_{\R}) \mid \lambda_{\nu,\id}(f) = 1\otimes f\}.
\]
We then put $\lambda_{\nu} = \lambda_{\nu,\id}$ and $\rho_{\nu} = \rho_{\nu,\id}$. 

\begin{Rem}
We are mimicking notation from geometric invariant theory (GIT): for $q=1$ we will not necessarily have that the ordinary quotient $G_{\nu}\backslash G_{\R}$ is an affine variety, and in particular will not equal the (real) spectrum $G_{\nu}\dbbackslash G_{\R}$ of $\mcO(G_{\R})^{\lambda_{\nu,\id}}$. However, $G_{\nu}\backslash G_{\R}$ will be embedded in this spectrum as a Zariski dense open subset, see Remark \ref{RemSpectrumZ}.
\end{Rem} 

Let us write
\[
W_{\pi} = (\id\otimes \pi_{\nu})Y_{\pi} \in \End(V_{\pi}) \otimes \mcO_q(G_{\nu}).
\]
Then the left and right coactions $\lambda_{\nu}$ and $\rho_{\nu}$ of resp. $\mcO_q(G_{\nu})$ and $\mcO_q(U)$ on $\mcO_q^{\nu,\id}(G_{\R})$ are determined by 
\begin{equation}\label{EqMapCorep}
(\id\otimes \lambda_{\nu})Y_{\pi} = W_{\pi,12}Y_{\pi,13},\qquad (\id\otimes \rho_{\nu})Y_{\pi} = Y_{\pi,12}U_{\pi,13}.
\end{equation}

Recall the element $\msE$ introduced in Definition \ref{DefmsE}.

\begin{Lem}\label{LemZQuot}
The elements 
\[
Z_{\pi} = \tau(Y_{\pi}^{\dag})(\msE_{\pi}\otimes 1) Y_{\pi} \in \End(V_{\pi}) \otimes \mcO_q^{\nu,\id}(G_{\R})
\]
lie in $\End(V_{\pi})\otimes \mcO_q(G_{\nu}\dbbackslash G_{\R})$. 
\end{Lem}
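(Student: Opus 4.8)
The plan is to verify directly that each $Z_\pi$ is $\lambda_\nu$-coinvariant, i.e. that $(\id\otimes\lambda_\nu)(Z_\pi) = 1\otimes Z_\pi$ as an identity in $\End(V_\pi)\otimes\mcO_q(G_\nu)\otimes\mcO_q^{\nu,\id}(G_\R)$, whose three tensor legs I label $1,2,3$. The only inputs I would use are: the coaction formula $(\id\otimes\lambda_\nu)Y_\pi = W_{\pi,12}Y_{\pi,13}$ from \eqref{EqMapCorep}; the fact that $\lambda_\nu$ is a $\dag$-preserving algebra homomorphism; its $\tau$-equivariance $\lambda_\nu\circ\tau = (\tau\otimes\tau)\circ\lambda_\nu$; and the defining relation of $\mcO_q(G_\nu)$.

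First I would record the $\tau$-equivariance of $\lambda_\nu$. Since $\Delta_{\nu,\id}^\nu$ is built from the \emph{undeformed} coproduct of $\mcO_q(G)$, for which the Hopf automorphism $\tau$ is a coalgebra map, and since $\tau$ descends along $\pi_\nu$ (the defining relations of $\mcO_q(G_\nu)$ being $\tau$-stable, as $\epsilon$ is $\tau$-invariant), one gets $\lambda_\nu\circ\tau = (\tau\otimes\tau)\circ\lambda_\nu$. Combining this with the $\dag$-preservation of $\id\otimes\lambda_\nu$ and applying the order-reversing anti-homomorphism $\dag$ to the coaction formula yields
\[
(\id\otimes\lambda_\nu)\big(\tau(Y_\pi^\dag)\big) = \tau(Y_{\pi,13}^\dag)\,\tau(W_{\pi,12}^\dag).
\]

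Next I would substitute into $Z_\pi = \tau(Y_\pi^\dag)(\msE_\pi\otimes 1)Y_\pi$. Using multiplicativity of $\id\otimes\lambda_\nu$ together with the two coaction formulas, I obtain
\[
(\id\otimes\lambda_\nu)(Z_\pi) = \tau(Y_{\pi,13}^\dag)\,\tau(W_{\pi,12}^\dag)\,\msE_{\pi,1}\,W_{\pi,12}\,Y_{\pi,13},
\]
where $\msE_{\pi,1}$ denotes $\msE_\pi$ acting in leg $1$ (recall $\lambda_\nu(1)=1\otimes 1$). By associativity I regroup the middle three factors, which live entirely in legs $1,2$, as $\tau(W_{\pi,12}^\dag)\,\msE_{\pi,1}\,W_{\pi,12}$. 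This is precisely the left-hand side of the defining relation \eqref{EqDefIdGnu} evaluated on $W_\pi = (\id\otimes\pi_\nu)Y_\pi$; since $\mcO_q(G_\nu)$ is a quotient of $\mcO_q(\widetilde{G}_\nu)$ (equivalently, by Lemma \ref{LemFactt} the pairing $(-,-)_\nu$ factors through it), the relation $\tau(W_\pi^\dag)\msE_\pi W_\pi = \msE_\pi$ holds in $\End(V_\pi)\otimes\mcO_q(G_\nu)$. Replacing the bracketed block by $\msE_{\pi,1}$ collapses leg $2$ and leaves $\tau(Y_{\pi,13}^\dag)\,\msE_{\pi,1}\,Y_{\pi,13} = (Z_\pi)_{13}$, which is exactly $1\otimes Z_\pi$.

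Finally I would justify the relation for an arbitrary admissible $\pi$ rather than the irreducibles $V_\varpi$ of \eqref{EqDefIdGnu}: this follows from complete reducibility of admissible representations, under which $Y_\pi,W_\pi,\msE_\pi$ are block-diagonal and the relation holds block-wise, or equivalently from the multiplicativity noted after Definition \ref{DefQuot}. I expect the main difficulty to be purely bookkeeping: keeping factor orders straight under the order-reversing $\dag$, tracking which leg each $\tau$ acts on, and confirming that the coinvariance relation \eqref{EqCommX} is applied in the correct legs $1,2$. The genuine conceptual content is concentrated in the single step where the $W$-block is annihilated by the defining relation of $\mcO_q(G_\nu)$; everything else is formal.
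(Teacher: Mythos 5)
Your proposal is correct and follows essentially the same route as the paper: apply $\id\otimes\lambda_\nu$ to $Z_\pi$, use the coaction formula \eqref{EqMapCorep} on both $Y_\pi$ and $\tau(Y_\pi^\dag)$, and collapse the middle block $\tau(W_{\pi,12}^\dag)\msE_{\pi,1}W_{\pi,12}=\msE_{\pi,1}$ via the relation \eqref{EqDefIdGnu}, which is available in $\mcO_q(G_\nu)$ precisely by Lemma \ref{LemFactt}. The only difference is that you spell out the $\dag$/$\tau$ bookkeeping (the $\tau$-equivariance of $\lambda_\nu$ and the reduction from admissible $\pi$ to irreducibles) that the paper leaves implicit.
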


\begin{proof}
By Lemma \ref{LemFactt} we can use the relation \eqref{EqDefIdGnu}, so that using \eqref{EqMapCorep} we find
\[
(\id\otimes\lambda_{\nu})(Z_{\pi}) = \tau(Y_{\pi,13}^{\dag})\tau(W_{\pi,12}^{\dag}) (\msE_{\pi}\otimes 1) W_{\pi,12}Y_{\pi,13} = \tau(Y_{\pi,13}^{\dag})(\msE_{\pi}\otimes 1)Y_{\pi,13},
\]
so $Z_{\pi}$ has entries in $\mcO_q(G_{\nu}\dbbackslash G_{\R})$.
\end{proof}

\begin{Lem}
The matrices $Z_{\pi}$ satisfy the \emph{$\tau$-modified reflection equation}: with $Z = Z_{\pi}$ and $Z' = Z_{\pi'}$, we have
\[
\msR_{21}^{\pi,\pi'}Z_{13}\msR_{\tau,12}^{\pi,\pi'}Z_{23}' = Z_{23}' \msR_{\tau,21}^{\pi,\pi'}Z_{13}\msR_{12}^{\pi,\pi'},
\]
where we write $\msR^{\pi,\pi'} = (\pi\otimes \pi')\msR$ and $\msR_{21}^{\pi,\pi'} = (\pi\otimes \pi')\msR_{21}$. 
\end{Lem}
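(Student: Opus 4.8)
The plan is to prove the identity by a direct computation inside $\End(V_\pi)\otimes\End(V_{\pi'})\otimes\mcO_q^{\nu,\id}(G_{\R})$: substitute $Z_{13}=\tau(Y_\pi^\dag)_{13}\,(\msE_\pi\otimes 1\otimes 1)\,Y_{\pi,13}$ and $Z'_{23}=\tau(Y_{\pi'}^\dag)_{23}\,(1\otimes\msE_{\pi'}\otimes 1)\,Y_{\pi',23}$ into both sides and push every $R$-matrix through the $Y$- and $\tau(Y^\dag)$-factors until the two sides coincide term by term. The structural picture guiding the bookkeeping is that the plain matrices $\msR_{12},\msR_{21}$ should end up coupling the holomorphic factors $Y$, the twisted matrices $\msR_{\tau,12},\msR_{\tau,21}$ should couple the antiholomorphic factors $\tau(Y^\dag)$, and the $\msE$'s sitting between them are exactly what forces the $\tau$-twist through the holomorphic/antiholomorphic cross relation.

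Before starting I would assemble the four commutation relations that carry the proof. First, the holomorphic generators obey the usual RTT relation coming from the coquasitriangular pairing $\mbr$ on $\mcO_q(G)$,
\[
\msR_{12}^{\pi,\pi'}Y_{\pi,13}Y_{\pi',23} = Y_{\pi',23}Y_{\pi,13}\msR_{12}^{\pi,\pi'}.
\]
Applying the $*$-operation together with $\msR^*=\msR_{21}$ from \eqref{EqPropR1} and transporting by the Hopf $*$-automorphism $\tau$ yields the conjugate RTT relation for the antiholomorphic generators $\tau(Y^\dag)$, now governed by $\msR_{\tau,21}$. The cross relation between the two halves is precisely Lemma \ref{LemFundInt} specialised to $\mu=\id$, which after transport by $\tau$ becomes an interchange rule between $Y_\pi$ and $\tau(Y_{\pi'}^\dag)$ in which the matrix $\msR_\nu$ appears, with $\nu=\nu_{\tau,\epsilon}$.

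The role of $\msE$ is to reconcile the $\msR_\nu$ produced by the cross relation with the $\msR_\tau$ that appears in the statement. Using the weight decomposition $\widetilde{\msR}=\sum_{\alpha\in Q^+}\widetilde{\msR}_\alpha$ of \eqref{EqPropR3} and the fact that $\epsilon$ is a $\tau$-invariant semigroup homomorphism on $Q^+$, one checks the intertwining identity
\[
(\id\otimes\msE_\pi)\msR_\tau^{\pi',\pi} = \msR_\nu^{\pi',\pi}(\id\otimes\msE_\pi),
\]
together with its variants on the other leg, valid even when $\msE$ is non-invertible, since $\msR_\nu=(\nu_\epsilon\otimes\id)\msR_\tau$ and $\msE$ implements the $\nu_\epsilon$-twist by conjugation. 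Feeding this identity in at each point where an $\msE$ meets an $R$-matrix converts every $\msR_\nu$ back into an $\msR_\tau$, at the cost of shifting the $\msE$'s, which are then reabsorbed into the neighbouring $Z$.

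The main obstacle is organisational rather than conceptual: one must track the four distinct $R$-matrices $\msR_{12},\msR_{21},\msR_{\tau,12},\msR_{\tau,21}$ across three tensor legs, apply the $\msE$-intertwining on the correct leg and with the correct handedness each time, and verify at the end that the surplus $\msE$-factors produced on the two sides match. I would control this by normal-ordering both sides symmetrically from their two ends (all $\tau(Y^\dag)$ to the left, then the $\msE$'s, then the $Y$'s), so that the whole statement reduces to the two RTT relations, the cross relation, and the single $\msE$-intertwining identity above; the coincidence of the two normal-ordered expressions then follows from the hexagon identities \eqref{EqPropR1} for $\msR$ and the $\tau$-invariance of $\epsilon$ recorded in \eqref{EqInvNu}. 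The one genuinely non-formal input is the verification of the $\msE$-intertwining identity itself, which I would isolate as a preliminary weight-space computation.
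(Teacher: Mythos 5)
Your overall strategy matches the paper's own proof: substitute the definition of $Z_{\pi}$, use the cross relation of Lemma \ref{LemFundInt} (with $\mu=\id$, transported by $\tau$) to trade the twisted matrices for $\msR_{\epsilon}=(\nu_{\epsilon}\otimes\id)\msR$, normal-order, and reduce everything to a weight-space identity among the $\msE$'s and $\msR$'s; your $\msE$-intertwining identity $(\id\otimes\msE)\msR_{\tau}=\msR_{\nu}(\id\otimes\msE)$ is correct and is exactly the non-formal input that the paper dispatches as ``an easy weight argument.'' However, one of your four assembled ingredients is wrong, and it is load-bearing. The RTT relation satisfied by the antiholomorphic generators $\tau(Y^{\dag})$ is governed by the \emph{plain} matrix $\msR_{21}$, not by $\msR_{\tau,21}$: both matrix legs of that relation carry $\tau$-transported corepresentations, so the governing matrix is $(\pi\tau\otimes\pi'\tau)(\msR_{21})=(\pi\otimes\pi')\bigl((\tau\otimes\tau)\msR\bigr)_{21}=\msR_{21}^{\pi,\pi'}$, since $(\tau\otimes\tau)\msR=\msR$. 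A one-sided twist $\msR_{\tau,21}$ would correspond to transporting only one leg, which is not what happens here.

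This slip is tied to your guiding ``structural picture,'' which is also off: in the identity to be proven, the twisted matrices $\msR_{\tau,12}$ and $\msR_{\tau,21}$ are precisely the ones sitting in \emph{cross} position, between a holomorphic factor $Y$ and an antiholomorphic factor $\tau(Y^{\dag})$ (and the cross relation converts them into $\msR_{\epsilon}$), while after normal-ordering the plain $\msR_{21}$ couples the antiholomorphic pair and the plain $\msR_{12}$ couples the holomorphic pair. Concretely, the one place where the antiholomorphic RTT relation is needed is to move the explicit $\msR_{21}$ of the left-hand side past $\tau(Y)^{\dag}_{13}\tau(Y')^{\dag}_{23}$; with your version of the relation this step fails and leaves an uncancelled factor $\msR_{21}\msR_{\tau,21}^{-1}$ to the left of the antiholomorphic pair, so the two normal-ordered sides cannot be matched. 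Once you correct the relation to $\msR_{21}\tau(Y)^{\dag}_{13}\tau(Y')^{\dag}_{23}=\tau(Y')^{\dag}_{23}\tau(Y)^{\dag}_{13}\msR_{21}$, your reduction closes and lands on the same middle identity as the paper,
\[
\msR_{21}(\msE\otimes 1)\msR_{\epsilon}(1\otimes\msE)=(1\otimes\msE)\msR_{\epsilon,21}(\msE\otimes 1)\msR,
\]
both sides being equal to $\msR_{21}\msR(\msE\otimes\msE)$ by your intertwining identity together with the fact that $\msE\otimes\msE$ commutes with $\msR$ (the weight argument).
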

\begin{proof}
Writing also $Y_{\pi} = Y$ etc., we compute using \eqref{EqDefHolAntihol} that
\begin{align*}
\msR_{21}^{\pi,\pi'} Z_{13} \msR_{\tau,12}^{\pi,\pi'} Z_{23} 
& = \msR_{21}^{\pi,\pi'}\tau(Y)^{\dag}_{13} \msE_{1} Y_{13} \msR_{\tau,12}^{\pi,\pi'}\tau(Y')^{\dag}_{23} \msE_{2}' Y_{23}'\\ 
& = \msR_{21}^{\pi,\pi'}\tau(Y)^{\dag}_{13} \msE_{1} \tau(Y')^{\dag}_{23}\msR_{\epsilon,12}^{\pi,\pi'} Y_{13} \msE_{2}' Y_{23}' \\ 
& = \msR_{21}^{\pi,\pi'}\tau(Y)^{\dag}_{13}\tau(Y')^{\dag}_{23} \msE_{1} \msR_{\epsilon,12}^{\pi,\pi'}\msE_{2}' Y_{13}  Y_{23}' \\ 
& = \tau(Y')^{\dag}_{23} \tau(Y)^{\dag}_{13}  \msR_{21}^{\pi,\pi'}\msE_{1} \msR_{\epsilon,12}^{\pi,\pi'}\msE_{2}' Y_{13}  Y_{23}'
\end{align*}
Now we use that
\[
\msR_{21}(\msE\otimes 1)\msR_{\epsilon}(1\otimes \msE) = \msR_{21}\msR (\msE\otimes \msE) =  (1\otimes \msE)\msR_{\epsilon,21}(\msE\otimes 1)\msR,
\]
which follows from an easy weight argument. Hence
\begin{align*}
\msR_{21}^{\pi,\pi'} Z_{13} \msR_{\tau,12}^{\pi,\pi'} Z_{23}' 
& = \tau(Y')^{\dag}_{23} \tau(Y)^{\dag}_{13} \msE_{2}'  \msR_{\epsilon,21}^{\pi,\pi'}\msE_{1} \msR_{12}^{\pi,\pi'}Y_{13}  Y_{23}'\\ 
& = \tau(Y')^{\dag}_{23} \tau(Y)^{\dag}_{13} \msE_{2}'  \msR_{\epsilon,21}^{\pi,\pi'}\msE_{1}  Y_{23}' Y_{13}\msR_{12}^{\pi,\pi'}\\ 
& = \tau(Y')^{\dag}_{23} \msE_{2}' \tau(Y)^{\dag}_{13}  \msR_{\epsilon,21}^{\pi,\pi'}Y_{23}' \msE_{1}  Y_{13}\msR_{12}^{\pi,\pi'}\\ 
& = \tau(Y')^{\dag}_{23} \msE_{2}'  Y_{23}' \msR_{\tau,21}^{\pi,\pi'}\tau(Y)^{\dag}_{13} \msE_{1}  Y_{13}\msR_{12}^{\pi,\pi'}\\
& = Z_{23}' \msR_{\tau,21}^{\pi,\pi'} Z_{13} \msR_{12}^{\pi,\pi'}. \qedhere
\end{align*}
\end{proof}

Note also that
\[
Z_{\pi}^{\dag} =\tau(Z_{\pi}).
\]

We want to view $\mcO_q(G_{\nu}\dbbackslash G_{\R})$ as a deformation of $\mcO_q(G)$. We will need some preparation. For $\xi,\eta\in V_{\pi}$, denote
\[
Z(\xi,\eta) = (\xi^*\otimes 1)Z_{\pi}(\eta \otimes 1) \in \mcO_q(G_{\nu}\dbbackslash G_{\R})
\]
for the associated matrix coefficient. Recall that we view $V^*$ as the dual of $V$ with the contragredient representation \eqref{EqContragredient}. By the Peter-Weyl-decomposition, we have a vector space isomorphism
\[
\oplus_{\varpi \in P^+} V_{\varpi}^* \otimes V_{\varpi} \rightarrow \mcO_q(G),\quad \xi^* \otimes \eta \mapsto Y(\xi,\eta) = (\xi^*\otimes 1)Y_{\varpi}(\eta\otimes 1)
\]

\begin{Prop}\label{PropLinBij}
The map 
\begin{equation}\label{EqIsoj}
j_{\nu}: \mcO_q(G) \rightarrow \mcO_q(G_{\nu}\dbbackslash G_{\R}),\quad Y(\xi,\eta) \mapsto Z(\xi,\eta)
\end{equation}
is a linear bijection.
\end{Prop}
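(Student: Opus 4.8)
We need to show that the map
\[
j_{\nu}: \mcO_q(G) \rightarrow \mcO_q(G_{\nu}\dbbackslash G_{\R}),\qquad Y(\xi,\eta) \mapsto Z(\xi,\eta)
\]
is a linear bijection. The plan is to exploit the explicit formula $Z_{\pi} = \tau(Y_{\pi}^{\dag})(\msE_{\pi}\otimes 1)Y_{\pi}$ together with the linear isomorphism $j_\nu$ inherits from the Peter–Weyl decomposition of $\mcO_q(G)$. Let me look at what $j_\nu$ does.

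Let me think about this carefully.Since $j_\nu$ is prescribed on the Peter--Weyl basis $\{Y(\xi,\eta)\}$ of $\mcO_q(G)$, it is tautologically a well-defined linear map, and Lemma~\ref{LemZQuot} guarantees that its image lands in $\mcO_q(G_\nu\dbbackslash G_\R)$; the whole content is bijectivity. My plan is to prove injectivity by a leading-term computation in the tensor factorisation $\mcO_q^{\nu,\id}(G_\R)\cong\mcO_q(G)\otimes\mcO_q(\overline{G})$, and to obtain surjectivity from a triangular (PBW-type) decomposition of $\mcO_q^{\nu,\id}(G_\R)$ that pins down the size of the coinvariant subalgebra.

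For injectivity, expanding $Z_\pi=\tau(Y_\pi^\dag)(\msE_\pi\otimes 1)Y_\pi$ in a weight basis $\{\xi_i\}$ of $V_\varpi$ and writing $y_{ij}=Y(\xi_i,\xi_j)$ gives
\[
Z(\xi_a,\xi_b)=\sum_k \epsilon_{\varpi-\wt(\xi_k)}\,\tau(y_{ak}^\dag)\,y_{kb},
\]
a product of an antiholomorphic and a holomorphic coefficient taken in the deformed order. I would straighten each term into the standard holomorphic-times-antiholomorphic form using the interchange relation \eqref{EqDefHolAntihol} of Lemma~\ref{LemFundInt} at $\mu=\id$. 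Because $\msR=\widetilde{\msR}\msQ$ with $\msQ$ acting diagonally on weights and $\widetilde{\msR}=\sum_{\alpha\in Q^+}\widetilde{\msR}_\alpha$ with $\widetilde{\msR}_0=1\otimes 1$ by \eqref{EqPropR3}, straightening produces a nonzero scalar multiple of the monomial $y_{kb}\otimes\tau(y_{ak}^\dag)$ (the contribution of $\widetilde{\msR}_0$, coming purely from the diagonal factors $\msQ$ and $\msQ_\nu$) plus terms whose holomorphic and antiholomorphic weights are strictly lowered. Filtering by this weight order, the leading symbol of $Z(\xi_a,\xi_b)$ is $\sum_k c_k\, y_{kb}\otimes\tau(y_{ak}^\dag)$ with the coefficient $c_k$ nonzero whenever $\epsilon_{\varpi-\wt(\xi_k)}\neq 0$; in particular the highest-weight index $k$ contributes $\epsilon_0=1\neq 0$, so the leading symbol is nonzero. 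As $(a,b)$ vary the families $\{y_{kb}\otimes\tau(y_{ak}^\dag)\}_k$ occupy disjoint coordinate subspaces of $\mcO_q(G)\otimes\mcO_q(\overline{G})$ (the holomorphic factor always carries column index $b$, the antiholomorphic factor always row index $a$), so the leading symbols are linearly independent across all $(\varpi,a,b)$. Hence $j_\nu$ is injective.

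For surjectivity I would establish that $\mcO_q(G_\nu\dbbackslash G_\R)$ is no larger than the image. The relation $\tau(Y_\pi^\dag)(\msE_\pi\otimes 1)Y_\pi=Z_\pi$ lets one solve $Y_\pi=(\msE_\pi\otimes1)^{-1}\tau(Y_\pi^\dag)^{-1}Z_\pi$, so that the holomorphic generators lie in the subalgebra generated by the antiholomorphic coefficients together with the entries of the $Z_\pi$; consequently $\mcO_q^{\nu,\id}(G_\R)$ is spanned by products $\tau(f^\dag)\,Z(\xi,\eta)$. I would upgrade this to a triangular decomposition $\mcO_q^{\nu,\id}(G_\R)\cong\mcO_q(G_\nu)\otimes\mathrm{im}(j_\nu)$ at the level of associated graded spaces for the weight filtration above, using that $\pi_\nu$ identifies the antiholomorphic part with $\mcO_q(G_\nu)$ modulo lower-order terms. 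Comparing Hilbert series, $\dim_q\mcO_q(G_\nu)$ matches $\dim_q\mcO_q(\overline{G})$, which forces $\mathrm{im}(j_\nu)$ to have the same graded dimension as $\mcO_q(G)$; combined with injectivity and the inclusion $\mathrm{im}(j_\nu)\subseteq\mcO_q(G_\nu\dbbackslash G_\R)$ of Lemma~\ref{LemZQuot}, this yields equality and hence surjectivity.

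The main obstacle is precisely this last step: proving that passing to the associated graded for the weight filtration is exact for the coaction $\lambda_\nu$, so that graded coinvariants are computed by the leading symbols alone and no extra coinvariants hide in lower filtration degree. This is a flatness/freeness statement for the $\nu$-deformed quotient; the leading-symbol bookkeeping in the injectivity step is routine once $\msR=\widetilde{\msR}\msQ$ and the nonvanishing of $\msE$ on highest weights are in hand, but verifying that the triangular decomposition is genuinely free, and hence that the Hilbert-series count is valid, is where the real work lies.
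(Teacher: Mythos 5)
Your injectivity argument is workable but heavier than necessary: since the multiplication map $\mcO_q(\overline{G})\otimes\mcO_q(G)\rightarrow\mcO_q^{\nu,\id}(G_{\R})$, $g^{\dag}\otimes f\mapsto g^{\dag}f$, is already a linear bijection (a standard consequence of convolution-invertibility of the twisting cocycles), the expansion $Z(\xi_a,\xi_b)=\sum_k\epsilon_{\varpi-\wt(\xi_k)}\,\tau(y_{ak}^{\dag})\,y_{kb}$ exhibits $j_{\nu}(Y_{\varpi}(\xi,\eta))$ directly as the image of $\xi^*\otimes\msE_{\varpi}\otimes\eta$ under the Peter--Weyl identification $\oplus_{\varpi,\varpi'}V_{\varpi}^*\otimes\Hom(V_{\varpi'},V_{\varpi})\otimes V_{\varpi'}\cong\mcO_q^{\nu,\id}(G_{\R})$, and injectivity follows at once from $\msE_{\varpi}\neq 0$ --- no $R$-matrix straightening or filtration bookkeeping is needed, and your leading-symbol accounting is exactly the part of that bookkeeping that is fragile.

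The genuine gap is in surjectivity, and it is threefold. First, your key formula $Y_{\pi}=(\msE_{\pi}\otimes 1)^{-1}\tau(Y_{\pi}^{\dag})^{-1}Z_{\pi}$ presupposes that $\msE_{\pi}$ is invertible, which fails precisely in the flag case $\epsilon_r\in\{0,1\}$ --- the case the whole construction is designed to include (this is why the paper works with $\msE$ and $\Omega_{\epsilon}$ rather than with $\msE^{-1}$ throughout). Second, the Hilbert-series claim that $\mcO_q(G_{\nu})$ matches $\mcO_q(\overline{G})$ is unjustified and in general false: when $\epsilon$ has zero values, $\mcO_q(G_{\nu})$ is a proper quotient of the double (classically, functions on the proper subgroup $K_SN_S^-\subseteq G$), so no graded-dimension comparison of this kind can close the argument. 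Third, you yourself flag the flatness/exactness of passing to the associated graded as unproven, and that is indeed where your route breaks down. The statement can instead be proved without any of this: transport the infinitesimal right action $f\lhd X=((X,-)_{\nu}\otimes\id)\lambda_{\nu}(f)$ of $U_q(\mfg_{\nu})$ through the Peter--Weyl isomorphism above; it preserves each block $V_{\varpi}^*\otimes\Hom(V_{\varpi'},V_{\varpi})\otimes V_{\varpi'}$ and acts only through the middle factor, where a direct computation shows that an invariant $T$ must satisfy $E_rT=\epsilon_r TE_r$, $TF_r=\epsilon_r F_rT$ and $K_{\chi}T=TK_{\chi}$. These relations force $T=0$ unless $\varpi=\varpi'$, in which case $T\in\C\msE_{\varpi}$, so the coinvariants are exactly the image of $j_{\nu}$; this argument is uniform in $\epsilon$, invertible or not.
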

\begin{proof}
Let us identify the space of operators $\Hom(V,W)$ with $W\otimes V^*$ in the natural way,
\[
W\otimes V^* \cong \Hom(V,W),\qquad \xi\otimes \eta^* \mapsto \xi\eta^*.
\]
Since the multiplication map $\mcO_q(\overline{G}) \otimes \mcO_q(G) \rightarrow \mcO_q^{\nu,\id}(G_{\R})$ is bijective, it follows from the Peter-Weyl-decomposition that we have a bijective linear map
\begin{equation}\label{EqIsoDTen}
\mathrm{PW}: \oplus_{\varpi,\varpi' \in P^+} V_{\varpi}^* \otimes \Hom(V_{\varpi'},V_{\varpi}) \otimes V_{\varpi'} \rightarrow \mcO_q^{\nu,\id}(G_{\R}),
\end{equation}
\[
 \xi^*\otimes (\eta\otimes \xi'^*) \otimes \eta' \mapsto \tau(Y_{\varpi}(\eta,\xi)^{\dag})Y_{\varpi'}(\xi',\eta').
\]
Since
\[
j_{\nu}(Y_{\varpi}(\xi,\eta)) = \mathrm{PW}(\xi^* \otimes \msE_{\varpi} \otimes \eta)
\]
and none of the $\msE_{\varpi}$ are zero, this proves that the map $j_{\nu}$ is injective. 

To see that the map is surjective, consider on $\mcO_q^{\nu,\id}(G_{\R})$ the infinitesimal right action of $U_q(\mfg_{\nu})$ via
\[
f\lhd X = ((X,-)_{\nu}\otimes\id)\lambda_{\nu}(f),\qquad X\in U_q(\mfg_{\nu}),f\in \mcO_q^{\nu,\id}(G_{\R}). 
\]
It is easy to see that under the isomorphism \eqref{EqIsoDTen}, the action $\lhd$ restricts to each of the components $V_{\varpi}^* \otimes \Hom(V_{\varpi'},V_{\varpi}) \otimes V_{\varpi'}$, on which it is given by 
\[
(\xi^*\otimes T \otimes \eta) \lhd X = \xi ^*\otimes (T\lhd X)\otimes \eta
\]
for an action of $U_q(\mfg_{\nu})$ on $\Hom(V_{\varpi'},V_{\varpi})$. We are to show that $T \lhd X = \varepsilon(X)T$ for all $X\in U_q(\mfg_{\nu})$ implies $\varpi = \varpi'$ and $T \in \C \msE_{\varpi}$. However, it is easily seen that 
\[
T \lhd E_r = -K_{r}^{-1}E_{r}T + \epsilon_r K_{r}^{-1}TE_r,\quad T \lhd F_r = -\epsilon_r F_{r}K_{r}T K_r^{-1}+ TF_r,\qquad T \lhd K_{\chi} = K_{\chi}^{-1}TK_{\chi}.
\]
Hence if $T \lhd X = \varepsilon(X)T$ for all $X\in U_q(\mfg_{\nu})$, it follows that 
\[
E_r T = \epsilon_r TE_{r},\qquad TF_r = \epsilon_r F_{r}T,\qquad K_{\chi}T = TK_{\chi}.
\]
The first identity implies that $T$ preserves the vector space spanned by highest weight vectors, while the second identity implies that the action of $T$ on the highest weight vector completely determines the action. Combined with the third identity, it then follows  that $T=0$ unless $\varpi' = \varpi$, in which case the space of $T$'s is one-dimensional, consisting of multiples of $\msE_{\varpi}$. 
\end{proof}

We want to determine explicitly the resulting $*$-algebra structure that $\mcO_q(G)$ inherits through the map $j_{\nu}$. 

\begin{Def}
Let $\epsilon: I \rightarrow \R$, and extend $\epsilon$ as before to a semigroup homomorphism $(Q^+,+) \rightarrow (\R,\cdot)$. We define 
\[
\Omega_{\epsilon} \in \mcU_q(\mfg) \hat{\otimes}\mcU_q(\mfg)
\]
as the unique element such that 
\[
\Omega_{\epsilon}\iota =  \epsilon_{\varpi + \varpi' - \varpi''}\iota,\qquad \forall \iota \in \Hom_{U_q(\mfu)}(V_{\varpi''},V_{\varpi}\otimes V_{\varpi'}).
\]
\end{Def}
It is easily seen that $\Omega_{\epsilon}$ is well-defined, as non-zero $\iota$ as above exist only when $\varpi + \varpi' - \varpi'' \in Q^+$. Moreover, when $\epsilon$ has no zero values, it is easily seen that 
\[
\Omega_{\epsilon} = (\msE\otimes \msE)\Delta(\msE^{-1}).
\]
It follows by continuity that, in general, $\Omega_{\epsilon}$ satisfies the $2$-cocycle identity
\[
(\Omega_\epsilon\otimes 1)(\Delta\otimes \id)(\Omega_{\epsilon}) = (1\otimes \Omega_{\epsilon})(\id\otimes \Delta)(\Omega_{\epsilon}).
\]
Moreover, as $\Omega_{\epsilon}$ assumes constant values on isotypical components in the tensor product, we have
\begin{equation}\label{EqCommDelt}
\Omega_{\epsilon}\Delta(X) = \Delta(X) \Omega_{\epsilon},\qquad X\in U_q(\mfu). 
\end{equation}

Finally, we note the following behaviour with respect to the universal $R$-matrix. 

\begin{Lem}
The following identity holds:
\begin{equation}\label{EqCommROmega}
\msR \Omega_{\epsilon} = \Omega_{\epsilon,21}\msR. 
\end{equation}
\end{Lem}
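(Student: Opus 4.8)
The plan is to verify the identity one representation at a time: since $\mcU_q(\mfg)\hat{\otimes}\mcU_q(\mfg)=\prod_{\varpi,\varpi'}\End(V_\varpi)\otimes\End(V_{\varpi'})$, it suffices to check that both sides agree as operators on every $V_\varpi\otimes V_{\varpi'}$. The mechanism is that $\Omega_\epsilon$ is by construction a scalar on isotypic components, and that scalar is insensitive to the order of the two tensor legs, so $\Omega_\epsilon$ will commute with the braiding $\Sigma\msR$, which is exactly the content of \eqref{EqCommROmega} once the flip is stripped off.

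First I would record the two inputs. By the defining property, $\Omega_\epsilon$ acts on $V_\varpi\otimes V_{\varpi'}$ as the scalar $\epsilon_{\varpi+\varpi'-\varpi''}$ on the $V_{\varpi''}$-isotypic component; writing $\Sigma$ for the tensor-flip map, the same formula shows that $\Omega_\epsilon$ acts on $V_{\varpi'}\otimes V_\varpi$ as $\epsilon_{\varpi'+\varpi-\varpi''}=\epsilon_{\varpi+\varpi'-\varpi''}$ on its $V_{\varpi''}$-isotypic component, the two scalars coinciding because $\epsilon$ is a semigroup homomorphism on $Q^+$ and $\varpi+\varpi'-\varpi''$ is symmetric in $\varpi,\varpi'$. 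The second input is that $\Sigma\msR\colon V_\varpi\otimes V_{\varpi'}\to V_{\varpi'}\otimes V_\varpi$ is an isomorphism of $U_q(\mfu)$-modules: from $\msR\Delta(X)=\Delta^{\opp}(X)\msR$ in \eqref{EqPropR1} and the elementary identity $\Sigma\Delta^{\opp}(X)=\Delta(X)\Sigma$ one gets $(\Sigma\msR)\Delta(X)=\Delta(X)(\Sigma\msR)$ for all $X\in U_q(\mfu)$.

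The key step is then purely formal. Being a module isomorphism, $\Sigma\msR$ carries the $V_{\varpi''}$-isotypic component of $V_\varpi\otimes V_{\varpi'}$ onto that of $V_{\varpi'}\otimes V_\varpi$, hence intertwines the corresponding isotypic projections. Since $\Omega_\epsilon$ equals the single scalar $\epsilon_{\varpi+\varpi'-\varpi''}$ on both of these components, it commutes with $\Sigma\msR$, i.e.\ $\Sigma\msR\,\Omega_\epsilon=\Omega_\epsilon\,\Sigma\msR$ as maps $V_\varpi\otimes V_{\varpi'}\to V_{\varpi'}\otimes V_\varpi$ (with $\Omega_\epsilon$ read on the source on the left and on the target on the right). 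Composing on the left with $\Sigma$, using $\Sigma^2=\id$ and the general fact $\Sigma\Omega_\epsilon\Sigma=\Omega_{\epsilon,21}$, gives $\msR\,\Omega_\epsilon=\Omega_{\epsilon,21}\,\msR$ on $V_\varpi\otimes V_{\varpi'}$, which is the asserted identity.

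I expect no genuine obstacle: the only points requiring care are the bookkeeping that $\Omega_\epsilon$ acts by the \emph{same} scalar in the two tensor orders (guaranteed by $\epsilon$ being a semigroup homomorphism) and that $\Sigma\msR$ is a module map (exactly the quasi-cocommutativity in \eqref{EqPropR1}). As an alternative one could first treat $\epsilon$ with no zero values, where $\Omega_\epsilon=(\msE\otimes\msE)\Delta(\msE^{-1})$ so that $\Omega_{\epsilon,21}=(\msE\otimes\msE)\Delta^{\opp}(\msE^{-1})$ and the claim reduces to the relation between $\msR$, $\Delta$ and $\Delta^{\opp}$, and then pass to arbitrary $\epsilon$ by the same weak-limit/continuity argument already invoked for the $2$-cocycle identity; but the isotypic-component argument above avoids any case distinction.
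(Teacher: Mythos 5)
Your proof is correct and is essentially the paper's own argument: the paper's one-line proof likewise observes that $\Sigma(\pi\otimes\pi')\msR$ preserves spectral (isotypic) subspaces while $(\pi\otimes\pi')\Omega_{\epsilon}$ is constant on them, which is exactly your combination of quasi-cocommutativity with the symmetry of the scalar $\epsilon_{\varpi+\varpi'-\varpi''}$. Your write-up just makes the bookkeeping with $\Sigma$ and $\Omega_{\epsilon,21}$ explicit.
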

\begin{proof}
This follows immediately from the fact that, with $\Sigma$ the flip map, the elements $\Sigma (\pi\otimes \pi')\msR$ preserve spectral subspaces, while the $(\pi\otimes \pi')\Omega_{\epsilon}$ are constant on spectral subspaces.
\end{proof}

The following modifies the construction of braided Hopf algebras  as in \cite{Maj93a}*{Theorem 4.1}, see also \cite{KS97}*{Proposition 10.3.30} and the references in \cite{Maj95}.

\begin{Def}
We define $\mcO_q(Z_{\nu}) = \mcO_q^{\nu-\braid}(G)$ to be the vector space $\mcO_q(G)$ with the product 
\begin{equation}\label{EqDefProdBraid}
f * g = (g_{(2)} \otimes f_{(1)},\Omega_{\epsilon})\mbr(f_{(2)},g_{(3)})f_{(3)}g_{(4)}\mbr(f_{(4)},\tau(S(g_{(1)}))) 
\end{equation}
and the $*$-structure 
\begin{equation}\label{EqDefStarBraid}
f^{\sharp} = \tau(S(f)^*),
\end{equation}
where on the right hand sides one uses the Hopf $*$-algebra structure of $\mcO_q(U)$. 
\end{Def}

We will show in a moment, by an indirect argument, that this defines a unital $*$-algebra structure, but for now we just view the above as a binary and unary operation. 

\begin{Def}
We will call $\mcO_q(Z_{\nu})$ the \emph{$\nu$-modified braided Hopf algebra}. 
\end{Def}

Note that the unit $1\in \mcO_q(G)$ is still the unit of $\mcO_q(Z_{\nu})$. We record the following `inverse formula' to \eqref{EqDefProdBraid}.

\begin{Lem} 
For all $f,g\in \mcO_q(Z_{\nu})$ we have 
\begin{equation}\label{EqAlternateK} 
(f_{(1)} \otimes g_{(1)},\Omega_{\epsilon})f_{(2)}g_{(2)} =  \mbr(S(f_{(1)}),g_{(1)})f_{(2)} * g_{(3)}\mbr(f_{(3)},\tau(g_{(2)})).\end{equation} 
\end{Lem}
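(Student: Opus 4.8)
The plan is to prove \eqref{EqAlternateK} by taking its right-hand side, substituting the definition \eqref{EqDefProdBraid} of the braided product into the factor $f_{(2)} * g_{(3)}$, and then collapsing the resulting scalar factors one group at a time until only $(f_{(1)} \otimes g_{(1)}, \Omega_{\epsilon}) f_{(2)} g_{(2)}$ survives. The only ingredients are the skew-pairing axioms for $\mbr$ — namely $\mbr(ff', g) = \mbr(f, g_{(1)}) \mbr(f', g_{(2)})$ and $\mbr(f, gg') = \mbr(f_{(1)}, g') \mbr(f_{(2)}, g)$, which follow by pairing the two coproduct identities for $\msR$ in \eqref{EqPropR1} — together with the antipode axioms $\sum h_{(1)} S(h_{(2)}) = \varepsilon(h) = \sum S(h_{(1)}) h_{(2)}$, the fact that $\tau$ is a Hopf-algebra automorphism commuting with $S$, and the single non-formal input \eqref{EqCommROmega}.

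First I would expand. After substituting \eqref{EqDefProdBraid} and relabeling by coassociativity, the right-hand side becomes the ordinary product $f_{(4)} g_{(6)}$ multiplied by the scalars $\mbr(S(f_{(1)}), g_{(1)})$, $(g_{(4)} \otimes f_{(2)}, \Omega_{\epsilon})$, $\mbr(f_{(3)}, g_{(5)})$, $\mbr(f_{(5)}, \tau S(g_{(3)}))$ and $\mbr(f_{(6)}, \tau(g_{(2)}))$. The two last factors involve only the legs $f_{(5)}, f_{(6)}$ and $g_{(2)}, g_{(3)}$, which occur nowhere else; using $\tau S = S \tau$ and multiplicativity of $\mbr$ in its first argument they combine into $\mbr(f_{(5)} f_{(6)}, \tau(g_{(2)}) S(\tau(g_{(3)})))$, and since $\sum g_{(2)} S(g_{(3)}) = \varepsilon 1$ this collapses to a counit, removing those four legs. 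After relabeling I am left with $\mbr(S(f_{(1)}), g_{(1)}) (g_{(2)} \otimes f_{(2)}, \Omega_{\epsilon}) \mbr(f_{(3)}, g_{(3)}) f_{(4)} g_{(4)}$.

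The crux is to straighten out the flipped factor $(g_{(2)} \otimes f_{(2)}, \Omega_{\epsilon})$. Pairing \eqref{EqCommROmega} against $f \otimes g$ yields the scalar identity
\[
\mbr(f_{(1)}, g_{(1)}) (f_{(2)} \otimes g_{(2)}, \Omega_{\epsilon}) = (g_{(1)} \otimes f_{(1)}, \Omega_{\epsilon}) \mbr(f_{(2)}, g_{(2)}),
\]
which, applied to the consecutive legs $(f_{(2)}, f_{(3)})$ and $(g_{(2)}, g_{(3)})$, turns $(g_{(2)} \otimes f_{(2)}, \Omega_{\epsilon}) \mbr(f_{(3)}, g_{(3)})$ into $\mbr(f_{(2)}, g_{(2)}) (f_{(3)} \otimes g_{(3)}, \Omega_{\epsilon})$. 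The two now-adjacent factors $\mbr(S(f_{(1)}), g_{(1)}) \mbr(f_{(2)}, g_{(2)})$ then combine, via multiplicativity in the first argument, into $\mbr(S(f_{(1)}) f_{(2)}, g)$ and hence into a counit by $\sum S(f_{(1)}) f_{(2)} = \varepsilon 1$. What remains, after relabeling, is exactly $(f_{(1)} \otimes g_{(1)}, \Omega_{\epsilon}) f_{(2)} g_{(2)}$, the left-hand side.

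The computation is entirely formal once these axioms are recorded, so the only genuine obstacle is the Sweedler bookkeeping: tracking which legs appear in which scalar factor so that the three collapses (the two antipode/counit collapses and the $\Omega_{\epsilon}$-flip via \eqref{EqCommROmega}) can be carried out without disturbing the surviving legs. The conceptual content is concentrated in \eqref{EqCommROmega}, which is precisely what allows $\Omega_{\epsilon}$ to be transported past $\mbr$ with its legs reversed.
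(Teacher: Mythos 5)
Your argument follows essentially the same route as the paper's proof: expand the definition of $*$, collapse one pair of $\mbr$-factors by an antipode identity, transport $\Omega_{\epsilon}$ past $\mbr$ using \eqref{EqCommROmega}, and collapse the remaining pair. The paper merely performs the \eqref{EqCommROmega}-flip first, rewriting
\[
f * g =\mbr(f_{(1)},g_{(2)}) (f_{(2)} \otimes g_{(3)},\Omega_{\epsilon})f_{(3)}g_{(4)}\mbr(f_{(4)},\tau(S(g_{(1)}))),
\]
and then quotes $(S\otimes \id)\msR = \msR^{-1}$ together with the fact that $(\id\otimes S)\msR$ inverts $\msR$ in $\mcU_q(\mfg)\hat{\otimes}\mcU_q(\mfg)^{\opp}$; the ingredients and the Sweedler bookkeeping are otherwise identical to yours.

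One step in your write-up is justified by the wrong rule, though. You claim that
\[
\mbr(f_{(5)},\tau(S(g_{(3)})))\,\mbr(f_{(6)},\tau(g_{(2)})) = \mbr\bigl(f_{(5)}f_{(6)},\,\tau(g_{(2)})S(\tau(g_{(3)}))\bigr)
\]
by ``multiplicativity of $\mbr$ in its first argument''. That rule reads $\mbr(ab,c)=\mbr(a,c_{(1)})\mbr(b,c_{(2)})$, so un-combining your right-hand side produces the coproduct legs of $\tau(g_{(2)})S(\tau(g_{(3)}))$ in the second slots; but $\Delta\bigl(\tau(g_{(2)})S(\tau(g_{(3)}))\bigr)$ is the product of the two coproducts, not $\tau(S(g_{(3)}))\otimes\tau(g_{(2)})$, so the displayed equality is not an instance of that rule. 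The correct tool here is the skew rule in the \emph{second} argument, $\mbr(a,cd)=\mbr(a_{(1)},d)\mbr(a_{(2)},c)$, which combines your two factors into $\mbr\bigl(f_{(5)},\tau(g_{(2)}S(g_{(3)}))\bigr)$ --- a single $f$-leg --- after which $g_{(2)}S(g_{(3)})=\varepsilon(g_{(2)})1$ collapses it; this is exactly the paper's invocation that $(\id\otimes S)\msR$ is the inverse of $\msR$ with respect to $\mcU_q(\mfg)\hat{\otimes}\mcU_q(\mfg)^{\opp}$. Since the legs $g_{(2)},g_{(3)}$ occur nowhere else, your intermediate expression happens to evaluate to the same counits, so your chain of equalities is numerically true and the conclusion of the step stands; the rest of the proof --- the \eqref{EqCommROmega}-flip on consecutive legs and the final collapse of $\mbr(S(f_{(1)}),g_{(1)})\mbr(f_{(2)},g_{(2)})$, which correctly uses the first-argument rule and $(S\otimes\id)\msR=\msR^{-1}$ --- is sound. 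But the justification of that one combination should be replaced by the skew rule as indicated.
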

\begin{proof}
From \eqref{EqCommROmega}, we see that
\[
f * g =\mbr(f_{(1)},g_{(2)}) (f_{(2)} \otimes g_{(3)},\Omega_{\epsilon})f_{(3)}g_{(4)}\mbr(f_{(4)},\tau(S(g_{(1)}))).
\]
The result then follows from the fact that $(S\otimes \id)\msR = \msR^{-1}$, while $(\id\otimes S)\msR$ is the inverse of $\msR$ with respect to $\mcU_q(\mfg)\hat{\otimes} \mcU_q(\mfg)^{\opp}$, where $\mcU_q(\mfg)^{\opp}$ has the opposite product. 
\end{proof}
\begin{Rem}
In case $\msE$ is invertible, we can further reduce \eqref{EqAlternateK} by inverting $\Omega_{\epsilon}$ and using \eqref{EqCommROmega},
\begin{equation}\label{EqAlternateKInv}
fg = \mbr(S(f_{(1)}),g_{(1)})(g_{(2)}\otimes f_{(2)},\Omega_{\epsilon}^{-1})f_{(3)}*g_{(4)} \mbr(f_{(4)},\tau(g_{(3)})).
\end{equation}
\end{Rem}
\begin{Theorem}\label{TheoIsoZCoset}
The map $j_{\nu}$ introduced in \eqref{EqIsoj} induces an isomorphism of $*$-algebras 
\[
j_{\nu}: \mcO_q(Z_{\nu}) \cong \mcO_q(G_{\nu}\dbbackslash G_{\R}).
\]
\end{Theorem}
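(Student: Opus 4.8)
The plan is to verify that $j_\nu$ intertwines the two products and the two involutions; since $\mcO_q(G_{\nu}\dbbackslash G_{\R})$ is already a genuine associative unital $*$-algebra (as a $*$-subalgebra of the double) and $j_\nu$ is a linear bijection by Proposition \ref{PropLinBij}, establishing these two compatibilities will simultaneously prove that the operations \eqref{EqDefProdBraid} and \eqref{EqDefStarBraid} equip $\mcO_q(Z_\nu)$ with an associative unital $*$-algebra structure — this is the ``indirect argument'' promised right after \eqref{EqDefStarBraid} — and that $j_\nu$ is an isomorphism. Concretely, writing $f = Y(\xi,\eta)$ and $g = Y(\xi',\eta')$, it suffices to prove the two identities
\[
j_\nu(f)\,j_\nu(g) = j_\nu(f * g), \qquad j_\nu(f)^{\dag} = j_\nu(f^{\sharp})
\]
on matrix coefficients, since these span $\mcO_q(G)$ and all maps in sight are (anti)linear.

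For multiplicativity I would work with the full matrices and compute the product $Z_{13}Z_{23}'$ in $\End(V_\pi)\otimes\End(V_{\pi'})\otimes\mcO_q(G_{\nu}\dbbackslash G_{\R})$, where $Z = Z_\pi$ and $Z' = Z_{\pi'}$. Substituting $Z_\pi = \tau(Y_\pi^{\dag})(\msE_\pi\otimes 1)Y_\pi$, the task is to commute the inner holomorphic factor $Y_{13}$ past the outer antiholomorphic factor $\tau(Y'^{\dag})_{23}$. This is governed precisely by the fundamental interchange relation \eqref{EqDefHolAntihol} of Lemma \ref{LemFundInt} (with $\mu = \id$), which produces the two $R$-matrix dressings, whose pairings become the factors $\mbr(f_{(2)},g_{(3)})$ and $\mbr(f_{(4)},\tau(S(g_{(1)})))$ of \eqref{EqDefProdBraid}; the $\tau$-twist and $\epsilon$-dependence of $\msR_\nu$ are exactly what convert one plain pairing into a $\tau$-twisted one. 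The central factor $\msE_\pi\otimes\msE_{\pi'}$ is then rewritten as $\Omega_\epsilon\,\Delta(\msE)$ — valid in general by continuity from the invertible case where $\Omega_\epsilon = (\msE\otimes\msE)\Delta(\msE^{-1})$ — which extracts the coefficient $(g_{(2)}\otimes f_{(1)},\Omega_\epsilon)$ and leaves behind $\Delta(\msE) = \msE_{\pi\otimes\pi'}$. What remains is a dressed copy of $Z_{\pi\otimes\pi'} = \tau(Y_{\pi\otimes\pi'}^{\dag})(\msE_{\pi\otimes\pi'}\otimes 1)Y_{\pi\otimes\pi'}$, so applying $j_\nu^{-1}$ — which merely replaces each $Z$-coefficient by the corresponding $Y$-coefficient, and $Y_{\pi\otimes\pi'}$ by $Y_{13}Y_{23}'$ — reproduces the braided product \eqref{EqDefProdBraid} termwise. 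Along the way one uses $\msR\,\Omega_\epsilon = \Omega_{\epsilon,21}\msR$ of \eqref{EqCommROmega} to move $\Omega_\epsilon$ through the $R$-matrices, just as in the derivation of \eqref{EqAlternateK}.

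For $*$-compatibility I would start from the identity $Z_\pi^{\dag} = \tau(Z_\pi)$ recorded just after the reflection equation. Passing to matrix coefficients and using $S(Y(\xi,\eta))^* = Y(\eta,\xi)$ (as in the proof of Lemma \ref{LemFundInt}) together with the definition $f^{\sharp} = \tau(S(f)^*)$, one reads off that $Z(\xi,\eta)^{\dag}$ is the $j_\nu$-image of $\tau(S(Y(\xi,\eta))^*) = Y(\xi,\eta)^{\sharp}$, which is the second required identity. Here one must keep track of how $\tau$ sends the block $V_\varpi$ to $V_{\tau\varpi}$ and of how $\msE_\varpi$ behaves under the weight-reversal implicit in $\dag$, but these are routine once the matrix identity $Z_\pi^{\dag} = \tau(Z_\pi)$ is available.

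The main obstacle is the multiplicativity computation: it is pure bookkeeping, but delicate bookkeeping. One must orient \eqref{EqDefHolAntihol} correctly (it is stated in a fixed index convention, whereas here the daggered factor carries an extra $\tau$ and occupies the outer slot), track the two distinct $R$-matrices $\msR$ and $\msR_\nu$ and their placement relative to $\Omega_\epsilon$, and finally match the resulting Sweedler-indexed expression against \eqref{EqDefProdBraid}, in particular checking that the $\msE$-to-$\Omega_\epsilon$ passage delivers the coefficient in the correct leg order $(g_{(2)}\otimes f_{(1)})$. Once this single identity is secured, associativity, unitality and the remaining $*$-algebra axioms for $\mcO_q(Z_\nu)$ need no separate proof, being transported from $\mcO_q(G_{\nu}\dbbackslash G_{\R})$ through the bijection $j_\nu$.
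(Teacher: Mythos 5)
Your skeleton --- linear bijectivity from Proposition \ref{PropLinBij}, transport of the $*$-algebra structure through the bijection (the ``indirect argument''), triviality of the $*$-check, and a continuity-in-$\epsilon$ reduction --- is exactly the paper's, and your $*$-compatibility argument via $Z_\pi^{\dag} = \tau(Z_\pi)$ is fine. The gap is in the multiplicativity step, at the sentence ``what remains is a dressed copy of $Z_{\pi\otimes\pi'}$''. After you use \eqref{EqDefHolAntihol} to push $Y_{13}$ to the right of $\tau(Y'^{\dag})_{23}$, the two antiholomorphic factors stand in the order $\tau(Y^{\dag})_{13}\tau(Y'^{\dag})_{23}$; but since $\dag$ is anti-multiplicative, $Z_{\pi\otimes\pi'} = \tau(Y_{\pi\otimes\pi'}^{\dag})(\msE_{\pi\otimes\pi'}\otimes 1)Y_{\pi\otimes\pi'}$ carries them in the opposite order $\tau(Y'^{\dag})_{23}\tau(Y^{\dag})_{13}$, and entries of $\mcO_q(\overline{G})$ do not commute (inside the double their mutual products are undeformed, so they multiply as in the anti-homomorphic copy of $\mcO_q(G)$). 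Hence \eqref{EqDefHolAntihol} together with $\Omega_\epsilon\Delta(\msE) = \msE\otimes\msE$ does not suffice: you also need the FRT-type commutation relation among antiholomorphic corepresentation matrices --- this is exactly the $\msR_{21}$-conjugation step in the paper's proof of the $\tau$-modified reflection equation --- and you must then check that the extra $R$-matrix it introduces recombines with $\msR_\nu$ and $\msR^{-1}$ to yield the three coefficients of \eqref{EqDefProdBraid}. A second, related problem: \eqref{EqDefProdBraid} admits no ``linear'' matrix form to match termwise, because its contraction pattern is cyclic (in leg $1$ the order is $\Omega_\epsilon$, $\msR$, kept factor, last pairing; in leg $2$ it is last pairing, $\Omega_\epsilon$, $\msR$, kept factor). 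The only clean matrix target is the inverse formula \eqref{EqAlternateK}, whose $j_\nu$-image reads $\Omega_{\epsilon,12}\,Z_{\pi\otimes\pi',(12)3} = \msR_{12}^{-1}Z_{13}\msR_{\tau,12}Z'_{23}$; and passing from that identity back to multiplicativity of $j_\nu$ requires inverting $\Omega_\epsilon$, i.e.\ invertibility of $\msE$, so the continuity reduction must cover the whole argument, not only the identity $\Omega_\epsilon = (\msE\otimes\msE)\Delta(\msE^{-1})$.

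The paper sidesteps both difficulties with a device absent from your sketch. Since $j_\nu$ is already known to be a linear bijection, one writes $j_\nu(f)j_\nu(g) = j_\nu(h)$ for an \emph{unknown} $h\in\mcO_q(Z_\nu)$, computes the left-hand side with the cocycle relations while leaving the antiholomorphic legs in whatever order they arrive, and then extracts $h$ by applying the counit to those legs and bringing $\msE$ to the other side --- the ordering problem evaporates because $\varepsilon$ is a character, and this is where invertibility of $\msE$ (hence the restriction $\epsilon_r\neq 0$, removed afterwards by continuity) enters. The identity $h = f*g$ is then verified not by matrix manipulation but by pairing both sides against arbitrary $X\in U_q(\mfg)$, using $\mbr_\nu(f,g) = g_{(1)}(\msE)\,\mbr(f,\tau(g_{(2)}))\,g_{(3)}(\msE^{-1})$ and the exchange relations \eqref{EqCommE}. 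If you wish to keep a matrix-level proof, reorganize it around \eqref{EqAlternateK} and import the antiholomorphic commutation relation from the reflection-equation proof; as written, the central identification step does not go through.
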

Remark that, borrowing again the coproduct from $\mcO_q(U)$, the map $j_{\nu}$ can be written more intrinsically as 
\begin{equation}\label{EqDefjnu}
j_{\nu}: \mcO_q(Z_{\nu}) \rightarrow \mcO_q(G_{\nu}\dbbackslash G_{\R}),\quad f \mapsto f_{(2)}(\msE) \tau(S(f_{(1)}))^{*\dag} f_{(3)}. 
\end{equation}
\begin{proof}
By Proposition \ref{PropLinBij}, it only remains to show that $j_{\nu}$ is a $*$-algebra map. Now the preservation of $*$-structures follows immediately from the definitions and \eqref{EqIsoj}.  On the other hand, to show that $j_{\nu}$ is multiplicative we may restrict to the case where  none of the $\epsilon_r$ are zero, as the structure coefficients of our algebras depend continuously on the $\epsilon_r$. Let us fix elements $f, g \in \mcO_q(Z_{\nu})$. By the bijectivity of the map $j_{\nu}$, there exists $h\in \mcO_q(Z_{\nu})$ such that
\[
j_{\nu}(f)j_{\nu}(g) = j_{\nu}(h).
\] 
We are to show that $h = f * g$. We have
\begin{align*}
j_{\nu}(f)j_{\nu}(g) &= f_{(2)}(\msE)g_{(2)}(\msE) \tau(S(f_{(1)}))^{*\dag} f_{(3)}   \tau(S(g_{(1)}))^{*\dag} g_{(3)} \\ 
&= f_{(2)}(\msE)g_{(4)}(\msE)  \omega_{\nu}^{-1}(\tau(S(g_{(3)}))^{*\dag},f_{(3)})
\tau(S(f_{(1)}))^{*\dag} \tau(S(g_{(2)}))^{*\dag} f_{(4)}g_{(5)}\omega_{\id}(\tau(S(g_{(1)}))^{*\dag},f_{(5)})\\
&= f_{(2)}(\msE)g_{(4)}(\msE) \mbr_{\nu}(f_{(3)},\tau(g_{(3)})) \tau(S(f_{(1)}))^{*\dag} \tau(S(g_{(2)}))^{*\dag} f_{(4)} g_{(5)}\mbr(f_{(5)},\tau(S(g_{(1)})))\\
&= h_{(2)}(\msE)\tau(S(h_{(1)}))^{*\dag} h_{(3)}.
\end{align*} 
Applying the counit to the $*\dag$-parts and bringing $\msE$ to the other side, this reads
\[
h = f_{(1)}(\msE)g_{(3)}(\msE)  \mbr_{\nu}(f_{(2)},\tau(g_{(2)}))(f_{(3)}g_{(4)}(\msE^{-1}))f_{(4)} g_{(5)}\mbr(f_{(5)},\tau(S(g_{(1)}))).
\]

But since 
\[
\mbr_{\nu}(f,g) = g_{(1)}(\msE) \mbr(f,\tau(g_{(2)}))g_{(3)}(\msE^{-1}),
\]
we then have for $X\in U_q(\mfg)$ that 
\begin{align*}
h(X) &= f_{(1)}(\msE)g_{(2)}(\msE) \mbr(f_{(2)},g_{(3)}) (f_{(3)} g_{(4)}(\msE^{-1}))(f_{(4)}g_{(5)}(X))\mbr(f_{(5)},\tau(S(g_{(1)})))\\  
&=  (f \otimes g, \msE  \msR_1\msE_{(1)}^{-1} X_{(1)} \msR_{1'} \otimes \tau(S(\msR_{2'}))\msE \msR_2\msE_{(2)}^{-1} X_{(2)}). \\
&= (f \otimes g, \msE  \msE_{(2)}^{-1}\msR_1 X_{(1)} \msR_{1'} \otimes \tau(S(\msR_{2'}))\msE \msE_{(1)}^{-1}\msR_2 X_{(2)}).
\end{align*}
It follows that $h = f*g$. 
\end{proof}

Now as the left coaction $\lambda_{\nu}$ commutes with the comultiplication $\Delta_{\nu,\id}^{\id}$, viewed as a right coaction on $\mcO_q^{\nu,\id}(G_{\R})$, it follows that the latter descends to a right coaction of $\mcO_q(G_{\R}) = \mcO_q^{\id}(G_{\R})$ on $\mcO_q(G_{\nu}\dbbackslash G_{\R})$. From the bijectivity of $j_{\nu}$, we get that $\mcO_q(Z_{\nu})$ inherits this coaction, given concretely via 
\begin{equation}\label{Eqdeltanu}
\delta_{\nu}: \mcO_q(Z_{\nu}) \rightarrow \mcO_q(Z_{\nu}) \otimes \mcO_q(G_{\R}): Z_{\pi} \mapsto \tau(Y_{\pi})_{13}^{\dag}Z_{\pi,12} Y_{\pi,13},
\end{equation}
where we transport the matrices $Z_{\pi}$ to $\mcO_q(Z_{\nu})$.  This in particular descends to a coaction of $\mcO_q(U)$ as
\begin{equation}\label{Eqrhonu}
\rho_{\nu}: Z_{\pi} \mapsto \tau(U_{\pi})_{13}^{*}Z_{\pi,12} U_{\pi,13},
\end{equation}
which we can also write as a twisted coadjoint coaction
\begin{equation}\label{EqCoactrho}
\rho_{\nu}(f) = f_{(2)}\otimes S(\tau(f_{(1)}))f_{(3)},\qquad f\in \mcO_q(Z_{\nu}).
\end{equation}
On the other hand, we can also descend to a coaction of $\mcO_q(B_{\R})$ by 
\[
\beta_{\nu}: Z_{\pi} \mapsto \tau(T_{\pi}^-)_{13}^{-1}Z_{\pi,12} T_{\pi,13}^+,
\]
which through the natural Hopf $*$-algebra surjection $\iota:\mcO_q(B_{\R}) \rightarrow U_q(\mfu)^{\cop}$  introduced in \eqref{EqDefiota} descends to a left coaction 
\begin{equation}\label{Eqgammanu}
\gamma_{\nu}: \mcO_q(Z_{\nu}) \rightarrow U_q(\mfu) \otimes \mcO_q(Z_{\nu}). 
\end{equation}
Since $(\id\otimes \iota)T_{\pi}^+ = (\pi\otimes \id)\msR$ and $(\id\otimes \iota)T_{\pi}^- = (\pi\otimes \id)\msR_{21}^{-1}$, we can write 
\begin{equation}\label{EqCoactgamm}
(\id\otimes \gamma_{\nu})Z_{\pi} = (\pi\otimes \id)(\msR_{\tau,21}) Z_{\pi,13}(\pi\otimes \id)(\msR)_{12}.
\end{equation}

\begin{Rem}\label{RemSpectrumZ}
The notation $Z_{\nu}$ refers to an isomorphic copy of the real spectrum of $\mcO(G_{\nu}\dbbackslash G_{\R})$. Classically, it corresponds to the real variety of $Y \in \prod_{\varpi}\End(V_{\varpi})$ satisfying the identities
\begin{equation}\label{EqClassY}
Y\otimes Y = \Omega_{\epsilon}\Delta(Y),\qquad \tau(Y)^* = Y.
\end{equation}

We have
\[
G_{\nu}\backslash G_{\R} \rightarrow Z_{\nu},\quad G_{\nu}g \mapsto \tau(g)^*\msE g.
\]

When $\msE$ is invertible, we can rewrite \eqref{EqClassY} as 
\[
\msE^{-1}Y\otimes \msE^{-1}Y = \Delta(\msE^{-1}Y),
\] 
so that we can naturally identify 
\[
Z_{\nu} \cong H_{\nu} := \{g\in G \mid \tau(g)^* = \msE g\msE^{-1}\} \subseteq G \subseteq \prod_{\varpi} \End(V_{\varpi}),\quad Y \mapsto \msE^{-1}Y.
\]
If moreover $\nu$ is of symmetric type, so that we can view $\nu$ as an involutive automorphism of $G$, we obtain that 
\begin{equation}\label{EqHnu}
H_{\nu} = \{g\in G\mid \nu(g)^* = g\},
\end{equation}
and the map $ G_{\nu}\backslash G_{\R} \rightarrow Z_{\nu}\cong H_{\nu}$ is in this case given by $G_{\nu}g \mapsto  \nu(g)^*g$, which is in general not surjective but has Zariski dense image. 
\end{Rem}

\subsection{Embedding of $\mcO_q(Z_{\nu})$ inside $\mcO_q^{\nu,\id}(B_{\R})$}

We want to relate now $\mcO_q(Z_{\nu})$ with $\mcO_q^{\nu,\id}(B_{\R})$. Recall the $*$-homomorphism $\msP$ introduced in \eqref{EqMapP}.

\begin{Prop}\label{PropInu}
The natural $*$-homomorphism 
\[
I_{\nu} = \msP\circ j_{\nu}: \mcO_q(Z_{\nu}) \rightarrow \mcO_q^{\nu,\id}(B_{\R})
\]
is injective.
\end{Prop}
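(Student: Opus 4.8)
The plan is to exploit that $j_\nu$ is already a $*$-algebra isomorphism onto $\mcO_q(G_\nu\dbbackslash G_\R)$ (Theorem \ref{TheoIsoZCoset}), so that $I_\nu$ is injective precisely when $\msP$ is injective on the subalgebra $\mcO_q(G_\nu\dbbackslash G_\R)\subseteq\mcO_q^{\nu,\id}(G_\R)$. Since the matrix coefficients $Z(\xi,\eta)$ linearly span this subalgebra (Proposition \ref{PropLinBij}), it suffices to prove that their images $\msP(Z(\xi,\eta))$ are linearly independent in $\mcO_q^{\nu,\id}(B_\R)$. First I would compute $\msP(Z_\pi)$ explicitly: as $\msP$ is a $*$-homomorphism and $\msE_\pi\otimes1$ is scalar on the algebra factor, applying $\id\otimes\msP$ to $Z_\pi=\tau(Y_\pi^\dag)(\msE_\pi\otimes1)Y_\pi$ and using $(\id\otimes\msP)Y_\pi=T_\pi^+$ together with \eqref{EqStarT} and \eqref{EqMsP+} yields $\msP(Z_\pi)=\tau((T_\pi^+)^*)(\msE_\pi\otimes1)T_\pi^+$, that is, on matrix coefficients (with $\xi,\eta$ weight vectors)
\[
\msP(Z(\xi,\eta))=\sum_\zeta \tau\big((T^+_\pi(\zeta,\xi))^*\big)\,\epsilon_{\varpi-\wt(\zeta)}\,T^+_\pi(\zeta,\eta),
\]
where $\zeta$ runs over an orthonormal weight basis of $V_\varpi$.

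The key structural input is the weight-triangularity of $T^+_\pi$: as recorded in the proof of Proposition \ref{PropiotIso}, the diagonal entries $T^+_\varpi(\zeta,\zeta)=L_{-\wt(\zeta)}^-$ are invertible and group-like, while $T^+_\varpi(\zeta,\eta)$ vanishes unless $\wt(\zeta)-\wt(\eta)\in Q^+$. Thus $T^+_\pi(\zeta,\eta)$ carries $\mcO_q(B)$-degree $\wt(\zeta)-\wt(\eta)$ and dually $\tau((T^+_\pi(\zeta,\xi))^*)$ carries $\mcO_q(B^-)$-degree $\wt(\zeta)-\wt(\xi)$. Because $\epsilon$ is a semigroup homomorphism with $\epsilon_0=1$, the summand $\zeta=\xi_\varpi$ always occurs with coefficient $1$ and realizes the maximal bidegree $(\varpi-\wt(\xi),\varpi-\wt(\eta))$, where it equals $\tau((T^+_\pi(\xi_\varpi,\xi))^*)\,T^+_\pi(\xi_\varpi,\eta)$. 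I would isolate this top-bidegree component and use that the assignments $\xi\mapsto T^+_\pi(\xi_\varpi,\xi)$ and $\eta\mapsto T^+_\pi(\xi_\varpi,\eta)$ are injective; this follows since $\xi_\varpi$ is cyclic, $\pi(U_q(\mfb^-))\xi_\varpi=V_\varpi$, so that $X\mapsto\langle\xi_\varpi,\pi(X)\xi\rangle=\langle\pi(X^*)\xi_\varpi,\xi\rangle$ separates weight vectors as $X$ ranges over $U_q(\mfb^+)$. Since the multiplication $\mcO_q(B^-)\otimes\mcO_q(B)\to\mcO_q^{\nu,\id}(B_\R)$ is bijective — it is a $2$-cocycle twist of $\mcO_q^{\com}(B_\R)$ — linearly independent families in the two tensor factors produce a linearly independent family of products.

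To upgrade these leading terms to full linear independence I would argue by a double grading. The left and right torus weights of $\msP(Z(\xi,\eta))$ pin down $\wt(\xi)$ and $\wt(\eta)$, so it is enough to treat a fixed pair $(\wt(\xi),\wt(\eta))$. Within such a graded piece the map $\varpi\mapsto(\varpi-\wt(\xi),\varpi-\wt(\eta))$ sending an irreducible to its maximal bidegree is injective, and all terms arising from a fixed $(\varpi,\xi,\eta)$ lie below this maximum along a fixed diagonal of bidegrees. One therefore peels off the overall top bidegree — which only the dominant $\varpi$ attains, through its $\zeta=\xi_\varpi$ term — applies the injectivity just established, subtracts, and continues by downward induction on $\varpi$ in the dominance order. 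The main obstacle is exactly this separation across different irreducibles $V_\varpi$, whose matrix coefficients can collapse onto the same elements of the comparatively small Borel algebras; the grading/leading-term bookkeeping, together with getting the $*$-, $\dag$- and $\tau$-identifications in the formula for $\msP(Z_\pi)$ precisely right, is where the real work lies. Alternatively, one could frame the same reduction conceptually: $I_\nu$ is a morphism of $\mcO_q(U)$-comodules for the twisted coadjoint coaction $\rho_\nu$ of \eqref{EqCoactrho}, so $\ker I_\nu$ is a subcomodule of the cosemisimple comodule $\mcO_q(Z_\nu)$, and it then suffices to verify that no isotypic component is annihilated, which is precisely what the highest-weight computation above supplies.
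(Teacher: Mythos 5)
Your proposal is correct, but it takes a genuinely different route from the paper's proof, which is a duality argument rather than a linear-independence argument inside the target. The paper assumes $I_{\nu}(f)=0$ and pairs against $U_q(\mfb^-)\otimes U_q(\mfb^+)$ --- using the same vector space identification $\mcO_q^{\nu,\id}(B_{\R})\cong \mcO_q(B^-)\otimes\mcO_q(B)$ and the stability of $U_q(\mfb^-)$ under $\tau\circ S$ that you invoke --- to conclude $f(X\msE Y)=0$ for all $X\in U_q(\mfb^-)$, $Y\in U_q(\mfb^+)$; the commutation relations $Y\msE=\msE\nu_{\epsilon}(Y)$, $\msE X=\nu_{\epsilon}(X)\msE$ of \eqref{EqCommE} together with the triangular decomposition $U_q(\mfg)=U_q(\mfb^+)U_q(\mfb^-)=U_q(\mfb^-)U_q(\mfb^+)$ then upgrade this to $f(X\msE Y)=0$ for all $X,Y\in U_q(\mfg)$, and Peter--Weyl plus the fact that the center of $U_q(\mfg)$ separates irreducible representations reduce the claim to $\End(V_{\varpi})\msE_{\varpi}\End(V_{\varpi})=\End(V_{\varpi})$, which holds since $\msE_{\varpi}\neq0$. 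That argument is short and needs no leading-term bookkeeping; its one trick is sliding $\msE$ across the Borels. Your route --- the explicit formula for $\msP(Z_{\pi})$ (which the paper also derives, in the proof of Proposition \ref{PropEquiInu}), the grading of $\mcO_q(B^-)\otimes\mcO_q(B)$ by nilpotent degrees and torus weights, the leading summand $\zeta=\xi_{\varpi}$ with coefficient $\epsilon_0=1$, downward induction in dominance order (a dominant $\varpi_0$ is a weight of $V_{\varpi}$ only if $\varpi_0\leq\varpi$), and cyclicity of the highest weight vector --- is heavier but buys more: it establishes outright linear independence of the images $I_{\nu}(Y_{\varpi}(\xi,\eta))$, i.e.\ a triangularity of $I_{\nu}$ with invertible leading terms, and it avoids appealing to the center of $U_q(\mfg)$; both proofs ultimately rest on $\msE$ being nonzero in each irreducible, yours through the single entry $\epsilon_{\varpi-\varpi}=1$, the paper's through the full block $\msE_{\varpi}$. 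Two small cautions: the $\zeta$-independent invariants are not the bare torus weights of the two tensor factors (those involve $\wt(\zeta)$) but the torus weights corrected by the nilpotent degrees --- the full grading does recover $(\wt(\xi),\wt(\eta),\wt(\zeta))$, so your induction is unaffected; and your closing comodule reformulation is too weak as stated, since distinct Peter--Weyl blocks of $\mcO_q(Z_{\nu})$ can contain equivalent $U_q(\mfu)$-subrepresentations, so that no isotypic component being annihilated does not imply injectivity --- one needs that no irreducible subcomodule is killed, which is supplied by your grading argument rather than by the highest-weight computation alone.
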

\begin{proof}
Using the expression \eqref{EqDefjnu} for $j_{\nu}$, we have
\begin{equation}\label{EqAltInu}
I_{\nu}(f) = f_{(2)}(\msE) \msP_-(\tau(S(f_{(1)}))) \msP_+(f_{(3)}). 
\end{equation}
Assume now that $\msP(j_{\nu}(f)) = 0$. Using the natural vector space pairing of $\mcO_q^{\nu,\id}(B_{\R}) \cong \mcO_q(B^-) \otimes \mcO_q(B^+)$ with $U_q(\mfb^-)\otimes U_q(\mfb^+)$, together with the stability of $U_q(\mfb^-)$ under $\tau \circ S$, we see that 
\[
f(X\msE Y) = 0,\qquad \forall X\in U_q(\mfb^-),Y\in U_q(\mfb^+).
\]
Now since 
\begin{equation}\label{EqCommE}
Y \msE = \msE \nu_{\epsilon}(Y),\qquad \msE X = \nu_{\epsilon}(X)\msE,\qquad Y \in U_q(\mfb^+), X\in U_q(\mfb^-),
\end{equation}
it follows from $U_q(\mfg) = U_q(\mfb^+)U_q(\mfb^-) = U_q(\mfb^-)U_q(\mfb^+)$ that in fact
\[
f(X\msE Y) = 0,\qquad \forall X,Y\in U_q(\mfg).
\]
Since $\msE$ is non-zero in each irreducible representation, it follows by an easy argument, using the Peter-Weyl decomposition and the fact that the center of $U_q(\mfg)$ separates representations of $U_q(\mfg)$, that $f = 0$. 
\end{proof}

The map $I_{\nu}$ has an important equivariance property. Note first that the right $\mcO_q(U)$-coaction $\rho_{\nu}$ endows $\mcO_q(Z_{\nu})$ with an infinitesimal left $U_q(\mfu)$-module $*$-algebra structure
\begin{equation}\label{EqInvAct}
X \rhd f = (\id\otimes (X,-))\rho_{\nu}(f),\qquad X \rhd Z(\xi,\eta) = Z(S(\tau(X_{(1)}))^*\xi,X_{(2)}\eta),
\end{equation}
where compatibility with the $*$-structure means that
\[
(X\rhd f)^{\#} = S(X)^* \rhd f^{\#}.
\]
On the other hand, as $\mcO_q^{\nu,\id}(B_{\R})$ forms part of a connected cogroupoid, it is in particular a right Galois object for $\mcO_q(B_{\R}) = \mcO_q^{\id}(B_{\R})$. We thus have on $\mcO_q^{\nu,\id}(B_{\R})$ the adjoint (or Miyashita-Ulbrich) action of $\mcO_q(B_{\R})$ \cite{Sch04}*{Definition 2.1.8}, which is a right $\mcO_q(B_{\R})$-module $*$-algebra structure determined explicitly in our case by 
\[
X \lhdb Y = S(Y_{(1)})X Y_{(2)},\qquad X\in \mcO_q^{\nu,\id}(B_{\R}), Y \in \mcO_q(B) \cup \mcO_q(B^-),
\]
using the usual Hopf algebra structure of $\mcO_q(B^{\pm})$. Recall now again the Hopf $*$-algebra homomorphism $\iota =\iota_{\id}: \mcO_q(B_{\R}) \rightarrow U_q(\mfu)^{\cop}$   introduced in \eqref{EqDefiota}.

\begin{Prop}\label{PropEquiInu}
The following equivariance property holds: for all $f \in \mcO_q(Z_{\nu})$ and $g \in \mcO_q(B_{\R})$ we have
\[
I_{\nu}(\iota(S(g)) \rhd f) = I_{\nu}(f)\lhdb g. 
\]
\end{Prop}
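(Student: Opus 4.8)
The plan is to verify the identity by reducing to generators and then carrying out a matrix computation governed by the interchange relations of $\mcO_q^{\nu,\id}(B_\R)$. First I would observe that both sides are compatible with products. Since $\rhd$ is a left $U_q(\mfu)$-module $*$-algebra action and $\lhdb$ a right $\mcO_q(B_\R)$-module $*$-algebra action, while $I_\nu$ is an algebra homomorphism, a short induction reduces the claim to the case where $f$ runs over algebra generators of $\mcO_q(Z_\nu)$ and $g$ over algebra generators of $\mcO_q(B_\R)$. For the reduction in $g$ one uses that $\iota$ is a homomorphism into $U_q(\mfu)^{\cop}$, so that $(XY)\rhd = X\rhd(Y\rhd-)$ matches $(-)\lhdb(gg') = ((-)\lhdb g)\lhdb g'$; for the reduction in $f$ one uses that $\iota$ lands in the co-opposite, giving $\Delta(\iota(S(g))) = \iota(S(g_{(1)}))\otimes\iota(S(g_{(2)}))$, which is exactly what is needed to split the left action across a product. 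As generators of $\mcO_q(Z_\nu)$ I take the matrix coefficients $Z(\xi,\eta)$, bundled into the matrix $Z_\pi$, and as generators of $\mcO_q(B_\R)$ the entries of the holomorphic matrices $T^+_{\pi'}$ and the antiholomorphic matrices $T^-_{\pi'}$.

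Next I would put both sides into matrix form. From the description \eqref{EqAltInu} of $I_\nu$ together with \eqref{EqStarT} one computes $I_\nu(Z_\pi) = \tau(T^-_\pi)^{-1}\msE_\pi T^+_\pi$, the Borel projection already appearing in $\beta_\nu$. Using the matrix form of the left action coming from \eqref{EqInvAct}, namely $X\rhd Z_\pi = \pi(S(\tau(X_{(1)})))\,Z_\pi\,\pi(X_{(2)})$ on the $\End(V_\pi)$-leg, together with $\Delta(\iota(S(g))) = \iota(S(g_{(1)}))\otimes\iota(S(g_{(2)}))$, the left-hand side becomes
\[
I_\nu(\iota(S(g))\rhd Z_\pi) = \pi\big(S(\tau(\iota(S(g_{(1)}))))\big)\,I_\nu(Z_\pi)\,\pi\big(\iota(S(g_{(2)}))\big).
\]
For the right-hand side I use the Miyashita--Ulbrich formula $a\lhdb g = S(g_{(1)})\,a\,g_{(2)}$ and the fact that $\lhdb$ is an algebra action with $\msE_\pi$ central in the algebra leg, so that
\[
I_\nu(Z_\pi)\lhdb g = \big(\tau(T^-_\pi)^{-1}\lhdb g_{(1)}\big)\,\msE_\pi\,\big(T^+_\pi\lhdb g_{(2)}\big).
\]
It therefore remains to understand how $T^+_\pi$ and $\tau(T^-_\pi)^{-1}$ transform under the Miyashita--Ulbrich action of the generators $g$.

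The crux is this last computation, which I would carry out by bundling the generator $g$ back into a matrix $T^{\pm}_{\pi'}$ over an auxiliary leg $\End(V_{\pi'})$ and invoking the defining interchange relations of $\mcO_q^{\nu,\id}(B_\R)$. Concretely, conjugating the holomorphic matrix $T^+_\pi$ by a holomorphic generator is the adjoint action inside the untwisted subalgebra $\mcO_q(B)$ and produces no twist, whereas conjugating $T^+_\pi$ by an antiholomorphic generator, and conjugating $\tau(T^-_\pi)^{-1}$ by a holomorphic one, is governed by the Borel projection of the fundamental relation \eqref{EqDefHolAntihol} of Lemma \ref{LemFundInt}; the asymmetry between $\msR_{\nu}$ and $\msR_{\id}=\msR$ in that relation is precisely what inserts the diagram automorphism $\tau$ and fixes the placement of the antipode. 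Taking matrix coefficients in the auxiliary leg then converts each $T^{\pm}_{\pi'}$ into an $R$-matrix through the defining property $(\id\otimes\iota)T^+_{\pi'} = (\pi'\otimes\id)\msR$ and $(\id\otimes\iota)T^-_{\pi'} = (\pi'\otimes\id)\msR_{21}^{-1}$ of $\iota$, so that the Miyashita--Ulbrich conjugations in $\mcO_q^{\nu,\id}(B_\R)$ turn into the matrix conjugations $\pi(S(\tau(\iota(S(g_{(1)})))))\,(-)\,\pi(\iota(S(g_{(2)})))$ occurring on the left-hand side. Matching the holomorphic and antiholomorphic factors separately then yields the claim. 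I expect the main obstacle to be exactly this bookkeeping: keeping the $\nu$-twist, the involution $\tau$, and the decorations $S$, $*$ and $\dag$ consistent while commuting $g$ past both the holomorphic factor $T^+_\pi$ and the antiholomorphic factor $\tau(T^-_\pi)^{-1}$, and checking that the two cases, $g$ holomorphic and $g$ antiholomorphic, assemble into the single formula above.
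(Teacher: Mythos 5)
Your strategy coincides with the paper's: reduce to matrix generators, write $I_\nu(Z_\pi) = \tau(T^-_\pi)^{-1}(\msE_\pi\otimes 1)T^+_\pi$, bundle $g$ into the matrices $T^{\pm}_{\pi'}$, and convert the Miyashita--Ulbrich conjugation into $R$-matrix conjugations via $(\id\otimes\iota)T^+_{\pi'} = (\pi'\otimes\id)\msR$, the interchange relations and \eqref{EqStarT}, then match against the $R$-matrix form of the left action. The one structural difference is that the paper halves the work at the outset: since both sides are module $*$-algebras and $\iota S = S^{-1}\iota$, it suffices to take $g\in\mcO_q(B)$; your plan of treating the antiholomorphic generators by a parallel computation is legitimate but redundant.

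There is, however, one concrete gap: your treatment of $\msE_\pi$. You use only that $\msE_\pi$ is ``central in the algebra leg'' (its entries are scalars), and you attribute the insertion of $\tau$ and of the twist entirely to the asymmetry between $\msR_\nu$ and $\msR$ in Lemma \ref{LemFundInt}. This is not enough. When the generator matrix $T^+_{\pi',13}$ is pushed leftward through the word $\tau(T^-_\pi)^{-1}_{23}\,\msE_{\pi,2}\,T^+_{\pi,23}$, the commutation inside $\mcO_q(B)$ produces a factor $\msR_{\pi',\pi,12}$ sitting to the right of $\msE_{\pi,2}$, and this factor does \emph{not} commute with $\msE_{\pi,2}$: they overlap in leg $2$, on which $\msE_\pi$ is a nontrivial operator. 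One needs the intertwining relation \eqref{EqCommE}, $\msE X = \nu_{\epsilon}(X)\msE$ for $X\in U_q(\mfb^-)$, to pass it across, and it is exactly this step that converts $\msR_{\pi',\pi}$ into $\msR_{\nu,\pi',\pi\circ\tau}$, i.e.\ inserts the $\epsilon$-part of the twist (the interchange relation of Lemma \ref{LemFundInt} then trades $\msR_{\nu}$ back for $\msR$). Without \eqref{EqCommE} the two sides cannot match: the target expression has $\msR_{\pi',\pi\circ\tau}$ on one side of $I_\nu(Z_\pi)$ and $\msR_{\pi',\pi}^{-1}$ on the other, an asymmetry that the interchange relation alone cannot produce. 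Relatedly, your plan to ``match the holomorphic and antiholomorphic factors separately'' cannot be carried out literally: the Miyashita--Ulbrich conjugation of the holomorphic factor alone, $(T^{+}_{\pi'})^{-1}_{13}T^+_{\pi,23}T^+_{\pi',13}$, is not a clean $R$-matrix conjugation of $T^+_{\pi,23}$; only the conjugation of the full word telescopes, which is why the paper's chain of equalities pushes $T^+_{\pi',13}$ through all three factors at once before cancelling it against $(T^{+}_{\pi'})^{-1}_{13}$.
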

\begin{proof}
It is enough to verify this for $g \in \mcO_q(B)$, as both sides are module $*$-algebras and $\iota S = S^{-1}\iota$. Fix now $\pi,\pi'$, and note that 
\[
(\id\otimes \pi \iota)T_{\pi'}^+ = (\pi'\otimes \pi)\msR = \msR_{\pi',\pi}. 
\]
As $\iota$ flips the coproduct, it follows that
\begin{equation}\label{EqEasyExpr}
(\id\otimes \iota S)(T_{\pi'}^+)_{13} \rhd Z_{\pi,23} = (\msR_{\pi',\pi\circ \tau}\otimes 1)(1\otimes Z_{\pi}) (\msR_{\pi',\pi}^{-1}\otimes 1).
\end{equation}
On the other hand, as $(\id \otimes I_{\nu})Z_{\pi} = \tau(T_{\pi}^{+,*})(\msE_{\pi}\otimes 1)T^{+}_{\pi}$, we have by the fundamental interchange relation \eqref{LemFundInt} and \eqref{EqStarT} that
\begin{align*}
((\id\otimes I_{\nu})Z_{\pi})_{23} \lhdb T_{\pi',13}^+ 
&=   (T_{\pi'}^+)^{-1}_{13}\tau(T_{\pi}^{-})_{23}^{-1} \msE_{\pi,2}T^{+}_{\pi,23}T_{\pi',13}^+\\
&=  (T_{\pi'}^+)^{-1}_{13}\tau(T_{\pi}^{-})_{23}^{-1} \msE_{\pi,2}\msR_{\pi',\pi,12}T^{+}_{\pi',13}T_{\pi,23}^+ \msR_{\pi',\pi,12}^{-1}\\
&= (T_{\pi'}^+)^{-1}_{13}(T_{\pi\circ \tau}^{-})_{23}^{-1}\msR_{\nu,\pi',\pi\circ \tau,12}T^{+}_{\pi',13} \msE_{\pi,2}T_{\pi,23}^+ \msR_{\pi',\pi,12}^{-1}\\
&=\msR_{\pi',\pi\circ \tau,12} (T_{\pi\circ \tau}^{-})_{23}^{-1} \msE_{\pi,2}T_{\pi,23}^+ \msR_{\pi',\pi,12}^{-1}\\
&= \msR_{\pi',\pi\circ \tau,12}((\id\otimes I_{\nu}) Z_{\pi})_{23} \msR_{\pi',\pi,12}^{-1}. \qedhere
\end{align*}
Comparing this with \eqref{EqEasyExpr} finishes the proof.
\end{proof}

\begin{Rem}
In the case $\nu = \id$ this result is well-known, see e.g. \cite{Bau00}*{Theorem 3} and the references loc. cit.
\end{Rem}

We want to characterize the image of $I_{\nu}$. Note first that from the proof of Proposition \ref{PropInu}, we see that for $\xi_{\varpi}$ a unit highest weight vector and $X\in U_q(\mfb^-),Y \in U_q(\mfb^+)$
\begin{align*}
(I_{\nu}(Z_{\varpi}(\xi_{\varpi},\xi_{\varpi})), X \otimes Y) 
&= \langle \xi_{\varpi},\tau(S(X))\msE_{\varpi} Y \xi_{\varpi}\rangle \\
&= (L_{-\varpi}^-,Y)(L_{-\tau(\varpi)}^+,X) \langle \xi_{\varpi},\msE_{\varpi}\xi_{\varpi}\rangle \\
&= (L_{-\tau(\varpi)}^+L_{-\varpi}^-, X \otimes Y).
\end{align*}
As the above pairing of $\mcO_q(B_{\R})$ with $U_q(\mfb^-)\otimes U_q(\mfb^+)$ is non-degenerate, we deduce that
\begin{equation}\label{EqImZHigh}
I_{\nu}(Z(\xi_{\varpi},\xi_{\varpi})) = L_{-\tau(\varpi)}^+L_{-\varpi}^-.
\end{equation}
Let us denote in the following
\begin{equation}\label{EqDefa}
a_{\varpi} = Z(\xi_{\varpi},\xi_{\varpi}).
\end{equation}
\begin{Lem}
We have the following relations
\[
a_{\varpi}^{\sharp} = a_{\tau(\varpi)},\qquad a_{\varpi}Z(\xi,\eta) = q^{((\id+\tau)\varpi,\wt(\xi) - \wt(\eta))}Z(\xi,\eta) a_{\varpi}.
\]
\end{Lem}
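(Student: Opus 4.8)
The plan is to treat the two identities separately. The first is an immediate consequence of the definition of the $\sharp$-operation, whereas for the second I would transport the relation, through the injective $*$-homomorphism $I_\nu$ of Proposition \ref{PropInu}, into the Borel double $\mcO_q^{\nu,\id}(B_\R)$, where $a_\varpi$ becomes a product of Cartan generators and the commutation relations are explicit.

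For the identity $a_\varpi^{\sharp} = a_{\tau(\varpi)}$: under $j_\nu$ the element $a_\varpi = Z(\xi_\varpi,\xi_\varpi)$ corresponds to the highest-to-highest matrix coefficient $Y(\xi_\varpi,\xi_\varpi)$ in $\mcO_q(Z_\nu)$, so I would compute $a_\varpi^{\sharp}$ directly from \eqref{EqDefStarBraid}, namely $f^{\sharp} = \tau(S(f)^*)$, using the identity $S(Y(\xi,\eta))^* = Y(\eta,\xi)$ recorded in the proof of Lemma \ref{LemFundInt}. With $\xi = \eta = \xi_\varpi$ this yields $S(a_\varpi)^* = Y(\xi_\varpi,\xi_\varpi)$, hence $a_\varpi^{\sharp} = \tau(Y(\xi_\varpi,\xi_\varpi))$. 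It then remains to observe that $\tau$, being dual to the diagram automorphism carrying $V_\varpi$ pulled back along $\tau$ to $V_{\tau(\varpi)}$ and a highest weight vector to a highest weight vector, sends $Y(\xi_\varpi,\xi_\varpi)$ to $Y(\xi_{\tau(\varpi)},\xi_{\tau(\varpi)}) = a_{\tau(\varpi)}$.

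For the commutation relation I would argue as follows. Since $I_\nu$ is an injective algebra homomorphism, it suffices to establish the corresponding relation between $c_\varpi := I_\nu(a_\varpi) = L_{-\tau(\varpi)}^+ L_{-\varpi}^-$, by \eqref{EqImZHigh}, and $I_\nu(Z(\xi,\eta))$ inside $\mcO_q^{\nu,\id}(B_\R)$. Conjugation by the Cartan element $c_\varpi$ is an algebra automorphism, so it scales each monomial in $X_r^{\pm}, L_\chi^{\pm}$ by a $q$-power which I would read off from the explicit relations \eqref{EqCommRelBR}; one finds $c_\varpi X_r^- c_\varpi^{-1} = q^{2(\varpi,\alpha_r)}X_r^-$, $c_\varpi X_r^+ c_\varpi^{-1} = q^{-((\id+\tau)\varpi,\alpha_r)}X_r^+$ and $c_\varpi L_\chi^{\pm} c_\varpi^{-1} = q^{((\tau-\id)\varpi,\chi)}L_\chi^{\pm}$. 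I would then expand $I_\nu(Z(\xi,\eta))$ via \eqref{EqAltInu} as $\sum_\lambda \msE(\lambda)\,\msP_-(\tau(S(Y(\xi,\lambda))))\,\msP_+(Y(\lambda,\eta))$, where $\msP_+(Y(\lambda,\eta)) = T_\pi^+(\lambda,\eta)$ and $\msP_-(\tau(S(Y(\xi,\lambda)))) = \tau(S(T_\pi^-(\xi,\lambda)))$ are each bigraded; computing their conjugation-weights from the formulas above shows that the two factors carry opposite dependence on $\wt(\lambda)$, which cancels, leaving the common value $((\id+\tau)\varpi,\wt\xi - \wt\eta)$ for every surviving $\lambda$. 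Thus $c_\varpi$ and $I_\nu(Z(\xi,\eta))$ $q$-commute with this scalar, and injectivity of $I_\nu$ gives the claim.

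The main obstacle is precisely this weight bookkeeping, and within it the effect of the antipode on the conjugation-weight: this is \emph{not} a naive sign change, but is governed by the Hopf structure of $\mcO_q(B^-)$, equivalently by reading $S(T_\pi^-)$ as the inverse corepresentation matrix $(T_\pi^-)^{-1}$. Getting this transformation right is exactly what produces the $-2(\varpi,\wt\lambda)$ contribution in $w(\msP_-(\tau(S(Y(\xi,\lambda)))))$ that cancels the $+2(\varpi,\wt\lambda)$ coming from $w(T_\pi^+(\lambda,\eta))$; a naive computation would instead yield an inhomogeneous, $\lambda$-dependent answer incompatible with a single $q$-commutation scalar. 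The triangularity of the $T_\pi^{\pm}$ (so that only $\lambda$ with $\wt\lambda \ge \wt\xi$ and $\wt\lambda \ge \wt\eta$ occur) is worth noting but becomes inessential once the cancellation is in place.
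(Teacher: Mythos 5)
Your proposal is correct, but it takes a more computational route than the paper, and the difference is worth noting. For the second identity the paper's proof is essentially one line: since $I_{\nu}(a_{\varpi}) = L_{-\tau(\varpi)}^+L_{-\varpi}^-$ is grouplike, conjugation by it is exactly the Miyashita--Ulbrich action $\lhdb$ of the corresponding element of $\mcO_q(B_{\R})$, so Proposition \ref{PropEquiInu} converts this conjugation into the $\rhd$-action of the Cartan element $K_{-(\id+\tau)\varpi}$, whose effect on $Z(\xi,\eta)$ is read off instantly from the weight formula \eqref{EqInvAct}, giving the scalar $q^{((\id+\tau)\varpi,\wt(\xi)-\wt(\eta))}$. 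You instead bypass the equivariance proposition entirely and re-derive this special case by hand: expanding $I_{\nu}(Z(\xi,\eta))$ via \eqref{EqAltInu} and tracking conjugation weights of the generators from \eqref{EqCommRelBR}. Your bookkeeping is right --- I verified that the $\msP_+$-factor carries conjugation weight $2(\varpi,\wt\lambda)-((\id+\tau)\varpi,\wt\eta)$ and the $\msP_-$-factor carries $-2(\varpi,\wt\lambda)+((\id+\tau)\varpi,\wt\xi)$, so the $\lambda$-dependence cancels exactly as you claim, and you correctly identified the antipode/inverse-corepresentation twist as the point where a naive computation would fail. For the first identity the roles are reversed: the paper transports through the $*$-homomorphism $I_{\nu}$ and reads the claim off \eqref{EqImZHigh} together with the star structure of the Borel double, whereas you compute $\sharp$ directly in $\mcO_q(Z_{\nu})$ from \eqref{EqDefStarBraid}, which additionally requires the (true, but unstated in the paper's proof) identification $\pi_{\varpi}\circ\tau \cong \pi_{\tau(\varpi)}$ matching unit highest weight vectors. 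In short: the paper's argument is shorter because it reuses Proposition \ref{PropEquiInu} as a black box; yours is self-contained at the level of this lemma but pays for it with the weight bookkeeping, which you have carried out correctly.
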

\begin{proof}
The identity for $a_{\varpi}^{\sharp}$ follows immediately upon applying the $*$-homomorphism $I_{\nu}$ and using \eqref{EqImZHigh}. For the second identity, we have by Proposition \ref{PropEquiInu} and \eqref{EqImZHigh} that
\[
a_{\varpi}Z(\xi,\eta)  = (K_{-(\id+\tau)\varpi}\rhd Z(\xi,\eta))a_{\varpi} =  q^{(\varpi,(\id+\tau)\wt(\xi) - (\id+\tau)\wt(\eta))}Z(\xi,\eta) a_\varpi. \qedhere
\]
\end{proof}

It follows from the above that we can consider the $*$-algebra 
\[
\mcO_q^{\loc}(Z_{\nu}) = \mcO_q(Z_{\nu})[a_{\varpi}^{-1}]
\] 
obtained by localising the $a_{\varpi}$. We can extend $\varpi \mapsto a_{\varpi}$ to the whole of $P$ by requiring the relations
\begin{equation}\label{EqComma}
a_{\omega + \chi}  =  q^{((\id-\tau)\omega,\chi)}a_{\omega}a_{\chi},\qquad \omega,\chi\in P.
\end{equation}
The map $I_{\nu}$ then extends to $\mcO^{\loc}_q(Z_{\nu})$.

\begin{Prop}
The elements $X_r^+$ lie in the image of $\mcO^{\loc}_q(Z_{\nu})$ under $I_{\nu}$. Moreover, as a $*$-algebra $\mcO^{\loc}_q(Z_{\nu})$ is generated by the $a_{\omega}$ and the 
\[
x_r  = I_{\nu}^{-1}(X_r^+).
\] 
\end{Prop}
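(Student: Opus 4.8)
The plan is to work inside $\mcO_q^{\nu,\id}(B_{\R})$ through the injective $*$-homomorphism $I_\nu$ of Proposition \ref{PropInu}, and to exploit the factorization
\[
(\id\otimes I_\nu)Z_\pi = \tau(T_\pi^-)^{-1}(\msE_\pi\otimes 1)T_\pi^+,
\]
obtained from the expression $(\id\otimes I_\nu)Z_\pi = \tau(T_\pi^{+,*})(\msE_\pi\otimes 1)T_\pi^+$ in the proof of Proposition \ref{PropEquiInu} together with \eqref{EqStarT}. Fixing a weight basis of $V_\pi$ ordered by non-increasing weight, the matrix $T_\pi^+$ is upper triangular with diagonal entries $L^-_{-\wt(\cdot)}$, while $\tau(T_\pi^-)^{-1}$ is lower triangular with diagonal entries $L^+_{-\tau(\wt(\cdot))}$; the off-diagonal entries are expressed through the generators $X_r^{\pm}$ and $L^{\pm}_\omega$ by the matrix coefficient formulas recorded after Proposition \ref{PropiotIso}. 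This reduces everything to a triangular computation.

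First I would prove that each $X_r^+$ lies in the image. Take the fundamental representation $V_{\varpi_r}$ and the matrix coefficient $Z(F_r\xi_{\varpi_r},\xi_{\varpi_r})$. Since $\xi_{\varpi_r}$ carries the top weight, upper triangularity of $T^+$ forces the middle summation in the $(F_r\xi_{\varpi_r},\xi_{\varpi_r})$-entry of the product to collapse to the diagonal term $[T^+]_{\xi_{\varpi_r},\xi_{\varpi_r}}=L^-_{-\varpi_r}$ with $\msE$-eigenvalue $\epsilon_0=1$, leaving only the subdiagonal entry of $\tau(T^-)^{-1}$. Computing that entry from $T^-_{\varpi_r}(F_r\xi_{\varpi_r},\xi_{\varpi_r})=q_r^{-2}(q_r-q_r^{-1})X_r^+L^+_{\varpi_r-\alpha_r}$, using $\tau(X_r^+)=X_{\tau(r)}^+$, $\tau(L^+_\omega)=L^+_{\tau(\omega)}$ and the triangular-inverse formula along the length-one weight step, yields $I_\nu(Z(F_r\xi_{\varpi_r},\xi_{\varpi_r}))=c\,X^+_{\tau(r)}\,L^+_{-\varpi_{\tau(r)}}L^-_{-\varpi_r}$ for a nonzero scalar $c$, after normal-ordering the group-likes. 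By \eqref{EqImZHigh} the group-like factor equals $I_\nu(a_{\varpi_r})$, so $X^+_{\tau(r)}=c^{-1}I_\nu\!\left(Z(F_r\xi_{\varpi_r},\xi_{\varpi_r})\,a_{\varpi_r}^{-1}\right)$. Since $\tau$ is an involution, all $X_r^+$ are thus in $I_\nu(\mcO_q^{\loc}(Z_\nu))$, and one reads off $x_{\tau(r)}=c^{-1}Z(F_r\xi_{\varpi_r},\xi_{\varpi_r})a_{\varpi_r}^{-1}$.

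For the generation statement I would again use injectivity of $I_\nu$ and reduce, via the linear bijection of Proposition \ref{PropLinBij}, to showing that every matrix coefficient $I_\nu(Z(\xi,\eta))$ lies in the $*$-subalgebra $D_0$ generated by the $I_\nu(a_\omega)$ and the $X_r^+$; note that $D_0$ already contains $(X_r^+)^*$, which equals a scalar multiple of $X_r^-L^-_{\alpha_r}$, and the inverses $I_\nu(a_\omega)^{-1}=I_\nu(a_{-\omega})$ up to scalar by \eqref{EqComma}. Expanding the $(\xi,\eta)$-entry of $\tau(T^-)^{-1}(\msE\otimes 1)T^+$ into words in the generators $X^\pm_r,L^\pm_\omega$ via the PBW description of $\mcO_q(B^\pm)$, the content of the argument is that all group-like factors recombine into the coupled form $L^+_{-\tau\omega}L^-_{-\omega}=I_\nu(a_\omega)$ and that every bare $X_r^-$ occurs dressed with the $L^-_{\alpha_r}$ needed to form $(X_r^+)^*$. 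This coupling is dictated by the commutation relation $a_\omega Z(\xi,\eta)=q^{((\id+\tau)\omega,\wt\xi-\wt\eta)}Z(\xi,\eta)a_\omega$, which fixes the joint Cartan bidegree of each coefficient, and is made effective by downward induction on the weight, the base case being the highest coefficient $I_\nu(a_\varpi)$ of \eqref{EqImZHigh}.

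I expect this last bookkeeping to be the main obstacle: keeping track, under PBW normal-ordering of the triangular factors, that the $L^+$ and $L^-$ exponents always assemble into coupled $a_\omega$'s and that no bare lowering generator $X_r^-$ or uncoupled group-like survives. A cleaner alternative, which I would pursue if the direct bookkeeping becomes unwieldy, is to first show — using that the braided product $*$ of \eqref{EqDefProdBraid} agrees with the classical product on highest-weight-to-highest-weight coefficients up to a scalar, so that the top Clebsch--Gordan component of $Z_{\varpi}*Z_{\varpi'}$ produces $Z_{\varpi+\varpi'}$ modulo lower terms — that $\mcO_q(Z_\nu)$ is generated as a $*$-algebra by the coefficients of the fundamental representations, thereby reducing Part 2 to the finitely many fundamental coefficients, each of which is handled by the same triangular computation as in the first step together with the relation $Z_\pi^\dag=\tau(Z_\pi)$ and the $\sharp$-structure to reach the lowering coefficients.
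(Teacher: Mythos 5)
Your first step is sound and is essentially a hands-on version of what the paper does: the factorization $(\id\otimes I_{\nu})Z_{\pi}=\tau(T_{\pi}^{-})^{-1}(\msE_{\pi}\otimes 1)T_{\pi}^{+}$ (from the proof of Proposition \ref{PropEquiInu} and \eqref{EqStarT}) does collapse against the highest weight vector, giving $I_{\nu}(Z(F_r\xi_{\varpi_r},\xi_{\varpi_r}))=c\,X_{\tau(r)}^{+}\,L^{+}_{-\varpi_{\tau(r)}}L^{-}_{-\varpi_r}=c\,X^+_{\tau(r)}I_{\nu}(a_{\varpi_r})$ with $c\neq 0$, whence $X_s^{+}\in I_{\nu}(\mcO_q^{\loc}(Z_{\nu}))$; the paper obtains the very same element as $I_{\nu}(a_{\varpi})\lhdb X_r^{+}$ using the equivariance of Proposition \ref{PropEquiInu} together with \eqref{EqImZHigh}.

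The generation statement, however, is not proved, and this is a genuine gap. The step you yourself call the main obstacle --- that in the PBW expansion of a general entry of $\tau(T_{\pi}^{-})^{-1}(\msE_{\pi}\otimes 1)T_{\pi}^{+}$ every group-like factor assembles into a coupled pair $L^{+}_{-\tau(\omega)}L^{-}_{-\omega}$ and every $X_r^{-}$ comes dressed as $(X_r^{+})^{*}$ --- is never carried out, and the justification you offer cannot supply it: commutation with the $a_{\omega}$ sees a PBW monomial $(\textrm{$X^{+}$-part})(\textrm{$X^{-}$-part})L^{+}_{\mu_1}L^{-}_{\mu_2}$ only through the weights of its $X^{\pm}$-parts, since by \eqref{EqCommRelBR} conjugation by group-likes is (for $\tau=\id$, completely) blind to the indices $\mu_1,\mu_2$; so no bidegree argument based on $a_{\omega}Z(\xi,\eta)=q^{((\id+\tau)\omega,\wt\xi-\wt\eta)}Z(\xi,\eta)a_{\omega}$ can force the coupling. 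The coupling is in fact true term by term, but for a different reason, namely the left/right Cartan covariance of the corepresentation matrices: the PBW components of $T^{+}_{\pi}(e_k,\eta)$ are ($X^{-}$-monomials of weight $\wt(e_k)-\wt(\eta)$)$\cdot L^{-}_{-\wt(\eta)}$ and those of $\tau(T^{-}_{\pi})^{-1}(\xi,e_k)$ are ($X^{+}$-monomials)$\cdot L^{+}_{-\tau(\wt(e_k))}$, so after moving group-likes across and substituting $X_s^{-}=(X_s^{+})^{*}(L^{-}_{\alpha_s})^{-1}$ (up to scalars) the residual group-like in the $k$-th term is exactly $L^{+}_{-\tau(\wt(e_k))}L^{-}_{-\wt(e_k)}$; this is the lemma your sketch is missing. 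Your fallback route has the same hole: the one-term collapse only happens for coefficients against a highest weight vector, so general fundamental coefficients (already $Z(\eta_{w_0\varpi_r},\eta_{w_0\varpi_r})$) are not handled by \emph{the same} triangular computation. Note that the paper bypasses all of this bookkeeping: by \eqref{EqInvAct} and \eqref{EqDefa} the vector $\xi_{\varpi}^{*}\otimes\xi_{\varpi}$ (lowest tensor highest weight) is cyclic, so the $a_{\varpi}$ generate $\mcO_q(Z_{\nu})$ as a $U_q(\mfu)$-module; Proposition \ref{PropEquiInu} converts this module action into the Miyashita--Ulbrich action $\lhdb$ on the image; and $\lhdb X_r^{+}$, $\lhdb (X_r^{+})^{*}$ are implemented by left and right multiplication by $X_r^{+}$, $(X_r^{+})^{*}$ up to weight scalars, which immediately confines $I_{\nu}(\mcO_q^{\loc}(Z_{\nu}))$ to the $*$-algebra generated by the $a_{\omega}$ and $x_r$.
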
 
\begin{proof}
Since $I_{\nu}(\mcO_q(Z_{\nu}))$ is closed under the right action of $\mcO_q(B_{\R})$, it follows by \eqref{EqImZHigh} that the range of $I_{\nu}$ contains
\[
L_{-\tau(\varpi)}^+L_{-\varpi}^- \lhdb X_r^+ = (1-q^{((\id + \tau)\varpi,\alpha_r)})L_{-\tau(\varpi)}^+L_{-\varpi}^-  X_r^+,
\]
and so $X_r^+ \in I_{\nu}(\mcO_q^{\loc}(Z_{\nu}))$. Now by \eqref{EqInvAct} and the definition \eqref{EqDefa} of the $a_{\varpi}$, we have that $\mcO_q(Z_{\nu})$ is generated as a left $U_q(\mfu)$-module by the $a_{\varpi}$. It follows that $I_{\nu}(\mcO_q(Z_{\nu}))$ is the smallest subspace containing the $a_{\varpi}$ and stable under the $\lhdb X_r^+$ and $\lhdb (X_r^+)^*$. As the latter operations can be implemented by left and right multiplication with the $X_r^+$ and $(X_r^+)^*$, it follows that $I_{\nu}(\mcO_q^{\loc}(Z_{\nu}))$ is contained in the $*$-algebra generated by the $a_{\omega}$ and $x_r$, and must hence coincide with it. 
\end{proof}

It follows that we can present $\mcO_q^{\loc}(Z_{\nu})$ directly by generators and relations: using \eqref{EqCommRelBR}, we see that it is generated by elements $a_{\omega},x_r,y_r = x_r^\sharp$, with $a_{\varpi}^{\sharp} = a_{\tau(\varpi)}$, such that the $x_r$ and $y_r$ satisfy the quantum Serre relations for the Dynkin diagram under consideration, and such that \eqref{EqComma} holds together with $a_0= 1$ and
\[
a_{\omega} x_r = q^{-((\id + \tau)\omega,\alpha_r)}x_ra_{\omega} ,\qquad a_{\omega}y_r = q^{((\id + \tau)\omega,\alpha_r)}y_ra_{\omega} 
\]
\[
x_r y_s  - q^{-(\alpha_r,\alpha_s)}     y_sx_r = \frac{\delta_{r,\tau(s)} \epsilon_s a_{-\alpha_s} - \delta_{r,s}}{q_r-q_r^{-1}}. 
\]

\begin{Rem}
For $\nu =\id$, one can characterize the image of $I_{\nu}(\mcO_q(Z_{\nu}))$ into $U_q(\mfg)$ by means of $\iota$ as the locally finite part of $U_q(\mfg)$ with respect to the adjoint action \cite{JL92}. The precise connection with the locally finite part of $\mcO_q^{\nu,\id}(B_{\R})$, or a quotient $*$-algebra thereof, becomes more muddy in the general case, particularly when $\tau \neq \id$, but will not be needed in what follows. 
\end{Rem}

\begin{Rem}\label{RemSpectral}
The embedding $I_{\nu}$ puts some `spectral conditions' on the $*$-algebra $\mcO_q(Z_{\nu}) \cong \mcO_q(G_{\nu}\dbbackslash G_{\R})$. For example if $\varpi = \tau(\varpi)$, then $I_{\nu}(a_{\varpi}) = (L_{-\varpi}^+)^*L_{-\varpi}^+$ is a positive element. This might allow one to define $\mcO_q(G_{\nu}\backslash G_{\R})$ as the finer structure of $\mcO_q(G_{\nu}\dbbackslash G_{\R})$ together with such spectral conditions, putting a restriction on its $*$-representation theory. We will however not dive deeper into these matters here. 
\end{Rem} 

\subsection{Characters of $\mcO_q(Z_{\nu})$}

\begin{Lem}\label{LemCharK}
The unital $*$-characters of $\mcO_q(Z_{\nu})$ are in one-to-one correspondence with elements 
\[
\msK \in \mcU_q(\mfg)= \prod_{\varpi} \End(V_{\varpi})
\] 
such that $\varepsilon(\msK) = 1$,
\begin{equation}\label{EqDefModKStar}
\msK^* = \tau(\msK)
\end{equation} 
and  
\begin{equation}\label{EqDefModK}
\Omega_{\epsilon}\Delta(\msK) =\msR^{-1}(\msK\otimes 1)\msR_{\tau}(1\otimes \msK). 
\end{equation}
\end{Lem}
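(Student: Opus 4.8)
The plan is to use that the evaluation pairing identifies the full linear dual of the vector space $\mcO_q(G)$ with $\mcU_q(\mfg) = \prod_{\varpi}\End(V_{\varpi})$: every linear functional on $\mcO_q(G)$ is of the form $\chi = (-,\msK)$ for a unique $\msK \in \mcU_q(\mfg)$, and this is the bijection we want to upgrade to a correspondence of unital $*$-characters. Under this dictionary the three defining properties translate one at a time. Unitality is immediate, since $\chi(1) = (1,\msK) = \varepsilon(\msK)$, so that $\chi(1)=1$ is exactly $\varepsilon(\msK)=1$. It then remains to match $\sharp$-preservation with \eqref{EqDefModKStar} and multiplicativity for the braided product $*$ with \eqref{EqDefModK}.

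For the $\sharp$-condition I would compute the adjoint, with respect to the pairing, of the antilinear map $f \mapsto f^{\sharp} = \tau(S(f)^*)$ of \eqref{EqDefStarBraid}. Using that $\tau$ and $S$ on $\mcO_q(G)$ are dual to $\tau$ and $S$ on $U_q(\mfg)$, the defining relation $(X,f^*) = \overline{(S(X)^*,f)}$, and the Hopf-$*$ identity $S\circ * = *\circ S^{-1}$ on $\mcU_q(\mfu)$ (equivalently $(S\circ *)^2=\id$, which one checks on generators), one peels the operations off $f$ and onto $\msK$, obtaining $(f^{\sharp},\msK)=\overline{(f,\tau(\msK^*))}$ for all $f$. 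Hence $\chi$ is $*$-preserving if and only if $\tau(\msK^*)=\msK$, i.e.\ $\msK^*=\tau(\msK)$, which is \eqref{EqDefModKStar} (here one uses that $\tau$ is a $*$-preserving Hopf automorphism, so $\tau S=S\tau$ and $\tau(\msK^*)=\tau(\msK)^*$).

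The substance is multiplicativity. Expanding $f*g$ by \eqref{EqDefProdBraid} and pairing with $\msK$, each Sweedler leg of $f$ and of $g$ gets contracted against one of $\Omega_\epsilon$, one of the two copies of $\msR$ (coming from $\mbr(f_{(2)},g_{(3)})$ and $\mbr(f_{(4)},\tau(S(g_{(1)}))))$, or $\Delta(\msK)$; collecting the $f$-legs and the $g$-legs into ordered products in $\mcU_q(\mfg)$ presents $\chi(f*g)$ as $(f\otimes g,\Xi)$ for an explicit $\Xi \in \mcU_q(\mfg)\hat{\otimes}\mcU_q(\mfg)$ assembled from $\Omega_\epsilon$, $\msR$, $\Delta(\msK)$ and $(\id\otimes S\tau)\msR$. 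Since $\chi(f)\chi(g)=(f\otimes g,\msK\otimes\msK)$ and the pairing of $\mcO_q(G)\otimes\mcO_q(G)$ with $\mcU_q(\mfg)\hat{\otimes}\mcU_q(\mfg)$ is non-degenerate (the latter being the full dual of the former), $\chi$ is multiplicative if and only if $\Xi=\msK\otimes\msK$. I would then rewrite $\Xi$ towards the right-hand side of \eqref{EqDefModK}: commute the outer R-matrix past $\Omega_\epsilon$ by \eqref{EqCommROmega}, move $\tau$ across the R-matrix legs by \eqref{EqInvNu} together with the antipode relations for $\msR$ (such as $(S\otimes\id)\msR=\msR^{-1}$), and invoke the hexagon identities \eqref{EqPropR1} and the centrality \eqref{EqCommDelt}; this should collapse $\Xi=\msK\otimes\msK$ to $\Omega_\epsilon\Delta(\msK)=\msR^{-1}(\msK\otimes1)\msR_\tau(1\otimes\msK)$. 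To avoid case distinctions when some $\epsilon_r$ vanish, I would carry out the rearrangement first under the assumption that $\msE$ is invertible (all $\epsilon_r\neq0$), where one may freely divide, and then extend the resulting operator identity to all $\epsilon$ by continuity of the structure constants, exactly as in the proof of Theorem \ref{TheoIsoZCoset}.

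The main obstacle is precisely this last rearrangement. The element $\Xi$ is braided: its two tensor legs carry $\Omega_\epsilon$, the two copies of $\msR$, and $\Delta(\msK)$ in different cyclic orders, the factor $(\id\otimes S\tau)\msR$ occupying the end of one leg but the beginning of the other, so $\Xi$ is not a plain leg-wise product. Converting $\Xi=\msK\otimes\msK$ into the universal $K$-matrix relation \eqref{EqDefModK} therefore requires genuinely commuting one copy of the R-matrix through the others using quasitriangularity, all while carefully distinguishing $S$ from $S^{-1}$ (the identities $(\id\otimes S\tau)\msR=(\tau S^{-1}\otimes\id)\msR$ and $(\id\otimes S^{-1})\msR=\msR^{-1}$ are what intervene). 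This bookkeeping, rather than any conceptual difficulty, is where the work lies.
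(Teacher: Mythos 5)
Your proposal is correct in substance and rests on the same idea as the paper: identify linear functionals on $\mcO_q(G)$ with elements of $\mcU_q(\mfg)$ via the Peter--Weyl pairing, and translate unitality, $\sharp$-compatibility and multiplicativity into $\varepsilon(\msK)=1$, \eqref{EqDefModKStar} and \eqref{EqDefModK}; your treatment of the first two conditions matches the paper's. The difference is in how multiplicativity is handled. You expand $\chi(f*g)$ from the definition \eqref{EqDefProdBraid} and then plan to wrestle the resulting braided element $\Xi$ into the form \eqref{EqDefModK} by hand, flagging this rearrangement as the main obstacle and proposing a continuity-in-$\epsilon$ fallback. But that rearrangement has already been carried out in the paper: it is precisely the content of the inverse formula \eqref{EqAlternateK}, whose proof uses \eqref{EqCommROmega} and the antipode identities you list. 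Pairing \eqref{EqAlternateK} against $\msK$ gives on the left $(f\otimes g,\Omega_{\epsilon}\Delta(\msK))$ and, when $\chi=(-,\msK)$ is multiplicative, on the right
\[
\mbr(S(f_{(1)}),g_{(1)})\,\chi(f_{(2)})\,\chi(g_{(3)})\,\mbr(f_{(3)},\tau(g_{(2)}))
=\bigl(f\otimes g,\;\msR^{-1}(\msK\otimes 1)\msR_{\tau}(1\otimes \msK)\bigr),
\]
since collecting the $f$-legs and the $g$-legs of the three functionals reproduces exactly the ordered products appearing in $\msR^{-1}(\msK\otimes 1)\msR_{\tau}(1\otimes \msK)$; non-degeneracy of the pairing and convolution-invertibility of the two $\mbr$-twists (which comes from invertibility of $\msR$, not of $\msE$) then give the equivalence in both directions. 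This makes your ``main obstacle'' disappear and renders the continuity reduction unnecessary --- which is just as well, because as phrased that reduction is delicate: $\msK$ is a single fixed element, not a family varying with $\epsilon$, so one would first have to recast the identity being extended as a universal one, quantified over all $\msK$ and polynomial in $\Omega_{\epsilon}$, before letting $\epsilon$ degenerate. So your route would work, but it amounts to re-proving \eqref{EqAlternateK} inline, whereas citing it reduces the lemma to the one-line argument above.
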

\begin{proof}
This is a direct consequence of \eqref{EqAlternateK}.
\end{proof}
Note that in case $\msE$ is invertible, \eqref{EqDefModK} can be rewritten as 
\begin{equation}\label{EqDeltaKEInv} 
\Delta (\msE^{-1}\msK) = \msR^{-1} (\msE^{-1}\msK\otimes 1)\msR_{\nu}(1\otimes \msE^{-1} \msK).
\end{equation}
Another way of writing this is 
\begin{equation}\label{EqDeltaKEInvAlt}
\Delta(\msE^{-1}\msK) = (1\otimes \msE^{-1}\msK)\msR_{\nu,21}(\msE^{-1}\msK\otimes 1)\msR_{21}^{-1},
\end{equation}
but it is not clear what the corresponding limit would be in the case of $\msE$ not invertible. However, if \eqref{EqDefModKStar} holds, we get using \eqref{EqCommDelt} that, upon applying $*$ to \eqref{EqDefModK},
\begin{equation}\label{EqDefModKAlt}
\Omega_{\epsilon}\Delta(\msK) =  (1\otimes \msK)\msR_{\tau,21}(\msK\otimes 1)\msR_{21}^{-1}, 
\end{equation}
so that in particular we have the $\tau$-modified reflection equation
\begin{equation}\label{EqModRefl}
(1\otimes \msK)\msR_{\tau,21}(\msK\otimes 1)\msR_{21}^{-1} =  \msR^{-1}(\msK\otimes 1)\msR_{\tau}(1\otimes \msK).
\end{equation}
Finally, note that the counitality assumption $\varepsilon(\msK) = 1$ is automatic once $\msK \neq 0$.

\begin{Def}
A non-zero element $\msK \in \mcU_q(\mfg)$ satisfying \eqref{EqDefModK} will be called a \emph{$\nu$-modified universal $K$-matrix}. If also \eqref{EqDefModKStar} holds, we call $\msK$ \emph{$*$-compatible}.
\end{Def}

\begin{Theorem}\label{TheoOneToOneCorr}
There is a one-to-one correspondence between 
\begin{enumerate}
\item $*$-compatible $\nu$-modified universal $K$-matrices $\msK \in \mcU_q(\mfu)$,
\item unital $*$-characters $\chi:\mcO_q(Z_{\nu}) \rightarrow \C$,
\item unital $*$-homomorphisms $\phi: \mcO_q(Z_{\nu}) \rightarrow \mcO_q(U)$ intertwining $\rho_{\nu}$ with $\Delta$,
\item unital $*$-homomorphisms $\hat{\phi}: \mcO_q(Z_{\nu}) \rightarrow U_q(\mfu)$ intertwining $\gamma_{\nu}$ with $\Delta$,
\item unital $*$-homomorphisms $\Phi: \mcO_q(Z_{\nu})  \rightarrow \mcO_q(G_{\R})$ intertwining $\delta_{\nu}$ with $\Delta$.
\end{enumerate}
The correspondence is determined by 
\[
\chi_{\msK}(f) = f(\msK),\qquad \phi_{\chi}(f) = (\chi\otimes \id)\rho_{\nu}(f),\qquad \hat{\phi}_\chi(f)= (\id\otimes \chi)\gamma_{\nu}(f),\qquad \Phi_{\chi}(f)= (\chi\otimes \id)\delta_{\nu}(f).
\]
\end{Theorem}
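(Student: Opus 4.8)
The plan is to reduce the entire five-fold equivalence to two inputs: the already-proven Lemma \ref{LemCharK}, which handles (1) $\Leftrightarrow$ (2), and a single formal principle relating characters of a comodule $*$-algebra to equivariant $*$-homomorphisms into the coacting Hopf $*$-algebra, which I would invoke three times to obtain (2) $\Leftrightarrow$ (3), (2) $\Leftrightarrow$ (4), and (2) $\Leftrightarrow$ (5). For the first equivalence there is essentially nothing to do: a $*$-compatible $\nu$-modified universal $K$-matrix is by definition a nonzero $\msK$ satisfying \eqref{EqDefModK} and \eqref{EqDefModKStar}, counitality $\varepsilon(\msK)=1$ is automatic for $\msK\neq 0$, and $\mcU_q(\mfu)$ is $\mcU_q(\mfg)$ equipped with the compact $*$-structure; so Lemma \ref{LemCharK} gives the bijection with unital $*$-characters via $\chi_{\msK}(f)=f(\msK)$ verbatim.

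For the remaining equivalences I would isolate the following standard fact. Let $H$ be a Hopf $*$-algebra and $(A,\rho)$ a right $H$-comodule $*$-algebra, so that $\rho\colon A\to A\otimes H$ is a unital $*$-algebra homomorphism with $(\rho\otimes\id)\rho=(\id\otimes\Delta)\rho$ and $(\id\otimes\varepsilon)\rho=\id$. Then
\[
\chi \longmapsto (\chi\otimes\id)\rho, \qquad \psi \longmapsto \varepsilon_H\circ\psi
\]
are mutually inverse bijections between unital $*$-characters $\chi\colon A\to\C$ and unital $*$-homomorphisms $\psi\colon A\to H$ satisfying $\Delta_H\circ\psi=(\psi\otimes\id)\rho$. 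The verifications are routine and I would only sketch them: multiplicativity of $(\chi\otimes\id)\rho$ uses that $\rho$ is an algebra map and $\chi$ a character; the equivariance identity is exactly coassociativity of $\rho$; $*$-preservation follows since $\rho$ is $*$-preserving and $\chi(a^{\sharp})=\overline{\chi(a)}$, whence conjugate-linearity of $*$ gives $(\chi\otimes\id)\rho(a^{\sharp})=\bigl((\chi\otimes\id)\rho(a)\bigr)^{*}$; and the two assignments invert one another by the counit axioms $(\id\otimes\varepsilon)\rho=\id$ and $(\varepsilon\otimes\id)\Delta=\id$, noting that $\varepsilon_H\circ\psi$ is a $*$-character because $\varepsilon_H$ is. The left-handed form, for a left comodule $\gamma\colon A\to H\otimes A$, reads identically with $\chi\mapsto(\id\otimes\chi)\gamma$, inverse $\varepsilon_H\circ\psi$, and equivariance $\Delta_H\circ\psi=(\id\otimes\psi)\gamma$.

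I would then apply this to the three coactions built in the preceding subsections. With $H=\mcO_q(U)$ and the right coaction $\rho_{\nu}$ of \eqref{Eqrhonu}/\eqref{EqCoactrho} one gets (2) $\Leftrightarrow$ (3), with $\phi_{\chi}=(\chi\otimes\id)\rho_{\nu}$; with $H=\mcO_q(G_{\R})$ and the right coaction $\delta_{\nu}$ of \eqref{Eqdeltanu} one gets (2) $\Leftrightarrow$ (5), with $\Phi_{\chi}=(\chi\otimes\id)\delta_{\nu}$; and with $H=U_q(\mfu)$ and the left coaction $\gamma_{\nu}$ of \eqref{Eqgammanu}/\eqref{EqCoactgamm} the left-handed version gives (2) $\Leftrightarrow$ (4), with $\hat\phi_{\chi}=(\id\otimes\chi)\gamma_{\nu}$. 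In every case the forward formula is exactly the one recorded in the theorem, the inverse is composition with the relevant counit, and unitality is immediate from $\rho_\nu(1)=1\otimes1$ etc.\ together with $\chi(1)=1$.

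The step demanding genuine care is confirming that the hypotheses of the principle hold for $\gamma_{\nu}$ and tracking the correct coproduct. Since $\gamma_{\nu}=(\iota\otimes\id)\beta_{\nu}$ factors through the Hopf $*$-algebra surjection $\iota\colon\mcO_q(B_{\R})\to U_q(\mfu)^{\cop}$ of \eqref{EqDefiota}, its comodule coassociativity is with respect to the coproduct of $U_q(\mfu)^{\cop}$, and the equivariance asserted in (4) must be interpreted accordingly. I would also check that $\hat\phi_{\chi}$ lands in $U_q(\mfu)$ rather than merely its completion — which holds because $\gamma_{\nu}$ is by construction valued in $U_q(\mfu)\otimes\mcO_q(Z_{\nu})$ — and that each of $\rho_{\nu},\delta_{\nu},\gamma_{\nu}$ is $*$-preserving for the $\sharp$-structure on $\mcO_q(Z_{\nu})$, all of which is part of the constructions \eqref{Eqdeltanu}--\eqref{EqCoactgamm}. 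Once these structural facts are granted, the correspondence among (2)--(5) is purely formal, and combined with Lemma \ref{LemCharK} the full statement follows.
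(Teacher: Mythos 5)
Your proposal is correct and takes essentially the same route as the paper: the paper also disposes of the equivalence (1) $\Leftrightarrow$ (2) by Lemma \ref{LemCharK}, and then obtains (2) $\Leftrightarrow$ (3), (4), (5) by invoking exactly the general correspondence between $*$-characters of an $L$-comodule $*$-algebra and $L$-equivariant $*$-homomorphisms into $L$ (cited as well-known, with reference to \cite{DM03b}), applied to the coactions $\rho_{\nu}$, $\gamma_{\nu}$, $\delta_{\nu}$. The only difference is one of detail: you spell out the routine verifications and the bookkeeping for the left coaction $\gamma_{\nu}$ that the paper leaves implicit.
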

\begin{proof}
The equivalence between the first two items is the content of Lemma \ref{LemCharK}.

If $(L,\Delta)$ is any Hopf $*$-algebra, and $(A,\alpha)$ a right $L$-comodule $*$-algebra, it is well-known (see e.g.~ \cite{DM03b} for a discussion) that there is a one-to-one correspondence between $*$-characters on $A$ and $*$-algebra maps $\pi:A\rightarrow L$ intertwining $\alpha$ and $\Delta$, given by the correspondence
\[
f \mapsto \pi_f = (f\otimes \id)\alpha,\qquad \pi \mapsto f_{\pi} = \varepsilon \circ \pi,
\]
where $\varepsilon$ is the counit of $L$. A similar correspondence holds for left coactions. This gives the correspondence between the last four items.
\end{proof}

Note that by \eqref{Eqdeltanu} and \eqref{Eqrhonu}, we have  
\begin{equation}\label{EqImZPhiMatrix}
(\id\otimes \Phi)Z_{\varpi} = \tau(Y_{\pi})^{\dag}_{12}(\pi(\msK)\otimes 1)Y_{\pi},
\end{equation}
\begin{equation}\label{EqImZphiMatrix}
(\id\otimes \phi)Z_{\varpi} = \tau(U_{\pi})^{*}_{12}(\pi(\msK)\otimes 1)U_{\pi}. 
\end{equation}

In the following, we fix a unital $*$-character 
\[
\chi: \mcO_q(Z_{\nu}) \rightarrow \C,
\]
and we let $\msK$ be the associated $*$-compatible $\nu$-modified universal $K$-matrix. Then $\phi$, $\hat{\phi}$ and $\Phi$ are the associated equivariant maps into respectively $\mcO_q(U)$, $U_q(\mfu)$ and $\mcO_q(G_{\R})$. We write the images of $\phi$, $\hat{\phi}$ and $\Phi$ respectively as
\[
\begin{split}
\mcO_q(K \backslash U) &= \phi(\mcO_q(Z_{\nu})) \subseteq \mcO_q(U), \\
U_q^{\fin}(\mfk') &= \hat{\phi}(\mcO_q(Z_{\nu})) \subseteq U_q(\mfu), \\
\mcO_q(L\dbbackslash G_{\R}) &= \Phi(\mcO_q(Z_{\nu})) \subseteq \mcO_q(G_{\R}).
\end{split}
\]
Then $\mcO_q(K\backslash U)$ and $\mcO_q(L\dbbackslash G_{\R})$ are right coideal $*$-subalgebras in their respective Hopf $*$-algebras, while $U_q^{\fin}(\mfk')$ is a left coideal $*$-subalgebra in $U_q(\mfu)$, which (slightly deviating from \cite{KoSt09}) we call a \emph{Noumi-Sugitani coideal subalgebra}. We may view $\mcO_q(L\dbbackslash G_{\R})$ as the \emph{Drinfeld codouble} of $\mcO_q(K\backslash U)$ and $U_q^{\fin}(\mfk')$. 

\begin{Rem}
Interpreting $\msK$ classically as an element in the spectrum of $Z_{\nu}$, the groups $L$ and $K$ are its stabilizers under the respective actions of $G$ and $U$ on $Z_{\nu}$. The symbol `\,$\fin$' in $U_q^{\fin}(\mfk')$ should be seen as indicating that it corresponds to some `locally finite part' of a quantized enveloping coideal subalgebra. Finally, as we will justify in Proposition \ref{PropInclusion}, $\mfk'$ should be seen as a Lie subalgebra of the Lie algebra $\mfk$ of $K$, and will coincide with it in many cases of interest. In the setting of Poisson homogeneous spaces, $L$ corresponds to the Lagrangian in the Drinfeld double $G$ of $U$ associated to the Poisson homogeneous space $K\backslash U$ \cite{Dri93}.
\end{Rem}

\begin{Prop}
The $*$-homomorphism $\Phi: \mcO_q(Z_{\nu}) \rightarrow \mcO_q(L\dbbackslash G_{\R})$ is a $*$-isomorphism.
\end{Prop}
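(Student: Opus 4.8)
The map $\Phi$ is surjective onto $\mcO_q(L\dbbackslash G_{\R})$ by the very definition of its codomain, so the entire content of the proposition is the \emph{injectivity} of $\Phi$. The plan is to read this off from the linear bijectivity of $j_{\nu}$ (Proposition~\ref{PropLinBij}) together with the matrix description \eqref{EqImZPhiMatrix}. Since the multiplication map $\mcO_q(\overline{G})\otimes\mcO_q(G)\to\mcO_q(G_{\R})$ is a linear isomorphism, the products $\tau(Y_{\varpi}(\xi,\xi'))^{\dag}\,Y_{\varpi'}(\eta,\eta')$ form a Peter--Weyl-type basis of $\mcO_q(G_{\R})$, and \eqref{EqImZPhiMatrix} gives, for $\xi,\eta\in V_{\varpi}$,
\[
\Phi(Z(\xi,\eta))=\sum_{k,l}\langle\xi_k,\pi_{\varpi}(\msK)\xi_l\rangle\;\tau(Y_{\varpi}(\xi,\xi_k))^{\dag}\,Y_{\varpi}(\xi_l,\eta),
\]
so that the $\varpi$-isotypic block of $\mcO_q(Z_{\nu})$ is carried into the span of those basis vectors whose holomorphic and antiholomorphic labels both lie in $V_{\varpi}$. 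Blocks attached to distinct $\varpi$ therefore occupy linearly independent families.

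Fixing $\varpi$ and a pair $(k_0,l_0)$ with $\langle\xi_{k_0},\pi_{\varpi}(\msK)\xi_{l_0}\rangle\neq0$, the coefficient of the basis vector $\tau(Y_{\varpi}(\xi,\xi_{k_0}))^{\dag}Y_{\varpi}(\xi_{l_0},\eta)$ in $\sum_{\xi,\eta}c_{\xi\eta}\Phi(Z(\xi,\eta))$ equals exactly $c_{\xi\eta}\langle\xi_{k_0},\pi_{\varpi}(\msK)\xi_{l_0}\rangle$. Hence $\Phi$ is injective on each block, and therefore injective, if and only if $\pi_{\varpi}(\msK)\neq0$ for every $\varpi\in P^{+}$. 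To secure this it suffices to show that the highest-weight matrix element $\kappa_{\varpi}:=\langle\xi_{\varpi},\pi_{\varpi}(\msK)\xi_{\varpi}\rangle$ is nonzero. By Lemma~\ref{LemCharK} one has $\kappa_{\varpi}=\chi(a_{\varpi})$, and the relations \eqref{EqComma} together with the multiplicativity of $\chi$ show that $\varpi\mapsto\kappa_{\varpi}$ is multiplicative on $P^{+}$ up to an explicit power of $q$; since $\kappa_{0}=\varepsilon(\msK)=1$, the problem reduces to the fundamental weights $\varpi_r$.

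The non-vanishing of the $\kappa_{\varpi_r}$ is the main obstacle. When $\msE$ is invertible --- in particular throughout the symmetric-type case, where $\epsilon_r^{2}=1$ --- I would use \eqref{EqDeltaKEInv}: the element $\msE^{-1}\msK$ satisfies $\Delta(\msE^{-1}\msK)=\msR^{-1}(\msE^{-1}\msK\otimes1)\msR_{\nu}(1\otimes\msE^{-1}\msK)$, which is the defining relation of a (twisted) universal $K$-matrix in the sense of Balagovi\'{c} and Kolb \cites{BK15b,Kol17}. Such a $K$-matrix is invertible and of the triangular shape ``quasi-$K$-matrix (a unipotent $1+(\text{lowering terms})$) times an invertible group-like factor'', so its highest-weight matrix element in each $V_{\varpi_r}$ is a nonzero power of $q$; hence $\kappa_{\varpi_r}\neq0$ and, by the reduction above, $\pi_{\varpi}(\msK)\neq0$ for all $\varpi$.

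For the degenerate case (flag type, where $\msE$ is merely an idempotent) I would combine the multiplicative reduction with the fact that all structure constants, hence the $\kappa_{\varpi}$, depend continuously on $\epsilon$, exactly as in the proof of Theorem~\ref{TheoIsoZCoset}, passing from the non-degenerate regime; one can try to anchor the argument on the $\tau$-invariant combinations $a_{\varpi+\tau(\varpi)}$, whose images $I_{\nu}(a_{\varpi+\tau(\varpi)})=(L^{+}_{-\varpi})^{*}L^{+}_{-\varpi}$ are invertible positive elements (Remark~\ref{RemSpectral}). Once $\pi_{\varpi}(\msK)\neq0$ is known for all $\varpi$, the block analysis yields $\ker\Phi=0$; together with surjectivity and the fact that $\Phi$ is a $*$-homomorphism, this gives the asserted $*$-isomorphism. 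I expect the genuinely delicate point to be this degenerate case, since continuity in $\epsilon$ does not by itself preserve non-vanishing, so one may instead need a direct evaluation of \eqref{EqDefModKAlt} on the highest-weight line of $V_{\varpi_r}^{*}\otimes V_{\varpi_r}$ to pin down each $\kappa_{\varpi_r}$ as an explicit nonzero scalar.
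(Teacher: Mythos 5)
The first half of your argument is correct and is essentially the paper's: surjectivity is definitional, and combining \eqref{EqImZPhiMatrix} with the linear independence of the products $\tau(Y_{\varpi}(e_i,e_k))^{\dag}Y_{\varpi'}(e_j,e_l)$ (the Peter--Weyl bijection underlying Proposition \ref{PropLinBij}) reduces injectivity of $\Phi$ to the single claim that $\pi_{\varpi}(\msK)\neq 0$ for every $\varpi\in P^{+}$. (Your index placement in the antiholomorphic factor is transposed relative to the paper's convention for $\dag$ on matrix corepresentations, but this is immaterial.)

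The genuine gap is in your proof of that claim. Your reduction to the highest-weight diagonal entry $\kappa_{\varpi}=\langle\xi_{\varpi},\pi_{\varpi}(\msK)\xi_{\varpi}\rangle$ replaces a true statement by a false one. In the flag case the $K$-matrix of Definition \ref{DefUniKFlag} acts on a weight vector $\xi\in V_{\varpi}$ by the scalar $\epsilon_{\varpi-w_0(\wt(\xi))}$, so $\kappa_{\varpi}=\epsilon_{\varpi-w_0(\varpi)}$, which vanishes for \emph{every} nonzero $\varpi$ as soon as $\mfg$ is simple and $S_0\neq I$ (a nonzero dominant weight in $Q$ has full support in the simple roots); yet $\pi_{\varpi}(\msK)\neq0$ because $\msK$ fixes the lowest weight vector. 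In the symmetric case your structural picture of the Balagovi\'{c}--Kolb matrix is wrong: by \eqref{EqDefOrK} it is $\mcK=\quasiK\,\xi\,T_{w_X}^{-1}T_{w_0}^{-1}$, which contains the Lusztig braid operators and is nothing like (unipotent)$\times$(group-like). Consequently $\msK\xi_{\varpi}$ for the $K$-matrix \eqref{EqModKDef} is a nonzero multiple of $T_{w_X}^{-1}T_{w_0}^{-1}\xi_{\varpi}$, a weight vector of weight $-w_X\tau_0(\varpi)\neq\varpi$ --- this is exactly the computation in Lemma \ref{LemOpNotScal} --- so again $\kappa_{\varpi}=0$ for all $\varpi\neq0$. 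The multiplicativity of $\varpi\mapsto\kappa_{\varpi}$ coming from \eqref{EqComma} then yields nothing, since $0=0\cdot 0$ is perfectly consistent with $\kappa_{0}=1$. There is also a logical problem: the proposition concerns the $\msK$ attached to an \emph{arbitrary} $*$-character of $\mcO_q(Z_{\nu})$, so properties of the particular $K$-matrices constructed in Sections 3 and 4 are not available here, and invoking them would in any case be circular.

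What the paper does instead is soft and uses no structure theory: if $\pi_{\varpi}(\msK)=0$, then \eqref{EqDefModK} and \eqref{EqCommDelt} give $(\pi_{\varpi}\otimes\id)\bigl(\Delta(\msK)\Omega_{\epsilon}\bigr)=0$; evaluating on a nonzero invariant vector $\iota:\C\to V_{\varpi}\otimes V_{\tau_0(\varpi)}$, on which $\Delta(\msK)$ acts by $\varepsilon(\msK)$ and $\Omega_{\epsilon}$ by the scalar $\epsilon_{\varpi+\tau_0(\varpi)}$, forces $\varepsilon(\msK)=0$, contradicting $\varepsilon(\msK)=1$. Ironically, the invertibility you mention would by itself settle the symmetric-type case (an invertible element of $\mcU_q(\mfg)=\prod_{\varpi}\End(V_{\varpi})$ is nonzero in every factor, and invertibility for arbitrary symmetric-type $\msK$ is proved in Appendix \ref{Ap0}), making the detour through diagonal entries unnecessary; but it is precisely the degenerate flag case, where $\msK$ is not invertible, that then remains, and that case your proposal leaves open by your own admission.
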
 
\begin{proof}
We claim that $\pi(\msK)\neq 0$ for all representations $\pi$. Indeed, assume that $\pi_{\varpi}(\msK)= 0$ for some $\varpi \in P^+$. Then \eqref{EqDefModK} and \eqref{EqCommDelt} imply that
\[
(\pi_{\varpi} \otimes \id)\Delta(\msK) (\pi_{\varpi}\otimes \id)\Omega_{\epsilon} =0. 
\]
However, let $\iota$ be a non-zero $U_q(\mfu)$-intertwiner $\C = V_{0} \rightarrow V_{\varpi}\otimes V_{\tau_0(\varpi)}$. Then it follows that 
\[
(\pi_{\varpi} \otimes \pi_{\tau_0(\varpi)})\Delta(\msK) (\pi_{\varpi}\otimes \pi_{\tau_0(\varpi)})\Omega_{\epsilon} \iota = (\pi_{\varpi} \otimes \pi_{\tau_0(\varpi)})\Delta(\msK) \iota  = \varepsilon(\msK)\iota = 0,
\]
contradicting $\varepsilon(\msK) = 1$. 

Let now $\{e_k\}$ be an orthonormal basis of $V_{\pi}$. Then by \eqref{EqImZPhiMatrix} the map $\Phi$ is given by 
\begin{equation}\label{EqImZPhi}
Z_{\pi}(e_k,e_l) \mapsto \sum_{ij} \pi(\msK)_{ij} \tau(Y_{\pi}(e_i,e_k))^{\dag}Y_{\pi}(e_j,e_l).
\end{equation}
As the $\tau(Y(e_i,e_k))^{\dag}Y(e_j,e_l)$ are all linearly independent when $\pi= \pi_{\varpi}, i,j,k,l$ vary, it follows that  that the kernel is trivial unless $\pi(\msK)=0$ for some $\pi$, which we have shown is impossible.
\end{proof}

Let us end by showing a duality relation between $\mcO_q(K\backslash U)$ and $U_q^{\fin}(\mfk')$. We start with a general result, see \cite{KS14} for similar results in the operator algebraic framework and \cite{LVD07,FS09} for the framework of algebraic quantum groups. In the following proposition, we will use again the unitary antipode $R$ \eqref{EqUnitaryAntipode}, acting by duality also on $\mcO_q(U)$ as an involutive Hopf $*$-algebra anti-automorphism.

\begin{Prop}\label{PropDualCoideal}
Let $I_{r/l}$ be a right/left coideal $*$-subalgebra of $\mcO_q(U)$, and let $J_{l/r}$ be a left/right coideal $*$-subalgebra of $\mcU_q(\mfu)$. Then  
\[
\Hat{I}_r = \{X \in \mcU_q(\mfu) \mid \forall f\in I_r: X(-f) = \varepsilon(f)X\} \subseteq \mcU_q(\mfu),
\]
\[
\Hat{I}_l = \{X \in \mcU_q(\mfu) \mid \forall f\in I_l: X(f-) = \varepsilon(f)X\} \subseteq \mcU_q(\mfu)
\]
are respectively a left/right coideal $*$-subalgebra of $\mcU_q(\mfu)$, while
\[
\Hat{J}_l = \{f \in \mcO_q(U) \mid \forall X\in  J_l: f(X-) = \varepsilon(X)f\} \subseteq \mcO_q(U),
\]
\[
\Hat{J}_r = \{f \in \mcO_q(U) \mid \forall X\in  J_r: f(-X) = \varepsilon(X)f\} \subseteq \mcO_q(U)
\]
are respectively a right/left coideal $*$-subalgebra of $\mcO_q(U)$. Moreover,
\begin{enumerate}
\item $\Hat{\Hat{I}}_{r/l} = I_{r/l}$,
\item $\Hat{\Hat{J}}_{l/r}$ is the weak closure of $J_{l/r}$,
\item if $I_{r/l,1} \subseteq I_{r/l,2}$, resp.~ $J_{l/r,1} \subseteq J_{l/r,2}$, then $\Hat{I}_{r/l,1}\supseteq \Hat{I}_{r/l,2}$, resp.~ $\Hat{J}_{l/r,1}\supseteq \Hat{J}_{l/r,2}$.
\end{enumerate}
\end{Prop}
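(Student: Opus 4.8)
The plan is to exploit that $\mcU_q(\mfu) = \prod_{\varpi}\End(V_\varpi)$ is precisely the full linear dual $A^*$ of the \emph{cosemisimple} Hopf $*$-algebra $A := \mcO_q(U)$ (a CQG algebra, with Haar state $h$), the pairing being evaluation. Each operation $\Hat{(-)}$ can then be read off as the annihilator of a one-sided ideal: writing $I^+ = I\cap \ker\varepsilon$ and $J^+ = J\cap\ker\varepsilon$, a direct unravelling of the defining conditions through the Hopf pairing gives
\[
\Hat{I}_r = \{X\in \mcU_q(\mfu) : (X,\, A\,I_r^+)=0\},\qquad \Hat{J}_l = \{f\in \mcO_q(U) : (J_l^+\,\mcU_q(\mfu),\, f)=0\},
\]
and symmetrically for $\Hat{I}_l$ and $\Hat{J}_r$. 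From these descriptions the inclusion-reversing property (item 3) is immediate, as enlarging $I$ or $J$ enlarges the ideal being annihilated, and the easy inclusions $I\subseteq\Hat{\Hat I}$, $J\subseteq\Hat{\Hat J}$ are the usual units of a Galois connection.

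I would then dispose of the structural claims. The subspace $M := A\,I_r^+$ is simultaneously a left ideal of $A$ (since $A\cdot M = M$) and a two-sided coideal of $A$; the latter uses only that $I_r$ is a right coideal subalgebra, via a short Sweedler computation showing $\Delta M\subseteq M\otimes A + A\otimes M$ and $\varepsilon(M)=0$. Dualising, the annihilator of a coideal is a unital subalgebra of $A^*$ and the annihilator of a left ideal is a left coideal of $A^*$, so $\Hat{I}_r = M^\perp$ is a unital left coideal subalgebra of $\mcU_q(\mfu)$; $*$-closedness follows from the compatibility $(X,f^*)=\overline{(S(X)^*,f)}$ together with the $*$-invariance of $I_r$. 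The remaining three cases are identical after interchanging left/right and the roles of $A$ and $A^*$.

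The heart of the matter is the biduality, and I would first recast $\Hat{\Hat I}_r$ coalgebraically. With $C := A/M = A/(A\,I_r^+)$ the quotient module coalgebra and $\pi:A\to C$ the projection, a direct computation identifies
\[
\Hat{\Hat I}_r = {}^{\mathrm{co}\,C}A := \{f\in A : (\pi\otimes\id)\Delta(f) = \pi(1)\otimes f\},
\]
the coinvariants of $A$ viewed as a left $C$-comodule. The inclusion $I_r\subseteq {}^{\mathrm{co}\,C}A$ is immediate from $\pi(I_r^+)=0$. The reverse inclusion $\Hat{\Hat I}_r\subseteq I_r$ is the one genuinely hard point: it is the Takeuchi--Masuoka correspondence between right coideal subalgebras and quotient left module coalgebras, which under faithful coflatness — automatic since $A$ is cosemisimple — yields $I_r = {}^{\mathrm{co}\,C}A$. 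Concretely I would prove it by using the Haar state $h$ to construct a $C$-colinear conditional expectation $A\to {}^{\mathrm{co}\,C}A$ and checking, via complete reducibility of $A$ as a comodule, that its image is already $I_r$. I expect this to be the main obstacle, since for a general Hopf algebra the coinvariants are strictly larger than $I_r$ and only cosemisimplicity forces equality; this establishes item 1 (the $\Hat{I}_l$ case being symmetric).

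Finally, item 2 differs from item 1 only by a weak closure, and this is forced by $\mcU_q(\mfu)$ being the full dual $A^*$. Every $\Hat{(-)}$ is an annihilator, hence automatically weak-$*$ closed, so $\Hat{\Hat J}_l$ is weakly closed and contains $J_l$, giving $\overline{J_l}\subseteq \Hat{\Hat J}_l$ (weak closure). For the reverse inclusion I would note that the condition defining $\Hat{J}_l$ pairs $J_l$ against fixed elements of $A$, hence is weak-$*$ continuous in the $\mcU_q(\mfu)$-variable, so $\Hat{J}_l = \Hat{(\overline{J_l})}$; combining this with the biduality $\Hat{\Hat K}=K$ for \emph{weakly closed} left coideal subalgebras $K$ of $\mcU_q(\mfu)$ — proved exactly as in the previous paragraph, now from the Haar weight of the discrete quantum group $\mcU_q(\mfu)$ — gives $\Hat{\Hat J}_l = \Hat{\Hat{(\overline{J_l})}} = \overline{J_l}$, as claimed.
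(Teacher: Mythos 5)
Your annihilator framework is set up correctly, and it is a genuinely different packaging from the paper's: since $\mcU_q(\mfu)=\prod_{\varpi}\End(V_{\varpi})$ is the full linear dual of $A:=\mcO_q(U)$ by Peter--Weyl, one indeed has $\Hat{I}_r=(AI_r^+)^{\perp}$, biannihilators of subspaces of $A$ recover the subspace, and consequently $\Hat{\Hat{I}}_r={}^{\mathrm{co}\,C}A$ for $C=A/AI_r^+$; item (3), the inclusions $I\subseteq\Hat{\Hat{I}}$ and $J\subseteq\Hat{\Hat{J}}$, and the observation in item (2) that $\Hat{J}_l$ depends only on the weak closure of $J_l$ are all correct. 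The paper never introduces $C$; it instead characterizes $\Hat{I}$ and $\Hat{J}$ as relative commutants inside the Heisenberg $*$-algebra $\mcU_q(\mfu)\otimes\mcO_q(U)$, which in particular yields $*$-closedness for free. Your one-line argument for $*$-closedness is, by contrast, incomplete: unwinding $(X^*,h)=\overline{(S^{-1}(X),h^*)}$ shows that $*$-closedness of $\Hat{I}_r$ amounts to $A\,S^{-1}(I_r)^{+}\subseteq AI_r^{+}$, which needs the computation $S^{-1}(f)-\varepsilon(f)1=-S^{-1}(f_{(2)})\,(f_{(1)}-\varepsilon(f_{(1)})1)\in AI_r^+$ for $f\in I_r$; this is fixable, but it does not follow from the stated compatibility alone.

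The genuine gap sits exactly where you locate the heart of the matter, and it is not just a missing citation. The assertion that faithful coflatness of $A$ over $C$ is \emph{automatic because $A$ is cosemisimple} is false: cosemisimplicity does not descend to quotient left module coalgebras. Already classically, for the (non-$*$-closed) right coideal subalgebra $I=\C[a,b]\subseteq\mcO(SU(2))$ generated by the first row of the fundamental corepresentation, one computes $C=A/AI^{+}\cong\C[c]$ with $c$ primitive: a connected coalgebra, as far from cosemisimple as possible (and indeed $A$ is not faithfully flat over this $I$, since $\Spec A\rightarrow\Spec I$ misses the origin of $\C^2$). Hence neither semisimplicity of $C$-comodules nor the existence of your $C$-colinear expectation onto ${}^{\mathrm{co}\,C}A$ follows from cosemisimplicity of $A$; what makes $I_r={}^{\mathrm{co}\,C}A$ true in the proposition is the $*$-closedness of $I_r$ together with positivity and faithfulness of the Haar state, and your sketch never says where these enter. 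Complete reducibility of $A$ as an $A$-comodule only shows that $I_r$ and ${}^{\mathrm{co}\,C}A$ are both spanned by coefficients $U(\xi,\eta)$ with $\eta$ ranging over suitable subspaces of the $V_{\varpi}$; proving that those subspaces coincide is the whole content, and it is exactly what the paper does by representing the Heisenberg algebra faithfully on $\mcO_q(U)$ via the Haar state and then exploiting, in the final commutation argument, that $I$ is a $*$-algebra. The same criticism hits your item (2): the biduality for weakly closed coideal $*$-subalgebras of $\mcU_q(\mfu)$ cannot be ``proved exactly as in the previous paragraph,'' since that paragraph contains no proof; the paper's argument there again rests on positivity (a weakly closed $*$-subalgebra of $\prod_{\varpi}\End(V_{\varpi})$ is of the form $\prod_{\pi}\End(W_{\pi})$) followed by a nontrivial ideal argument. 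In short: the skeleton via annihilators and the Takeuchi--Masuoka correspondence is attractive and genuinely different, but both directions of the biduality are reduced to statements whose proofs are the actual content of the proposition, and the proposed shortcut through cosemisimplicity alone demonstrably breaks.
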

\begin{proof}
By means of the unitary antipode $R$, we can switch between left and right coideal $*$-subalgebras, in a way which is compatible with the above dualities. It is thus sufficient to show that the above proposition holds for a left coideal $*$-subalgebra $I \subseteq \mcO_q(U)$ and a right coideal $*$-subalgebra $J \subseteq \mcU_q(\mfu)$. Moreover, item $(3)$ is immediately clear from the definitions. 

It is further clear that $\Hat{I}$ and $\Hat{J}$ are respectively right and left coideals. To see that they are $*$-algebras, we consider the Heisenberg $*$-algebra $\mcH = \mcU_q(\mfu) \otimes \mcO_q(U)$ consisting of  $\mcU_q(\mfu)$ and $\mcO_q(U)$ as $*$-subalgebras with the interchange relation 
\[
X \cdot f = f_{(1)}\cdot X(f_{(2)}-).
\]
Put 
\begin{equation}\label{EqCommIsDual}
\widetilde{I} = \{X \in \mcU_q(\mfu)\mid \forall f \in I: X\cdot f = f\cdot X\},\qquad \widetilde{J} = \{f \in \mcO_q(U) \mid \forall X\in J: X\cdot f = f\cdot X\}.
\end{equation}
Then clearly $\widetilde{I}$ and $\widetilde{J}$ are $*$-subalgebras. As $\mcO_q(U)$ and $\mcU_q(\mfu)$ are independent within $\mcH$, we have that
\begin{equation}\label{EqDefHeisComm}
X\in \widetilde{I} \quad \Leftrightarrow \quad \forall f\in I: f_{(1)}\otimes X(f_{(2)}-) = f\otimes X.
\end{equation}
Applying the counit to the first leg, we find $\widetilde{I} \subseteq \Hat{I}$. On the other hand, since $I$ is a left coideal, it follows that \eqref{EqDefHeisComm} holds for all $X \in \Hat{I}$, whence $\widetilde{I} = \Hat{I}$ and $\Hat{I}$ a $*$-algebra. On the other hand, 
\begin{equation}\label{EqDefHeisComm2}
f\in \widetilde{J} \quad \Leftrightarrow \quad \forall X\in J: f_{(1)}\otimes X(f_{(2)}-) = f\otimes X.
\end{equation}
Applying the counit to the second leg, we find $\widetilde{J}\subseteq \Hat{J}$, and the right coideal property of $J$ ensures that $X(-g) \in J$ for all $g\in \mcO_q(U)$, from which the equality $\widetilde{J}= \Hat{J}$ follows. In particular $\Hat{J}$ is a $*$-algebra.

Let us now prove item (2). It is clear that $J \subseteq \Hat{\Hat{J}}$, and that $\Hat{\Hat{J}}$ is weakly closed. To see that $\Hat{\Hat{J}}$ equals the weak closure of $J$, note first that since $J \subseteq \mcU_q(\mfu) \cong \prod_{\varpi} \End(V_{\varpi})$ is a unital $*$-algebra, the weak closure $\overline{J}$ will be isomorphic to $\prod_{\pi} \End(W_{\pi})$ for certain finite dimensional Hilbert spaces $W_{\pi}$. Moreover, since $\varepsilon$ is a non-trivial character on $J$, it corresponds to a particular one-dimensional $W_0$. Let $p$ be the projection on $W_0$. Considering $p$ as an element of $\mcU_q(\mfg)$, we have that $pV_{\varpi}$ consists of vectors on which $J$ acts by the counit, and hence $\Hat{J}$ is spanned by all $U(\xi,p\eta)$. In turn, this implies that $\Hat{\Hat{J}}$ consists of all $X\in \mcU_q(\mfu)$ with 
\begin{equation}\label{EqGrouplike}
\Delta(X)(p\otimes 1) = p\otimes X.
\end{equation}
Let now $X$ be an element satisfying \eqref{EqGrouplike}. Applying the comultiplication to the first leg and multiplying the second leg to the left with $S^{-1}$ of the third leg reveals that 
\begin{equation}\label{EqGrouplike2}
(X\otimes 1)\Delta(p) = (1\otimes S^{-1}(X))\Delta(p).
\end{equation}
In particular, since $p \in \overline{J}$ and $\overline{J}$ a right coideal, we find that 
\[
X (\id\otimes f)(\Delta(p)) \in \overline{J},\qquad \forall f\in \mcO_q(U). 
\]
Since all $Y\in J$ satisfy \eqref{EqGrouplike2}, we then obtain that also 
\[
X Y (\id\otimes f)(\Delta(p))Z \in \overline{J},\qquad \forall f\in \mcO_q(U),Y,Z\in J. 
\]
In particular, let $B$ be the weak closure of the algebra generated by the $Y (\id\otimes f)(\Delta(p))Z$ with $f\in \mcO_q(U),Y,Z\in J$. Then $B$ is a weakly closed, $*$-closed two-sided ideal in $\overline{J}$, with 
\[
X B \subseteq \overline{J}.
\]
To finish the proof of (2), we need to show that $B=\overline{J}$ (and hence contains the unit). Suppose however this were not the case. Then as $B$ is a weakly closed, $*$-closed two-sided ideal in $\overline{J} \cong \prod_{\pi} \End(W_{\pi})$, there would exist a finite dimensional representation $V$ of $\mcU_q(\mfu)$ and a non-zero vector $\xi$ with $B\xi = 0$. This implies however $\Delta(p)(1\otimes \xi) = 0$, and hence $0=S(p_{(1)})p_{(2)}\xi = \varepsilon(p)\xi = \xi$, a contradiction. 

Finally, let us prove item (1). Again, the inclusion $I \subseteq \Hat{\Hat{I}}$ is immediate. As $I$ is a left coideal, it is clear that $I$ will be spanned by elements $U_{\varpi}(\xi,\eta)$ where $\xi$ ranges over $V_{\varpi}$ and where $\eta \in W_{\varpi}$ for a certain subspace $W_{\varpi}\subseteq V_{\varpi}$. Let $q_{\varpi}$ be the projection of $V_{\varpi}$ onto $W_{\varpi}$, and let $q = \prod q_{\varpi} \in \mcU_q(\mfu)$. Let $p \in \Hat{I}$ be the projection onto the orthogonal complement of the kernel projection of the restriction of $\varepsilon$ to $\Hat{I}$. Since $I \subseteq \Hat{\Hat{I}}$, we have $p \leq q$. If we can show that $p \in \Hat{I}$, then also $q\leq p$ (since $\varepsilon(p)=1$), and hence $I = \Hat{\Hat{I}}$. 

To see that $p \in \Hat{I}$, we will first faithfully represent $\mcH$ on $\mcO_q(U)$. 
Consider on $\mcO_q(U)$ the \emph{Haar state} $\varphi: \mcO_q(U) \rightarrow \C$, uniquely characterized by the conditions $\varphi(1) = 1$ and $\varphi(U_{\varpi}(\xi,\eta))= 0$ for $\varpi \neq 0$. Then $\varphi$ is faithful, and $\mcO_q(U)$ becomes a pre-Hilbert space for the inner product
\[
\langle f,g\rangle = \varphi(f^*g).
\]
Concretely, we have
\[
\langle U_{\varpi}(\xi,\eta),U_{\varpi'}(\xi',\eta')\rangle = \delta_{\varpi,\varpi'} \frac{\langle \xi',K_{2\rho}\xi\rangle \langle \eta,\eta'\rangle}{\Tr(\pi_{\varpi}(K_{2\rho}))},
\]
see e.g.~ \cite{NT13}*{Theorem 1.4.3 and Proposition 2.4.10}. Consider the following representations:
\[
\pi: \mcO_q(U) \rightarrow \End(\mcO_q(U)),\quad \pi(f)g = fg,
\]
\[
\hat{\pi}: \mcU_q(\mfu) \rightarrow  \End(\mcO_q(U)),\quad \hat{\pi}(X)f = f(-X),
\]
so in particular $\hat{\pi}(X)U(\xi,\eta) = U(\xi,X\eta)$. Then we obtain in particular a $*$-representation
\[
\pi_{\mcH}: \mcH \rightarrow \End_*(\mcO_q(U)),\quad X\mapsto \hat{\pi}(X),\quad f \mapsto \pi(f),
\]
where $\End_*(\mcO_q(U))$ denotes the $*$-algebra of adjointable endomorphisms. We claim that $\pi_{\mcH}$ is faithful. To see this, consider also the $*$-representation
\[
\hat{\pi}': U_q(\mfu) \rightarrow \End_*(\mcO_q(U)),\quad \hat{\pi}'(X)f = f(S^{-1}(X)-),
\]
so that $\hat{\pi}'(X)U(\xi,\eta) = U(S^{-1}(X)^*\xi,\eta)$. Then $\End_*(\mcO_q(U))$ has the right $\hat{\pi}'$-adjoint $U_q(\mfu)$-module $*$-algebra structure
\[
y \lhd X = \hat{\pi}'(S(X_{(1)}))y\hat{\pi}'(X_{(2)}).
\]
Let $\End_*^{\fin}(\mcO_q(U))$ be the locally finite part of $\End_*(\mcO_q(U))$ with respect to $\lhd$, i.e.~ the $*$-algebra of elements which span a finite-dimensional subspace under $\lhd$. Then we can consider the projection map 
\[
E: \End_*^{\fin}(\mcO_q(U)) \rightarrow \End_*^{\fin}(\mcO_q(U))_0
\]
onto the $\lhd$-trivial subspace. Since $x \in \End_*(\mcO_q(U))$ is $\lhd$-trivial if and only if $x$ commutes with all $\hat{\pi}'(X)$ for $X \in U_q(\mfu)$, and since the $\pi(Y)$ commute with the $\hat{\pi}'(X)$ for $X\in U_q(\mfu)$ and $Y\in \mcU_q(\mfu)$, it follows that in fact $\End_*^{\fin}(\mcO_q(U))_0 = \hat{\pi}(\mcU_q(\mfu))$. On the other hand, an easy computation reveals
\[
\pi(f) \lhd X= \pi(f(X-)), 
\]
so in particular $\pi(U(\xi,\eta)) \lhd X= \pi(U(X^*\xi,\eta))$. It follows that
\[
E(\hat{\pi}(X)\pi(f)) = \varphi(f) \hat{\pi}(X),\qquad f\in \mcO_q(U), X\in \mcU_q(\mfu),
\] 
which implies $\pi_{\mcH}$ is faithful. 

We are now ready to show that $p\in \Hat{I}$. Indeed, by the characterisation of $\Hat{I}$ in \eqref{EqCommIsDual} and the faithfulness of $\pi_{\mcH}$, it is sufficient to show that $\hat{\pi}(p)\pi(f) = \pi(f)\hat{\pi}(p)$. However, this follows immediately from the fact that $I$ is a $*$-algebra, for then we have
\[
\pi(f)\hat{\pi}(p) U(\xi,\eta) = f U(\xi,p\eta) = \hat{\pi}(p) (fU(\xi,p\eta)) = \hat{\pi}(p)\pi(f)\hat{\pi}(p) U(\xi,\eta),\qquad \forall f\in I,\forall \xi,\eta. \qedhere
\]
\end{proof}

\begin{Def}\label{DefDualCoid}
We define
\[
\begin{split}
\widehat{U}_q^{\fin}(\mfk') &= \{f \in \mcO_q(U) \mid \forall X\in U_q^{\fin}(\mfk'): f(X-) = \varepsilon(X)f\} \subseteq \mcO_q(U), \\
\widehat{\mcO}_q(K\backslash U) &= \{X \in \mcU_q(\mfu) \mid \forall f\in \mcO_q(K\backslash U): X(-f) = \varepsilon(f)X\} \subseteq \mcU_q(\mfu).
\end{split}
\]
\end{Def}

It follows from Proposition \ref{PropDualCoideal} that $\widehat{U}_q^{\fin}(\mfk')$ is a right coideal $*$-subalgebra in $\mcO_q(U)$, while $\widehat{\mcO}_q(K\backslash U)$ is a left coideal $*$-subalgebra in $\mcU_q(\mfu)$ in the sense that 
\begin{equation}\label{EqRightCoidSub}
(f\otimes \id)\Delta(X) \in \widehat{\mcO}_q(K\backslash U),\qquad X\in \widehat{\mcO}_q(K\backslash U),f\in \mcO_q(U).
\end{equation}
Moreover, if we denote by $\mcU_q(\mfk')$ the weak closure of $U_q^{\fin}(\mfk')$, we also have 
\[
\dbwidehat{U}_q{}^{\!\!\!\fin}(\mfk') = \mcU_q(\mfk'),\qquad \vardbwidehat{\mcO}_q(K\backslash U) = \mcO_q(K\backslash U).
\]

Part of the following proposition can be found (in the untwisted case) in \cite[Corollary 4.5]{KoSt09}.

\begin{Prop}\label{PropInclusion}
We have 
\[
\mcO_q(K\backslash U) \subseteq \widehat{U}_q^{\fin}(\mfk'),\qquad U_q^{\fin}(\mfk')  \subseteq \widehat{\mcO}_q(K\backslash U).
\]
Furthermore, 
\begin{equation}\label{EqCharDualO}
\widehat{\mcO}_q(K\backslash U) = \{X \in \mcU_q(\mfu)\mid (1\otimes \msK)\Delta(X) = (\id\otimes \tau)(\Delta(X)) (1\otimes \msK)\}.
\end{equation}
\end{Prop}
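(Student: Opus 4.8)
The plan is to first collapse the two stated inclusions into a single assertion, then establish the characterization \eqref{EqCharDualO}, and finally read the remaining inclusion off from it. For the reduction I would use the Heisenberg-algebra description from the proof of Proposition \ref{PropDualCoideal}: by \eqref{EqCommIsDual}, an element $X\in\mcU_q(\mfu)$ lies in $\widehat{\mcO}_q(K\backslash U)$ exactly when $X\cdot f=f\cdot X$ in $\mcH$ for every $f\in\mcO_q(K\backslash U)$, and symmetrically $f\in\widehat{U}_q^{\fin}(\mfk')$ exactly when $f\cdot X=X\cdot f$ for every $X\in U_q^{\fin}(\mfk')$. Since the relation $X\cdot f=f\cdot X$ is symmetric in its two arguments, the inclusions $U_q^{\fin}(\mfk')\subseteq\widehat{\mcO}_q(K\backslash U)$ and $\mcO_q(K\backslash U)\subseteq\widehat{U}_q^{\fin}(\mfk')$ are one and the same statement, namely that every element of $U_q^{\fin}(\mfk')$ commutes in $\mcH$ with every element of $\mcO_q(K\backslash U)$. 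Thus it suffices to prove the first inclusion, and for this it is convenient to have \eqref{EqCharDualO} in hand.

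To prove \eqref{EqCharDualO}, write $M_\pi=(\id\otimes\phi)Z_\pi=\tau(U_\pi)^*(\pi(\msK)\otimes 1)U_\pi$ as in \eqref{EqImZphiMatrix}. The space $\mcO_q(K\backslash U)$ is spanned by the matrix coefficients of the $M_\pi$ as $\pi,\xi,\eta$ range over all admissible representations and vectors, so the defining condition $X(-f)=\varepsilon(f)X$ can be tested on these coefficients alone. Pairing $X\bigl(g\cdot M_\pi(\xi,\eta)\bigr)=\varepsilon(M_\pi(\xi,\eta))\,X(g)$ against an arbitrary $g\in\mcO_q(U)$ and using $\varepsilon\circ\phi=\chi$, i.e. $(\id\otimes\varepsilon)M_\pi=\pi(\msK)$, reduces the membership of $X$ to the single operator identity
\[
\sum_{(X)} X_{(1)}\otimes (\id\otimes X_{(2)})M_\pi = X\otimes \pi(\msK)\qquad\text{in }\mcU_q(\mfu)\otimes\End(V_\pi).
\]
Expanding $(\id\otimes X_{(2)})M_\pi$ through the coproduct, using $(\id\otimes Y)U_\pi=\pi(Y)$ and $(\id\otimes Y)\tau(U_\pi)=\pi(\tau(Y))$ (as $\tau$ on $\mcO_q(U)$ is dual to the involutive $\tau$ on $U_q(\mfu)$), and then cancelling the corepresentation matrices by unitarity of $U_\pi$, i.e. $U_\pi^*=U_\pi^{-1}$, the identity collapses to the balanced relation $(1\otimes\msK)\Delta(X)=(\id\otimes\tau)(\Delta X)(1\otimes\msK)$; the twist $(\id\otimes\tau)$ is precisely the residue of the factor $\tau(U_\pi)^*$.

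Granting \eqref{EqCharDualO}, I would finish by checking that each generator $X=\hat\phi(Z_\pi(\xi,\eta))$ of $U_q^{\fin}(\mfk')$ satisfies the balanced relation. Since $\hat\phi$ intertwines $\gamma_\nu$ with $\Delta$, equation \eqref{EqCoactgamm} gives $(\id\otimes\Delta)(\id\otimes\hat\phi)Z_\pi$ in closed form in terms of $\msR_\tau$, $\msR$ and $\pi(\msK)$; substituting this into both sides of the balanced relation and simplifying with the defining relation \eqref{EqDefModK} of the $\nu$-modified universal $K$-matrix (equivalently the $\tau$-modified reflection equation \eqref{EqModRefl}), together with \eqref{EqCommROmega}, should make the two sides coincide. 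This yields $U_q^{\fin}(\mfk')\subseteq\widehat{\mcO}_q(K\backslash U)$, and hence $\mcO_q(K\backslash U)\subseteq\widehat{U}_q^{\fin}(\mfk')$ by the reduction of the first paragraph.

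The main obstacle is the middle step producing \eqref{EqCharDualO}: it requires tracking carefully how the $*$-operation and the diagram automorphism $\tau$ act on the unitary corepresentation $U_\pi$, and using unitarity at the right place to eliminate the corepresentation matrices while leaving behind exactly the $\tau$-twist. The subsequent verification for the generators is essentially bookkeeping with the relations \eqref{EqDefModK}, \eqref{EqModRefl} and \eqref{EqCommROmega}, and the collapse of the two inclusions to a single symmetric statement is immediate from the Heisenberg picture.
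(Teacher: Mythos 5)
Your proposal is correct, and its two substantive steps coincide with the paper's own proof: the characterization \eqref{EqCharDualO} is derived exactly as in the paper (the paper works from the formula $\phi(f)(Y)=f(S(\tau(Y_{(1)}))\msK Y_{(2)})$ coming from \eqref{EqCoactrho}, you work from the matrices $(\id\otimes\phi)Z_\pi$ of \eqref{EqImZphiMatrix}; the manipulation — test on the image of $\phi$, use $\varepsilon\circ\phi=\chi$, then cancel the corepresentation matrices — is the same), and the inclusion $U_q^{\fin}(\mfk')\subseteq\widehat{\mcO}_q(K\backslash U)$ is obtained, as in the paper, by plugging \eqref{EqHatPhi} into \eqref{EqCharDualO}. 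Two comments. First, calling that last verification ``bookkeeping'' undersells it: in the paper it is an eight-line computation and the technical heart of the proof, requiring both coproduct identities \eqref{EqDefModK} and \eqref{EqDefModKAlt} together with the fact that $\msR^*\msR=\msR_{21}\msR$ commutes with $\Delta(X)$ and with $\Omega_{\epsilon}$; your ingredient list (\eqref{EqDefModK}, \eqref{EqModRefl}, \eqref{EqCommROmega}) does generate what is needed, since \eqref{EqDefModK} and \eqref{EqModRefl} together yield \eqref{EqDefModKAlt}, so this is a matter of execution rather than a gap. Second, where you genuinely diverge is in obtaining the remaining inclusion $\mcO_q(K\backslash U)\subseteq\widehat{U}_q^{\fin}(\mfk')$: the paper deduces it from biduality and contravariance, $\mcO_q(K\backslash U)=\dbwidehat{\mcO}_q(K\backslash U)\subseteq \widehat{U}_q^{\fin}(\mfk')$, i.e.\ items (1) and (3) of Proposition \ref{PropDualCoideal}, whereas you note that in the Heisenberg picture both inclusions assert elementwise commutation of $U_q^{\fin}(\mfk')$ with $\mcO_q(K\backslash U)$, hence are literally one statement. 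This is a genuine economy: it uses only the commutant identifications from the proof of Proposition \ref{PropDualCoideal} (which need just the coideal properties of the two subalgebras), and not the much harder biduality $\Hat{\Hat{I}}=I$. The one detail to patch is a left/right mismatch: \eqref{EqCommIsDual} is formulated for a \emph{left} coideal of $\mcO_q(U)$ and a \emph{right} coideal of $\mcU_q(\mfu)$, the mirror image of the present situation, so you must either conjugate by the unitary antipode $R$ (as the paper does at the start of that proof) or write down the mirrored Heisenberg interchange relation; after that adjustment your symmetry argument goes through.
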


\begin{proof}
From \eqref{EqCoactrho}, we have that
\begin{equation}\label{EqImphi}
\phi(f)(Y)= f(S(\tau(Y_{(1)}))\msK Y_{(2)}), 
\end{equation}
hence $X\in \widehat{\mcO}_q(K\backslash U)$ if and only if
\[
X_{(1)} \otimes S(\tau(X_{(2)}))\msK X_{(3)} = X\otimes \msK,
\]
which is equivalent to 
\[
(1\otimes \msK)\Delta(X) = (\id\otimes \tau)(\Delta(X)) (1\otimes \msK),
\]
proving \eqref{EqCharDualO}.

On the other hand, by \eqref{EqCoactgamm} we see that 
\begin{equation}\label{EqHatPhi}
\hat{\phi}(f) = (f\otimes \id)(\msR_{\tau,21}(\msK\otimes 1)\msR),\qquad f\in \mcO_q(Z_{\nu}).
\end{equation}
We then compute, using that $\msR^*\msR$ commutes with $\Delta(X)$ for $X\in U_q(\mfu)$ and with $\Omega_{\epsilon}$, that
\begin{eqnarray*}
(1 \otimes 1 \otimes \msK)(\id \otimes \Delta)(\msR_{\tau,21}(\msK \otimes 1)\msR)
&=& \msK_3 \msR_{\tau,21}\msR_{\tau,31} \msK_1 \msR_{13}\msR_{12}\\ 
&=& \msR_{\tau,21}\msK_3 \msR_{\tau,31} \msK_1 \msR_{13}\msR_{12}\\ 
&=& \msR_{\tau,21}(\msK_3 \msR_{\tau,31} \msK_1 \msR_{31}^{-1}) (\msR^* \msR)_{13}\msR_{12} \\ 
&\underset{\eqref{EqDefModK}}{=}&  \msR_{\tau,21}\Omega_{\epsilon,13}\Delta(K)_{13} (\msR^* \msR)_{13}\msR_{12} \\
&=& \msR_{\tau,21}(\msR^* \msR)_{13}\Omega_{\epsilon,13}\Delta(K)_{13} \msR_{12} \\
&\underset{\eqref{EqDefModKAlt}}{=}& \msR_{\tau,21}(\msR^* \msR)_{13}(\msR^{-1}_{13}\msK_1 \msR_{\tau,13}\msK_3) \msR_{12}\\ 
&=& \msR_{\tau,21} \msR_{31} \msK_1 \msR_{\tau,13}\msK_3 \msR_{12}\\
&=& \msR_{\tau,21} \msR_{31} \msK_1 \msR_{\tau,13}\msR_{12}\msK_3 \\
&= & (\id \otimes (\id \otimes \tau)\Delta)(\msR_{\tau,21}(\msK \otimes 1)\msR)(1 \otimes 1 \otimes \msK).
\end{eqnarray*}
From \eqref{EqCharDualO}, it now follows that $\hat{\phi}(f) \in  \widehat{\mcO}_q(K\backslash U)$ for all $f\in \mcO_q(Z_{\nu})$, i.e.~ $U_q^{\fin}(\mfk')  \subseteq \widehat{\mcO}_q(K\backslash U)$.

We then have also $\mcO_q(K\backslash U)  = \dbwidehat{\mcO}_q(K\backslash U) \subseteq \widehat{U}_q^{\fin}(\mfk')$.
\end{proof}

\begin{Rem}
It is possible that the equality $\mcO_q(K\backslash U) =\widehat{U}_q^{\fin}(\mfk')$ holds in full generality, but we were not able to prove this - conditions for equality of these algebras were already asked for in the non-modified case in \cite[Remark 4.6]{KoSt09}. We will however verify this property directly in many cases, sometimes by quite ad hoc computations. 
\end{Rem}

In the next sections, we will construct $*$-compatible $\nu$-modified universal $K$-matrices in the case where $\nu$ is either of flag type or of symmetric type. 

\section{Quantum flag varieties}\label{SecFlag}

In this section, we will examine in more detail the case where $\nu = (\id,\epsilon) \in \End_*(U_q(\mfb))$ is of flag type, so $\epsilon_r \in \{0,1\}$ for all $r$. We resume further notation from Section \ref{SecPrelim} and Section \ref{SecTwistBraid}.

\subsection{Construction of a $*$-compatible $\nu$-modified universal $K$-matrix}

Let $S_0 = \{r\mid \epsilon_r = 1\}$, and let $S = \{\tau_0(r)\mid r\in S_0\}$, where we recall that $\tau_0$ is the Dynkin diagram automorphism defined by the longest word $w_0$ of $W$, that is $\tau_0(\omega) = - w_0(\omega)$ for $\omega \in P$. 

\begin{Def}\label{DefUniKFlag}
We define $\Elw \in \mcU_q(\mfu)$
by 
\[
\Elw \xi := \epsilon_{\varpi - w_0(\mathrm{wt}(\xi))} \xi, \quad \xi \in V_\varpi.
\]
\end{Def}

Note that this is meaningful: if $\xi$ is a non-zero weight vector at $\omega$, then $T_{w_0}\xi$ is a non-zero weight vector at $w_0\omega$, hence $w_0\omega = \varpi - \alpha$ for some $\alpha \in Q^+$. 

We will in the following also write $\nu_0 := \tau_0 \circ \nu \circ \tau_0 \in \End_*(U_q(\mfb))$. Explicitly we have
\[
\nu_0(K_\omega) = K_\omega, \quad \nu_0(E_r) = \epsilon_{\tau_0(r)} E_r, \quad \nu_0(F_r) = \epsilon_{\tau_0(r)} F_r.
\]
Observe that $\nu_0(E_r) = E_r$ for $r \in S$ and $\nu_0(E_r) = 0$ for $r \notin S$. Similarly for $F_r$.

We denote by $Q_S^+$ the positive span of the simple roots $\alpha_r$ with $r \in S$. 

\begin{Lem}\label{LemCommRel}
Let $X \in U_q(\mfb)$ and $Y \in U_q(\mfb^-)$. Then
\[
\Elw X = \nu_0(X) \Elw, \quad Y \Elw = \Elw \nu_0(Y).
\]
Moreover, if $X \in U_q(\mfb)_{\alpha}$ and $Y \in U_q(\mfb^-)_{-\beta}$ for $\alpha, \beta \in Q_S^+$ then $\Elw X = X \Elw$ and $\Elw Y = Y \Elw$.
\end{Lem}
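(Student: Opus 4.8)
The plan is to exploit that $\Elw$ acts as a scalar on each weight space: on a weight vector $\xi \in V_\varpi$ of weight $\mu = \wt(\xi)$ it multiplies by $\epsilon_{\varpi - w_0(\mu)}$, a scalar lying in $\epsilon(Q^+)$ since $\varpi - w_0(\mu) \in Q^+$. Because $U_q(\mfb)$ is generated by the $K_\omega$ and $E_r$ and $\nu_0$ is an algebra homomorphism, the identity $\Elw X = \nu_0(X)\Elw$ will follow for all $X \in U_q(\mfb)$ once it is checked on these generators; likewise $Y\Elw = \Elw\nu_0(Y)$ reduces to the generators $K_\omega, F_r$ of $U_q(\mfb^-)$. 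The Cartan case is immediate: $K_\omega$ preserves weights and so commutes with the diagonal operator $\Elw$, while $\nu_0(K_\omega) = K_\omega$.

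The key computation is the $E_r$-case. First I would record the identity $-w_0(\alpha_r) = \alpha_{\tau_0(r)}$, which is just $\tau_0 = -w_0$ on $P$. Then for a weight vector $\xi$ of weight $\mu$ with $E_r\xi \neq 0$, the vector $E_r\xi$ has weight $\mu + \alpha_r$, so
\[
\Elw E_r\xi = \epsilon_{\varpi - w_0(\mu+\alpha_r)}E_r\xi = \epsilon_{(\varpi - w_0(\mu)) + \alpha_{\tau_0(r)}}E_r\xi = \epsilon_{\tau_0(r)}\,\epsilon_{\varpi - w_0(\mu)}E_r\xi,
\]
where the last step uses the semigroup homomorphism property $\epsilon_{\alpha+\beta} = \epsilon_\alpha\epsilon_\beta$ for $\alpha,\beta \in Q^+$ (here both $\varpi - w_0(\mu)$ and $\alpha_{\tau_0(r)}$ lie in $Q^+$). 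The right-hand side equals $\epsilon_{\tau_0(r)}E_r\Elw\xi = \nu_0(E_r)\Elw\xi$, giving $\Elw E_r = \nu_0(E_r)\Elw$. The $F_r$-case is entirely analogous, using $w_0(\alpha_r) = -\alpha_{\tau_0(r)}$ and the same homomorphism identity to absorb the weight shift; it yields $F_r\Elw = \Elw\nu_0(F_r)$. I would stress that since $\epsilon$ may vanish, one must apply the homomorphism property only in the multiplicative direction and never divide by $\epsilon_{\tau_0(r)}$; because the relevant weights all remain in $Q^+$, this presents no difficulty.

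For the second assertion I would first invoke the first part to rewrite $\Elw X = \nu_0(X)\Elw$, so that $\Elw X = X\Elw$ is equivalent to $\nu_0(X) = X$, and similarly for $Y$. The remaining point is purely about weights: $U_q(\mfb)_\alpha$ is spanned by elements $K_\beta m$ with $m$ a monomial in the $E_s$ of total weight $\alpha$, and if $\alpha = \sum_i \alpha_{s_i} \in Q_S^+$ then linear independence of the simple roots forces every $s_i \in S$. Since $r \in S$ is equivalent to $\tau_0(r) \in S_0$, i.e.\ to $\epsilon_{\tau_0(r)} = 1$, the already-recorded observation $\nu_0(E_s) = E_s$ for $s \in S$ gives $\nu_0(m) = m$, hence $\nu_0(X) = X$ and $\Elw X = X\Elw$. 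The case $Y \in U_q(\mfb^-)_{-\beta}$ with $\beta \in Q_S^+$ is identical with $F_s$ in place of $E_s$.

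No step presents a genuine obstacle: the whole argument is weight bookkeeping resting on $\tau_0 = -w_0$ and the multiplicativity of $\epsilon$ on $Q^+$. The only place demanding a little care is ensuring the arguments of $\epsilon$ stay in $Q^+$ throughout, so that the semigroup identity is legitimately applicable and no division by a possibly-vanishing $\epsilon_{\tau_0(r)}$ is ever required.
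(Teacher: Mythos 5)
Your proof is correct and follows essentially the same route as the paper: both verify the commutation relations on the generators $E_r$, $F_r$ (the Cartan case being trivial) via the weight computation $\varpi - w_0(\omega \pm \alpha_r) = (\varpi - w_0(\omega)) \pm \alpha_{\tau_0(r)}$ together with multiplicativity of $\epsilon$ on $Q^+$, and both deduce the final assertion from $\epsilon_{\tau_0(r)} = 1$ for $r \in S$. One tiny caveat: your claim that $\Elw X = X\Elw$ is \emph{equivalent} to $\nu_0(X) = X$ would require right multiplication by $\Elw$ to be injective (which can fail when $\epsilon$ has zeros), but since you only use the implication $\nu_0(X) = X \Rightarrow \Elw X = X\Elw$, this does not affect the argument.
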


\begin{proof}
Let $\xi$ be of weight $\omega$. First we compute
\[
\Elw E_r \xi = \epsilon_{\varpi - w_0(\omega + \alpha_r)} E_r \xi = \epsilon_{\tau_0(r)} E_r \epsilon_{\varpi - w_0(\omega)} \xi = \nu_0(E_r) \Elw \xi.
\]
Next we compute
\[
F_r \Elw \xi = \epsilon_{\varpi - w_0(\omega)} F_r \xi = \epsilon_{\varpi - w_0(\omega - \alpha_r)} \epsilon_{\tau_0(r)} F_r \xi = \Elw \nu_0(F_r) \xi.
\]
The final statement in the lemma follows immediately from the fact that $\epsilon_{\tau_0(r)} = 1$ for $r \in S$.
\end{proof}

\begin{Lem}
We have $\Omega_\epsilon \Delta(\Elw) = \Elw \otimes \Elw$.
\end{Lem}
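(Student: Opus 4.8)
The plan is to verify the operator identity $\Omega_\epsilon\Delta(\Elw) = \Elw\otimes\Elw$ by evaluating both sides on an arbitrary tensor product $V_\varpi\otimes V_{\varpi'}$ and comparing scalars on a convenient spanning set, namely the images of intertwiners. Concretely, for each $\varpi''\in P^+$ I would fix an intertwiner $\iota\in\Hom_{U_q(\mfu)}(V_{\varpi''},V_\varpi\otimes V_{\varpi'})$ and a weight vector $\zeta\in V_{\varpi''}$ of weight $\mu$; as $\varpi''$, $\iota$ and $\zeta$ vary, the vectors $\iota(\zeta)$ span $V_\varpi\otimes V_{\varpi'}$ by complete reducibility, so it suffices to check the identity on each $\iota(\zeta)$.

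First I would compute the left-hand side. Since $\iota$ is $U_q(\mfu)$-linear and $\Elw\in\mcU_q(\mfg)$, naturality gives $\Delta(\Elw)\iota(\zeta) = \iota(\Elw\zeta) = \epsilon_{\varpi''-w_0\mu}\,\iota(\zeta)$, directly from Definition \ref{DefUniKFlag} applied inside $V_{\varpi''}$. By the defining property of $\Omega_\epsilon$, the image of $\iota$ lies in an isotypical component of type $V_{\varpi''}$, on which $\Omega_\epsilon$ acts by the scalar $\epsilon_{\varpi+\varpi'-\varpi''}$. Hence $\Omega_\epsilon\Delta(\Elw)\iota(\zeta) = \epsilon_{\varpi+\varpi'-\varpi''}\,\epsilon_{\varpi''-w_0\mu}\,\iota(\zeta)$, and since $\epsilon$ is a semigroup homomorphism $(Q^+,+)\to(\R,\cdot)$ this collapses to $\epsilon_{\varpi+\varpi'-w_0\mu}\,\iota(\zeta)$. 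For the right-hand side I would write $\iota(\zeta) = \sum_k \xi_k\otimes\eta_k$ with $\xi_k,\eta_k$ weight vectors satisfying $\wt(\xi_k)+\wt(\eta_k)=\mu$, and use $(\Elw\otimes\Elw)(\xi_k\otimes\eta_k) = \epsilon_{\varpi-w_0\wt(\xi_k)}\,\epsilon_{\varpi'-w_0\wt(\eta_k)}\,\xi_k\otimes\eta_k = \epsilon_{\varpi+\varpi'-w_0\mu}\,\xi_k\otimes\eta_k$, again by multiplicativity of $\epsilon$; summing gives $(\Elw\otimes\Elw)\iota(\zeta) = \epsilon_{\varpi+\varpi'-w_0\mu}\,\iota(\zeta)$, which matches.

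Along the way I would check that every subscript of $\epsilon$ genuinely lies in $Q^+$, so that all scalars are defined: $\varpi+\varpi'-\varpi''\in Q^+$ because $V_{\varpi''}$ occurs in $V_\varpi\otimes V_{\varpi'}$; and $\varpi''-w_0\mu$, $\varpi-w_0\wt(\xi_k)$, $\varpi'-w_0\wt(\eta_k)$ all lie in $Q^+$ because the weight sets are $W$-invariant, so $w_0$ of a weight is again a weight and is therefore dominated by the relevant highest weight. The only point needing real care is the bookkeeping of these $Q^+$-membership conditions together with the repeated use of the semigroup property (note that multiplicativity $\epsilon_{a+b}=\epsilon_a\epsilon_b$ holds in the monoid $(\R,\cdot)$ even when $\epsilon$ takes the value $0$, so no continuity or nonvanishing hypothesis is needed); everything else is formal.

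An alternative, which I would mention but not pursue, is to exploit $\Elw = T_{w_0}^{-1}\msE T_{w_0}$, immediate from the weight shift $\wt(T_{w_0}\xi)=w_0\wt(\xi)$, together with the identity $\Omega_\epsilon\Delta(\msE)=\msE\otimes\msE$ and the fact that $\Omega_\epsilon$ commutes with $\Delta(T_{w_0})$ since $T_{w_0}\in\mcU_q(\mfu)$. This reduces the claim to comparing $\Delta(T_{w_0})$ with $T_{w_0}\otimes T_{w_0}$, which however forces one into the quasi-$R$-matrix expansion of the coproduct of $T_{w_0}$. The direct spanning-set computation above avoids this and is shorter.
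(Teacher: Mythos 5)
Your proof is correct and is essentially the paper's own argument: both evaluate the two sides on weight vectors lying in an irreducible summand of the tensor product, use the defining scalar action of $\Omega_\epsilon$ on that summand together with the action of $\Delta(\Elw)$ there (naturality), compute $\Elw\otimes\Elw$ on pure tensors of weight vectors, and conclude by multiplicativity of $\epsilon$. The only differences are cosmetic (intertwiners versus embedded submodules) plus your explicit bookkeeping of the $Q^+$-membership of the subscripts, which the paper leaves implicit in the remark following Definition \ref{DefUniKFlag}.
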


\begin{proof}
Given $V_\varpi \subseteq V_{\varpi^\prime} \otimes V_{\varpi^{\prime \prime}}$ we have $\Omega_\epsilon |_{V_\varpi} = \epsilon_{\varpi^\prime + \varpi^{\prime \prime} - \varpi}$. For $\xi \in V_\varpi$ of weight $\omega$ we get
\[
\Omega_\epsilon \Delta(\Elw) \xi = \epsilon_{\varpi^\prime + \varpi^{\prime \prime} - \varpi} \epsilon_{\varpi - w_0(\omega)} \xi = \epsilon_{\varpi^\prime + \varpi^{\prime \prime} - w_0(\omega)}  \xi.
\]
Now consider $\xi^\prime \otimes \xi^{\prime\prime} \in V_{\varpi^\prime} \otimes V_{\varpi^{\prime \prime}}$ with $\mathrm{wt}(\xi^\prime) = \omega^\prime$ and $\mathrm{wt}(\xi^{\prime\prime}) = \omega^{\prime \prime}$. Then we have
\[
\Elw \xi^\prime \otimes \Elw \xi^{\prime\prime} = \epsilon_{\varpi^\prime - w_0(\omega^\prime)} \xi^\prime \otimes \epsilon_{\varpi^{\prime\prime} - w_0(\omega^{\prime\prime})} \xi^{\prime \prime} = \epsilon_{\varpi^{\prime} + \varpi^{\prime \prime} - w_0(\omega^\prime + \omega^{\prime \prime})} \xi^\prime \otimes \xi^{\prime \prime}.
\]
From this we conclude that $(\Elw \otimes \Elw) \xi = \epsilon_{\varpi^\prime + \varpi^{\prime \prime} - w_0(\omega)} \xi$.
Comparing the two expressions we obtain the equality $\Omega_\epsilon \Delta(\Elw) = \Elw \otimes \Elw$.
\end{proof}

\begin{Theorem}\label{TheoFlagK}
The element $\Elw$ is a $*$-compatible $\nu$-modified universal K-matrix.
\end{Theorem}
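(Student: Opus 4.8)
The plan is to verify the three conditions of Lemma~\ref{LemCharK} for $\msK = \Elw$, keeping in mind that in the flag case $\tau = \id$, so that $\msR_\tau = \msR$ and the compatibility condition \eqref{EqDefModKStar} simply reads $\Elw^* = \Elw$. The counit normalization and $*$-compatibility come for free: on each $V_\varpi$ the operator $\Elw$ is the projection onto the sum of those weight spaces $V_\varpi(\omega)$ with $\varpi - w_0(\omega) \in Q_{S_0}^+$ (equivalently $\epsilon_{\varpi - w_0(\omega)} = 1$), where $Q_{S_0}^+$ denotes the positive span of the $\alpha_r$ with $r\in S_0$. Since distinct weight spaces are mutually orthogonal in the compact $*$-representation and $\Elw$ acts there by the real scalars $0,1$, it is a self-adjoint projection; thus $\Elw^* = \Elw$, while $\varepsilon(\Elw) = 1$ follows from its action on $V_0$ (and is in any case automatic as $\Elw\neq 0$).

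The real content is condition \eqref{EqDefModK}. By the preceding lemma $\Omega_\epsilon\Delta(\Elw) = \Elw\otimes\Elw$, so the left-hand side equals $\Elw\otimes\Elw$ and it remains to establish
\[
\msR(\Elw\otimes\Elw) = (\Elw\otimes 1)\msR(1\otimes\Elw).
\]
Here I would insert the factorization $\msR = \wmsR\msQ$ from \eqref{EqPropR3}. As $\msQ$ is diagonal in the weight decomposition, it commutes with both $\Elw\otimes\Elw$ and $1\otimes\Elw$ and, being invertible, may be cancelled, reducing the claim to $\wmsR(\Elw\otimes\Elw) = (\Elw\otimes 1)\wmsR(1\otimes\Elw)$. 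Writing $\wmsR = \sum_{\alpha\in Q^+}\wmsR_\alpha$ with $\wmsR_\alpha = \sum_i a_i\otimes b_i$, $a_i\in U_q(\mfn^+)_\alpha$ and $b_i \in U_q(\mfn^-)_{-\alpha}$, the difference of the two sides becomes $\sum_{\alpha,i}(\Elw a_i - a_i\Elw)\otimes b_i\Elw$, since the right tensor leg is $b_i\Elw$ on both sides.

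I would then kill this sum degree by degree in $\alpha$. When $\alpha\in Q_S^+$, Lemma~\ref{LemCommRel} gives $\Elw a_i = a_i\Elw$, so the degree-$\alpha$ term vanishes. The crucial—and, I expect, only genuinely delicate—step is the complementary case $\alpha\notin Q_S^+$, where I claim $b\,\Elw = 0$ for every $b\in U_q(\mfn^-)_{-\alpha}$. This is a lowest-weight estimate: if $\Elw\eta = \eta$ for a weight vector $\eta$ of weight $\omega$, then $\gamma := \varpi - w_0(\omega)\in Q_{S_0}^+$, and the hypothetical nonzero vector $b\eta$ would have weight $\omega - \alpha$; using $w_0(\alpha) = -\tau_0(\alpha)$ one computes $\varpi - w_0(\omega-\alpha) = \gamma - \tau_0(\alpha)$, which cannot lie in $Q^+$, because $\tau_0(\alpha)\notin Q_{S_0}^+$ (as $S = \tau_0(S_0)$ forces $\tau_0$ to carry $Q_S^+$ onto $Q_{S_0}^+$) produces a strictly negative coefficient on some simple root outside $S_0$, whereas $\gamma$ is supported on $S_0$. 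Hence $\omega - \alpha$ falls below the lowest weight $w_0(\varpi)$ of $V_\varpi$, so it is not a weight and $b\eta = 0$, giving $b_i\Elw = 0$ and the vanishing of the degree-$\alpha$ term. Summing over $\alpha$ yields \eqref{EqDefModK}, and with the opening paragraph this completes the proof; everything outside the weight estimate is bookkeeping resting on Lemma~\ref{LemCommRel} and the factorization $\msR = \wmsR\msQ$.
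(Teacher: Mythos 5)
Your proof is correct: the reduction to $\msR(\Elw\otimes\Elw) = (\Elw\otimes 1)\msR(1\otimes\Elw)$, the cancellation of $\msQ$, and the two-case analysis of the graded components $\wmsR_{\alpha}$ all check out. But your execution genuinely differs from the paper's. The paper never decomposes $\msR$ into graded pieces: it verifies the equivalent identity \eqref{EqDefModKAlt}, in the form $(1\otimes\Elw)\msR_{21}(\Elw\otimes 1)\msR_{21}^{-1} = \Elw\otimes\Elw$, by a global conjugation using the full strength of Lemma~\ref{LemCommRel}. Namely, with $\nu_0 = \tau_0\circ\nu\circ\tau_0$ one has $\Elw X = \nu_0(X)\Elw$ and $Y\Elw = \Elw\nu_0(Y)$, so $1\otimes\Elw$ passes through $\msR_{21}$ at the cost of twisting it to $\msR_{\nu_0,21}$, and then $\Elw\otimes 1$ commutes with $\msR_{\nu_0,21}^{\pm 1}$ because the relevant leg of the twisted $R$-matrix only involves the generators $F_r$ with $r\in S$; everything then telescopes to $\Elw\otimes\Elw$. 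You instead use the factorization $\msR = \wmsR\msQ$ and kill the difference degree by degree, substituting for the intertwining relation $Y\Elw = \Elw\nu_0(Y)$ your lowest-weight estimate. In fact your one ``delicate'' step, the vanishing $b\,\Elw = 0$ for $b\in U_q(\mfn^-)_{-\alpha}$ with $\alpha\notin Q_S^+$, is exactly that relation in disguise: any such $b$ satisfies $\nu_0(b)=0$, since every monomial of weight $-\alpha$ contains some $F_r$ with $r\notin S$ and $\nu_0(F_r)=0$ there, whence $b\,\Elw = \Elw\,\nu_0(b) = 0$ directly from Lemma~\ref{LemCommRel}. So you could have shortened your argument by invoking that lemma in full; conversely, your weight argument amounts to an independent, representation-theoretic proof of that part of the lemma. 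The trade-off: the paper's twisted-$R$-matrix conjugation is shorter and stays at the level of universal elements, while yours is more elementary and makes explicit which graded components of $\msR$ interact nontrivially with $\Elw$. Finally, note that you verify \eqref{EqDefModK} directly whereas the paper verifies \eqref{EqDefModKAlt}; given $\Elw^* = \Elw$ and $\msR^* = \msR_{21}$ these are equivalent, so both arguments are complete.
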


\begin{proof}
Since $\msR \in \mcU_q(\mfb^+) \hat{\otimes} \mcU_q(\mfb^-)$, we obtain by Lemma \ref{LemCommRel} 
\[
(1 \otimes \Elw) \mathscr{R}_{21} (\Elw \otimes 1) \mathscr{R}_{21}^{-1} = \mathscr{R}_{\nu_0, 21} (\Elw \otimes \Elw) \mathscr{R}_{21}^{-1} = \mathscr{R}_{\nu_0, 21} (\Elw \otimes 1) \mathscr{R}_{\nu_0, 21}^{-1} (1 \otimes \Elw).
\]
We have $\nu_0(F_r) = 0$ for $r \notin S$, hence the first leg of the element $\mathscr{R}_{\nu_0, 21}$ only contains expressions in the generators $F_r$ with $r \in S$, and similarly for its inverse. Then again by Lemma \ref{LemCommRel} we have $(\Elw\otimes 1) \mathscr{R}_{\nu_0, 21}^{-1} = \mathscr{R}_{\nu_0, 21}^{-1} (\Elw\otimes 1)$ and we get
\[
(1 \otimes \Elw) \mathscr{R}_{21} (\Elw \otimes 1) \mathscr{R}_{21}^{-1} = \mathscr{R}_{\nu_0, 21} \mathscr{R}_{\nu_0, 21}^{-1} (\Elw \otimes \Elw) = \Elw \otimes \Elw.
\]
Since $\Omega_\epsilon \Delta(\Elw) = \Elw \otimes \Elw$ by the previous lemma, we see that $\Elw$ is a $\nu$-modified universal $K$-matrix. The $*$-compatibility is immediate, since ${\Elw}^* = \Elw$.
\end{proof}

\subsection{Comparison of the coideal subalgebras $U_q^{\fin}(\mfk')$ and $U_q(\mfk_S)$}

Fix $S$, $S_0$ as in the previous section. Recall the notations introduced following Theorem \ref{TheoOneToOneCorr}, using the $*$-compatible $\nu$-modified universal $K$-matrix $\msK$ constucted in Definition \ref{DefUniKFlag}. Let further $\mfk_S$ be the compact form of the Levi factor of the parabolic subalgebra associated to $S$, so that the complexification $\mfk_S^{\C}\subseteq \mfg$ is generated by $\mfh$ and the $e_r$, $f_r$ with $r \in S$. Let $U_q(\mfk_S)\subseteq U_q(\mfu)$ be the quantized enveloping $*$-algebra of $\mfk_S$, generated by the $K_{\omega}$ for $\omega \in P$ and the $E_r$, $F_r$ for $r \in S$, and let $\mcU_q(\mfk_S)$ be its weak completion. Finally, let $\mcO_q(K_S\backslash U) = \hat{U}_q(\mfk_S)$  be the dual right coideal $*$-subalgebra of $\mcO_q(U)$.

We denote by $W_S\subseteq W$ the subgroup generated by the $s_r$ with $r\in S$, and $w_{S,0}$ the longest element in $W_S$ with respect to the natural word length on $W_S$. We let $w_S = w_{S,0}w_0$. 

\begin{Theorem}\label{TheoIdUnivFlag}
The equality $\mcU_q(\mfk') = \mcU_q(\mfk_S)$ holds. 
\end{Theorem}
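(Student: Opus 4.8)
The plan is to prove the asserted equality of weak closures by two opposite inclusions, working throughout with the explicit formula \eqref{EqHatPhi} for $\hat{\phi}$, which here — since $\tau = \id$ and $\msK = \Elw$ — reads $\hat{\phi}(f) = (f \otimes \id)\bigl(\msR_{21}(\Elw \otimes 1)\msR\bigr)$. Thus $U_q^{\fin}(\mfk')$ is precisely the span of the second-leg matrix coefficients of the element $\msR_{21}(\Elw \otimes 1)\msR \in \mcU_q(\mfg)\hat{\otimes}\mcU_q(\mfu)$, and everything reduces to analysing this single element.

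First I would prove the upper bound $\mcU_q(\mfk') \subseteq \mcU_q(\mfk_S)$. Writing $\msR = \widetilde{\msR}\msQ$ as in \eqref{EqPropR3}, each homogeneous piece $\widetilde{\msR}_{\alpha}$ has first leg in $U_q(\mfn)_{\alpha}$ and second leg in $U_q(\mfn^-)_{-\alpha}$. By Lemma \ref{LemCommRel} one has $\Elw X = \nu_0(X)\Elw$ and $Y\Elw = \Elw\nu_0(Y)$, and $\nu_0$ annihilates every $E_r, F_r$ with $r \notin S$. Hence the sandwiched $\Elw$ forces the surviving terms of the right-hand $\msR$ to have $\alpha \in Q_S^+$ and those of the left-hand $\msR_{21}$ to have weight in $Q_S^+$; by linear independence of the simple roots the corresponding $U_q(\mfn^{\pm})_{\pm\alpha}$ lie in $\mcU_q(\mfk_S)$. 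The surviving terms reassemble into the universal $R$-matrix $\msR_S$ of $U_q(\mfk_S)$, yielding the identity $\msR_{21}(\Elw \otimes 1)\msR = \msR_{S,21}(\Elw \otimes 1)\msR_S$ inside $\mcU_q(\mfg)\hat{\otimes}\mcU_q(\mfk_S)$. Since $\msR_S \in \mcU_q(\mfk_S)\hat{\otimes}\mcU_q(\mfk_S)$, the second leg lies in $\mcU_q(\mfk_S)$, so $U_q^{\fin}(\mfk') \subseteq \mcU_q(\mfk_S)$ and therefore $\mcU_q(\mfk') \subseteq \mcU_q(\mfk_S)$.

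For the reverse inclusion I would exploit the same identity together with the fact that $\Elw$ commutes with $U_q(\mfk_S)$ (again Lemma \ref{LemCommRel}, as $\nu_0$ restricts to the identity on the $S$-generators and on $\mcU_q(\mfh)$). Concretely $\pi_{\varpi}(\Elw)$ is the orthogonal projection onto the irreducible $U_q(\mfk_S)$-submodule $V_{\varpi}^{S} \subseteq V_{\varpi}$ generated by a lowest weight vector, of $S$-highest weight $w_S\varpi$; commuting this projection past $\msR_S$ one rewrites $(\id \otimes \hat{\phi})Z_{\varpi} = (\pi_{\varpi} \otimes \id)(\msR_{S,21}\msR_S)(\pi_{\varpi}(\Elw) \otimes 1)$, so that $U_q^{\fin}(\mfk')$ contains all $\langle \xi, (\pi_{\varpi} \otimes \id)(\msR_{S,21}\msR_S)\,\eta\rangle$ with $\eta \in V_{\varpi}^{S}$ and $\xi \in V_{\varpi}$. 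Specializing $\eta$ to the $S$-highest weight vector of $V_{\varpi}^{S}$ collapses $\msR_S$ to its Cartan part (by the weight vanishing in \eqref{EqPropR4}) and extracts the coefficients of $\msR_{S,21}$; the weight-$\alpha_r$ term of \eqref{EqPropR4} then produces $E_rK_{\bullet}$ for $r \in S$, while the dual specialization produces $F_rK_{\bullet}$, and the leading term gives the Cartan elements $K_{-2\wt(\xi)}$. Letting $\varpi$ range over $P^+$ so that $V_{\varpi}^{S}$ runs through the irreducible $U_q(\mfk_S)$-modules, these $l^{\pm}$-functionals weakly generate $U_q(\mfk_S)$, giving $\mcU_q(\mfk_S) \subseteq \mcU_q(\mfk')$ and hence equality.

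The hard part is this reverse inclusion: one must check that the $\Elw$-construction is non-degenerate enough to recover \emph{all} of $\mcU_q(\mfk_S)$ rather than a proper coideal. The two delicate points are (i) that the family $\{V_{\varpi}^{S} : \varpi \in P^+\}$ supplies a weakly generating set of $l$-functionals, which rests on the lattice fact that $\{w_S\varpi : \varpi \in P^+\}$ meets every $S$-dominant direction, and (ii) that the diagonal coefficients $K_{-2\wt(\xi)}$ weakly exhaust the Cartan part of $\mcU_q(\mfk_S)$. A more conceptual alternative would be to prove the abstract equality $\mcO_q(K\backslash U) = \widehat{U}_q^{\fin}(\mfk')$ of the Remark following Proposition \ref{PropInclusion} directly in this case and then transport it through the Galois correspondence of Proposition \ref{PropDualCoideal}; I nonetheless expect the explicit generator computation above to be the more robust route.
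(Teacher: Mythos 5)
Your proposal is correct and follows essentially the same route as the paper's proof: the commutation identity obtained from Lemma \ref{LemCommRel} (your $\msR_S$ is the paper's $\msR_{\nu_0}$, i.e.\ the part of $\widetilde{\msR}$ supported on $Q_S^+$ times $\msQ$) gives the inclusion $\mcU_q(\mfk')\subseteq\mcU_q(\mfk_S)$, and evaluating at the $S$-highest weight vectors $\xi_{w_S(\varpi)}$ of the range of $\Elw$ produces $K_{-2w_S(\varpi)}$ together with scalar multiples of $K_{\alpha_r-2w_S(\varpi)}F_r$ for $r\in S$, exactly as in the paper. The two ``delicate points'' you flag at the end are non-issues once you invoke the fact recorded in Section \ref{SubsecQUE} that the weak closure of a $*$-subalgebra of $\mcU_q(\mfu)$ is its bicommutant: since $\{2w_S(\varpi)\mid\varpi\in P^+\}$ spans $P\otimes_{\Z}\Q$, the commutant of $\{K_{-2w_S(\varpi)}\mid \varpi\in P^+\}$ consists precisely of the operators preserving all weight spaces, so the bicommutant of $U_q^{\fin}(\mfk')$ contains every $K_{\omega}$, and then (multiplying the elements $K_{\alpha_r-2w_S(\varpi)}F_r$ and their adjoints by suitable Cartan elements) also $F_r$ and $E_r$ for $r\in S$. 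In particular you need neither the family $\{V_{\varpi}^S\}$ to exhaust the irreducible $U_q(\mfk_S)$-modules nor any finer lattice analysis of the Cartan coefficients; recovering the generators in the bicommutant suffices, and this is exactly how the paper concludes.
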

\begin{proof}
We recall from \eqref{EqHatPhi} that 
\[
\hat{\phi}(f) = (f\otimes \id)(\msR_{21}(\msK\otimes 1)\msR). 
\]
Since by Lemma \ref{LemCommRel}
\[
\msR_{21}(\msK \otimes 1) = (\msK\otimes 1)\msR_{\nu_0,21},\quad (\msK\otimes 1)\msR = \msR_{\nu_0}(\msK\otimes 1),
\]
we obtain from the fact that $\msK$ is a self-adjoint projection that
\[
\hat{\phi}(Z(\xi,\eta)) = (U(\msK\xi,\msK\eta)\otimes \id)(\msR_{\nu_0,21}\msR_{\nu_0}).
\]
This shows that $U_q^f(\mfk') \subseteq U_q(\mfk_S)$, and hence $\mcU_q(\mfk') \subseteq \mcU_q(\mfk_S)$.

On the other hand, let $\eta_{w_0\varpi}\in V_{\varpi}$ be a lowest weight vector for $U_q(\mfu)$. Then $U_q(\mfk_S)\eta_{w_0\varpi}$ is an irreducible $U_q(\mfk_S)$-module of highest weight $w_S(\varpi) = w_{S,0}w_0(\varpi)$. Then with $\xi_{w_S(\varpi)}$ a corresponding highest weight vector of unit norm, we find
\[
\hat{\phi}(Z(\xi_{w_S(\varpi)},\xi_{w_S(\varpi)})) = (U(\msK\xi_{w_S(\varpi)},\msK\xi_{w_S(\varpi)})\otimes \id)(\msR_{\nu_0,21}\msR_{\nu_0}).
\] 
Now 
\[
\msK \xi_{w_S(\varpi)} = \epsilon_{\varpi - w_0w_S(\varpi)}\xi_{w_S(\varpi)} =  \epsilon_{\varpi - w_{S,0}(\varpi)}\xi_{w_S(\varpi)} =  \xi_{w_S(\varpi)},
\]
since $\varpi - w_{S,0}(\varpi) \in Q_S^+$. It follows that
\[
\hat{\phi}(Z(\xi_{w_S(\varpi)},\xi_{w_S(\varpi)})) =  (U(\xi_{w_S(\varpi)},\xi_{w_S(\varpi)})\otimes \id)(\msR_{\nu_0,21}\msR_{\nu_0}) =   (U(\xi_{w_S(\varpi)},\xi_{w_S(\varpi)})\otimes \id)(\msQ^2) = K_{-2 w_S(\varpi)}.  
\]
Similarly, since $F_r\xi_{w_S(\varpi)}$ still lies in the range of $\msK$ for $r\in S$, it follows that
\[
\hat{\phi}(Z(\xi_{w_S(\varpi)},F_r\xi_{w_S(\varpi)})) =  (U(\xi_{w_S(\varpi)},F_r\xi_{w_S(\varpi)})\otimes \id)(\msR_{\nu_0,21}\msR_{\nu_0}) 
\]
will be a scalar multiple of $K_{\alpha_r-2w_S(\varpi)}F_r$. Since $\mcU_q(\mfk')$ can be identified with the bicommutant of the $*$-algebra $U_q^{\fin}(\mfk')$, it now follows immediately that $\mcU_q(\mfk') = \mcU_q(\mfk_S)$. 
\end{proof}

\subsection{Comparison of the coideal subalgebras $\mcO_q(K\backslash U)$ and $\mcO_q(K_S\backslash U)$}

Fix again the setting as in the previous subsection. We then also have the following dual result.

\begin{Theorem}\label{TheoMainFlag2}
The equality $\mcO_q(K\backslash U) = \mcO_q(K_S\backslash U)$ holds.
\end{Theorem}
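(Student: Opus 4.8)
The plan is to deduce the equality from the perfect duality set up in Proposition \ref{PropDualCoideal}, combined with the enveloping-algebra identification already obtained in Theorem \ref{TheoIdUnivFlag}. First note that the hat operation $J \mapsto \widehat{J}$ depends only on the weak closure of $J$: this is a formal consequence of items (1) and (2) of Proposition \ref{PropDualCoideal}, since applying the hat three times gives $\widehat{J} = \widehat{\overline{J}}$ for $\overline{J}$ the weak closure. Using $\mcO_q(K_S\backslash U) = \hat{U}_q(\mfk_S)$ by definition, together with Theorem \ref{TheoIdUnivFlag} ($\mcU_q(\mfk') = \mcU_q(\mfk_S)$), this yields the chain
\[
\mcO_q(K_S\backslash U) = \widehat{\mcU_q(\mfk_S)} = \widehat{\mcU_q(\mfk')} = \widehat{U}_q^{\fin}(\mfk').
\]
By the last line of Proposition \ref{PropInclusion} we have $\mcO_q(K\backslash U) \subseteq \widehat{U}_q^{\fin}(\mfk')$, so one inclusion $\mcO_q(K\backslash U) \subseteq \mcO_q(K_S\backslash U)$ follows with no further work.

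Since the duality of Proposition \ref{PropDualCoideal} is an order-reversing bijection between right coideal $*$-subalgebras of $\mcO_q(U)$ and weakly closed left coideal $*$-subalgebras of $\mcU_q(\mfu)$, to upgrade this to an equality it suffices to prove the reverse inclusion of duals,
\[
\widehat{\mcO}_q(K\backslash U) \subseteq \mcU_q(\mfk_S).
\]
The opposite inclusion already holds: Proposition \ref{PropInclusion} gives $U_q^{\fin}(\mfk') \subseteq \widehat{\mcO}_q(K\backslash U)$, and as $\widehat{\mcO}_q(K\backslash U)$ is defined by weakly closed conditions it contains the weak closure $\mcU_q(\mfk') = \mcU_q(\mfk_S)$. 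Hence the displayed inclusion would give $\widehat{\mcO}_q(K\backslash U) = \mcU_q(\mfk_S)$, and applying the hat once more produces $\mcO_q(K\backslash U) = \widehat{\mcU_q(\mfk_S)} = \mcO_q(K_S\backslash U)$, as desired.

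For the remaining inclusion I would use the explicit description \eqref{EqCharDualO}. Since $\tau = \id$ in the flag case it reads
\[
\widehat{\mcO}_q(K\backslash U) = \{X \in \mcU_q(\mfu) \mid \Delta(X) \text{ commutes with } 1\otimes \msK\},
\]
where $\msK$ is the self-adjoint projection of Definition \ref{DefUniKFlag}, whose range $W_\varpi \subseteq V_\varpi$ (the span of the weight vectors $\xi$ with $\varpi - w_0(\wt(\xi))$ in the positive span of the $\alpha_r$, $r \in S_0$) is $U_q(\mfk_S)$-stable by Lemma \ref{LemCommRel} and contains the lowest weight line. I would fix $\varpi$, let the second tensor leg run over $V_{\varpi'}$ with $\varpi'$ pushed deep into the dominant chamber, and evaluate the commutation relation $\Delta(X)(1\otimes \pi_{\varpi'}(\msK)) = (1\otimes \pi_{\varpi'}(\msK))\Delta(X)$ on vectors of the form $\zeta \otimes \eta_{w_0\varpi'}$ built from lowest weight vectors. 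A weight-space analysis in $V_\varpi \otimes V_{\varpi'}$ should then force those components of $\pi_\varpi(X)$ that would push a weight out of $W$ by a simple root $\alpha_r$ with $r \notin S$ to vanish, pinning $\pi_\varpi(X)$ inside the bicommutant $\pi_\varpi(U_q(\mfk_S))''$; combined with $\msK^* = \msK$ this gives $X \in \mcU_q(\mfk_S)$.

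The main obstacle is exactly this last step. It is the flag-case instance of the equality $\mcO_q(K\backslash U) = \widehat{U}_q^{\fin}(\mfk')$ that is left open in general in the Remark following Proposition \ref{PropInclusion}, so it cannot be purely formal: the duality alone is consistent with a strict inclusion. What makes it go through here is the concrete nature of $\msK$ as an honest $\mfk_S$-stable orthogonal projection onto a sum of weight spaces, together with the large-$\varpi'$ localization that lets the coproduct detect all of $\pi_\varpi(X)$. I would expect the delicate point to be verifying that this limiting argument sees enough of $\pi_\varpi$ to determine $X$ exactly, rather than merely up to the evident $U_q(\mfk_S)$-bi-invariant part, and this is where a careful representation-theoretic computation (in the spirit of the explicit image calculations in the proof of Theorem \ref{TheoIdUnivFlag}) would be required.
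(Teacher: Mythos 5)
Your first half is fine and matches the paper: the inclusion $\mcO_q(K\backslash U)\subseteq\mcO_q(K_S\backslash U)$ does follow formally from Proposition \ref{PropInclusion}, Theorem \ref{TheoIdUnivFlag}, and the fact that the hat operation only sees weak closures. Your reduction of the reverse inclusion to $\widehat{\mcO}_q(K\backslash U)\subseteq\mcU_q(\mfk_S)$ is also logically correct, via the order-reversing biduality of Proposition \ref{PropDualCoideal}. The problem is that this reduced statement is the entire content of the theorem, and you do not prove it: your weight-space/large-$\varpi'$ analysis of the commutation relation $(1\otimes\msK)\Delta(X)=\Delta(X)(1\otimes\msK)$ is a strategy sketch whose crucial step you yourself flag as unresolved. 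Note that what you are trying to prove directly is exactly the flag-case instance of the equality $\mcO_q(K\backslash U)=\widehat{U}_q^{\fin}(\mfk')$ (equivalently, by biduality, $\widehat{\mcO}_q(K\backslash U)=\mcU_q(\mfk')$), which the Remark following Proposition \ref{PropInclusion} explicitly states the authors could not establish in general; in the paper this equality is a \emph{consequence} of Theorem \ref{TheoMainFlag2}, not an input to it. There is also a technical worry in your sketch: pinning each $\pi_\varpi(X)$ inside the blockwise bicommutant $\pi_\varpi(U_q(\mfk_S))''$ is weaker than membership in the weak closure $\mcU_q(\mfk_S)$, which requires compatibility of the blocks across representations, so even the endgame of your argument needs more care than stated.

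The paper closes the gap by a different and much more concrete route, working on the function-algebra side rather than dualizing. Using \eqref{EqImZphiMatrix} with $\tau=\id$ and the fact that $\msK$ is a self-adjoint projection, one computes for the distinguished elements $a_\varpi=Z(\xi_\varpi,\xi_\varpi)$ of \eqref{EqDefa} that
\[
\phi(a_{\varpi}) = \sum_i U_{\varpi}(e_i,\xi_{\varpi})^* U_{\varpi}(\msK e_i,\xi_{\varpi}),
\]
where the non-zero $\msK e_i$ form an orthonormal basis of the $U_q(\mfk_S)$-submodule of $V_\varpi$ generated by the lowest weight vector. By the generation result of \cite{DCN15}*{Proposition 2.3}, these elements generate $\mcO_q(K_S\backslash U)$ as a $U_q(\mfu)$-module; since $\phi$ is $U_q(\mfu)$-equivariant, its image $\mcO_q(K\backslash U)$ therefore contains $\mcO_q(K_S\backslash U)$, giving equality. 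If you want to salvage your proposal, this is the kind of representation-theoretic input you would need to substitute for the unproven weight-analysis step; as written, the proposal does not constitute a proof.
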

\begin{proof}
We already know by Proposition \ref{PropInclusion} and Theorem \ref{TheoIdUnivFlag} that $\mcO_q(K\backslash U)  \subseteq \mcO_q(K_S\backslash U)$. On the other hand, consider the elements $a_{\varpi} = Z(\xi_{\varpi},\xi_{\varpi})$ introduced in \eqref{EqDefa}. Then 
\[
\phi(a_{\varpi}) = \sum_i U_{\varpi}(e_i,\xi_{\varpi})^* U_{\varpi}(\msK e_i,\xi_{\varpi}),
\]
for $e_i$ an orthonormal basis of $V_{\varpi}$. However, choosing the $e_i$ to be eigenvectors of $\msK$, the non-zero $\msK e_i$ then form an orthonormal basis of the $U_q(\mfk_S)$-module spanned by the $U_q(\mfu)$-lowest weight vector $\eta_{w_0\varpi}$ in $V_{\varpi}$. Following the reasoning as in \cite{DCN15}*{Proposition 2.3}, we see that the $\phi(a_{\varpi})$ generate $\mcO_q(K_S\backslash U)$ as a $U_q(\mfu)$-module. Since $\phi$ is $U_q(\mfu)$-equivariant, we must hence have $\mcO_q(K\backslash U) = \mcO_q(K_S\backslash U)$.  
\end{proof}

\section{Quantum symmetric spaces}\label{SecSymm}

In this section we fix a semisimple Lie algebra $\mfg$ with Dynkin diagram $\Gamma$ and underlying set $I$, and use also further notation as in Section \ref{SecPrelim} and Section \ref{SecTwistBraid}. We will further write $\mfa \subseteq \mfg$ for the real span of the $h_r$, and $T = e^{i \mfa} \subseteq U$ for the maximal torus of $U$ associated to $\mfh$. 
 
\subsection{Involutions in Satake form}

We recall some preliminaries on involutions of $\mfg$ in maximally split form. We follow mainly the exposition in \cite{Kol14}*{Section 2}.

For $X\subseteq I$, we write $\mfg_X$ for the semisimple Lie algebra generated by the $\{e_r,f_r,h_r\mid r\in X\}$, and we write $\Delta_X\subseteq \Delta$ for the associated root system. We denote by $Q_X \subseteq Q$ the root lattice spanned by $\Delta_X$, equipped with the restriction of the bilinear form $(-,-)$. We write $W_X \subseteq W$ for the Weyl group of $\mfg_X$, and $w_X$ for the longest element in $W_X$. We further write
\[
\rho_X^{\vee} = \frac{1}{2}\sum_{\alpha \in \Delta_X^{+}} \alpha^{\vee}\in P. 
\]

\begin{Def}\label{DefEnhSat}
We call \emph{concrete Satake diagram\footnote{In what follows, we will exclude the trivial Satake diagram $X = I$ and $\tau = \id$}} on the Dynkin diagram $\Gamma$ the datum of a subset $X\subseteq I$ and a Dynkin diagram involution $\tau$ of $I$ such that the following two conditions are satisfied:
\begin{itemize}
\item $\tau$ preserves $X$ and coincides on it with the action of $-w_X$,
\item $(\alpha_r,\rho_X^{\vee})  \in  \Z$ for all $r\in I \setminus X$ with $\tau(r) = r$. 
\end{itemize}

We call \emph{enhanced Satake diagram} a concrete Satake diagram that is also equipped with a function
\[
z: I \rightarrow \{\pm 1\}
\] 
such that 
\[
z_r = 1\quad \textrm{ when }(\alpha_r,\rho_X^{\vee})\in \Z,\qquad z_rz_{\tau(r)} = -1\quad \textrm{ when }(\alpha_r,\rho_X^{\vee})\notin \Z.
\]
\end{Def}

Note that such a $z$ always exists, since $(\alpha_r,\rho_X^{\vee})\notin \Z$ implies $\tau(r)\neq r$. Moreover, $z_r = 1$ for $r\in X$ as then $(\alpha_r,\rho_X^{\vee}) = 1$. A function $z$ as above satisfies in particular 
\begin{equation}\label{EqProdzz}
z_rz_{\tau(r)} (-1)^{(\alpha_r,2\rho_X^{\vee})}= 1,\qquad \forall r\in I. 
\end{equation}

By direct diagram checking, one verifies that the resulting Satake diagrams correspond to the (unions of) Satake diagrams as in \cite{Ar62}, from which we also borrow the nomenclature, together with the diagrams consisting of two copies of the same Dynkin diagram and the involution interchanging the two copies. Note however that as we are fixing the Dynkin diagram beforehand, we also need to treat as separate the Satake diagrams obtained by applying Dynkin diagram automorphisms. This means:
\begin{enumerate}
\item In the non-simple case, the ordering of the different components is taken into account. 
\item In the $DIII$-case $\mfu^*_{2p}(\mathbb{H}) = \mfso^*(4p)$ we include also the Satake diagram with the coloring of the fork endpoints interchanged, see Table \ref{Tablesostar}.
\begin{table}[ht]
\caption{Concrete Satake diagrams for $\mfso^*(4p)$}\label{Tablesostar}
\begin{center}
\bgroup
\def\arraystretch{3}
{\setlength{\tabcolsep}{1.5em}
\begin{tabular}{cc}
  \begin{tikzpicture}[scale=.4,baseline]
\draw[fill = black] (0cm,0) circle (.2cm) node[above]{\small $1$} ;
\draw (0.2cm,0) -- +(0.6cm,0);
\draw (1cm,0) circle (.2cm)node[above]{\small $2$};
\draw (1.2cm,0) -- +(0.6cm,0);
\draw[fill = black] (2cm,0) circle (.2cm)node[above]{\small $3$};
\draw (2.2cm,0) -- +(0.2cm,0);
\draw[dotted] (2.4cm,0) --+ (1cm,0);
\draw (3.4cm,0) --+ (0.2cm,0);
\draw[fill = black] (3.8cm,0) circle (.2cm); 
\draw (4cm,0) --+ (0.6cm,0);
\draw (4.8cm,0) circle (.2cm)  node[above]{\tiny $2p-2$};
\draw (5cm,0) --+ (1.6,0.6);
\draw (5cm,0) --+ (1.6,-0.6);
\draw[fill = black] (6.8cm,0.6) circle (.2cm) node[above]{\tiny $2p-1$} ;
\draw (6.8cm,-0.6) circle (.2cm) node[below]{\tiny $2p$} ;
\end{tikzpicture}
& 
\begin{tikzpicture}[scale=.4,baseline]
\draw[fill = black] (0cm,0) circle (.2cm) node[above]{\small $1$} ;
\draw (0.2cm,0) -- +(0.6cm,0);
\draw (1cm,0) circle (.2cm)node[above]{\small $2$};
\draw (1.2cm,0) -- +(0.6cm,0);
\draw[fill = black] (2cm,0) circle (.2cm)node[above]{\small $3$};
\draw (2.2cm,0) -- +(0.2cm,0);
\draw[dotted] (2.4cm,0) --+ (1cm,0);
\draw (3.4cm,0) --+ (0.2cm,0);
\draw[fill = black] (3.8cm,0) circle (.2cm); 
\draw (4cm,0) --+ (0.6cm,0);
\draw (4.8cm,0) circle (.2cm)  node[above]{\tiny $2p-2$};
\draw (5cm,0) --+ (1.6,0.6);
\draw (5cm,0) --+ (1.6,-0.6);
\draw (6.8cm,0.6) circle (.2cm) node[above]{\tiny $2p-1$} ;
\draw[fill = black] (6.8cm,-0.6) circle (.2cm) node[below]{\tiny $2p$} ;
\end{tikzpicture}
\end{tabular}
}
\egroup
\end{center}
\end{table}

\item In the $D$-cases $\mfso(1,7)$, $\mfso(2,6)$ and $\mfso(3,5)$ we include the Satake diagrams obtained by rotation, see Table \ref{Tableso8}. Note that one of these establishes the isomorphism $\mfso(2,6)\cong \mfso^*(8)$.  

\begin{table}[ht]
\caption{Concrete Satake diagrams for $\mfso(p,8-p)$ with $1\leq p \leq 3$}\label{Tableso8}
\begin{center}
\bgroup
\def\arraystretch{4}
{\setlength{\tabcolsep}{1.5em}
\begin{tabular}{cccc}
$\mfso(1,7) \;(DII)$
& 
\begin{tikzpicture}[scale=.4,baseline]
\draw (0cm,0) circle (.2cm) node[above]{\small $1$} ;
\draw (0.2cm,0) -- +(0.6cm,0);
\draw[fill = black] (1cm,0) circle (.2cm) node[above]{\small $2$} ;
\draw (1.2cm,0) --+ (0.6cm,0.6cm);
\draw (1.2cm,0) --+ (0.6cm,-0.6cm);
\draw[fill = black] (1.95cm,0.7) circle (.2cm) node[above] {\small $3$}; 
\draw[fill = black] (1.95cm,-0.7) circle (.2cm)  node[below]{\small $4$};
\end{tikzpicture}
& 
\begin{tikzpicture}[scale=.4,baseline]
\draw[fill = black] (0cm,0) circle (.2cm) node[above]{\small $1$} ;
\draw (0.2cm,0) -- +(0.6cm,0);
\draw[fill = black] (1cm,0) circle (.2cm) node[above]{\small $2$} ;
\draw (1.2cm,0) --+ (0.6cm,0.6cm);
\draw (1.2cm,0) --+ (0.6cm,-0.6cm);
\draw (1.95cm,0.7) circle (.2cm) node[above] {\small $3$}; 
\draw[fill = black] (1.95cm,-0.7) circle (.2cm)  node[below]{\small $4$};
\end{tikzpicture}
& 
\begin{tikzpicture}[scale=.4,baseline]
\draw[fill = black] (0cm,0) circle (.2cm) node[above]{\small $1$} ;
\draw (0.2cm,0) -- +(0.6cm,0);
\draw[fill = black] (1cm,0) circle (.2cm) node[above]{\small $2$} ;
\draw (1.2cm,0) --+ (0.6cm,0.6cm);
\draw (1.2cm,0) --+ (0.6cm,-0.6cm);
\draw[fill = black] (1.95cm,0.7) circle (.2cm) node[above] {\small $3$}; 
\draw (1.95cm,-0.7) circle (.2cm)  node[below]{\small $4$};
\end{tikzpicture}\\ 
$\mfso(2,6)\; (DI)$  
& 
\begin{tikzpicture}[scale=.4,baseline]
\draw (0cm,0) circle (.2cm) node[above]{\small $1$} ;
\draw (0.2cm,0) -- +(0.6cm,0);
\draw (1cm,0) circle (.2cm) node[above]{\small $2$} ;
\draw (1.2cm,0) --+ (0.6cm,0.6cm);
\draw (1.2cm,0) --+ (0.6cm,-0.6cm);
\draw[fill = black] (1.95cm,0.7) circle (.2cm) node[above] {\small $3$}; 
\draw[fill = black] (1.95cm,-0.7) circle (.2cm)  node[below]{\small $4$};
\end{tikzpicture}
& 
\begin{tikzpicture}[scale=.4,baseline]
\draw[fill = black] (0cm,0) circle (.2cm) node[above]{\small $1$} ;
\draw (0.2cm,0) -- +(0.6cm,0);
\draw (1cm,0) circle (.2cm) node[above]{\small $2$} ;
\draw (1.2cm,0) --+ (0.6cm,0.6cm);
\draw (1.2cm,0) --+ (0.6cm,-0.6cm);
\draw (1.95cm,0.7) circle (.2cm) node[above] {\small $3$}; 
\draw[fill = black] (1.95cm,-0.7) circle (.2cm)  node[below]{\small $4$};
\end{tikzpicture}
& 
\begin{tikzpicture}[scale=.4,baseline]
\draw[fill = black] (0cm,0) circle (.2cm) node[above]{\small $1$} ;
\draw (0.2cm,0) -- +(0.6cm,0);
\draw (1cm,0) circle (.2cm) node[above]{\small $2$} ;
\draw (1.2cm,0) --+ (0.6cm,0.6cm);
\draw (1.2cm,0) --+ (0.6cm,-0.6cm);
\draw[fill = black] (1.95cm,0.7) circle (.2cm) node[above] {\small $3$}; 
\draw (1.95cm,-0.7) circle (.2cm)  node[below]{\small $4$};
\end{tikzpicture}\\
$\mfso(3,5) \; (DI)$ 
&  
\begin{tikzpicture}[scale=.4,baseline]
\node (v1) at (2,0.8) {};
\node (v2) at (2,-0.8) {};
\draw (0cm,0) circle (.2cm) node[above]{\small $1$} ;
\draw (0.2cm,0) -- +(0.6cm,0);
\draw (1cm,0) circle (.2cm) node[above]{\small $2$} ;
\draw (1.2cm,0) --+ (0.6cm,0.6cm);
\draw (1.2cm,0) --+ (0.6cm,-0.6cm);
\draw (1.95cm,0.7) circle (.2cm) node[above] {\small $3$}; 
\draw (1.95cm,-0.7) circle (.2cm)  node[below]{\small $4$};
\draw[<->]
(v1) edge[bend left] (v2);
\end{tikzpicture}
& 
\begin{tikzpicture}[scale=.4,baseline]
\node (v1) at (0,0) {};
\node (v2) at (2,-0.8) {};
\draw (0cm,0) circle (.2cm) node[above]{\small $1$} ;
\draw (0.2cm,0) -- +(0.6cm,0);
\draw (1cm,0) circle (.2cm) node[above]{\small $2$} ;
\draw (1.2cm,0) --+ (0.6cm,0.6cm);
\draw (1.2cm,0) --+ (0.6cm,-0.6cm);
\draw (1.95cm,0.7) circle (.2cm) node[above] {\small $3$}; 
\draw (1.95cm,-0.7) circle (.2cm)  node[below]{\small $4$};
\draw[<->]
(v1) edge[bend right](v2);
\end{tikzpicture}
& 
\begin{tikzpicture}[scale=.4,baseline]
\node (v1) at (0,0) {};
\node (v2) at (2,0.8) {};
\draw (0cm,0) circle (.2cm) node[below]{\small $1$} ;
\draw (0.2cm,0) -- +(0.6cm,0);
\draw (1cm,0) circle (.2cm) node[below]{\small $2$} ;
\draw (1.2cm,0) --+ (0.6cm,0.6cm);
\draw (1.2cm,0) --+ (0.6cm,-0.6cm);
\draw (1.95cm,0.7) circle (.2cm) node[above] {\small $3$}; 
\draw (1.95cm,-0.7) circle (.2cm)  node[below]{\small $4$};
\draw[<->]
(v1) edge[bend left](v2);
\end{tikzpicture}\\
\end{tabular}
}
\egroup
\end{center}
\end{table}
\end{enumerate} 
When we do not care about the connection with the underlying Dynkin diagram, we will talk of an \emph{abstract Satake diagram}. More precisely, let us call concrete Satake diagrams on respective Dynkin diagrams $\Gamma,\Gamma'$ \emph{equivalent} if one is carried to the other by an isomorphism of Dynkin diagrams. Then we refer to abstract Satake diagram as an equivalence class under this relation.

Recall that $\tau_0$ is the Dynkin diagram automorphism induced by $-w_0$. 

\begin{Lem}\label{LemCommTau0Tau}
Let $(X,\tau,z)$ be an enhanced Satake diagram. Then 
\begin{itemize}
\item $\tau_0(X) = \tau(X) =  X$,
\item $\tau\tau_0 = \tau_0\tau$,
\item $z$ is $\tau\tau_0$-invariant,
\item $z$ is $w_X$-invariant.
\end{itemize}
\end{Lem}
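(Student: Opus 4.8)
The plan is to prove the four assertions in a convenient order, isolating the two that are formal and reducing the remaining two to a single structural fact about the opposition involution $\tau_0 = -w_0$. First I would dispose of the commutation $\tau\tau_0=\tau_0\tau$. Since $\tau$ is a Dynkin diagram automorphism it preserves the length function of $W$, so $\tau w_0\tau^{-1}$ is again the longest element and equals $w_0$ by uniqueness; hence $\tau$ and $w_0$ commute in $\Aut(Q)$. Applying $w_0\tau=\tau w_0$ to a simple root and using $w_0(\alpha_s)=-\alpha_{\tau_0(s)}$ gives $-\alpha_{\tau_0\tau(r)}=-\alpha_{\tau\tau_0(r)}$, i.e. $\tau_0\tau=\tau\tau_0$. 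I would also record for later use that $\tau$ and $\tau_0$ induce positive-system-preserving automorphisms of $\Delta_X$ — for $\tau$ because $\tau|_X=(-w_X)|_X$, for $\tau_0$ once $\tau_0(X)=X$ is known — so that both fix $\rho_X^{\vee}$, while $w_X(\rho_X^{\vee})=-\rho_X^{\vee}$; in particular $(\alpha_{\tau_0(r)},\rho_X^{\vee})=(\alpha_r,\rho_X^{\vee})$ for all $r$.

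The equality $\tau(X)=X$ is part of the definition, so the real content of the first bullet is $\tau_0(X)=X$, and this I expect to be the main obstacle. Conceptually it is the assertion that $-w_0$ commutes with the weight-lattice involution $\Theta=-w_X\tau$ attached to the Satake datum, equivalently that $w_0$ commutes with $w_X$: granting this, $\tau_0$ preserves the $\Theta$-fixed roots, which a short support computation identifies with $\Delta_X$, and hence preserves $X$. The catch is that, through the commutation just proved, "$\tau_0$ commutes with $\Theta$" is itself equivalent to $w_0 w_X=w_X w_0$, i.e. to $\tau_0(X)=X$, so no purely Weyl-theoretic manipulation closes the loop; the fact genuinely reflects that $-w_0$ is compatible with the involution $\theta$ underlying the diagram. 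I would therefore settle it on the Araki classification. For irreducible $\mathfrak g$ this is short: when $\tau_0\ne\id$ the only nontrivial diagram involution is $\tau_0$ itself, so a nontrivial $\tau$ must equal $\tau_0$ and then $\tau_0(X)=\tau(X)=X$. The residual subcases ($\tau=\id$, and the $D$-type diagrams with $\tau_0=\id$ but $\tau$ the spin-node swap) are exactly those where an asymmetric $X$ or a half-integral node could a priori occur, and there the second Satake axiom rules them out; this I would confirm directly, using the $D$-type diagrams in Tables~\ref{Tablesostar} and \ref{Tableso8}. The reducible case reduces to components together with the two-copies-swapped diagrams, for which necessarily $X=\emptyset$.

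For the $w_X$-invariance of $z$, I read $z$ as the character $z\colon Q\to\{\pm1\}$ with $z(\alpha_r)=z_r$, so the claim is $z\circ w_X=z$. As $w_X$ is generated by the $s_r$ with $r\in X$ and $s_r\mu=\mu-(\mu,\alpha_r^{\vee})\alpha_r$, one gets $z(s_r\mu)=z(\mu)\,z_r^{-(\mu,\alpha_r^{\vee})}$, and since $z_r=1$ for $r\in X$ (because $(\alpha_r,\rho_X^{\vee})=1$) each generator fixes $z$, hence so does $w_X$.

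Finally, for the $\tau\tau_0$-invariance, set $\zeta(\mu)=(-1)^{(\mu,2\rho_X^{\vee})}$; the defining conditions on $z$ are equivalent to $z\circ\tau=\zeta\cdot z$. Using $\tau\tau_0=\tau_0\tau$ and $\tau^2=\id$, the assertion $z\circ(\tau\tau_0)=z$ is equivalent to $z\circ\tau_0=z\circ\tau=\zeta\cdot z$, so it suffices to check $z_{\tau_0(r)}=\zeta(\alpha_r)z_r$ on generators. Since $\tau_0$ fixes $\rho_X^{\vee}$, the node $\tau_0(r)$ is integral precisely when $r$ is: on an integral node $\zeta(\alpha_r)=1$ and $z_r=z_{\tau_0(r)}=1$; on a non-integral node the second Satake axiom forces $\tau(r)\ne r$, and the classification input from the previous paragraph shows $\tau$ agrees with $\tau_0$ on such nodes, whence $z_{\tau_0(r)}=z_{\tau(r)}=-z_r=\zeta(\alpha_r)z_r$. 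Thus the last bullet rests on the same structural fact, and the whole lemma reduces to verifying $\tau_0(X)=X$ (and the coincidence of $\tau$ and $\tau_0$ on half-integral nodes) on the Satake classification, which is the one genuinely non-formal step.
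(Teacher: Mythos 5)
Your proof is correct, but it takes a genuinely different route from the paper's. The paper disposes of the first three bullets by citing \cite{BK15b}*{Remark 7.2}, adding only the observation that $\tau_0$ commutes with every diagram automorphism because $D_4$ (the one diagram with non-abelian automorphism group) has $\tau_0=\id$; the fourth bullet is proved exactly as you do, from $z_r=1$ for $r\in X$. You instead make the first three bullets self-contained: your commutation argument via length-preservation and uniqueness of $w_0$ is cleaner and more uniform than the paper's $D_4$ remark (it avoids any case inspection), and your reduction of both $\tau_0(X)=X$ and the $\tau\tau_0$-invariance of $z$ to a single classification fact --- that $\tau=\tau_0$ whenever both are nontrivial on an irreducible diagram, plus the absence of half-integral nodes in the $D_{\mathrm{even}}$ spin-swap case --- is sound, and you correctly flag the circularity ($\tau_0\Theta=\Theta\tau_0 \Leftrightarrow w_0w_X=w_Xw_0 \Leftrightarrow \tau_0(X)=X$) that makes a purely formal proof impossible. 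What the paper's route buys is brevity, at the cost of an external reference whose proof is itself classification-based; what yours buys is transparency about exactly which steps are formal and which require diagram checking. Two small points: the residual verification for $\tau_0(X)=X$ concerns the diagrams with $\tau=\id$ and $\tau_0\neq\id$ (types $A$, $D_{\mathrm{odd}}$, $E_6$), for which Tables~\ref{Tablesostar} and \ref{Tableso8} are not the relevant ones (those treat $D_{\mathrm{even}}$ and $D_4$); and your half-integral-node check in the $D_{\mathrm{even}}$ spin-swap case is precisely the computation the paper performs in Lemma~\ref{LemChoiceInvExt}, where $(\alpha_p,\rho_X^{\vee})=l-p-1\in\Z$ forces $z=1$, so that step can be borrowed rather than redone.
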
 
\begin{proof}
For the first three properties we refer to \cite{BK15b}*{Remark 7.2}. Note here that $\tau_0$ commutes in fact with any Dynkin diagram automorphism, as $\tau_0$ is trivial in the only case $D_4$ where there is more than one non-trivial Dynkin diagram automorphism. The last property follows from the fact that $z_r=1$ for $r\in X$. 
\end{proof}

Fix now an enhanced Satake diagram $(X,\tau,z)$. One constructs explicitly an involution $\theta = \theta(X,\tau,z)$ of $\mfg$ as follows.

Extend first again $\tau$ to an automorphism of $\mfg$ by 
\[
\tau(e_r) = e_{\tau(r)},\qquad \tau(f_r) = f_{\tau(r)},\qquad \tau(h_r) = h_{\tau(r)}. 
\]
Let $\omega$ be the Chevalley involution of $\mfg$, which is the complex Lie algebra automorphism determined by 
\[
\omega(e_r) = -f_r,\qquad \omega(f_r) = -e_r,\qquad \omega(h) = -h.
\]
Let
\begin{equation}\label{EqInvSimple}
m_r = \exp(e_r)\exp(-f_r)\exp(e_r) \in U
\end{equation}
and identify $s_r = \Ad(m_r)_{\mid \mfh} \in W$. Let 
\begin{equation}\label{EqRedExprX}
w_0 = s_{r_1}\ldots s_{r_N},\qquad w_X = s_{r_1'}\ldots s_{r_M'}
\end{equation}
be reduced expressions for the longest elements in respectively $W$ and $W_X$, and write the corresponding elements in $U$ as
\[
m_0 = m_{r_1}\ldots m_{r_N} \in U,\qquad m_X = m_{r_1'} \ldots m_{r_M'} \in U. 
\]
We note that $m_0,m_X$ are independent of the chosen reduced expressions.

Finally, note that $z$ may be extended uniquely to a unitary character on $Q$. The following lemma ensures that $z$ can be extended to a character on the weight lattice, i.e.~ an element of $T$, such that some of the symmetry properties of $z$ are preserved.

\begin{Lem}\label{LemChoiceInvExt}
Let $(X,\tau,z)$ be an enhanced Satake diagram. Then we can choose $\chi_0 \in \mfa$ with $\tau\tau_0(\chi_0) = \chi_0$ and such that $\widetilde{z} = e^{2\pi i \chi_0} \in T$ is an extension of $z$. 
\end{Lem}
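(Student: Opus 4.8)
The plan is to combine the $\tau\tau_0$-invariance of $z$ recorded in Lemma \ref{LemCommTau0Tau} with a simple averaging argument, taking care to keep everything at the level of exact rational values rather than values modulo $\Z$. Write $\sigma = \tau\tau_0$; by Lemma \ref{LemCommTau0Tau} this is an involution of $I$ and $z$ is $\sigma$-invariant. Recall that an element $\chi \in \mfa$ determines the torus element $e^{2\pi i \chi} \in T$ acting on a weight vector $\xi$ by $e^{2\pi i(\wt(\xi),\chi)}\xi$, so that $\widetilde{z} = e^{2\pi i \chi_0}$ restricts to $z$ on $Q$ exactly when $e^{2\pi i (\alpha_r,\chi_0)} = z_r$ for every $r \in I$.

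First I would fix the exact real lift $\zeta_r \in \{0,\tfrac12\}$ of the sign $z_r \in \{\pm1\}$, characterised by $z_r = e^{2\pi i \zeta_r}$. Since $z$ is $\sigma$-invariant, the tuple $(\zeta_r)_{r\in I}$ is literally $\sigma$-invariant, i.e.\ $\zeta_{\sigma(r)} = \zeta_r$. Next, because the matrix $\big((\alpha_r,h_s)\big)_{r,s} = (a_{sr})_{r,s}$ is the transpose of the Cartan matrix and hence invertible, the map $\chi \mapsto \big((\alpha_r,\chi)\big)_{r\in I}$ is a linear isomorphism $\mfa \xrightarrow{\sim} \R^I$. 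I would use it to pick the unique $\chi_1 \in \mfa$ with $(\alpha_r,\chi_1) = \zeta_r$ for all $r$; then $e^{2\pi i(\alpha_r,\chi_1)} = z_r$, so $e^{2\pi i \chi_1}$ already extends $z$, though it need not be $\sigma$-invariant.

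The decisive step is to symmetrise: set $\chi_0 = \tfrac12\big(\chi_1 + \sigma(\chi_1)\big)$, which satisfies $\sigma(\chi_0) = \chi_0$ because $\sigma^2 = \id$. As $\sigma$ is induced by a symmetry of the root system it preserves the form $(-,-)$ and sends $\alpha_r$ to $\alpha_{\sigma(r)}$, so $(\alpha_r,\sigma(\chi_1)) = (\alpha_{\sigma(r)},\chi_1) = \zeta_{\sigma(r)} = \zeta_r$. Hence $(\alpha_r,\chi_0) = \tfrac12(\zeta_r + \zeta_r) = \zeta_r$, giving $e^{2\pi i(\alpha_r,\chi_0)} = z_r$ for all $r$; thus $\widetilde{z} = e^{2\pi i \chi_0}$ is a $\tau\tau_0$-invariant element of $T$ extending $z$, as required.

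The one point where the argument could break down — and hence the place to be careful — is the compatibility of averaging with the extension property. Had I only arranged $(\alpha_r,\chi_1) \equiv \zeta_r \pmod{\Z}$, the averaged quantity $\tfrac12\big((\alpha_r,\chi_1)+(\alpha_r,\sigma(\chi_1))\big)$ could differ from $\zeta_r$ by a half-integer, introducing a spurious sign in $e^{2\pi i(\alpha_r,\chi_0)}$. Pinning down the rational lift $\zeta_r \in \{0,\tfrac12\}$ once and for all, and exploiting that $(\zeta_r)$ is genuinely (not merely modulo $\Z$) $\sigma$-invariant, is exactly what makes the averaged value land back on $\zeta_r$ and removes this obstacle.
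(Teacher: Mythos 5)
Your proof is correct, but it takes a genuinely different route from the paper's. The paper argues by case analysis: if $X = \emptyset$ or $\tau = \id$ then $z = 1$ and one takes $\chi_0 = 0$; if $\tau\tau_0 = \id$ the invariance condition is vacuous and one only needs that every unitary character of $Q$ extends to $P$ and that $\exp: i\mfa \rightarrow T$ is onto; and the remaining possibility ($\mfg$ simple, $X \neq \emptyset$, $\tau \neq \id$, $\tau\tau_0 \neq \id$) is shown by diagram checking to occur only for type $D_l$ with $l$ even and $\mfg_{\theta} \cong \mfso(p,2l-p)$, $p$ odd, where a computation of the values $(\alpha_r,\rho_X^{\vee})$ shows $z=1$ once more. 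Your argument replaces all of this with a uniform construction: lift $z$ to the exact half-integer vector $(\zeta_r)$, solve $(\alpha_r,\chi_1) = \zeta_r$ using invertibility of the (transposed) Cartan matrix, and average over the involution $\sigma = \tau\tau_0$. The point you flag --- that the lift must be $\sigma$-invariant on the nose, not merely modulo $\Z$, for the average to land back on $\zeta_r$ --- is exactly the crux, and you handle it correctly. What your approach buys is independence from the classification of Satake diagrams (no case checking), making transparent that the lemma needs only the $\tau\tau_0$-invariance of $z$ from Lemma \ref{LemCommTau0Tau} plus elementary linear algebra; what the paper's approach buys is the sharper (though subsequently unused) information that $z=1$, hence $\widetilde{z}$ may be taken trivial, whenever $\tau\tau_0 \neq \id$. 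One small point worth making explicit: the identity $(\alpha_r,\sigma(\chi_1)) = (\alpha_{\sigma(r)},\chi_1)$ uses that the fixed $W$-invariant form is also invariant under the diagram automorphisms $\tau$ and $\tau_0$; for $\tau_0$ this is automatic since $-w_0$ is an isometry, and for $\tau$ it is an implicit standing assumption of the paper (already needed for $\tau$ to define a Hopf algebra automorphism of $U_q(\mfg)$), so this is harmless.
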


\begin{proof}
For $X= \emptyset$ or $\tau = \id$ we have $z=1$, and we can hence take $\chi_0=0$. On the other hand, if $\tau\tau_0 = \id$ the existence of $\chi_0$ follows from $T = \exp(i\mfa)$.  We may thus assume that $\mfg$ is simple with $X\neq \emptyset$, $\tau\neq \id$ and $\tau\tau_0\neq \id$. By direct diagram checking (see again also \cite{BK15b}*{Remark 7.2}) it can be verified however that this can only happen for $\mfg$ of type $D_{l}$ for $l$ even and $\mfg_{\theta}\cong \mfso(p,2l-p)$ for $p$ odd, for which the Satake diagram is given by 
\[
 \begin{tikzpicture}[scale=.4,baseline=1cm]
\node (v1) at (10,0.8) {};
\node (v2) at (10,-0.8) {};
\draw (0cm,0) circle (.2cm) node[above]{\small $1$} ;
\draw (0.2cm,0) -- +(1cm,0);
\draw (1.4cm,0) circle (.2cm);
\draw (1.6cm,0) -- +(0.2cm,0);
\draw[dotted] (1.8cm,0) --+ (1cm,0);
\draw (2.8cm,0) --+ (0.2cm,0);
\draw (3.2cm,0) circle (.2cm); 
\draw (3.6cm,0) --+ (1cm,0);
\draw (4.8cm,0) circle (.2cm)  node[above]{\footnotesize $p$};
\draw (5cm,0) --+ (1cm,0);
\draw[fill = black] (6.2cm,0) circle (.2cm)  node[above]{\footnotesize $p+1$};
\draw (6.4cm,0) -- +(0.2cm,0);
\draw[dotted] (6.6cm,0) --+ (1cm,0);
\draw (7.6cm,0) --+ (0.2cm,0);
\draw[fill = black] (7.8cm,0) circle (.2cm);
\draw (8cm,0) --+ (1.6,0.6);
\draw (8cm,0) --+ (1.6,-0.6);
\draw[fill = black] (9.8cm,0.6) circle (.2cm) node[above]{\small $\ell-1$} ;
\draw[fill = black] (9.8cm,-0.6) circle (.2cm) node[below]{\small $\ell$} ;
\draw[<->]
(v1) edge[bend left] (v2);
\end{tikzpicture}
\]
(where contrary to custom we indicated also the action of $\tau$ on $X$ for clarity). It is clear that then $(\alpha_r,\rho_X^{\vee}) \in \Z$ for all $r\in I$, except possibly for $r=p$. However, using e.g.~ \cite{OV90}*{Reference Chapter, Section 2, Table 1}, we find that also $(\alpha_p,\rho_X^{\vee}) = l-p-1 \in \Z$. Hence $z=1$, so we can take $\chi_0=0$ in this case.
\end{proof}

In the following, we will fix $\chi_0$ and $\widetilde{z}$ as above.

\begin{Def}\label{DefConcSat}
Let $(X,\tau,z)$ be an enhanced Satake diagram for $\Gamma$. We define 
\begin{equation}\label{EqFormThet}
\theta = \theta(X,\tau,z) = \Ad(\widetilde{z}) \circ \tau \circ \omega \circ \Ad(m_X) \in \Aut(\mfg),
\end{equation}
and call it the \emph{Satake involution} of $\mfg$ associated to $(X,\tau,z)$. 
\end{Def}

\begin{Rem}
Note that the Satake involution indeed only depends on $(X,\tau,z)$. Moreover, it is easy to see that $\theta$ depends only on $(X,\tau)$ up to inner conjugacy, with $(X,\tau)$ corresponding to a unique inner conjugacy class, see Theorem \ref{TheoUniqueInnConj}.
\end{Rem}

\begin{Rem} 
It is not hard to check that $\theta$ is indeed an involution, and that $\theta$ commutes with $*$. In particular, $\theta$ restricts to a Lie algebra involution of $\mfu$. Associated to $\theta$ we then have the real Lie algebra
\[
\mfg_{\theta} = \{X\in \mfg \mid \theta(X)^* = -X\},
\]
and all real semisimple Lie algebras arise in this way, their isomorphism class uniquely determined by the associated abstract  Satake diagram. 
\end{Rem}

Note (for example by \cite{B-VB-PBMR95}*{Lemme 4.9}) that one can write $\omega = \tau_0 \circ \Ad(m_0) = \Ad(m_0) \circ \tau_0$, from which it follows that we can also write the Satake involution as
\begin{equation}\label{EqCompInnOut}
\theta = \Ad(\widetilde{z}) \circ \tau \circ \tau_0 \circ \Ad(m_0) \circ \Ad(m_X). 
\end{equation}

Let us write $\Theta$ for the dual of the restriction of $\theta$ to $\mfh$. From the Definition of $\theta$ and Lemma \ref{LemCommTau0Tau}, we immediately obtain the following. 

\begin{Lem}[\cite{Kol14}*{Equation (2.10)}]\label{LemCommThetaTau}
We have $\Theta(\alpha) = -w_X\tau(\alpha)$, and $\Theta$ commutes with $\tau_0$ and $\tau$. 
\end{Lem}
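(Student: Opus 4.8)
The plan is to reduce everything to the explicit action of $\theta$ on the Cartan subalgebra and then transpose. First I would compute $\theta_{\mid\mfh}$ by treating the four factors of $\theta = \Ad(\widetilde{z})\circ\tau\circ\omega\circ\Ad(m_X)$ from \eqref{EqFormThet} one at a time. Since $\widetilde{z}\in T$ lies in the maximal torus, $\Ad(\widetilde{z})$ fixes $\mfh$ pointwise and this factor simply drops out; by construction $s_r = \Ad(m_r)_{\mid\mfh}$ and $m_X = m_{r_1'}\cdots m_{r_M'}$, whence $\Ad(m_X)_{\mid\mfh} = w_X$; the Chevalley involution acts by $\omega(h) = -h$; and $\tau$ acts by $h_r\mapsto h_{\tau(r)}$. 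Composing in the order dictated by \eqref{EqFormThet} then yields $\theta_{\mid\mfh} = -\,\tau\, w_X$ as a linear endomorphism of $\mfh$.

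Next I would pass to $\mfh^*$ by transposing. Both $w_X\in W$ and the diagram automorphism $\tau$ are orthogonal for the bilinear form $(-,-)$, so their transposes on $\mfh^*$ coincide with their inverses; since $w_X$ is the longest element of $W_X$ and $\tau$ is an involution, each equals itself. As transposition reverses the order of composition, I obtain $\Theta = (\theta_{\mid\mfh})^{\top} = -(\tau w_X)^{\top} = -\,w_X\,\tau$ on $\mfh^*$, which is precisely $\Theta(\alpha) = -w_X\tau(\alpha)$. (Either convention for `dual' gives the same answer once one knows $w_X$ and $\tau$ commute, which is established below.)

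For the commutation statements I would invoke Lemma \ref{LemCommTau0Tau}, which supplies $\tau(X) = \tau_0(X) = X$ and $\tau\tau_0 = \tau_0\tau$. Because $\tau$ and $\tau_0$ are diagram automorphisms preserving $X$, each restricts to a diagram automorphism of $\mfg_X$, hence normalizes $W_X$, preserves word length, and therefore fixes the unique longest element $w_X$; this gives $\tau w_X\tau^{-1} = w_X$ and $\tau_0 w_X\tau_0^{-1} = w_X$. Consequently $w_X$, $\tau$, $\tau_0$ commute pairwise, and so $\Theta = -w_X\tau$ commutes with both $\tau$ and $\tau_0$.

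The only genuinely delicate point is the bookkeeping of the dualization: one must track the reversal of composition order under transpose and the passage from an operator on $\mfh$ to its contragredient on $\mfh^*$, and check that the orthogonality of $w_X$ and $\tau$ together with $w_X^2 = \tau^2 = 1$ absorbs the stray inverses. Everything else is a direct unwinding of the definition \eqref{EqFormThet} combined with the symmetry properties recorded in Lemma \ref{LemCommTau0Tau}.
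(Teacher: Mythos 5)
Your proposal is correct and is essentially the argument the paper intends: the paper states the lemma as an immediate consequence of the definition \eqref{EqFormThet} and Lemma \ref{LemCommTau0Tau}, and your proof simply spells out that derivation (restriction of $\theta$ to $\mfh$ factor by factor, dualization using orthogonality and involutivity of $w_X$ and $\tau$, and the commutation facts $\tau(X)=\tau_0(X)=X$, $\tau\tau_0=\tau_0\tau$). The bookkeeping you flag as delicate is handled correctly, so nothing is missing.
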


\subsection{Construction of a $*$-compatible $\nu$-modified universal $K$-matrix}

Let $(X,\tau,z)$ be an enhanced Satake diagram for $\Gamma$, and let $\theta = \theta(X,\tau,z)$ be the associated Satake involution. Fix an extension to the weight lattice
\[
\widetilde{z} = e^{2\pi i \chi_0} \in T
\]
with $\chi_0 \in \mfa$ a $\tau\tau_0$-invariant element as in Lemma \ref{LemChoiceInvExt}. We then put 
\begin{equation}\label{EqDefzTau}
\widetilde{z}_{\tau} = \widetilde{z}\circ \tau = \tau(\widetilde{z}).
\end{equation}
Let us write
\[
\rho = \frac{1}{2}\sum_{\alpha \in \Delta^+} \alpha,\qquad \rho_X = \frac{1}{2}\sum_{\alpha\in \Delta_X^+} \alpha,
\]
and let
\[
c_r = q^{\frac{1}{2}(\alpha_r,\Theta(\alpha_r)-2\rho_X)},\qquad r\in I,
\]
where we note that $c_r = 1$ for $r\in X$. 

Write 
\[
T_{w_X} = T_{r_1'}\ldots T_{r_{M}'} \in \mcU_q(\mfg)
\]
for the Lusztig braid operator associated to $w_X$. 

\begin{Lem}\label{CommTXT0}
The Lusztig braid operators $T_{w_0}$ and $T_{w_X}$ commute. 
\end{Lem}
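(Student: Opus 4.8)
The plan is to conjugate $T_{w_X}$ by $T_{w_0}$ and show the result is $T_{w_X}$ itself, which is equivalent to the claimed commutativity. Fix a reduced expression $w_X = s_{r_1'}\cdots s_{r_M'}$ as in \eqref{EqRedExprX}, so that $T_{w_X} = T_{r_1'}\cdots T_{r_M'}$. Since conjugation by $T_{w_0}$ is an algebra automorphism of $\mcU_q(\mfu)$, I would distribute it over the factors and apply Lemma \ref{LemCommTw0Tr} to each one, obtaining
\[
T_{w_0} T_{w_X} T_{w_0}^{-1} = \prod_{j=1}^{M} \bigl(T_{w_0} T_{r_j'} T_{w_0}^{-1}\bigr) = T_{\tau_0(r_1')}\cdots T_{\tau_0(r_M')}.
\]

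The next step is to identify the right-hand side with $T_{w_X}$. By Lemma \ref{LemCommTau0Tau} we have $\tau_0(X) = X$, so each $\tau_0(r_j')$ again lies in $X$ and $\tau_0$ restricts to a diagram automorphism of $\Gamma_X$, hence to a length-preserving automorphism of the Coxeter system $(W_X, \{s_r\}_{r\in X})$. In particular $s_{\tau_0(r_1')}\cdots s_{\tau_0(r_M')}$ equals the image of $w_X$ under this automorphism; being length-preserving, it fixes the unique longest element, so this product is again $w_X$ and the displayed word is a reduced expression for it of length $M$. Because the $T_r$ satisfy the braid relations, the operator $T_w$ attached to a Weyl group element $w$ is independent of the chosen reduced expression (this is exactly the well-definedness already invoked for $T_{w_0}$ in the excerpt), so $T_{\tau_0(r_1')}\cdots T_{\tau_0(r_M')} = T_{w_X}$. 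Combining the two displays gives $T_{w_0} T_{w_X} T_{w_0}^{-1} = T_{w_X}$, i.e.\ $T_{w_0}$ and $T_{w_X}$ commute.

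The only genuinely substantive point—and the one I would state carefully rather than leave implicit—is that $\tau_0$ fixes $w_X$; everything else is bookkeeping built on Lemma \ref{LemCommTw0Tr} and the reduced-expression independence of the braid operators. This fixity follows from the fact that $\tau_0$ preserves $X$ (Lemma \ref{LemCommTau0Tau}) together with the uniqueness of the longest element of a finite Coxeter group, so I do not expect a real obstacle here; the risk is merely conflating the action of $\tau_0$ as a diagram automorphism of the ambient $\Gamma$ with its restriction to $\Gamma_X$, which the preservation $\tau_0(X)=X$ licenses.
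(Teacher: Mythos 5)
Your proof is correct and follows essentially the same route as the paper, which cites exactly the same three ingredients: Lemma \ref{LemCommTw0Tr}, the stability $\tau_0(X)=X$ from Lemma \ref{LemCommTau0Tau}, and the independence of $T_{w_X}$ of the chosen reduced expression. The paper leaves the argument as a one-line citation; you have simply filled in the bookkeeping, including the (correct) observation that a diagram automorphism of $\Gamma_X$ fixes the longest element $w_X$ and carries reduced expressions to reduced expressions.
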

\begin{proof}
This follows from Lemma \ref{LemCommTw0Tr}, Lemma \ref{LemCommTau0Tau} and the fact that $T_{w_X}$ is independent of the chosen reduced expression for $w_X$. 
\end{proof}

The following algebras were introduced by Letzter \cite{Let99}. We follow the conventions of \cite{Kol14,BK15b}. 

\begin{Def}\label{DefElBr}
We define $U_q'(\mfg_X)$ as the subalgebra of $U_q(\mfg)$ generated by the $K_r,E_r,F_r$ for $r\in X$. We define $U_q(\mfh^{\Theta})$ as the subalgebra of $U_q(\mfh)$ generated by the elements $K_{\omega}$ with $\Theta(\omega) = \omega$. We define $\msB \subseteq U_q(\mfg)$ as the subalgebra of $U_q(\mfg)$ generated by $U_q'(\mfg_X)$ and $U_q(\mfh^{\Theta})$ together with the elements
\[
B_r = F_r+ c_r X_r K_r^{-1},\qquad r\in I \setminus X,
\]
where
\[
X_r = - z_{\tau(r)} \Ad(T_{w_X})(E_{\tau(r)}).
\]
\end{Def}

One then has that $\msB$ is a right coideal subalgebra, 
\[
\Delta(\msB) \subseteq \msB \otimes U_q(\mfg). 
\]

\begin{Rem}\label{RemDiffConv}
In \cite{Kol14}*{Definition 4.3} a specific function $z = s(X,\tau)$ is used which does not satisfy our requirements as it takes values in $\{\pm1,\pm i\}$ in general. However, one can also use the current conventions for $z$ throughout the theory, see \cite{BK15b}*{Remark 5.2}. 
\end{Rem}

\begin{Rem}
In general $\msB$ depends more freely on the parameter $c$, and can have an additional parameter $s$. The specific choice for $c$ is needed to apply the results of \cite{BK15b}, while the choice $s=0$ simplifies some of the constructions.
\end{Rem}

\begin{Rem}\label{RemInvtautau0}
By Lemma \ref{LemCommTau0Tau} both $z$ and $c$ are $\tau\tau_0$-invariant, from which it follows that 
\[
\tau\tau_0(B_r) = B_{\tau\tau_0(r)}.
\]
In particular, $\msB$ is $\tau\tau_0$-invariant. 
\end{Rem}

In the following, we also write $c$ for the unique character
\[
c: P \rightarrow \R_{>0},\qquad c_{\alpha_r} = c_r.
\]
We then write $\gamma$ for the $\C^{\times}$-valued character on $P$ given by
\[
\gamma(\omega) = \widetilde{z}_{\tau}(\omega)c(\omega) ,\qquad \omega \in P.
\] 

We further write\footnote{We follow the notation in \cite{BK15b}, even though we also use $\xi$ for an arbitrary vector in a Hilbert space. This should however not lead to any confusion, as the r\^{o}les of the two different uses are quite different.} $\xi \in \mcU_q(\mfh)$ for the unique element such that for $\omega\in P$
\begin{equation}\label{EqXi}
\xi(\omega) = \gamma(\omega) q^{-(\omega^+,\omega^+)+ \sum_{r\in I} (\alpha_r^-,\alpha_r^-)(\omega,\varpi_r^{\vee})},
\end{equation}
where $\varpi_r^{\vee} = \frac{2\varpi_r}{(\alpha_r,\alpha_r)}$ are the fundamental coweights determined by $(\varpi_r^{\vee},\alpha_s) = \delta_{rs}$ and where
\begin{equation}\label{EqPM}
\omega^{\pm} = \frac{1}{2}(\omega \pm \Theta(\omega)).
\end{equation}
Finally, let $\quasiK \in  \mcU_q(\mfn^+)$ be the quasi-$K$-matrix introduced in \cite{BK15b}*{Theorem 6.10}, where we note that the construction performed there can be repeated ad verbatim for $q$ a positive scalar distinct from 1.

\begin{Theorem}[\cite{BK15b}*{Corollary 7.7 and Theorem 9.5}]
The element
\begin{equation}\label{EqDefOrK}
\mcK= \quasiK \xi T_{w_X}^{-1}  T_{w_0}^{-1} \in \mcU_q(\mfg)
\end{equation}
satisfies 
\begin{equation}\label{EqOrigK}
\Delta(\mcK) = (\mcK\otimes 1) \msR_{\tau\tau_0,21}(1\otimes \mcK)\msR = \msR_{21}(1\otimes \mcK)\msR_{\tau\tau_0}(\mcK\otimes1). 
\end{equation}
Moreover, for all  $X\in \msB$
\begin{equation}\label{EqCommOrK}
\mcK X = \tau\tau_0(X)\mcK.
\end{equation}
\end{Theorem}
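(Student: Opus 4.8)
The plan is to deduce both identities directly from the Balagovi\'{c}--Kolb construction, the only genuine task being a careful dictionary between their conventions and ours. First I would recall that in \cite{BK15b} the universal $K$-matrix is assembled from three pieces: the quasi-$K$-matrix $\quasiK \in \mcU_q(\mfn^+)$ of their Theorem 6.10, a diagonal twist living in $\mcU_q(\mfh)$, and a product of Lusztig braid operators. I would then identify our $\mcK$ in \eqref{EqDefOrK} with their element by matching these three pieces: $\quasiK$ is taken verbatim; the diagonal factor $\xi$ of \eqref{EqXi} must be checked to coincide with their diagonal twist, including the character $\gamma = \widetilde{z}_{\tau} c$ and the quadratic $q$-exponent; and the braid part $T_{w_X}^{-1}T_{w_0}^{-1}$ must reproduce theirs, where I would use Lemma \ref{CommTXT0} to freely reorder $T_{w_X}$ and $T_{w_0}$.

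Before invoking their theorems I would reconcile the parameter choices. As recorded in Remark \ref{RemDiffConv}, Balagovi\'{c} and Kolb employ a sign function valued in $\{\pm 1,\pm i\}$, whereas we use $z$ valued in $\{\pm 1\}$ following \cite{BK15b}*{Remark 5.2}; I would confirm that this alternative convention is precisely the one under which their Corollary 7.7 and Theorem 9.5 remain valid, and that our specialization $s=0$ of the second Letzter parameter lies within their hypotheses. The relevant twisting automorphism is $\tau\tau_0$, which by Lemma \ref{LemCommTau0Tau} commutes with $\Theta$ as needed and, by Remark \ref{RemInvtautau0}, preserves $\msB$ with $\tau\tau_0(B_r) = B_{\tau\tau_0(r)}$; this matches the automorphism appearing in \eqref{EqCommOrK}. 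With the dictionary in place, \eqref{EqCommOrK} is exactly the coideal intertwining property of \cite{BK15b}*{Corollary 7.7}, and the first equality in \eqref{EqOrigK} is their coproduct formula. The second equality in \eqref{EqOrigK} I would obtain from the symmetric form in which they state their result, or alternatively deduce from the first using the quasitriangularity relation $\msR\Delta(-)\msR^{-1} = \Delta^{\opp}$ and the $*$-compatibility $\msR^* = \msR_{21}$ from \eqref{EqPropR1}.

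The one point requiring care, and the main obstacle, is the transition from the formal setting to a fixed positive real $q \neq 1$. Balagovi\'{c} and Kolb work over $\C(q)$ or a formal parameter, whereas here all objects live in the weak completion $\mcU_q(\mfg) = \prod_{\varpi} \End(V_{\varpi})$. I would argue, as anticipated in the remark preceding the statement, that the quasi-$K$-matrix is determined by the same recursive relations, whose solution is a weakly convergent sum whose coefficients are rational in $q$ and regular at our chosen value; consequently every identity established by them as an equality of formal series specializes to an identity in $\mcU_q(\mfg)$. The diagonal twist $\xi$ of \eqref{EqXi} and the braid operators $T_{w_X}, T_{w_0}$ are manifestly well-defined elements of $\mcU_q(\mfg)$ for positive real $q$, so no convergence issue beyond that of $\quasiK$ arises, and the two displayed identities hold verbatim at our value of $q$.
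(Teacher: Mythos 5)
Your proposal coincides with the paper's treatment: the paper gives no independent proof of this theorem but imports it wholesale from Balagovi\'{c}--Kolb, flagging exactly the points you raise — the $\{\pm 1\}$-valued $z$ being admissible by \cite{BK15b}*{Remark 5.2} (Remark \ref{RemDiffConv}), the identification of their $\hat{R}$ with $\Sigma\circ\msR$ (the remark immediately following the theorem), and the validity of the whole construction at a fixed scalar $q>0$, $q\neq 1$ (the sentence preceding the theorem, made precise by the reformulation of the defining property of $\quasiK$ in Proposition \ref{PropUniqueK}). Your convention dictionary and specialization argument are thus precisely the content the paper supplies around the citation, so the approaches are essentially identical.
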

\begin{Rem}
Note that when comparing conventions, the element $\hat{R}$ in \cite{BK15b}*{Theorem 9.5} coincides with our $\Sigma \circ\msR$, where $\Sigma$ is the flip map.
\end{Rem}

In the following, we will modify $\mcK$ so that it becomes a $*$-compatible $\nu$-modified universal $K$-matrix for a particular $\nu$. We will need some preliminary results concerning the behaviour of $\quasiK$ under the $*$-operation.  

First, we reinterpret the characterisation of $\quasiK$ in \cite{BK15b}*{Theorem 6.10}, so that we can use it for $q$ a concrete value as in our setting. We make first the following definition. 

\begin{Def}
For $r\in I \setminus X$ we define
\begin{equation}\label{EqBarB}
\overline{B_r} = F_r- (-1)^{(2\rho_X^\vee, \alpha_{\tau(r)})} q^{- (2\rho_X, \alpha_{\tau(r)})} c_r^{-1} z_{\tau(r)} \Ad(T_{w_X}^{-1}) (E_{\tau(r)}) K_r \in U_q(\mfg).
\end{equation}
\end{Def}

\begin{Prop}\label{PropUniqueK}
Let $\quasiK' = \sum_{\alpha \in Q^+} \quasiK_{\alpha}' \in \mcU_q(\mfn)$ with $\quasiK_{\alpha}' \in U_q(\mfn)_{\alpha}$, and assume that $\quasiK_{0}' =1$ and 
\[ 
B_r \quasiK' = \quasiK' \overline{B}_r,\qquad F_s \quasiK' = \quasiK' F_s,\qquad r\in I \setminus X,s\in X.
\]
Then $\quasiK' = \quasiK$.
\end{Prop}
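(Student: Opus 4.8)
The plan is to prove uniqueness by a recursion on the weight grading, using the two families of intertwining relations to force each homogeneous component $\quasiK'_\alpha$ from the strictly lower ones. Since the genuine quasi-$K$-matrix $\quasiK$ is known to satisfy exactly these relations together with $\quasiK_0 = 1$ (this is the content of \cite{BK15b}*{Theorem 6.10}, whose construction transfers verbatim to our numerical $q$), it suffices to show that any $\quasiK' = \sum_{\alpha \in Q^+}\quasiK'_\alpha$ with $\quasiK'_0 = 1$ and the stated relations is uniquely determined; the conclusion $\quasiK' = \quasiK$ then follows. I would argue by induction on the height of $\alpha$, the base case being $\quasiK'_0 = 1 = \quasiK_0$.

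The core is to expand $B_r \quasiK' - \quasiK' \overline{B_r} = 0$ for $r \in I \setminus X$ and read it off in the triangular decomposition $U_q(\mfg) = U_q(\mfn^-)U_q(\mfh)U_q(\mfn^+)$. The two $F_r$-contributions combine into the commutator $F_r \quasiK' - \quasiK' F_r = [F_r, \quasiK']$, which by Lusztig's skew-derivation formula lies in $U_q(\mfn^+)U_q(\mfh)$ and equals $(q_r - q_r^{-1})^{-1}\bigl({}_r\partial(\quasiK')K_r - K_r^{-1}\partial_r(\quasiK')\bigr)$. The remaining terms $c_r X_r K_r^{-1}\quasiK'$ and the $\Ad(T_{w_X}^{-1})(E_{\tau(r)})$-term of $\overline{B_r}$ also lie in $U_q(\mfn^+)U_q(\mfh)$, since $X_r$ and $\Ad(T_{w_X}^{-1})(E_{\tau(r)})$ are homogeneous positive root vectors of weight $-\Theta(\alpha_r) = w_X\tau(\alpha_r) \in \Delta^+ \setminus \Delta_X^+$. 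Separating the $K_r$- and $K_r^{-1}$-isotypic parts then yields, for each $\alpha$, two identities expressing ${}_r\partial(\quasiK'_\alpha)$ and $\partial_r(\quasiK'_\alpha)$ through the single lower component $\quasiK'_{\alpha - \alpha_r + \Theta(\alpha_r)}$; here $\alpha_r - \Theta(\alpha_r) \in Q^+ \setminus \{0\}$ precisely because $\Theta(\alpha_r)$ is a negative root for $r \notin X$, so this component has strictly smaller height.

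To close the induction I would invoke the relations $F_s \quasiK' = \quasiK' F_s$ for $s \in X$, which say exactly that $\partial_s(\quasiK'_\alpha) = {}_s\partial(\quasiK'_\alpha) = 0$. Combining the two cases, all skew-derivatives $\{\partial_r(\quasiK'_\alpha)\}_{r \in I}$ are determined by components $\quasiK'_\beta$ of smaller height, which by the inductive hypothesis agree with $\quasiK_\beta$. Since an element of $U_q(\mfn^+)_\alpha$ with $\alpha \neq 0$ is uniquely determined by the family of its skew-derivations (joint nondegeneracy of $\partial_r$ in positive degree), $\quasiK'_\alpha$ is forced and equals $\quasiK_\alpha$, completing the induction.

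The main obstacle I anticipate is the bookkeeping in the triangular decomposition: correctly isolating the $K_r$- versus $K_r^{-1}$-parts so as to peel off ${}_r\partial$ and $\partial_r$ separately, and verifying that the $E$-type terms really introduce only strictly lower-weight components. Both hinge on the weight computations for $X_r$ and $\Ad(T_{w_X}^{-1})(E_{\tau(r)})$ and on $\Theta(\alpha_r)$ being negative for $r \notin X$. A cleaner alternative, if one prefers to avoid the explicit recursion, is to check directly that the relations imposed here coincide with those characterising $\quasiK$ in \cite{BK15b}*{Theorem 6.10} and to quote the uniqueness proved there.
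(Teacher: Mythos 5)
Your recursion argument for the uniqueness half is sound: separating the $K_r$- and $K_r^{-1}$-isotypic parts of $B_r \quasiK' = \quasiK' \overline{B_r}$ via Lusztig's skew derivations, together with $F_s\quasiK' = \quasiK' F_s$ for $s\in X$, determines every component $\quasiK'_{\alpha}$ from the strictly lower component $\quasiK'_{\alpha - \alpha_r + \Theta(\alpha_r)}$, and the joint nondegeneracy of the skew derivations in positive degree closes the induction. But this is, in essence, a re-derivation of \cite{BK15b}*{Proposition 6.3}, which the paper simply quotes (observing that its proof is valid for non-formal $q$); so on this half you are reproving a cited result rather than taking a genuinely different route.

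The genuine gap is in your opening reduction. To conclude $\quasiK' = \quasiK$ you need the quasi-$K$-matrix $\quasiK$ itself to satisfy the relations \emph{as stated in the proposition}, i.e.~with $\overline{B_r}$ the explicit element \eqref{EqBarB}. You assert that this ``is the content of \cite{BK15b}*{Theorem 6.10}'', but it is not: that theorem characterizes $\quasiK$ through the intertwining property with the \emph{bar involution} applied to $B_r$, an operation sending $q \mapsto q^{-1}$, which only makes sense for formal $q$ and whose value on $B_r$ is not visibly the formula \eqref{EqBarB}. Showing that the two agree is precisely the content of the paper's proof: one computes $\overline{X_r}$ using $\Ad(T_{w_X}) = T_{w_X,1}^{\prime\prime}$, the bar-behaviour $\overline{T_{r,\pm 1}^{\prime\prime}(X)} = T_{r,\mp 1}^{\prime\prime}(\overline{X})$, and the comparison $T_{w_X,-1}^{\prime\prime}(E_r) = (-1)^{(2\rho_X^{\vee},\alpha_r)}q^{-(2\rho_X,\alpha_r)}T_{w_X,-1}^{\prime}(E_r)$ from \cite{BK15a}*{Lemma 2.9}, while keeping track of which quantities are treated as scalars ($z_r$) versus formal expressions (the $q$ in $c_r$) so that the resulting characterization specializes to numerical $q$. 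Your closing remark that one could alternatively ``check directly that the relations imposed here coincide with those characterising $\quasiK$'' is therefore not an optional shortcut: it is exactly the missing verification on which your own argument's final step depends, and it is what the paper's proof actually consists of.
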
 
\begin{proof}
The uniqueness of $\quasiK$ will follow from \cite{BK15b}*{Proposition 6.3}, which also works for $q$ non-formal, once we have shown that for $q$ formal our value of $\overline{B_r}$ coincides with the bar involution applied to $B_r$. Here we note that in order to be able to specialize at some positive value distinct from 1, we can work over the smallest $\C$-algebra inside the algebraic closure of $\C(q)$ containing all formal roots of $q$ and $q^{-1}$, and containing $(q^n -q^{-n})^{-1}$ for all $n\in \Q$. Note that for $q$ formal, our definition of $B_r$ treats  $z_r$ as a complex scalar, but the $q$ in $c_r$ as a formal parameter.

Let now $\overline{\phantom{B}}$ be the bar involution of $U_q(\mfg)$ (for $q$ formal as above), determined as the algebra automorphism satisfying
\[
\overline{q^n} = q^{-n},\qquad \overline{E_r} = E_r,\qquad \overline{F_r} = F_r,\qquad \overline{K_{\omega}} = K_{\omega}^{-1}.
\]
Let us momentarily write the element in \eqref{EqBarB} as $\overline{B_r}'$, and use $\overline{B_r}$ for the value of the bar involution applied to $B_r$. First of all we have
\[
\overline{B_r} = F_r + \overline{c_r} \overline{X_r} K_r.
\]
Since $z_{\tau(r)} \in \mathbb{C}$, we have $\overline{z_{\tau(r)}} = z_{\tau(r)}$. On the other hand, we have in the notation of \cite{Lusz94}*{37.1} that $\Ad(T_r) = T_{r,1}^{\prime \prime}$ and $\Ad(T_r^{-1}) = T_{r,-1}^{\prime}$, with 
\[
\overline{T_{r,\pm 1}^{\prime \prime}(X)} = T_{r,\mp 1}^{\prime \prime}(\overline{X}), \quad
\overline{T_{r,\pm 1}^{\prime}(X)} = T_{r,\mp 1}^{\prime}(\overline{X}),\qquad X\in U_q(\mfg).
\]
Hence we have
\[
\overline{X_r} = - z_{\tau(r)} \overline{\Ad(T_{w_X})(E_{\tau(r)})} = - z_{\tau(r)} T_{w_X,-1}^{\prime \prime} (E_{\tau(r)}).
\]
In the proof of \cite{BK15a}*{Lemma 2.9} it is shown that 
\[
T_{w_X,-1}^{\prime \prime} (E_r) = (-1)^{(2\rho_X^\vee, \alpha_r)} q^{-(2\rho_X, \alpha_r)} T_{w_X,-1}^{\prime} (E_r).
\]
Using this we get
\[
\overline{X_r} = - (-1)^{(2\rho_X^\vee, \alpha_{\tau(r)})} q^{- (2\rho_X, \alpha_{\tau(r)})} z_{\tau(r)} \Ad(T_{w_X}^{-1}) (E_{\tau(r)}).
\]
Plugging this into $\overline{B_r}$ and using $\overline{c_r} = c_r^{-1}$ shows $\overline{B_r}' = \overline{B_r}$. 
\end{proof}

\begin{Cor}\label{CorInvTauTau_0}
The quasi-$K$-matrix $\quasiK$ satisfies $\tau\tau_0(\quasiK) = \quasiK$. Similarly, $\tau\tau_0(\mcK) = \mcK$.
\end{Cor}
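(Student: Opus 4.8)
Write $\sigma = \tau\tau_0$; by Lemma~\ref{LemCommTau0Tau} this is an involutive Dynkin diagram automorphism with $\sigma(X) = X$, which I regard as a Hopf $*$-automorphism of $U_q(\mfg)$ and of its completion via $E_r \mapsto E_{\sigma(r)}$, $F_r \mapsto F_{\sigma(r)}$, $K_\omega \mapsto K_{\sigma(\omega)}$. The plan for $\tau\tau_0(\quasiK) = \quasiK$ is to appeal to the uniqueness in Proposition~\ref{PropUniqueK}. Since $\sigma$ preserves $U_q(\mfn^+)$ and carries $U_q(\mfn)_\alpha$ into $U_q(\mfn)_{\sigma(\alpha)}$ with $\sigma(Q^+) = Q^+$, the element $\sigma(\quasiK)$ again lies in $\mcU_q(\mfn^+)$ with degree-zero part $1$. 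Applying $\sigma$ to the two defining relations $B_r \quasiK = \quasiK\,\overline{B_r}$ ($r \in I\setminus X$) and $F_s \quasiK = \quasiK\,F_s$ ($s\in X$), and using $\sigma(B_r) = B_{\sigma(r)}$ from Remark~\ref{RemInvtautau0}, I would be left to verify the single identity $\sigma(\overline{B_r}) = \overline{B_{\sigma(r)}}$. Granting this, $\sigma(\quasiK)$ satisfies exactly the relations characterising $\quasiK$ (the index sets $I\setminus X$ and $X$ being merely permuted by $\sigma$), so uniqueness forces $\sigma(\quasiK) = \quasiK$.

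The crux is therefore $\sigma(\overline{B_r}) = \overline{B_{\sigma(r)}}$, which I would read off term by term from the explicit formula \eqref{EqBarB}. The leading term $F_r$ maps to $F_{\sigma(r)}$, as required. For the Lusztig term one first notes $\sigma(T_{w_X}) = T_{w_X}$: indeed $\sigma(T_r) = T_{\sigma(r)}$ (immediate from the defining formula for $T_r$), and since $\sigma w_X \sigma^{-1} = w_X$ the $\sigma$-transform of a reduced word for $w_X$ is again such a word, so independence of the reduced expression gives the claim. Combined with the identities $\sigma\tau = \tau_0 = \tau\sigma$ (from $\tau^2 = \id$ and $\tau\tau_0 = \tau_0\tau$) this yields $\sigma(\Ad(T_{w_X}^{-1})(E_{\tau(r)})) = \Ad(T_{w_X}^{-1})(E_{\tau_0(r)}) = \Ad(T_{w_X}^{-1})(E_{\tau(\sigma(r))})$, matching the Lusztig term of $\overline{B_{\sigma(r)}}$. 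It then remains to match the scalar prefactor, and this is the one genuinely computational point: the factors $c_r^{-1}$ and $z_{\tau(r)}$ are unchanged because $c$ and $z$ are $\tau\tau_0$-invariant (Remark~\ref{RemInvtautau0}, Lemma~\ref{LemCommTau0Tau}), giving $c_{\sigma(r)} = c_r$ and $z_{\tau(\sigma(r))} = z_{\tau_0(r)} = z_{\tau(r)}$; while for the sign and the $q$-power I would use that $2\rho_X^{\vee}$ and $2\rho_X$ are fixed by every $X$-preserving diagram automorphism, and that such automorphisms preserve $(-,-)$, to get $(2\rho_X^{\vee},\alpha_{\tau(r)}) = (2\rho_X^{\vee},\alpha_r) = (2\rho_X^{\vee},\alpha_{\tau(\sigma(r))})$ and likewise for $(2\rho_X,\alpha_{\tau(r)})$.

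For $\mcK = \quasiK\,\xi\,T_{w_X}^{-1}T_{w_0}^{-1}$ I would factor $\sigma(\mcK) = \sigma(\quasiK)\,\sigma(\xi)\,\sigma(T_{w_X})^{-1}\sigma(T_{w_0})^{-1}$ and dispose of the factors one at a time. We have just shown $\sigma(\quasiK) = \quasiK$, and the same reduced-word argument gives $\sigma(T_{w_X}) = T_{w_X}$ and $\sigma(T_{w_0}) = T_{w_0}$ (using $\sigma w_0\sigma^{-1} = w_0$). The remaining point is $\sigma(\xi) = \xi$, that is $\xi\circ\sigma = \xi$ on $P$, which I would again check factorwise in \eqref{EqXi}: $c$ is $\sigma$-invariant; the factor $\widetilde{z}_{\tau} = \widetilde{z}\circ\tau$ is $\sigma$-invariant since $\tau\sigma = \tau_0$ and the $\tau\tau_0$-invariance of $\chi_0$ (Lemma~\ref{LemChoiceInvExt}) gives $\tau_0(\chi_0) = \tau(\chi_0)$, whence $\widetilde{z}\circ\tau_0 = \widetilde{z}\circ\tau$; and the Cartan factor is $\sigma$-invariant because $\Theta$ commutes with $\sigma$ (Lemma~\ref{LemCommThetaTau}), so that $\sigma(\omega^\pm) = (\sigma\omega)^\pm$ and $\sigma(\alpha_r^-) = \alpha_{\sigma(r)}^-$, while $\sigma$ preserves $(-,-)$ and sends $\varpi_r^{\vee}$ to $\varpi_{\sigma(r)}^{\vee}$, so that the sum over $r$ is merely reindexed. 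This gives $\sigma(\mcK) = \mcK$. The main obstacle throughout is the bookkeeping in $\sigma(\overline{B_r}) = \overline{B_{\sigma(r)}}$ — ensuring that the sign, the $q$-power, and the $c$- and $z$-prefactors all transform coherently — whereas the $\mcK$-step is a clean assembly of the invariance lemmas.
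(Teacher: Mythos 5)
Your proof is correct and follows essentially the same route as the paper: invariance of $\quasiK$ via the uniqueness characterisation of Proposition \ref{PropUniqueK} together with the equivariance $\tau\tau_0(\overline{B_r}) = \overline{B}_{\tau\tau_0(r)}$, and invariance of $\mcK$ by checking each factor $\quasiK$, $\xi$, $T_{w_X}^{-1}$, $T_{w_0}^{-1}$ separately. The only difference is that you spell out the prefactor bookkeeping and the $\tau\tau_0$-invariance of $\xi$, $T_{w_X}$ and $T_{w_0}$ explicitly, which the paper compresses into citations of Remark \ref{RemInvtautau0}, Lemma \ref{LemCommTau0Tau} and the $\tau\tau_0$-invariance of $c$ and $\widetilde{z}$.
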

\begin{proof}
As in Remark \ref{RemInvtautau0}, we have $\tau\tau_0(\overline{B_r}) = \overline{B}_{\tau\tau_0(r)}$ for $r\in I \setminus X$. The equality $\tau\tau_0(\quasiK) = \quasiK$ then follows immediately from the uniqueness in Proposition \ref{PropUniqueK}. The $\tau\tau_0$-invariance of $\mcK$ then follows immediately from its definition, Lemma \ref{LemCommTau0Tau} and the $\tau\tau_0$-invariance of $c$ and $\widetilde{z}$.
\end{proof}

Our first goal now will be to show the identities 
\[
\quasiK^* = \Ad(T_{w_0})(\tau_0(\quasiK)),\qquad \Ad(\widetilde{z})(\quasiK) = \tau(\quasiK),
\] 
see Theorem \ref{TheoStarX}. We use here the notation $\Ad(t)X = tXt^{-1}$ for $t$ a grouplike in $\mcU_q(\mfh)$ and $X\in U_q(\mfg)$. 

Let $\mcS_0$ be as in \eqref{EqDefS}, and let similarly 
\begin{equation}\label{EqDefSX}
\mcS_X = e^{2\pi i \rho_X^{\vee}} \in T.
\end{equation}
Let us further denote
\[
\mcS = e^{\pi i \rho^{\vee}}\in T,
\] 
so that $\mcS^2 = \mcS_0$. Note that since $(\rho^{\vee},\alpha_r) = 1$ for all $r\in I$, we have
\[
\Ad(\mcS)(E_r) = -E_r,\qquad \Ad(\mcS)(F_r) = -F_r,\qquad \Ad(\mcS)(K_{\omega}) = K_{\omega}. 
\]

\begin{Lem}\label{CommSTX}
The identity $\mcS T_{w_X}^{-1} = T_{w_X}^{-1}\mcS\mcS_X^{-1}$ holds.
\end{Lem}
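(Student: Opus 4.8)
The plan is to reduce the claim to the single conjugation identity
$$\Ad(T_{w_X})(\mcS) = \mcS\,\mcS_X^{-1}.$$
This is equivalent to the stated equation: since $T = \exp(i\mfa)$ is abelian the elements $\mcS$ and $\mcS_X$ commute, and left-multiplying $T_{w_X}\mcS T_{w_X}^{-1} = \mcS\mcS_X^{-1}$ by $T_{w_X}^{-1}$ yields precisely $\mcS T_{w_X}^{-1} = T_{w_X}^{-1}\mcS\mcS_X^{-1}$. So it suffices to understand how $\Ad(T_{w_X})$ acts on the torus element $\mcS = e^{\pi i \rho^\vee}$.

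First I would record the conjugation rule for torus elements. The weight-space observation in the proof of Proposition \ref{PropStarT} gives $T_r e^{\pi i \lambda} T_r^{-1} = e^{\pi i s_r(\lambda)}$ for $\lambda$ a coroot; since the coroots span $\mfa$, and $\lambda \mapsto e^{\pi i \lambda}$ is additive while conjugation is multiplicative, this extends by linearity to all $\lambda \in \mfa$. Choosing the reduced expression $w_X = s_{r_1'}\cdots s_{r_M'}$ underlying $T_{w_X} = T_{r_1'}\cdots T_{r_M'}$ and iterating, I obtain $\Ad(T_{w_X})(e^{\pi i \lambda}) = e^{\pi i w_X(\lambda)}$; as $T_{w_X}$ is independent of the reduced expression, so is the outcome. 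In particular $\Ad(T_{w_X})(\mcS) = e^{\pi i w_X(\rho^\vee)}$.

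It then remains to compute $w_X(\rho^\vee)$. I would decompose $\rho^\vee = \rho_X^\vee + \tfrac12\sum_{\alpha\in\Delta^+\setminus\Delta_X^+}\alpha^\vee$. The longest element $w_X\in W_X$ sends $\Delta_X^+$ to $\Delta_X^-$, so $w_X(\rho_X^\vee) = -\rho_X^\vee$; and $w_X$ permutes $\Delta^+\setminus\Delta_X^+$ (each $s_r$ with $r\in X$ alters only the coefficient of $\alpha_r$ and hence preserves the positivity of the coefficients supported on $I\setminus X$), so it fixes $\tfrac12\sum_{\alpha\in\Delta^+\setminus\Delta_X^+}\alpha^\vee$. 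Combining these gives $w_X(\rho^\vee) = \rho^\vee - 2\rho_X^\vee$, whence $\Ad(T_{w_X})(\mcS) = e^{\pi i \rho^\vee}e^{-2\pi i \rho_X^\vee} = \mcS\mcS_X^{-1}$, as required.

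The argument is entirely formal once the conjugation rule is in place; the only point demanding care is the combinatorial fact that $w_X$ permutes $\Delta^+\setminus\Delta_X^+$ while reversing the sign of the $\Delta_X^+$-part of $\rho^\vee$, which is the standard behaviour of the longest element of a parabolic subgroup and is the sole substantive input.
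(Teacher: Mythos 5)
Your argument is, in substance, the paper's own proof in different packaging: the paper commutes $\mcS$ past each factor $T_r^{-1}$ of $T_{w_X}^{-1}$ acting on weight vectors and accumulates the total phase $e^{-2\pi i(\wt(\xi),\rho_X^{\vee})}$, which is exactly your identity $\rho^{\vee}-w_X(\rho^{\vee})=2\rho_X^{\vee}$ in disguise; both proofs rest on the same two ingredients, namely the weight-space conjugation rule for the Lusztig braid operators on torus elements and the standard parabolic fact about $w_X$.

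However, one step of your write-up is not a valid deduction as stated, though it is immediately repairable. You extend $T_r e^{\pi i\lambda}T_r^{-1}=e^{\pi i s_r(\lambda)}$ from coroots to all of $\mfa$ ``by linearity'', but additivity of $\lambda\mapsto e^{\pi i\lambda}$ together with multiplicativity of conjugation only propagates the identity to \emph{integer} combinations of coroots, whereas $\rho^{\vee}$ is in general only a half-integer combination (already for $\mfsl_2$ one has $\rho^{\vee}=\tfrac12\alpha^{\vee}$). Knowing $\Ad(T_r)(\mcS^2)=e^{2\pi i s_r(\rho^{\vee})}$ does not determine $\Ad(T_r)(\mcS)$, since square roots in the torus are two-valued. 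The fix is to drop the linearity argument altogether: the weight-space consideration you cite applies verbatim to an \emph{arbitrary} $\lambda\in\mfa$. Indeed, $T_{w_X}^{-1}$ maps the weight space at $\omega$ to the weight space at $w_X(\omega)$, so $T_{w_X}e^{\pi i\lambda}T_{w_X}^{-1}\xi=e^{\pi i(\lambda,w_X\wt(\xi))}\xi=e^{\pi i(w_X\lambda,\wt(\xi))}\xi$, i.e.\ $\Ad(T_{w_X})(e^{\pi i\lambda})=e^{\pi i w_X(\lambda)}$ directly, in particular for $\lambda=\rho^{\vee}$. With that replacement, the rest of your argument --- the reduction to $\Ad(T_{w_X})(\mcS)=\mcS\mcS_X^{-1}$ and the computation $w_X(\rho^{\vee})=\rho^{\vee}-2\rho_X^{\vee}$ via $w_X(\rho_X^{\vee})=-\rho_X^{\vee}$ and the permutation of $\Delta^+\setminus\Delta_X^+$ --- is correct and complete.
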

\begin{proof}
We have
\[
\mcS T_r^{-1} \xi = e^{\pi i (\rho^{\vee},\wt(\xi) - (\wt(\xi),\alpha_r^{\vee})\alpha_r)}T_r^{-1}\xi = e^{-\pi i (\wt(\xi),\alpha_r^{\vee})}T_r^{-1}\mcS\xi. 
\]
Hence 
\[
\mcS T_{w_X}^{-1}\xi = e^{-2\pi i (\wt(\xi),\rho_X^{\vee})}T_{w_X}^{-1}\mcS \xi = T_{w_X}^{-1} \mcS \mcS_X^{-1}\xi. \qedhere
\]
\end{proof}

Let further $\sigma: U_q(\mfg) \rightarrow U_q(\mfg)$ be the unique algebra anti-isomorphism such that 
\[
\sigma(E_r) = E_r,\qquad \sigma(F_r) = F_r,\qquad \sigma(K_{\omega}) = K_{\omega}^{-1}.
\]
Note that by \cite{Jan96}*{8.(10)}, one has
\[
\sigma(\Ad(T_r)X) = \Ad(T_r^{-1})(\sigma(X)),\qquad X\in U_q(\mfg).
\]

\begin{Lem}
\label{lem:s-sigma}\label{LemActztauB}
For $r \in I \backslash X$ we have
\begin{enumerate}
\item\label{LemActztauB1} $\Ad(\mcS)(\sigma (B_r)) = - \overline{B_r}$,
\item\label{LemActztauB2} $\Ad(\widetilde{z})(\tau(B_r)) = z_{\tau(r)}B_{\tau(r)}$,
\item\label{LemActztauB3} $\Ad(\widetilde{z})(\tau(\overline{B_r})) = z_{\tau(r)}\overline{B_{\tau(r)}}$.
\end{enumerate}
\end{Lem}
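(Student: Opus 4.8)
```latex
\textbf{Plan.} The three identities in Lemma \ref{lem:s-sigma} are of a similar nature, so the plan is to verify each one by reducing it to the action of the relevant maps on the explicit generator
\[
B_r = F_r + c_r X_r K_r^{-1},\qquad X_r = -z_{\tau(r)}\Ad(T_{w_X})(E_{\tau(r)}),
\]
and on $\overline{B_r}$ as given in \eqref{EqBarB}, and then matching the pieces term by term. The main point throughout will be to track carefully how $\Ad(\mcS)$, $\sigma$, $\tau$ and $\Ad(\widetilde{z})$ act on the single ``twisted'' summand $\Ad(T_{w_X})(E_{\tau(r)})$, since the $F_r$ and $K_r$ parts are essentially immediate.

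For \eqref{LemActztauB1}, I would first apply $\sigma$ to $B_r$. Using $\sigma(F_r)=F_r$, $\sigma(K_r^{-1})=K_r$, that $\sigma$ is an algebra anti-automorphism, and the compatibility $\sigma(\Ad(T_{w_X})X)=\Ad(T_{w_X}^{-1})(\sigma(X))$ (which follows by induction from the quoted identity \cite{Jan96}*{8.(10)} $\sigma(\Ad(T_r)X)=\Ad(T_r^{-1})(\sigma(X))$ applied along a reduced word for $w_X$), I get $\sigma(B_r)=F_r+c_r K_r \Ad(T_{w_X}^{-1})(X_r)$ up to reordering, where $\sigma(X_r)=-z_{\tau(r)}\Ad(T_{w_X}^{-1})(E_{\tau(r)})$. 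Then applying $\Ad(\mcS)$ I use $\Ad(\mcS)(F_r)=-F_r$, $\Ad(\mcS)(K_r)=K_r$, and the commutation $\mcS T_{w_X}^{-1}=T_{w_X}^{-1}\mcS\mcS_X^{-1}$ from Lemma \ref{CommSTX}, together with $\Ad(\mcS)(E_{\tau(r)})=-E_{\tau(r)}$ and the scalar action of $\mcS_X=e^{2\pi i\rho_X^\vee}$, which produces exactly the factor $(-1)^{(2\rho_X^\vee,\alpha_{\tau(r)})}$. Collecting the resulting scalar prefactors and comparing with \eqref{EqBarB}, I expect everything to combine into $-\overline{B_r}$; the scalar $q^{-(2\rho_X,\alpha_{\tau(r)})}$ must appear from converting $\Ad(T_{w_X})$ into $\Ad(T_{w_X}^{-1})$ on $E_{\tau(r)}$, for which I would invoke the identity $T_{w_X,-1}''(E_s)=(-1)^{(2\rho_X^\vee,\alpha_s)}q^{-(2\rho_X,\alpha_s)}T_{w_X,-1}'(E_s)$ already used in the proof of Proposition \ref{PropUniqueK}.

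For \eqref{LemActztauB2}, I would apply $\tau$ to $B_r$, using $\tau(F_r)=F_{\tau(r)}$, $\tau(K_r)=K_{\tau(r)}$, the $\tau\tau_0$-invariance of $c$ (so $\tau(c_r)=c_{\tau(r)}$ via Lemma \ref{LemCommTau0Tau}), and the fact that $\tau$ commutes with $\Ad(T_{w_X})$ up to relabelling (since $\tau(X)=X$ and $\tau$ permutes the $T_{r'}$ according to the $\tau$-action on $X$). This gives $\tau(B_r)=F_{\tau(r)}+c_{\tau(r)}X_{\tau(r)}'K_{\tau(r)}^{-1}$ for a suitably twisted $X'$. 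Then $\Ad(\widetilde{z})$ acts on root vectors by the character $\widetilde{z}$, producing scalars $\widetilde{z}(\alpha)$ on each weight component, and the claimed output $z_{\tau(r)}B_{\tau(r)}$ should emerge once I use that $\widetilde{z}=e^{2\pi i\chi_0}$ extends $z$ on $Q$ so that $\widetilde{z}(\alpha_s)=z_s$, and that the grouplike $\widetilde{z}$ commutes with $T_{w_X}$ appropriately. Statement \eqref{LemActztauB3} is the analogue of \eqref{LemActztauB2} for $\overline{B_r}$, which I would prove by the identical computation applied to \eqref{EqBarB}, the only difference being the extra scalars and the appearance of $\Ad(T_{w_X}^{-1})$ in place of $\Ad(T_{w_X})$.

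\textbf{Main obstacle.} The routine part is the action on $F_r$, $K_r$ and the scalar characters; the delicate bookkeeping is in \eqref{LemActztauB1}, where the interplay of $\sigma$ (which inverts $T_{w_X}$), $\Ad(\mcS)$ (which toggles signs and introduces $\mcS_X$), and the conversion between $\Ad(T_{w_X})$ and $\Ad(T_{w_X}^{-1})$ must conspire to reproduce the precise combination of signs $(-1)^{(2\rho_X^\vee,\alpha_{\tau(r)})}$ and powers $q^{-(2\rho_X,\alpha_{\tau(r)})}$ appearing in the definition of $\overline{B_r}$. I expect the sign and $q$-power tracking here to be the step most prone to error, and I would double-check it by evaluating on weight vectors exactly as in the proof of Proposition \ref{PropUniqueK}.
```
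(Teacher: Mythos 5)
Your overall strategy coincides with the paper's: for \eqref{LemActztauB1} apply $\sigma$ first (converting $\Ad(T_{w_X})$ into $\Ad(T_{w_X}^{-1})$ via $\sigma\circ\Ad(T_{w_X})=\Ad(T_{w_X}^{-1})\circ\sigma$), then conjugate by $\mcS$ using Lemma \ref{CommSTX} to produce the sign $(-1)^{(2\rho_X^{\vee},\alpha_{\tau(r)})}$; for \eqref{LemActztauB2} and \eqref{LemActztauB3} track the action of $\tau$ and of the character $\widetilde{z}$ on each weight component. The sign bookkeeping in your item (1) is correct and matches the paper's proof.

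However, there is a genuine error in where you locate the power $q^{-(2\rho_X,\alpha_{\tau(r)})}$ and the inversion $c_r\mapsto c_r^{-1}$ in item (1). In your computation the passage from $\Ad(T_{w_X})$ to $\Ad(T_{w_X}^{-1})$ is carried out by $\sigma$, and since $\sigma(E_{\tau(r)})=E_{\tau(r)}$ this step is scalar-free; the identity $T_{w_X,-1}^{\prime\prime}(E_s)=(-1)^{(2\rho_X^{\vee},\alpha_s)}q^{-(2\rho_X,\alpha_s)}T_{w_X,-1}^{\prime}(E_s)$ that you invoke relates the two Lusztig symmetries $T_{w_X,-1}^{\prime\prime}$ and $T_{w_X,-1}^{\prime}$, neither of which is $\Ad(T_{w_X})=T_{w_X,1}^{\prime\prime}$; it is needed to justify the formula \eqref{EqBarB} itself in Proposition \ref{PropUniqueK} (via the bar involution), not in this lemma, and applying it on top of the $\sigma$-conversion would double-count and produce wrong scalars. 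The actual source of the $q$-power, which your plan omits, is the reordering of the Cartan part: after applying $\Ad(\mcS)\circ\sigma$ the twisted term reads $c_r K_r\,\Ad(\mcS)(\sigma(X_r))$, whereas $\overline{B_r}$ has $K_r$ on the right; since $\Ad(\mcS)(\sigma(X_r))\in U_q(\mfg)_{-\Theta(\alpha_r)}$, commuting $K_r$ through produces $q^{-(\alpha_r,\Theta(\alpha_r))}$, and one then needs the identity $q^{-(\alpha_r,\Theta(\alpha_r))}c_r=q^{-(2\rho_X,\alpha_{\tau(r)})}c_r^{-1}$ (a consequence of the explicit form of $c_r$ and of $(\rho_X,\alpha_{\tau(r)})=(\rho_X,\alpha_r)$) to convert $c_r$ into the $c_r^{-1}$ appearing in \eqref{EqBarB}. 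Without this step your prefactors cannot match $-\overline{B_r}$. A smaller slip of the same kind occurs in (2): what is needed there is $c_{\tau(r)}=c_r$ (that is, $\tau$-invariance of $c$), not $\tau\tau_0$-invariance, together with the untracked sign $z_rz_{\tau(r)}$ coming from $\tau(X_r)=z_rz_{\tau(r)}X_{\tau(r)}$ and the evaluation $\widetilde{z}(w_X\alpha_r)=z_r$, which uses $z_s=1$ for $s\in X$.
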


\begin{proof}
Let us first prove \eqref{LemActztauB1}. We have
\[
\Ad(\mcS)(\sigma (B_r)) = - F_r + c_r K_r \Ad(\mcS)(\sigma (X_r)).
\]
Since $X_r \in U_q(\mfg)_{-\Theta(\alpha_r)}$, the same is true for $ \Ad(\mcS)(\sigma (X_r))$. Therefore we can write
\[
\Ad(\mcS)(\sigma (B_r)) = - F_r +q^{-(\alpha_r, \Theta(\alpha_r))} c_r \Ad(\mcS)(\sigma (X_r))K_r.
\]
Now from the identity $\sigma \circ \Ad(T_r) = \Ad(T_r^{-1}) \circ \sigma$ we obtain $\sigma \circ \Ad(T_{w_X}) = \Ad(T_{w_X}^{-1}) \circ \sigma$. Since  by Lemma \ref{CommSTX} we have 
\[
\Ad(\mcS) \Ad(T_{w_X}^{-1}) (E_{\tau(r)}) = - (-1)^{(2\rho_X^\vee, \alpha_{\tau(r)})} \Ad(T_{w_X}^{-1}) (E_{\tau(r)}),
\] 
we get
\[
\Ad(\mcS)(\sigma (X_r))= (-1)^{(2\rho_X^\vee, \alpha_{\tau(r)})} z_{\tau(r)} \Ad(T_{w_X}^{-1}) (E_{\tau(r)}).
\]
Plugging this into $\Ad(\mcS)(\sigma (B_r))$ and observing that 
\[
q^{-(\alpha_r, \Theta(\alpha_r))} c_r = q^{- \frac{1}{2} (\alpha_r , \Theta(\alpha_r) + 2\rho_X)} = q^{- (2\rho_X, \alpha_{\tau(r)})} c_r^{-1}
\]
gives the result.

Let us now prove \eqref{LemActztauB2}. Using that $\tau(X_r) \in U_q(\mfn)_{w_X\alpha_r}$, $z_r^2 =1$, $\widetilde{z}(w_X(\alpha_r)) = z_r$ and $c_{\tau(r)} = c_r$, we find 
\[
\Ad(\widetilde{z})(\tau(B_r)) = z_{\tau(r)}F_{\tau(r)} + \widetilde{z}(w_X\alpha_r) z_rz_{\tau(r)}c_r X_{\tau(r)}K_{\tau(r)}^{-1}  =  z_{\tau(r)}B_{\tau(r)}.
\]
The proof of \eqref{LemActztauB3} follows similarly. 
\end{proof}

\begin{Prop}
\label{prop:invariance-ssigma}
We have $\signaut(\sigma(\quasiK))  = \quasiK$.
\end{Prop}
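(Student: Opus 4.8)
The plan is to verify that $\quasiK'' := \signaut(\sigma(\quasiK))$ satisfies the hypotheses characterizing the quasi-$K$-matrix in Proposition~\ref{PropUniqueK}, and then to conclude $\quasiK'' = \quasiK$ by the uniqueness asserted there.

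First I would establish the structural properties of the map $\Psi := \signaut \circ \sigma$. Since $\sigma^2$ is an algebra automorphism fixing all generators we have $\sigma^2 = \id$, while $\signaut^2 = \Ad(\mcS^2) = \Ad(\mcS_0) = \id$, because $\mcS_0 = e^{2\pi i \rho^{\vee}}$ acts trivially under conjugation (as $(\rho^{\vee},\alpha_r) = 1$ gives $e^{2\pi i(\rho^{\vee},\alpha_r)} = 1$, and $\mcS_0 \in T$ commutes with every $K_\omega$). A comparison on the generators $E_r, F_r, K_\omega$ shows that $\signaut$ and $\sigma$ commute, so $\Psi$ is an involutive anti-automorphism of $U_q(\mfg)$. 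Moreover $\Psi$ maps $\mcU_q(\mfn^+)$ to itself and preserves each weight space $U_q(\mfn^+)_\alpha$, on which $\signaut$ acts by the scalar $(-1)^{\mathrm{ht}(\alpha)}$; consequently $\quasiK'' = \sum_{\alpha \in Q^+}\quasiK''_\alpha$ with $\quasiK''_\alpha = (-1)^{\mathrm{ht}(\alpha)}\sigma(\quasiK_\alpha) \in U_q(\mfn)_\alpha$ and $\quasiK''_0 = 1$, so $\quasiK''$ has the normalized graded form required by Proposition~\ref{PropUniqueK}.

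The main computation is to transport the two defining intertwining relations of $\quasiK$ through $\Psi$. Applying the anti-homomorphism $\Psi$ to $B_r\quasiK = \quasiK\,\overline{B_r}$ reverses the order and yields
\[
\quasiK''\,\Psi(B_r) = \Psi(\overline{B_r})\,\quasiK''.
\]
Now Lemma~\ref{LemActztauB}\,\eqref{LemActztauB1} gives $\Psi(B_r) = \signaut(\sigma(B_r)) = -\overline{B_r}$, and applying the involution $\Psi$ once more to this same identity gives $\Psi(\overline{B_r}) = -B_r$. Substituting both into the displayed equation produces $B_r\quasiK'' = \quasiK''\,\overline{B_r}$. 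Similarly, applying $\Psi$ to $F_s\quasiK = \quasiK F_s$ for $s\in X$ and using $\Psi(F_s) = \signaut(F_s) = -F_s$ gives $F_s\quasiK'' = \quasiK'' F_s$.

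Having matched the normalization, the grading, and both relations, Proposition~\ref{PropUniqueK} forces $\quasiK'' = \quasiK$, which is the assertion. The only point requiring care is the companion identity $\Psi(\overline{B_r}) = -B_r$, which does not come directly from Lemma~\ref{LemActztauB} but follows cleanly from the involutivity of $\Psi$ established in the first step; the remainder is bookkeeping with the anti-multiplicativity of $\Psi$ and the root-space grading.
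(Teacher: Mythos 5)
Your proposal is correct and follows essentially the same route as the paper: both verify that $\signaut(\sigma(\quasiK))$ has the normalized graded form and satisfies the two intertwining relations of Proposition~\ref{PropUniqueK}, using Lemma~\ref{LemActztauB}\,\eqref{LemActztauB1} together with its companion identity $\signaut(\sigma(\overline{B_r})) = -B_r$ (which the paper states as an equivalence and you justify via the involutivity of $\signaut\circ\sigma$). Your explicit verification of that involutivity and of the weight-space bookkeeping merely spells out steps the paper leaves implicit.
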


\begin{proof}
We can write
\[
\signaut(\sigma(\quasiK)) = \sum_{\alpha \in Q^+} \signaut(\sigma(\quasiK_\alpha))
\] 
with $\signaut(\sigma(\quasiK_0)) = 1$ and $\signaut(\sigma(\quasiK_\alpha)) \in U_q(\mfn)_\alpha$.

Now since $\quasiK$ commutes with $F_r$ for $r\in X$, we also have that $\signaut(\sigma(\quasiK))$ commutes with $F_r$. On the other hand, applying $\signaut \circ \sigma$ to $B_r \quasiK = \quasiK \overline{B_r}$ with $r \in I \backslash X$, we get
\[
\signaut(\sigma(\quasiK)) \signaut(\sigma(B_r)) = \signaut(\sigma(\overline{B_r})) \signaut(\sigma(\quasiK)).
\]
But we have $\signaut (\sigma (B_r)) = - \overline{B_r}$ from Lemma \ref{lem:s-sigma}, which is also equivalent to $B_r= - \signaut (\sigma (\overline{B_r}))$. Using these we get 
\[
\signaut(\sigma(\quasiK)) \overline{B_r} = B_r \signaut(\sigma(\quasiK)).
\]
By Proposition \ref{PropUniqueK} we conclude $\signaut(\sigma(\quasiK)) = \quasiK$.
\end{proof}

Let now 
\[
\mathrm{cc}: U_q(\mfg) \rightarrow U_q(\mfg)
\]
be the unique anti-linear algebra homomorphism which is the identity on $E_r,F_r,K_{\omega}$. Then we have the identities
\begin{equation}\label{EqAdTw0star}
\Ad(T_{w_0}) = * \circ \mathrm{cc} \circ \Ad(\mcS) \circ \sigma \circ \tau_0, \quad
\Ad(T_{w_0}^{-1}) = \tau_0 \circ \sigma \circ \Ad(\mcS) \circ \mathrm{cc} \circ *,
\end{equation}
where the first identity  follows from \eqref{EqIdAdT_0} and where the second expression follows from the first, since all the various maps are involutions. Note further that the maps $\Ad(\mcS)$, $\tau_0$, $\sigma$ and $\mathrm{cc}$ all commute among themselves.

\begin{Theorem}\label{TheoStarX}
We have $\quasiK^* = \Ad(T_{w_0})(\tau_0(\quasiK))$. 
\end{Theorem}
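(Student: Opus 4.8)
The plan is to prove the identity $\quasiK^* = \Ad(T_{w_0})(\tau_0(\quasiK))$ by showing that the element $\Ad(T_{w_0})(\tau_0(\quasiK))$ satisfies the characterizing properties of $\quasiK$ from Proposition \ref{PropUniqueK}, and then invoking uniqueness. Using the expression \eqref{EqAdTw0star} for $\Ad(T_{w_0})$, I would first rewrite $\quasiK^*$ in a form amenable to comparison. Concretely, since $*$ is anti-linear and anti-multiplicative, applying $*$ to a sum $\quasiK = \sum_{\alpha\in Q^+}\quasiK_\alpha$ with $\quasiK_\alpha \in U_q(\mfn)_\alpha$ produces elements in $U_q(\mfn^-)$; the composition with $\Ad(T_{w_0})\tau_0$ should land back in $\mcU_q(\mfn^+)$ with the correct weight grading, so that both sides are candidate quasi-$K$-matrices of the form $\sum_{\alpha\in Q^+}(\cdots)_\alpha$ with degree-zero part equal to $1$.

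The key computational step is to verify the two intertwining relations from Proposition \ref{PropUniqueK}: namely that $B_r\, \Ad(T_{w_0})(\tau_0(\quasiK)) = \Ad(T_{w_0})(\tau_0(\quasiK))\,\overline{B_r}$ for $r\in I\setminus X$, and commutation with $F_s$ for $s\in X$. For this I would combine three ingredients already established in the excerpt: the invariance $\signaut(\sigma(\quasiK)) = \quasiK$ from Proposition \ref{prop:invariance-ssigma}, the $\tau\tau_0$-invariance $\tau\tau_0(\quasiK) = \quasiK$ from Corollary \ref{CorInvTauTau_0}, and the behaviour of $B_r, \overline{B_r}$ under the maps $\tau$, $\sigma$, $\Ad(\mcS)$ and $\Ad(\widetilde z)$ recorded in Lemma \ref{LemActztauB}. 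The strategy is to decompose $\Ad(T_{w_0}) = *\circ\mathrm{cc}\circ\Ad(\mcS)\circ\sigma\circ\tau_0$ and track how the defining relation $B_r\quasiK = \quasiK\overline{B_r}$ transforms under each factor. Since $\tau_0(\quasiK) = \tau(\quasiK)$ by Corollary \ref{CorInvTauTau_0} (as $\tau\tau_0(\quasiK)=\quasiK$), I can replace $\tau_0$ by $\tau$ where convenient and exploit the clean transformation rules in Lemma \ref{LemActztauB}, which are stated for $\tau$.

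The main obstacle I expect is bookkeeping the interplay between the \emph{anti-linear} map $\mathrm{cc}$ and the complex scalars $z_{\tau(r)}$, together with the $q$-power factors $c_r$ and the sign factors $(-1)^{(2\rho_X^\vee,\alpha_{\tau(r)})}$ appearing in the definition \eqref{EqBarB} of $\overline{B_r}$. Because $\mathrm{cc}$ fixes $q$ formally but conjugates scalars, and because $*$ and $\sigma$ interact nontrivially with the $K$-factors in $B_r$ and $\overline{B_r}$, the precise cancellation of these scalar and $q$-power contributions must be checked carefully; this is exactly the kind of scalar-matching that drove the proof of Proposition \ref{prop:invariance-ssigma} via Lemma \ref{lem:s-sigma}\eqref{LemActztauB1}. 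Once the intertwining relations are confirmed and the normalization $(\,\cdot\,)_0 = 1$ is checked (using that $T_{w_0}$ and $\tau_0$ fix the trivial weight component and that $\quasiK_0 = 1$), uniqueness from Proposition \ref{PropUniqueK} forces $\Ad(T_{w_0})(\tau_0(\quasiK)) = \quasiK^*$, completing the proof.
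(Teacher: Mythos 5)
Your proposal is in essence the paper's own proof: both rest on the factorization \eqref{EqAdTw0star}, on Proposition \ref{prop:invariance-ssigma}, on controlling $\mathrm{cc}$, and on the uniqueness statement of Proposition \ref{PropUniqueK}. The ``main obstacle'' you flag (the interaction of the antilinear map $\mathrm{cc}$ with the scalars $z_{\tau(r)}$ and the $q$-powers) dissolves in one line: since $z$ is real-valued and $q>0$, the elements $B_r$, $\overline{B_r}$ and $F_s$ are $\mathrm{cc}$-invariant, so applying $\mathrm{cc}$ to the defining relations and invoking Proposition \ref{PropUniqueK} gives $\mathrm{cc}(\quasiK)=\quasiK$. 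With that and Proposition \ref{prop:invariance-ssigma} in hand, the paper simply composes: $\Ad(T_{w_0})(\tau_0(\quasiK)) = (*\circ\mathrm{cc}\circ\Ad(\mcS)\circ\sigma\circ\tau_0)(\tau_0(\quasiK)) = *(\mathrm{cc}(\signaut(\sigma(\quasiK)))) = \quasiK^*$, with no second appeal to uniqueness and no need for Corollary \ref{CorInvTauTau_0} or parts (2)--(3) of Lemma \ref{LemActztauB}.

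One point in your framing does need correcting. Neither $\quasiK^*$ nor $\Ad(T_{w_0})(\tau_0(\quasiK))$ lies in $\mcU_q(\mfn)$: their weight components lie in $U_q(\mfn^-)_{-\alpha}K_{\alpha}$, so they are not literally candidates for Proposition \ref{PropUniqueK}; worse, if $\Ad(T_{w_0})(\tau_0(\quasiK))$ did satisfy those hypotheses, uniqueness would force it to equal $\quasiK$ rather than $\quasiK^*$. To run your uniqueness argument you must first transport back to the positive part: the theorem is equivalent (since $\Ad(T_{w_0})$ and $\tau_0$ commute and are involutive on the relevant elements) to the identity $\tau_0\bigl(\Ad(T_{w_0}^{-1})(\quasiK^*)\bigr)=\quasiK$, and this transported element does lie in $\mcU_q(\mfn)$ with degree-zero part $1$. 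By \eqref{EqAdTw0star} it equals $\signaut\bigl(\sigma(\mathrm{cc}(\quasiK))\bigr)$, so checking the intertwining relations for it reduces precisely to the two invariances above — at which point you are reproducing the paper's argument, just with an extra (redundant) invocation of uniqueness at the end.
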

\begin{proof}
We clearly have $\mathrm{cc}(\quasiK) = \quasiK$ by the defining property in Proposition \ref{PropUniqueK}, using that $z$ is real-valued and hence the $B_r$ and $\overline{B_r}$ are invariant under $\mathrm{cc}$. The result then follows from \eqref{EqAdTw0star} and Proposition \ref{prop:invariance-ssigma}.
\end{proof}

We now perform various small changes to $\msB$, first to make it $*$-invariant and then to make it rather a left coideal. Define
\begin{equation}\label{EqDefome0}
\omega_0  =-\frac{1}{2}(\rho-\rho_X).
\end{equation}
Using the obvious notation $K_{\omega_0} \in \mcU_q(\mfh)$, let us write 
\[
\widetilde{\msB} = \Ad(K_{\omega_0})(\msB),\qquad \widetilde{\quasiK} = \Ad(K_{\omega_0})(\quasiK),\qquad  \widetilde{\mcK} = \Ad(K_{\omega_0})(\mcK).
\] 
Clearly $\widetilde{\msB}$ is again a right coideal subalgebra. Note that $\widetilde{\msB}$ is generated by $U_q'(\mfg_X)$, $U_q(\mfh^{\Theta})$ and the elements
\[
\widetilde{B}_r = F_r+ q^{(\omega_0,\alpha_r -  \Theta(\alpha_r))}c_r X_rK_r^{-1},\qquad r\in I \setminus X.
\] 
Since $w_X \rho = \rho-2\rho_X$, and hence $w_X\omega_0 = \omega_0$, we can simplify this expression as
\[
\widetilde{B}_r = F_r+ q^{2(\omega_0,\alpha_r)}c_r X_rK_r^{-1} =  F_r+ q^{-(\alpha_r^-,\alpha_r^-)} X_rK_r^{-1},
\]
using once more the notation \eqref{EqPM} and the identity $(2\rho,\alpha_r) = (\alpha_r,\alpha_r)$. 
\begin{Lem}
The algebra $\widetilde{\msB}$ is $*$-invariant. 
\end{Lem}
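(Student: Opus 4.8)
The plan is to reduce to a single family of generators and then transport the computation through the factorisation of the compact involution. Since $\Ad(K_{\omega_0})$ fixes $U_q(\mfh)$ and, because $(\omega_0,\alpha_s)=0$ for $s\in X$ (using $(\rho-\rho_X,\alpha_s)=0$ on $X$), also fixes $U_q'(\mfg_X)$ pointwise, the two ``Levi'' pieces of $\widetilde{\msB}$ are unchanged by the twist and are manifestly $*$-stable: for $s\in X$ one has $E_s^*=F_sK_s$, $F_s^*=K_s^{-1}E_s$, and $K_\omega^*=K_\omega$ with $\Theta(\omega)=\omega$. Hence it suffices to show $\widetilde{B}_r^*\in\widetilde{\msB}$ for $r\in I\setminus X$. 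As $K_{\omega_0}^*=K_{\omega_0}$ (here $\omega_0\in\mfa$ is real, see \eqref{EqDefome0}), this is equivalent to the twist-free statement $\msB^*=\Ad(K_{2\omega_0})(\msB)=\Ad(K_{-(\rho-\rho_X)})(\msB)$.

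To compute $\msB^*$ I would use the identity $* = \Ad(T_{w_0})\circ\tau_0\circ\sigma\circ\Ad(\mcS)\circ\mathrm{cc}$, which is the inverse form of \eqref{EqAdTw0star} (the four inner maps commute and are involutive). Because $z$ and $c$ are real-valued, $\mathrm{cc}$ fixes each $B_r$ of Definition \ref{DefElBr}, so $\mathrm{cc}(\msB)=\msB$; by Lemma \ref{LemActztauB}\eqref{LemActztauB1} the antilinear anti-automorphism $\sigma\circ\Ad(\mcS)$ sends $B_r\mapsto-\overline{B_r}$ and preserves $U_q'(\mfg_X)$ and $U_q(\mfh^\Theta)$, so its image of $\msB$ is the ``bar coideal'' generated by $U_q'(\mfg_X)$, $U_q(\mfh^\Theta)$ and the $\overline{B_r}$ of \eqref{EqBarB}. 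Applying $\tau_0$ and then $\Ad(T_{w_0})$ and using \eqref{EqIdAdT_0} ($\Ad(T_{w_0})(F_r)=-K_{\alpha_{\tau_0(r)}}^{-1}E_{\tau_0(r)}$ and $\Ad(T_{w_0})(K_\omega)=K_{w_0\omega}$), the image of $F_r$ becomes an $E$-type term, while the image of the second summand of $\overline{B_r}$ is $\Ad(T_{w_0}T_{w_X}^{-1})(E_{\tau\tau_0(r)})K_{w_0\alpha_r}$ up to the explicit scalar in \eqref{EqBarB}; here I would invoke $T_{w_0}=T_{w_0w_X}T_{w_X}$, valid since $\ell(w_0)=\ell(w_0w_X)+\ell(w_X)$. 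A weight count via Lemma \ref{LemCommThetaTau} places this summand in weight $\Theta(\alpha_{\tau_0(r)})$, which matches the weight of the $\Ad(T_{w_X})$-term of $\Ad(K_{2\omega_0})(B_{\tau_0(r)})$; the remaining bookkeeping of scalars rests on $\tau_0$-invariance of $\rho_X^\vee$, $c$ (from Remark \ref{RemInvtautau0}) and the sign rule \eqref{EqProdzz} for $z$, together with $[\Theta,\tau_0]=0$ and $\tau_0(X)=X$ from Lemmas \ref{LemCommTau0Tau} and \ref{LemCommThetaTau}.

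The main obstacle is exactly this matching of the conjugated generators, i.e. the braid identity $\Ad(T_{w_0}T_{w_X}^{-1})(E_{\tau\tau_0(r)})\propto\Ad(T_{w_X})(F_{?})$ relating conjugation by the minimal coset representative $w_0w_X$ to a lowering operator inside the $X$-Levi through the Satake involution $\Theta$, together with pinning down the accompanying sign from \eqref{EqProdzz}. This is the one place where the non-quasi-split geometry ($w_X\neq e$) genuinely enters: unlike the split case, $\widetilde{B}_r^*$ is \emph{not} a scalar multiple of a single generator (its two weight components are $\alpha_r$ and $\Theta(\alpha_r)$, which are not the weights of any $\widetilde{B}_s$), so one must produce it through the full triangular structure of $\msB$ rather than by a term-by-term comparison. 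As a concrete alternative I would, if the braid identity proves awkward, instead compute $\widetilde{B}_r^*$ directly from $F_r^*=K_r^{-1}E_r$ and the $W_X$-analogue $T_{w_X}^*=T_{w_X}\mcS_X$ of Proposition \ref{PropStarT} (with $\mcS_X$ as in \eqref{EqDefSX}), obtaining $\widetilde{B}_r^*$ as an explicit sum of $K_r^{-1}E_r$ and an $\Ad(T_{w_X}^{-1})(F_{\tau(r)})$-term, and then exhibit its membership in $\widetilde{\msB}$ by lowering the high part of $\widetilde{B}_r$ through iterated brackets with $E_s,F_s$ $(s\in X)$ and multiplying by suitable elements of $U_q(\mfh^\Theta)$.
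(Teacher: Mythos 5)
Your preliminary reductions are all correct: it suffices to treat the generators $\widetilde{B}_r$ for $r\in I\setminus X$ (the Levi parts are fixed by $\Ad(K_{\omega_0})$ and manifestly $*$-stable), the twist drops out in the form $\msB^*=\Ad(K_{2\omega_0})(\msB)$, and the factorisation $* = \Ad(T_{w_0})\circ\tau_0\circ\sigma\circ\Ad(\mcS)\circ\mathrm{cc}$ together with Lemma \ref{LemActztauB}\eqref{LemActztauB1} and $\mathrm{cc}(B_r)=B_r$ does express $B_r^*$ as $-\Ad(T_{w_0})\tau_0(\overline{B_r})$. But the argument stops exactly where the lemma begins. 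The element $B_r^*$ has two weight components, of weights $\alpha_r$ and $\Theta(\alpha_r)$, and showing that this particular \emph{sum} lies in $\Ad(K_{2\omega_0})(\msB)$ is the entire content of the statement; you never do it. It is deferred twice ("here I would invoke\dots", "the remaining bookkeeping of scalars rests on\dots"), and the fallback via iterated brackets with $E_s,F_s$, $s\in X$, is likewise not carried out. Worse, the one concrete matching you assert is false: the conjugated second summand $\Ad(T_{w_0}T_{w_X}^{-1})(E_{\tau_0\tau(r)})K_{-\alpha_r}$ has weight $w_0w_X\alpha_{\tau_0\tau(r)}=-w_X\alpha_{\tau(r)}=\Theta(\alpha_r)$, not $\Theta(\alpha_{\tau_0(r)})$, whereas the $\Ad(T_{w_X})$-term of $B_{\tau_0(r)}$ has weight $-\Theta(\alpha_{\tau_0(r)})$; these never coincide. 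The correct comparison partner is $B_{\tau(r)}$, and even there the weights of the respective components agree only modulo $Q_X$ --- which is precisely your own (correct) closing observation that no term-by-term identification with a single generator is possible. So the sketch is internally inconsistent at that point and, more importantly, incomplete where it matters: bridging the $Q_X$-discrepancy requires the triangular structure of $\msB$ (e.g.\ the $\{B_J\}$-basis of $\msB$ over $U_q(\mfn_X)U_q(\mfh^{\Theta})$, or a normalizer property of $\Ad(T_{w_X})$), none of which appears in the proposal.

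For comparison, the paper does not attempt this computation at all. It cites \cite{DCNTY17}*{Theorem 3.11}, which says that $*$-invariance of the twisted coideal is \emph{equivalent} to the purely scalar condition $q^{2(\omega_0,\alpha_r+\alpha_{\tau(r)})}c_rc_{\tau(r)}=q^{(\Theta(\alpha_r)-\alpha_r,\alpha_{\tau(r)})}$, and then verifies this condition: $(\omega_0,\alpha_r)=0$ for $r\in X$ because $(\rho_X,\alpha_r)=(\rho,\alpha_r)$ there, and for $r\in I\setminus X$ the identity $\Theta(\alpha_{\tau(r)})-\alpha_{\tau(r)}-\Theta(\alpha_r)=-\alpha_r$ of \cite{BK15a}*{Lemma 3.2} together with $2(\rho,\alpha_r)=(\alpha_r,\alpha_r)$. (The paper also checks that the cited theorem, proved there under a different normalisation of $z$, only uses \eqref{EqProdzz} and $z_r=1$ for $\alpha_r\perp X$ --- a point you would equally have to address.) A self-contained argument along your lines amounts to reproving that cited theorem; that is a genuine piece of work, not bookkeeping, and it is the missing piece of your proof.
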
 
\begin{proof}
By \cite{DCNTY17}*{Theorem 3.11}, we need to check that 
\begin{equation}
q^{2(\omega_0,\alpha_r + \alpha_{\tau(r)})} c_r c_{\tau(r)} =  q^{(\Theta(\alpha_r) - \alpha_r,\alpha_{\tau(r)})}. 
\end{equation}
Note that this theorem was proven under a different assumption on $z$ mentioned in Remark \ref{RemDiffConv}, but it is easily verified that for the $*$-invariance of $\widetilde{B}$ the only feature of $z$ which was used was \eqref{EqProdzz} and the fact that $z_r = 1$ for $\alpha_r\perp X$, which is still valid in the current setup.  

Following the discussion under \cite{DCNTY17}*{Theorem 3.14}, it is sufficient to show that  $(\omega_0,\alpha_r) = 0$ for $r\in X$, and 
\begin{equation}\label{EqVerifDiffEasy}
(\omega_0,\alpha_r) = \frac{1}{4}(\Theta(\alpha_{\tau(r)}) - \alpha_{\tau(r)} -\Theta(\alpha_r) +2\rho_X,\alpha_r),\quad r\in I \setminus X.
\end{equation}
Now since $(\rho_X,\alpha_r) = (\rho,\alpha_r)$ for $r\in X$, we obtain $(\omega_0,\alpha_r)=0$ for $r\in X$. On the other hand, if $r\in I \setminus X$, we have by \cite{BK15a}*{Lemma 3.2} that
\[
\Theta(\alpha_{\tau(r)}) - \alpha_{\tau(r)} -\Theta(\alpha_r) = -\alpha_r. 
\] 
Since $2(\rho,\alpha_r) = (\alpha_r,\alpha_r)$ for all $r\in I$, it follows that \eqref{EqVerifDiffEasy} holds. 
\end{proof}

\begin{Rem}
Remark that $*$-invariance of Letzter coideals was also discussed in \cite[Proposition 4.6]{BW16}, and in a less concrete manner in \cite [discussion before Theorem 7.6]{Let02}.
\end{Rem}

We will now proceed to show that also $\widetilde{\mcK}$ has a nice behaviour with respect to $*$, see Proposition \ref{PropStarKMain}. We again need some preliminaries.

Let
\[
\xi' = \xi K_{2\omega_0} = K_{2\omega_0}\xi,
\]
with $\xi$ as in \eqref{EqXi}. Using that
\[
K_{\omega_0} T_{w_0} = T_{w_0}K_{\omega_0}^{-1},\qquad K_{\omega_0}T_{w_X} = T_{w_X}K_{\omega_0},
\] 
we find
\[
\widetilde{\mcK} = \widetilde{\quasiK} \xi' T_{w_X}^{-1}T_{w_0}^{-1}.
\]

Let $C_{\Theta}\in \mcU_q(\mfh)$ be the unique operator such that 
\[
C_{\Theta}\eta = q^{-(\wt(\xi)^+,\wt(\xi)^+)}\eta,\qquad \eta\in V_{\varpi},
\]
with $\omega^{\pm}$ defined as in \eqref{EqPM}.
 
\begin{Lem}
We have
\[
\xi' = \widetilde{z}_{\tau} C_{\Theta}. 
\]
\end{Lem}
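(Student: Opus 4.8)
The plan is to exploit that all three operators $\xi'$, $\widetilde{z}_\tau$ and $C_\Theta$ lie in $\mcU_q(\mfh) = \prod_\varpi \End(V_\varpi)$ and act diagonally on weight vectors, so that the asserted identity is merely an equality of the associated characters $P \to \C^\times$. It therefore suffices to evaluate both sides on an arbitrary weight $\omega \in P$. On the left, using $\xi' = K_{2\omega_0}\xi$ together with the defining formula \eqref{EqXi} for $\xi$ and $\gamma(\omega) = \widetilde{z}_\tau(\omega)c(\omega)$, I would record
\[
\xi'(\omega) = \widetilde{z}_\tau(\omega)\, c(\omega)\, q^{-(\omega^+,\omega^+) + \sum_{r}(\alpha_r^-,\alpha_r^-)(\omega,\varpi_r^\vee) + (2\omega_0,\omega)},
\]
while on the right $C_\Theta$ contributes $q^{-(\omega^+,\omega^+)}$ and $\widetilde{z}_\tau$ the unitary factor $\widetilde{z}_\tau(\omega)$. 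After cancelling the common factors $\widetilde{z}_\tau(\omega)$ and $q^{-(\omega^+,\omega^+)}$ (which match on all of $P$ directly), the lemma reduces to the single scalar identity
\[
c(\omega)\, q^{\sum_{r}(\alpha_r^-,\alpha_r^-)(\omega,\varpi_r^\vee)}\, q^{(2\omega_0,\omega)} = 1, \qquad \omega \in P.
\]

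Next I would note that the left-hand side is a character $P \to \R_{>0}$; since $\R_{>0}$ is torsion-free while $P/Q$ is finite, it is enough to check the identity for $\omega$ ranging over the simple roots $\alpha_s$. For $\omega = \alpha_s$ one has $(\alpha_s,\varpi_r^\vee) = \delta_{rs}$, so the middle exponent collapses to $(\alpha_s^-,\alpha_s^-)$ and the identity to verify becomes
\[
c_s\, q^{(\alpha_s^-,\alpha_s^-)}\, q^{(2\omega_0,\alpha_s)} = 1.
\]

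Then I would expand each factor from the definitions $c_s = q^{\frac12(\alpha_s,\Theta(\alpha_s)-2\rho_X)}$, $2\omega_0 = -(\rho-\rho_X)$ and $\alpha_s^- = \frac12(\alpha_s - \Theta(\alpha_s))$. The key structural input is that $\Theta$ is an isometry of $(-,-)$, which follows from $\Theta(\alpha) = -w_X\tau(\alpha)$ (Lemma \ref{LemCommThetaTau}) together with the fact that $-\mathrm{id}$, $w_X \in W$ and the diagram automorphism $\tau$ all preserve the $W$-invariant form; hence $(\alpha_s^-,\alpha_s^-) = \frac12\big[(\alpha_s,\alpha_s) - (\alpha_s,\Theta(\alpha_s))\big]$. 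Substituting and summing the total exponent of $q$, the terms $\pm\frac12(\alpha_s,\Theta(\alpha_s))$ cancel, the terms $\mp(\alpha_s,\rho_X)$ cancel, and the remaining contribution $\frac12(\alpha_s,\alpha_s) - (\rho,\alpha_s)$ vanishes by the standard identity $(\rho,\alpha_s) = \frac12(\alpha_s,\alpha_s)$. Thus the exponent is $0$ and the identity holds.

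This computation is entirely routine, so I expect no genuine obstacle; the only points requiring care are the bookkeeping of the $\rho_X$- and $\Theta$-contributions and the correct use of the isometry property of $\Theta$. The one mild subtlety worth flagging is the passage from the simple roots $\alpha_s$ back to all of $P$, which rests on the torsion-freeness of $\R_{>0}$ to promote an equality of $q$-power characters on $Q$ to one on $P$, while the unitary factor $\widetilde{z}_\tau(\omega)$ and the Gaussian factor $q^{-(\omega^+,\omega^+)}$ are simply carried along and match on all of $P$ verbatim.
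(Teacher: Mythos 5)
Your proof is correct and follows essentially the same route as the paper's: both verify the claim as an equality of characters on weight space, and your key identity $c_s\, q^{(\alpha_s^-,\alpha_s^-)}\, q^{(2\omega_0,\alpha_s)}=1$ is exactly the paper's reformulation $c_r = q^{-2(\omega_0,\alpha_r)}q^{-(\alpha_r^-,\alpha_r)}$ rearranged, resting on the same inputs ($\Theta$-invariance of the form, $(\alpha_s^-,\alpha_s^+)=0$, and $(\rho,\alpha_s)=\tfrac{1}{2}(\alpha_s,\alpha_s)$). The only cosmetic difference is the passage from $Q$ to $P$: the paper writes $\omega=\sum_r k_r\alpha_r$ with $k_r\in\Q$ and uses rational powers of the positive scalars $c_r$, whereas you invoke torsion-freeness of $\R_{>0}$ and finiteness of $P/Q$; both are valid.
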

\begin{proof}
We can write
\[
c_r = q^{-2(\omega_0,\alpha_r)}q^{-(\alpha_r^-,\alpha_r)},\qquad r\in I,
\]
and hence for $\omega = \sum_r k_r \alpha_r \in P$ with $k_r \in \Q$ we have 
\[
\gamma(\omega) = \widetilde{z}_{\tau}(\omega) \prod_{r\in I} c_r^{k_r} = \widetilde{z}_{\tau}(\omega) q^{-(2\omega_0,\omega)}\prod_{r\in I}  q^{-\sum_{r\in I\setminus X} k_r(\alpha_r^-,\alpha_r)}.
\]
On the other hand, since $(\alpha_r^-,\alpha_r^+) = 0$ by $\Theta$-invariance of $(-,-)$, we have
\[
\sum_{r\in I} (\alpha_r^-,\alpha_r^-)(\omega,\varpi_r^{\vee}) = \sum_{r\in I} k_r(\alpha_r^-,\alpha_r^-) = \sum_{r\in I} k_r (\alpha_r^-,\alpha_r),
\]
and hence
\[
\xi(\omega) =  \widetilde{z}_{\tau}(\omega) q^{-(2\omega_0,\omega)}\prod_{r\in I \setminus X}  q^{-\sum_{r\in I\setminus X} k_r(\alpha_r^-,\alpha_r^--\alpha_r)}q^{-(\omega^+,\omega^+)},
\]
from which the lemma follows.
\end{proof} 

\begin{Lem}\label{LemComCX}
The element $C_{\Theta}$ is invariant under $\tau$ and $\tau_0$, and commutes with $\quasiK$ and $K_{\omega_0}$.
\end{Lem}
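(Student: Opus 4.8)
The plan is to exploit that $C_{\Theta}$ lies in the commutative algebra $\mcU_q(\mfh)$ and is the diagonal operator whose eigenvalue on a weight vector $\eta$ is $q^{-(\wt(\eta)^+,\wt(\eta)^+)}$, with $\lambda^{\pm}=\tfrac{1}{2}(\lambda\pm\Theta(\lambda))$ as in \eqref{EqPM}. The commutation with $K_{\omega_0}$ is then immediate, since $K_{\omega_0}\in\mcU_q(\mfh)$ and $\mcU_q(\mfh)$ is commutative. The remaining three assertions I would organize around the behaviour of this eigenvalue function.

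For the invariance under $\phi\in\{\tau,\tau_0\}$: each such $\phi$ extends to a Hopf algebra automorphism with $\phi(K_\omega)=K_{\phi(\omega)}$, so it carries a diagonal element of $\mcU_q(\mfh)$ with eigenvalue function $d(\lambda)$ to the one with function $d(\phi(\lambda))$, using that $\phi$ is an isometry of $(-,-)$. Hence $\phi(C_{\Theta})$ has eigenvalue $q^{-((\phi\lambda)^+,(\phi\lambda)^+)}$ on the weight-$\lambda$ space. Since $\Theta$ commutes with both $\tau$ and $\tau_0$ by Lemma \ref{LemCommThetaTau}, we get $(\phi\lambda)^+=\tfrac{1}{2}(\phi\lambda+\Theta\phi\lambda)=\phi(\lambda^+)$, and as $\phi$ preserves $(-,-)$ this gives $((\phi\lambda)^+,(\phi\lambda)^+)=(\lambda^+,\lambda^+)$. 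Therefore $\phi(C_{\Theta})=C_{\Theta}$ for $\phi=\tau$ and $\phi=\tau_0$.

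For the commutation with $\quasiK$: write $\quasiK=\sum_{\mu\in Q^+}\quasiK_\mu$ with $\quasiK_\mu\in U_q(\mfn)_\mu$. Since $C_{\Theta}$ preserves weights, it suffices to show $C_{\Theta}\quasiK_\mu C_{\Theta}^{-1}=\quasiK_\mu$ for each $\mu$ occurring. A direct computation on a weight-$\lambda$ vector shows that $C_{\Theta} X C_{\Theta}^{-1}$ acts as $q^{-2(\lambda^+,\mu^+)-(\mu^+,\mu^+)}X$ for $X\in U_q(\mfn)_\mu$; this scalar is independent of $\lambda$ and equal to $1$ exactly when $\mu^+=0$, i.e. when $\Theta(\mu)=-\mu$. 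I would then invoke the support property of the quasi-$K$-matrix, namely that $\quasiK_\mu=0$ unless $\Theta(\mu)=-\mu$, so that $\mu^+=0$ on the whole support and $C_{\Theta}$ commutes with $\quasiK$ term by term.

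The main obstacle is thus the support statement $\quasiK_\mu\neq0\Rightarrow\Theta(\mu)=-\mu$. I expect to extract it from the construction of $\quasiK$ in \cite{BK15b}; failing a direct citation, it can be proved by induction from the characterisation in Proposition \ref{PropUniqueK}, starting from $\quasiK_0=1$ and using that the homogeneous pieces of $B_r$ and $\overline{B_r}$ change weight by $-\alpha_r$ or $-\Theta(\alpha_r)$, whose difference $\Theta(\alpha_r)-\alpha_r$ lies in the $(-1)$-eigenlattice $\{\mu:\Theta(\mu)=-\mu\}$; this keeps the recursively determined components supported on that lattice. Everything else is the routine weight bookkeeping indicated above.
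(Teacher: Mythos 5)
Your proof is correct and follows essentially the same route as the paper: commutation with $K_{\omega_0}$ and invariance under $\tau,\tau_0$ via Lemma \ref{LemCommThetaTau}, and commutation with $\quasiK$ by conjugating each weight component $\quasiK_\mu$ and invoking the support property that $\quasiK_\mu\neq 0$ forces $\Theta(\mu)=-\mu$, which the paper cites directly as \cite{BK15b}*{Proposition 6.1} (so your fallback induction is not needed, though its sketch is consistent with how that result is established).
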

\begin{proof}
The invariance of $C_{\Theta}$ under $\tau$ and $\tau_0$ follows immediately from Lemma \ref{LemCommThetaTau}. It is also immediate that $C_{\Theta}$ commutes with $K_{\omega_0}$. Finally, write again $\quasiK = \sum_{\alpha\in Q^+}\quasiK_{\alpha}$ with $\quasiK_{\alpha}\in U_q(\mfn)_{\alpha}$. Let $V$ be a representation of $U_q(\mfg)$, and $\xi\in V$. Then $C_\Theta \quasiK_{\alpha}C_{\Theta}^{-1}\xi = \frac{C_{\Theta}(\wt(\xi) + \alpha)}{C_{\Theta}(\wt(\xi))}\quasiK_{\alpha} \xi$. From \cite{BK15b}*{Proposition 6.1}, we know that $\quasiK_{\alpha} \neq0$ implies $\Theta(\alpha) = -\alpha$. As the latter implies in turn that $C_{\Theta}(\wt(\xi)+\alpha) = C_{\Theta}(\wt(\xi))$, the commutation of $\quasiK$ and $C_{\Theta}$ follows. 
\end{proof}

\begin{Lem}\label{LemComTw0z}\label{LemComXiTw0}
We have
\begin{equation}\label{EqAdzw0}
\Ad(T_{w_0})(\widetilde{z}) = \widetilde{z}_{\tau}^{-1},\qquad \Ad(T_{w_X})(\widetilde{z}) = \widetilde{z}
\end{equation}
and
\begin{equation}\label{EqAdxiw0}
\Ad(T_{w_0})(\xi') = \widetilde{z}^{-1} \widetilde{z}_{\tau}^{-1} \xi'.
\end{equation}
\end{Lem}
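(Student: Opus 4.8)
The plan is to reduce both displayed identities to the way $\Ad(T_w)$ acts on diagonal elements of the completed Cartan $\mcU_q(\mfh)$. First I would record the following elementary fact: if $D \in \mcU_q(\mfh)$ acts on weight vectors by $D\eta = D(\wt(\eta))\eta$ for some function $D(-)$ on $P$, then, since $T_w$ sends a weight vector of weight $\mu$ to a weight vector of weight $w\mu$ (as already used in the proof of Proposition \ref{PropStarT}), one gets $\Ad(T_w)(D)\eta = D(w^{-1}\wt(\eta))\eta$. In particular, as $w_0$ and $w_X$ are involutions, $\Ad(T_{w_0})(D) = D\circ w_0$ and $\Ad(T_{w_X})(D) = D\circ w_X$. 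This applies to the diagonal elements $\widetilde{z}$, $\widetilde{z}_\tau$ and $C_\Theta$.

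The two inputs on $\widetilde z$ that I would use are: (i) $\widetilde z$ is $\tau\tau_0$-invariant, immediate from $\tau\tau_0(\chi_0) = \chi_0$ (Lemma \ref{LemChoiceInvExt}) together with the fact that $\tau\tau_0$ preserves the pairing between $\mfa$ and $P$; and (ii) $\widetilde z$ is $w_X$-invariant on all of $P$. For \eqref{EqAdzw0}, using $w_0 = -\tau_0$ on $P$, I compute for $\omega \in P$
\[
\Ad(T_{w_0})(\widetilde z)(\omega) = \widetilde z(w_0\omega) = \widetilde z(\tau_0\omega)^{-1} = \widetilde z(\tau\omega)^{-1} = \widetilde z_\tau^{-1}(\omega),
\]
where the third equality is (i) applied to $\tau_0\omega$, using $\tau_0^2 = \id$. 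For the second identity, I would use $\Ad(T_{w_X})(\widetilde z) = \widetilde z\circ w_X$ together with the key observation that $w_X\omega - \omega \in Q_X$ for every $\omega \in P$, so that
\[
\widetilde z(w_X\omega) = \widetilde z(\omega)\,\widetilde z(w_X\omega - \omega) = \widetilde z(\omega),
\]
the last step because $\widetilde z$ restricts to $z$ on $Q_X$ and $z$ is trivial there ($z_r = 1$ for $r\in X$). This is precisely point (ii).

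For \eqref{EqAdxiw0} I would invoke the previous lemma, $\xi' = \widetilde z_\tau C_\Theta$, and that $\Ad(T_{w_0})$ is an algebra map, to write $\Ad(T_{w_0})(\xi') = \Ad(T_{w_0})(\widetilde z_\tau)\,\Ad(T_{w_0})(C_\Theta)$. The first factor equals $\widetilde z^{-1}$, since $\Ad(T_{w_0})(\widetilde z_\tau)(\omega) = \widetilde z(\tau w_0\omega) = \widetilde z(-\tau\tau_0\omega) = \widetilde z(\omega)^{-1}$ by (i). For the second factor, the $\tau_0$-invariance of $C_\Theta$ (Lemma \ref{LemComCX}) together with $C_\Theta(-\mu) = C_\Theta(\mu)$ (immediate from $(-\mu)^+ = -\mu^+$) gives $\Ad(T_{w_0})(C_\Theta)(\omega) = C_\Theta(-\tau_0\omega) = C_\Theta(\omega)$, so that $\Ad(T_{w_0})(C_\Theta) = C_\Theta$. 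Hence $\Ad(T_{w_0})(\xi') = \widetilde z^{-1}C_\Theta = \widetilde z^{-1}\widetilde z_\tau^{-1}\xi'$, as claimed.

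The main point requiring care is the passage in (ii) from the $w_X$-invariance of $z$ on the root lattice $Q$ (Lemma \ref{LemCommTau0Tau}) to the $w_X$-invariance of the extension $\widetilde z$ on the full weight lattice $P$. This is exactly where the hypothesis $z_r = 1$ for $r\in X$ is essential, via $w_X\omega \equiv \omega \pmod{Q_X}$; I expect that no constraint on the choice of $\chi_0$ beyond the $\tau\tau_0$-invariance of Lemma \ref{LemChoiceInvExt} is needed, since the $w_X$-invariance then holds for \emph{any} such extension.
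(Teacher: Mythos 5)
Your proposal is correct and follows essentially the same route as the paper: both reduce everything to the fact that $\Ad(T_w)$ acts on diagonal elements of $\mcU_q(\mfh)$ by precomposition with $w$, then combine the $W_X$-invariance of $\widetilde z$ (forced by $z_r=1$ for $r\in X$, which you justify cleanly via $w_X\omega-\omega\in Q_X$), the $\tau\tau_0$-invariance of $\widetilde z$ coming from the choice of $\chi_0$, and, for \eqref{EqAdxiw0}, the factorization $\xi'=\widetilde z_\tau C_\Theta$ together with the evenness and $\tau_0$-invariance of $C_\Theta$. The paper's proof is just a terser version of exactly this argument.
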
 
\begin{proof}
As $z_r = 1$ for $r\in X$, it follows that $\widetilde{z}$ is $W_X$-invariant. The identities \eqref{EqAdzw0} then follow from the assumption that $\tau(\widetilde{z}) = \tau_0(\widetilde{z})$.

The identity \eqref{EqAdxiw0} follows from \eqref{EqAdzw0} by the computation  
\[
C_{\Theta}(w_0\omega) = C_{\Theta}(-w_0\omega) = C_{\Theta}(\tau_0(\omega)) = C_{\Theta}(\omega).
\]
\end{proof}

\begin{Lem}\label{LemCommQuasiKxi}
We have 
\begin{equation}\label{EqAdzX}
\Ad(\widetilde{z})(\quasiK) = \tau(\quasiK)
\end{equation}
and
\begin{equation}\label{EqAdXXi}
\quasiK \xi' = \xi'  \tau(\quasiK).
\end{equation}
\end{Lem}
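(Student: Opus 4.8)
The plan is to deduce both identities from the uniqueness characterization of $\quasiK$ in Proposition \ref{PropUniqueK}, together with the transformation rules for $B_r$ and $\overline{B_r}$ recorded in Lemma \ref{lem:s-sigma}. Throughout, the one genuinely delicate point is the bookkeeping needed to pass between $\widetilde{z}$ and $\widetilde{z}_{\tau} = \tau(\widetilde{z})$, and it is precisely the support condition on $\quasiK$ that lets this discrepancy be absorbed.

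First I would prove \eqref{EqAdzX}. Consider the algebra automorphism $\Psi = \Ad(\widetilde{z})\circ\tau$ of $U_q(\mfg)$. Since $\tau$ preserves $X$ (Lemma \ref{LemCommTau0Tau}) and $z_{\tau(s)}=1$ for $s\in X$, one has $\Psi(F_s) = F_{\tau(s)}$, while parts \ref{LemActztauB2} and \ref{LemActztauB3} of Lemma \ref{lem:s-sigma} give $\Psi(B_r) = z_{\tau(r)}B_{\tau(r)}$ and $\Psi(\overline{B_r}) = z_{\tau(r)}\overline{B_{\tau(r)}}$ for $r\in I\setminus X$. Applying $\Psi$ to the defining relations $B_r\quasiK = \quasiK\overline{B_r}$ and $F_s\quasiK = \quasiK F_s$, the scalars $z_{\tau(r)}$ cancel, and since $r\mapsto \tau(r)$ permutes $I\setminus X$ (resp.\ $X$) I find that $\Psi(\quasiK)$ satisfies exactly the same relations as $\quasiK$. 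As $\Psi$ preserves $\mcU_q(\mfn)$ and its weight grading with $\Psi(1)=1$, uniqueness forces $\Psi(\quasiK) = \quasiK$, that is $\Ad(\widetilde{z})(\tau(\quasiK)) = \quasiK$; applying $\tau$ and using $\tau\circ\Ad(\widetilde{z})\circ\tau = \Ad(\widetilde{z}_{\tau})$ yields $\Ad(\widetilde{z}_{\tau})(\quasiK) = \tau(\quasiK)$.

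To upgrade this to the desired $\Ad(\widetilde{z})$ I would argue on the support of $\quasiK$. Writing $\quasiK = \sum_{\alpha\in Q^+} \quasiK_{\alpha}$ with $\quasiK_{\alpha}\in U_q(\mfn)_{\alpha}$, one has $\quasiK_{\alpha}\neq 0$ only when $\Theta(\alpha) = -\alpha$ (\cite{BK15b}*{Proposition 6.1}, as already invoked in Lemma \ref{LemComCX}), which by $\Theta = -w_X\tau$ (Lemma \ref{LemCommThetaTau}) means $\tau(\alpha) = w_X(\alpha)$. Since $z_r=1$ for $r\in X$, the character $\widetilde{z}$ is $W_X$-invariant, so $\widetilde{z}_{\tau}(\alpha) = \widetilde{z}(\tau\alpha) = \widetilde{z}(w_X\alpha) = \widetilde{z}(\alpha)$ for all such $\alpha$. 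Hence $\Ad(\widetilde{z})$ and $\Ad(\widetilde{z}_{\tau})$ agree on $\quasiK$, and \eqref{EqAdzX} follows.

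Finally, for \eqref{EqAdXXi} I would use $\xi' = \widetilde{z}_{\tau}C_{\Theta}$ together with the fact that $C_{\Theta}$ commutes with $\quasiK$ (Lemma \ref{LemComCX}). Commuting the torus element $\xi'$ past $\quasiK$ gives $\quasiK\,\xi' = \xi'\,\Ad(\xi'^{-1})(\quasiK)$, and since $C_{\Theta}$ is central for every element supported on $\{\alpha : \Theta(\alpha)=-\alpha\}$ this reduces to $\Ad(\widetilde{z}_{\tau}^{-1})(\quasiK)$. On the support the weights lie in $Q^+$, where $\widetilde{z}$ restricts to the $\{\pm1\}$-valued character $z$, so $\widetilde{z}_{\tau}^{-1}$ and $\widetilde{z}_{\tau}$ act identically on $\quasiK$; combined with $\Ad(\widetilde{z}_{\tau})(\quasiK) = \tau(\quasiK)$ from the first part this gives $\quasiK\,\xi' = \xi'\,\tau(\quasiK)$. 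The main obstacle is thus not any hard computation but the consistent passage between $\widetilde{z}$ and $\widetilde{z}_{\tau}$; the uniqueness argument and the support condition $\Theta(\alpha)=-\alpha$ are exactly what make it go through.
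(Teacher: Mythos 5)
Your proposal is correct and takes essentially the same route as the paper: \eqref{EqAdzX} via the uniqueness statement of Proposition \ref{PropUniqueK} together with Lemma \ref{LemActztauB}, and \eqref{EqAdXXi} via $\xi' = \widetilde{z}_{\tau}C_{\Theta}$, Lemma \ref{LemComCX} and the first identity. Your only deviation---the support-condition detour upgrading $\Ad(\widetilde{z}_{\tau})(\quasiK)=\tau(\quasiK)$ to $\Ad(\widetilde{z})(\quasiK)=\tau(\quasiK)$---is valid but unnecessary, since $\widetilde{z}$ restricts to the $\{\pm1\}$-valued character $z$ on the root lattice, so $\Ad(\widetilde{z})=\Ad(\widetilde{z})^{-1}$ on $\mcU_q(\mfn)$ and your uniqueness conclusion $\Ad(\widetilde{z})(\tau(\quasiK))=\quasiK$ already rewrites directly as \eqref{EqAdzX}.
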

\begin{proof}
For \eqref{EqAdzX}, it is by Proposition \ref{PropUniqueK} sufficient to show that 
\[
B_r \tau(\Ad(\widetilde{z})(\quasiK)) = \tau(\Ad(\widetilde{z})(\quasiK)) \overline{B}_r
, \qquad F_s \tau(\Ad(\widetilde{z})(\quasiK)) = \tau(\Ad(\widetilde{z})(\quasiK))F_s, \qquad r \in I \setminus X, \ s \in X.
\]
By Lemma \ref{LemActztauB} this is equivalent with the defining property of $\quasiK$.

For \eqref{EqAdXXi} we note that 
\[
\quasiK \xi' = \widetilde{z}_{\tau} \Ad(\widetilde{z}_{\tau}^{-1})(\quasiK) C_{\Theta} = \widetilde{z}_{\tau} \tau(\Ad(\widetilde{z}^{-1})(\tau(\quasiK)) C_{\Theta}. 
\]
From \eqref{EqAdzX} and Lemma \ref{LemComCX}, it then follows that
\[
\quasiK \xi' =  \widetilde{z}_{\tau} \tau(\quasiK) C_{\Theta} = \widetilde{z}_{\tau}C_{\Theta} \tau(\quasiK)  = \xi' \tau(\quasiK).
\]
\end{proof}

\begin{Prop}\label{PropStarKMain}
We have
\begin{equation}\label{EqKTildeBraid}
\Delta(\widetilde{\mcK}) = (\widetilde{\mcK}\otimes 1) \msR_{\tau\tau_0,21}(1\otimes \widetilde{\mcK})\msR = \msR_{21}(1\otimes \widetilde{\mcK})\msR_{\tau\tau_0}(\widetilde{\mcK}\otimes1) 
\end{equation}
and for all  $X\in \widetilde{\msB}$
\begin{equation}\label{ECommTildeK}
\widetilde{\mcK} X = \tau\tau_0(X)\widetilde{\mcK}.
\end{equation}
Moreover,
\begin{equation}\label{EqKStarFund} 
\tau\tau_0(\widetilde{\mcK}) = \widetilde{\mcK},\qquad \widetilde{\mcK}^* = \widetilde{\mcK}\mathcal{S}_X\mathcal{S}_0 \widetilde{z} \widetilde{z}_{\tau}^{-1}.
\end{equation}
\end{Prop}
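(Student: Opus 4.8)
The plan is to obtain the two displayed relations \eqref{EqKTildeBraid}, \eqref{ECommTildeK} and the invariance $\tau\tau_0(\widetilde{\mcK})=\widetilde{\mcK}$ by transporting the already established properties \eqref{EqOrigK}, \eqref{EqCommOrK} and Corollary \ref{CorInvTauTau_0} of $\mcK$ through the conjugation $\widetilde{(-)}=\Ad(K_{\omega_0})$, and then to treat the $*$-identity in \eqref{EqKStarFund} by a direct factor-by-factor computation of $\widetilde{\mcK}^*$. The fact underlying every transport is that $\rho$ and $\rho_X$ are fixed by each diagram automorphism preserving $X$, so that $\tau_0(X)=\tau(X)=X$ (Lemma \ref{LemCommTau0Tau}) gives $\tau\tau_0(\omega_0)=\omega_0$, hence $\tau\tau_0(K_{\omega_0})=K_{\omega_0}$.

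For \eqref{EqKTildeBraid} I would conjugate \eqref{EqOrigK} by $K_{\omega_0}\otimes K_{\omega_0}$: since $\Delta(K_{\omega_0})=K_{\omega_0}\otimes K_{\omega_0}$ and $K_\mu\otimes K_\mu$ commutes with each of $\msR,\msR_{21},\msR_{\tau\tau_0},\msR_{\tau\tau_0,21}$, inserting $(K_{\omega_0}^{-1}\otimes K_{\omega_0}^{-1})(K_{\omega_0}\otimes K_{\omega_0})$ between consecutive factors leaves every $R$-matrix untouched and turns each copy of $\mcK$ into $\widetilde{\mcK}$. In the same way \eqref{EqCommOrK} conjugates to \eqref{ECommTildeK}, using $\widetilde{\msB}=\Ad(K_{\omega_0})(\msB)$ and $\tau\tau_0(K_{\omega_0}XK_{\omega_0}^{-1})=K_{\omega_0}\tau\tau_0(X)K_{\omega_0}^{-1}$; and $\tau\tau_0(\widetilde{\mcK})=\widetilde{\mcK}$ is immediate from $\tau\tau_0(\mcK)=\mcK$ together with $\tau\tau_0(K_{\omega_0})=K_{\omega_0}$.

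For the $*$-identity I would write $\widetilde{\mcK}^*=(T_{w_0}^{-1})^*(T_{w_X}^{-1})^*(\xi')^*\widetilde{\quasiK}^{\,*}$ and substitute: $T_{w_0}^*=T_{w_0}\mcS_0$ and its $W_X$-analogue $T_{w_X}^*=T_{w_X}\mcS_X$ (Proposition \ref{PropStarT} and the same argument applied to $w_X$), whence $(T_{w_0}^{-1})^*=\mcS_0 T_{w_0}^{-1}$ and $(T_{w_X}^{-1})^*=\mcS_X T_{w_X}^{-1}$; the factorization $\xi'=\widetilde z_\tau C_\Theta$ with $C_\Theta$ positive self-adjoint and $\widetilde z_\tau$ unitary, giving $(\xi')^*=\widetilde z_\tau^{-1}C_\Theta$; and the $*$-behaviour of the quasi-$K$-matrix $\widetilde{\quasiK}^{\,*}=\Ad(T_{w_0})(\tau_0(\widetilde{\quasiK}))$, obtained by conjugating Theorem \ref{TheoStarX} through $\Ad(K_{\omega_0})$. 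The decisive arithmetic simplification is that $\mcS_0^2=\mcS_X^2=1$, because $2\rho^\vee$ and $2\rho_X^\vee$ lie in the coroot lattice and so pair integrally with $P$; combined with $w_0\rho^\vee=-\rho^\vee$, $w_0\rho_X^\vee=-\rho_X^\vee$, $w_X\rho^\vee=\rho^\vee-2\rho_X^\vee$, the $W_X$-invariance of $\widetilde z$, and $\Ad(T_{w_0})(\widetilde z)=\widetilde z_\tau^{-1}$ (Lemma \ref{LemComTw0z}), this determines the action of $\Ad(T_{w_0})$ and $\Ad(T_{w_X})$ on every grouplike factor, so that (using that $T_{w_0}$ and $T_{w_X}$ commute, Lemma \ref{CommTXT0}) all the grouplike factors can be collected and the two braid operators moved to the right.

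The substantive step, which I expect to be the main obstacle, is the reassembly of the quasi-$K$-matrix and braid part: once the grouplike factors are collected the expression contains $\tau_0(\widetilde{\quasiK})$ with $T_{w_X}^{-1}$ on the \emph{opposite} side of it from its position in $\widetilde{\mcK}=\widetilde{\quasiK}\,\xi'\,T_{w_X}^{-1}T_{w_0}^{-1}$, and $T_{w_X}$ does \emph{not} commute with $\widetilde{\quasiK}$. To close this I would use $\tau_0(\widetilde{\quasiK})=\tau(\widetilde{\quasiK})$ (Corollary \ref{CorInvTauTau_0}), together with the conjugation symmetries $\widetilde{\quasiK}\,\xi'=\xi'\,\tau(\widetilde{\quasiK})$ and $\Ad(\widetilde z)(\widetilde{\quasiK})=\tau(\widetilde{\quasiK})$ (the $\Ad(K_{\omega_0})$-conjugates of \eqref{EqAdXXi} and Lemma \ref{LemCommQuasiKxi}) and the commutation of $C_\Theta$ with $\widetilde{\quasiK}$ (Lemma \ref{LemComCX}); these turn $\tau(\widetilde{\quasiK})$ back into $\widetilde{\quasiK}$ conjugated only by grouplikes. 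After this rearrangement the asserted equality $\widetilde{\mcK}^*=\widetilde{\mcK}\,\mcS_X\mcS_0\widetilde z\widetilde z_\tau^{-1}$ collapses to the statement that one explicit element of $T$, assembled from $\mcS_0,\mcS_X,\widetilde z,\widetilde z_\tau$, acts trivially under $\Ad$ on $\widetilde{\quasiK}$, i.e.\ evaluates to $1$ on every weight $\mu\in Q^+$ with $\quasiK_\mu\neq 0$. Such $\mu$ satisfy $\Theta(\mu)=-\mu$ (the Balagovi\'c--Kolb support property recalled in the proof of Lemma \ref{LemComCX}), and for these weights the required torus identity is precisely what the enhanced-Satake compatibility \eqref{EqProdzz} of $z$ guarantees, once the defining values of $c$ and $C_\Theta$ are inserted. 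I expect the disciplined tracking of the grouplike factors through the two braid conjugations, and the correct repeated use of $\mcS_0^2=\mcS_X^2=1$, to be where sign errors are most likely, and hence the part of the argument demanding the greatest care.
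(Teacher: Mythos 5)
Your treatment of \eqref{EqKTildeBraid}, \eqref{ECommTildeK} and of $\tau\tau_0(\widetilde{\mcK})=\widetilde{\mcK}$ is correct and coincides with the paper's argument: conjugation by the $\tau\tau_0$-invariant grouplike $K_{\omega_0}$ transports \eqref{EqOrigK} and \eqref{EqCommOrK}, and the invariance of all four factors gives the third identity. Your inventory for the $*$-identity is also essentially the right one ($T_{w_0}^*=T_{w_0}\mcS_0$ and its $W_X$-analogue, $(\xi')^*=\widetilde{z}_{\tau}^{-1}C_{\Theta}$, the $\Ad(K_{\omega_0})$-conjugate of Theorem \ref{TheoStarX}, Lemmas \ref{LemComTw0z} and \ref{CommTXT0}, and the triviality on $Q$ of the torus element built from $\mcS_0,\mcS_X,\widetilde{z},\widetilde{z}_{\tau}$ via \eqref{EqProdzz}); starting from the factor-by-factor adjoint instead of the paper's device of first rewriting $\widetilde{\mcK}$ and then starring is a harmless reordering.

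The genuine gap is your assertion that $T_{w_X}$ does \emph{not} commute with $\widetilde{\quasiK}$, together with the plan to route around this using only the torus-conjugation identities $\widetilde{\quasiK}\xi'=\xi'\tau(\widetilde{\quasiK})$, $\Ad(\widetilde{z})(\widetilde{\quasiK})=\tau(\widetilde{\quasiK})$ and $[C_{\Theta},\widetilde{\quasiK}]=0$. The assertion is false: $T_{w_X}$ \emph{does} commute with $\quasiK$, hence with $\widetilde{\quasiK}$ (since $w_X\omega_0=\omega_0$ makes $K_{\omega_0}$ commute with $T_{w_X}$). The paper states and uses exactly this at the very first step of its proof, deducing it from \eqref{EqCommOrK}; it also follows from the Balagovi\'c--Kolb fact that $\quasiK$ commutes with $U_q'(\mfg_X)$, in whose weak closure $T_{w_X}$ lies. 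More importantly, this commutation is indispensable, and none of your substitute tools can produce it, because none of them involves $T_{w_X}$. Concretely, carrying out your computation: cancelling the inner $T_{w_0}$ against the leftmost $T_{w_0}^{-1}$ (using $\Ad(T_{w_0}^{-1})(\mcS_X)=\mcS_X$, $\Ad(T_{w_0}^{-1})(\widetilde{z}_{\tau}^{-1})=\widetilde{z}$, and the $T_{w_0}$-invariance of $C_{\Theta}$ and $T_{w_X}$) yields
\[
\widetilde{\mcK}^* \;=\; \mcS_0\mcS_X\, T_{w_X}^{-1}\,\widetilde{z}\,C_{\Theta}\,\tau_0(\widetilde{\quasiK})\,T_{w_0}^{-1},
\]
and your identities convert $\widetilde{z}\,C_{\Theta}\,\tau_0(\widetilde{\quasiK})$ into $\widetilde{z}\widetilde{z}_{\tau}^{-1}\widetilde{\quasiK}\xi'$. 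Comparing with the target $\widetilde{\mcK}\mcS_X\mcS_0\widetilde{z}\widetilde{z}_{\tau}^{-1}=\mcS_X\mcS_0\widetilde{z}\widetilde{z}_{\tau}^{-1}\widetilde{\quasiK}\xi'T_{w_X}^{-1}T_{w_0}^{-1}$ (the prefactor is trivial on $Q$ by \eqref{EqProdzz}, hence central, which is the easy torus identity), and using that $\widetilde{z},\widetilde{z}_{\tau},\xi'$ are $W_X$-invariant, the remaining statement to be proved is precisely $T_{w_X}^{-1}\widetilde{\quasiK}=\widetilde{\quasiK}T_{w_X}^{-1}$. So your argument stalls at exactly the obstacle you flagged, and the residue is a braid-operator commutation, not a torus identity on the support $\{\mu\in Q^+ : \Theta(\mu)=-\mu\}$ as you predict. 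Once the commutation $[T_{w_X},\quasiK]=0$ is granted (with the paper's justification, or via $U_q'(\mfg_X)$), your computation closes and becomes, in effect, the paper's proof read in the opposite order.
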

\begin{proof}
Since $K_{\omega_0}$ is a $\tau\tau_0$-invariant grouplike element, the element $\widetilde{\mcK}$ satisfies \eqref{EqOrigK}, and \eqref{EqCommOrK} with respect to $\widetilde{\msB}$.

As $\xi',T_{w_0}$ and $T_{w_X}$ are $\tau\tau_0$-invariant, and as $\widetilde{\quasiK}$ is $\tau\tau_0$-invariant by Corollary \ref{CorInvTauTau_0} and $\tau\tau_0$-invariance of $\omega_0$, the identity $\tau\tau_0(\widetilde{\mcK}) = \widetilde{\mcK}$ follows. 

Note now that $T_{w_X}$ commutes with $\xi'$, and with $\quasiK$ by \eqref{EqCommOrK}. Using also Lemma \ref{LemCommQuasiKxi} and the fact that $K_{\omega_0}$ is $\tau$-invariant, we see that we can write
\[
\widetilde{\mcK} = T_{w_X}^{-1}\xi' \tau(\widetilde{\quasiK})T_{w_0}^{-1}.
\]
Since $\Ad(T_{w_0})$, $\tau$ and $\tau_0$ commute, since $\tau_0(\omega_0) = \omega_0$ and since $K_{\omega_0}T_{w_0}K_{\omega_0} = T_{w_0}$, we find from Theorem \ref{TheoStarX} and $\tau\tau_0$-invariance of $\widetilde{\quasiK}$ that
\[
\widetilde{\mcK} = T_{w_X}^{-1}\xi' T_{w_0}^{-1}\widetilde{\quasiK}^*,
\]
and thus from Lemma \ref{LemComXiTw0}
\[
\widetilde{\mcK} = T_{w_X}^{-1}T_{w_0}^{-1} \widetilde{z}^{-1}\widetilde{z}_{\tau}^{-1}\xi' \widetilde{\quasiK}^* =  \widetilde{z}\widetilde{z}_{\tau} T_{w_X}^{-1}T_{w_0}^{-1}\xi' \widetilde{\quasiK}^*.
\]
Using now Proposition \ref{PropStarT}, and the fact that $\mcS_X, \mcS_0$ assume values in $\{\pm 1\}$, we see that
\[
\widetilde{\mcK}^* =  \widetilde{\quasiK}\xi'^*T_{w_0}^{-1} \mcS_0T_{w_X}^{-1} \mcS_X\widetilde{z}^{-1}\widetilde{z}_{\tau}^{-1} = \widetilde{\quasiK}\xi'^*T_{w_0}^{-1} T_{w_X}^{-1}\mcS_0 \mcS_X\widetilde{z}^{-1}\widetilde{z}_{\tau}^{-1},
\]
where in the last step we used that $w_X\rho^{\vee} = \rho^{\vee} - 2\rho_X^{\vee}$ and $e^{4\pi i \rho_X^{\vee}} = 1$. As $\xi'^* = \xi' \widetilde{z}_{\tau}^{-2}$, and as $T_{w_X}$ and $T_{w_0}$ commute by Lemma \ref{CommTXT0}, this becomes the second identity in \eqref{EqKStarFund} by another application of \eqref{EqAdzw0}.
\end{proof}

Let us now move from right coideals to left coideals to have compatibility with the conventions in Section \ref{SecTwistBraid}. This can be achieved by means of the unitary antipode $R$ defined in \eqref{EqUnitaryAntipode}. We then write 
\begin{equation}\label{EqCoidSwitch}
U_q(\mfu^{\theta}) = R(\widetilde{\msB}),
\end{equation}
which is a left coideal $*$-subalgebra of $U_q(\mfu)$. Note that by \eqref{EqForm}, we have that $U_q(\mfu^{\theta})$ is generated by the $U_q'(\mfg_X)$, $U_q(\mfh^{\Theta})$ and the elements
\begin{equation}\label{DefNewGenCoid}
C_r = -q_r R(\widetilde{B}_r)^* =  E_r + q^{(\alpha_r^+,\alpha_r^+)}Y_rK_r,\qquad Y_r = -z_{\tau(r)} \Ad(T_{w_X})(F_{\tau(r)})
\end{equation}
for $r\in I \setminus X$. 

Let further $v\in \mcU_q(\mfu)$ be the ribbon element 
\begin{equation}\label{EqDefRibbon}
v\eta = q^{-(\varpi,\varpi + 2\rho)}\eta,\qquad \eta \in V_{\varpi},
\end{equation}
so that $v$ is central, self-adjoint and
\begin{equation}\label{EqPropRibbon}
\msR_{21}\msR = \Delta(v)(v^{-1}\otimes v^{-1}).
\end{equation}
Put 
\[
\widetilde{\msK} = R(\widetilde{\mcK})v^{-1}. 
\]
Then since $(R\otimes R)\msR = \msR$ and $R$ commutes with $\tau\tau_0$, we find by Proposition \ref{PropStarKMain} that
\begin{equation}\label{EqTildeMSK}
\Delta(\widetilde{\msK}) = (1\otimes \widetilde{\msK})\msR_{\tau\tau_0,21} (\widetilde{\msK}\otimes 1)\msR_{21}^{-1},\qquad \widetilde{\msK}^* = \mathcal{S}_X\mathcal{S}_0 \widetilde{z}_{\tau} \widetilde{z}^{-1}\tau\tau_0(\widetilde{\msK}).
\end{equation}
For a categorical motivation of passing between these different kinds of conditions for $K$-matrices, we refer to \cite{BZBJ16}. See also \cite{tDH-O98,tD98,Wee18} for further discussion on the categorical origin of $K$-matrices. 

Let now $\tau_{\nu} =\tau\tau_0$, and let $\epsilon = \epsilon_{\nu}$ be an $(X,\tau)$-admissible sign function on $I$ as in Definition \ref{DefTauAdmi}. Put $\nu = (\tau_{\nu},\epsilon_{\nu})\in \End_*(U_q(\mfb))$. The crucial property we will need for $\epsilon$ is that by Theorem \ref{TheoremSignEps} we can find an extension $\widetilde{\epsilon}\in T$ of $\epsilon$ such that 
\begin{equation}\label{EqCrucialProp}
\widetilde{\epsilon}\tau\tau_0(\widetilde{\epsilon}) = \mcS_0\mcS_X\widetilde{z}\widetilde{z}_{\tau}^{-1}. 
\end{equation}
Define $\msE$ as in Definition \ref{DefmsE}, and put
\begin{equation}\label{EqModKDef}
\msK = \msE \widetilde{\epsilon}^{-1}\widetilde{\msK} \in \mcU_q(\mfu).
\end{equation}

\begin{Theorem}\label{TheoSymmK}
The element $\msK$ is a $*$-compatible $\nu$-modified universal $K$-matrix.
\end{Theorem}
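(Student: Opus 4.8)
The plan is to verify the two defining properties of a $*$-compatible $\nu$-modified universal $K$-matrix isolated in Lemma \ref{LemCharK} and the discussion following it: the $*$-compatibility \eqref{EqDefModKStar}, i.e.\ $\msK^* = \tau\tau_0(\msK)$ (here $\tau_\nu = \tau\tau_0$), and the modified universal $K$-matrix relation \eqref{EqDefModK}. The counitality $\varepsilon(\msK)=1$ is then automatic because $\msK\neq 0$. Throughout I would exploit that $\nu$ is of symmetric type, so $\epsilon_r\in\{\pm1\}$: this makes $\msE$ invertible with $\msE^2=1$ and $\msE^*=\msE$, and by \eqref{EqCommE} the inner automorphism $\Ad(\msE)$ agrees with $\nu_\epsilon$ on $\mcU_q(\mfu)$. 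Since $\widetilde{\epsilon}\in T$ is a unitary grouplike element extending $\epsilon$, a one-line check on the generators $E_r,F_r,K_\omega$ shows that $\Ad(\widetilde{\epsilon}^{-1})$ equals $\nu_\epsilon$ as well. This double description of $\nu_\epsilon$, as conjugation by either $\msE$ or $\widetilde{\epsilon}^{-1}$, is the engine of the whole argument.

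For the reflection relation I would pass to the equivalent form \eqref{EqDeltaKEInvAlt}, legitimate since $\msE$ is invertible, and observe that by \eqref{EqModKDef} one has $\msE^{-1}\msK = \widetilde{\epsilon}^{-1}\widetilde{\msK}$. Now \eqref{EqTildeMSK} asserts that $\widetilde{\msK}$ already satisfies exactly the shape of \eqref{EqDeltaKEInvAlt}, except with $\msR_{\tau\tau_0,21}$ in place of $\msR_{\nu,21}$. Factoring the grouplike $\widetilde{\epsilon}$ out through $\Delta$ and redistributing it, the conjugation $(\widetilde{\epsilon}^{-1}\otimes 1)\msR_{\tau\tau_0,21}(\widetilde{\epsilon}\otimes 1)$ appears; I would turn this into $\msR_{\nu,21}$ by combining $\Ad(\widetilde{\epsilon}^{-1})=\nu_\epsilon$ acting on the first ($\mcU_q(\mfb^-)$) leg with the identity $\msR_\nu = (\id\otimes\nu_\epsilon)\msR_{\tau\tau_0}$, which is a consequence of \eqref{EqInvNu}. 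This shows $\msE^{-1}\msK$ obeys \eqref{EqDeltaKEInvAlt}, equivalently that $\msK$ obeys \eqref{EqDefModKAlt}; once $*$-compatibility is established, applying $*$ converts \eqref{EqDefModKAlt} back into \eqref{EqDefModK}, exactly as in the derivation preceding the definition of a $\nu$-modified universal $K$-matrix.

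For $*$-compatibility I would compute both sides directly. Using $\msE^*=\msE$ and $\widetilde{\epsilon}^*=\widetilde{\epsilon}^{-1}$ gives $\msK^* = \widetilde{\msK}^*\,\widetilde{\epsilon}\,\msE$, and substituting the second identity of \eqref{EqKStarFund} together with $\tau\tau_0(\widetilde{\msK})=\widetilde{\msK}$ yields $\msK^* = \mcS_X\mcS_0\widetilde{z}_{\tau}\widetilde{z}^{-1}\,\widetilde{\msK}\,\widetilde{\epsilon}\,\msE$. On the other hand, $\tau\tau_0$-invariance of $\epsilon$ gives $\tau\tau_0(\msE)=\msE$, whence $\tau\tau_0(\msK) = \msE\,\tau\tau_0(\widetilde{\epsilon})^{-1}\widetilde{\msK}$. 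The crucial property \eqref{EqCrucialProp} rewrites the prefactor as $\mcS_X\mcS_0\widetilde{z}_{\tau}\widetilde{z}^{-1} = \widetilde{\epsilon}^{-1}\tau\tau_0(\widetilde{\epsilon})^{-1}$, and after cancelling the common torus factors the required equality $\msK^* = \tau\tau_0(\msK)$ collapses to the single identity $\widetilde{\epsilon}^{-1}\widetilde{\msK}\widetilde{\epsilon} = \msE\widetilde{\msK}\msE^{-1}$. This holds because the two sides are $\Ad(\widetilde{\epsilon}^{-1})(\widetilde{\msK})$ and $\Ad(\msE)(\widetilde{\msK})$, and these inner automorphisms coincide, both equal to $\nu_\epsilon$.

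The main obstacle is precisely this bookkeeping reconciling the two encodings of $\nu_\epsilon$ — conjugation by the diagonal involution $\msE$ versus by the torus character $\widetilde{\epsilon}$ — since $\widetilde{\msK}$ commutes with neither, so the factors cannot simply be pushed around. The whole computation rests on the coincidence $\Ad(\msE)=\Ad(\widetilde{\epsilon}^{-1})=\nu_\epsilon$, which genuinely uses $\epsilon_r\in\{\pm1\}$ (so that $\msE^2=1$ and $\widetilde{\epsilon}^{\alpha_r}=\epsilon_r$ produce the same automorphism), and on the fact that \eqref{EqCrucialProp} delivers exactly the scalar discrepancy $\mcS_0\mcS_X\widetilde{z}\widetilde{z}_{\tau}^{-1}$ required to absorb the $*$-anomaly in \eqref{EqKStarFund}. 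Once these two inputs are in hand, the remaining steps are routine applications of grouplikeness of $\widetilde{\epsilon}$, the symmetry \eqref{EqInvNu}, and the elementary equivalences among \eqref{EqDefModK}, \eqref{EqDefModKAlt} and \eqref{EqDeltaKEInvAlt}.
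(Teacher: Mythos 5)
Your proposal is correct and follows essentially the same route as the paper: the coproduct identity is obtained by factoring the grouplike $\widetilde{\epsilon}$ through \eqref{EqTildeMSK} to land on \eqref{EqDeltaKEInvAlt}, and $*$-compatibility comes from \eqref{EqCrucialProp} together with the coincidence of conjugation by $\msE$ and by $\widetilde{\epsilon}^{-1}$, which is exactly the paper's observation that $\msE\widetilde{\epsilon}$ is central. The only cosmetic slip is that the $*$-identity you invoke is the second identity of \eqref{EqTildeMSK} (for $\widetilde{\msK}$), not of \eqref{EqKStarFund} (which concerns $\widetilde{\mcK}$).
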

\begin{proof}
Since $\widetilde{\epsilon}$ is grouplike, it follows from \eqref{EqTildeMSK} that $\msE^{-1}\msK$ satisfies \eqref{EqDeltaKEInvAlt}, hence $\msK$ is a $\nu$-modified universal $K$-matrix. To see that it is $*$-compatible, note that by selfadjointness of $\msE$ we have $(\msE \widetilde{\epsilon}^{-1})^* = \msE\widetilde{\epsilon}$. Now $\msE\widetilde{\epsilon}$ is central, with 
\[
\msE\widetilde{\epsilon} \xi = \widetilde{\epsilon}_{\varpi}\xi,\qquad \xi \in V_{\varpi}. 
\]
Hence by \eqref{EqTildeMSK} and the defining property of $\widetilde{\epsilon}$, we find that
\[
\msK^* = \widetilde{\epsilon}^{-1}\tau\tau_0(\widetilde{\epsilon})^{-1} \tau\tau_0(\widetilde{\msK})\msE \widetilde{\epsilon} = 
\tau\tau_0(\widetilde{\epsilon})^{-1}\msE \tau\tau_0(\widetilde{\msK}) = \tau\tau_0(\msK),
\]
since $\epsilon$ and hence $\msE$ is $\tau\tau_0$-invariant. This proves $*$-compatibility.
\end{proof}

Note also that $\msK$ satisfies the following commutation relation, using \eqref{ECommTildeK} and the fact that $\msE\widetilde{\epsilon}^{-1}$ is central,
\begin{equation}\label{EqCommOrKNew}
 \msK X= \tau\tau_0(X)\msK,\qquad X\in U_q(\mfu^{\theta}). 
\end{equation}

\begin{Rem}
The only property of the $\tau\tau_0$-invariant sign function $\epsilon$ which is needed in the above construction is the existence of an extension $\widetilde{\epsilon}$ satisfying \eqref{EqCrucialProp}. This property is in general much weaker than being $(X,\tau)$-admissible (for example one could have $\epsilon = 1$). However, we believe that only in case of $(X,\tau)$-admissible $\epsilon$ will the associated $K$-matrix lead to a $*$-homomorphism $\phi:\mcO_q(Z_{\nu}) \rightarrow \mcO_q(U)$ with sufficiently nice spectral properties, cf.~ Remark \ref{RemSpectral}. Again, we will not deal here with this subtle phenomenon, which deserves further investigation. 
\end{Rem}

\subsection{Comparison of the coideal subalgebras $U_q^{\fin}(\mfk')$ and $U_q(\mfu^{\theta})$}

In this section, we clarify the connection between the left coideal $*$-subalgebra $U_q^{\fin}(\mfk') \subseteq U_q(\mfu)$ as constructed from the $*$-compatible $\nu$-modified universal $K$-matrix of \eqref{EqModKDef} by the map $\hat{\phi}$ in Theorem \ref{TheoOneToOneCorr}, and the left coideal $*$-subalgebra $U_q(\mfu^{\theta}) \subseteq U_q(\mfu)$ as defined by \eqref{EqCoidSwitch}.

We introduce first the following map
\[
\hat{\Phi}: \mcO_q(U) \rightarrow U_q(\mfg),\quad f\mapsto (\id\otimes f)(\msR_{\tau\tau_0,21}(1\otimes \mcK)\msR),
\]
where $\mcK$ was introduced in \eqref{EqDefOrK}. Recall further the unitary antipode $R$ defined in \eqref{EqUnitaryAntipode}and the element $\omega_0$ introduced in \eqref{EqDefome0}. 

\begin{Lem}\label{LemEqualitySwitch}
The equality $\hat{\phi}(\mcO_q(Z_{\nu})) = (R \circ \Ad(K_{\omega_0}) \circ \hat{\Phi}) (\mcO_q(U))$ holds. 
\end{Lem}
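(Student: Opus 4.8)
The plan is to reduce the statement to an identity between the universal elements defining the two maps, and then to track how the passage from $\mcK$ to $\msK$ is implemented. Recall from \eqref{EqHatPhi} (applied with $\tau_{\nu}=\tau\tau_0$) that $\hat{\phi}(f)=(f\otimes\id)(\msR_{\tau\tau_0,21}(\msK\otimes 1)\msR)$, while by definition $\hat{\Phi}(g)=(\id\otimes g)(\msR_{\tau\tau_0,21}(1\otimes\mcK)\msR)$. Since $\mcO_q(Z_{\nu})$ and $\mcO_q(U)$ are both $\mcO_q(G)$ as vector spaces and their matrix coefficients exhaust the relevant functionals, the image $\hat{\phi}(\mcO_q(Z_{\nu}))$ is precisely the weak closure of the span obtained by contracting the \emph{first} leg of $\msR_{\tau\tau_0,21}(\msK\otimes 1)\msR$ against all matrix coefficients, and $\hat{\Phi}(\mcO_q(U))$ is the analogous span from contracting the \emph{second} leg of $\msR_{\tau\tau_0,21}(1\otimes\mcK)\msR$. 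Because $R\circ\Ad(K_{\omega_0})$ is a linear bijection of $\mcU_q(\mfu)$ commuting with the pairing against the opposite leg, it carries the latter span to the first-leg span of $(R\Ad(K_{\omega_0})\otimes\id)(\msR_{\tau\tau_0,21}(1\otimes\mcK)\msR)$. Thus it suffices to match these two spans.

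First I would absorb the conjugation $\Ad(K_{\omega_0})$. Using invertibility of $\mcK$ together with the braid relation \eqref{EqOrigK}, write $\msR_{\tau\tau_0,21}(1\otimes\mcK)\msR=(\mcK^{-1}\otimes 1)\Delta(\mcK)$, so that $\hat{\Phi}(g)=\mcK^{-1}(\id\otimes g)\Delta(\mcK)$. Since $\Delta\circ\Ad(K_{\omega_0})=(\Ad(K_{\omega_0})\otimes\Ad(K_{\omega_0}))\circ\Delta$ and $\widetilde{\mcK}=\Ad(K_{\omega_0})(\mcK)$, applying $\Ad(K_{\omega_0})$ yields
\[
\Ad(K_{\omega_0})\hat{\Phi}(g)=\widetilde{\mcK}^{-1}(\id\otimes g')\Delta(\widetilde{\mcK})=(\id\otimes g')\bigl(\msR_{\tau\tau_0,21}(1\otimes\widetilde{\mcK})\msR\bigr),\qquad g'=g\circ\Ad(K_{\omega_0}^{-1}),
\]
where the last equality uses the braid relation \eqref{EqKTildeBraid} for $\widetilde{\mcK}$. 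As $g\mapsto g'$ is a bijection of $\mcO_q(U)$, the conjugation has simply been traded for replacing $\mcK$ by $\widetilde{\mcK}$, so I am reduced to comparing the image of the $\widetilde{\mcK}$-map, transformed by $R$, with the image of $\hat{\phi}$.

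Next I would apply $R$. Using that $R$ is anti-multiplicative, anti-comultiplicative and commutes with $\tau\tau_0$, and that $R(\widetilde{\mcK})=\widetilde{\msK}v$ (from $\widetilde{\msK}=R(\widetilde{\mcK})v^{-1}$), I would re-expand $\Delta(R(\widetilde{\mcK}))=\Delta(\widetilde{\msK})\Delta(v)$ by means of the braid relation \eqref{EqTildeMSK} and the ribbon identity \eqref{EqPropRibbon}. Here the crucial simplification occurs: the factor $\msR_{21}^{-1}$ in \eqref{EqTildeMSK} cancels the factor $\msR_{21}$ coming from $\Delta(v)=\msR_{21}\msR(v\otimes v)$, leaving $\msR_{\tau\tau_0,21}(\widetilde{\msK}\otimes 1)\msR$ up to central ribbon and grouplike factors. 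This is the step that turns the second-leg datum $\widetilde{\mcK}$ into a first-leg datum $\widetilde{\msK}$, matching the shape of $\hat{\phi}$.

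Finally, the residual discrepancy between $\widetilde{\msK}$ and $\msK=\msE\widetilde{\epsilon}^{-1}\widetilde{\msK}$ from \eqref{EqModKDef} must be reconciled. The factor $\widetilde{\epsilon}^{-1}$ is grouplike and $\msE,v$ are central; their interaction with $\msR$ is governed by \eqref{EqCommROmega}, namely $\msR\,\Omega_{\epsilon}=\Omega_{\epsilon,21}\,\msR$, and by centrality of $v$. Since all of these are invertible operations acting on a single tensor leg or acting centrally, they do not alter the relevant leg-span, and one checks that they convert the element obtained above into exactly $\msR_{\tau\tau_0,21}(\msK\otimes 1)\msR$ read against the opposite leg, the defining property \eqref{EqCrucialProp} of $\widetilde{\epsilon}$ being what forces the central scalars to line up. I expect the main obstacle to be precisely this last bookkeeping: handling the anti-automorphism $R$ on a single leg forces repeated use of $(R\otimes R)\msR=\msR$ together with the flip $\Sigma$ (an anti-automorphism reverses the order of factors within a leg), and one must keep exact track of the central corrections $\msE\widetilde{\epsilon}^{-1}$, $v$ and $K_{\omega_0}$ so that $\mcK$ is converted into $\msK$ itself rather than into a scalar multiple or a conjugate of it.
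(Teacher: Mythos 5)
Your reduction to spans of matrix coefficients and your treatment of $\Ad(K_{\omega_0})$ (trading $\mcK$ for $\widetilde{\mcK}=\Ad(K_{\omega_0})(\mcK)$ at the cost of precomposing the functional with $\Ad(K_{\omega_0}^{-1})$) are correct, and agree with what the paper does. The gap is in the step where you apply $R$. The identity that \eqref{EqTildeMSK} and \eqref{EqPropRibbon} actually give is
\[
\Delta(R(\widetilde{\mcK}))=\Delta(\widetilde{\msK})\Delta(v)=(1\otimes\widetilde{\msK})\,\msR_{\tau\tau_0,21}(\widetilde{\msK}\otimes 1)\msR\,(v\otimes v),
\]
and the leftover factor $(1\otimes\widetilde{\msK})$ is \emph{not} a ``central ribbon or grouplike factor'': it is the $K$-matrix itself sitting in the second leg. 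Moreover, since $R$ is anti-multiplicative, applying it to $\Ad(K_{\omega_0})\hat{\Phi}(g)=\widetilde{\mcK}^{-1}(\id\otimes g')\Delta(\widetilde{\mcK})$ puts the factor $R(\widetilde{\mcK})^{-1}$ on the \emph{right}. Carrying your computation through (using $(R\otimes R)\msR=\msR$ to push the $R$'s into the functional), these two non-central factors combine into a conjugation: one finds
\[
R\bigl(\Ad(K_{\omega_0})\hat{\Phi}(g)\bigr)=\widetilde{\msK}\,\Bigl[(h\otimes\id)\bigl(\msR_{\tau\tau_0,21}(\widetilde{\msK}\otimes 1)\msR\bigr)\Bigr]\,\widetilde{\msK}^{-1}
\]
for a functional $h$ depending bijectively on $g$. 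Since $\msK$ and $\widetilde{\msK}$ differ by an invertible central element, your route therefore proves $(R\circ\Ad(K_{\omega_0})\circ\hat{\Phi})(\mcO_q(U))=\widetilde{\msK}\,\hat{\phi}(\mcO_q(Z_{\nu}))\,\widetilde{\msK}^{-1}$, not the asserted equality. To finish you would need the extra fact that $\hat{\phi}(\mcO_q(Z_{\nu}))=U_q^{\fin}(\mfk')$ is stable under $\Ad(\widetilde{\msK})$; this is true, but requires a separate argument (by Proposition \ref{PropInclusion} and \eqref{EqCharDualO}, every $X\in U_q^{\fin}(\mfk')$ satisfies $\msK X\msK^{-1}=\tau\tau_0(X)$, so one is reduced to $\tau\tau_0$-stability of $U_q^{\fin}(\mfk')$, which follows from $(\tau\tau_0\otimes\tau\tau_0)$-invariance of $\msR$ and the fact that $\tau\tau_0(\msK)$ is a central multiple of $\msK$), and none of this is in your proposal --- it is exactly what your phrase ``up to central ribbon and grouplike factors'' sweeps under the rug.

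The paper avoids the problem by never splitting off $\widetilde{\mcK}^{-1}$: it applies $R$ directly to the triple product, so that anti-multiplicativity reverses all three first-leg factors at once and yields
\[
R\bigl(\Ad(K_{\omega_0})\hat{\Phi}(g)\bigr)=(\id\otimes g\circ\Ad(K_{\omega_0}^{-1})\circ R)\bigl(\msR(1\otimes R(\widetilde{\mcK}))\msR_{\tau\tau_0,21}\bigr)
\]
with no conjugation; combined with the rewriting $\hat{\phi}(f)=(\id\otimes f\circ\tau\tau_0)\bigl(\msR(1\otimes\tau\tau_0(\msK))\msR_{\tau\tau_0,21}\bigr)$ (which rests on the $\tau\tau_0$-invariance of $\widetilde{\mcK}$, Corollary \ref{CorInvTauTau_0}), the lemma follows because $R(\widetilde{\mcK})=\widetilde{\msK}v$ and $\tau\tau_0(\msK)$ differ by an invertible central element. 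Note also that \eqref{EqCrucialProp} plays no role here, contrary to what your last step suggests: the only central bookkeeping needed is centrality of $\msE\widetilde{\epsilon}^{\,-1}$ (available because $\epsilon$ is a sign function) and of the ribbon element $v$.
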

\begin{proof}
Recall from \eqref{EqHatPhi} that 
\[
\hat{\phi}(f) = (f\otimes \id)(\msR_{\tau\tau_0,21}(\msK \otimes 1)\msR_{\tau\tau_0}) = (\id\otimes f\circ \tau\tau_0)(\msR(1\otimes \msK)\msR_{\tau\tau_0,21}),
\]
where in the last step we used that $\mcK$ is $\tau\tau_0$-invariant by Corollary \ref{CorInvTauTau_0}. On the other hand, since $K_{\omega_0}$ is $\tau\tau_0$-invariant, we find that 
\[
R(\Ad(K_{\omega_0})(f)) = (\id\otimes f\circ \Ad(K_{\omega_0}^{-1})\circ R)(\msR(1\otimes R(\widetilde{\mcK}))\msR_{\tau\tau_0,21}).
\]
Since $R(\widetilde{\mcK})$ and $\msK$ differ only by multiplication with an invertible central element, this proves the lemma.
\end{proof}

\begin{Cor}\label{CorIncUSym}
We have $U_q^{\fin}(\mfk')\subseteq \mcU_q(\mfu^{\theta})$.
\end{Cor}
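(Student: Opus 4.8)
The plan is to deduce this from Lemma \ref{LemEqualitySwitch}, which expresses $U_q^{\fin}(\mfk') = \hat{\phi}(\mcO_q(Z_{\nu}))$ as $(R\circ\Ad(K_{\omega_0}))(\hat{\Phi}(\mcO_q(U)))$. Since $R$ is an involutive antiautomorphism and $\Ad(K_{\omega_0})$ an automorphism of $\mcU_q(\mfg)$, both weakly continuous, and since by \eqref{EqCoidSwitch} one has $U_q(\mfu^{\theta}) = R(\widetilde{\msB}) = (R\circ\Ad(K_{\omega_0}))(\msB)$, it follows that $\mcU_q(\mfu^{\theta}) = (R\circ\Ad(K_{\omega_0}))(\overline{\msB})$, where $\overline{\msB}$ denotes the weak closure of $\msB$. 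Thus the whole statement reduces to the inclusion $\hat{\Phi}(\mcO_q(U))\subseteq\overline{\msB}$.

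Write $B_0 = \hat{\Phi}(\mcO_q(U))$ and $\mathcal{T} = \msR_{\tau\tau_0,21}(1\otimes\mcK)\msR$, so that $\hat{\Phi}(f)=(\id\otimes f)(\mathcal{T})$ and, by the first equality in \eqref{EqOrigK}, $\mathcal{T}=(\mcK^{-1}\otimes 1)\Delta(\mcK)$. I would first record two structural properties of $B_0$. Since $U_q^{\fin}(\mfk')$ is a left coideal $*$-subalgebra and $R$ is anticomultiplicative while $\Ad(K_{\omega_0})$ is comultiplicative, $B_0$ is a right coideal $*$-subalgebra of $\mcU_q(\mfg)$. Next, applying $\tau\tau_0$ to the first leg of $\mathcal{T}$ and using that $\mcK$ is $\tau\tau_0$-invariant (Corollary \ref{CorInvTauTau_0}) together with $(\tau\tau_0\otimes\tau\tau_0)\msR=\msR$ (which follows from \eqref{EqInvNu}), the element $(\tau\tau_0\otimes\id)\mathcal{T}$ becomes $\msR_{21}(1\otimes\mcK)\msR_{\tau\tau_0} = \Delta(\mcK)(\mcK^{-1}\otimes 1)$, the last equality being the second form of \eqref{EqOrigK}. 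Applying $\id\otimes f$ and comparing with $\mcK\,\hat{\Phi}(f)\,\mcK^{-1} = (\id\otimes f)((\mcK\otimes 1)\mathcal{T}(\mcK^{-1}\otimes 1))$ yields
\[
\tau\tau_0(\hat{\Phi}(f)) = (\id\otimes f)(\Delta(\mcK)(\mcK^{-1}\otimes 1)) = \mcK\,\hat{\Phi}(f)\,\mcK^{-1}.
\]
Hence every $Y\in B_0$ satisfies the same intertwining relation $\mcK Y = \tau\tau_0(Y)\mcK$ that the elements of $\msB$ satisfy by \eqref{EqCommOrK}; that is, $B_0\subseteq\msB' := \{Y\in\mcU_q(\mfg)\mid \mcK Y = \tau\tau_0(Y)\mcK\}$.

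It then remains to pass from $\msB'$ to $\overline{\msB}$. Here I would use that $\msB'$ is a subalgebra (the defining relation is preserved under products, as $\tau\tau_0$ is an algebra automorphism) and is proper (for instance $E_r\notin\msB'$ for $r\notin X$), and that both $\overline{\msB}$ (by \eqref{EqCommOrK} and Remark \ref{RemInvtautau0}) and $B_0$ are right coideal subalgebras contained in $\msB'$. The subalgebra generated by $\overline{\msB}\cup B_0$ is again a right coideal subalgebra lying inside the proper subalgebra $\msB'$, so by the maximality of the quantum symmetric pair coideal $\widetilde{\msB}$ (equivalently $\msB$) among right coideal subalgebras of $U_q(\mfg)$, a structural feature of these algebras \cite{Let02,Kol14}, it must coincide with $\overline{\msB}$, giving $B_0\subseteq\overline{\msB}$ and hence the corollary.

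The main obstacle is precisely this last step. The intertwining relation by itself does \emph{not} force membership in the coideal: indeed $\mcK\in\msB'\setminus\overline{\msB}$, so one genuinely needs to combine the coideal property of $B_0$ with the maximality of $\msB$. Alternatively, one can argue directly that the matrix coefficients $\hat{\Phi}(f)$ span a locally finite, weakly dense subalgebra of $\msB$, which is the coideal analogue of the Joseph--Letzter description \cite{JL92} of the locally finite part of $U_q(\mfg)$ and on which the Balagovi\'c--Kolb construction of $\mcK$ in \cite{BK15b} is modelled; establishing this spanning statement rigorously is where the real work lies.
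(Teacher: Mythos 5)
Your reduction via Lemma \ref{LemEqualitySwitch} is exactly the paper's, and your derivation of the intertwining relation $\mcK\,\hat{\Phi}(f) = \tau\tau_0(\hat{\Phi}(f))\,\mcK$ from the two forms of \eqref{EqOrigK} and the $\tau\tau_0$-invariance of $\mcK$ is correct and is a genuinely nice observation: it shows the image of $\hat{\Phi}$ satisfies the same relation \eqref{EqCommOrK} that $\msB$ does. But, as you yourself flag, this relation is strictly weaker than membership in $\overline{\msB}$ (your example $\mcK\in\msB'$ makes the point), and the step you use to bridge the gap --- ``maximality of the quantum symmetric pair coideal $\msB$ among right coideal subalgebras of $U_q(\mfg)$'' --- is not a result available in \cite{Let02} or \cite{Kol14}. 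Neither reference proves that a right coideal subalgebra properly containing $\msB$ must be all of $U_q(\mfg)$; what Letzter's papers contain are much more delicate statements (specialization results, quantum Iwasawa decomposition, multiplicity-freeness of spherical weights), and maximality in the generality you need is a substantial theorem in its own right, not a citable ``structural feature.'' On top of that, even granting such a maximality statement inside $U_q(\mfg)$, you apply it to the subalgebra generated by $\overline{\msB}\cup B_0$ inside the completion $\mcU_q(\mfg)$, where weak closures and coideal properties interact in ways your argument does not address, and the properness of $\msB'$ (your claim $E_r\notin\msB'$ for $r\notin X$) is asserted rather than proved.

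The paper closes precisely this gap by citation rather than by a maximality argument: by \cite{Kol17}*{Theorem 3.11.(0)}, the image of $\hat{\Phi}$ is contained in $\msB$ itself (no closure needed), which combined with Lemma \ref{LemEqualitySwitch} immediately gives the corollary. That result of Kolb is proved by direct computation with the quasi-$K$-matrix of \cite{BK15b}, and it is exactly the ``spanning/membership'' statement you identify at the end as ``where the real work lies.'' So your instinct about the location of the difficulty is right, but your proposed resolution of it does not go through as written: without either the Kolb citation or an actual proof of the containment $\hat{\Phi}(\mcO_q(U))\subseteq\msB$, the proof is incomplete.
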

\begin{proof}
By \cite{Kol17}*{Theorem 3.11.(0)}, the image of $\hat{\Phi}$ is contained in $\msB$ (note that the element $\msR$ in \cite{Kol17} indeed coincides with our element $\msR$). The corollary then follows from Lemma \eqref{LemEqualitySwitch}.
\end{proof}

To obtain an inclusion in the opposite direction after completion, we need some preliminaries. Let us write 
\[
\widetilde{\msR} = \sum_A w_A E^A \otimes F^A
\]
where $E^A$ and $F^A$ denote the standard PBW-bases, $A$ being words in the positive roots and $w_A$ are non-zero scalars. We write $\wt(A) = \wt(E_A) \in Q^+$, where $E_A \in U_q(\mfn)_{\wt(E_A)}$. Write
\begin{equation}\label{EqDecompK}
\mcK = \sum_{\gamma\in Q^+} \mcK_{\gamma},\qquad \mcK_{\gamma} = \quasiK_{\gamma} \xi T_{w_X}^{-1}T_{w_0}^{-1}
\end{equation}
with $\quasiK = \sum_{\gamma} \quasiK_{\gamma}$ and $\quasiK_{\gamma} \in U_q(\mfn)_{\gamma}$ as before. Then writing $V(\omega)$ for the $\omega$-weight space of a representation $V$, we have
\[
\mcK_{\gamma}: V(\omega) \rightarrow V(w_Xw_0(\omega)+ \gamma). 
\]

\begin{Lem}
We have
\[
\hat{\Phi}(U_{\varpi}(\xi,\eta)) = \sum_{A,B}\sum_{\gamma \in Q^+} w_Aw_B \kappa_{A,B,\gamma} \tau\tau_0(F^A)K_{\Theta(\wt(\eta) - \wt(B))+\tau\tau_0(\gamma)}^{-1}E^BK_{\wt(\eta)}^{-1}
\]
where $\kappa_{A,B,\gamma} = \langle \xi, E^A\mcK_{\gamma}F^B \eta\rangle$.
\end{Lem}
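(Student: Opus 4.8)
The plan is to unwind the definition $\hat{\Phi}(f) = (\id\otimes f)(\msR_{\tau\tau_0,21}(1\otimes\mcK)\msR)$ by putting everything in PBW form and then evaluating the functional $U_\varpi(\xi,\eta)$, which pairs with $X\in U_q(\mfg)$ as $\langle\xi,\pi_\varpi(X)\eta\rangle$, on the second tensor leg; thus $(\id\otimes U_\varpi(\xi,\eta))$ keeps the first leg and replaces a second-leg element $X$ by $\langle\xi,\pi_\varpi(X)\eta\rangle$. I would write $\msR = \widetilde{\msR}\msQ$ with $\widetilde{\msR} = \sum_B w_B E^B\otimes F^B$ and $\msQ = \sum_\mu K_{-\mu}\otimes P_\mu$, where $P_\mu$ is the projection onto the weight-$\mu$ subspace, so that $\msR = \sum_{B,\mu} w_B E^B K_{-\mu}\otimes F^B P_\mu$. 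For $\msR_{\tau\tau_0,21}$ I would first invoke \eqref{EqInvNu} for the diagram involution $\tau\tau_0$ (admissible since $\tau$ and $\tau_0$ commute and are involutions by Lemma \ref{LemCommTau0Tau}) to move $\tau\tau_0$ onto the lower Borel leg, and then flip, obtaining $\msR_{\tau\tau_0,21} = \sum_{A,\nu} w_A\,\tau\tau_0(F^A)P_{\tau\tau_0\nu}\otimes E^A K_{-\nu}$, where $\tau\tau_0(P_\nu) = P_{\tau\tau_0\nu}$ because $\tau\tau_0$ preserves $(-,-)$ and is involutive.

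Next I would let the three second-leg factors of $\msR_{\tau\tau_0,21}(1\otimes\mcK)\msR$ act successively on $\eta$, reading right to left and tracking weights. The factor $F^B P_\mu$ forces $\mu=\wt\eta$ and returns $F^B\eta$, leaving $E^B K_{-\wt\eta}$ on the first leg; then $\mcK_\gamma$ from the decomposition $\mcK = \sum_\gamma \mcK_\gamma$ of \eqref{EqDecompK} sends the weight-$(\wt\eta-\wt B)$ vector $F^B\eta$ to weight $w = w_X w_0(\wt\eta-\wt B)+\gamma$; finally $E^A K_{-\nu}$ contributes the scalar $q^{-(\nu,w)}$ together with the operator $E^A$, so that pairing against $\xi$ produces exactly $\kappa_{A,B,\gamma} = \langle\xi,E^A\mcK_\gamma F^B\eta\rangle$. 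The one piece of genuine bookkeeping is the first-leg Cartan contribution of $\msR_{\tau\tau_0,21}$: the projections must be summed against the scalar coming from $K_{-\nu}$, and I would check by the substitution $\lambda=\tau\tau_0\nu$ and comparison of eigenvalues on weight vectors that $\sum_\nu q^{-(\nu,w)}P_{\tau\tau_0\nu} = K_{-\tau\tau_0 w}$.

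It then remains to identify $-\tau\tau_0 w$ with $-\Theta(\wt\eta-\wt B)-\tau\tau_0\gamma$, i.e.\ to establish $\tau\tau_0 w_X w_0 = \Theta$ on $P$; this is the step I expect to demand the most care, but it is forced by the available lemmas. On the weight lattice $\tau_0 = -w_0$, while $w_0 w_X w_0 = w_X$ since $\tau_0(X)=X$ (Lemma \ref{LemCommTau0Tau}) and $\tau w_X = w_X\tau$ since $\tau(X)=X$; combining these with $\Theta = -w_X\tau$ from Lemma \ref{LemCommThetaTau} gives $\tau\tau_0 w_X w_0 = -\tau w_0 w_X w_0 = -\tau w_X = -w_X\tau = \Theta$. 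Assembling the first-leg operator $\tau\tau_0(F^A)\,K_{-\tau\tau_0 w}\,E^B K_{-\wt\eta}$ with $K_{-\tau\tau_0 w} = K_{\Theta(\wt\eta-\wt B)+\tau\tau_0\gamma}^{-1}$ and $K_{-\wt\eta}=K_{\wt\eta}^{-1}$, and multiplying by the scalar $w_A w_B\kappa_{A,B,\gamma}$ summed over $A,B,\gamma$, reproduces the claimed identity.
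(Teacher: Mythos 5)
Your proof is correct and is exactly the computation the paper has in mind: the paper's proof is a one-line appeal to writing out $\msR_{\tau\tau_0,21}(1\otimes\mcK)\msR$ using $\msR = \widetilde{\msR}\msQ$ in PBW form and the decomposition $\mcK = \sum_\gamma \mcK_\gamma$, which is precisely what you carry out. Your supporting steps — using \eqref{EqInvNu} to place $\tau\tau_0$ on the $F$-leg, the resummation $\sum_\nu q^{-(\nu,w)}P_{\tau\tau_0(\nu)} = K_{-\tau\tau_0(w)}$, and the identity $\tau\tau_0 w_X w_0 = \Theta$ on $P$ via Lemmas \ref{LemCommTau0Tau} and \ref{LemCommThetaTau} — are all valid and simply make explicit what the paper leaves implicit.
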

\begin{proof}
This follows straightforwardly by writing out the left hand expression using the formulas \eqref{EqPropR3} and \eqref{EqDecompK}. 
\end{proof}

Note that the $(A,B,\gamma)$-term  in the sum for $\hat{\Phi}(U_{\varpi}(\xi,\eta))$ has weight $-\tau\tau_0(\wt(A)) + \wt(B)$. 

Choose now for each $\varpi\in P^+$ non-zero vectors $\eta_{w_0(\varpi)},\xi_{w_X(\varpi)} \in V_{\varpi}$ with respective weights $w_0(\varpi)$ and $w_X(\varpi)$. Put
\[
k_{\varpi} = U_{\varpi}(\eta_{w_0(\varpi)},\xi_{w_X(\varpi)})
\]

\begin{Lem}\label{LemCart}
We have 
\[
\hat{\Phi}(k_{\varpi}) = t_{\varpi} K_{\tau(\varpi) + \Theta(\tau(\varpi))}
\]
where $t_{\varpi} = \langle \eta_{w_0(\varpi)},\mcK_0 \xi_{w_X(\varpi)}\rangle$ is non-zero.
\end{Lem}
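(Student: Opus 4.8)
The plan is to evaluate the explicit formula for $\hat{\Phi}(U_{\varpi}(\xi,\eta))$ displayed just above the lemma, with the vector arguments taken to be $\eta_{w_0(\varpi)}$ and $\xi_{w_X(\varpi)}$, and to show that among all the terms only $A = B = \emptyset$, $\gamma = 0$ contributes. The coefficients are $\kappa_{A,B,\gamma} = \langle \eta_{w_0(\varpi)}, E^A \mcK_{\gamma} F^B \xi_{w_X(\varpi)}\rangle$. The guiding observation is that $w_0(\varpi)$ and $w_X(\varpi)$ are extremal weights, so their weight spaces in $V_{\varpi}$ are one-dimensional; in particular $\eta_{w_0(\varpi)}$ spans the lowest weight space and is annihilated by every element of $U_q(\mfg)$ that strictly lowers the weight by a nonzero element of $Q^+$.

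The key step is to transfer operators across the inner product, using that $V_{\varpi}$ is a $*$-representation of $U_q(\mfu)$. Writing $\mcK_{\gamma} = \quasiK_{\gamma}\,\xi\, T_{w_X}^{-1}T_{w_0}^{-1}$ and noting that $E^A \quasiK_{\gamma} \in U_q(\mfn^+)_{\wt(A)+\gamma}$ raises the weight by $\wt(A)+\gamma$, its adjoint lowers weight by the same amount, so
\[
\kappa_{A,B,\gamma} = \langle (E^A \quasiK_{\gamma})^* \eta_{w_0(\varpi)},\, \xi\, T_{w_X}^{-1} T_{w_0}^{-1} F^B \xi_{w_X(\varpi)}\rangle = 0
\]
unless $\wt(A) + \gamma = 0$; as both summands lie in $Q^+$, this forces $A = \emptyset$ and $\gamma = 0$ (recall $\quasiK_0 = 1$). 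With $\gamma = 0$ the surviving coefficient is $\langle \eta_{w_0(\varpi)}, \mcK_0 F^B \xi_{w_X(\varpi)}\rangle$, and since $\mcK_0$ carries $V(\omega)$ into $V(w_X w_0 \omega)$, the vector $\mcK_0 F^B \xi_{w_X(\varpi)}$ has weight $w_X w_0(w_X(\varpi) - \wt(B)) = w_0(\varpi) - w_X w_0 \wt(B)$, where I use the Weyl-group identity $w_X w_0 w_X = w_0$. Pairing nontrivially against $\eta_{w_0(\varpi)}$ then forces $w_X w_0 \wt(B) = 0$, hence $B = \emptyset$.

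Only the term $A = B = \emptyset$, $\gamma = 0$ then remains; recalling $w_{\emptyset} = 1$ and $\wt(\eta) = w_X(\varpi)$, the formula collapses to
\[
\hat{\Phi}(k_{\varpi}) = t_{\varpi}\, K_{\Theta(w_X(\varpi))}^{-1} K_{w_X(\varpi)}^{-1} = t_{\varpi}\, K_{-\Theta(w_X(\varpi)) - w_X(\varpi)},
\]
with $t_{\varpi} = \langle \eta_{w_0(\varpi)}, \mcK_0 \xi_{w_X(\varpi)}\rangle$. To identify the exponent I would invoke $\Theta(\omega) = -w_X\tau(\omega)$ from Lemma \ref{LemCommThetaTau} together with the mutual commutation of $\tau,\tau_0,w_X,w_0$ (which follows from $\tau(X) = \tau_0(X) = X$ in Lemma \ref{LemCommTau0Tau} and $\tau_0 = -w_0$): this gives $\Theta(w_X(\varpi)) = -\tau(\varpi)$ and $w_X(\varpi) = -\Theta(\tau(\varpi))$, whence $-\Theta(w_X(\varpi)) - w_X(\varpi) = \tau(\varpi) + \Theta(\tau(\varpi))$, as claimed.

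Finally, for $t_{\varpi} \neq 0$: since $T_{w_X}$, $T_{w_0}$ and the character $\xi$ are invertible operators, $\mcK_0 \xi_{w_X(\varpi)} = \xi\, T_{w_X}^{-1} T_{w_0}^{-1} \xi_{w_X(\varpi)}$ is a nonzero vector of weight $w_X w_0 w_X(\varpi) = w_0(\varpi)$; as this weight space is one-dimensional and spanned by $\eta_{w_0(\varpi)}$, the inner product $t_{\varpi}$ is nonzero. The only genuine bookkeeping is the identity $w_X w_0 w_X = w_0$ and the commutation relations above, all immediate from the cited lemmas, so I do not anticipate any serious obstacle; the one point to state carefully is the extremality (hence one-dimensionality) of the two weight spaces, which underlies both the vanishing argument and the nonvanishing of $t_{\varpi}$.
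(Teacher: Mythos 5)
Your proof is correct and follows essentially the same route as the paper's: it kills all terms with $A \neq \emptyset$ or $\gamma \neq 0$ using the lowest-weight property of $\eta_{w_0(\varpi)}$, kills $B \neq \emptyset$ by the weight computation for $\mcK_0 F^B \xi_{w_X(\varpi)}$, identifies the exponent via $w_X(\varpi) = -\Theta(\tau(\varpi))$, and obtains $t_{\varpi} \neq 0$ from the invertibility of $\mcK_0 = \xi T_{w_X}^{-1}T_{w_0}^{-1}$ together with the one-dimensionality of the extremal weight spaces. The only difference is that you make explicit two points the paper leaves implicit, namely the adjoint-transfer argument behind ``$\kappa_{A,B,\gamma}=0$ unless $A=\gamma=0$'' and the Weyl-group identity $w_X w_0 w_X = w_0$ underlying the weight computation.
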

\begin{proof}
We need to analyze the inner products 
\[
\kappa_{A,B,\gamma} = \langle \eta_{w_0(\varpi)},E^A\mcK_{\gamma} F^B \xi_{w_X(\varpi)}\rangle.
\]
Since $\eta_{w_0(\varpi)}$ is a lowest weight vector, it follows that $\kappa_{A,B,\gamma}=0$ unless $A=\gamma=0$. On the other hand, the element $\mcK_0 F^B \xi_{w_X(\varpi)}$ has weight $w_Xw_0(w_X(\varpi)-\wt(B)) = w_0(\varpi) - w_Xw_0(\wt(B))$, hence $\kappa_{0,B,0} = \langle   \eta_{w_0(\varpi)},\mcK_{0} F^B \xi_{w_X(\varpi)}\rangle$ is zero unless $w_0(\varpi) = w_0(\varpi) - w_Xw_0(\wt(B)) = 0$, i.e. $\wt(B)= 0$. 

It follows that
\[
\hat{\Phi}(k_{\varpi}) = \kappa_{0,0,0} K_{\Theta(w_X(\varpi))}^{-1}K_{w_X(\varpi)}^{-1}.
\]
Using $w_X(\varpi) = -\Theta(\tau(\varpi))$, we can write $K_{\Theta(w_X(\varpi))}^{-1}K_{w_X(\varpi)}^{-1} =  K_{\tau(\varpi) + \Theta(\tau(\varpi))}$. Finally, $\kappa_{0,0,0}\neq 0$ since the weight spaces at $w_0(\varpi)$ and $w_X(\varpi)$ are one-dimensional and $\mcK_0 = \xi T_{w_X}^{-1}T_{w_0}^{-1}$.
\end{proof}

Write now
\[
f_{\varpi,r} = U_{\varpi}(E_r\eta_{w_0(\varpi)},\xi_{w_X(\varpi)})
\]

\begin{Lem}\label{LemBor1}
There exists $t_{\varpi,r}\in \C$ and $Y_{\varpi,r} \in U_q(\mfb)$ such that 
\begin{equation}\label{EqFormImHatPhi}
\hat{\Phi}(f_{\varpi,r}) = t_{\varpi,r}F_{\tau\tau_0(r)}K_{\tau(\varpi)+\Theta(\tau(\varpi))}+ Y_{\varpi,r}.
\end{equation}
Moreover, 
\begin{enumerate}
\item $t_{\varpi,r}\neq 0$ if $E_r \eta_{w_0(\varpi)}\neq 0$, 
\item $Y_{\varpi,r} = 0$ if $r\in X$,
\item $Y_{\varpi,r} \in U_q(\mfb)_{-\Theta \tau\tau_0(\alpha_r)}$ for $r\in I \setminus X$.
\end{enumerate}
\end{Lem}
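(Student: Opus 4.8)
The plan is to compute $\hat\Phi(f_{\varpi,r})$ directly from the preceding lemma, isolating the leading term by a weight argument. Recall that $f_{\varpi,r} = U_\varpi(E_r\eta_{w_0(\varpi)},\xi_{w_X(\varpi)})$, so I take $\xi = E_r\eta_{w_0(\varpi)}$ and $\eta = \xi_{w_X(\varpi)}$ in the formula
\[
\hat\Phi(U_\varpi(\xi,\eta)) = \sum_{A,B}\sum_{\gamma\in Q^+} w_Aw_B\,\kappa_{A,B,\gamma}\,\tau\tau_0(F^A)K_{\Theta(\wt(\eta)-\wt(B))+\tau\tau_0(\gamma)}^{-1}E^BK_{\wt(\eta)}^{-1},
\]
with $\kappa_{A,B,\gamma} = \langle E_r\eta_{w_0(\varpi)}, E^A\mcK_\gamma F^B\xi_{w_X(\varpi)}\rangle = \langle \eta_{w_0(\varpi)}, F_rE^A\mcK_\gamma F^B\xi_{w_X(\varpi)}\rangle$ after moving $E_r^* = F_rK_r$ across (up to the $K_r$ factor and a scalar from the $*$-structure). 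The key observation is that $\eta_{w_0(\varpi)}$ is a lowest weight vector, so pairing against it forces a strong weight constraint: the vector $F_rE^A\mcK_\gamma F^B\xi_{w_X(\varpi)}$ must land in the one-dimensional lowest weight space. Since $\mcK_\gamma$ shifts weights by $w_Xw_0(-)+\gamma$ and $F^B$, $E^A$ shift by $-\wt(B)$ and $+\wt(A)$, this pins down the admissible triples $(A,B,\gamma)$ to a very short list.

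First I would carry out the weight bookkeeping to show that only terms with $\wt(A)\le\alpha_r$ (roughly, either $A=0$ or $E^A = E_r$ up to scalar) can contribute, and that among these the distinguished leading contribution comes from $A$ corresponding to $E_r$ together with $\gamma = 0$ and $B$ trivial, which produces precisely $\tau\tau_0(F_r)K_{\Theta(\tau(\varpi))+\tau(\varpi)}$ times a scalar $t_{\varpi,r}$; here I reuse the computation $K_{\Theta(w_X(\varpi))}^{-1}K_{w_X(\varpi)}^{-1} = K_{\tau(\varpi)+\Theta(\tau(\varpi))}$ and $w_X(\varpi) = -\Theta(\tau(\varpi))$ from the proof of Lemma~\ref{LemCart}. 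All remaining terms I collect into $Y_{\varpi,r}$, and I must check $Y_{\varpi,r}\in U_q(\mfb)$, i.e.\ that no genuine $\tau\tau_0(F^A)$ with $A\neq 0$ other than the leading one survives; this again follows because any surviving $A$ with $\wt(A)\neq 0,\alpha_r$ is excluded by the lowest-weight constraint, and the case $A$ giving $F_{\tau\tau_0(r)}$ is exactly the extracted leading term, so the leftover $F^A$-part vanishes and $Y_{\varpi,r}$ involves only the $E^BK_\omega$-factors lying in $U_q(\mfb)$.

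For the three itemized claims: (1) the non-vanishing of $t_{\varpi,r}$ when $E_r\eta_{w_0(\varpi)}\neq 0$ reduces, as in Lemma~\ref{LemCart}, to non-vanishing of an inner product of the form $\langle E_r\eta_{w_0(\varpi)}, E_r\mcK_0\xi_{w_X(\varpi)}\rangle$ (with $\mcK_0 = \xi T_{w_X}^{-1}T_{w_0}^{-1}$ invertible on the relevant one-dimensional spaces), which is a norm-type positivity argument using that $\mcK_0$ maps the highest weight line isomorphically onto the lowest weight line. (2) For $r\in X$ one has $E_r\eta_{w_0(\varpi)}$ still a weight vector whose weight lies in the $W_X$-orbit structure making the correction terms degenerate; more directly, when $r\in X$ the relevant shift is absorbed by $T_{w_X}$ and one checks $Y_{\varpi,r}=0$ by the same weight argument showing no extra $E^B$-terms appear. (3) The weight of $Y_{\varpi,r}$ is read off as $-\Theta\tau\tau_0(\alpha_r)$ from the overall weight $-\tau\tau_0(\wt(A))+\wt(B)$ of each surviving term combined with the Cartan shifts, using Lemma~\ref{LemCommThetaTau} that $\Theta$ commutes with $\tau_0$ and $\tau$. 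I expect the main obstacle to be the careful weight-space combinatorics in identifying exactly which $(A,B,\gamma)$ survive the pairing against the lowest weight vector — in particular separating the genuine leading $F_{\tau\tau_0(r)}$-term from spurious $F^A$-contributions and confirming the latter all cancel — rather than any conceptual difficulty.
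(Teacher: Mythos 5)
Your outline follows the same route as the paper (expand $\hat{\Phi}(f_{\varpi,r})$ in the PBW sum and constrain the triples $(A,B,\gamma)$ by pairing against the lowest weight vector), and your identification of the leading term $A=\alpha_r$, $B=\gamma=0$ is correct. But there is a genuine gap: weight bookkeeping together with the lowest-weight constraint is provably \emph{insufficient} to eliminate all the unwanted terms, and in particular cannot establish items (2) and (3). Consider the triple $A=B=0$, $\gamma=\alpha_r$. Its weight constraint is satisfied: since $\mcK_{\gamma}$ maps $V(\omega)$ to $V(w_Xw_0(\omega)+\gamma)$, the vector $\mcK_{\alpha_r}\xi_{w_X(\varpi)}$ has weight exactly $w_0(\varpi)+\alpha_r$. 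Moreover the lowest-weight mechanism gives nothing here: writing $\mcK_{\alpha_r}=\quasiK_{\alpha_r}\xi T_{w_X}^{-1}T_{w_0}^{-1}$, the vector $\xi T_{w_X}^{-1}T_{w_0}^{-1}\xi_{w_X(\varpi)}$ is a nonzero multiple of $\eta_{w_0(\varpi)}$, and since $U_q(\mfn)_{\alpha_r}=\C E_r$ one gets $\mcK_{\alpha_r}\xi_{w_X(\varpi)}\in\C E_r\eta_{w_0(\varpi)}$, so that $\kappa_{0,0,\alpha_r}$ is a nonzero multiple of $\langle E_r\eta_{w_0(\varpi)},E_r\eta_{w_0(\varpi)}\rangle\neq 0$ whenever $\quasiK_{\alpha_r}\neq 0$. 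Such a term contributes a nonzero product of $K$'s (a weight-zero element of $U_q(\mfb)$) to $Y_{\varpi,r}$, which would violate (3) (since $-\Theta\tau\tau_0(\alpha_r)=-w_Xw_0(\alpha_r)\neq 0$) and, for $r\in X$, also (2). The only way to kill this term --- and this is what the paper does --- is the structural fact that $\mcK_{\alpha_s}=0$ for all $s\in I$, imported from the proof of Theorem 6.10 in \cite{BK15b}. This is an external input about the quasi-$K$-matrix, not a consequence of any combinatorics of the pairing, so your closing expectation that the only obstacle is ``weight-space combinatorics'' rather than a conceptual point is exactly where the proof breaks down.

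Two smaller issues. First, even in the leading case $A=\alpha_r$, the weight equation $\gamma=w_Xw_0(\wt(B))$ alone has spurious solutions (e.g.\ $\gamma=\alpha_{\tau(s)}$, $\wt(B)=\alpha_{\tau_0(s)}$ for $s\in X$); what forces $\gamma=B=0$ there is the reduction $E_r^*E_r\eta_{w_0(\varpi)}\in\C^{\times}\eta_{w_0(\varpi)}$ followed by the observation that $\quasiK_{\gamma}^*$ lowers weight by $\gamma$ and hence annihilates the lowest weight vector unless $\gamma=0$; this operator-adjoint step should be made explicit rather than subsumed under ``bookkeeping''. Second, your explanation of (2) (``the shift is absorbed by $T_{w_X}$'') is not the actual mechanism: once only $A=0$, $\gamma=0$ terms remain, the weight equation forces $\wt(B)=-\Theta\tau\tau_0(\alpha_r)$, and for $r\in X$ one has $\Theta(\alpha_{\tau\tau_0(r)})=\alpha_{\tau\tau_0(r)}$, so the required weight is $-\alpha_{\tau\tau_0(r)}\notin Q^+$ and no such $B$ exists.
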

\begin{proof}
Fix $\varpi,r$, where we assume that $E_r \eta_{w_0(\varpi)}\neq 0$. We have to analyze again the coefficients
\[
\kappa_{A,B,\gamma} = \langle E_r \eta_{w_0(\varpi)},E^A\mcK_{\gamma}F^B \xi_{w_X(\varpi)}\rangle.
\]
Since $\eta_{w_0(\varpi)}$ is a lowest weight vector, it is clear that $\kappa_{A,B,\gamma}=0$ unless $A$ is the simple root $\alpha_r$ or the empty word. 

If $A= \alpha_r$, we have that 
\[
\kappa_{\alpha_r,B,\gamma} =  \langle E_r \eta_{w_0(\varpi)},E_{r}\mcK_{\gamma}F^B \xi_{w_X(\varpi)}\rangle = \langle E_{r}^*E_r \eta_{w_0(\varpi)},\mcK_{\gamma}F^B \xi_{w_X(\varpi)}\rangle.
\]
Since $E_r^*E_r\eta_{w_0(\varpi)}$ is a non-zero multiple of $\eta_{w_0(\varpi)}$, it follows as before that $\kappa_{\alpha_r,B,\gamma} =0$ unless $\gamma = B = 0$, in which case $\kappa_{\alpha_r,B,\gamma}$ is a non-zero scalar. This already accounts for the general form \eqref{EqFormImHatPhi} and (1).

Assume now that $A$ is the empty word. We claim that $\kappa_{0,B,\gamma}= 0$ unless $\gamma =0$. Indeed, from the proof of \cite{BK15b}*{Proof of Theorem 6.10} it follows that $\mcK_{\alpha_s} = 0$ for all $s\in I$, which proves the claim. On the other hand, by weight arguments the element
\[
\kappa_{0,B,0} = \langle E_r\eta_{w_0(\varpi)},\mcK_{0}F^B \xi_{w_X(\varpi)}\rangle 
\]
will be zero unless 
\[
w_0(\varpi) + \alpha_r = w_0(\varpi) -w_Xw_0(\wt(B)).
\]
This forces $\wt(B)  = -\Theta \tau\tau_0(\alpha_r)$. As $\Theta(\alpha_r) = \alpha_r$ for $r\in X$, it follows that in this case no such $B$ exist, and hence $Y_{\varpi,r}=0$. On the other hand, this also shows that $Y_{\varpi,r} \in U(\mfb)_{-\Theta \tau\tau_0(\alpha_r)}$ for $r\in I \setminus X$.
\end{proof}

We will need some more detailed information in the case $r\in I \setminus X$. In the following, we write $Q_X\subseteq Q$ for the root lattice of $\mfg_X$, generated by the $\alpha_r$ with $r\in X$. 

\begin{Def}
For $r\in I \setminus X$, we define 
\[
\Lambda_r = \varpi_r - \Theta(\varpi_r) \in P^+.
\]
\end{Def}

\begin{Prop}\label{PropBor2}
The following identity holds for $r\in I \setminus X$: $\hat{\Phi}(f_{\Lambda_r,r}) = t_{\Lambda_r,r} B_{\tau\tau_0(r)}$ with $t_{\Lambda_r,r}\neq 0$.
\end{Prop}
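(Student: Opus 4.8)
The plan is to feed $\varpi=\Lambda_r$ into Lemma \ref{LemBor1} and then pin down the correction term $Y_{\Lambda_r,r}$ by combining the membership $\hat\Phi(f_{\Lambda_r,r})\in\msB$ with a weight argument carried out inside $\msB$. First I would simplify the group-like prefactor of Lemma \ref{LemBor1}. Since $\Lambda_r=\varpi_r-\Theta(\varpi_r)$ and $\Theta^2=\id$, we get $\Theta(\Lambda_r)=-\Lambda_r$; as $\Theta$ commutes with $\tau$ by Lemma \ref{LemCommThetaTau}, this gives $\tau(\Lambda_r)+\Theta\tau(\Lambda_r)=\tau\bigl(\Lambda_r+\Theta(\Lambda_r)\bigr)=0$, so $K_{\tau(\Lambda_r)+\Theta\tau(\Lambda_r)}=1$. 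Hence Lemma \ref{LemBor1} specializes to
\[
\hat\Phi(f_{\Lambda_r,r}) = t_{\Lambda_r,r}\,F_{\tau\tau_0(r)} + Y_{\Lambda_r,r},\qquad Y_{\Lambda_r,r}\in U_q(\mfb)_{-\Theta\tau\tau_0(\alpha_r)}.
\]
The leading term $t_{\Lambda_r,r}F_{\tau\tau_0(r)}$ already matches the $F$-part of $B_{\tau\tau_0(r)}$, so everything reduces to identifying the positive-Borel correction. For the nonvanishing of $t_{\Lambda_r,r}$ I would invoke Lemma \ref{LemBor1}(1): it suffices to check $E_r\eta_{w_0(\Lambda_r)}\neq 0$, which is a direct evaluation of the $\alpha_r$-string through the lowest weight vector $\eta_{w_0(\Lambda_r)}$ (using $w_0(\alpha_r^\vee)=-\alpha_{\tau_0(r)}^\vee$ and the identity $\Theta(\varpi_r)=-\varpi_{\tau(r)}$ for $r\in I\setminus X$, a short consequence of Lemma \ref{LemCommThetaTau}).

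The substance of the proof is the identification $Y_{\Lambda_r,r}=t_{\Lambda_r,r}\,c_{\tau\tau_0(r)}X_{\tau\tau_0(r)}K_{\tau\tau_0(r)}^{-1}$. Recall from Definition \ref{DefElBr} that $B_{\tau\tau_0(r)}=F_{\tau\tau_0(r)}+c_{\tau\tau_0(r)}X_{\tau\tau_0(r)}K_{\tau\tau_0(r)}^{-1}$, where $X_{\tau\tau_0(r)}\in U_q(\mfg)_{-\Theta(\alpha_{\tau\tau_0(r)})}=U_q(\mfn^+)_{-\Theta\tau\tau_0(\alpha_r)}$. Thus $X_{\tau\tau_0(r)}K_{\tau\tau_0(r)}^{-1}$ and $Y_{\Lambda_r,r}$ lie in one and the same weight space $U_q(\mfb^+)_{-\Theta\tau\tau_0(\alpha_r)}$. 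I would then form $D:=\hat\Phi(f_{\Lambda_r,r})-t_{\Lambda_r,r}B_{\tau\tau_0(r)}$; the $F_{\tau\tau_0(r)}$-terms cancel, leaving $D=Y_{\Lambda_r,r}-t_{\Lambda_r,r}c_{\tau\tau_0(r)}X_{\tau\tau_0(r)}K_{\tau\tau_0(r)}^{-1}\in U_q(\mfb^+)_{-\Theta\tau\tau_0(\alpha_r)}$. By Corollary \ref{CorIncUSym} the image of $\hat\Phi$ lies in $\msB$, and $B_{\tau\tau_0(r)}\in\msB$, so $D\in\msB\cap U_q(\mfb^+)$.

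To conclude I would use the triangular (PBW) decomposition of the Letzter coideal $\msB$ (Letzter, Kolb): any element of $\msB$ lying in the positive Borel $U_q(\mfb^+)$ is supported in ad-weights belonging to $Q_X$ (concretely, $\msB\cap U_q(\mfb^+)$ is spanned by the positive part of $U_q'(\mfg_X)$ times $U_q(\mfh^\Theta)$). But $-\Theta\tau\tau_0(\alpha_r)=w_X(\alpha_{\tau_0(r)})\in\alpha_{\tau_0(r)}+Q_X$, and this is \emph{not} in $Q_X$, since $\tau_0$ preserves $X$ by Lemma \ref{LemCommTau0Tau} and hence $\tau_0(r)\in I\setminus X$. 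Therefore $D=0$, i.e.\ $\hat\Phi(f_{\Lambda_r,r})=t_{\Lambda_r,r}B_{\tau\tau_0(r)}$ with $t_{\Lambda_r,r}\neq 0$.

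The main obstacle is the structural step in the last paragraph: the assertion that $\msB\cap U_q(\mfb^+)$ contains no nonzero element whose weight falls outside $Q_X$. This is precisely where the PBW/triangular decomposition of the Letzter coideal is indispensable — the generators $B_s$ ($s\in I\setminus X$) each carry a genuine $F_s$ contribution, and one must know that in a product of generators landing in the positive Borel these negative contributions can only cancel against the $F_s$ with $s\in X$ coming from $U_q'(\mfg_X)$, thereby confining the surviving weights to $Q_X$. Once this is granted, the weight mismatch makes the vanishing of $D$ automatic and the rest is bookkeeping with the already-established Lemma \ref{LemBor1} and Corollary \ref{CorIncUSym}.
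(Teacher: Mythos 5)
You follow the paper's own proof essentially line by line: specialize Lemma \ref{LemBor1} at $\varpi=\Lambda_r$ (the Cartan factor trivializes because $\Theta(\Lambda_r)=-\Lambda_r$), subtract $t_{\Lambda_r,r}B_{\tau\tau_0(r)}$, observe that the difference lies in $\msB\cap U_q(\mfb)_{\beta}$ for $\beta=-\Theta\tau\tau_0(\alpha_r)\notin Q_X^+$, and conclude by the vanishing of this intersection. That vanishing is not something you need to defer to Letzter--Kolb as a known fact: it is precisely the lemma the paper states and proves immediately after the proposition, by the filtration argument (based on Kolb's basis $\{B_J\}$ of $\msB$ as a left $U_q(\mfn_X)U_q(\mfh^{\Theta})$-module) that you sketch in your last paragraph; and your computation $-\Theta\tau\tau_0(\alpha_r)=w_X(\alpha_{\tau_0(r)})\in\alpha_{\tau_0(r)}+Q_X$ is an adequate replacement for the paper's argument by contradiction that $\beta\notin Q_X^+$.

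The genuine gap is the step you label ``a direct evaluation'': the non-vanishing of $t_{\Lambda_r,r}$. By item (1) of Lemma \ref{LemBor1} this requires $E_r\eta_{w_0(\Lambda_r)}\neq 0$, i.e. $(w_0\Lambda_r,\alpha_r^{\vee})<0$, i.e. $(\Lambda_r,\alpha_{\tau_0(r)}^{\vee})>0$. Carrying out the evaluation with the very identities you cite, one finds $\Lambda_r=\varpi_r+\varpi_{\tau(r)}$, hence $(\Lambda_r,\alpha_{\tau_0(r)}^{\vee})=\delta_{r,\tau_0(r)}+\delta_{\tau(r),\tau_0(r)}$, which is zero whenever $\tau_0(r)\notin\{r,\tau(r)\}$. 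This genuinely happens: for the $AI$ diagram of $\mfsl(3,\R)$ and $r=1$ one has $\Lambda_1=2\varpi_1$, and $E_1$ kills the lowest weight vector of $V_{2\varpi_1}$, so $f_{\Lambda_1,1}=0$ and no identity $\hat{\Phi}(f_{\Lambda_1,1})=t\,B_{2}$ with $t\neq 0$ can hold; the same failure occurs in types $AII$, $EI$, and at the fork nodes of split $D_{\ell}$ with $\ell$ odd (the step is harmless exactly when $\tau_0(r)\in\{r,\tau(r)\}$, e.g. whenever $\tau_0=\id$ or $\tau=\tau_0$). In fairness, the paper's proof has the same defect --- it invokes $(\Lambda_r,\alpha_r)>0$, which is the pairing against the wrong simple root --- so your proposal reproduces rather than creates the problem. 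The repair is to replace $\Lambda_r$ by $\Lambda_{\tau_0(r)}$ throughout: then $(\Lambda_{\tau_0(r)},\alpha_{\tau_0(r)}^{\vee})\geq 1$, while still $\Theta(\Lambda_{\tau_0(r)})=-\Lambda_{\tau_0(r)}$ and the correction term keeps the weight $-\Theta\tau\tau_0(\alpha_r)$, so the rest of your argument applies verbatim and gives $\hat{\Phi}(f_{\Lambda_{\tau_0(r)},r})=t\,B_{\tau\tau_0(r)}$ with $t\neq 0$; since $r\mapsto\tau\tau_0(r)$ permutes $I\setminus X$, this corrected version is all that the proof of Theorem \ref{TheoEqUnivEnv} requires.
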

\begin{proof}
Clearly $E_r \eta_{w_0(\Lambda_r)}\neq 0$ since $(\Lambda_r,\alpha_r)>0$. This shows that $t_{\Lambda_r,r} \neq 0$. Also note that in this case
\[
\hat{\Phi}(f_{\Lambda_r,r}) = t_{\Lambda_r,r}F_{\tau\tau_0(r)}+ Y_{\Lambda_r,r}
\]
since $\Theta(\Lambda_r) = - \Lambda_r$. Hence using notation as in Definition \eqref{DefElBr}, we can write $\hat{\Phi}(f_{\Lambda_r,r}) = t_{\Lambda_r,r}B_{\tau\tau_0(r)} + Y_r'$ where
\[
Y_r' = Y_{\Lambda_r,r} - t_{\Lambda_r,r} c_{\tau\tau_0(r)}X_{\tau\tau_0(r)} K_{\tau\tau_0(\alpha_r)}^{-1}.
\]
We clearly still have $Y_r' \in U(\mfb)_{\beta}$ with $\beta = -\Theta\tau\tau_0(\alpha_r)$. We claim that $\beta \notin Q_X^+$. Indeed, if $\beta$ were in $Q_X^+$, it would follow from $\Theta_{\mid Q_X^+} = \id$ and $\Theta^2=\id$ that $\beta = -\tau\tau_0(\alpha_r)$, which is impossible. Since $Y_r' \in \msB$, we then find $Y_r' = 0$ by the following lemma. 
\end{proof}

\begin{Lem}
Let $\beta \in Q^+ \setminus Q_X^+$. Then $\msB \cap U(\mfb)_{\beta} = \{0\}$.
\end{Lem}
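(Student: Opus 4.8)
The claim to prove is: for $\beta \in Q^+ \setminus Q_X^+$, we have $\msB \cap U(\mfb)_{\beta} = \{0\}$. Let me think carefully about what structure I can exploit.

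Recall $\msB$ is generated by $U_q'(\mfg_X)$ (generated by $E_r, F_r, K_r$ for $r \in X$), by $U_q(\mfh^{\Theta})$, and by the elements $B_r = F_r + c_r X_r K_r^{-1}$ for $r \in I \setminus X$, where $X_r = -z_{\tau(r)}\Ad(T_{w_X})(E_{\tau(r)})$.

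The key point is the weight structure. An element of $\msB$ lying in $U_q(\mfb)_\beta$ with $\beta \in Q^+$ (not $Q^-$) is already a strong constraint, because $\msB$ is built from the $B_r$, which contain $F_r$ (weight $-\alpha_r$, in $Q^-$) plus a term $c_r X_r K_r^{-1}$ whose weight is $-\Theta(\alpha_r)$. The issue is that $-\Theta(\alpha_r)$ can be positive.

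**What I would need to understand.**

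The real content is the decomposition structure of $\msB$. There is a known triangular/PBW-type decomposition for Letzter coideal subalgebras: $\msB$ has a basis given by products $F^{\mathbf{a}} \cdot (\text{element of } U_q'(\mfg_X) U_q(\mfh^{\Theta})) \cdot (\text{the } B_r\text{-monomials})$, and crucially the projection of $\msB$ onto weight spaces relative to the grading by $Q$ respects a filtration. Concretely, in Kolb's work (\cite{Kol14}) and Balagović–Kolb (\cite{BK15b}), $\msB$ is shown to be a free module over $U_q(\mfh^{\Theta})$ with PBW basis built from monomials in the $B_r$ ($r \in I \setminus X$), in $U_q'(\mfg_X)$, suitably ordered.

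**The approach.**

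My plan is to use the weight grading together with the fact that $U_q(\mfh^\Theta) \subseteq U_q(\mfb)_0$. First I would observe that $\msB \subseteq U_q^{\geq}$ fails — so instead I must use the $Q$-grading of $U_q(\mfg)$ itself. Writing any element $b \in \msB \cap U_q(\mfb)_\beta$, I would expand $b$ in the PBW basis of $\msB$ over $U_q(\mfh^\Theta)$ and examine the leading terms with respect to the total degree. The generators $B_r$ have ``lowest'' component $F_r$ of weight $-\alpha_r$ and ``other'' component of weight $-\Theta(\alpha_r)$; the generators of $U_q'(\mfg_X)$ have weights in $Q_X$. A monomial in these generators, multiplied by an element of $U_q(\mfh^\Theta)$ (weight $0$), produces terms whose weights are sums of $-\alpha_r$, $-\Theta(\alpha_r)$ (for the various choices within each $B_r$), and elements of $Q_X$. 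The decisive fact is that for $r \in I\setminus X$, one has $-\Theta(\alpha_r) = -\tau(\alpha_r) + (\text{something in } Q_X^+)$ and $\tau$ permutes $I \setminus X$, so that \emph{every} weight appearing has the property: if its component along the $I \setminus X$ simple roots (modulo $Q_X$) is nonzero, then it is a nontrivial combination. I would show that the set of achievable weights $\beta \in Q^+$ is exactly $Q_X^+$, by proving that any term of positive weight outside $Q_X^+$ would require the ``$X_r$'' components, but tracking the $Q/Q_X$ image shows these contribute weights whose $I \setminus X$ part is $-\tau(\alpha_r) \bmod Q_X$, which lands in $Q^-$ modulo $Q_X$, contradicting $\beta \in Q^+ \setminus Q_X^+$.

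**The main obstacle.**

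The hard part will be controlling the cross-terms: in a product of several $B_r$'s, cancellations could in principle conspire so that a genuinely mixed-degree element lands in a single weight space $U_q(\mfb)_\beta$ with $\beta \in Q^+ \setminus Q_X^+$. To rule this out cleanly I expect I would reduce modulo $Q_X$, i.e. pass to the quotient lattice $Q / Q_X$ and track only the image $\bar\beta$ of weights there. In $Q/Q_X$, the generators of $U_q'(\mfg_X)$ and of $U_q(\mfh^\Theta)$ contribute $0$, while $B_r$ contributes $\overline{-\alpha_r}$ from its $F_r$-part and $\overline{-\Theta(\alpha_r)} = \overline{-\tau(\alpha_r)}$ from its $X_r$-part (using $\Theta(\alpha_r) = -w_X\tau(\alpha_r) \equiv \tau(\alpha_r) \bmod Q_X$ by Lemma \ref{LemCommThetaTau}). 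Both images lie in the \emph{negative} cone $\overline{-(Q^+)}$ of $Q/Q_X$ spanned by the $\overline{\alpha_r}$, $r \in I \setminus X$, which is a genuine cone since $\tau$ preserves $I \setminus X$. Hence every element of $\msB$ projects into the span of antidominant combinations, so $\bar\beta \in \overline{-Q^+}$; but $\beta \in Q^+ \setminus Q_X^+$ gives $\bar\beta \in \overline{Q^+} \setminus \{0\}$, and since $\overline{Q^+} \cap \overline{-Q^+} = \{0\}$ in the free abelian group $Q/Q_X$, this forces $b = 0$. Making this cone argument rigorous — in particular verifying that the images $\{\overline{\alpha_r} : r \in I \setminus X\}$ are linearly independent in $Q/Q_X$ and that no nonzero nonnegative combination equals a nonpositive one — is the step requiring the most care, but it follows from the fact that $\{\alpha_r : r \in I\}$ is a basis of $Q$ and $Q_X$ is spanned by the subset $\{\alpha_r : r \in X\}$.
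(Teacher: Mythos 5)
There is a genuine gap, and it sits at the heart of your cone argument: a sign error in the reduction of $\Theta(\alpha_r)$ modulo $Q_X$. By Lemma \ref{LemCommThetaTau} we have $\Theta(\alpha_r) = -w_X\tau(\alpha_r)$, and since any element of $W_X$ changes a weight only by an element of $Q_X$, the correct reduction is $\Theta(\alpha_r) \equiv -\tau(\alpha_r) \bmod Q_X$, hence
\[
-\Theta(\alpha_r) \;=\; w_X\tau(\alpha_r) \;=\; \tau(\alpha_r) + \bigl(\text{element of } Q_X^+\bigr) \;\equiv\; +\,\tau(\alpha_r) \bmod Q_X,
\]
not $-\tau(\alpha_r)$ as you assert. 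In fact $-\Theta(\alpha_r) = w_X\tau(\alpha_r)$ is a \emph{positive} root lying in $Q^+ \setminus Q_X^+$, and the component $c_r X_r K_r^{-1}$ of $B_r$ lies in $U_q(\mfb)_{-\Theta(\alpha_r)}$. So in $Q/Q_X$ the two components of $B_r$ contribute $-\overline{\alpha_r}$ and $+\overline{\alpha_{\tau(r)}}$: one negative and one positive. Your central claim --- that every weight occurring in an element of $\msB$ reduces into the negative cone $\overline{-Q^+}$ --- is therefore false, and the punchline $\overline{Q^+} \cap \overline{-Q^+} = \{0\}$ gives nothing: a product $B_{j_1}\cdots B_{j_n}$ has weight components reducing to arbitrary mixed-sign sums, e.g.\ its ``all $X_r$-part'' component reduces to $\sum_k \overline{\alpha_{\tau(j_k)}}$, a nonzero element of the positive cone.

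This is not a reparable typo, because the lemma cannot be proved by tracking weights modulo $Q_X$ at all. Elements of $\msB$ genuinely have components in weight spaces $U_q(\mfb)_\beta$ with $\beta \in Q^+\setminus Q_X^+$ ($B_r$ itself is an example); the content of the lemma is that such components can never occur \emph{alone}, i.e.\ that the negative-weight leading terms of a combination $\sum_J T_J B_J$ cannot all cancel. That is a linear-independence statement, not a weight statement, and the paper proves it structurally: by \cite{Kol14}*{Proposition 6.2} the monomials $B_J$, $J \in \mathcal{J}$, form a basis of $\msB$ as a left $U_q(\mfn_X)U_q(\mfh^{\Theta})$-module, and $B_J - F_J \in U_q(\mfb)\,\mathcal{F}^{|J|-1}(U_q(\mfn^-))$ for the degree filtration $\mathcal{F}^{\bullet}$ of $U_q(\mfn^-)$. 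Writing $Y = \sum_J T_J B_J \in \msB \cap U_q(\mfb)_{\beta}$ with $\beta \notin Q_X^+$, some $T_J \neq 0$ must have $|J| = N > 0$ (otherwise $Y \in U_q(\mfn_X)U_q(\mfh^{\Theta})$, whose weights lie in $Q_X^+$); then $Y \in U_q(\mfb)$ forces $\sum_J T_J F_J \in U_q(\mfb)\,\mathcal{F}^{N-1}(U_q(\mfn^-))$, contradicting the freeness of $U_q(\mfg)$ as a left $U_q(\mfb)$-module on the basis $\{F_J\}$. Your preamble correctly identifies this PBW structure as the relevant input, but the argument you actually run never uses it; it rests entirely on the false cone containment.
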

\begin{proof}
As usual, let us put $B_r = F_r$ for $r \in X$. For $J = (j_1,\ldots,j_n)$ with $j_k \in I$, write $F_J = F_{j_1}\ldots F_{j_n}$ and $B_J = B_{j_1}\ldots B_{j_n}$, and put $|J|=n$. Let $\mathcal{J}$ be a collection of indices such that $\{F_J\mid J \in \mathcal{J}\}$ is a basis of $U(\mfn^-)$. Let $U_q(\mfn_X)$ be the unital algebra generated by the $E_r$ with $r\in X$, and let as before $U_q(\mfh^{\Theta})$ be the algebra generated by the $K_{\omega}$ with $\omega \in P$ and $\Theta(\omega) = \omega$. In \cite{Kol14}*{Proposition 6.2} it is shown that $\{B_J\mid J \in \mathcal{J}\}$ is a basis  for $\msB$ as a left $U_q(\mfn_X)U_q(\mfh^{\Theta})$-module. This uses the following fact: if we take $B_J$ with $|J|=n$, then $B_J-F_J \in U(\mfb)\mathcal{F}^{n-1}(U(\mfn^-))$, where $\mathcal{F}^{\bullet}$ is the filtration of $U(\mfn^-)$ defined by $\mathcal{F}^n(U(\mfn^-)) = \mathrm{span}\{F_J\mid |J|\leq n\}$. 

Assume now that $Y \in U(\mfb)_{\beta}$ is a non-zero element with $\beta\notin Q_X^+$, and assume $Y \in \msB$. Write 
\[
Y = \sum_{J\in \mathcal{J}} T_JB_J,
\]
for uniquely determined $T_J \in U_q(\mfn_X)U_q(\mfh^{\Theta})$ with only finitely many non-zero. Since $Y$ is non-zero and weights for $U_q(\mfn_X)U_q(\mfh^{\Theta})$ lie in $Q_X^+$, there must exist $J \in \mathcal{J}$ with $|J|>0$ and $T_J\neq 0$. Let 
\[
N = \max \{|J|\mid T_J \neq 0\} >0.
\]
Then we have
\[
Y-\sum_{J\in \mathcal{J}} T_JF_J \in U(\mfb) \mathcal{F}^{N-1}(U(\mfn^-)).
\]
Since $Y \in U(\mfb)_{\beta}$, this would imply that also
\[
\sum_{J\in \mathcal{J}} T_JF_J \in U(\mfb) \mathcal{F}^{N-1}(U(\mfn^-)).
\]
However, as the $F_J$ form a basis of $U_q(\mfg)$ as a left $U_q(\mfb)$-module, this implies $T_J=0$ for all $J$ with $|J|= N$, in contradiction with the definition of $N$. This concludes the proof.
\end{proof}

\begin{Theorem}\label{TheoEqUnivEnv}
The equality $\mcU_q(\mfk') = \mcU_q(\mfu^{\theta})$ holds. 
\end{Theorem}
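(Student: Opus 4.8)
The inclusion $\mcU_q(\mfk')\subseteq\mcU_q(\mfu^{\theta})$ is Corollary \ref{CorIncUSym} after passing to weak closures, so the whole task is the reverse inclusion. My plan is to reduce everything to an equality of weak closures on the $\hat{\Phi}$-side. By Lemma \ref{LemEqualitySwitch} we have $U_q^{\fin}(\mfk') = R(\Ad(K_{\omega_0})(\hat{\Phi}(\mcO_q(U))))$, and $R$, $\Ad(K_{\omega_0})$ are weakly bicontinuous (anti-)automorphisms of $\mcU_q(\mfu)$ carrying $\msB$ to $\widetilde{\msB}=\Ad(K_{\omega_0})(\msB)$ and then to $U_q(\mfu^{\theta})=R(\widetilde{\msB})$. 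Hence $\mcU_q(\mfk')=\mcU_q(\mfu^{\theta})$ is equivalent to the equality of weak closures $\overline{\hat{\Phi}(\mcO_q(U))} = \overline{\msB}$. The inclusion $\subseteq$ already holds since $\hat{\Phi}(\mcO_q(U))\subseteq\msB$ by \cite{Kol17}*{Theorem 3.11} (as used in Corollary \ref{CorIncUSym}). So everything reduces to showing that $\overline{\hat{\Phi}(\mcO_q(U))}$ contains each algebra generator of $\msB$, namely the $B_r$ for $r\in I\setminus X$, the Cartan part $U_q(\mfh^{\Theta})$, and $U_q'(\mfg_X)$.

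The first two families come out directly from the preceding lemmas. Proposition \ref{PropBor2} gives $\hat{\Phi}(f_{\Lambda_r,r}) = t_{\Lambda_r,r}B_{\tau\tau_0(r)}$ with $t_{\Lambda_r,r}\neq 0$, and since $\tau\tau_0$ permutes $I\setminus X$ (Lemma \ref{LemCommTau0Tau}) these exhaust all $B_s$, $s\in I\setminus X$. Lemma \ref{LemCart} gives $\hat{\Phi}(k_{\varpi}) = t_{\varpi}K_{\tau(\varpi)+\Theta(\tau(\varpi))}$ with $t_{\varpi}\neq 0$, so $\hat{\Phi}(\mcO_q(U))$ contains the grouplikes $K_{\mu+\Theta(\mu)}$ for all $\mu=\tau(\varpi)\in P^+$, hence — taking products and blockwise inverses inside the closure — for all $\mu$ in the subgroup $L=\{\mu+\Theta(\mu)\mid\mu\in P\}$ of $P^{\Theta}$. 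To reach $U_q'(\mfg_X)$ I would use Lemma \ref{LemBor1} with $r\in X$: there the error term $Y_{\varpi,r}$ vanishes, so $\hat{\Phi}(f_{\varpi,r}) = t_{\varpi,r}F_{\tau\tau_0(r)}K_{\tau(\varpi)+\Theta(\tau(\varpi))}$, and choosing $\varpi$ with $E_r\eta_{w_0(\varpi)}\neq 0$ (e.g.\ $\varpi=\varpi_{\tau_0(r)}$, since then $(\varpi,\alpha_{\tau_0(r)}^{\vee})>0$) makes $t_{\varpi,r}\neq 0$; as $\tau\tau_0$ permutes $X$ this yields $F_s$ times an invertible grouplike, for every $s\in X$.

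It then remains to peel off the invertible Cartan factors and promote these data to the honest generators inside the weak closure; to keep the $*$-structure standard I would first transport to $U_q^{\fin}(\mfk')$ via $R\circ\Ad(K_{\omega_0})$ and argue there, where $\mcU_q(\mfk')$ is a genuine bicommutant in $\prod_{\varpi}\End(V_{\varpi})$ and blockwise functional calculus for the compact $*$ is available. First, for $\omega\in P^{\Theta}$ one has $2\omega=\omega+\Theta(\omega)\in L$, so $K_{2\omega}$ lies in the closure and $K_{\omega}=(K_{2\omega})^{1/2}$ does too; this gives all of $U_q(\mfh^{\Theta})$, and in particular $K_s=K_{\alpha_s}$ for $s\in X$ (note $\Theta(\alpha_s)=\alpha_s$ by Lemma \ref{LemCommThetaTau} and $\tau|_X=-w_X$). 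Dividing $F_sK_{\mu+\Theta\mu}$ by the now-available invertible $K_{\mu+\Theta\mu}$ isolates $F_s$, whence $E_s=K_sF_s^{*}$ using $F_s^{*}=K_s^{-1}E_s$; this produces all of $U_q'(\mfg_X)$. Combining, $\overline{\hat{\Phi}(\mcO_q(U))}$ contains every generator of $\msB$, so the two weak closures coincide and, transporting back, $\mcU_q(\mfk')=\mcU_q(\mfu^{\theta})$. I expect the main obstacle to be precisely this last, soft but delicate, step: justifying the blockwise functional-calculus manipulations (inverses of invertibles, positive square roots) within the weak closure, and checking that the grouplikes $K_{\mu+\Theta\mu}$ generate $P^{\Theta}$ up to $2$-divisibility so that no part of the Cartan or of $U_q'(\mfg_X)$ is missed.
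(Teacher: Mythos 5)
Your proposal is correct and follows essentially the same route as the paper: the same reduction via Lemma \ref{LemEqualitySwitch} to an equality of weak closures, the same three inputs (Lemma \ref{LemCart}, Lemma \ref{LemBor1} with $r\in X$, Proposition \ref{PropBor2}), and the same bicommutant characterization of the weak closure at the end. The only difference is that you spell out explicitly (square roots of positive grouplikes, peeling off invertible Cartan factors, $E_s=K_sF_s^*$) what the paper compresses into ``this easily implies''; these details are sound and justified by the blockwise functional calculus available in the bicommutant.
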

\begin{proof}
From Corollary \eqref{CorIncUSym} we already know that $\subseteq$ holds. For the reverse inclusion, it is by Lemma \ref{LemEqualitySwitch} sufficient to show that the weak closure of the image of $\hat{\Phi}$ equals the weak closure of $\msB$. However, by Lemma \ref{LemCart}, Lemma \ref{LemBor1} and Proposition \ref{PropBor2} we know that the range of $\hat{\Phi}$ contains the $K_{\tau(\varpi) + \Theta(\tau(\varpi))}$ for all $\varpi$, the $F_{\tau\tau_0(r)}K_{\tau(\varpi_r)+\Theta(\tau(\varpi_r))}$ for all $r\in X$, and the $B_r$ for all $r\in I \setminus X$. Since the range of $\hat{\Phi}$ is $*$-closed, its weak closure equals its bicommutant. This easily implies that the weak closure of the range of $\hat{\Phi}$ will equal the weak closure of $\msB$. 
\end{proof}

\subsection{Comparison of the coideal subalgebras $\mcO_q(K\backslash U)$ and $\mcO_q(U^{\theta}\backslash U)$}

Let again $\msK$ be the $*$-compatible $\nu$-modified universal $K$-matrix defined in \eqref{EqModKDef}, and let $\mcO_q(K\backslash U) \subseteq \mcO_q(U)$ be the right coideal $*$-subalgebra as constructed from the map $\phi$ in Theorem \ref{TheoOneToOneCorr}. Let $\mcO_q(U^{\theta}\backslash U) = \widehat{U}_q(\mfu^{\theta})\subseteq \mcO_q(U)$ be the  right coideal $*$-subalgebra dual to $U_q(\mfu^{\theta}) \subseteq U_q(\mfu)$ as defined by the general duality in Definition \ref{DefDualCoid}. We will show the following theorem.

\begin{Theorem}\label{TheoMainSymmFunct}
The equality $\mcO_q(K\backslash U) = \mcO_q(U^{\theta}\backslash U)$ holds, except possibly for $U^{\theta}\subseteq U$ containing a component of type $EIII$, $EIV$,  $EVI$, $EVII$ or $EIX$.
\end{Theorem}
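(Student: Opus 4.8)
The plan is to split the asserted equality into the two inclusions, obtain one of them formally from the results already in hand, and reduce the other to an explicit spherical-vector computation that is carried out by case analysis.

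For the inclusion $\mcO_q(K\backslash U)\subseteq\mcO_q(U^{\theta}\backslash U)$ I would argue entirely by duality, with no case distinction. Proposition \ref{PropInclusion} gives $\mcO_q(K\backslash U)\subseteq\widehat{U}_q^{\fin}(\mfk')$. The dualizing operation $J\mapsto\hat{J}$ of Proposition \ref{PropDualCoideal} sees only the weak closure of $J$: for a fixed $f\in\mcO_q(U)$ the assignment $X\mapsto f(X-)$ factors through the finitely many $\pi_{\varpi}(X)$ on the support of $f$, hence is weakly continuous, so the condition $f(X-)=\varepsilon(X)f$ passes from $U_q^{\fin}(\mfk')$ to its weak closure $\mcU_q(\mfk')$. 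Thus $\widehat{U}_q^{\fin}(\mfk')=\widehat{\mcU_q(\mfk')}$, and by Theorem \ref{TheoEqUnivEnv} we have $\mcU_q(\mfk')=\mcU_q(\mfu^{\theta})$, so $\widehat{U}_q^{\fin}(\mfk')=\widehat{\mcU_q(\mfu^{\theta})}=\widehat{U}_q(\mfu^{\theta})=\mcO_q(U^{\theta}\backslash U)$. Since Theorem \ref{TheoEqUnivEnv} is exception-free, this inclusion holds in full generality, which matches the fact that the exceptional types only enter the statement of the present theorem.

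For the reverse inclusion I would work at the level of Peter--Weyl multiplicity spaces. Both algebras are right coideal $*$-subalgebras of $\mcO_q(U)$, so each has the form $\bigoplus_{\varpi}W_{\varpi}^{*}\otimes V_{\varpi}$ for left-index subspaces $W_{\varpi}\subseteq V_{\varpi}$; for $\mcO_q(U^{\theta}\backslash U)=\widehat{U}_q(\mfu^{\theta})$ the space $W_{\varpi}^{\theta}$ consists precisely of the $U_q(\mfu^{\theta})$-fixed (spherical) vectors in $V_{\varpi}$, whose dimension is governed by the restricted root datum of the Letzter--Kolb pair. The inclusion already proved gives $W_{\varpi}^{K}\subseteq W_{\varpi}^{\theta}$, so it suffices to show $\dim W_{\varpi}^{K}\ge\dim W_{\varpi}^{\theta}$ for all $\varpi$, i.e. that $\phi$ produces enough spherical matrix coefficients. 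Because $\phi$ is $U_q(\mfu)$-equivariant I would reduce this, as in Theorem \ref{TheoMainFlag2} and \cite{DCN15}*{Proposition 2.3}, to the lowest generators: with $a_{\varpi}=Z(\xi_{\varpi},\xi_{\varpi})$ as in \eqref{EqDefa} and using \eqref{EqImZphiMatrix}, one has $\phi(a_{\varpi})=\sum_i U_{\varpi}(e_i,\xi_{\varpi})^{*}U_{\varpi}(\msK e_i,\xi_{\varpi})$, so that the image $\phi(a_{\varpi})$ is a sum of spherical-function-type elements built from the range of $\pi_{\varpi}(\msK)$, and the task becomes to show that these generate $\mcO_q(U^{\theta}\backslash U)$ as a $U_q(\mfu)$-module, exhausting each spherical multiplicity space.

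The main obstacle is exactly this last step: one must verify that $\pi_{\varpi}(\msK)$, built from the quasi-$K$-matrix and the twisting factors of \eqref{EqModKDef}, does not annihilate the relevant spherical directions, so that the spherical components of $\phi(a_{\varpi})$ are non-zero and span what is required. This is the quantum analogue of the non-vanishing of the leading spherical coefficient and does not follow formally; I expect to establish it case by case using the analysis of spherical vectors in quantized exterior products of Appendix \ref{Ap2} (together with the explicit computation for type $FII$ in Appendix \ref{Ap3}). It is precisely for the large-rank exceptional symmetric spaces $EIII$, $EIV$, $EVI$, $EVII$ and $EIX$ that this verification is unavailable, which is why these types are excluded from the statement.
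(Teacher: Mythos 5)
Your first half is exactly the paper's argument for the inclusion $\mcO_q(K\backslash U)\subseteq\mcO_q(U^{\theta}\backslash U)$ (Proposition \ref{PropInclusion}, Theorem \ref{TheoEqUnivEnv}, and the biduality of Proposition \ref{PropDualCoideal}), and your remark that this direction is exception-free is correct. The gap is in the reverse inclusion. Your reduction is legitimate as far as it goes --- since $\mcO_q(Z_{\nu})$ is generated as a $U_q(\mfu)$-module by the $a_{\varpi}$, the range of $\phi$ is indeed the module generated by the $\phi(a_{\varpi})$ --- but ``show the spherical components of the $\phi(a_{\varpi})$ are non-zero and span what is required, case by case'' is not a workable plan as stated: the spherical weights form an infinite cone, so your verification is an infinite list of non-vanishing statements, and the appendices you lean on only treat AII, CII, DIII (Appendix \ref{Ap2}) and FII (Appendix \ref{Ap3}). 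What makes the problem finite in the paper is a reduction you never set up: by the Gelfand-pair (multiplicity one) property of Theorem \ref{ThmMultFree}, Sugiura's description of the spherical cone in terms of the fundamental spherical weights $\mu_r$, and the absence of zero-divisors in $\mcO_q(U)$, the subspaces $\mcO_q(U^{\theta}\backslash U)_{\mu_r}$ generate $\mcO_q(U^{\theta}\backslash U)$ \emph{as an algebra} (Corollary \ref{CorImGen}); since the range of $\phi$ is a coideal $*$-subalgebra, it then suffices to hit these finitely many subspaces.

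Moreover, the paper's actual non-vanishing computation is not performed on $a_{\varpi}=Z(\xi_{\varpi},\xi_{\varpi})$ but on $Z(\eta_{w_0\varpi},\xi_{\varpi})$ for $\varpi$ vanishing on $X$: this is a highest weight vector of weight $\varpi+\tau(\varpi)$, and $\varepsilon\bigl(\phi(Z(\eta_{w_0\varpi},\xi_{\varpi}))\bigr)=\langle\eta_{w_0\varpi},\msK\xi_{\varpi}\rangle\neq 0$ follows from the form of $\mcK_0=\xi T_{w_X}^{-1}T_{w_0}^{-1}$ by a weight argument (Lemma \ref{LemAppRang}). This uniformly produces every $\mu_r$ with $\tau(r)\neq r$, and every $\mu_r=2\varpi_r$ with $\tau(r)=r$ not adjacent to a black vertex; the only residual difficulty is $\mu_r=\varpi_r$ for $\tau$-fixed $r$ adjacent to a black vertex, where this mechanism only reaches $2\mu_r$. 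For those, the cases BI, BII, DI, DII are settled not by the appendices but by a tensor-product trick: one exhibits $\varpi$ such that $V_{\mu_r}$ is the unique non-trivial spherical constituent of $V_{\tau(\varpi)}\otimes V_{\varpi}$, and uses that $\pi_{\varpi}(\msK)$ is never a scalar (Corollary \ref{CorImContDich}, Lemma \ref{LemOpNotScal}); only then do AII, CII, DIII and FII require the explicit spherical-vector computations of the appendices. Your instinct that the excluded $E$-cases are exactly where no such verification is available is right, but without the finite reduction and these specific mechanisms your outline cannot be completed.
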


The proof of this theorem will not be uniform, and will be subdivided into a separate treatment for different classes, see Proposition \ref{PropMainClass1}, Proposition \ref{PropMainClass2}, Proposition \ref{PropMainClass3} and Proposition \ref{PropMainClass4}. For some cases the proof is straightforward, and for others the proof is very ad hoc and computational. As such, our methods were not strong enough to cover also the mentioned $E$-cases in the above theorem. However, at the end of the section we will sketch a uniform proof for all cases when $q$ is sufficiently close to 1, Theorem \ref{TheoDeform}, based on deformation theory. 

Let us first start with the following easy result. 

\begin{Prop}
The inclusion $\mcO_q(K\backslash U) \subseteq \mcO_q(U^{\theta}\backslash U)$ holds.
\end{Prop}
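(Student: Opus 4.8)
The plan is to deduce this inclusion formally from results already established, with essentially no new computation. First I would invoke Proposition \ref{PropInclusion}, which gives $\mcO_q(K\backslash U) \subseteq \widehat{U}_q^{\fin}(\mfk')$. Thus it suffices to prove $\widehat{U}_q^{\fin}(\mfk') \subseteq \mcO_q(U^{\theta}\backslash U) = \widehat{U}_q(\mfu^{\theta})$, where both sides are the right coideal $*$-subalgebras of $\mcO_q(U)$ dual, in the sense of Definition \ref{DefDualCoid}, to $U_q^{\fin}(\mfk')$ and $U_q(\mfu^{\theta})$ respectively.

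The key observation is that the hat operation of Proposition \ref{PropDualCoideal} depends on a left coideal $*$-subalgebra $J \subseteq \mcU_q(\mfu)$ only through its weak closure. Indeed, for a fixed $f \in \mcO_q(U)$, which is a matrix coefficient supported on only finitely many $V_{\varpi}$, the assignment $X \mapsto f(X-) - \varepsilon(X)f$ is weakly continuous on $\mcU_q(\mfu) = \prod_{\varpi}\End(V_{\varpi})$, since its value on any fixed $g \in \mcO_q(U)$ factors through the projections $\pi_{\varpi}(X)$ for finitely many $\varpi$. Hence for fixed $f$ the set $\{X \mid f(X-) = \varepsilon(X)f\}$ is weakly closed, so the annihilation condition defining $\widehat{J}$ holds for all $X \in J$ if and only if it holds for all $X$ in the weak closure of $J$.

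Applying this to $f \in \widehat{U}_q^{\fin}(\mfk')$, the condition $f(X-) = \varepsilon(X)f$ then holds for all $X$ in the weak closure $\mcU_q(\mfk')$ of $U_q^{\fin}(\mfk')$. By Theorem \ref{TheoEqUnivEnv} we have $\mcU_q(\mfk') = \mcU_q(\mfu^{\theta}) \supseteq U_q(\mfu^{\theta})$, so in particular $f(X-) = \varepsilon(X)f$ for all $X \in U_q(\mfu^{\theta})$, i.e.\ $f \in \widehat{U}_q(\mfu^{\theta}) = \mcO_q(U^{\theta}\backslash U)$. Combining with Proposition \ref{PropInclusion} yields $\mcO_q(K\backslash U) \subseteq \widehat{U}_q^{\fin}(\mfk') \subseteq \mcO_q(U^{\theta}\backslash U)$, as desired.

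There is no serious obstacle here: all the substance is already contained in Theorem \ref{TheoEqUnivEnv} (equality of the weak closures of the two quantized enveloping coideals), and the present statement is a formal dualization. The only point requiring care is the weak-continuity remark, which is routine given that elements of $\mcO_q(U)$ are supported on finitely many isotypical blocks; alternatively one can phrase the same argument purely through items (1)--(3) of Proposition \ref{PropDualCoideal}, using that $\widehat{\widehat{J}}$ is the weak closure of $J$ to conclude that $\widehat{J}$ is unchanged under passing to the weak closure, and then appealing to Theorem \ref{TheoEqUnivEnv} to identify the two weak closures.
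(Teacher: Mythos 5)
Your proof is correct and follows essentially the same route as the paper, which deduces the inclusion in one line from Proposition \ref{PropInclusion}, Theorem \ref{TheoEqUnivEnv}, and general duality for coideals (Proposition \ref{PropDualCoideal}). Your weak-continuity argument (equivalently, the observation that $\widehat{J} = \widehat{\overline{J}}$ via items (1)--(2) of Proposition \ref{PropDualCoideal}) is precisely the "general duality" step the paper leaves implicit, so you have simply spelled out the details of the intended argument.
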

\begin{proof}
This follows by general duality for coideals, Theorem \ref{TheoEqUnivEnv} and Proposition \ref{PropInclusion}.
\end{proof}

To obtain an inclusion in the other direction, we first recall the following classical terminology. Let us write
\[
\mcO_q(U)_{\varpi} = \textrm{linear span}\{U(\xi,\eta)\mid \xi,\eta\in V_{\varpi}\},\qquad \mcO_q(U^{\theta}\backslash U)_{\varpi} = \mcO_q(U)_{\varpi} \cap \mcO_q(U^{\theta}\backslash U)
\]
for spectral subspaces, and call 
\[
m_{q,\varpi} = \frac{\dim(\mcO_q(U^{\theta}\backslash U)_{\varpi})}{\dim(V_{\varpi})}
\]
the \emph{associated multiplicity}. We also use this notation at $q=1$.

\begin{Def}
A positive integral weight $\varpi \in P^+$ is called \emph{spherical} with respect to $U^{\theta}$ if $\mcO(U^{\theta}\backslash U)_{\varpi}\neq 0$. We denote by $P^+_{\spher}$ the set of spherical weights.
\end{Def}

The following theorem states in particular that $U^{\theta}\subseteq U$ and its quantum companion are \emph{Gelfand pairs}, i.e. the multiplicity function is $\{0,1\}$-valued. 

\begin{Theorem}
\label{ThmMultFree}
If $\varpi \in P^+$ and $q>0$, then $m_{q,\varpi} = \delta_{\varpi \in P_s^+}$. 
\end{Theorem}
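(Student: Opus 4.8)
The plan is to reduce the computation of $m_{q,\varpi}$ to the counting of spherical vectors, and then to appeal to the classification of such vectors for Letzter's coideals.

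First I would combine the Peter--Weyl decomposition $\mcO_q(U)_{\varpi} \cong V_{\varpi}^*\otimes V_{\varpi}$, $\xi^*\otimes \eta \mapsto U_{\varpi}(\xi,\eta)$, with the defining characterisation of $\mcO_q(U^{\theta}\backslash U) = \widehat{U}_q(\mfu^{\theta})$ from Definition \ref{DefDualCoid}, namely $f\in \widehat{U}_q(\mfu^{\theta})$ iff $f(X-) = \varepsilon(X)f$ for all $X\in U_q(\mfu^{\theta})$. Unwinding this condition for $f = U_{\varpi}(\xi,\eta)$ against the comultiplication $\Delta U_{\varpi}(\xi,\eta) = \sum_k U_{\varpi}(\xi,e_k)\otimes U_{\varpi}(e_k,\eta)$, and using irreducibility of $V_{\varpi}$ (so that $Y\eta$ exhausts $V_{\varpi}$ when $\eta\neq 0$), shows that membership depends only on the left leg and holds precisely when $\xi$ is a \emph{spherical vector}, i.e. $X\xi = \varepsilon(X)\xi$ for all $X\in U_q(\mfu^{\theta})$, while $\eta$ is unconstrained. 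This also re-confirms the right coideal property. Hence $\mcO_q(U^{\theta}\backslash U)_{\varpi}\cong (V_{\varpi})^{U_q(\mfu^{\theta})}\otimes V_{\varpi}$ and
\[
m_{q,\varpi} = \frac{\dim \mcO_q(U^{\theta}\backslash U)_{\varpi}}{\dim V_{\varpi}} = \dim (V_{\varpi})^{U_q(\mfu^{\theta})},\qquad (V_{\varpi})^{U_q(\mfu^{\theta})} = \{\xi\mid X\xi = \varepsilon(X)\xi\ \forall X\in U_q(\mfu^{\theta})\}.
\]

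Next I would transfer the count to the Letzter coideal $\msB$. Since $U_q(\mfu^{\theta}) = R(\widetilde{\msB})$ with $\widetilde{\msB} = \Ad(K_{\omega_0})(\msB)$, and since $R$ is an involutive $*$-preserving algebra anti-automorphism with $\varepsilon\circ R = \varepsilon$ while $\Ad(K_{\omega_0})$ is an algebra automorphism, the space of $U_q(\mfu^{\theta})$-spherical vectors in $V_{\varpi}$ has the same dimension as the space of vectors fixed up to the counit character by $\msB$: one transports along the grouplike twist $K_{\omega_0}$ and dualises by $R$, using that $\widetilde{\msB}$ is $*$-invariant so that $X^*$ again ranges over the coideal. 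This reduces the statement to a property of the Letzter coideal of \cite{Let99} in the conventions of \cite{Kol14,BK15b}. Finally I would invoke the quantum Cartan--Helgason theorem of Letzter \cites{Let00,Let03}: for $\msB$ with the present parameters (in particular $s=0$) the space of spherical vectors in $V_{\varpi}$ is at most one-dimensional, and is nonzero exactly for the classically spherical weights, so that $\dim(V_{\varpi})^{U_q(\mfu^{\theta})} = \delta_{\varpi\in P^+_{\spher}}$; combined with the displayed identity this gives $m_{q,\varpi} = \delta_{\varpi\in P^+_{\spher}}$.

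The genuinely quantum input, and thus the main obstacle, is the multiplicity-one bound together with the fact that the set of quantum spherical weights is independent of $q$ and coincides with the classical $P^+_{\spher}$. Upper semicontinuity of $\dim\ker$ of the (finitely many, $q$-dependent) linear invariance conditions only yields one inequality for generic $q$, so one cannot bypass Letzter's finer analysis (a quantum Iwasawa decomposition and explicit control of spherical vectors). The remaining bookkeeping in the transfer step — verifying that the counit character of $U_q(\mfu^{\theta})$ is matched, with no spurious character twist, to the character singled out by $\msB$ under the twist $K_{\omega_0}$ and the anti-automorphism $R$ — is routine but must be carried out with care.
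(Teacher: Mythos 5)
Your proposal is correct and follows essentially the same route as the paper: the paper's proof simply notes that the case $q=1$ is classical and that for $q\neq 1$ the statement is exactly Letzter's multiplicity-one/Cartan--Helgason result (\cite{Let00}*{Theorems 4.2 and 4.3}, see also \cite{Let02}*{Theorem 7.8}), which is the same key input you invoke after spelling out the (standard) Peter--Weyl reduction and the transfer between $U_q(\mfu^{\theta})=R(\widetilde{\msB})$ and Letzter's coideal $\msB$. The only point the paper makes explicit that you leave implicit is that Letzter's proofs are written for $q$ an indeterminate and must be checked to remain valid for a scalar $q>0$, which the paper addresses in a footnote; your transfer step also silently uses that dualising by $R$ replaces $\varpi$ by $\tau_0(\varpi)$ and that $P^+_{\spher}$ is $\tau_0$-stable, but this is indeed routine.
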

\begin{proof}
For $q=1$ this is classical. For $q\neq 1$ this is proven\footnote{The references assume that $q$ is an indeterminate, but one can check that the proofs for these specific results are also valid for the case of $q$ a scalar.} in \cite{Let00}*{Theorem 4.2 and Theorem 4.3}, see also \cite{Let02}*{Theorem 7.8}. 
\end{proof}

Let now $I_{\Sigma} \subseteq I\setminus X$ be a fundamental domain for $\tau$, and define $\mu_r\in P^+$ for $r \in I_{\Sigma}$ as follows in terms of the Satake diagram $(X,\tau)$ and the fundamental weights $\varpi_r$:
\[
\mu_r = \left\{ \begin{array}{ll} \varpi_r&\textrm{if }\tau(r)=r\textrm{ and }r\textrm{ is connected to a black vertex},\\
2\varpi_r &\textrm{if }\tau(r)=r\textrm{ and }r\textrm{ is not connected to a black vertex},\\
\varpi_r + \varpi_{\tau(r)} &\textrm{if }\tau(r)\neq r.
\end{array}\right.
\]

\begin{Theorem}[\cite{Sug62}*{Theorem 2 and Theorem 4}, see also \cite{Vre76}] 
A weight $\mu$ is spherical if and only $\mu$ is a positive integer combination of the $\mu_r$. 
\end{Theorem}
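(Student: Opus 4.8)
The plan is to deduce the statement from the classical \emph{Cartan--Helgason theorem}, which characterises the spherical representations of a compact symmetric pair, and then to translate its two conditions into the combinatorics of the Satake diagram $(X,\tau)$. First I would record that, by the Peter--Weyl decomposition together with Frobenius reciprocity, one has $\mcO(U^{\theta}\backslash U)_{\varpi}\neq 0$ if and only if $V_{\varpi}$ carries a non-zero $U^{\theta}$-fixed vector; thus $P^+_{\spher}$ is exactly the set of highest weights of spherical representations of the pair $(\mfu,\mfk)$ with $\mfk = \mfu^{\theta}$. The Cartan--Helgason theorem then asserts that $\varpi\in P^+$ is spherical precisely when (a) $\Theta(\varpi) = -\varpi$, equivalently $\varpi$ vanishes on $\mft\cap\mfk$, and (b) the integrality condition $\tfrac{(\varpi,\beta)}{(\beta,\beta)}\in\Z_{\geq 0}$ holds for every positive restricted root $\beta$.

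Next I would unwind condition (a) using $\Theta = -w_X\tau$ from Lemma~\ref{LemCommThetaTau}. For dominant $\varpi$ the equation $w_X\tau(\varpi) = \varpi$ forces $\varpi$ to be $W_X$-invariant: since $\tau(\varpi)$ is $W_X$-dominant, $w_X\tau(\varpi)$ is $W_X$-antidominant, and its equality with the dominant weight $\varpi$ gives $\langle\varpi,\alpha_r^{\vee}\rangle = 0$ for all $r\in X$; feeding this back yields $\tau(\varpi) = \varpi$. Hence (a) is equivalent to $\varpi = \sum_{r} n_r\varpi_r$ with $n_r\in\Z_{\geq 0}$, $n_r = 0$ for $r\in X$, and $n_r = n_{\tau(r)}$. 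The monoid of such weights is freely generated by the elements $\sigma_r$, $r\in I_{\Sigma}$, given by $\varpi_r$ when $\tau(r)=r$ and by $\varpi_r + \varpi_{\tau(r)}$ when $\tau(r)\neq r$; these are visibly $\Z$-linearly independent since distinct $\tau$-orbits use disjoint fundamental weights.

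It then remains to impose condition (b), which I expect to be the crux. The simple restricted roots are the projections $\bar\alpha_r = \alpha_r^- = \tfrac12(\alpha_r - \Theta(\alpha_r))$, $r\in I_{\Sigma}$, and (b) reads $\tfrac{(\varpi,\bar\alpha_r)}{(\bar\alpha_r,\bar\alpha_r)}\in\Z_{\geq 0}$. The decisive point is that the length $(\bar\alpha_r,\bar\alpha_r)$ depends on the local shape of the diagram. When $\tau(r)=r$ and $\alpha_r$ has no black neighbour one has $\alpha_r\perp Q_X$, hence $w_X\alpha_r=\alpha_r$, so $\Theta(\alpha_r)=-\alpha_r$, $\bar\alpha_r=\alpha_r$, and $\tfrac{(\varpi_r,\alpha_r)}{(\alpha_r,\alpha_r)} = \tfrac12$, which forces the even multiple $2\varpi_r$. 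When instead $\tau(r)=r$ and $\alpha_r$ does have a black neighbour, the projection strictly shortens $\alpha_r$ (the term $\Theta(\alpha_r)$ restores half of the adjacent roots of $X$), and one checks that $\tfrac{(\varpi_r,\bar\alpha_r)}{(\bar\alpha_r,\bar\alpha_r)}$ is already a positive integer, so $\sigma_r=\varpi_r$ survives; likewise for $\tau(r)\neq r$ the symmetrised weight $\varpi_r+\varpi_{\tau(r)}$ pairs integrally with $\bar\alpha_r=\bar\alpha_{\tau(r)}$. This upgrades each $\sigma_r$ to exactly the $\mu_r$ of the statement, and I would conclude $P^+_{\spher} = \bigoplus_{r\in I_{\Sigma}}\Z_{\geq 0}\,\mu_r$ from the linear independence of the $\mu_r$.

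The hard part will be the restricted-root bookkeeping inside step three: establishing \emph{uniformly}, across all Satake types, that in the black-neighbour case the Cartan--Helgason pairing is a positive integer, which requires computing the lengths $(\bar\alpha_r,\bar\alpha_r)$ and the corresponding pairings, and keeping careful track of the non-reduced ($BC$-type) restricted systems and the root multiplicities. In the classical references this is carried out by explicit case-by-case inspection of the Satake diagrams, and I would follow the same route, invoking the structure theory and classification of restricted root systems to justify the three-case split defining $\mu_r$ rather than attempt a slick type-independent derivation.
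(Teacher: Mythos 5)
The paper offers no proof of this statement at all: it is imported verbatim from \cite{Sug62} (see also \cite{Vre76}), so the only comparison available is with the classical arguments behind that citation. Your proposal reconstructs exactly the standard route: Peter--Weyl plus Frobenius reciprocity to identify $P^+_{\spher}$ with highest weights of representations containing a $U^{\theta}$-fixed vector, then the Cartan--Helgason theorem (which applies here since $U$ is simply connected, so $U^{\theta}$ is connected), then translation of its two conditions into Satake-diagram combinatorics. Your reduction of condition (a) is correct: dominance of $\varpi$ and of $\tau(\varpi)$ together with $W_X$-antidominance of $w_X\tau(\varpi)$ force $(\varpi,\alpha_r^{\vee})=0$ for $r\in X$ and then $\tau(\varpi)=\varpi$. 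Your three-case analysis of condition (b) is also sound, and this is indeed where the content lies; it is the same case-by-case verification carried out in the cited sources (Vr\'{e}tare's treatment is essentially your argument; Sugiura's original proof is a more explicit construction). Two points you should make explicit to close the sketch. First, condition (b) ranges over \emph{all} positive restricted roots, not just the simple ones $\bar{\alpha}_r$: in the reduced case this reduction is harmless because $\tfrac{(\varpi,\beta)}{(\beta,\beta)}=\tfrac12(\varpi,\beta^{\vee})$ and every positive restricted coroot is a non-negative integer combination of simple restricted coroots, but in the non-reduced ($BC$) cases the doubled root $2\bar{\alpha}_r$ imposes the strictly stronger condition $\tfrac{(\varpi,\bar{\alpha}_r)}{(\bar{\alpha}_r,\bar{\alpha}_r)}\in 2\Z$, which must be checked to be automatic in precisely the diagrams where doubling occurs (it is, e.g. for $FII$ one gets $2n\in 2\Z$); note also that in your first case ($\bar{\alpha}_r=\alpha_r$) doubling can be excluded uniformly, since $\gamma-\Theta(\gamma)=4\alpha_r$ would give $2(\gamma,\alpha_r^{\vee})=8$, impossible for roots. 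Second, the decoupling across $\tau$-orbits, namely $(\sigma_s,\bar{\alpha}_r)=0$ for $s\notin\{r,\tau(r)\}$, and the identity $\bar{\alpha}_r=\bar{\alpha}_{\tau(r)}$, both follow from $\sigma_s\perp Q_X$, $\Theta(\alpha_r)\equiv -\alpha_{\tau(r)} \bmod Q_X$, and $Q_X$ lying in the $+1$-eigenspace of $\Theta$; without this, imposing (b) generator by generator is not justified. With these remarks added, your outline is a complete and correct proof of the cited theorem.
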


We call the $\mu_r$ the \emph{fundamental spherical weights}. In particular, it follows that $P_s^+ $ is a cone. 

\begin{Cor}\label{CorImGen}
The spectral subspaces $\mcO_q(U^{\theta}\backslash U)_{\mu_r}$ generate $\mcO_q(U^{\theta}\backslash U)$ as an algebra.
\end{Cor}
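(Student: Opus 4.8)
The plan is to combine the multiplicity-freeness of Theorem \ref{ThmMultFree} with the description of $P^+_{\spher}$ as the monoid generated by the fundamental spherical weights $\mu_r$ (the theorem recalled just above), and to induct on the height of a spherical weight, the decisive trick being to force products into the top spectral component.

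First I would fix the structural set-up. By Theorem \ref{ThmMultFree} the Peter--Weyl decomposition restricts to
\[
\mcO_q(U^{\theta}\backslash U) = \bigoplus_{\varpi \in P^+_{\spher}} \mcO_q(U^{\theta}\backslash U)_{\varpi},
\]
in which each non-zero summand is a single isotypic component. Equipping $\mcO_q(U)$ with the left $U_q(\mfu)$-action $X \rhd f = (X, f_{(2)})\, f_{(1)}$, for which the right coideal $\mcO_q(U^{\theta}\backslash U)$ is a module $*$-algebra, each spectral subspace $\mcO_q(U^{\theta}\backslash U)_{\varpi}$ becomes an irreducible $U_q(\mfu)$-module, spanned by the matrix coefficients $U(\xi^{\spher}_{\varpi},\eta)$ with $\eta$ running over all of $V_{\varpi}$ and the remaining leg fixed to the (one-dimensional, by multiplicity one) space of spherical vectors. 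Writing $A$ for the $*$-subalgebra generated by the $\mcO_q(U^{\theta}\backslash U)_{\mu_r}$, the module-algebra property shows that $A$ is a $U_q(\mfu)$-submodule, hence a direct sum of \emph{full} spectral subspaces; it therefore suffices to prove $\mcO_q(U^{\theta}\backslash U)_{\varpi}\subseteq A$ for every $\varpi\in P^+_{\spher}$.

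I would establish this by induction on the height of $\varpi = \sum_r n_r\mu_r$. When the height is one there is nothing to prove, so suppose $\varpi = \mu_r + \varpi'$ with $\varpi'\in P^+_{\spher}$ of strictly smaller height, whence $\mcO_q(U^{\theta}\backslash U)_{\varpi'}\subseteq A$. Choose in $\mcO_q(U^{\theta}\backslash U)_{\mu_r}$ and $\mcO_q(U^{\theta}\backslash U)_{\varpi'}$ the elements $a = U(\xi^{\spher}_{\mu_r},\eta^{\hw}_{\mu_r})$ and $b = U(\xi^{\spher}_{\varpi'},\eta^{\hw}_{\varpi'})$ whose free leg is a highest-weight vector; these are precisely the highest-weight vectors of the irreducible modules $\mcO_q(U^{\theta}\backslash U)_{\mu_r}$ and $\mcO_q(U^{\theta}\backslash U)_{\varpi'}$, hence non-zero. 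Their product $ab$ is a matrix coefficient of $V_{\mu_r}\otimes V_{\varpi'}$ whose free leg has $\rhd$-weight $\mu_r + \varpi' = \varpi$, the highest weight occurring in that tensor product. Since only the Cartan component $V_{\varpi}$ carries that weight, $ab$ lies in $\mcO_q(U^{\theta}\backslash U)_{\varpi}$; and $ab\neq 0$ because $\mcO_q(U)$, being $\mcO_q(G)$ as an algebra with $q>0$, $q\neq1$ not a root of unity, is an integral domain. Thus $A\cap\mcO_q(U^{\theta}\backslash U)_{\varpi}\neq 0$, and irreducibility of the latter forces $\mcO_q(U^{\theta}\backslash U)_{\varpi}\subseteq A$, completing the induction.

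The pleasant feature of this route is that the domain property of $\mcO_q(G)$ lets me bypass what would otherwise be the main obstacle: a direct proof that the top (Cartan) component of the product of two spherical vectors is non-zero, i.e.\ that $p(\xi^{\spher}_{\mu_r}\otimes\xi^{\spher}_{\varpi'})\neq 0$ under the projection $p\colon V_{\mu_r}\otimes V_{\varpi'}\twoheadrightarrow V_{\varpi}$. Such a Clebsch--Gordan-type nonvanishing admits no extremal-weight shortcut --- already for $\SU(2)/\SO(2)$ the spherical vector is the zero-weight vector, orthogonal to the highest-weight line --- and would in general require the analysis of spherical vectors in quantized products. Selecting highest-weight free legs avoids it entirely, so the only non-formal inputs are Theorem \ref{ThmMultFree} and the integrality of $\mcO_q(G)$. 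The single point I would check carefully is that in each spectral subspace the first leg is indeed constrained to the spherical line while the free (second) leg ranges over all of $V_{\varpi}$, guaranteeing that highest-weight free-leg elements exist and generate the modules.
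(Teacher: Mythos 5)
Your proposal is correct and follows essentially the same route as the paper's proof: matrix coefficients with the spherical vector on one leg and a highest-weight vector on the free leg, non-vanishing of products via the absence of zero-divisors in $\mcO_q(U)$ (\cite{Jos95}), placement of the product in the Cartan-component spectral subspace, and $U_q(\mfu)$-equivariance to fill out full spectral subspaces. The only cosmetic difference is that you induct on the height of a spherical weight, adjoining one fundamental spherical weight at a time, whereas the paper forms the product $f_{r_1}^{n_1}\cdots f_{r_a}^{n_a}$ in a single step.
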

\begin{proof} 
Let $v_r$ be a non-zero $U_q(\mfu^{\theta})$-invariant vector in $V_{\mu_r}$, and let $\xi_r$ be a highest weight vector in $V_{\mu_r}$. Then $f_r = U(v_r,\xi_r) \in \mcO_q(U^{\theta}\backslash U)$ must be non-zero, as it generates $\mcO_q(U^{\theta}\backslash U)_{\mu_r}$ as a $U_q(\mfg)$-module for the translation action $X \rhd f =f(-X)$. Now as $\mcO_q(U)$ does not have zero-divisors \cite{Jos95}*{Lemma 9.1.9.(1)}, it follows that $f_n = f_{r_1}^{n_1}\ldots f_{r_ a}^{n_a} \neq0$, with $r_1,\ldots,r_a$ an enumeration of $I_{\Sigma}$ and $n_s \in \N$. On the other hand, it is clear that $f_n \in \mcO_q(U^{\theta}\backslash U)_{\mu}$ for $\mu = \sum_{s=1}^a n_s \mu_s$. As the algebra generated by the $\mcO_q(U^{\theta}\backslash U)_{\mu_r}$ is closed under the $U_q(\mfu)$-action, this shows that the latter algebra must equal the whole of $\mcO_q(U^{\theta}\backslash U)$. 
\end{proof}

\begin{Lem}\label{LemAppRang}
Assume $\varpi \in P^+$ vanishes on $X$. Then $\mcO_q(U^{\theta}\backslash U)_{\varpi + \tau(\varpi)}$ lies in the range of $\phi$. 
\end{Lem}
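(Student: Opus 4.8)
The plan is to reduce, via multiplicity-freeness, to the nonvanishing of a single explicit element in the range of $\phi$.

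\emph{Step 1 (sphericality and reduction).} First I would record that $\lambda := \varpi + \tau(\varpi)$ is spherical. Since $\varpi$ vanishes on $X$ we may write $\varpi = \sum_{r\in I\setminus X}(\varpi,\alpha_r^{\vee})\varpi_r$, and as $\tau$ preserves $I\setminus X$ this gives $\lambda = \sum_{r\in I\setminus X}\big((\varpi,\alpha_r^{\vee})+(\varpi,\alpha_{\tau(r)}^{\vee})\big)\varpi_r$, which is a non-negative integral combination of the fundamental spherical weights $\mu_r$; hence $\lambda\in P_s^+$. By Theorem \ref{ThmMultFree}, $\mcO_q(U^{\theta}\backslash U)_{\lambda}$ is then an irreducible right $U_q(\mfu)$-module isomorphic to $V_{\lambda}$. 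As $\mcO_q(K\backslash U)=\phi(\mcO_q(Z_{\nu}))$ is a right coideal $*$-subalgebra contained in $\mcO_q(U^{\theta}\backslash U)$ (by the inclusion established above), the intersection $\mcO_q(K\backslash U)\cap\mcO_q(U)_{\lambda}$ is a right $U_q(\mfu)$-submodule of $\mcO_q(U^{\theta}\backslash U)_{\lambda}$, hence by irreducibility equals $0$ or all of it. It therefore suffices to exhibit one nonzero element of $\mcO_q(U^{\theta}\backslash U)_{\lambda}$ in the range of $\phi$.

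\emph{Step 2 (the candidate).} I would take $\xi' = Z(\eta_{w_0\varpi},\xi_{\varpi})\in\mcO_q(Z_{\nu})$, with $\xi_{\varpi}$ a highest and $\eta_{w_0\varpi}$ a lowest weight vector of $V_{\varpi}$. Under the action \eqref{EqInvAct}, the span of the $Z(\xi,\eta)$ with $\xi,\eta\in V_{\varpi}$ is isomorphic to $V_{\tau(\varpi)}\otimes V_{\varpi}$, the first tensorand carrying the action $X\mapsto\varpi(S(\tau_\nu(X))^{*})$. A check on generators shows $\eta_{w_0\varpi}$ is its highest weight vector: it is annihilated by all $F_s$, and $\varpi(S(\tau_\nu(E_r))^{*})=-\varpi(F_{\tau_\nu(r)})$. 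Thus $\xi'$ is a highest weight vector of weight $\lambda$ for $\rho_{\nu}$; since $\phi$ intertwines $\rho_{\nu}$ with $\Delta$, the image $\phi(\xi')$ is a $\lhd$-highest weight vector of weight $\lambda$, and a $\lhd$-highest weight vector of dominant weight $\lambda$ lies in $\mcO_q(U)_{\lambda}$, whence in $\mcO_q(U)_{\lambda}\cap\mcO_q(U^{\theta}\backslash U)=\mcO_q(U^{\theta}\backslash U)_{\lambda}$. The lemma thus reduces to the single claim $\phi(\xi')\neq 0$.

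\emph{Step 3 (nonvanishing, the main obstacle).} This is where the real work lies. From \eqref{EqImZphiMatrix} one has $\phi(\xi')=\sum_{a,d}\varpi(\msK)_{ad}\,\tau_\nu(U_{\varpi}(e_a,\eta_{w_0\varpi}))^{*}U_{\varpi}(e_d,\xi_{\varpi})$, with values $\phi(\xi')(Y)=\langle\eta_{w_0\varpi},\varpi(S(\tau_\nu(Y_{(1)}))\,\msK\,Y_{(2)})\xi_{\varpi}\rangle$ by \eqref{EqImphi}. The decisive structural fact is that for $\nu$ of symmetric type $\epsilon(I)\subseteq\{\pm1\}$, so $\msE$ is invertible; combined with invertibility of $\widetilde{\epsilon}$, $\xi$, $v$, $K_{\omega_0}$ and the Lusztig operators $T_{w_X},T_{w_0}$, and with the unipotence of $\quasiK=1+\sum_{\gamma>0}\quasiK_{\gamma}$, this makes $\varpi(\msK)$ invertible on $V_{\varpi}$. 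I would then filter by $Q^{+}$ using $\mcK=\mcK_0+\sum_{\gamma>0}\mcK_{\gamma}$ with $\mcK_0=\xi\,T_{w_X}^{-1}T_{w_0}^{-1}$, see \eqref{EqDecompK}. Because $\varpi$ vanishes on $X$ we have $w_X\varpi=\varpi$, so $\xi_{w_X\varpi}=\xi_{\varpi}$, and the leading contribution is controlled, up to the nonzero invertible scalars coming from $\msE$ and $\widetilde{\epsilon}$, by $t_{\varpi}=\langle\eta_{w_0\varpi},\mcK_0\xi_{\varpi}\rangle$, which is nonzero by Lemma \ref{LemCart}. Choosing a weight-$(-\lambda)$ element $Y$ that isolates this leading term (note $Y=1$ yields $0$, consistent with $\phi(\xi')$ having nonzero weight) then gives $\phi(\xi')(Y)\neq 0$.

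The hard part is precisely Step 3: controlling the higher $\mcK_{\gamma}$-contributions and selecting the detecting element $Y$ so that only the $t_{\varpi}$-term survives. An alternative I would keep in reserve is to pair $\phi(\xi')$ against the dual coideal $\mcU_q(\mfk')=\mcU_q(\mfu^{\theta})$ and invoke the spherical-vector computations of Appendix \ref{Ap2}.
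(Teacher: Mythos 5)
Your Steps 1 and 2 coincide with the paper's proof: the same candidate element $Z(\eta_{w_0\varpi},\xi_{\varpi})$, the same weight computation $\varpi-\tau\tau_0 w_0(\varpi)=\varpi+\tau(\varpi)$, and the same multiplicity-one reduction to the single claim $\phi(Z(\eta_{w_0\varpi},\xi_{\varpi}))\neq 0$. The gap is in Step 3, and it is not merely that you defer the hard part: your one concrete guidepost there is false and steers you away from the solution. You claim that evaluating $\phi(\xi')$ at $Y=1$ gives $0$ "consistent with $\phi(\xi')$ having nonzero weight". But by the correspondence in Theorem \ref{TheoOneToOneCorr} (equivalently, setting $Y=1$ in \eqref{EqImphi}) one has
\[
\phi(\xi')(1) \;=\; \varepsilon(\phi(\xi')) \;=\; \chi(\xi') \;=\; \xi'(\msK) \;=\; \langle \eta_{w_0\varpi},\msK\,\xi_{\varpi}\rangle ,
\]
and the paper's entire proof consists in showing this quantity is \emph{nonzero}: the counit is itself the detecting functional. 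The weight-heuristic behind your claim fails because $\phi$ intertwines $\rho_{\nu}$ with $\Delta$, so $\phi(\xi')$ is a weight vector for \emph{right translation}, and the counit does not kill nonzero-weight vectors for that action (e.g.\ $U_{\varpi}(\xi_{\varpi},\xi_{\varpi})$ has weight $\varpi\neq 0$ and counit $1$); the vanishing you have in mind would hold for an \emph{invariant} functional with respect to a (co)adjoint-type action, but $\chi$ is a character of the algebra $\mcO_q(Z_{\nu})$, satisfying $\chi(X\rhd f)=\phi(f)(X)$, not an invariant functional.

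Once the counit is recognized as the right evaluation, the machinery you assembled does finish the argument along the paper's lines, and no auxiliary "detecting element $Y$" is needed: since $\msK$ is obtained from $\mcK$ via the unitary antipode (together with central and grouplike factors), passing to contragredient representations gives $\langle \eta_{w_0\varpi},\msK\xi_{\varpi}\rangle \neq 0$ if and only if $\langle \xi_{\tau_0(\varpi)},\mcK\,\eta_{\tau_0(\varpi)}\rangle \neq 0$. Writing $\mcK=\sum_{\gamma\in Q^+}\mcK_{\gamma}$ as in \eqref{EqDecompK}, the hypothesis $\varpi_{\mid X}=0$ gives $w_X\varpi=\varpi$, so $T_{w_X}^{-1}T_{w_0}^{-1}\eta_{\tau_0(\varpi)}$ is a nonzero multiple of $\xi_{\tau_0(\varpi)}$; hence all terms with $\gamma\neq 0$ pair to zero against $\xi_{\tau_0(\varpi)}$ by weight reasons, and the $\gamma=0$ term contributes a nonzero scalar (this is essentially the computation underlying Lemma \ref{LemCart}, which you correctly cite, but applied after the contragredient switch rather than to an unspecified pairing $\phi(\xi')(Y)$). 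Note also that invertibility of $\varpi(\msK)$, which you emphasize, is neither needed nor sufficient here: nonvanishing of $\phi(\xi')$ is a statement about one specific matrix entry of $\msK$, not about its invertibility.
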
 
\begin{proof}
Let $\xi_{\varpi}$ and $\eta_{w_0\varpi}$ be respectively a non-zero highest weight and lowest weight vector in $V_{\varpi}$. Then it is clear that $Z(\eta_{w_0\varpi},\xi_{\varpi})$ is a highest weight vector for the natural $U_q(\mfg)$-action \eqref{EqInvAct} on $\mcO_q(Z_{\nu})$, with highest weight $\varpi- \tau \tau_0 w_0(\varpi)  = \varpi + \tau(\varpi)$. It follows that 
\[
\phi(Z(\eta_{w_0\varpi},\xi_{\varpi})) \in \mcO_q(U^{\theta}\backslash U)_{\varpi + \tau(\varpi)}.
\] 
Since the latter spectral subspace has multiplicity one, it now suffices by $U_q(\mfu)$-equivariance of $\phi$ to show that $\phi(Z(\eta_{w_0\varpi},\xi_{\varpi}))\neq 0$. This will follow once we show that
\[
\varepsilon(\phi(Z(\eta_{w_0\varpi},\xi_{\varpi}))) = \langle \eta_{w_0\varpi},\msK \xi_{\varpi}\rangle \neq 0. 
\]
Now from the correspondence between $\msK = \tau\tau_0(\msK)^*$ and $\mcK$, it is clear from taking contragredient representations that 
\[
 \langle \eta_{w_0\varpi},\msK \xi_{\varpi}\rangle \neq 0 \qquad \Leftrightarrow \qquad \langle \xi_{\tau_0(\varpi)},\mcK \eta_{\tau_0(\varpi)}\rangle \neq 0.
\]
Write now $\mcK = \sum_{\gamma \in Q^+} \mcK_{\gamma}$ as in \eqref{EqDecompK}. As $w_X\varpi = \varpi$ by the assumption $\varpi_{\mid X} = 0$, we have that $T_{w_X}^{-1}T_{w_0}^{-1} \eta_{\tau_0(\varpi)}$ will be a non-zero multiple of $\xi_{\tau_0(\varpi)}$. Hence $\langle\xi_{\tau_0(\varpi)},\mcK_{\gamma} \eta_{\tau_0(\varpi)}\rangle=0$ for $\gamma\neq 0$. From this, it is clear that 
\[
 \langle \xi_{\tau_0(\varpi)},\mcK \eta_{\tau_0(\varpi)}\rangle  =  \langle \xi_{\tau_0(\varpi)},\mcK_0 \eta_{\tau_0(\varpi)}\rangle \neq 0. \qedhere
\]
\end{proof}

\begin{Cor}\label{CorImFundWeight}
The spectral subspace $\mcO_q(U^{\theta}\backslash U)_{\mu_r}$ lies in the range of $\phi$ for $r\in I_{\Sigma}$ with $\tau(r)\neq r$, and for $r\in I_{\Sigma}$ with $\tau(r) = r$ but $r$ not connected to a black vertex. For $r\in I_{\Sigma}$ with $\tau(r) = r$ and $r$ connected to a black vertex, we have that $\mcO_q(U^{\theta}\backslash U)_{2\mu_r}$ lies in the range of $\phi$.
\end{Cor}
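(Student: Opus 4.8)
The plan is to read off the entire statement from Lemma \ref{LemAppRang} via the single choice $\varpi = \varpi_r$, so the real work is just matching weights and checking the vanishing hypothesis. First I would recall that $I_{\Sigma} \subseteq I \setminus X$, so any $r \in I_{\Sigma}$ satisfies $r \notin X$. Consequently $(\varpi_r,\alpha_s^{\vee}) = \delta_{rs} = 0$ for all $s \in X$, which is precisely the condition that $\varpi_r$ vanishes on $X$. Thus Lemma \ref{LemAppRang} applies with $\varpi = \varpi_r$ and yields that $\mcO_q(U^{\theta}\backslash U)_{\varpi_r + \tau(\varpi_r)}$ lies in the range of $\phi$.

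The second step is to identify $\varpi_r + \tau(\varpi_r)$ with (a multiple of) the fundamental spherical weight $\mu_r$, using $\tau(\varpi_r) = \varpi_{\tau(r)}$. When $\tau(r) \neq r$ one has $\varpi_r + \tau(\varpi_r) = \varpi_r + \varpi_{\tau(r)} = \mu_r$, giving the claim for $\mcO_q(U^{\theta}\backslash U)_{\mu_r}$ directly. When $\tau(r) = r$ one instead gets $\varpi_r + \tau(\varpi_r) = 2\varpi_r$; in the subcase where $r$ is not connected to a black vertex this already equals $\mu_r = 2\varpi_r$, whereas in the subcase where $r$ is connected to a black vertex one has $\mu_r = \varpi_r$ and so $2\varpi_r = 2\mu_r$, which is exactly the weaker statement recorded in the corollary for that subcase.

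I do not expect a genuine obstacle: all the substance is packaged in Lemma \ref{LemAppRang} (itself resting on the nonvanishing $\langle \xi_{\tau_0(\varpi)},\mcK_0 \eta_{\tau_0(\varpi)}\rangle \neq 0$ and the multiplicity-one property of the spherical spectral subspaces), and what remains is the elementary observation $(\varpi_r)_{\mid X} = 0$ together with the purely combinatorial case split dictated by the definition of $\mu_r$. The only point requiring a moment's care is that the exceptional subcase forces us to settle for $2\mu_r$ rather than $\mu_r$, since $\mu_r = \varpi_r$ cannot be written in the form $\varpi + \tau(\varpi)$ with $\varpi \in P^+$ vanishing on $X$; this is why the corollary is stated asymmetrically, and this asymmetry is what will later necessitate the more delicate spherical-vector analysis for the black-vertex case in the proof of Theorem \ref{TheoMainSymmFunct}.
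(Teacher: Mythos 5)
Your proposal is correct and is exactly the argument the paper intends: the corollary is stated immediately after Lemma \ref{LemAppRang} with no separate proof, precisely because it follows by taking $\varpi = \varpi_r$ (which vanishes on $X$ since $I_{\Sigma} \subseteq I \setminus X$) and matching $\varpi_r + \tau(\varpi_r)$ against the case-by-case definition of $\mu_r$, as you do. Your closing remark about the asymmetry in the black-vertex case forcing $2\mu_r$, and its role in the later proof of Theorem \ref{TheoMainSymmFunct}, is also an accurate reading of the paper's structure.
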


\begin{Prop}\label{PropMainClass1}
The equality $\mcO_q(K\backslash U) = \mcO_q(U^{\theta}\backslash U)$ holds in the following irreducible cases, using still the classification as in \cite{Ar62}: 
\begin{itemize}
\item $AI$, $AIII$, $AIV$, 
\item $CI$, 
\item $DI$ in the case of $\mfso(p)\times \mfso(2l-p) \subseteq \mfso(2l)$ with $p=l-1$ or $p=l$,
\item $EI$, $EII$, $EV$, $EVIII$, 
\item $FI$,
\item $G$,
\item diagonal inclusions $\mfu \subseteq \mfu \oplus \mfu$.
\end{itemize} 
In particular, equality holds for all the symmetric pairs corresponding to split real semisimple Lie algebras.  
\end{Prop}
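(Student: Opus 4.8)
The plan is to combine the general reduction machinery already in place with a finite inspection of Satake diagrams. First I would recall that the inclusion $\mcO_q(K\backslash U) \subseteq \mcO_q(U^{\theta}\backslash U)$ is the content of the preceding proposition, so only the reverse inclusion $\mcO_q(U^{\theta}\backslash U) \subseteq \mcO_q(K\backslash U)$ remains. Since $\phi$ is a unital $*$-homomorphism by Theorem \ref{TheoOneToOneCorr}, its image $\mcO_q(K\backslash U) = \phi(\mcO_q(Z_{\nu}))$ is a $*$-subalgebra of $\mcO_q(U)$. By Corollary \ref{CorImGen} the algebra $\mcO_q(U^{\theta}\backslash U)$ is generated by the spectral subspaces $\mcO_q(U^{\theta}\backslash U)_{\mu_r}$ with $r \in I_{\Sigma}$, so it suffices to show that each such subspace lies in the range of $\phi$.

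By Corollary \ref{CorImFundWeight} this is automatic for every $r \in I_{\Sigma}$ with $\tau(r) \neq r$, and for every $r \in I_{\Sigma}$ with $\tau(r) = r$ that is not connected to a vertex of $X$; the only potential gap is the weight $\mu_r = \varpi_r$ attached to an $r \in I_{\Sigma}$ with $\tau(r) = r$ and $r$ adjacent to some vertex of $X$, for which Corollary \ref{CorImFundWeight} delivers only $\mcO_q(U^{\theta}\backslash U)_{2\mu_r}$. Thus the whole argument reduces to verifying, for each case in the list, the purely combinatorial statement that \emph{no $\tau$-fixed vertex of $I \setminus X$ is adjacent to a vertex of $X$}. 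Once this is checked, every fundamental spherical weight space sits inside the subalgebra $\mcO_q(K\backslash U)$, and the reverse inclusion follows.

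The verification I would carry out case by case using the Satake diagrams of \cite{Ar62}. For all the split real forms in the list ($AI$, $CI$, $DI$ with $p = l$, $EI$, $EV$, $EVIII$, $FI$, $G$), as well as for $EII$, for $DI$ with $p = l-1$ (i.e.\ $\mfso(l-1,l+1)$), and for the diagonal inclusions $\mfu \subseteq \mfu \oplus \mfu$, one has $X = \emptyset$; there are simply no black vertices, so the condition holds vacuously. The only cases that can have $X \neq \emptyset$ are $AIII$ and $AIV$ (that is, $\mathfrak{su}(p,q)$ with $p \le q$): here $\tau$ is the flip of the $A_{p+q-1}$ diagram and the black vertices form the central segment, the vertices bordering $X$ are exchanged in pairs by $\tau$, and the unique $\tau$-fixed vertex (present only when $p+q$ is even and $p \le q-2$) itself lies in $X$. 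Hence again no $\tau$-fixed vertex of $I \setminus X$ touches $X$, completing the reduction; the final sentence of the statement is then immediate, since split forms are exactly those with $X = \emptyset$ and $\tau = \id$.

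The main obstacle is not any single computation but making the combinatorial check exhaustive: one must confirm that the listed cases are precisely those in which the bad configuration $\{\tau(r)=r,\ r \text{ adjacent to } X\}$ is absent, and in particular that $AIII$/$AIV$ genuinely hide their fixed vertex inside $X$. This is exactly why the remaining symmetric pairs, where such a fixed vertex does border $X$ and only $\mcO_q(U^{\theta}\backslash U)_{2\mu_r}$ is a priori available, are deferred to the more delicate, case-specific arguments of the subsequent propositions.
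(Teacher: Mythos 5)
Your proposal is correct and follows essentially the same route as the paper: the paper's proof is exactly the combination of Corollary \ref{CorImGen}, Corollary \ref{CorImFundWeight}, and the observation that in the listed Satake diagrams there is no $\tau$-fixed white vertex adjacent to a black vertex, which you simply verify case by case (your parenthetical about when the $\tau$-fixed vertex exists in type $AIII$ is slightly imprecise for $\mfsu(p,p)$, but harmless since $X=\emptyset$ there).
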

\begin{proof}
This follows immediately from Corollary \ref{CorImGen}, Corollary \ref{CorImFundWeight} and the fact that in the Satake diagrams corresponding to the above cases there are no $\tau$-fixed white points connected to black vertices. 
\end{proof}

There are some further cases which can be obtained by an easy argument, using the following lemma.

\begin{Lem}\label{LemOpNotScal}
For all $\varpi \in P^+$, the operator $\pi_{\varpi}(\msK)$ is not a scalar.
\end{Lem}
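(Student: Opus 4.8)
The plan is to read off the weight-space behaviour of $\pi_\varpi(\msK)$ and show it can never collapse to a scalar for $\varpi\neq 0$ (for $\varpi=0$ one has $\pi_0(\msK)=\varepsilon(\msK)=1$, so the statement must be read for nonzero weights). First I would strip off the invertible diagonal and central factors. By \eqref{EqModKDef} and the definition of $\widetilde{\msK}$ we may write $\pi_\varpi(\msK)=\pi_\varpi(\msE\widetilde{\epsilon}^{-1})\,\pi_\varpi(R(\widetilde{\mcK}))\,\pi_\varpi(v^{-1})$, where $\pi_\varpi(\msE\widetilde{\epsilon}^{-1})$ is an invertible diagonal operator (here $\msE$ is invertible since $\epsilon$ is a sign function) and $\pi_\varpi(v^{-1})$ is a nonzero scalar. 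Hence $\pi_\varpi(\msK)$ is scalar only if $\pi_\varpi(R(\widetilde{\mcK}))$ is diagonal, i.e.\ preserves every weight space of $V_\varpi$.

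Next I would compute the weight-shift of $R(\widetilde{\mcK})$. Writing $\mcK=\sum_\gamma\mcK_\gamma$ as in \eqref{EqDecompK} and $\widetilde{\mcK}=\Ad(K_{\omega_0})(\mcK)=\widetilde{\quasiK}\,\xi'T_{w_X}^{-1}T_{w_0}^{-1}$, each $\widetilde{\mcK}_\gamma$ maps $V_\varpi(\mu)$ into $V_\varpi(w_Xw_0\mu+\gamma)$. Since $R$ is anti-multiplicative and grading-preserving, and $R(T_{w_0}^{-1})$, $R(T_{w_X}^{-1})$ implement $w_0$ and $w_X$ up to diagonal operators by Lemma \ref{LemTstarR} and Proposition \ref{PropStarT}, the operator $R(\widetilde{\mcK}_\gamma)$ maps $V_\varpi(\mu)$ into $V_\varpi(u\mu+u\gamma)$, where $u:=w_0w_X\in W$. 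The leading term $R(\widetilde{\mcK}_0)=R(\xi'T_{w_X}^{-1}T_{w_0}^{-1})$ is a product of invertible operators, so it carries each $V_\varpi(\mu)$ isomorphically onto $V_\varpi(u\mu)$, while the terms with $\gamma\neq 0$ land in the strictly different weight spaces $V_\varpi(u\mu+u\gamma)$.

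Now suppose $\pi_\varpi(R(\widetilde{\mcK}))$ is diagonal. Applying it to a nonzero $\zeta\in V_\varpi(\mu)$ and isolating the $V_\varpi(u\mu)$-component, which is exactly $R(\widetilde{\mcK}_0)\zeta\neq 0$, forces $u\mu=\mu$ for \emph{every} weight $\mu$ of $V_\varpi$. In particular $u$ fixes $\varpi$ and, via the weights $\varpi-\alpha_r$ with $r\in\operatorname{supp}(\varpi)$, fixes the corresponding simple roots; propagating along adjacent weight strings (if $\mu$ and $\mu-\alpha_s$ are both weights then $u\alpha_s=\alpha_s$) one sees that $u$ fixes every simple root reachable from $\operatorname{supp}(\varpi)$. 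When $\mfg$ is simple and $\varpi\neq 0$ this reaches all of $I$, so $u=\id$, i.e.\ $w_0=w_X$, contradicting $X\neq I$. Thus for simple $\mfg$ the leading-term bookkeeping already closes the argument. In the reducible case the residual possibility is that $\varpi$ is supported on a proper $u$-stable sub-diagram and the leading term is diagonal; there diagonality additionally forces the off-diagonal terms to vanish, i.e.\ $\pi_\varpi(\quasiK_\gamma)=0$ for all $\gamma\neq 0$, hence $\pi_\varpi(\quasiK)=\id$. I would rule this out using that $\quasiK$ intertwines $B_r$ and $\overline{B_r}$ (Proposition \ref{PropUniqueK}): a trivial action would give $\pi_\varpi(B_r)=\pi_\varpi(\overline{B_r})$ for $r\in I\setminus X$, contradicting the fact that $B_r$ and $\overline{B_r}$ differ in their $X_r$-term, which acts nontrivially on $V_\varpi$.

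The main obstacle I anticipate is precisely this residual configuration, where $u=w_0w_X$ fixes all weights of $V_\varpi$: there the weight-shift argument is silent and one must either extract the contradiction from the finer structure of $\quasiK$ as above, or check that such a configuration — a fully black (compact) factor carrying $\varpi$, on which $\quasiK$ is trivial — is excluded by the Satake conventions, ultimately by the standing exclusion of the trivial Satake datum in Definition \ref{DefEnhSat}. Making this last step uniform, rather than by factor-by-factor inspection of reducible Satake diagrams, is the delicate point.
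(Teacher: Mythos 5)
Your treatment of the simple case is correct and runs on the same engine as the paper's proof, namely the triangular decomposition $\mcK=\sum_{\gamma}\quasiK_\gamma\,\xi\,T_{w_X}^{-1}T_{w_0}^{-1}$ whose invertible $\gamma=0$ term implements $w_Xw_0$ on weight spaces; but your endgame is genuinely different and longer. The paper evaluates $\msK$ only on the highest weight vector $\xi_\varpi$: there all terms with $\gamma\neq 0$ vanish outright (they would raise the weight), so $\msK\xi_\varpi$ is a nonzero multiple of $T_{w_X}^{-1}T_{w_0}^{-1}\xi_\varpi$, a weight vector at weight $w_Xw_0\varpi$, and scalarity would force the single equality $w_Xw_0\varpi=\varpi$. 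This is then refuted in one stroke by pairing with the highest root $\beta$: since $\beta\notin\Delta_X^+$ and $w_X$ preserves $\Delta^+\setminus\Delta_X^+$, one gets $0\geq(w_X\varpi,w_X\beta)=(\varpi,\beta)>0$. You instead force $u=w_0w_X$ to fix \emph{every} weight of $V_\varpi$, propagate along weight strings to get $u=\id$, and conclude $w_0=w_X$, i.e.\ $X=I$. Both routes need $\varpi\neq 0$ (your remark here is a fair catch: the paper's inequality $(\varpi,\beta)>0$ also silently assumes this) and the exclusion $X\neq I$, and both in fact presuppose $\mfg$ simple: the paper's highest-root step fails verbatim for reducible $\mfg$ (no root has full support), just as your propagation does, so the ``delicate point'' you flag at the end is a limitation shared with the paper's own proof, not one you introduced.

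However, your proposed repair of the reducible case via Proposition \ref{PropUniqueK} contains a genuine error. If $\varpi$ is supported on an all-black simple factor and $r\in I\setminus X$ lies in a \emph{different} factor, then $F_r$, $X_r=-z_{\tau(r)}\Ad(T_{w_X})(E_{\tau(r)})$, and hence both $B_r$ and $\overline{B_r}$ act as zero on $V_\varpi$ (the factor containing $r$ acts through the counit), so $\pi_\varpi(B_r)=\pi_\varpi(\overline{B_r})$ yields no contradiction; your assertion that the $X_r$-term ``acts nontrivially on $V_\varpi$'' is false precisely in the problematic configuration. Moreover, no argument can close this case, because the statement itself fails there: on an all-black factor one has $\tau\tau_0=\id$, so by \eqref{EqCommOrKNew} the operator $\pi_\varpi(\msK)$ commutes with the irreducibly acting image of $U_q'(\mfg_X)$, hence \emph{is} a scalar by Schur's lemma for any $\varpi$ supported on that factor. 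Thus your alternative (b) — ruling out trivial factors by the conventions of Definition \ref{DefEnhSat}, read factor by factor — is not merely the cleaner option but the only viable one, for your proof and for the paper's alike.
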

\begin{proof}
If $\xi_{\varpi}$ is a highest weight vector, we have that $\msK \xi_{\varpi}$ is a non-zero scalar multiple of the vector $T_{w_X}^{-1}T_{w_0}^{-1} \xi_{\varpi}$. The latter is a weight vector at weight $w_Xw_0 \varpi = -w_X\tau_0(\varpi)$. We claim that $-w_X \tau_0(\varpi) \neq \varpi$, which will finish the proof. Namely, suppose this were not the case. Then $w_X \varpi = -\tau_0(\varpi)$ is a negative weight. However, let $\beta \in \Delta^+$ be the highest root. Since $\beta = \sum_{r\in I} k_{r} \alpha_r$ with $k_r>0$ for all $r$, we know that $\beta \notin \Delta_X^+$. On the other hand,  $w_X$ preserves the set $\Delta^+ \setminus \Delta_X^+$, and hence $w_X\beta \geq 0$. It follows that 
\[
0 \geq (w_X\varpi,w_X\beta) = (\varpi,\beta)>0,
\]
a contradiction. 
\end{proof}

\begin{Cor}\label{CorImContDich}
Let $\mu\in P_s^+ \setminus\{0\}$, and assume that there exists a positive weight $\varpi \in P^+$ such that $V_{\tau(\varpi)}\otimes V_{\varpi}$ contains $V_{\mu}$ as its only non-trivial spherical representation. Then $\mcO_q(U^{\theta}\backslash U)_{\mu}$ lies in the range of $\phi$.
\end{Cor}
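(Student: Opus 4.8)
The plan is to analyse the image under $\phi$ of the single spectral block $W := \Span\{Z_{\varpi}(\xi,\eta)\mid \xi,\eta\in V_{\varpi}\}\subseteq\mcO_q(Z_\nu)$, and to identify its unique non-trivial constituent with $\mcO_q(U^{\theta}\backslash U)_\mu$.

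First I would describe $M_\varpi := \phi(W)$ as a $U_q(\mfu)$-submodule of $\mcO_q(U^{\theta}\backslash U)$. By \eqref{EqImZphiMatrix} each $\phi(Z_\varpi(e_k,e_l))$ is a linear combination of products $\tau(U_\varpi(e_i,e_k))^*U_\varpi(e_j,e_l)$; since in this section $\tau$ stands for $\tau\tau_0$, conjugation turns $\tau\tau_0(U_\varpi)^*$ into matrix coefficients of $V_{\tau(\varpi)}$, so that $M_\varpi$ lies in the span of matrix coefficients of $V_{\tau(\varpi)}\otimes V_\varpi$. As $M_\varpi\subseteq\mcO_q(U^{\theta}\backslash U)$, all its irreducible constituents are spherical, hence by hypothesis lie in $\{V_0,V_\mu\}$. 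By the multiplicity-freeness of Theorem \ref{ThmMultFree}, $\mcO_q(U^{\theta}\backslash U)_\mu$ is the unique copy of $V_\mu$ in $\mcO_q(U^{\theta}\backslash U)$; consequently it suffices to show that $M_\varpi$ contains a non-constant function, for then its $V_\mu$-constituent is non-zero and must coincide with $\mcO_q(U^{\theta}\backslash U)_\mu$, which therefore lies in the range of $\phi$.

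It then remains to exclude $M_\varpi\subseteq\C 1$. Assuming this, I would first use that $\varepsilon_{\mcO_q(U)}\circ\phi=\chi=\chi_\msK$ (the correspondence of Theorem \ref{TheoOneToOneCorr}) to pin down the scalars and conclude $(\id\otimes\phi)Z_\varpi=\pi_\varpi(\msK)\otimes 1$. Feeding this back into \eqref{EqImZphiMatrix} and using unitarity of $U_\varpi$ and of $\tau\tau_0(U_\varpi)$ yields the intertwining relation $(\pi_\varpi(\msK)\otimes1)U_\varpi=\tau\tau_0(U_\varpi)(\pi_\varpi(\msK)\otimes1)$, i.e.\ $\pi_\varpi(\msK)$ is a morphism of corepresentations from $U_\varpi$ to $\tau\tau_0(U_\varpi)$. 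Evaluating on a highest weight vector $\xi_\varpi$ and comparing weights then produces a contradiction: a corepresentation morphism preserves weights, so it forces $\pi_\varpi(\msK)\xi_\varpi\in\C\xi_\varpi$ (of weight $\varpi$, or zero), whereas by the construction \eqref{EqModKDef} the vector $\msK\xi_\varpi$ is a non-zero multiple of $T_{w_X}^{-1}T_{w_0}^{-1}\xi_\varpi$, of weight $w_Xw_0\varpi\neq\varpi$, exactly as established in the proof of Lemma \ref{LemOpNotScal}.

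The main obstacle is this last step. When $\tau\tau_0(\varpi)\neq\varpi$ the two corepresentations are non-isomorphic irreducibles, the morphism vanishes, and $\pi_\varpi(\msK)=0$ contradicts $\msK\xi_\varpi\neq0$ at once; the delicate case is $\tau\tau_0(\varpi)=\varpi$, where $\pi_\varpi(\msK)$ could a priori be a non-scalar unitary implementing $\tau\tau_0$ on $V_\varpi$, so that Lemma \ref{LemOpNotScal} alone does not suffice. The resolution is precisely the weight comparison above, since any such implementer fixes the highest weight line and thus cannot send $\xi_\varpi$ to weight $w_Xw_0\varpi$. I expect the careful reconciliation of the two meanings of $\tau$ (the Satake involution versus $\tau\tau_0$) in the first step, and this weight bookkeeping in the final step, to be where the real work lies.
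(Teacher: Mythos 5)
Your proposal is correct and follows essentially the same route as the paper's proof: restrict $\phi$ to the span of the $Z_{\varpi}(\xi,\eta)$, note that its image consists of spherical matrix coefficients of $V_{\tau(\varpi)}\otimes V_{\varpi}$ and hence lies in $\C 1 + \mcO_q(U^{\theta}\backslash U)_{\mu}$, and exclude the image being $\C 1$ by extracting from \eqref{EqImZphiMatrix} that $\pi_{\varpi}(\msK)$ would intertwine $\pi_{\varpi}$ and $\pi_{\varpi}\circ\tau\tau_0$, contradicting Lemma \ref{LemOpNotScal}. Your final case analysis ($\tau\tau_0(\varpi)\neq\varpi$ versus $\tau\tau_0(\varpi)=\varpi$, with the weight comparison of $\pi_{\varpi}(\msK)\xi_{\varpi}$ against the non-zero multiple of $T_{w_X}^{-1}T_{w_0}^{-1}\xi_{\varpi}$) is in fact slightly more careful than the paper's terse assertion that such an intertwiner \emph{must hence be a scalar}, since in the self-conjugate case the needed contradiction really rests on the weight statement established inside the proof of Lemma \ref{LemOpNotScal} rather than on its bare conclusion.
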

\begin{proof}
The matrix coefficients of the $Z_{\varpi}(\xi,\eta)$ span an $U_q(\mfg)$-module which is isomorphic to the tensor product representation $V_{\tau\tau_0(\varpi)}^*\otimes V_{\varpi} \cong V_{\tau(\varpi)}\otimes V_{\varpi}$.  Hence the range of this module lies in $\C1 + \mcO_q(U^{\theta}\backslash U)_{\mu}$ by assumption. However, the range can not be $\C1$, as this would imply by \eqref{EqImZphiMatrix} that $\pi_{\varpi}(\msK)$ intertwines $\pi_{\varpi}$ and $\pi_{\varpi}\circ\tau\tau_0$, and must hence be a  scalar, in contradiction with Lemma \ref{LemOpNotScal}. This entails that $\mcO_q(U^{\theta}\backslash U)_{\mu}$ lies in the range of $\phi$. 
\end{proof}

\begin{Prop}\label{PropMainClass2}
The equality $\mcO_q(K\backslash U) = \mcO_q(U^{\theta}\backslash U)$ holds also in the following irreducible cases:
\begin{itemize}
\item $BI$, $BII$, 
\item $DI$ (remaining cases) and $DII$.
\end{itemize}
\end{Prop}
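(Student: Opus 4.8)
The plan is to prove only the reverse inclusion $\mcO_q(U^{\theta}\backslash U)\subseteq\mcO_q(K\backslash U)$, since $\mcO_q(K\backslash U)\subseteq\mcO_q(U^{\theta}\backslash U)$ was already established at the start of this subsection. As $\mcO_q(K\backslash U)=\phi(\mcO_q(Z_{\nu}))$ is a $*$-subalgebra of $\mcO_q(U)$, and as Corollary \ref{CorImGen} shows that $\mcO_q(U^{\theta}\backslash U)$ is generated as an algebra by the spectral subspaces $\mcO_q(U^{\theta}\backslash U)_{\mu_r}$ at the fundamental spherical weights, it suffices to place each such spectral subspace in the range of $\phi$. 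By Corollary \ref{CorImFundWeight} this is automatic unless $r$ is a $\tau$-fixed white node connected to a black vertex, where only $2\mu_r=2\varpi_r$ is known to lie in the range. In each of the diagrams $BI$, $BII$, $DI$ (remaining cases) and $DII$ there is exactly one such node, namely the white node $r_0=p$ adjacent to the black subdiagram $X$, with $\mu_{r_0}=\varpi_{r_0}$; adjacency to a black vertex forces $X\neq\emptyset$, hence $r_0<l$ in type $B$ and $r_0\le l-2$ in type $D$. Thus everything reduces to showing $\mcO_q(U^{\theta}\backslash U)_{\varpi_{r_0}}$ lies in the range of $\phi$.

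For this I would invoke Corollary \ref{CorImContDich}, exhibiting a weight $\varpi$ for which $V_{\tau(\varpi)}\otimes V_{\varpi}$ contains $V_{\varpi_{r_0}}$ as its only nontrivial spherical constituent; the candidate is a (half\mbox{-})spin representation. In type $B_l=\mfso(2l+1)$ one has $\tau=\id$ and the self-dual spin module satisfies $V_{\varpi_l}\otimes V_{\varpi_l}\cong\bigoplus_{k=0}^{l}\Lambda^{k}(\C^{2l+1})$, the summands having highest weights $0,\varpi_1,\dots,\varpi_{l-1},2\varpi_l$. Since the spherical weights form the positive integer cone on the $\mu_r$, the only members of this list lying in $P_s^+$ are $0$ and $\varpi_{r_0}$, so the corollary applies with $\varpi=\varpi_l$, settling $BI$ and its rank-one specialisation $BII$.

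In type $D_l=\mfso(2l)$ the two half-spin modules $V_{\varpi_{l-1}},V_{\varpi_l}$ give $V_{\varpi_l}\otimes V_{\varpi_l}\cong V_{2\varpi_l}\oplus\bigoplus_{k\equiv l\,(2)}\Lambda^{k}$ and $V_{\varpi_{l-1}}\otimes V_{\varpi_l}\cong V_{\varpi_{l-1}+\varpi_l}\oplus\bigoplus_{k\equiv l-1\,(2)}\Lambda^{k}$, so that the wedge power $\Lambda^{r_0}(\C^{2l})=V_{\varpi_{r_0}}$ (recall $r_0\le l-2$) occurs in the first product iff $r_0\equiv l\pmod 2$ and in the second iff $r_0\not\equiv l\pmod 2$. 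The key point is that the diagram involution is correlated with this parity: since $X=\{r_0+1,\dots,l\}$ spans a subdiagram of type $D_{l-r_0}$ and $\tau$ agrees with $-w_X$ on $X$, one has that $\tau$ is the fork swap precisely when $l-r_0$ is odd, i.e. when $r_0\not\equiv l\pmod 2$. Consequently $V_{\tau(\varpi_l)}\otimes V_{\varpi_l}$ is $V_{\varpi_l}\otimes V_{\varpi_l}$ when $r_0\equiv l$ and $V_{\varpi_{l-1}}\otimes V_{\varpi_l}$ when $r_0\not\equiv l$, and in both cases it contains $\Lambda^{r_0}=V_{\varpi_{r_0}}$; checking as before that no other constituent is spherical, Corollary \ref{CorImContDich} again applies and covers the remaining $DI$ cases together with $DII$.

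The main obstacle, and the only genuinely case-dependent verification, is the claim that the chosen spin tensor product has $V_{\varpi_{r_0}}$ as its \emph{unique} nontrivial spherical constituent. This rests on the explicit combinatorial description of $P_s^+$ as the cone generated by the fundamental spherical weights $\mu_r$: one must confirm that none of the remaining summands — the wedge powers $\Lambda^{k}$ with $k\neq r_0$ and the (anti-)self-dual top pieces $V_{2\varpi_l}$, $V_{2\varpi_{l-1}}$, $V_{\varpi_{l-1}+\varpi_l}$ — lies in this cone, together with the parity bookkeeping tying the fork behaviour of $\tau$ to $l-r_0$ in type $D$. These checks are elementary but must be carried out diagram by diagram, and it is exactly the analogous bookkeeping for the exceptional Satake diagrams that the spin-representation argument cannot supply, which is why the corresponding $E$-cases are excluded from Theorem \ref{TheoMainSymmFunct}.
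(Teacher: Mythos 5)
Your proposal is correct and follows essentially the same route as the paper's proof: both reduce via Corollary \ref{CorImGen}, Corollary \ref{CorImFundWeight} and Corollary \ref{CorImContDich} to exhibiting a (half-)spin representation whose $\tau$-twisted tensor square contains $V_{\varpi_p}$ as its unique non-trivial spherical constituent, using the same tensor product decompositions and the same parity analysis in type $D$ (first decomposition when $\ell-p$ is even, the mixed half-spin product when $\ell-p$ is odd and $\tau$ swaps the fork). The only differences are that you additionally spell out the cone check against $P_s^+$ and the reason $\tau$ is the fork swap exactly when $\ell-p$ is odd, both of which the paper simply asserts.
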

\begin{proof}
In these cases, there is a unique white vertex $s$ which is $\tau$-fixed and connected to a black vertex. By Corollary \ref{CorImGen}, Corollary \eqref{CorImFundWeight} and Corollary \ref{CorImContDich}, it is then sufficient to show that there exists an irreducible representation $V_{\varpi}$ such that $V_{\mu_s}$ is the only non-trivial spherical representation in $V_{\tau(\varpi)}\otimes V_{\varpi}$.

Let us consider the BI and BII cases first. In these cases $\mathfrak{g}$ is of type $B_\ell$, $\tau = \id$ and $X = \{p + 1, \cdots, \ell \}$ with $1 \leq p \leq \ell - 1$. The Satake diagram is as follows.
\[
 \begin{tikzpicture}[scale=.4,baseline]
\draw (0cm,0) circle (.2cm) node[above]{\small $1$} ;
\draw (0.2cm,0) -- +(0.2cm,0);
\draw[dotted] (0.4cm,0) --+ (1cm,0);
\draw (1.4cm,0) --+ (0.2cm,0);
\draw (1.8cm,0) circle (.2cm); 
\draw (2cm,0) --+ (0.6cm,0);
\draw (2.8cm,0) circle (.2cm)  node[above]{\small $p$};
\draw (3cm,0) --+ (0.6cm,0);
\draw[fill = black] (3.8cm,0) circle (.2cm);
\draw (4.2cm,0) -- +(0.2cm,0);
\draw[dotted] (4.4cm,0) --+ (1cm,0);
\draw (5.4cm,0) --+ (0.2cm,0);
\draw[fill = black] (5.8cm,0) circle (.2cm);
\draw[fill = black] (6.8cm,0) circle (.2cm) node[above]{\small $\ell$};
\draw
(6,-.06) --++ (0.6,0)
(6,+.06) --++ (0.6,0);
\draw
(6.4,0) --++ (60:-.2)
(6.4,0) --++ (-60:-.2);
\end{tikzpicture}
\vspace{0.4cm}
\]
The spherical weights are $\mu_r = 2 \varpi_r$ for $r = 1, \cdots, p - 1$ and $\mu_p = \varpi_p$. We have
\[
V_{\varpi_\ell} \otimes V_{\varpi_\ell} \cong V_{2 \varpi_\ell} \oplus \left( \bigoplus_{r = 1}^\ell V_{\varpi_{\ell - r}} \right),
\]
where by convention $V_{\varpi_0}$ is the trivial representation, see for instance \cite{OV90}*{Reference Chapter, Section 2, Table 5}. Hence $V_{\varpi_p}$ is the only non-trivial spherical representation appearing in $V_{\varpi_\ell} \otimes V_{\varpi_\ell}$.

Next consider the cases DI and DII. In these cases $\mathfrak{g}$ is of type $D_\ell$ and $X = \{p + 1, \cdots, \ell \}$ with $1 \leq p \leq \ell - 2$. The automorphism $\tau$ depends on the parity of $\ell - p$: we have $\tau = \id$ for $\ell - p$ even, while for $\ell - p$ odd we have that $\tau$ switches the two end nodes of the Dynkin diagram. 
\vspace{-1.2cm}
\begin{table}[ht]
\begin{center}
\bgroup
\def\arraystretch{3}
{\setlength{\tabcolsep}{1.5em}
\begin{tabular}{cc}
 \begin{tikzpicture}[scale=.4,baseline=1cm]
\node (v1) at (10,0.8) {};
\node (v2) at (10,-0.8) {};
\draw (0cm,0) circle (.2cm) node[above]{\small $1$} ;
\draw (0.2cm,0) -- +(1cm,0);
\draw (1.4cm,0) circle (.2cm);
\draw (1.6cm,0) -- +(0.2cm,0);
\draw[dotted] (1.8cm,0) --+ (1cm,0);
\draw (2.8cm,0) --+ (0.2cm,0);
\draw (3.2cm,0) circle (.2cm); 
\draw (3.6cm,0) --+ (1cm,0);
\draw (4.8cm,0) circle (.2cm)  node[above]{\small $p$};
\draw (5cm,0) --+ (1cm,0);
\draw[fill = black] (6.2cm,0) circle (.2cm);
\draw (6.4cm,0) -- +(0.2cm,0);
\draw[dotted] (6.6cm,0) --+ (1cm,0);
\draw (7.6cm,0) --+ (0.2cm,0);
\draw[fill = black] (7.8cm,0) circle (.2cm);
\draw (8cm,0) --+ (1.6,0.6);
\draw (8cm,0) --+ (1.6,-0.6);
\draw[fill = black] (9.8cm,0.6) circle (.2cm) node[above]{\small $\ell-1$} ;
\draw[fill = black] (9.8cm,-0.6) circle (.2cm) node[below]{\small $\ell$};
\end{tikzpicture}
& 
 \begin{tikzpicture}[scale=.4,baseline=1cm]
\node (v1) at (10,0.8) {};
\node (v2) at (10,-0.8) {};
\draw (0cm,0) circle (.2cm) node[above]{\small $1$} ;
\draw (0.2cm,0) -- +(1cm,0);
\draw (1.4cm,0) circle (.2cm);
\draw (1.6cm,0) -- +(0.2cm,0);
\draw[dotted] (1.8cm,0) --+ (1cm,0);
\draw (2.8cm,0) --+ (0.2cm,0);
\draw (3.2cm,0) circle (.2cm); 
\draw (3.6cm,0) --+ (1cm,0);
\draw (4.8cm,0) circle (.2cm)  node[above]{\small $p$};
\draw (5cm,0) --+ (1cm,0);
\draw[fill = black] (6.2cm,0) circle (.2cm);
\draw (6.4cm,0) -- +(0.2cm,0);
\draw[dotted] (6.6cm,0) --+ (1cm,0);
\draw (7.6cm,0) --+ (0.2cm,0);
\draw[fill = black] (7.8cm,0) circle (.2cm);
\draw (8cm,0) --+ (1.6,0.6);
\draw (8cm,0) --+ (1.6,-0.6);
\draw[fill = black] (9.8cm,0.6) circle (.2cm) node[above]{\small $\ell-1$} ;
\draw[fill = black] (9.8cm,-0.6) circle (.2cm) node[below]{\small $\ell$} ;
\draw[<->]
(v1) edge[bend left] (v2);
\end{tikzpicture}
\end{tabular}
}
\egroup
\end{center}
\end{table}

The spherical weights in these cases are $\mu_r = 2 \varpi_r$ for $r = 1, \cdots, p - 1$ and $\mu_p = \varpi_p$. We have the following tensor product decompositions:
\[
V_{\varpi_\ell} \otimes V_{\varpi_\ell} \cong V_{2 \varpi_\ell} \oplus \left( \bigoplus_{r \geq 1} V_{\varpi_{\ell - 2r}} \right), \qquad
V_{\varpi_\ell} \otimes V_{\varpi_{\ell - 1}} \cong V_{\varpi_{\ell - 1} + \varpi_\ell} \oplus \left( \bigoplus_{r \geq 1} V_{\varpi_{\ell - 2r - 1}} \right),
\]
see again \cite{OV90}*{Reference Chapter, Section 2, Table 5}. In the case $\ell - p$ even we use the first decomposition. Then we have $V_{\varpi_{\ell - 2r}} = V_{\varpi_p}$ for $r = (\ell - p) / 2$ and this is the only non-trivial spherical representation appearing. In the case $\ell - p$ odd we use the second decomposition, since $\tau(\ell) = \ell - 1$. Then we have $V_{\varpi_{\ell - 2r - 1}} = V_{\varpi_p}$ for $r = (\ell - p - 1) / 2$ and this is the only non-trivial spherical representation appearing.
\end{proof}

The only remaining classical cases to be dealt with are now the following.

\begin{Prop}\label{PropMainClass3}
The equality $\mcO_q(K\backslash U) = \mcO_q(U^{\theta}\backslash U)$ holds also in the following irreducible cases: 
\begin{itemize}
\item $AII$, 
\item $CII$, 
\item $DIII$.
\end{itemize}
\end{Prop}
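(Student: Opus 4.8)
The plan is to upgrade the inclusion $\mcO_q(K\backslash U) \subseteq \mcO_q(U^{\theta}\backslash U)$ established above to an equality, by showing that the whole right coideal $\mcO_q(U^{\theta}\backslash U)$ lies in the range of $\phi$. By Corollary \ref{CorImGen} it is enough to check that each spectral subspace $\mcO_q(U^{\theta}\backslash U)_{\mu_r}$ attached to a fundamental spherical weight $\mu_r$, $r\in I_{\Sigma}$, is contained in $\phi(\mcO_q(Z_{\nu}))$. Corollary \ref{CorImFundWeight} already disposes of the vertices with $\tau(r)\neq r$ and of the $\tau$-fixed vertices not connected to a black node, so the entire problem collapses to the $\tau$-fixed white vertices $r$ connected to a black vertex, for which $\mu_r=\varpi_r$ and for which a priori only $\mcO_q(U^{\theta}\backslash U)_{2\mu_r}$ is known to be reached. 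For the Satake types $AII$, $CII$, $DIII$ these are exactly the ``even'' white nodes of the vector (standard) representation, and the reason for isolating these cases is that here the required spherical vectors sit inside exterior powers of the standard module and can be computed explicitly, which is the content of Appendix \ref{Ap2}.

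First I would read off from the Satake diagrams the list of problematic vertices: in type $AII$ on $A_{2\ell-1}$ these give the weights $\varpi_2,\varpi_4,\dots,\varpi_{2\ell-2}$, and in types $CII$ and $DIII$ the analogous even nodes, with the fork nodes of $DIII$ (and the nontrivial $\tau$ in the odd-rank case) treated separately. For the lowest such weight the argument is immediate from Corollary \ref{CorImContDich}: taking $\varpi=\varpi_1$, the module $V_{\tau(\varpi_1)}\otimes V_{\varpi_1}=V_{\varpi_1}\otimes V_{\varpi_1}$ (recall $\tau(\varpi_1)=\varpi_1$ in all three types) contains a single nontrivial spherical constituent, namely the exterior square $V_{\varpi_2}$ in type $AII$, and its primitive (symplectic, resp.\ orthogonal) part in types $CII$ and $DIII$; hence the corresponding spectral subspace lies in the range of $\phi$.

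For the remaining even vertices I would induct along the chain of even nodes, exploiting that $\mcO_q(K\backslash U)$ is a subalgebra. Taking $\varpi=\varpi_i$, the coefficients of $Z_{\varpi_i}$ span $V_{\tau(\varpi_i)}\otimes V_{\varpi_i}$, whose spherical constituents are the target weight $\mu$ together with weights of the shape $\mu'+\mu''$ for strictly lower fundamental spherical weights $\mu',\mu''$; by the induction hypothesis, multiplicity-freeness (Theorem \ref{ThmMultFree}) and the absence of zero divisors in $\mcO_q(U)$, these latter components already lie in the range of $\phi$. It then remains to show that the $U_q(\mfu)$-submodule $\phi(\mathrm{span}\{Z_{\varpi_i}(\xi,\eta)\})$ genuinely contains the target isotypic component. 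This is the analogue of Lemma \ref{LemAppRang}: I would choose extreme weight vectors $\eta',\xi'$ realizing the highest weight vector of the $\mu$-component and reduce the statement to the non-vanishing of the single matrix element $\langle\eta',\msK\,\xi'\rangle$, using Lemma \ref{LemOpNotScal} (that $\pi_{\varpi_i}(\msK)$ is never scalar) to exclude the trivial submodule. Via the passage to contragredients as in Lemma \ref{LemAppRang}, this reduces further to pairing against the leading term $\mcK_0=\xi\,T_{w_X}^{-1}T_{w_0}^{-1}$ of $\mcK$, evaluated on explicit $U_q(\mfu^{\theta})$-spherical vectors in the quantized exterior powers of $V_{\varpi_1}$ furnished by Appendix \ref{Ap2}.

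The main obstacle is precisely this last non-vanishing computation: controlling the spherical vectors inside the quantized exterior powers and verifying that $\mcK_0$ pairs them nontrivially, uniformly across $AII$, $CII$, $DIII$. It is subtlest for the symplectic primitive pieces in $CII$ and, above all, for the spinorial fork node of $DIII$, where the relevant fundamental spherical weight is a half-spin weight not directly realized by an exterior power and where the odd-rank diagram automorphism $\tau$ alters the tensor-product bookkeeping. This is the computational heart of the statement, and it is also the reason the exceptional families $EIII$, $EIV$, $EVI$, $EVII$, $EIX$ are excluded: there the spherical generators are not produced by exterior powers of a tractable vector representation, so the present method fails and one must instead invoke the deformation argument of Theorem \ref{TheoDeform} valid for $q$ near $1$.
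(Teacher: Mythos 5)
Your opening step coincides with the paper's: applying Corollary \ref{CorImContDich} to $\varpi=\varpi_1$ puts $\mcO_q(U^{\theta}\backslash U)_{\varpi_2}$ in the range of $\phi$, and the reduction via Corollaries \ref{CorImGen} and \ref{CorImFundWeight} to the $\tau$-fixed white nodes adjacent to black ones is correct (these are exactly $\varpi_2,\varpi_4,\ldots$ in all three families; note in passing that the half-spin fork nodes of $DIII$ you single out as the hardest case are \emph{not} among them --- they are either swapped by $\tau$ or not adjacent to a black vertex, so Corollary \ref{CorImFundWeight} already covers them). The genuine gap is in your inductive step. To show that $\phi(\mathrm{span}\{Z_{\varpi_i}(\xi,\eta)\})$ hits the isotypic component of a higher spherical weight $\mu$, none of your cited tools suffices: (i) the highest weight vector of the $\mu$-component of $V_{\tau(\varpi_i)}\otimes V_{\varpi_i}$ lies strictly below the top weight $\tau(\varpi_i)+\varpi_i$, hence sits in a multi-dimensional weight space and is \emph{not} a pure tensor of extreme weight vectors; the quantity you must show non-zero is a linear combination of matrix elements of $\msK$, not a ``single matrix element''. (ii) The reduction to the leading term $\mcK_0$ in Lemma \ref{LemAppRang} rests precisely on the extreme weight spaces being one-dimensional, so that $\mcK_\gamma$ with $\gamma\neq 0$ cannot contribute; for interior weight vectors the quasi-$K$-matrix components $\quasiK_\gamma$ do contribute and are not explicitly computable (the paper needs a full ad hoc computation, Appendix \ref{Ap3}, to handle even one such non-vanishing in the rank-$4$ case $FII$). (iii) Lemma \ref{LemOpNotScal} only excludes the image being $\C 1$; once the tensor product has two or more non-trivial spherical constituents (e.g.\ $V_{2\varpi_2}$ and $V_{\varpi_4}$ inside $V_{\varpi_2}\otimes V_{\varpi_2}$), nothing prevents the image from being $\C 1\oplus\mcO_q(U^{\theta}\backslash U)_{2\varpi_2}$ and missing the target. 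So the induction does not close.

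The paper proceeds differently and avoids any further $K$-matrix estimates. It proves the purely algebraic statement (Proposition \ref{PropAlgGenSph} in Appendix \ref{Ap2}) that every relevant spectral subspace $\mcO_q(U^{\theta}\backslash U)_{\mu_i}$ is contained in the \emph{algebra generated by} $\mcO_q(U^{\theta}\backslash U)_{\varpi_2}$; since the range of $\phi$ is an algebra containing the latter, equality follows. The mechanism is the one you gesture at (``$\mcO_q(K\backslash U)$ is a subalgebra'') but never actually deploy: by multiplicity-freeness (Theorem \ref{ThmMultFree}) the invariant vector $w_2\in V_{\varpi_2}$ is unique up to scalar, products of matrix coefficients satisfy $U(w_2,v_1)\cdots U(w_2,v_m)=U(w_2^{\otimes m},v_1\otimes\cdots\otimes v_m)$, and $w_2^{\otimes m}$ is again invariant because the coideal property is preserved under tensoring; so everything reduces to showing $w_2^{\otimes m}$ has non-zero component in $V_{\varpi_{2m}}$. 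That non-vanishing is what Appendix \ref{Ap2} actually establishes (Lemma \ref{LemWJ}), using the explicit Noumi--Sugitani invariant vectors $w_J$ and the quantized exterior algebra of $V_{\varpi_1}$ --- not, as your proposal assumes, spherical vectors to be paired against $\mcK_0$. To repair your argument, replace the per-module non-vanishing of $\msK$-components by this tensor-power/product argument; as written, your inductive step would fail.
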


\begin{proof}
We give the corresponding Satake diagrams of $\mfsl_{p+1}(\mathbb{H})$, $\mfsp(p,q)$ and $\mfsu^*_l(\mathbb{H})$  the standard ordering as can be found for example in the tables \ref{TableABC} and \ref{TableD}. Then taking the weight $\varpi = \varpi_1$ in Corollary \ref{CorImContDich}, we obtain that the range of $\phi$ contains $\mcO_q(U^{\theta}\backslash U)_{\mu_2}$, where $\mu_2 = \varpi_2$.
It suffices to show that these elements generate $\mcO_q(U^{\theta}\backslash U)$ as an algebra. This claim will be proven in Proposition \ref{PropAlgGenSph} in Appendix \ref{Ap2}. 
\end{proof}

Finally, we treat the case $FII$ of $\mfg = \mathfrak{f}_4$. This corresponds to the following Satake diagram.
\[
\begin{tikzpicture}[scale=.4,baseline]
\draw (0cm,0) circle (.2cm) node[above]{\small $1$};
\draw (0.2cm,0) --+ (0.6cm,0);
\draw[fill = black] (1cm,0) circle (.2cm) node[above]{\small $2$};
\draw[fill = black] (2cm,0) circle (.2cm)  node[above]{\small $3$};
\draw
(1.2,-.06) --++ (0.6,0)
(1.2,+.06) --++ (0.6,0);
\draw
(1.5,0) --++ (60:.2)
(1.5,0) --++ (-60:.2);
\draw (2.2cm,0) --+ (0.6cm,0);
\draw[fill = black] (3cm,0) circle (.2cm) node[above]{\small $4$};
\end{tikzpicture}
\vspace{0.4cm}
\]
For this case, we will very explicitly verify that $\mcO_q(K\backslash U)_{\mu_1}\neq 0$. 

\begin{Prop}\label{PropMainClass4}
The equality $\mcO_q(K\backslash U) = \mcO_q(U^{\theta}\backslash U)$ holds also in the case irreducible case $FII$.
\end{Prop}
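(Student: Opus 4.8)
The plan is to reduce the asserted equality to a single non-vanishing statement and then to verify that statement by an explicit computation in the smallest faithful representation of $\mathfrak{f}_4$. Here $\tau=\tau_0=\id$, the black vertices form $X=\{2,3,4\}$, and the only fundamental spherical weight is $\mu_1=\varpi_1$, where $V_{\varpi_1}$ is the $26$-dimensional fundamental representation, the minimal spherical representation of the pair. By the inclusion $\mcO_q(K\backslash U)\subseteq\mcO_q(U^\theta\backslash U)$ established just above, and since by Corollary \ref{CorImGen} the algebra $\mcO_q(U^\theta\backslash U)$ is generated by the single spectral subspace $\mcO_q(U^\theta\backslash U)_{\mu_1}$, it suffices to prove the inclusion $\mcO_q(U^\theta\backslash U)_{\mu_1}\subseteq\mcO_q(K\backslash U)$. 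Now $\phi$ is $U_q(\mfu)$-equivariant by Theorem \ref{TheoOneToOneCorr}, and by multiplicity-freeness (Theorem \ref{ThmMultFree}) the space $\mcO_q(U^\theta\backslash U)_{\mu_1}$ is an irreducible $U_q(\mfu)$-module isomorphic to $V_{\mu_1}$; hence $\phi(\mcO_q(Z_{\nu}))\cap\mcO_q(U)_{\mu_1}$ is either $0$ or all of $\mcO_q(U^\theta\backslash U)_{\mu_1}$. Everything therefore comes down to showing $\mcO_q(K\backslash U)_{\mu_1}\neq 0$.

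Next I would explain why the uniform arguments of the previous propositions do not reach $\mu_1$ in this case. Since the white vertex $1$ is $\tau$-fixed and connected to a black vertex, Corollary \ref{CorImFundWeight} only places $\mcO_q(U^\theta\backslash U)_{2\mu_1}$ in the range of $\phi$. Moreover Corollary \ref{CorImContDich} is unavailable: the tensor square $V_{\varpi_1}\otimes V_{\varpi_1}$ contains, besides the trivial representation, both $V_{\mu_1}$ and $V_{2\mu_1}$ as spherical constituents, so no choice of $\varpi$ isolates $V_{\mu_1}$ as the unique nontrivial spherical summand of $V_{\tau(\varpi)}\otimes V_{\varpi}$. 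A direct verification is thus forced upon us.

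For the non-vanishing I would take $\varpi=\varpi_1$ and consider the $U_q(\mfg)$-submodule $\mathrm{span}\{Z_{\varpi_1}(\xi,\eta)\}\subseteq\mcO_q(Z_{\nu})$, which is isomorphic to $V_{\varpi_1}\otimes V_{\varpi_1}$. Its $V_{\mu_1}$-isotypic component $W$ is carried by $\phi$ into $\mcO_q(U^\theta\backslash U)_{\mu_1}$, and by the irreducibility noted above it is enough to show $\phi|_W\neq 0$. Using the explicit matrix form \eqref{EqImZphiMatrix}, namely $(\id\otimes\phi)Z_{\varpi_1}=\tau(U_{\varpi_1})^*(\pi_{\varpi_1}(\msK)\otimes 1)U_{\varpi_1}$, together with the correspondence $\chi(f)=f(\msK)$ of Theorem \ref{TheoOneToOneCorr}, this reduces to evaluating a single matrix element of $\pi_{\varpi_1}(\msK)$ between the weight vectors spanning the $V_{\mu_1}$-component and checking it does not vanish. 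Inserting $\msK=\msE\,\widetilde{\epsilon}^{-1}\widetilde{\msK}$ from \eqref{EqModKDef}, together with $\widetilde{\msK}=R(\widetilde{\mcK})v^{-1}$ and $\mcK=\quasiK\,\xi\,T_{w_X}^{-1}T_{w_0}^{-1}$ from \eqref{EqDefOrK}, turns this into a computation involving the action of the quasi-$K$-matrix $\quasiK$, the Cartan factor $\xi$, and the Lusztig operators $T_{w_X},T_{w_0}$ on $V_{\varpi_1}$, which I would carry out explicitly in Appendix \ref{Ap3}.

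The hard part is precisely this final matrix-element computation. Unlike the classical series, where branching rules let one read off the relevant spherical constituents cheaply, for $\mathfrak{f}_4$ one must control the action of $\quasiK=\sum_{\alpha}\quasiK_{\alpha}$ on several weight spaces of $V_{\varpi_1}$ simultaneously, and the needed non-vanishing results from cancellations that are not transparent a priori. This is exactly why the argument is ad hoc and confined to the smallest faithful representation, and why the failure to push the method through for the larger exceptional cases forces the exclusion of $EIII$, $EIV$, $EVI$, $EVII$ and $EIX$ in Theorem \ref{TheoMainSymmFunct}.
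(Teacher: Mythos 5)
Your reduction is exactly the paper's: by Corollary \ref{CorImGen} (for $FII$ the only fundamental spherical weight is $\mu_1=\varpi_1$), equivariance of $\phi$, the already-established inclusion $\mcO_q(K\backslash U)\subseteq\mcO_q(U^{\theta}\backslash U)$, and multiplicity-freeness (Theorem \ref{ThmMultFree}), everything comes down to showing $\mcO_q(K\backslash U)_{\varpi_1}\neq 0$. But that non-vanishing statement \emph{is} the content of the proposition, and your proposal does not prove it: you defer it to an appendix you would write, and you yourself concede that the required cancellations are ``not transparent a priori.'' Two concrete pieces of work are missing. First, you must exhibit an explicit vector in the $V_{\varpi_1}$-isotypic component $W$ of $\mathrm{span}\{Z_{\varpi_1}(\xi,\eta)\}$ on which to evaluate $\chi$; this is not a formality (it is Lemma \ref{LemInvVect} of the paper, where a highest weight vector $z$ at weight $\varepsilon_1$ is assembled from three $U_q'(\mfg_X)$-invariant pieces $z_0,z_{\pm}$ with specific coefficients). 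Note also that $\chi(z)$ is not ``a single matrix element'' of $\pi_{\varpi_1}(\msK)$ but a linear combination of many entries of $\msK$, so one cannot get away with computing one entry. Second, you must actually compute those entries and verify the sum is non-zero.

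For that second step your proposed route — inserting the factorization $\msK=\msE\,\widetilde{\epsilon}^{-1}\widetilde{\msK}$ with $\mcK=\quasiK\,\xi\,T_{w_X}^{-1}T_{w_0}^{-1}$ and computing $\quasiK$, $T_{w_0}$, etc.\ on the $26$-dimensional representation — is substantially harder than what the paper does, and it is not at all clear it terminates in reasonable form: the quasi-$K$-matrix is defined by a recursion and has no known closed form for this pair. The paper's Appendix \ref{Ap3} never touches $\quasiK$ or $T_{w_0}$. Instead it exploits the intertwining property \eqref{EqCommOrKNew} (here $\tau\tau_0=\id$, so $\msK$ genuinely commutes with $U_q(\mfu^{\theta})$): writing down the explicit coideal generator $C_1=E_1-q^{3/4}T_{w_X}F_1T_{w_X}^{-1}K_1$ — for which only the action of $T_{w_X}$ is needed (Lemma \ref{LemActTX}) — commutation of $\msK$ with $C_1$, $C_1^*$, $F_2$ and $U_q'(\mfg_X)$, together with weight-space considerations, pins down $\pi_{\varpi_1}(\msK)$ on all the relevant vectors up to one overall scalar (Lemma \ref{LemComputeK}); then $\varepsilon(\phi(z))=z(\msK)$ is evaluated and shown to be a manifestly positive multiple of that scalar (Proposition \ref{PropExistNonZero}). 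Without this idea, or some completed substitute for it, your argument stops exactly where the proposition begins.
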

\begin{proof}
By Corollary \ref{CorImGen} and equivariance of $\phi$, it is sufficient to show that $\mcO_q(U^{\theta}\backslash U)_{\varpi_1}$ contains a non-zero element. This is the content of Proposition \ref{PropExistNonZero} in Appendix \ref{Ap3}.
\end{proof}

In what follows, we will sketch a uniform argument showing that Theorem \ref{TheoMainSymmFunct} holds true, also in the exceptional cases, for $q$ close to $1$. As this result is not as strong as we would like, we will not be very detailed. 

\begin{Theorem}\label{TheoDeform}
For any compact symmetric pair $U^{\theta} \subseteq U$, the identity $\mcO_q(K\backslash U) = \mcO_q(U^{\theta}\backslash U)$ holds for $q$ sufficiently close to $1$. 
\end{Theorem}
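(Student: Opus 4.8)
The plan is to prove the equality uniformly, for all types including the exceptional ones, by a continuity argument in $q$: I will upgrade the inclusion $\mcO_q(K\backslash U) \subseteq \mcO_q(U^{\theta}\backslash U)$ already established to an equality for $q$ near $1$ without invoking the type-by-type computations. First I would reduce the statement to the nonvanishing of finitely many numerical invariants. By Theorem \ref{ThmMultFree} each spectral subspace $\mcO_q(U^{\theta}\backslash U)_{\varpi}$ has dimension $0$ or $\dim V_{\varpi}$, being nonzero exactly when $\varpi \in P^+_{\spher}$; since $\phi$ is $U_q(\mfu)$-equivariant and $\mcO_q(U^{\theta}\backslash U)_{\varpi}$ is irreducible as a $U_q(\mfu)$-module, Schur's lemma gives $\mcO_q(U^{\theta}\backslash U)_{\varpi} \subseteq \operatorname{range}(\phi)$ if and only if the $\varpi$-isotypic component of $\phi$ is nonzero. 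By Corollary \ref{CorImGen} it then suffices to check this for the finitely many fundamental spherical weights $\mu_r$. For each $\mu_r$ I would fix a witnessing weight $\varpi_r$ together with an intertwiner $V_{\mu_r}\hookrightarrow V_{\tau(\varpi_r)}\otimes V_{\varpi_r}$ (as in the proof of Corollary \ref{CorImContDich}) and extract from \eqref{EqImZphiMatrix} a single spherical coefficient
\[
\lambda_r(q) \;=\; \langle \psi_r,\, \pi_{\varpi_r}(\msK)\,\phi_r\rangle,
\]
a contraction of the universal $K$-matrix $\msK$ against a highest weight vector $\phi_r$ and a suitable $U_q(\mfu^{\theta})$-spherical vector $\psi_r$, whose nonvanishing is equivalent to $\mu_r$ lying in the range of $\phi$. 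Thus equality holds as soon as $\lambda_r(q) \neq 0$ for every $r$. (For all $\mu_r$ except the $\tau$-fixed white vertices connected to a black vertex we already have $\lambda_r(q)\neq 0$ for all $q$ by Lemma \ref{LemAppRang}, so the deformation argument is only genuinely needed for the residual cases occurring in type $E$.)

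Next I would establish continuity of $q \mapsto \lambda_r(q)$ in a neighbourhood of $q=1$. All ingredients of $\msK = \msE\,\widetilde{\epsilon}^{-1}\widetilde{\msK}$ in \eqref{EqModKDef} act on the fixed finite-dimensional space $V_{\varpi_r}$ by matrices depending continuously, indeed real-analytically, on $q$ near $1$: the Lusztig operators $T_{w_0},T_{w_X}$ and the grouplike $\xi$ are given by explicit expressions in powers of $q$, while $\msE$ and the torus characters $\widetilde{z},\widetilde{\epsilon}$ are independent of $q$. The only delicate factor is the quasi-$K$-matrix $\quasiK$ of Balagovi\'c--Kolb, whose homogeneous components are rational in $q$ with poles bounded away from $q=1$, hence specialize continuously there. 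Therefore each $\lambda_r$ extends to a continuous function on a neighbourhood of $q=1$, and $\lambda_r(q)\neq 0$ is an open condition.

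Finally I would compute the classical limit and show $\lambda_r(1)\neq 0$. At $q=1$ the bar involution becomes trivial, so $\overline{B_r}=B_r$ and the element $1$ satisfies the characterizing identities of Proposition \ref{PropUniqueK}; by uniqueness this forces $\quasiK|_{q=1}=1$. Hence $\mcK|_{q=1}$ reduces to the grouplike $\xi|_{q=1}$ times the classical specializations of $T_{w_X}^{-1}T_{w_0}^{-1}$, and up to the torus corrections this is exactly a group element of $U$ implementing the Satake involution $\theta$. Consequently $\phi|_{q=1}$ specializes to the classical Cartan-type embedding realizing $U^{\theta}\backslash U$ inside $U$, for which surjectivity onto every spherical spectral subspace is a classical fact, uniform across all types (using that $U$ is simply connected, so $U^{\theta}$ is connected). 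In particular $\lambda_r(1)$ equals a nonzero value of a zonal spherical function, so $\lambda_r(1)\neq 0$; combined with the openness of nonvanishing this gives $\lambda_r(q)\neq 0$ for all $r$ once $q$ is close enough to $1$, yielding $\mcO_q(K\backslash U)=\mcO_q(U^{\theta}\backslash U)$.

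I expect the main obstacle to be this last step: rigorously identifying $\msK|_{q=1}$ with a classical representative of $\theta$ and matching $\lambda_r(1)$ with a genuinely classical spherical coefficient, so that the known surjectivity of the classical Cartan embedding can be invoked; a secondary technical point is controlling the $q$-dependence of the Balagovi\'c--Kolb quasi-$K$-matrix uniformly in a neighbourhood of $1$, which is what forces the conclusion to hold only for $q$ sufficiently close to $1$ rather than for all $q>0$.
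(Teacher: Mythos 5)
Your proposal is correct and follows essentially the same route as the paper's own proof: continuity in $q$ of the structure coefficients and of the $K$-matrix (with $T_{w_0},T_{w_X}\to m_0,m_X$, $\xi\to\widetilde{z}_{\tau}$, $\quasiK\to 1$), identification of the classical limit $\msK_1 = \msE\,\widetilde{\epsilon}^{-1}m_0m_X\widetilde{z}_{\tau}^{-1}$, surjectivity of $\phi_1$ at $q=1$, and then openness of hitting the finitely many fundamental spherical spectral subspaces of Corollary \ref{CorImGen}. The only divergences are cosmetic: where you invoke surjectivity of the classical Cartan embedding (and package openness into scalar invariants $\lambda_r(q)$), the paper gets surjectivity of $\phi_1$ by computing the stabilizer $K=\{u\in U \mid \tau_0\tau(u)^{-1}\msK_1 u=\msK_1\}$ and identifying it with $U^{\theta}$ via $\theta=\Ad(\widetilde{z})\circ\tau\tau_0\circ\Ad(m_0)\Ad(m_X)$ --- which is exactly the identification you flagged as your main obstacle.
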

\begin{proof}
First we note that the highest weight modules $V_{\varpi}$ for $U_q(\mfg)$ can be identified as vector spaces over all $0<q$ in such a way that the structure coefficients of $\mcO_q(G)$ depend continuously on $q$ \cite{NT11}*{Theorem 1.2}. Similarly, the structure coefficients of the algebra $\mcO_q(Z_{\nu})$ depend then continuously on $q$, becoming in the limit $q=1$ the vector space $\mcO(G)$ with product and $*$-structure
\[
f*g = (g_{(1)}\otimes f_{(1)},\Omega_{\epsilon})f_{(2)}g_{(2)},\qquad f^{\sharp}(g) = \overline{f(\tau(g)^*)}
\]
We now claim that the $*$-homomorphisms
\[
\phi = \phi_q: \mcO_q(Z_{\nu}) \rightarrow \mcO_q(U)
\] 
have a well-defined limit at $q=1$. For this, it is enough to show that the $K$-matrix $\msK = \msK_q$, or equivalently $\mcK_q$ has a well-defined limit at $q=1$. Now $T_{w_0}$ and $T_{w_X}$ converge respectively to $m_0$ and $m_X$, while the functions $\xi$ and $\xi'$ converge to $\widetilde{z}_{\tau}$. Finally, from its construction in \cite{BK15b} one sees that the quasi-$K$-matrix $\quasiK_q$ becomes the unit at $q=1$. Since also the ribbon element $v$ becomes $1$ in the limit $q =1$, it follows that the $\msK_q$ indeed vary continuously over $q$, and at the limit $q=1$ we have
\[
\msK_1 = \msE \widetilde{\epsilon}^{-1} m_0m_X\widetilde{z}_{\tau}^{-1}.
\]
If now $\phi_1$ is surjective, it follows that $\phi_q$ will hit all the $\mcO_q(U^{\theta}\backslash U)_{\mu_r}$ for $q$ close to 1, and hence $\phi_q$ will be surjective by Corollary \ref{CorImGen}. 

To see that $\phi_1$ is surjective, note that the range of $\phi_1$, being a coideal, will be of the form $\mcO(K\backslash U)$ for $K$ a closed subgroup of $U^{\theta}$. Since 
\[
\phi_1(f)(u) = f(\tau_0\tau(u)^{-1}\msK_1 u),\qquad u\in U,
\] 
it follows that 
\[
K = \{u \in U \mid \tau_0\tau(u)^{-1}\msK_1 u = \msK_1\}.
\] 
But as $\msE\widetilde{\epsilon}^{-1}$ is central, as $\theta = \Ad(\widetilde{z})\circ \tau\tau_0\circ \Ad(m_0)\Ad(m_X)$ and as $m_0m_X \widetilde{z}_{\tau} = \widetilde{z}m_0m_X$, we see that $K = U^{\theta}$. 
\end{proof}

Let us end with the following remark. 

\begin{Rem}
By construction, the Vogan automorphism $\nu$ of $U$ determined by $(\tau\tau_0,\epsilon)$ will be inner conjugate to $\theta$, say by $u \in U$, 
\[
\theta = \Ad(u)\nu\Ad(u^{-1}).
\]
Let
\[
w= w_{\nu,\theta} = u \nu(u)^* \in U.
\]
Then $w^* = \nu(w)$, i.e. $w\in H_{\nu}$ with $H_{\nu}$ as in \eqref{EqHnu}, and moreover
\[
\Ad(w)(x) = \nu(x),\qquad x\in U^{\theta}.
\]
In particular, we obtain a map 
\[
U^{\theta}\backslash U \rightarrow H_{\nu},\quad U^{\theta}x \mapsto \nu(x)^{-1}wx.
\]
Now by the proof of Theorem \ref{TheoDeform}, we see that in the classical limit $\msK$ corresponds to the element $w'= \widetilde{\varepsilon}^{-1}m_0m_X\widetilde{z}_{\tau}^{-1}$. It would hence be interesting to see if one can take $w=w'$, and if then the factorisation $w = u \nu(u)^*$ passes through to the quantum setting for $\msK$. We believe that this will be connected to a notion of \emph{quantum Cayley transform}, see \cite{Let17} for some closely related material. 
\end{Rem}

\appendix

\section{Variations on twisting}\label{Ap0} 

In this appendix, we consider some variations on the results in Section \ref{SecTwistBraid} by modifying the twist. We resume the notation of that section.

As a first variation, consider the opposite universal $R$-matrix and associated coquasitriangular structure 
\[
\widetilde{\msR} = \msR_{21}^{-1},\qquad \widetilde{\mbr} = \mbr_{21}^{-1}. 
\]
With $\nu \in \End_*(\mfb)$, we can then also consider 
\[
{\msR}_{\nu} = \msR_{\nu,21}^{-1},\qquad \widetilde{\mbr}_{\nu} = \mbr_{\nu,21}^{-1}
\]
and the associated convolution invertible real $2$-cocycle functional 
\[
\widetilde{\omega}_{\nu}: \mcO_q^{\com}(G_{\R}) \times \mcO_q^{\com}(G_{\R}) \rightarrow \C,\quad \widetilde{\omega}_{\nu}(fg^{\dag},hk^{\dag}) = \varepsilon(f) \widetilde{\mbr}_{\nu}(h,g^*)\overline{\varepsilon(k)}.
\]

\begin{Def}
For  $\nu,\mu\in \End_*(U_q(\mfb))$ we define $\widetilde{\mcO}_q^{\nu,\mu}(G_{\R})$, resp. $\dbwidetilde{\mcO}_q^{\nu,\mu}(G_{\R})$ as the vector space $\mcO_q^{\com}(G_{\R})$ endowed with the respective new multiplications
\[
\widetilde{m}_{\nu,\mu}(f,g) = \widetilde{\omega}_{\nu}(f_{(1)},g_{(1)}) f_{(2)}g_{(2)} \omega_{\mu}^{-1}(f_{(3)},g_{(3)}),\qquad f,g\in \mcO_q^{\com}(G_{\R}),
\]
\[
\dbwidetilde{m}_{\nu,\mu}(f,g) = \widetilde{\omega}_{\nu}(f_{(1)},g_{(1)}) f_{(2)}g_{(2)} \widetilde{\omega}_{\mu}^{-1}(f_{(3)},g_{(3)}),\qquad f,g\in \mcO_q^{\com}(G_{\R})
\]
and the original $*$-structure. 
\end{Def}
As before, these can be made into a connected cogroupoid with compatible $*$-structure using the tensor product comultiplication on $\mcO_q^{\com}(G_{\R})$. In particular, we have the Hopf $*$-algebra $\dbwidetilde{\mcO}_q^{\nu}(G_{\R}) = \dbwidetilde{\mcO}_q^{\nu,\nu}(G_{\R})$, and the $*$-algebra $\widetilde{\mcO}_q^{\nu,\id}(G_{\R})$ with commuting left and right coactions by respectively $\dbwidetilde{\mcO}_q^{\nu}(G_{\R})$ and $\mcO_q(G_{\R})$. We then have the following straightforward modifications of the results in Section \ref{SecTwistBraid}. Unexplained notation should be straightforward to interpret.  

\begin{Lem}[Cf. Lemma \ref{LemFundInt}]
Let $\nu,\mu \in \End_*(U_q(\mfb))$, and let $\pi,\pi'$ be representations of $U_q(\mfu)$. In $\widetilde{\mcO}_q^{\nu,\mu}(G_{\R})$, resp.~ $\dbwidetilde{\mcO}_q^{\nu,\mu}(G_{\R})$  we have the commutation relations
\[
\widetilde{Y}_{13}' \msR_{\mu,12}^{\pi',\pi} \widetilde{Y}_{23}^{\dag} = \widetilde{Y}_{23}^{\dag} \widetilde{\msR}_{\nu,12}^{\pi',\pi}\widetilde{Y}_{13}', \qquad 
\dbwidetilde{Y}_{13}' \widetilde{\msR}_{\mu,12}^{\pi',\pi}\dbwidetilde{Y}_{23}^{\dag} = \dbwidetilde{Y}_{23}^{\dag} \widetilde{\msR}_{\nu,12}^{\pi',\pi}\dbwidetilde{Y}_{13}'.
\]
\end{Lem}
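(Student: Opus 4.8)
The plan is to mimic exactly the proof of Lemma \ref{LemFundInt}, replacing the roles of $\omega_\nu$ and $\omega_\nu^{-1}$ by the twisted cocycle $\widetilde\omega_\nu$ (built from $\widetilde\mbr_\nu = \mbr_{\nu,21}^{-1}$) wherever the left twist occurs. Recall that the defining interchange relation in $\mcO_q^{\nu,\mu}(G_\R)$ came from the commutation of the holomorphic matrix $Y'$ with the antiholomorphic matrix $Y^\dag$, encoded through the $2$-cocycles controlling the product on $\mcO_q^{\com}(G_\R)$. For the algebras $\widetilde{\mcO}_q^{\nu,\mu}(G_\R)$ and $\dbwidetilde{\mcO}_q^{\nu,\mu}(G_\R)$ only the left-hand cocycle (and, in the doubly-tilded case, also the right-hand one) is changed from $\omega$ to $\widetilde\omega$, so the structure of the argument is unchanged and only the $R$-matrix appearing in the corresponding leg gets replaced.

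Concretely, first I would unwind the definition of $\widetilde{m}_{\nu,\mu}$ and $\dbwidetilde{m}_{\nu,\mu}$ on products of the form $\widetilde Y'_{13}\widetilde Y^\dag_{23}$, exactly as in the proof of Lemma \ref{LemFundInt}, rewriting the asserted identity in the equivalent form
\[
(\widetilde Y_{23}^{\dag})^{-1}\widetilde Y_{13}' = \widetilde{\msR}_{\nu,12}^{\pi',\pi} \widetilde Y_{13}' (\widetilde Y_{23}^{\dag})^{-1} (\msR_{\mu,12}^{\pi',\pi})^{-1}
\]
for the singly-tilded algebra, and with $(\msR_{\mu,12}^{\pi',\pi})^{-1}$ replaced by $(\widetilde\msR_{\mu,12}^{\pi',\pi})^{-1}$ for the doubly-tilded one. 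The key observation is that the left cocycle $\widetilde\omega_\nu$ pairs via $\widetilde\mbr_\nu$, which by definition evaluates against $\msR_{\nu,21}^{-1}$; carrying this through the Sweedler computation produces the factor $\widetilde\msR_{\nu}$ rather than $\msR_\nu$ in the first leg, while the right cocycle (still $\omega_\mu^{-1}$, resp.\ now $\widetilde\omega_\mu^{-1}$) produces $\msR_\mu$, resp.\ $\widetilde\msR_\mu$, in the second. I would then use the same auxiliary facts invoked there, namely $(\widetilde Y^\dag)^{-1} = ((\id\otimes S)\widetilde Y)^\dag$, the identity $(\id\otimes S^{-1})\msR = \msR^{-1}$, and $S(Y(\xi,\eta))^* = Y(\eta,\xi)$, all of which transfer verbatim since the antipode and $*$-structure on $\mcO_q^{\com}(G_\R)$ are unchanged.

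I do not expect a genuine obstacle here: the only subtlety is bookkeeping, namely making sure that when the left cocycle is $\widetilde\omega_\nu$ the surviving $R$-matrix is $\widetilde\msR_\nu=\msR_{\nu,21}^{-1}$ and not $\msR_\nu$, and that in the doubly-tilded case both legs acquire tildes while in the singly-tilded case only the left leg does. This is precisely dictated by which cocycle ($\omega$ or $\widetilde\omega$) sits on which side of the product $\widetilde{m}_{\nu,\mu}$ versus $\dbwidetilde{m}_{\nu,\mu}$, so the asymmetry in the two asserted commutation relations is forced and self-consistent. The proof is therefore a direct transcription of the computation in Lemma \ref{LemFundInt}, and I would simply remark that it follows by the same argument, recording the replacement $\mbr_\nu \rightsquigarrow \widetilde\mbr_\nu$ in the relevant legs.
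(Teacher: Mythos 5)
Your proposal is correct and is essentially the paper's own argument: the paper gives no separate proof of this lemma, presenting it (with the preface that these are ``straightforward modifications'' of Section \ref{SecTwistBraid}) as exactly the transcription of the proof of Lemma \ref{LemFundInt} with $\omega_{\nu}$ replaced by $\widetilde{\omega}_{\nu}$ in the left leg (and also $\omega_{\mu}^{-1}$ by $\widetilde{\omega}_{\mu}^{-1}$ in the doubly-tilded case), which is precisely the bookkeeping you carry out. Your identification of which cocycle produces which $R$-matrix, and your observation that the auxiliary identities involving $S$, $\dag$ and the invertibility of $\widetilde{Y}^{\dag}$ transfer verbatim, are all accurate.
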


\begin{Prop}[cf. Proposition \ref{PropExtPairing}]
There is a unique pairing $(-,-)_{\epsilon}'$ of Hopf algebras between $U_q^{\epsilon}(\mfg)$ and $\mcO_q(G)$ such that
\begin{equation}\label{EqRestrHol}
(K_{\omega},f)_{\epsilon}' = (K_{\omega},f),\quad (E_r,f)_{\epsilon}' = (E_r,f),\quad (F_r,f)_{\epsilon}' = \epsilon_r(F_r,f). 
\end{equation}
Moreover, there is a unique pairing $(-,-)_{\nu}$ of Hopf $*$-algebras between $U_q(\mfg_{\nu})$ and $\dbwidetilde{\mcO}_q^{\nu}(G_{\R})$ extending the pairing $(-,-)_{\epsilon}$ extending the above pairing $(-,-)_{\epsilon}'$.
\end{Prop}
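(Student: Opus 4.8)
The plan is to follow the pattern of the proof of Proposition \ref{PropExtPairing} almost verbatim, replacing the modified universal $R$-matrix $\msR_\nu$ everywhere by the opposite version $\widetilde{\msR}_\nu = \msR_{\nu,21}^{-1}$ that governs the twisted products $\widetilde{m}_{\nu,\mu}$ and $\dbwidetilde{m}_{\nu,\mu}$. First I would establish existence and uniqueness of the pairing $(-,-)_\epsilon'$ between $U_q^\epsilon(\mfg)$ and $\mcO_q(G)$ satisfying the normalisation in \eqref{EqRestrHol}: exactly as before, a rescaling argument shows that the pairing $(-,-)$ of $U_q(\mfg)$ with $\mcO_q(G)$ can be rescaled on the $F_r$ by the factor $\epsilon_r$ (rather than on the $E_r$), and that this rescaling is compatible with the deformed commutation relation $[E_r,F_s] = \delta_{rs}\epsilon_r(K_r-K_r^{-1})/(q_r-q_r^{-1})$ defining $U_q^\epsilon(\mfg)$. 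The only asymmetry with Proposition \ref{PropExtPairing} is that here the deformation is placed on the lower triangular generators, matching the appearance of $\widetilde{\msR}_\nu$ in the first variation.

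Next I would define the candidate extension to a pairing of Hopf $*$-algebras between $U_q(\mfg_\nu)$ and $\dbwidetilde{\mcO}_q^\nu(G_{\R})$ by the formula
\[
(X,fg^{\dag})_\nu = (X_{(1)},f)_\epsilon'\,\overline{(S(X_{(2)})^{\dag},g)_\epsilon'},\qquad f,g\in \mcO_q(G),
\]
which is forced by requiring compatibility with the $*$-structure and with the product $fg^{\dag}$. As in the original proof, uniqueness is automatic from this formula, so the entire content is to verify that this is a genuine pairing of Hopf $*$-algebras — concretely, that the dual relation
\[
(X,f^{\dag}g)_\nu = \overline{(S(X_{(1)})^{\dag},f)_\epsilon'}\,(X_{(2)},g)_\epsilon'
\]
holds, and it suffices to check this for $X$ a generator lying in $U_q(\mfb)\cup U_q(\mfb^-)$. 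The mechanism is identical to the original: one expands the left-hand side using the definition of the twisted product $\dbwidetilde{m}_{\nu,\nu}$ in terms of $\widetilde{\omega}_\nu$, collapses the resulting expression into a pairing against $\widetilde{\msR}_\nu(\nu_\epsilon\otimes\tau)\Delta(X)\widetilde{\msR}_\nu^{-1}$ (for $X\in U_q(\mfb)$; against $\widetilde{\msR}_\nu(\id\otimes\nu)\Delta(X)\widetilde{\msR}_\nu^{-1}$ for $X\in U_q(\mfb^-)$), and then invokes the intertwining property $\widetilde{\msR}\Delta(X)\widetilde{\msR}^{-1} = \Delta^{\op}(X)$ together with $(\nu_\epsilon\otimes\id)\widetilde{\msR} = (\id\otimes\nu_\epsilon)\widetilde{\msR}$, which is the analogue of \eqref{EqInvNu} and follows from involutivity of $\tau$ and $\tau$-invariance of $\epsilon$.

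The step I expect to require the most care is checking that $\widetilde{\msR}_\nu = \msR_{\nu,21}^{-1}$ actually satisfies the intertwining identity $\widetilde{\msR}_\nu(\nu_\epsilon\otimes\tau)\Delta(X)\widetilde{\msR}_\nu^{-1} = (\nu_\epsilon\otimes\tau)\Delta^{\op}(X)$ in the form needed to make the telescoping work, since passing from $\msR_\nu$ to $\msR_{\nu,21}^{-1}$ swaps the roles of $\Delta$ and $\Delta^{\op}$ and inverts, so one must track whether the computation lands on $\Delta^{\op}$ or $\Delta$ at the end and confirm the $\epsilon_r$-normalisation is consistent with placing the deformation on the $F_r$. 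This is precisely where the choice $(F_r,f)_\epsilon' = \epsilon_r(F_r,f)$ (versus $(E_r,f)_\epsilon = \epsilon_r(E_r,f)$ in the original) becomes essential, and the bookkeeping, though routine, is the only genuine obstacle; everything else transcribes directly from Proposition \ref{PropExtPairing}.
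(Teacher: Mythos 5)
Your overall strategy is the intended one: the paper in fact states this appendix proposition \emph{without} proof, presenting it as a straightforward modification of Proposition \ref{PropExtPairing}, and your plan (rescaling argument now placed on the $F_r$, the forced extension formula, reduction to $X\in U_q(\mfb)\cup U_q(\mfb^-)$, and telescoping via $\widetilde{\msR}_{\nu}$ together with $\widetilde{\msR}\Delta(X)\widetilde{\msR}^{-1}=\Delta^{\op}(X)$) is exactly the right transcription. However, your intermediate formulas are assigned backwards, and as written the verification would fail. With the primed normalisation $(E_r,f)_{\epsilon}'=(E_r,f)$, $(F_r,f)_{\epsilon}'=\epsilon_r(F_r,f)$, the expansion of $(X,f^{\dag}g)_{\nu}$ collapses, for $X\in U_q(\mfb)$, onto $\widetilde{\msR}_{\nu}(\id\otimes\nu)\Delta(X)\widetilde{\msR}_{\nu}^{-1}$ and, for $X\in U_q(\mfb^-)$, onto $\widetilde{\msR}_{\nu}(\nu_{\epsilon}\otimes\tau)\Delta(X)\widetilde{\msR}_{\nu}^{-1}$ --- precisely the opposite of what you wrote; you have copied the assignments of the original proof verbatim, whereas moving the deformation from the $E_r$ to the $F_r$ interchanges the two cases. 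The reason is that for $X\in U_q(\mfb)$ the holomorphic leg $(X_{(1)},g_{(2)})_{\epsilon}'$ is now \emph{undeformed}, while the antiholomorphic leg involves $S(X_{(2)})^{\dag}\in U_q(\mfb^-)$, where the primed pairing does carry the $\epsilon$-deformation; combined with the compact $*$-structure this gives $\overline{(S(X_{(2)})^{\dag},f_{(2)})_{\epsilon}'}=(\nu(X_{(2)}),f_{(2)}^*)$, i.e.\ the full $\nu=\tau\nu_{\epsilon}$ on the second leg and nothing on the first. Concretely, for $X=E_r$ the identity to be proved has right-hand side $-\epsilon_r\varepsilon(g)\overline{(F_{\tau(r)},f)}+(E_r,g)\overline{(K_{-\tau(r)},f)}$, while your assignment produces $-\varepsilon(g)\overline{(F_{\tau(r)},f)}+\epsilon_r(E_r,g)\overline{(K_{-\tau(r)},f)}$: the factor $\epsilon_r$ lands on the wrong leg, and the two expressions differ whenever $\epsilon_r\neq1$.

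A second, related slip: you assert that passing from $\msR_{\nu}$ to $\widetilde{\msR}_{\nu}=\msR_{\nu,21}^{-1}$ ``swaps the roles of $\Delta$ and $\Delta^{\op}$''. It does not: $\msR_{21}^{-1}$ is itself a universal $R$-matrix for the same coproduct and satisfies the identical intertwining relation $\widetilde{\msR}\Delta(X)\widetilde{\msR}^{-1}=\Delta^{\op}(X)$, which you in fact invoke correctly in your second paragraph, so your third paragraph is internally inconsistent with it. Moreover, both factorisations $\widetilde{\msR}_{\nu}=(\id\otimes\nu)\widetilde{\msR}=(\nu_{\epsilon}\otimes\tau)\widetilde{\msR}$ hold (the tilde analogue of \eqref{EqInvNu} follows by flipping and inverting), so the telescoping step itself would succeed with either sandwiched coproduct; the only place where the $F_r$-versus-$E_r$ normalisation genuinely matters is in which twisted coproduct emerges from the pairing expansion, and that is exactly the point at which your bookkeeping is inverted. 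Once the two cases are interchanged, the rest of your argument goes through and yields the proposition.
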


We then denote by $\dbwidetilde{\mcO}_q(G_{\nu})$ the Hopf $*$-algebra arising as the coimage of $\dbwidetilde{O}_q^{\nu}(G_{\R})$ obtained by dividing out through the kernel of this pairing. Clearly $\dbwidetilde{\mcO}_q(G_{\nu})\cong \mcO_q(G_{\nu})$ as Hopf $*$-algebras.

Let us denote $\widetilde{\mcO}_q(G_{\nu}\dbbackslash G_{\R})$ for the coinvariants in $\widetilde{\mcO}_q^{\nu,\id}(G_{\R})$ with respect to the left coaction by $\dbwidetilde{\mcO}_q^{\nu}(G_{\R})$. Denote by $\widetilde{\mcO}_q(Z_{\nu})$ the vector space $\mcO_q(G)$ with the product 
\begin{equation}\label{EqDefBraidProdAlt}
f \;\widetilde{*}\; g = \mbr(f_{(1)},g_{(2)}) (f_{(2)}\otimes g_{(3)},\Omega_{\epsilon}) f_{(3)}g_{(4)}\widetilde{\mbr}(f_{(4)},\tau(S(g_{(1)}))).
\end{equation}
and the $*$-structure $f^{\sharp} = \tau(S(f)^*)$. 

\begin{Theorem}[Cf. Theorem \ref{TheoIsoZCoset}]
The map
\[
\tilde{\j}_{\nu}: \widetilde{\mcO}_q(Z_{\nu}) \cong \widetilde{\mcO}_q(G_{\nu}\dbbackslash G_{\R}),\quad f \mapsto (f_{(2)},\msE) S(f_{(1)})\tau(f_{(3)})^{*\dag}
\]
induces an isomorphism of $*$-algebras.
\end{Theorem}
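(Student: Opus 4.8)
The plan is to mirror, nearly verbatim, the proof of Theorem~\ref{TheoIsoZCoset}, since the tilde-constructions are built from the same ingredients with $\mbr$ and $\widetilde{\mbr}=\mbr_{21}^{-1}$ interchanged in a controlled way. First I would establish the linear-bijectivity half by the same argument as in Proposition~\ref{PropLinBij}: the multiplication map $\mcO_q(\overline{G})\otimes\mcO_q(G)\to\widetilde{\mcO}_q^{\nu,\id}(G_{\R})$ is still bijective (the underlying vector space is unchanged, only the product is twisted), so the Peter--Weyl map $\mathrm{PW}$ of \eqref{EqIsoDTen} remains a linear isomorphism. One then computes, exactly as before, that $\tilde{\j}_{\nu}(Y_{\varpi}(\xi,\eta))=\mathrm{PW}(\xi^*\otimes\msE_{\varpi}\otimes\eta)$, and surjectivity follows from the same analysis of the infinitesimal right $U_q(\mfg_{\nu})$-action $\lhd$ on $\widetilde{\mcO}_q^{\nu,\id}(G_{\R})$, whose fixed points in each $\Hom(V_{\varpi'},V_{\varpi})$-block are forced to be scalar multiples of $\msE_{\varpi}$ with $\varpi=\varpi'$. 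The coaction defining $\widetilde{\mcO}_q(G_{\nu}\dbbackslash G_{\R})$ is the left $\dbwidetilde{\mcO}_q^{\nu}(G_{\R})$-coaction, and the relation \eqref{EqDefIdGnu} still holds in the coimage, so the matrix-coefficient elements land in the coinvariants.

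Next I would verify that $\tilde{\j}_{\nu}$ is a $*$-map. The $*$-structure $f^{\sharp}=\tau(S(f)^*)$ is identical to \eqref{EqDefStarBraid}, and the $*$-structure on $\widetilde{\mcO}_q^{\nu,\id}(G_{\R})$ is still the original one $(fg^{\dag})^{\dag}=g\,f^{\dag}$, so preservation of $*$ is immediate from the explicit form of $\tilde{\j}_{\nu}$. The substantive part is multiplicativity. As in Theorem~\ref{TheoIsoZCoset}, I would first reduce to the case where no $\epsilon_r$ vanishes, invoking continuity of the structure constants in the $\epsilon_r$; then use bijectivity to write $\tilde{\j}_{\nu}(f)\tilde{\j}_{\nu}(g)=\tilde{\j}_{\nu}(h)$ for a unique $h$, and show $h=f\,\widetilde{*}\,g$. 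The computation runs as in the original proof, except that the product in $\widetilde{\mcO}_q^{\nu,\id}(G_{\R})$ uses $\widetilde{\omega}_{\nu}$ in the first slot and $\omega_{\id}^{-1}$ in the third; the key interchange relation replacing \eqref{EqDefHolAntihol} is the first displayed identity in the tilde-version of Lemma~\ref{LemFundInt}, giving $\widetilde{\mbr}_{\nu}$ in place of $\mbr_{\nu}$. Tracking this through, one should arrive at
\[
h=f_{(1)}(\msE)g_{(3)}(\msE)\,\widetilde{\mbr}_{\nu}(f_{(2)},\tau(g_{(2)}))\,(f_{(3)}g_{(4)}(\msE^{-1}))f_{(4)}g_{(5)}\,\mbr(f_{(5)},\tau(S(g_{(1)}))),
\]
and then using $\widetilde{\mbr}_{\nu}(f,g)=g_{(1)}(\msE)\,\widetilde{\mbr}(f,\tau(g_{(2)}))\,g_{(3)}(\msE^{-1})$ together with \eqref{EqCommROmega} to collapse this to $f\,\widetilde{*}\,g$ as defined in \eqref{EqDefBraidProdAlt}.

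The main obstacle I anticipate is bookkeeping rather than conceptual: one must get the placement of $\mbr$ versus $\widetilde{\mbr}=\mbr_{21}^{-1}$ correct throughout, in particular checking that the slot which carried $\mbr_{\nu}$ in \eqref{EqDefProdBraid} is now the $\widetilde{\mbr}$-slot of \eqref{EqDefBraidProdAlt} while the other skew-pairing factor remains the untwisted $\mbr$ coming from the $\mu=\id$ (hence $\omega_{\id}^{-1}$) side. The compatibility used silently is that $\widetilde{\mbr}_{\nu}$ still factors through the skew pairing of $\mcO_q(B^+)$ with $\mcO_q(B^-)$, which it does since $\widetilde{\msR}_{\nu}=\msR_{\nu,21}^{-1}$ lies in the appropriate Borel completion, so the analogue of the factorisation through $\mcO_q^{\nu,\id}(B_{\R})$ persists. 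Once the replacement dictionary $\mbr_{\nu}\leadsto\widetilde{\mbr}_{\nu}$, $\Omega_{\epsilon}$ unchanged, is fixed, the argument is formally the same as Theorem~\ref{TheoIsoZCoset}, so I would state it by reference and only display the two or three lines where the tilde genuinely enters.
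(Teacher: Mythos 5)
Your strategy is the one the paper intends: Appendix \ref{Ap0} offers no separate argument for this theorem, declaring it a straightforward modification of Theorem \ref{TheoIsoZCoset}, and your treatment of linear bijectivity (Peter--Weyl plus the invariance analysis of Proposition \ref{PropLinBij}), of $*$-preservation, and of the continuity reduction to invertible $\msE$ all transplant correctly. The problem is that the two formulas you actually display --- by your own account the only lines ``where the tilde genuinely enters'' --- are both wrong, so the multiplicativity verification as you set it up does not close.

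Concretely, the identity $\widetilde{\mbr}_{\nu}(f,g)=g_{(1)}(\msE)\,\widetilde{\mbr}(f,\tau(g_{(2)}))\,g_{(3)}(\msE^{-1})$ is false: since $\widetilde{\msR}=\msR_{21}^{-1}$ lies in $\mcU_q(\mfb^-)\hat{\otimes}\mcU_q(\mfb^+)$, its \emph{second} leg sits in the positive Borel part, where \eqref{EqCommE} implements $\nu_{\epsilon}$ as $Y\mapsto \msE^{-1}Y\msE$ --- the opposite conjugation to the one on $U_q(\mfb^-)$ that underlies the identity $\mbr_{\nu}(f,g)=g_{(1)}(\msE)\mbr(f,\tau(g_{(2)}))g_{(3)}(\msE^{-1})$ used in the proof of Theorem \ref{TheoIsoZCoset}. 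The correct statement is $\widetilde{\mbr}_{\nu}(f,g)=g_{(1)}(\msE^{-1})\,\widetilde{\mbr}(f,\tau(g_{(2)}))\,g_{(3)}(\msE)$, equivalently $f_{(1)}(\msE)\,\widetilde{\mbr}(\tau(f_{(2)}),g)\,f_{(3)}(\msE^{-1})$ with the twist on the first argument. Second, your intermediate formula for $h$ is a verbatim transplant of the one in Theorem \ref{TheoIsoZCoset}, but the computation is not verbatim: in $\tilde{\j}_{\nu}$ the holomorphic leg $S(f_{(1)})$ stands to the \emph{left} of the antiholomorphic leg $\tau(f_{(3)})^{*\dag}$, the reverse of the arrangement in $j_{\nu}$, so the interchange one must perform is the forward product rule applied to $\tau(f_{(3)})^{*\dag}\cdot S(g_{(1)})$ (i.e. $\widetilde{\omega}_{\nu}$ and $\omega_{\id}^{-1}$ used directly, rather than inverted as in the original proof). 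This produces pairings of the shape $\widetilde{\mbr}_{\nu}(S(g_{(3)}),\tau(f_{(3)}))$ and $\mbr(S(g_{(1)}),S^{-1}\tau(f_{(5)}))$ --- arguments in the opposite slots and decorated with antipodes --- together with a holomorphic factor of the form $S(g_{(2)}f_{(1)})$, not the expression you wrote. The two errors do not compensate each other: substituting the corrected $\widetilde{\mbr}_{\nu}$-identity into your formula for $h$, the $\msE$-legs no longer collapse (one is left with an uncancelled $g(\msE^{-1})$ and an adjacent pair $g_{(i)}(\msE)g_{(i+1)}(\msE)=g(\msE^{2})$, which is not $\varepsilon(g)$ because $\msE$ is not grouplike, as $\msE\otimes\msE=\Omega_{\epsilon}\Delta(\msE)$), so the expression cannot reduce to $f\,\widetilde{*}\,g$ of \eqref{EqDefBraidProdAlt}. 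The plan is sound and the gap is repairable, but the bookkeeping --- which is the entire content of this statement relative to Theorem \ref{TheoIsoZCoset} --- has to be redone starting from the correct interchange relations.
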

In terms of the generating matrices of $\widetilde{\mcO}_q(Z_{\nu})$, which we write $\widetilde{Z}$, this means
\[
\tilde{\j}_{\nu}: \widetilde{Z}_{\pi} \mapsto \widetilde{Y}_{\pi}^{-1}(\msE_{\pi}\otimes 1)\tau(\widetilde{Y})_{\pi}^{-1,\dag}.
\] 
We then have the following form of the reflection equation: 
\[
\widetilde{\msR}_{21}^{\pi,\pi'} \widetilde{Z}_{13} \widetilde{\msR}_{\tau,12}^{\pi,\pi'} \widetilde{Z}_{23}' = \widetilde{Z}_{23}' \widetilde{\msR}_{\tau,21}^{\pi,\pi'} \widetilde{Z}_{13} \widetilde{\msR}_{12}^{\pi,\pi'},
\]
with the induced right coaction of $\mcO_q(G_{\R})$ now given by 
\[
\widetilde{Z}_{\pi} \mapsto Y_{\pi,13}^{-1} \widetilde{Z}_{\pi,12}\tau(Y_{\pi})_{13}^{-1,\dag}. 
\] 
Inverting \eqref{EqDefBraidProdAlt} leads to 
\[
(f_{(1)}\otimes g_{(1)},\Omega_{\epsilon})f_{(2)}g_{(2)} = \mbr(S(f_{(1)}),g_{(1)}) f_{(2)} \;\widetilde{*}\; g_{(3)} \widetilde{\mbr}(f_{(3)},\tau(g_{(2)})), 
\]
so that the corresponding $*$-characters of $\widetilde{\mcO}_q(Z_{\nu})$ are those $\widetilde{\msK}\in \mcU_q(\mfg)$ with 
\begin{equation}\label{EqKAlt}
\widetilde{\msK}^*  = \tau(\widetilde{\msK}),\qquad \Omega_{\epsilon}\Delta(\widetilde{\msK}) = \msR^{-1}(\widetilde{\msK}\otimes 1) \widetilde{\msR}_{\tau} (1\otimes \widetilde{\msK}) = (1\otimes \widetilde{\msK})\widetilde{\msR}_{\tau,21}(\widetilde{\msK}\otimes 1)\msR_{21}^{-1}.
\end{equation}
Let $\widetilde{\mcO}_q(K \backslash U)$ the corresponding right coideal $*$-subalgebra of $\mcO_q(U)$. Let $v$ be the ribbon element as defined in \eqref{EqDefRibbon}. 

\begin{Prop}
Assume that $\nu$ is of symmetric type. Then any $*$-compatible $\nu$-modified universal $K$-matrix is invertible, and there is a one-to-one correspondence between $*$-compatible $\nu$-modified universal $K$-matrices and elements satisfying \eqref{EqKAlt}, the correspondence being given by 
\begin{equation}\label{EqCorrKs}
 \widetilde{\msK} = v^{-1}\msK^{-1}.
\end{equation}
Moreover, 
\begin{equation}\label{EqEqualCoid}
\mcO_q(K\backslash U) = \tau(\widetilde{\mcO}_q(K\backslash U)).
\end{equation}   
\end{Prop}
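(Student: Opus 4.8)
The plan is to establish the two claims of the proposition in sequence. First I would show that any $*$-compatible $\nu$-modified universal $K$-matrix $\msK$ is invertible when $\nu$ is of symmetric type. The key observation is that for symmetric type we have $\epsilon_r^2 = 1$ for all $r$, so the element $\msE$ from Definition \ref{DefmsE} is invertible (indeed $\msE^2$ acts as $\epsilon_{\varpi-\wt(\xi)}^2 = 1$, so $\msE^{-1} = \msE$). Then by Lemma \ref{LemOpNotScal}-style weight arguments, or more directly by examining the action on highest weight vectors via the defining relation \eqref{EqDeltaKEInvAlt}, the operator $\msE^{-1}\msK$ maps $V_\varpi$ onto a space of the correct dimension. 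The cleanest route is to use that $\msE^{-1}\msK$ satisfies the group-like-type relation \eqref{EqDeltaKEInvAlt}, from which one deduces that $\pi_\varpi(\msE^{-1}\msK)$ is a nonzero multiple of an invertible operator on each $V_\varpi$; combined with invertibility of $\msE$ this gives invertibility of $\msK$ in $\mcU_q(\mfu)$.

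Next I would establish the correspondence \eqref{EqCorrKs}. Given $\msK$ invertible, set $\widetilde{\msK} = v^{-1}\msK^{-1}$ and verify the two conditions in \eqref{EqKAlt}. For the $*$-condition, since $v$ is central and self-adjoint one computes $\widetilde{\msK}^* = (v^{-1}\msK^{-1})^* = (\msK^*)^{-1}v^{-1} = \tau(\msK)^{-1}v^{-1} = \tau(v^{-1}\msK^{-1}) = \tau(\widetilde{\msK})$, using $\msK^* = \tau(\msK)$ from \eqref{EqDefModKStar} and $\tau$-invariance of $v$. For the braiding relation, I would start from \eqref{EqDefModKAlt} for $\msK$, invert both sides, and conjugate by $\Delta(v)$ using the ribbon identity \eqref{EqPropRibbon} $\msR_{21}\msR = \Delta(v)(v^{-1}\otimes v^{-1})$ together with centrality of $v$ and \eqref{EqCommDelt} for $\Omega_\epsilon$. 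The two forms of \eqref{EqKAlt} are exchanged by applying $*$ and using the $*$-condition exactly as \eqref{EqDefModKAlt} was derived from \eqref{EqDefModK}. Bijectivity of $\msK \mapsto v^{-1}\msK^{-1}$ is then immediate since $v$ is fixed and invertible and the map is an involution up to the fixed factor $v$.

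Finally, for the equality of coideals \eqref{EqEqualCoid}, I would identify both sides explicitly. The right coideal $\mcO_q(K\backslash U)$ is the image of $\phi$, determined on matrix coefficients by \eqref{EqImZphiMatrix} as $\tau(U_\pi)^*_{12}(\pi(\msK)\otimes 1)U_\pi$; the twisted analogue $\widetilde{\mcO}_q(K\backslash U)$ is the image of the corresponding twisted character $\widetilde{\phi}$ built from $\widetilde{\msK}$, with its generating matrix $\widetilde{Z}_\pi \mapsto Y_{\pi,13}^{-1}\widetilde{Z}_{\pi,12}\tau(Y_\pi)^{-1,\dag}_{13}$ descending to $\mcO_q(U)$. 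Applying $\tau$ (a Hopf $*$-algebra automorphism of $\mcO_q(U)$) to $\widetilde{\mcO}_q(K\backslash U)$ and substituting $\widetilde{\msK} = v^{-1}\msK^{-1}$, I would show the two images coincide by a direct manipulation of the $R$-matrix expressions, using that $v$ acts as a scalar $q^{-(\varpi,\varpi+2\rho)}$ on each $V_\varpi$ (hence contributes only an overall phase to matrix coefficients) and that $\pi(\msK^{-1})$ transforms the spherical-vector data into the same span as $\pi(\msK)$ after applying $\tau$.

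The main obstacle I expect is the coideal-equality step \eqref{EqEqualCoid}: tracking the precise effect of passing from $\msK$ to $v^{-1}\msK^{-1}$ through the two different twisted reflection-equation presentations and verifying that $\tau$ reconciles the opposite-$R$-matrix conventions of $\widetilde{\mcO}_q(Z_\nu)$ with the original conventions. The invertibility and the algebraic verification of \eqref{EqKAlt} are relatively mechanical given the ribbon identity, but showing the two concrete coideal subalgebras are literally equal — rather than merely isomorphic — requires careful bookkeeping of the matrix-coefficient formulas and exploiting that $v$ is central with scalar action on irreducibles so it drops out of the coideal generated.
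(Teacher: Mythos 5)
There are two genuine gaps, both at the places where your proposal replaces an argument by an assertion.

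First, the invertibility step. Your ``cleanest route'' claims that the relation \eqref{EqDeltaKEInvAlt} forces $\pi_{\varpi}(\msE^{-1}\msK)$ to be (a nonzero multiple of) an invertible operator on each $V_{\varpi}$, but no such deduction exists: \eqref{EqDeltaKEInvAlt} is a reflection-equation--type coproduct identity, not group-likeness, and applying counits to it only recovers $\varepsilon(\msK)=1$; likewise Lemma \ref{LemOpNotScal} only says $\pi_{\varpi}(\msK)$ is not a scalar, and nonvanishing on highest weight vectors does not give invertibility of the whole operator. The paper's proof supplies the missing idea: following \cite{KoSt09}*{Lemma 3.13}, one defines an explicit candidate inverse $\overline{\msK}_{\epsilon}$ by a convolution formula involving $\mbr$, $\mbr_{\nu}$ and the antipode, and verifies it is a left (and similarly a right) inverse by evaluating the coproduct identity against $S(a_{(1)})a_{(2)} = \varepsilon(a)1$. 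Note that the same construction is also needed to show that elements satisfying \eqref{EqKAlt} are invertible, without which your ``immediate'' bijectivity of $\msK \mapsto v^{-1}\msK^{-1}$ has no inverse map.

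Second, and more seriously, the coideal equality \eqref{EqEqualCoid}. Your direct approach requires showing that the span of the elements $\sum_{ij}\pi(\msK)_{ij}\,\tau(U_{\pi}(e_i,e_k))^{*}U_{\pi}(e_j,e_l)$ coincides (after applying $\tau$) with the corresponding span built from $\pi(\msK^{-1})$; the central factor $v$ indeed drops out, but replacing a matrix by its inverse inside such coefficients is not a formal manipulation, and this is exactly the point you leave as ``transforms the spherical-vector data into the same span'' with no argument. The paper circumvents this entirely by working with the \emph{dual} coideals: by the analogue of \eqref{EqCharDualO}, $\widehat{\mcO}_q(K\backslash U)$ is cut out by the relation $(1\otimes \msK)\Delta(X) = (\id\otimes\tau)(\Delta(X))(1\otimes\msK)$, and the dual of the twisted coideal is cut out by the same type of relation with $\widetilde{\msK}=v^{-1}\msK^{-1}$; at this level passing from $\msK$ to $\msK^{-1}$ is trivial (multiply both sides by inverses and use centrality of $v$), so the two dual coideals are literally equal as subsets of $\mcU_q(\mfu)$. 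The equality of the original coideals then follows from the biduality statement $\Hat{\Hat{I}}=I$ of Proposition \ref{PropDualCoideal}. This detour through duality is not optional bookkeeping; it is the mechanism that converts an equality which is easy for the dual objects into the equality of images that you cannot reach by direct computation.
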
 
\begin{proof}
If $\msK$ is a $*$-compatible $\nu$-modified universal $K$-matrix, put $\msK_{\epsilon} = \msE^{-1}\msK$. Then 
\begin{equation}\label{EqKeps}
\Delta(\msK_{\epsilon}) = \msR^{-1} (\msK_{\epsilon}\otimes 1)\msR_{\nu}(1\otimes \msK_{\epsilon}) = (1\otimes \msK_{\epsilon})\msR_{\nu,21}(\msK_{\epsilon}\otimes 1)\msR_{21}^{-1}. 
\end{equation}
As in \cite[Lemma 3.13]{KoSt09} one finds by applying $\msK_{\epsilon}$ to $S(a_{(1)})a_{(2)}$ and using \eqref{EqKeps} that $\overline{\msK}_{\epsilon}\in \mcU_q(\mfu)$, defined by
\[
(\overline{\msK}_{\epsilon},a) = \mbr(S(a_{(2)}),a_{(4)})(\msK_{\epsilon},S(a_{(2)})) \mbr_{\nu}(S(a_{(1)}),a_{(5)}),
\]
is a left inverse to $\msK_{\epsilon}$. Similarly one constructs a right inverse, proving invertibility of $\msK$. The same argument shows that any element satisfying \eqref{EqKAlt} is invertible. 

Using now the identities \eqref{EqCommDelt} and \eqref{EqPropRibbon}, the centrality of $v$ and the fact that, in the symmetric case, $\Omega_{\epsilon}=\Omega_{\epsilon}^{-1}$, one deduces that the correspondence \eqref{EqCorrKs} is well-defined. 

To see that \eqref{EqEqualCoid} holds, note that the same argument as in Proposition \eqref{PropInclusion} shows that 
\[
\widehattilde{\mcO}_q(K\backslash U) = \{X \in \mcU_q(\mfu) \mid (1\otimes \widetilde{\msK})(\id\otimes \tau)\Delta(X) = \Delta(X) (1\otimes \widetilde{\msK})\}.
\]
Using centrality of $v$, it is then immediate that
\[
\widehattilde{\mcO}_q(K\backslash U)  =  \{X \in \mcU_q(\mfu) \mid (1\otimes \msK)\Delta(X) =(\id\otimes \tau) \Delta(X) (1\otimes \msK)\},
\]
hence $\widehattilde{\mcO}_q(K\backslash U) = \widehat{\mcO}_q(K\backslash U)$ by Proposition \eqref{PropInclusion}, and then $\widetilde{\mcO}_q(K \backslash U) = \mcO_q(K\backslash U)$ by the biduality statement in Proposition \ref{PropDualCoideal}.
\end{proof}

\begin{Rem}
It was not clear to us how (or if) the above correspondence can be generalized to the non-symmetric case, as one no longer has invertibility of $\msK$.   
\end{Rem}

Let us now present a second variation. Let us for the moment identify $\mcO_q(\bar{G})$ with $\mcO_q(G)$ by the map $f^{\dag}\mapsto f^*$, and identify then further $\mcO_q^{\nu}(G_{\R})$ with $\mcO_q(G)\otimes \mcO_q(G)$ by applying this map to the second component.  In particular, we then have $(f\otimes g)^{\dag} = g^*\otimes f^*$. By  general twisting arguments,  $\mcO_q^{\nu}(G_{\R})$ is coquasitriangular with universal $\mbr$-form
\[
\mbr_{\nu,D} = \mbr_{\nu,14}\mbr_{13}\mbr_{24}\mbr_{\nu,32}^{-1},
\]
cf. \cite[Theorem 2.3.4 and Proposition 7.3.2]{Maj95}. Moreover, $\mbr_{\nu,D}$ is real in the sense that 
\[
(S\otimes S)(\mbr_{\nu,D})^{\dag} = \mbr_{\nu,D,21} \in \mcU_q(\mfu)^{\hat{\otimes}2} \hat{\otimes} \mcU_q(\mfu)^{\hat{\otimes}2}.
\] 
It follows that we can consider the $\mbr_{\nu,D}$-twisted $*$-algebra $\mcO_q^{\nu}(G_{\R})'$, obtained by endowing $\mcO_q^{\nu}(G_{\R})$ with the original $*$-structure and the new product 
\begin{equation}\label{EqDefTwistrD}
f\cdot g= (\mbr_{\nu,D},f_{(1)}\otimes g_{(1)})f_{(2)}g_{(2)},\qquad f,g\in \mcO_q^{\nu}(G_{\R}). 
\end{equation}
This $*$-algebra fits into a connected cogroupoid linking $\mcO_q^{\nu}(G_{\R})^{\opp}$ with $\mcO_q^{\nu}(G_{\R})$. By composition, we also obtain an  $\mbr_{\nu,D}$-twisted $*$-algebra $\mcO_q^{\nu,\id}(G_{\R})'$ with new product given again by \eqref{EqDefTwistrD}, but interpreting $f,g\in \mcO_q^{\nu}(G_{\R})$. This time the $*$-algebra $\mcO_q^{\nu,\id}(G_{\R})'$ fits into a connected cogroupoid linking $\mcO_q^{\nu}(G_{\R})^{\opp}$ with $\mcO_q(G_{\R})$. The $*$-algebra $\mcO_q^{\nu,\id}(G_{\R})'$ is the $\nu$-twisted Heisenberg double analogon of the $\nu$-twisted Drinfeld double $\mcO_q^{\nu,\id}(G_{\R})$, and corresponds to\footnote{In \cite{STS85,STS94} the constructions are carried out in the complex setting, without consideration of the $*$-structure. This allows one to consider a more general class of automorphisms than the involutive ones.}  the twisted  doubles considered in respectively the Poisson and quantum setting in \cite{STS85,STS94}. 

Let us show that this second variation is actually isomorphic to the first variation. We will need some preparations. 

\begin{Theorem}\label{TheoRCobound}
There exists an invertible element $t\in \mcU_q(\mfg)$ such that the following holds: $\Ad(t)$ is an algebra and anti-coalgebra homomorphism satisfying
\begin{equation}\label{EqAdt}
\Ad(t)(K_{\omega}) = K_{-\tau_0(\omega)},\qquad \Ad(t)(E_r) = -q_r^2F_{\tau_0(r)},\qquad \Ad(t)(F_r) = -q_r^{-2}E_{\tau_0(r)},
\end{equation}
and 
\begin{equation}\label{EqRCobound}
\msR = (t\otimes t)\Delta(t^{-1}) = \Delta^{\opp}(t^{-1})(t\otimes t).
\end{equation}
Moreover, 
\begin{equation}\label{EqSstart}
S(t)^* =t.
\end{equation} 
\end{Theorem}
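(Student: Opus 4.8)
The plan is to construct $t$ explicitly from the two braid-group/torus elements that have already appeared in the paper, namely the longest Lusztig operator $T_{w_0}$ and a suitable element of the torus $T\subseteq \mcU_q(\mfu)$, and then verify the four listed properties in turn. First I would set
\[
t = T_{w_0}\, e^{c},
\]
where $e^c$ is a grouplike element of $\mcU_q(\mfh)$ whose role is to correct the scalar discrepancies in \eqref{EqAdt}: one expects $e^c$ to be of the form $q^{(\text{quadratic in the weight})}$ so that $\Ad(e^c)$ is trivial but conjugation combined with $\Ad(T_{w_0})$ produces the factors $-q_r^{\pm 2}$ and sends $K_\omega\mapsto K_{-\tau_0(\omega)}$. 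The action of $\Ad(T_{w_0})$ on the Chevalley generators is already recorded in \eqref{EqIdAdT_0}, which gives $\Ad(T_{w_0})(E_r)^* = -E_{\tau_0(r)}$ etc.; combining this with the definition of the compact $*$-structure $E_r^* = F_rK_r$ lets one read off how $\Ad(T_{w_0})$ moves $E_r$ toward $F_{\tau_0(r)}$ up to explicit powers of $q_r$ and $K$'s, which the corrective torus factor $e^c$ is then chosen to absorb. That $\Ad(t)$ is an algebra homomorphism and an anti-coalgebra homomorphism follows because $\Ad(T_{w_0})$ is an algebra automorphism and $T_{w_0}$ is (up to the grouplike $\mcS_0$) the unique element implementing the Weyl-group action, together with the fact that $T_{w_0}$ intertwines $\Delta$ and $\Delta^{\op}$ in the weak completion — this is the standard statement that $T_{w_0}$ realizes the action of $-w_0$ on the coproduct.

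The heart of the matter is the coboundary identity \eqref{EqRCobound}, $\msR = (t\otimes t)\Delta(t^{-1})$. Here I would exploit the factorization $\msR = \wmsR\msQ$ from \eqref{EqPropR3}, where $\msQ$ is the diagonal Cartan part $q^{-(\wt\otimes\wt)}$ and $\wmsR\in\mcU_q(\mfn^+)\hat\otimes\mcU_q(\mfn^-)$. On the right-hand side, $\Delta(t^{-1})$ and $(t\otimes t)$ are computed via $\Delta(T_{w_0})$ and $\Delta(e^c)$. The key input is Lusztig's formula expressing $T_{w_0}$ as implementing $w_0$ at the level of the quantum group and the well-known relation $\Delta(T_{w_0}) = (T_{w_0}\otimes T_{w_0})\,\bar{\msR}'$ for an appropriate $R$-type correction; matching this against $\wmsR\msQ$ after extracting the grouplike contributions is where the explicit powers of $q_r$ in \eqref{EqAdt} must come out exactly right. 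I expect this to be the main obstacle: reconciling the precise normalization constants coming from $\Delta(T_{w_0})$ with those in the standard universal $R$-matrix, and verifying that the same $t$ yields both the first equality $\msR=(t\otimes t)\Delta(t^{-1})$ and the second $\msR=\Delta^{\op}(t^{-1})(t\otimes t)$ — the latter should follow from the former by applying the relation $\msR\Delta(-)\msR^{-1}=\Delta^{\op}$ from \eqref{EqPropR1} together with the anti-coalgebra property of $\Ad(t)$.

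Finally, the self-duality $S(t)^* = t$ in \eqref{EqSstart} I would deduce from Proposition \ref{PropStarT}, which gives $T_{w_0}^* = R(T_{w_0}) = T_{w_0}\mcS_0$, combined with the fact that the unitary antipode $R$ and the genuine antipode $S$ differ by conjugation with a grouplike torus element. Writing $S(t)^*$ in terms of $S(T_{w_0})^*$ and $S(e^c)^*$, using that $e^c$ is self-adjoint up to the same torus corrections, and invoking $R=S\circ(\text{conjugation by }K_{2\rho})$ or the explicit relation between $S$ and $*$ on $U_q(\mfu)$, the identity should reduce to the computation already performed for $T_{w_0}$ in Proposition \ref{PropStarT}, with the grouplike factors $\mcS_0$ and $e^c$ conspiring to cancel. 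The consistency of all choices is guaranteed at the end by the uniqueness of the universal $R$-matrix: any two candidates for $t$ satisfying \eqref{EqRCobound} differ by a central grouplike element, and the constraints \eqref{EqAdt} together with \eqref{EqSstart} pin this ambiguity down.
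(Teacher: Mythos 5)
There is a genuine gap at the heart of your construction. You take $t = T_{w_0}\,e^{c}$ with $e^{c}$ a \emph{grouplike} element of $\mcU_q(\mfh)$, but grouplike factors cancel identically out of the coboundary: if $g$ is grouplike then
\[
(T_{w_0}g\otimes T_{w_0}g)\,\Delta\bigl((T_{w_0}g)^{-1}\bigr)
= (T_{w_0}g\otimes T_{w_0}g)(g^{-1}\otimes g^{-1})\,\Delta(T_{w_0}^{-1})
= (T_{w_0}\otimes T_{w_0})\,\Delta(T_{w_0}^{-1}),
\]
so your $t$ has the same coboundary as $T_{w_0}$ alone. By the Kirillov--Reshetikhin/Levendorskii--Soibelman formula, that coboundary is (up to conventions) only the \emph{quasi}-$R$-matrix $\widetilde{\msR}$, i.e.~the unipotent part of $\msR = \widetilde{\msR}\msQ$; it can never produce the Cartan factor $\msQ$. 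This is exactly why the paper's element is $t = c\,K_{-\rho}\,T_{w_0}'$, where $c$ is the \emph{non-grouplike} Gaussian $c\,\xi = q^{(\wt(\xi),\wt(\xi))/2}\xi$: its failure to be grouplike is precisely what supplies the Cartan part, since $(c\otimes c)\Delta(c^{-1})$ acts on $\xi\otimes\eta$ by $q^{-(\wt(\xi),\wt(\eta))}$, i.e.~equals $\msQ$. You do write that $e^c$ should be "$q^{(\text{quadratic in the weight})}$", which is the right guess, but that contradicts your assertion that it is grouplike (grouplike torus elements are multiplicative, i.e.~linear exponentials in the weight, and their adjoint action is not trivial either). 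As written, your ansatz cannot satisfy \eqref{EqRCobound}, and you explicitly defer the step where this would surface ("the main obstacle") without resolving it.

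Two further discrepancies with the paper's proof are worth noting. First, the paper does not verify \eqref{EqRCobound} by matching normalizations at all: it invokes the half-twist theorem of Snyder--Tingley \cite{ST09} (after careful translation of conventions: opposite coproduct, $\msR\leftrightarrow\msR^{-1}$, a sign typo, and replacing $K_{\rho}$ by $K_{-\rho}$), and uses the \emph{alternative} braid operator $T_{w_0}'$ (with weights $q_r^{ac-b}$) rather than $T_{w_0}$, so your appeal to Proposition \ref{PropStarT} for \eqref{EqSstart} does not directly apply. Second, for $S(t)^*=t$ the paper notes $S(c)^*=c$, factors $K_{-\rho}T_{w_0}'$ into rank-one pieces $K_{-\alpha_r/2}T_r'$, and checks the rank-one identity $R(T')^* = T'K_{2\alpha}$ as in Lemma \ref{LemTstarR}; the choice $K_{-\rho}$ (rather than $K_{\rho}$) is exactly what makes this work, a point your sketch has no mechanism to detect.
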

\begin{proof}
Let $c\in \mcU_q(\mfu)$ be defined by $c\xi = q^{(\wt(\xi),\wt(\xi))/2}\xi$, and let $T_{w_0}'$ be the alternative Lusztig braid operator at the longest root constructed from the 
\[ 
T_r' \xi = \underset{-a+b-c = (\wt(\xi),\alpha_r^{\vee})}{\sum_{a,b,c\geq 0}} (-1)^b q_r^{ac-b} E_r^{(a)}F_r^{(b)}E_r^{(c)}\xi.
\]
Then from \cite[Lemma 3.10 and Theorem 3.11]{ST09} (see also \cite{KR90,LS91,KT09}) it follows that 
\[
t = cK_{-\rho}T_{w_0}' = cT_{w_0}'K_{\rho}
\] 
satisfies \eqref{EqAdt} and \eqref{EqRCobound}, upon noting that 
\begin{itemize}
\item the above references use the opposite comultiplication, 
\item their $R$-matrices hence correspond to our $\msR^{-1}$, 
\item in \cite[Definition 3.5]{ST09} one should correct the small typo by adding an extra sign in the expression under the summation sign and changing the root to the associated coroot, and
\item we have changed the appearence of $K_{\rho}$ in  \cite[Definition]{ST09} into $K_{-\rho}$ as this does not change \eqref{EqRCobound} but is important to have the right compatibility with the $*$-structure.
\end{itemize}

Finally, to see that $S(t)^* = t$ we first note that $S(c)^* = c$. Since we can write $K_{-\rho}T_{w_0}'$ as a product of the rank one operators $K_{-\alpha_r/2}T_{r}'$, it is hence sufficient to verify that $K_{-\alpha/2}T'$ is stable under $S(-)^*$ in the rank one case. However, since $S(X)^* = R(\Ad(K_{-\alpha/2})X)^*$, and since $K_{-\alpha/2}T' = T'K_{\alpha/2}$, this is equivalent to $R(T')^* = T'K_{2\alpha}$. This now follows similarly as in Lemma \ref{LemTstarR}.
\end{proof}

Consider now the corestricted left coaction of  $\mcO_q(G_{\nu})^{\opp}$ on $\mcO_q^{\nu,\id}(G_{\R})'$, and let $\mcO_q(G_{\nu}\dbbackslash G_{\R})'$ be the associated coinvariant $*$-subalgebra. Let $t$ be as in Theorem \ref{TheoRCobound}.

\begin{Prop}
The map 
\[
F: \mcO_q^{\nu,\id}(G_{\R})' \rightarrow \widetilde{\mcO}_q^{\nu,\id}(G_{\R}), f \mapsto f((t\otimes t)\msR_{\nu,21}^{-1}-)
\]
is a right $\mcO_q(G_{\R})$-equivariant $*$-isomorphism, carrying $\mcO_q(G_{\nu}\dbbackslash G_{\R})'$ isomorphically onto $\widetilde{\mcO}_q(G_{\nu}\dbbackslash G_{\R})$. 
\end{Prop}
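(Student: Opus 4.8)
The plan is to verify directly that the map $F$ is a well-defined $*$-isomorphism intertwining the right $\mcO_q(G_{\R})$-coactions, and then trace through how it interacts with the left coactions so as to match the two coinvariant subalgebras. Since both $\mcO_q^{\nu,\id}(G_{\R})'$ and $\widetilde{\mcO}_q^{\nu,\id}(G_{\R})$ have the underlying vector space $\mcO_q^{\com}(G_{\R}) \cong \mcO_q(G)\otimes \mcO_q(G)$, the map $F$ is simply a (twisted) translate of the identity, so linearity and bijectivity are immediate once we check invertibility of the defining element $(t\otimes t)\msR_{\nu,21}^{-1}\in \mcU_q(\mfg)\hat\otimes \mcU_q(\mfg)$, which holds since $t$ is invertible by Theorem \ref{TheoRCobound} and $\msR$ is invertible.

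First I would establish multiplicativity. Writing out $F(f\cdot g)$ using the twisted product \eqref{EqDefTwistrD} and the coboundary property \eqref{EqRCobound} of $t$ together with $\msR = (t\otimes t)\Delta(t^{-1})$, the $\mbr_{\nu,D}$-factor should combine with the translate by $(t\otimes t)\msR_{\nu,21}^{-1}$ to reproduce exactly the skew-pairing factors $\widetilde{\mbr}_{\nu}=\mbr_{\nu,21}^{-1}$ appearing in the product $\widetilde m_{\nu,\id}$ of $\widetilde{\mcO}_q^{\nu,\id}(G_{\R})$. This is the computational heart of the argument: one has to unfold the explicit form of $\mbr_{\nu,D}=\mbr_{\nu,14}\mbr_{13}\mbr_{24}\mbr_{\nu,32}^{-1}$ and use the various hexagon and naturality identities \eqref{EqPropR1}, \eqref{EqInvNu}, and the fact that $\Ad(t)$ is an algebra and anti-coalgebra homomorphism, to push all the $R$-matrix contributions into the right place. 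Next I would verify that $F$ preserves the $*$-structure, using \eqref{EqSstart} ($S(t)^* = t$), the reality \eqref{EqomStar}/\eqref{EqrnuStar} of the cocycles, and the reality of $\mbr_{\nu,D}$ recorded just before the proposition, $(S\otimes S)(\mbr_{\nu,D})^{\dag} = \mbr_{\nu,D,21}$.

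For the equivariance, I would check $(F\otimes\id)\circ(\text{right coaction}) = (\text{right coaction})\circ F$ at the level of generating matrices $Y_{\pi}$: the right $\mcO_q(G_{\R})$-coaction acts only on the holomorphic leg and commutes with left multiplication by fixed elements of $\mcU_q(\mfg)\hat\otimes\mcU_q(\mfg)$, so the translate by $(t\otimes t)\msR_{\nu,21}^{-1}$ is harmless. The final identification of coinvariants then follows formally: $\Ad(t)$ intertwines the left $\mcO_q(G_{\nu})$-comodule structure defining $\mcO_q(G_{\nu}\dbbackslash G_{\R})'$ (through $\mcO_q(G_{\nu})^{\opp}$) with the one defining $\widetilde{\mcO}_q(G_{\nu}\dbbackslash G_{\R})$, because \eqref{EqAdt} shows $\Ad(t)$ realizes the automorphism $\tau_0$-type flip relating the opposite and ordinary structures; since $F$ is an equivariant $*$-isomorphism, it must carry fixed points to fixed points bijectively.

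The main obstacle I anticipate is the multiplicativity computation: correctly bookkeeping the four tensor legs of $\mbr_{\nu,D}$ against the two-sided translate by $(t\otimes t)\msR_{\nu,21}^{-1}$, and confirming that the coboundary relation \eqref{EqRCobound} converts the Heisenberg-double twist $\mbr_{\nu,D}$ into the Drinfeld-double cocycle governing $\widetilde{\mcO}_q^{\nu,\id}(G_{\R})$. This is where a sign or a leg-ordering error is easy to make, and where one genuinely needs the precise form of $t$ and the anti-coalgebra property of $\Ad(t)$ rather than just abstract nonsense; everything else ($*$-preservation, equivariance, the passage to coinvariants) should then follow cleanly.
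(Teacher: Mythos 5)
Your proposal follows essentially the same route as the paper: multiplicativity is reduced via the coboundary relation \eqref{EqRCobound} and \eqref{EqAdt} to an explicit $R$-matrix identity (the paper checks $(x^{-1}\otimes x^{-1})\msR_{\nu,14}\msR_{13}\msR_{24}\Delta^{\otimes}(x)=\msR_{\nu,23}^{-1}$ for $x=(t\otimes t)\msR_{\nu,21}^{-1}$), $*$-preservation comes from $S(t)^*=t$ together with $(S\otimes S)\msR_{\nu}=\msR_{\nu}$ and $\msR_{\nu}^{*\otimes *}=\msR_{\nu,21}$, equivariance holds by construction, and the coinvariant subalgebras are matched through the $E_r\leftrightarrow F_{\tau_0(r)}$ flip \eqref{EqAdt} implemented by $\Ad(t)$, which the paper carries out at the level of the infinitesimal right actions, taking care of the discrepancy between the pairings $(-,-)_{\epsilon}$ and $(-,-)_{\epsilon}'$. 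One minor inaccuracy: the right $\mcO_q(G_{\R})$-coaction acts on both the holomorphic and antiholomorphic legs, not only the holomorphic one, though your conclusion that left translation by $(t\otimes t)\msR_{\nu,21}^{-1}$ commutes with it remains correct.
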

\begin{proof}
Pulling the algebra structures back to $\mcO_q^{\com}(G_{\R}) \cong \mcO_q(G)\otimes \mcO_q(G)$, we need to compare the two products
\[
f\cdot g = (\mbr_{\nu,14} \mbr_{13}\mbr_{24},f_{(1)}\otimes g_{(1)}) f_{(2)}g_{(2)} (\mbr_{32}^{-1},f_{(3)}\otimes g_{(3)}),\qquad f,g\in \mcO_q^{\com}(G_{\R})
\]
of $\mcO_q^{\nu,\id}(G_{\R})'$ and
\[
f g = (\mbr_{\nu,23}^{-1},f_{(1)}\otimes g_{(1)}) f_{(2)}g_{(2)} (\mbr_{32}^{-1},f_{(3)}\otimes g_{(3)}),\qquad f,g\in \mcO_q^{\com}(G_{\R})
\]
of $\widetilde{\mcO}_q^{\nu,\id}(G_{\R})$ by means of $F$. This entails checking that, with $x = (t\otimes t)\msR_{\nu,21}^{-1}$,
\begin{equation}\label{EqIntermedR}
(x^{-1}\otimes x^{-1}) \msR_{\nu,14}\msR_{13}\msR_{24} \Delta^{\otimes}(x) = \msR_{\nu,23}^{-1},
\end{equation}
where $\Delta^{\otimes}$ is the tensor product coalgebra structure. Now using \eqref{EqAdt}, we easily see that
\[
(t^{-1}\otimes t^{-1})\msR_{\nu} = \msR_{\nu,21}(t^{-1}\otimes t^{-1}).
\]
By \eqref{EqRCobound} we can then simplify \eqref{EqIntermedR} to 
\[
\msR_{\nu,21}\msR_{\nu,43} \msR_{\nu,41} \Delta^{\otimes}(\msR_{\nu,21}^{-1}) = \msR_{\nu,23}^{-1}.
\]
However, an easy calculation shows that $\Delta^{\otimes}(\msR_{\nu,21}^{-1}) = \msR_{\nu,41}^{-1}\msR_{\nu,21}^{-1}\msR_{\nu,43}^{-1}\msR_{\nu,23}^{-1}$, proving the above identity.

This shows that $F$ is an algebra isomorphism, and it is right $\mcO_q(G_{\R})$-equivariant by construction. To see that $F$ is $*$-preserving, we need to show that $x$ as above satisfies $S(x)^{\dag}= x$, i.e.~
\[
(S\otimes S)( (t\otimes t)\msR_{\nu}^{-1})^{*\otimes *} =  (t\otimes t)\msR_{21,\nu}^{-1}. 
\]
This follows from $(S\otimes S)\msR_{\nu} =  \msR_{\nu}$ and $\msR_{\nu}^{*\otimes *} = \msR_{\nu,21}$ together with \eqref{EqSstart}.

Finally, to see that  $F(\mcO_q(G_{\nu}\dbbackslash G_{\R})') = \widetilde{\mcO}_q(G_{\nu}\dbbackslash G_{\R})$, it is enough to compare the right infinitesimal actions $\lhd$ of respectively $U_q(\mfg)$ and $U_q(\mfg)^{\cop}$ on  $ \widetilde{\mcO}_q(G_{\nu}\dbbackslash G_{\R})$ and $\mcO_q(G_{\nu}\dbbackslash G_{\R})')$. Transporting these actions along the natural vector space isomorphisms $ \widetilde{\mcO}_q(G_{\nu}\dbbackslash G_{\R}) \cong \mcO_q(G)\otimes \mcO_q(G) \cong \mcO_q(G_{\nu}\dbbackslash G_{\R})'$ and taking care of implementing correctly the pairings $(-,-)_{\epsilon}$ and $(-,-)_{\epsilon}'$, we compute for example for $f\in \mcO_q(G)\otimes \mcO_q(G)$ that 
\[
F(f)\lhd K_{\omega} = f(x(K_{\omega}\otimes K_{\omega})-) = f((K_{-\tau_0(\omega)}\otimes K_{-\tau_0(\omega)})x-) = F(f \lhd K_{-\tau_0(\omega)}). 
\]
Similarly, using again \eqref{EqAdt}, we find
\begin{eqnarray*}
F(f) \lhd E_r 
&=& f((t\otimes t)\msR_{\nu,21}^{-1}(E_r\otimes 1 + \epsilon_r K_r\otimes E_{\tau(r)})-)\\
&=& f((t\otimes t)(\epsilon_r1\otimes E_r + E_{r}\otimes K_{\tau(r)})\msR_{\nu,21}^{-1}-)\\
&=& f((-\epsilon_rq_r^21\otimes F_{\tau_0(r)} -q_r^2F_{\tau_0(r)} \otimes K_{-\tau_0\tau(r)})(t\otimes t)\msR_{\nu,21}^{-1}-)\\
&=& F(f\lhd (-q_r^2F_{\tau_0(r)})),
\end{eqnarray*}
where in the last step we use that $\tau$ and $\tau_0$ commute, cf.~ Lemma \ref{LemCommTau0Tau}. One similarly shows that $F(f) \lhd F_r =  F(f\lhd (-q_r^{-2}E_{\tau_0(r)}))$, from which the preservation of invariant subalgebras under $F$ then follows. 
\end{proof}

\section{Enhanced Satake diagrams and associated Vogan diagrams}\label{Ap1}

Let $\mfg$ be a semisimple complex Lie algebra with Dynkin diagram $\Gamma$ and compact form $\mfu$. We use further notation as in Section \ref{SecPrelim}. In particular, we endow $\mfg$ with the Lie $*$-algebra structure inducing $\mfu$.

Recall that two Lie algebra involutions $\sigma,\sigma'$ of $\mfu$ or, equivalently, two Lie $*$-algebra involutions of $\mfg$ are called \emph{inner equivalent} or \emph{inner conjugate} if there exists $g\in G$ with 
\[
\sigma'= \Ad(g) \sigma \Ad(g)^{-1}. 
\] 
It is not hard to see that one may always take $g \in U$, so that $\sigma,\sigma'$ are \emph{unitarily inner equivalent}. More generally, we call $\sigma,\sigma'$ \emph{equivalent} or \emph{conjugate} if there exists $\phi \in \Aut(\mfu) = \Aut(\mfg,*)$ such that 
\[
\sigma' = \phi \sigma \phi^{-1}. 
\]

Recall from Definition \ref{DefConcSat} the construction of involutions $\theta = \theta(X,\tau,z)$ starting from a concrete Satake diagram $(X,\tau,z)$. It is straightforward to check that $\theta(X,\tau,z)$ does not depend on $z$ up to unitary inner conjugacy by an element $\Ad(t)$ for $t\in T$, the maximal torus in $U$. We then have the following theorem.

\begin{Theorem}\label{TheoUniqueInnConj}
The assignment
\[
(X,\tau,z) \mapsto \theta(X,\tau,z)
\]
descends to a one-to-one correspondence between concrete Satake diagrams $(X,\tau)$ on $\Gamma$ and unitary inner conjugacy classes of Lie algebra involutions of $\mfu$.  
\end{Theorem}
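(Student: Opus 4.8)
The plan is to establish the claimed bijection in two halves: surjectivity (every involution arises from some concrete Satake diagram) and injectivity (two concrete Satake diagrams producing unitarily inner conjugate involutions must coincide). Since the map is already known to be well-defined on concrete Satake diagrams (Theorem \ref{TheoUniqueInnConj}'s preceding remark shows independence of $z$ up to unitary inner conjugacy by a torus element), I would take as the starting point the standard classical structure theory of involutions of $\mfu$, as developed for instance in the references \cite{Ar62,OV90} and recalled in \cite{Kol14}*{Section 2}.

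First I would prove surjectivity. Given a Lie algebra involution $\sigma$ of $\mfu$, complexify it to a Lie $*$-algebra involution $\theta_\sigma$ of $\mfg$. The classical maximal-split (Satake) normal form asserts that, after conjugating by a suitable inner automorphism $\Ad(g)$ with $g \in U$, one may arrange that $\theta_\sigma$ stabilizes the Cartan subalgebra $\mfh$ and acts on the root system so that the associated $\Theta$ on $\mfh^*$ has the form $\Theta(\alpha) = -w_X\tau(\alpha)$ for a uniquely determined subset $X \subseteq I$ (the black nodes, where $\Theta$ acts as the identity on $Q_X$) and a diagram involution $\tau$. The two defining conditions of a concrete Satake diagram in Definition \ref{DefEnhSat} — that $\tau$ preserves $X$ and equals $-w_X$ there, and the integrality condition $(\alpha_r,\rho_X^\vee)\in\Z$ for $\tau$-fixed $r\notin X$ — are precisely the constraints satisfied by the normal form of a genuine involution; these are exactly Lemma \ref{LemCommThetaTau} together with the parity facts encoded in \eqref{EqProdzz}. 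Having recovered $(X,\tau)$, I would choose any enhancing sign function $z$ (which exists by the remark after Definition \ref{DefEnhSat}) and verify, using the comparison formula \eqref{EqCompInnOut} for $\theta(X,\tau,z)$, that $\theta(X,\tau,z)$ is inner conjugate to $\theta_\sigma$; since the choice of $z$ only changes $\theta$ by a torus conjugation, this yields $\sigma \sim \theta(X,\tau,z)$ up to unitary inner conjugacy.

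Next I would prove injectivity. Suppose $(X,\tau)$ and $(X',\tau')$ are concrete Satake diagrams with $\theta(X,\tau,z)$ and $\theta(X',\tau',z')$ unitarily inner conjugate, say by $\Ad(g)$ with $g\in U$. The invariants $X$ and the $\Theta$-action (hence $\tau$) are extracted from the involution in a conjugation-invariant way: $X$ is recovered as the set of simple roots spanning the maximal sub-root-system on which $\Theta = \id$ (equivalently, the ``compact'' part of the restricted root system), and $\tau$ is recovered from the residual action $-w_X\Theta$ on the quotient. Because restricted root data and the multiplicities attached to an involution are invariant under inner conjugacy, the pair $(X,\tau)$ is uniquely determined, giving $X = X'$ and $\tau = \tau'$. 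The one subtlety I anticipate as the main obstacle is that inner conjugation a priori allows $g$ to move $\mfh$ to another Cartan subalgebra and the positive system to another; one must argue that the maximal-split normal form is rigid enough that two such forms of inner-conjugate involutions differ only by an element of the normalizer $N_U(\mfh)$ acting through $W$, and then check that no nontrivial Weyl group element can carry one concrete Satake diagram to a genuinely different one while preserving the fixed Dynkin-diagram labelling. This is exactly the point where the bookkeeping of ``concrete'' versus ``abstract'' Satake diagrams matters — the enumeration subtleties in items (1)–(3) following Definition \ref{DefEnhSat} (the $DIII$ and triality-related $D_4$ cases) are the cases where the rigidity argument is most delicate and where I would expect to fall back on direct diagram checking, consistent with the paper's stated methodology in Appendix \ref{Ap1}.

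In carrying this out I would lean on the classical classification stated in \cite{Ar62}, which already asserts that abstract Satake diagrams classify real forms up to (not-necessarily-inner) conjugacy; the extra content here is the refinement to \emph{unitary inner} conjugacy with a \emph{fixed} Dynkin diagram, which is precisely why the ordering of components and the diagram-automorphism-distinct copies must be retained. Concretely, I expect the proof to reduce to: (i) the existence of the split normal form, quoted from the structure theory; (ii) the invariance of $(X,\tau)$ under inner conjugacy, established via restricted roots; and (iii) a finite verification, case by case over the exceptional $D$-type phenomena, that distinct concrete diagrams yield non-inner-conjugate involutions. Step (iii) is the main obstacle and is the portion best handled by the explicit diagram analysis that the paper defers to this appendix.
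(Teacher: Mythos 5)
Your proposal is correct in outline, and it essentially reproduces the route that the paper itself offers as an \emph{alternative} to its official proof, so it is worth spelling out the difference. The paper's actual proof is a two-citation argument: surjectivity and classification up to arbitrary (not necessarily inner) conjugacy is quoted from Araki \cite{Ar62}, and the injectivity statement --- that equal inner conjugacy classes force equal concrete Satake diagrams --- is obtained at once from \cite{Hel88}*{Theorem 3.11}. Only afterwards does the paper remark that one may instead restrict to simple $\mfg$, note that the only concrete Satake diagrams actually moved by a Dynkin diagram automorphism are those of $\mfso^*(4p)$ and of $\mfso(1,7)$, $\mfso(2,6)$, $\mfso(3,5)$, and verify non-inner-conjugacy in those cases directly (Propositions \ref{PropInnConj} and \ref{PropInnConj2}); your plan is this second route. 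Two points of comparison. First, your intermediate step --- recovering $(X,\tau)$ from restricted root data ``in a conjugation-invariant way'' --- cannot carry the weight you initially place on it: restricted roots and multiplicities are invariants of the \emph{abstract} Satake diagram only, and are blind to precisely the ambiguity at stake (concrete diagrams related by a diagram automorphism), so this step merely reduces injectivity to your case analysis (iii), as you in fact concede; the paper avoids this detour entirely by citing Helminck. Second, for the case analysis itself the paper does not argue on the Satake side at all: it converts the involutions to \emph{Vogan form} and uses Lemma \ref{LemEqVog}, which translates inner conjugacy into an explicit equivalence relation on sign functions; a Weyl-group-invariant sign such as $c = \epsilon(\alpha_1+\alpha_3+\cdots+\alpha_{l-1})$ in Proposition \ref{PropInnConj} then cleanly separates the two candidate classes for $\mfso^*(4p)$, with Proposition \ref{PropInnConj2} handling the $D_4$ rotations. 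Your ``direct diagram checking'' would need some such computable invariant to be carried out; the Vogan-side sign computation is what makes the finite verification tractable, and is the main ingredient your sketch leaves unspecified.
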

\begin{proof}
It is well-known that any $*$-compatible involution is equivalent to a Satake involution up to conjugacy with an automorphism of $\mfu$ \cite{Ar62}. We have to show then that two concrete Satake diagrams induce \emph{inner} conjugate involutions if and only if the concrete Satake diagrams are equal. This follows from \cite{Hel88}*{Theorem 3.11}. 
\end{proof}

In fact, for the proof of the previous theorem we may clearly restrict to the case of $\mfg$ simple, and then only the cases of the Satake diagrams associated to $\mfu^*_{2p}(\mathbb{H}) = \mfso^*(4p)$ and $\mfso(1,7),\mfso(2,6)$ and $\mfso(3,5)$ need to be investigated, as they are the only ones admitting Dynkin diagram automorphisms which are not Satake diagram automorphisms. In Proposition \ref{PropInnConj} and Proposition \ref{PropInnConj2} we will show explicitly that these automorphisms induce non-inner equivalences by using instead the \emph{Vogan form} for the involutions. 

\begin{Def}
A $*$-preserving involution $\nu$ of $\mfg$ is said to be in \emph{Vogan form} with respect to the Chevalley-Serre generators $\msS$ if there exists an involutive Dynkin diagram automorphism $\tau$ and a $\tau$-invariant sign function
\[
\epsilon: I \rightarrow \{\pm1\}
\]
such that 
\begin{equation}\label{EqDefnu}
\nu(h_r) = h_{\tau(r)},\qquad \nu(e_r) = \epsilon_re_{\tau(r)},\qquad \nu(f_r) = \epsilon_rf_{\tau(r)}. 
\end{equation}
\end{Def}

Conversely, whenever $\tau$ is an involutive automorphism of the Dynkin diagram $\Gamma$ and $\epsilon$ is a $\tau$-invariant sign function on the underlying set $I$, we can define a $*$-compatible involution $\nu = \nu(Y,\tau)$ by \eqref{EqDefnu}, where we write $Y =Y_{\epsilon}$ for the set of points with $\epsilon_r = -1$. One can reduce to the case with $\epsilon_{r} = 1$ for $\tau(r)\neq r$, but it will be more natural not to make this reduction a priori. The datum $(Y,\tau)$ can be encoded on the Dynkin diagram by connecting $2$-point orbits of $\tau$ via arrows and coloring the $Y$-elements black. One calls the resulting diagram a \emph{concrete Vogan diagram}. 

It is well-known that any involution of $\mfu$ is inner conjugate to some some $\nu(Y,\tau)$, see e.g.~ \cite{Hel78}*{Chapter X} or \cite{Kna96}*{Chapter VI}. To see which $\nu(Y,\tau)$ are inner conjugate, we will use the following lemma. Note first that any sign function $\epsilon$ on $I$ can be extended uniquely to a $\{\pm1\}$-valued character on the root lattice $Q$. Given a subset $Z \subseteq I$, let us further write 
\[
\eta_Z: I \rightarrow \{\pm 1\},\qquad \left\{\begin{array}{ll} r \mapsto -1 & \textrm{if }r\in Z \\ r \mapsto 1 & \textrm{if }r\notin Z\end{array}\right.
\]

\begin{Lem}\label{LemEqVog}
Two Vogan involutions $\nu(Y,\tau)$ and $\nu(Y',\tau')$ with associated sign characters $\epsilon,\epsilon'$ are inner conjugate if and 
only if $\tau = \tau'$ and $\epsilon,\epsilon'$ are equivalent with respect to the smallest equivalence relation generated by the following two types of relations:
\begin{itemize}
\item type 1: $\epsilon \sim \epsilon' = \epsilon \circ s_{r}$ for $r\in I$ with $\tau(r) = r$ and $\epsilon(r) = -1$. 
\item type 2: $\epsilon \sim  \epsilon' = \eta_{\{r,\tau(r)\}}\cdot \epsilon$ for $r\in I$  with $\tau(r) \neq r$.
\end{itemize}
\end{Lem}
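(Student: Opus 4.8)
The plan is to realize every Vogan involution as a twist of the diagram automorphism by a torus element, and then to match the two elementary moves with the two basic ways of conjugating such twists: by elements of the maximal torus $T$ and by Weyl representatives.

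First I would record the invariant $\tau$. Since $\nu(Y,\tau)$ acts on $\mfh$ as $\tau$ and on the root spaces as $\tau$ up to signs, its image in $\Aut(\mfg)/\Int(\mfg)$ is exactly the class of $\tau$; as inner conjugation acts trivially on $\Aut(\mfg)/\Int(\mfg)$, two inner conjugate Vogan involutions must have $\tau=\tau'$. This disposes of the $\tau$-part in both directions, so from now on I fix $\tau$. Next I would write $\nu(Y,\tau)=\Ad(t_{\epsilon})\circ\tau$, where $t_\epsilon\in T$ is the element with $\alpha_r(t_\epsilon)=\epsilon_r$; such $t_\epsilon$ exists because $\{\alpha_r\}$ is a basis of $Q$ and $U$ is simply connected, and it is $\tau$-invariant since $\epsilon$ is. The relation $\nu^2=\Ad(t_\epsilon\tau(t_\epsilon))=\Ad(t_\epsilon^2)=\id$ confirms $t_\epsilon^2\in Z(U)$.

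For the backward direction I would check that each move is an inner conjugation. Conjugating by $s\in T$ gives $\Ad(s)\nu\Ad(s)^{-1}=\Ad(s\,t_\epsilon\,\tau(s)^{-1})\tau$, whose sign character is $\epsilon''_r=\alpha_r(s)\alpha_{\tau(r)}(s)^{-1}\epsilon_r$; this leaves $\tau$-fixed nodes untouched and, choosing $s$ with $\alpha_r(s)=-1$, $\alpha_{\tau(r)}(s)=1$ and trivial elsewhere, flips the signs at a $2$-orbit $\{r,\tau(r)\}$, producing exactly the type 2 relation $\epsilon\sim\eta_{\{r,\tau(r)\}}\cdot\epsilon$. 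Conjugating by the representative $m_r$ of \eqref{EqInvSimple} for a $\tau$-fixed $r$ gives, using $\tau(m_r)=m_r$, the twist $\Ad(m_r t_\epsilon m_r^{-1})\tau=\Ad(s_r(t_\epsilon))\tau$, whose character on $Q$ is $\epsilon\circ s_r$; since $\epsilon(s_r\beta)=\epsilon(\beta)\epsilon(\alpha_r)^{-\langle\beta,\alpha_r^\vee\rangle}$, this move is the identity when $\epsilon_r=+1$ and is the type 1 relation when $\epsilon_r=-1$, which explains the hypothesis on $\epsilon_r$. Hence the whole equivalence class under type 1 and type 2 lies inside a single inner conjugacy class.

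For the forward direction I would argue as follows. If $\nu'=\Ad(g)\nu\Ad(g)^{-1}$, then $\mfh$ is a maximally compact $\nu'$-stable Cartan subalgebra, and so is $\Ad(g)\mfh$; by conjugacy of maximally compact Cartan subalgebras under the identity component of $U^{\nu'}$ I can replace $g$ by an element of $N(T)$ without changing $\nu'$, so $g=n_w$ for some $w\in W$. Matching the induced action on $\mfh$ forces $w$ to commute with $\tau$, i.e. $w\in W^{\tau}$, and $\nu'=\Ad(w(t_\epsilon)z)\tau$, where $z\in T$ is the sign cocycle measuring the difference between the two representatives $n_w$ and $\tau(n_w)$ of $w$. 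I would then invoke Steinberg's description of $W^{\tau}$ as the Weyl group of the folded root system: it is generated by the simple reflections $s_r$ at $\tau$-fixed nodes and by the folded reflections attached to the $2$-orbits. The generators of the first kind act on $\epsilon$ by $\epsilon\mapsto\epsilon\circ s_r$ (type 1, or trivial when $\epsilon_r=+1$), while those of the second kind permute and sign-change the $\tau$-orbits in a way realizable by torus conjugation (type 2); the residual cocycle $z$, being a $\{\pm1\}$-character on $Q$, is likewise absorbed by a type 2 move. Decomposing $w$ into these generators exhibits $\epsilon'$ as obtained from $\epsilon$ by a sequence of type 1 and type 2 relations, completing the proof.

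The hard part will be the forward direction: first the geometric reduction of the conjugating element $g$ into $N(T)$ via conjugacy of maximally compact Cartan subalgebras, and second the careful bookkeeping of the sign cocycle $z$ (the ambiguity between $n_w$ and $\tau(n_w)$) together with the torus signs produced by the folded reflections, showing that all of them are type 2 moves. Verifying that Steinberg's generators of $W^{\tau}$ act on $\epsilon$ precisely through type 1 and type 2 is where the genuine content lies, whereas the backward direction and the determination of $\tau$ are essentially formal.
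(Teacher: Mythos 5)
Your proof is correct in outline, and while your backward direction coincides with the paper's (torus conjugation realizes type 2, conjugation by the $\tau$-fixed representative $m_r$ of \eqref{EqInvSimple} realizes type 1), your forward direction takes a genuinely different route. The paper reduces to simple $\mfg$, quotes \cite{CH02}*{Theorem 5.1} for $\tau = \id$, and for $\tau \neq \id$ settles types $A$, $D$ and $E_6$ by explicit diagram checking against the known list of real forms; this is short given the literature and moreover produces the explicit tables the paper reuses later (Propositions \ref{PropInnConj} and \ref{PropInnConj2}). You instead propose a uniform, case-free argument: reduce the conjugating element to $N_U(T)$ via conjugacy of maximally compact $\nu'$-stable Cartan subalgebras, observe that the induced Weyl element lies in $W^{\tau}$, and decompose it using Steinberg's generators of $W^{\tau}$. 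This buys a self-contained proof that needs neither the Chuah--Hu classification nor any comparison with the list of real forms, and in fact the crucial verification is cleaner than you suggest: for a non-adjacent $2$-orbit the folded reflection $s_r s_{\tau(r)}$ multiplies $\epsilon$ by the sign function $\alpha_j \mapsto \epsilon_r^{(\alpha_j,\alpha_r^{\vee}+\alpha_{\tau(r)}^{\vee})}$, which (using $\epsilon_{\tau(r)} = \epsilon_r$) is $\tau$-invariant and trivial on $\tau$-fixed nodes, hence a product of the $\eta_{\{s,\tau(s)\}}$; in the adjacent case $s_r s_{\tau(r)} s_r$ is the reflection in $\beta = \alpha_r + \alpha_{\tau(r)}$ and acts trivially since $\epsilon(\beta) = \epsilon_r^2 = 1$. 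So no genuine case analysis is needed there.

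Two points in your sketch do need to be nailed down. First, the reduction to $N_U(T)$ requires that $\mfh \cap \mfu$ is a maximally compact Cartan for every Vogan involution; this holds because a diagram automorphism preserves the positive system, so no root satisfies $\tau\alpha = -\alpha$, whence the centralizer of $(\mfh\cap\mfu)^{\tau}$ in $\mfu$ is $\mfh\cap\mfu$ itself. Second, your claim that the residual cocycle is absorbed ``being a $\{\pm1\}$-character on $Q$'' is too weak as stated: an arbitrary sign character need not be a product of type 2 moves (it could take the value $-1$ at a $\tau$-fixed node, which type 2 moves can never achieve). What is true, and what you must check, is that the characters actually arising, namely $\beta \mapsto \beta(t)\,(\tau\beta)(t)^{-1}$ coming from the ambiguity $n_w \mapsto n_w t$ in the choice of Weyl representative (one can choose $\tau$-fixed representatives $n_h$ for each Steinberg generator $h$, so that the remaining ambiguity is exactly of this torus form), are $\tau$-invariant and trivial on all $\tau$-fixed simple roots, hence products of the $\eta_{\{r,\tau(r)\}}$. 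With these verifications your plan closes up into a complete proof.
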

\begin{proof}
It is clear that (inner) conjugacy implies $\tau = \tau'$, so we will assume this in what follows. 

Fix $\epsilon$. If $\epsilon' = \eta_{\{r,\tau(r)\}}\cdot \epsilon$ for $\tau(r)\neq r$, we can pick $z\in T$ such that 
\[
 z(\alpha_{\tau(s)})z(\alpha_s)^{-1} = \eta_{\{r,\tau(r)\}}(s),\qquad  \textrm{for all }s\in I.
\]
Then $\nu(Y',\tau)  = \Ad(z) \nu(Y,\tau)\Ad(z)^{-1}$. On the other hand, if $\epsilon' = \epsilon \circ s_{r}$ for $\tau(r) = r$ and $\epsilon(r) = -1$, we have, using the notation \eqref{EqInvSimple}, that $\tau(m_{r}) = m_{r}$, and it is then easy to see that 
\[
\Ad(m_{r})\nu \Ad(m_{r})^{-1} = \nu'.
\]
Hence $\epsilon \sim \epsilon'$ implies $\nu(Y,\tau) \sim \nu(Y',\tau)$. 

Conversely, assume that $\nu(Y,\tau) \sim \nu(Y',\tau)$. We may assume that $\mfg$ is simple. If $\tau = \id$, we only need to use the first operation, and the result follows from\footnote{Note that the proof of \cite{CH02}*{Theorem 5.1} is with respect to equivalence by \emph{inner conjugacy}, although the authors introduce the equivalence relation as being by general conjugacy with an automorphism.} \cite{CH02}*{Theorem 5.1}. If $\tau \neq \id$, we treat the three cases $A,D,E$ separately, following the arguments in \cite{CH02}*{Section 4}. 

For the $A$-case, it is clear that any two inner equivalent $\nu,\nu'$ must have diagrams related by an equivalence of type 2. 

\begin{table}[ht]
\begin{center}
\bgroup
\def\arraystretch{3}
{\setlength{\tabcolsep}{1.5em}
\begin{tabular}{ccc}
\begin{tikzpicture}[scale=.4,baseline]
\node (v1) at (0,0) {};
\node (v2) at (2,0) {};
\node (v3) at (4,0) {};
\node (v4) at (6,0) {};
\draw (0cm,0) circle (.2cm);
\draw (0.2cm,0) -- +(0.2cm,0);
\draw[dotted] (0.4cm,0) --+ (1cm,0);
\draw (1.6cm,0) --+ (0.2cm,0);
\draw (2cm,0) circle (.2cm);
\draw (2.2cm,0) -- +(0.6cm,0);
\draw[fill=black] (3cm,0) circle (.2cm);
\draw (3.2cm,0) -- +(0.6cm,0);
\draw (4cm,0) circle (.2cm);
\draw (4.2cm,0) -- +(0.2cm,0);
\draw[dotted] (4.4cm,0) --+ (1cm,0);
\draw (5.6cm,0) --+ (0.2cm,0);
\draw (6cm,0) circle (.2cm); 

\draw[<->]
(v1) edge[bend left] (v4);
\draw[<->]
(v2) edge[bend left=50] (v3);
\end{tikzpicture}
& 
\begin{tikzpicture}[scale=.4,baseline]
\node (v1) at (0,0) {};
\node (v2) at (1.8,0) {};
\node (v3) at (3.2,0) {};
\node (v4) at (5,0) {};
\draw(0cm,0) circle (.2cm);
\draw (0.2cm,0) -- +(0.2cm,0);
\draw[dotted] (0.4cm,0) --+ (1cm,0);
\draw (1.6cm,0) --+ (0.2cm,0);
\draw (2cm,0) circle (.2cm);
\draw (2.2cm,0) -- +(0.6cm,0);
\draw(3cm,0) circle (.2cm);
\draw (3.2cm,0) -- +(0.2cm,0);
\draw[dotted] (3.4cm,0) --+ (1cm,0);
\draw (4.6cm,0) --+ (0.2cm,0);
\draw (5cm,0) circle (.2cm); 
\draw[<->]
(v1) edge[bend left] (v4);
\draw[<->]
(v2) edge[bend left=70] (v3);
\end{tikzpicture}
&
\begin{tikzpicture}[scale=.4,baseline]
\node (v1) at (0,0) {};
\node (v2) at (2,0) {};
\node (v3) at (4,0) {};
\node (v4) at (6,0) {};

\draw (0cm,0) circle (.2cm);
\draw (0.2cm,0) -- +(0.2cm,0);
\draw[dotted] (0.4cm,0) --+ (1cm,0);
\draw (1.6cm,0) --+ (0.2cm,0);
\draw (2cm,0) circle (.2cm);
\draw (2.2cm,0) -- +(0.6cm,0);
\draw (3cm,0) circle (.2cm);
\draw (3.2cm,0) -- +(0.6cm,0);
\draw (4cm,0) circle (.2cm);
\draw (4.2cm,0) -- +(0.2cm,0);
\draw[dotted] (4.4cm,0) --+ (1cm,0);
\draw (5.6cm,0) --+ (0.2cm,0);
\draw (6cm,0) circle (.2cm); 

\draw[<->]
(v1) edge[bend left] (v4);
\draw[<->]
(v2) edge[bend left=50] (v3);
\end{tikzpicture}

\end{tabular}
}
\egroup
\end{center}
\caption{Equivalence classes for Vogan diagrams of type $A$ with non-trivial automorphism}\label{TableAInv}
\end{table}

For the $E$-case, there is only $E_6$ to consider. It is easy to check directly in this case that the equivalence relation on the possible signs creates two orbits, which correspond precisely to the two choices of real forms.

\begin{table}[ht]
\begin{center}
\bgroup
\def\arraystretch{3}
{\setlength{\tabcolsep}{1.5em}
\begin{tabular}{cc}
 \begin{tikzpicture}[scale=.4,baseline=-6pt]
\node (v1) at (0,0.2) {};
\node (v2) at (4,0.2) {};
\node (v3) at (1,0.2) {};
\node (v4) at (3,0.2) {};
\draw (0cm,0) circle (.2cm);
\draw (0.2cm,0) -- +(0.6cm,0);
\draw (1cm,0) circle (.2cm);
\draw (1.2cm,0) -- +(0.6cm,0);
\draw (2cm,0) circle (.2cm);
\draw (2.2cm,0) -- +(0.6cm,0);
\draw (3cm,0) circle (.2cm);
\draw (3.2cm,0) -- +(0.6cm,0);
\draw (4cm,0) circle (.2cm);
\draw (2cm,-0.2) -- +(0cm,-0.6);
\draw[fill = black] (2cm,-1) circle (.2cm);
\draw[<->]
(v1) edge[bend left] (v2);
\draw[<->]
(v3) edge[bend left] (v4);
\end{tikzpicture}
&
 \begin{tikzpicture}[scale=.4,baseline=-6pt]
\node (v1) at (0,0.2) {};
\node (v2) at (4,0.2) {};
\node (v3) at (1,0.2) {};
\node (v4) at (3,0.2) {};
\draw (0cm,0) circle (.2cm);
\draw (0.2cm,0) -- +(0.6cm,0);
\draw (1cm,0) circle (.2cm);
\draw (1.2cm,0) -- +(0.6cm,0);
\draw (2cm,0) circle (.2cm);
\draw (2.2cm,0) -- +(0.6cm,0);
\draw (3cm,0) circle (.2cm);
\draw (3.2cm,0) -- +(0.6cm,0);
\draw (4cm,0) circle (.2cm);
\draw (2cm,-0.2) -- +(0cm,-0.6);
\draw (2cm,-1) circle (.2cm);
\draw[<->]
(v1) edge[bend left] (v2);
\draw[<->]
(v3) edge[bend left] (v4);
\end{tikzpicture}
\end{tabular}
}
\egroup
\end{center}
\caption{Equivalence classes for Vogan diagrams of type $E$ with non-trivial automorphism}\label{TableAInv}
\end{table}

The same argument as in \cite{CH02}*{Section 4} can be used in the $D$-case, whereby the operation of the first kind can be used to reduce to the case of a single painted $\tau$-fixed vertex. If the single painted vertex is at position $p$, the Vogan diagram corresponds to the symmetric pair $\mfso(2p+1)\times \mfso(2q+1) \subseteq \mfso(2p+2q+2) = \mfso(2l)$, see e.g.~ \cite{Kna96}*{Appendix C.3}. The only thing left to prove is then that the two sign functions associated to the diagrams with non-trivial automorphism and a single painted vertex either at $p$ or $q=l-p-1$ (for $p\leq l-2$) are equivalent,
\[
 \begin{tikzpicture}[scale=.4,baseline]
\node (v1) at (9.8,0.8) {};
\node (v2) at (9.8,-0.8) {};
\draw (0cm,0) circle (.2cm) node[above]{\small $1$} ;
\draw (0.2cm,0) -- +(0.6cm,0);
\draw (1cm,0) circle (.2cm)node[above]{\small $2$};
\draw (1.2cm,0) -- +(0.6cm,0);
\draw (2cm,0) circle (.2cm)node[above]{\small $3$};
\draw (2.2cm,0) -- +(0.2cm,0);
\draw[dotted] (2.4cm,0) --+ (1cm,0);
\draw (3.4cm,0) --+ (0.2cm,0);
\draw (3.8cm,0) circle (.2cm); 
\draw (4cm,0) --+ (0.6cm,0);
\draw[fill = black] (4.8cm,0) circle (.2cm)  node[above]{\small $p$};
\draw (5cm,0) --+ (0.6cm,0);
\draw (5.8cm,0) circle (.2cm);
\draw (6cm,0) -- +(0.2cm,0);
\draw[dotted] (6.2cm,0) --+ (1cm,0);
\draw (7.2cm,0) --+ (0.2cm,0);
\draw (7.6cm,0) circle (.2cm)  node[above]{\small $\ell-2$};
\draw (7.8cm,0) --+ (1.6,0.6);
\draw (7.8cm,0) --+ (1.6,-0.6);
\draw (9.6cm,0.6) circle (.2cm) node[above]{\small $\ell-1$} ;
\draw (9.6cm,-0.6) circle (.2cm) node[below]{\small $\ell$} ;
\draw[<->]
(v1) edge[bend left] (v2);
\end{tikzpicture}
\qquad
\cong 
\qquad 
 \begin{tikzpicture}[scale=.4,baseline]
\node (v1) at (9.8,0.8) {};
\node (v2) at (9.8,-0.8) {};
\draw (0cm,0) circle (.2cm) node[above]{\small $1$} ;
\draw (0.2cm,0) -- +(0.6cm,0);
\draw (1cm,0) circle (.2cm)node[above]{\small $2$};
\draw (1.2cm,0) -- +(0.6cm,0);
\draw (2cm,0) circle (.2cm)node[above]{\small $3$};
\draw (2.2cm,0) -- +(0.2cm,0);
\draw[dotted] (2.4cm,0) --+ (1cm,0);
\draw (3.4cm,0) --+ (0.2cm,0);
\draw (3.8cm,0) circle (.2cm); 
\draw (4cm,0) --+ (0.6cm,0);
\draw[fill = black] (4.8cm,0) circle (.2cm)  node[above]{\small $\ell-p-1$};
\draw (5cm,0) --+ (0.6cm,0);
\draw (5.8cm,0) circle (.2cm);
\draw (6cm,0) -- +(0.2cm,0);
\draw[dotted] (6.2cm,0) --+ (1cm,0);
\draw (7.2cm,0) --+ (0.2cm,0);
\draw (7.6cm,0) circle (.2cm)  node[above]{\small $\ell-2$};
\draw (7.8cm,0) --+ (1.6,0.6);
\draw (7.8cm,0) --+ (1.6,-0.6);
\draw (9.6cm,0.6) circle (.2cm) node[above]{\small $\ell-1$} ;
\draw (9.6cm,-0.6) circle (.2cm) node[below]{\small $\ell$} ;
\draw[<->]
(v1) edge[bend left] (v2);
\end{tikzpicture}
\]
This follows by an easy direct verification (using also the type 2 equivalence!). 
\end{proof}

It is known from the Borel-de Siebenthal theorem \cite{Kna96}*{Theorem 6.96} that any Vogan involution is equivalent by conjugation with an automorphism to a Vogan involution coming from a diagram with at most one painted vertex. Moreover, as we have seen there is no distinction between conjugation by an automorphism and conjugation by an inner automorphism, except in the case corresponding to the real forms $\mfso^*(4p)$ or real forms of $\mfso(8)$. In these cases, we have the following. 

\begin{Prop}\label{PropInnConj}
Consider the Vogan diagrams $(\{l\},\id)$ and $(\{l-1\},\id)$ on $D_{l}$ for $l$ even,
\[
  \begin{tikzpicture}[scale=.4,baseline]
\draw(0cm,0) circle (.2cm) node[above]{\small $1$} ;
\draw (0.2cm,0) -- +(0.6cm,0);
\draw (1cm,0) circle (.2cm)node[above]{\small $2$};
\draw (1.2cm,0) -- +(0.6cm,0);
\draw (2cm,0) circle (.2cm)node[above]{\small $3$};
\draw (2.2cm,0) -- +(0.2cm,0);
\draw[dotted] (2.4cm,0) --+ (1cm,0);
\draw (3.4cm,0) --+ (0.2cm,0);
\draw (3.8cm,0) circle (.2cm); 
\draw (4cm,0) --+ (0.6cm,0);
\draw (4.8cm,0) circle (.2cm)  node[above]{\tiny $\ell-2$};
\draw (5cm,0) --+ (1.6,0.6);
\draw (5cm,0) --+ (1.6,-0.6);
\draw (6.8cm,0.6) circle (.2cm) node[above]{\tiny $\ell-1$} ;
\draw[fill = black] (6.8cm,-0.6) circle (.2cm) node[below]{\small $\ell$} ;
\end{tikzpicture},\qquad
  \begin{tikzpicture}[scale=.4,baseline]
\draw(0cm,0) circle (.2cm) node[above]{\small $1$} ;
\draw (0.2cm,0) -- +(0.6cm,0);
\draw (1cm,0) circle (.2cm)node[above]{\small $2$};
\draw (1.2cm,0) -- +(0.6cm,0);
\draw (2cm,0) circle (.2cm)node[above]{\small $3$};
\draw (2.2cm,0) -- +(0.2cm,0);
\draw[dotted] (2.4cm,0) --+ (1cm,0);
\draw (3.4cm,0) --+ (0.2cm,0);
\draw (3.8cm,0) circle (.2cm); 
\draw (4cm,0) --+ (0.6cm,0);
\draw (4.8cm,0) circle (.2cm)  node[above]{\tiny $\ell-2$};
\draw (5cm,0) --+ (1.6,0.6);
\draw (5cm,0) --+ (1.6,-0.6);
\draw[fill = black] (6.8cm,0.6) circle (.2cm) node[above]{\tiny $\ell-1$} ;
\draw (6.8cm,-0.6) circle (.2cm) node[below]{\small $\ell$} ;
\end{tikzpicture}.
\]

 Then $\nu(\{l\},\id)$ and $\nu(\{l-1\},\id)$ are equivalent but not inner equivalent. 
\end{Prop}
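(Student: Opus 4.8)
The plan is to exhibit an explicit automorphism $\phi$ of $\mfg$ realizing the equivalence, and then to obstruct inner conjugacy by a cohomological/invariant argument. The two Vogan diagrams differ by the order-two graph automorphism $\sigma$ of $D_\ell$ which swaps the two fork endpoints $\ell-1$ and $\ell$ and fixes all other nodes. Since $\sigma$ carries the set $Y=\{\ell\}$ to $Y'=\{\ell-1\}$ and fixes $\tau=\id$, conjugation by the corresponding automorphism $\phi$ of $\mfg$ (extending $\sigma$ on the Chevalley generators via $\phi(e_r)=e_{\sigma(r)}$ etc.) satisfies $\phi\,\nu(\{\ell\},\id)\,\phi^{-1}=\nu(\{\ell-1\},\id)$. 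This $\phi$ is a genuine automorphism of $\mfu$ but is \emph{outer}, since $\sigma$ is a nontrivial diagram automorphism; hence the two involutions are equivalent.

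First I would pin down exactly the outer/inner dichotomy for $D_\ell$ with $\ell$ even. By Lemma \ref{LemEqVog}, $\nu(\{\ell\},\id)$ and $\nu(\{\ell-1\},\id)$ are inner conjugate if and only if their sign characters $\epsilon=\eta_{\{\ell\}}$ and $\epsilon'=\eta_{\{\ell-1\}}$ (both with $\tau=\id$) are related by the equivalence generated by the type-1 moves $\epsilon\sim\epsilon\circ s_r$ for $\tau$-fixed $r$ with $\epsilon(r)=-1$ (type-2 moves are unavailable here since $\tau=\id$). So the heart of the matter is to show that $\eta_{\{\ell\}}$ and $\eta_{\{\ell-1\}}$ lie in distinct orbits under the group generated by those simple reflections $s_r$ that are permitted to act. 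The natural invariant is the restriction of $\epsilon$, viewed as a character on $Q$, to a $W$-invariant sublattice, or equivalently the value $\epsilon(\beta)$ on the highest root $\beta$ or on $2\rho^\vee$; one checks that a single type-1 move at a node $r$ multiplies the other signs by $(-1)^{a_{sr}}$ and so preserves a suitable $\Z/2$-valued invariant that distinguishes the two endpoints of the $D_\ell$ fork when $\ell$ is even.

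The cleanest route is probably to compute the parity invariant directly. Extending $\epsilon$ to a character on $Q$, the quantity $\epsilon(\rho^{\vee})$ (or, to stay within $Q$, $\epsilon$ evaluated on the sum of positive roots, reduced mod the reflection action) is unchanged under type-1 moves because each $s_r$ with $\epsilon(r)=-1$ alters $\epsilon$ by precharacters that cancel on $W$-invariant elements. For $D_\ell$ with $\ell$ even, the fundamental group $P/Q\cong\Z/2\times\Z/2$ distinguishes the spinor nodes $\ell-1,\ell$, and the two painted-vertex diagrams give different images in $P/Q$, hence different orbits; this realizes the isomorphisms $\mfso^*(4p)$ versus the mirror form. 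The main obstacle will be showing that no \emph{sequence} of type-1 moves connects $\eta_{\{\ell\}}$ to $\eta_{\{\ell-1\}}$: a single move is easy to track, but one must produce an honest invariant preserved by the whole generated group rather than checking finitely many moves. I expect to settle this either by the $P/Q$-class computation above, or by a direct finite check exploiting that for $\ell$ even the longest element $w_0$ of $W(D_\ell)$ acts trivially on the Dynkin diagram, so that the spinor nodes cannot be interchanged by inner automorphisms fixing the Cartan—precisely the feature that fails for $\ell$ odd and makes the even case genuinely non-inner.
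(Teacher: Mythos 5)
Your overall framework is the right one and matches the paper's: reduce via Lemma \ref{LemEqVog} to showing that $\eta_{\{\ell\}}$ and $\eta_{\{\ell-1\}}$ lie in different orbits under precomposition with Weyl group elements, and then produce a $\Z/2$-valued orbit invariant separating them. The gap is that neither of the concrete invariants you name actually separates them. The highest root of $D_\ell$ is $\alpha_1+2\alpha_2+\cdots+2\alpha_{\ell-2}+\alpha_{\ell-1}+\alpha_\ell$, so both sign characters take the value $-1$ on it (and this quantity is not even preserved by a type-1 move at node $2$, since the highest root pairs to $1$ against $\alpha_2^\vee$). The value $\epsilon(2\rho^\vee)$ \emph{is} a genuine $W$-orbit invariant (because $w(2\rho)-2\rho\in 2Q$), but it also fails to distinguish: the coefficients of $\alpha_{\ell-1}$ and $\alpha_\ell$ in $2\rho$ are both equal to $\ell(\ell-1)/2$, so $\eta_{\{\ell\}}(2\rho^\vee)=\eta_{\{\ell-1\}}(2\rho^\vee)$. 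Your remaining suggestions are not proofs as stated: the ``image in $P/Q$'' of a sign character is never defined, and the fact that $w_0$ acts trivially on the diagram for $\ell$ even does not by itself translate into a statement about orbits of characters on $Q$. You acknowledge this yourself (``The main obstacle will be\ldots''), so the decisive step is missing.

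For comparison, the paper closes exactly this gap with a sharper invariant: $c=\epsilon(\alpha_1+\alpha_3+\cdots+\alpha_{\ell-1})$. The point is that $v=\alpha_1+\alpha_3+\cdots+\alpha_{\ell-1}$ pairs \emph{evenly} with every simple coroot (this uses $\ell$ even at the fork: node $\ell-2$ has the two odd neighbours $\ell-3$ and $\ell-1$, while node $\ell$ has none), hence $w(v)\equiv v \bmod 2Q$ for all $w\in W$, so $c$ is constant on each orbit $\{\epsilon\circ w\}$; and $c=+1$ for $\eta_{\{\ell\}}$ but $c=-1$ for $\eta_{\{\ell-1\}}$, since $v$ contains $\alpha_{\ell-1}$ but not $\alpha_\ell$. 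Your $P/Q$ idea can in fact be completed along dual lines --- the class of $\epsilon$ in the cokernel of the restriction $\Hom(P,\Z/2)\to\Hom(Q,\Z/2)$ is $W$-invariant because $w\lambda-\lambda\in Q$ for $\lambda\in P$, and $\eta_{\{\ell\}}\eta_{\{\ell-1\}}=\eta_{\{\ell-1,\ell\}}$ does not lift to $P$ because its value on $2\varpi_\ell\in Q$ is $(-1)^{(\ell-2)/2+\ell/2}=-1$ --- but this computation, or an equivalent one, must actually be carried out; without it the proof is incomplete.
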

\begin{proof}
As in the proof of Lemma \ref{LemEqVog}, we have that the two diagrams are inner equivalent if and only if the associated sign functions satisfy $\epsilon = \epsilon'\circ w$ for $w\in W$. But consider the value 
\[
c = \epsilon(\alpha_1+ \alpha_3 + \ldots + \alpha_{l-1}).
\]
Then it is easily seen that $c$ is the same value on the whole of $W\epsilon$. However, $c$ differs for the two choices of Vogan diagrams. 
\end{proof}

\begin{Prop}\label{PropInnConj2}
There are nine inner equivalence classes for Vogan diagrams of $\mfso(8)$, obtained by rotations of the following three cases:
\begin{enumerate}
\item $\mfso(1,7)$: $Y= \emptyset$, $\tau(3) = 4$,
\item $\mfso(2,6)$: $Y \in \{\{1\},\{1,2\},\{3,4\},\{2,3,4\}\}$, $\tau =\id$,
\item $\mfso(3,5)$: $Y \in \{\{1\},\{1,2\},\{3,4\},\{2,3,4\}\}$, $\tau(3) = 4$.
\end{enumerate}

\begin{table}[ht]
\caption{Vogan diagrams for $\mfso(p,8-p)$ with $1\leq p \leq 3$ with at most one colored vertex}\label{Tableso8Vogan}
\begin{center}
\bgroup
\def\arraystretch{4}
{\setlength{\tabcolsep}{1.5em}
\begin{tabular}{cccc}
$\mfso(1,7)$
& 
\begin{tikzpicture}[scale=.4,baseline]
\node (v1) at (2,0.8) {};
\node (v2) at (2,-0.8) {};
\draw (0cm,0) circle (.2cm) node[above]{\small $1$} ;
\draw (0.2cm,0) -- +(0.6cm,0);
\draw (1cm,0) circle (.2cm) node[above]{\small $2$} ;
\draw (1.2cm,0) --+ (0.6cm,0.6cm);
\draw (1.2cm,0) --+ (0.6cm,-0.6cm);
\draw (1.95cm,0.7) circle (.2cm) node[above] {\small $3$}; 
\draw (1.95cm,-0.7) circle (.2cm)  node[below]{\small $4$};
\draw[<->]
(v1) edge[bend left] (v2);
\end{tikzpicture}
& 
\begin{tikzpicture}[scale=.4,baseline]
\node (v1) at (0,0) {};
\node (v2) at (2,-0.8) {};
\draw (0cm,0) circle (.2cm) node[above]{\small $1$} ;
\draw (0.2cm,0) -- +(0.6cm,0);
\draw (1cm,0) circle (.2cm) node[above]{\small $2$} ;
\draw (1.2cm,0) --+ (0.6cm,0.6cm);
\draw (1.2cm,0) --+ (0.6cm,-0.6cm);
\draw (1.95cm,0.7) circle (.2cm) node[above] {\small $3$}; 
\draw (1.95cm,-0.7) circle (.2cm)  node[below]{\small $4$};
\draw[<->]
(v1) edge[bend right](v2);
\end{tikzpicture}
& 
\begin{tikzpicture}[scale=.4,baseline]
\node (v1) at (0,0) {};
\node (v2) at (2,0.8) {};
\draw (0cm,0) circle (.2cm) node[below]{\small $1$} ;
\draw (0.2cm,0) -- +(0.6cm,0);
\draw (1cm,0) circle (.2cm) node[below]{\small $2$} ;
\draw (1.2cm,0) --+ (0.6cm,0.6cm);
\draw (1.2cm,0) --+ (0.6cm,-0.6cm);
\draw (1.95cm,0.7) circle (.2cm) node[above] {\small $3$}; 
\draw (1.95cm,-0.7) circle (.2cm)  node[below]{\small $4$};
\draw[<->]
(v1) edge[bend left](v2);
\end{tikzpicture}
\\ 
$\mfso(2,6)$  
& 
\begin{tikzpicture}[scale=.4,baseline]
\draw[fill = black] (0cm,0) circle (.2cm) node[above]{\small $1$} ;
\draw (0.2cm,0) -- +(0.6cm,0);
\draw (1cm,0) circle (.2cm) node[above]{\small $2$} ;
\draw (1.2cm,0) --+ (0.6cm,0.6cm);
\draw (1.2cm,0) --+ (0.6cm,-0.6cm);
\draw (1.95cm,0.7) circle (.2cm) node[above] {\small $3$}; 
\draw (1.95cm,-0.7) circle (.2cm)  node[below]{\small $4$};
\end{tikzpicture}
& 
\begin{tikzpicture}[scale=.4,baseline]
\draw (0cm,0) circle (.2cm) node[above]{\small $1$} ;
\draw (0.2cm,0) -- +(0.6cm,0);
\draw (1cm,0) circle (.2cm) node[above]{\small $2$} ;
\draw (1.2cm,0) --+ (0.6cm,0.6cm);
\draw (1.2cm,0) --+ (0.6cm,-0.6cm);
\draw[fill = black] (1.95cm,0.7) circle (.2cm) node[above] {\small $3$}; 
\draw (1.95cm,-0.7) circle (.2cm)  node[below]{\small $4$};
\end{tikzpicture}
& 
\begin{tikzpicture}[scale=.4,baseline]
\draw (0cm,0) circle (.2cm) node[above]{\small $1$} ;
\draw (0.2cm,0) -- +(0.6cm,0);
\draw (1cm,0) circle (.2cm) node[above]{\small $2$} ;
\draw (1.2cm,0) --+ (0.6cm,0.6cm);
\draw (1.2cm,0) --+ (0.6cm,-0.6cm);
\draw (1.95cm,0.7) circle (.2cm) node[above] {\small $3$}; 
\draw[fill = black] (1.95cm,-0.7) circle (.2cm)  node[below]{\small $4$};
\end{tikzpicture}\\
$\mfso(3,5)$ 
&
\begin{tikzpicture}[scale=.4,baseline]
\node (v1) at (2,0.8) {};
\node (v2) at (2,-0.8) {};
\draw[fill = black] (0cm,0) circle (.2cm) node[above]{\small $1$} ;
\draw (0.2cm,0) -- +(0.6cm,0);
\draw (1cm,0) circle (.2cm) node[above]{\small $2$} ;
\draw (1.2cm,0) --+ (0.6cm,0.6cm);
\draw (1.2cm,0) --+ (0.6cm,-0.6cm);
\draw (1.95cm,0.7) circle (.2cm) node[above] {\small $3$}; 
\draw (1.95cm,-0.7) circle (.2cm)  node[below]{\small $4$};
\draw[<->]
(v1) edge[bend left] (v2);
\end{tikzpicture}
& 
\begin{tikzpicture}[scale=.4,baseline]
\node (v1) at (0,0) {};
\node (v2) at (2,-0.8) {};
\draw (0cm,0) circle (.2cm) node[above]{\small $1$} ;
\draw (0.2cm,0) -- +(0.6cm,0);
\draw (1cm,0) circle (.2cm) node[above]{\small $2$} ;
\draw (1.2cm,0) --+ (0.6cm,0.6cm);
\draw (1.2cm,0) --+ (0.6cm,-0.6cm);
\draw[fill = black] (1.95cm,0.7) circle (.2cm) node[above] {\small $3$}; 
\draw (1.95cm,-0.7) circle (.2cm)  node[below]{\small $4$};
\draw[<->]
(v1) edge[bend right](v2);
\end{tikzpicture}
& 
\begin{tikzpicture}[scale=.4,baseline]
\node (v1) at (0,0) {};
\node (v2) at (2,0.8) {};
\draw (0cm,0) circle (.2cm) node[below]{\small $1$} ;
\draw (0.2cm,0) -- +(0.6cm,0);
\draw (1cm,0) circle (.2cm) node[below]{\small $2$} ;
\draw (1.2cm,0) --+ (0.6cm,0.6cm);
\draw (1.2cm,0) --+ (0.6cm,-0.6cm);
\draw (1.95cm,0.7) circle (.2cm) node[above] {\small $3$}; 
\draw[fill = black] (1.95cm,-0.7) circle (.2cm)  node[below]{\small $4$};
\draw[<->]
(v1) edge[bend left](v2);
\end{tikzpicture}
\\
\end{tabular}
}
\egroup
\end{center}
\end{table}

\end{Prop}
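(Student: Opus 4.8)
The plan is to reduce the whole statement to a finite orbit computation on sign functions, governed by Lemma \ref{LemEqVog}. Since inner conjugacy forces the diagram involution to coincide, I would first organize the count according to the involutive automorphisms $\tau$ of the Dynkin diagram $D_4$: these are $\id$ together with the three transpositions of the outer legs $\{1,3,4\}$ (the trivalent node $2$ being fixed), and the triality group $\Aut(\Gamma)\cong S_3$ permutes the three transpositions transitively while fixing $\id$. Because $\tau$ is an invariant of the inner conjugacy class by Lemma \ref{LemEqVog}, any two Vogan diagrams whose associated $\tau$ differ are automatically non-inner-equivalent. This alone separates the three diagrams in the $\mfso(1,7)$ row (distinct transpositions), the three in the $\mfso(3,5)$ row (distinct transpositions), and separates both of these rows from the $\mfso(2,6)$ row, where $\tau=\id$.

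Next I would treat $\tau=\id$, accounting for the $\mfso(2,6)$ row, where only type-1 moves $\epsilon\mapsto\epsilon\circ s_r$ act. I would compute the orbit of $\eta_{\{1\}}$ directly: applying $s_1$ gives $\eta_{\{1,2\}}$, then $s_2$ gives $\eta_{\{2,3,4\}}$, then $s_3$ gives $\eta_{\{3,4\}}$, and one checks no new element arises, so the orbit is exactly $\{\eta_{\{1\}},\eta_{\{1,2\}},\eta_{\{2,3,4\}},\eta_{\{3,4\}}\}$, matching case (2). By the leg-permuting diagram symmetries the orbits of $\eta_{\{3\}}$ and $\eta_{\{4\}}$ are the corresponding rotated four-element sets, and a direct inspection shows these three four-element orbits are pairwise disjoint. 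Hence the three single-painted-leg diagrams are pairwise non-inner-equivalent, giving three classes. (The pair $\eta_{\{3\}},\eta_{\{4\}}$ is already separated by Proposition \ref{PropInnConj}; the new input is that $\eta_{\{1\}}$ lies in neither of their orbits, which the computation supplies.)

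I would then fix a representative transposition, say $\tau$ with $\tau(3)=4$, covering the $\mfso(1,7)$ and $\mfso(3,5)$ rows. Here both type-1 moves through the $\tau$-fixed nodes $1,2$ and the type-2 move $\epsilon\mapsto\eta_{\{3,4\}}\cdot\epsilon$ are available. Computing orbits, $\eta_{\emptyset}$ lies in the short orbit $\{\eta_{\emptyset},\eta_{\{3,4\}}\}$ (the $\mfso(1,7)$ class), while $\eta_{\{1\}}$ lies in a separate, larger orbit (the $\mfso(3,5)$ class); one verifies these two orbits are distinct. Matching painted-vertex counts with the concrete Satake diagrams of Table \ref{Tableso8} and Arthur's tables \cite{Ar62} (noting $\mfso(2,6)\cong\mfso^*(8)$) identifies the real forms, so each transposition contributes exactly one class to each of the two rows. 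Assembling the three values of $\tau$ yields $3+3+3=9$ pairwise distinct inner equivalence classes, which are precisely those displayed in Table \ref{Tableso8Vogan}; triality-invariance of the split and compact forms explains why they do not enter this count of rotated diagrams.

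The main obstacle I anticipate is the $\tau=\id$ row: there the $\tau$-invariant cannot distinguish the three rotations, so one must genuinely carry out the twisted Weyl-orbit computation and prove disjointness of the orbits of $\eta_{\{1\}},\eta_{\{3\}},\eta_{\{4\}}$ — indeed the single invariant $c$ of Proposition \ref{PropInnConj} distinguishes $\eta_{\{3\}}$ from $\eta_{\{4\}}$ but \emph{not} $\eta_{\{1\}}$ from $\eta_{\{3\}}$, so it must be supplemented by the orbit computation. A secondary bookkeeping nuisance is the transposition case, where the type-2 moves enlarge each orbit beyond the naive list of $\tau$-invariant sign functions, and one must include them carefully before concluding that two diagrams are inequivalent.
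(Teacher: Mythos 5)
Your proposal is correct and is essentially the paper's own proof: the paper disposes of this proposition in a single line (``immediate verification on the Dynkin diagram $D_4$ by means of Lemma \ref{LemEqVog}''), and your organization by the involution $\tau$ (an inner-conjugacy invariant) followed by the explicit orbit computations under the type-1 moves (flip the neighbours of a painted $\tau$-fixed vertex) and type-2 moves (flip a two-element $\tau$-orbit) is precisely that verification carried out in detail, including the correct observation that the invariant $c$ of Proposition \ref{PropInnConj} cannot separate $\eta_{\{1\}}$ from $\eta_{\{3\}}$ so that the orbit computation is genuinely needed. One remark: your (correct) computation for the transposition $\tau(3)=4$ — the short orbit $\{\eta_{\emptyset},\eta_{\{3,4\}}\}$ giving $\mfso(1,7)$ and the complementary six-element orbit $\{\eta_{\{1\}},\eta_{\{2\}},\eta_{\{1,2\}},\eta_{\{1,3,4\}},\eta_{\{2,3,4\}},\eta_{\{1,2,3,4\}}\}$ giving $\mfso(3,5)$ — shows that the set $\{3,4\}$ listed under case (3) of the proposition in fact lies in the $\mfso(1,7)$ class (the list in (3) appears to duplicate that of case (2)), a misprint in the statement which does not affect the count of nine classes.
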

\begin{proof}
It follows by an immediate verification on the Dynkin diagram $D_4$ by means of Lemma \ref{LemEqVog}. 
\end{proof}

\begin{Def}\label{DefCompSatVog}
We call a concrete Satake diagram $(X,\tau)$ and a concrete Vogan diagram $(Y,\tau')$ \emph{compatible} if $\theta(X,\tau)$ and $\nu(Y,\tau')$ are inner conjugate. 
\end{Def}

The following lemma is clear by \eqref{EqCompInnOut}. 

\begin{Lem}
If  a concrete Satake diagram $(X,\tau)$ and a concrete Vogan diagram $(Y,\tau')$ are compatible, then $\tau' =\tau\tau_0$. 
\end{Lem}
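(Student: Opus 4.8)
The plan is to pass to the outer automorphism group and exploit the explicit factorisation \eqref{EqCompInnOut}. Recall that for the semisimple Lie algebra $\mfg$ one has the short exact sequence
\[
1 \to \mathrm{Inn}(\mfg) \to \Aut(\mfg) \to \mathrm{Out}(\mfg) \to 1,
\]
in which the quotient $\mathrm{Out}(\mfg)$ is canonically identified with the group of automorphisms of the Dynkin diagram $\Gamma$: an automorphism of $\mfg$ which preserves the Chevalley--Serre generators $\msS$ is sent to the diagram automorphism it induces. The key observation is that inner conjugate automorphisms have the same image in $\mathrm{Out}(\mfg)$; indeed, if $\sigma' = \Ad(g)\sigma\Ad(g)^{-1}$, then since $\Ad(g)\in\mathrm{Inn}(\mfg)$ maps to the identity of the quotient, the classes of $\sigma$ and $\sigma'$ coincide.

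First I would compute the image of the Satake involution $\theta = \theta(X,\tau,z)$ in $\mathrm{Out}(\mfg)$. By \eqref{EqCompInnOut} we have $\theta = \Ad(\widetilde{z}) \circ \tau\tau_0 \circ \Ad(m_0) \circ \Ad(m_X)$, where $\widetilde{z}\in T$ and $m_0,m_X\in U$, so that all three $\Ad$-factors are inner. Hence the class of $\theta$ in $\mathrm{Out}(\mfg)$ equals the class of $\tau\tau_0$, which as a diagram automorphism is $\tau\tau_0$ itself. Next I would compute the image of the Vogan involution $\nu = \nu(Y,\tau')$. Extending $\tau'$ to $\mfg$ by fixing the generators $\msS$, one sees from \eqref{EqDefnu} together with the $\tau'$-invariance of $\epsilon$ that $\nu\circ\tau'$ fixes each $h_r$ and scales $e_r$ and $f_r$ by $\epsilon_r$; consequently $\nu\circ\tau' = \Ad(t)$ for a suitable torus element $t\in T$, and thus $\nu = \Ad(t)\circ\tau'$ represents the class $\tau'$ in $\mathrm{Out}(\mfg)$.

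Finally, compatibility of $(X,\tau)$ and $(Y,\tau')$ means precisely that $\theta = \theta(X,\tau)$ and $\nu = \nu(Y,\tau')$ are inner conjugate, so by the first paragraph their images in $\mathrm{Out}(\mfg)$ agree; comparing the two computations gives $\tau\tau_0 = \tau'$ as diagram automorphisms, which is the assertion. There is no genuine obstacle in this argument—it is essentially bookkeeping with the exact sequence above—and the only point needing (routine) verification is that $\nu\circ\tau'$ is implemented by an element of $T$, which is immediate from \eqref{EqDefnu}.
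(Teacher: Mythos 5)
Your proof is correct and is essentially the paper's own argument: the paper dismisses the lemma as ``clear by \eqref{EqCompInnOut}'', and the intended reasoning is precisely your comparison of outer automorphism classes, with $\theta$ lying in the class of $\tau\tau_0$ by \eqref{EqCompInnOut} and $\nu(Y,\tau')$ in the class of $\tau'$ since $\nu\circ\tau'=\Ad(t)$ for some $t\in T$. You have simply written out the bookkeeping that the paper leaves implicit.
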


We can hence reformulate Definition \ref{DefCompSatVog} as follows. Recall that if $\epsilon$ is a sign-function, we denote by $Y_{\epsilon}$ the set of points with value $-1$. 

\begin{Def}\label{DefTauAdmi}
Let $(X,\tau)$ be a concrete Satake diagram. We call \emph{$(X,\tau)$-admissible sign function} $\epsilon: I \rightarrow \{\pm 1\}$ any sign function which is $\tau \tau_0$-invariant and such that $\nu(Y_{\epsilon},\tau\tau_0)$ is inner conjugate to $\theta(X,\tau)$. We call two sign functions $\epsilon,\epsilon'$ \emph{equivalent} if they are $(X,\tau)$-admissible for the same Satake diagram $(X,\tau)$. 
\end{Def}

One can find at least one $(X,\tau)$-admissible sign function by comparing the classifications of involutions in terms of Satake diagrams on the one hand, and of special Vogan diagrams with at most one painted root on the other. Again, one can use standard tables to look up the equivalence, but we need to know more specifically the correspondence up to inner conjugacy in the case of $\mfso^*(4p)$. 

\begin{Lem}
Consider the concrete Satake diagram $(X,\tau)$ corresponding to $\mfso^*(4p)$ with the $2p-1$th root painted as in Table \ref{Tablesostar}. Then $\theta = \theta(X,\tau)$ is inner conjugate to $\nu(\{2p\},\id)$. 
\begin{table}[ht]
\begin{center}
\bgroup
\def\arraystretch{3}
{\setlength{\tabcolsep}{1.5em}
\begin{tabular}{ccc}
$\underset{Satake}{
\begin{tikzpicture}[scale=.4,baseline]
\draw[fill = black] (0cm,0) circle (.2cm) node[above]{\small $1$} ;
\draw (0.2cm,0) -- +(0.6cm,0);
\draw (1cm,0) circle (.2cm)node[above]{\small $2$};
\draw (1.2cm,0) -- +(0.6cm,0);
\draw[fill = black] (2cm,0) circle (.2cm)node[above]{\small $3$};
\draw (2.2cm,0) -- +(0.2cm,0);
\draw[dotted] (2.4cm,0) --+ (1cm,0);
\draw (3.4cm,0) --+ (0.2cm,0);
\draw[fill = black] (3.8cm,0) circle (.2cm); 
\draw (4cm,0) --+ (0.6cm,0);
\draw (4.8cm,0) circle (.2cm)  node[above]{\tiny $2p-2$};
\draw (5cm,0) --+ (1.6,0.6);
\draw (5cm,0) --+ (1.6,-0.6);
\draw[fill = black] (6.8cm,0.6) circle (.2cm) node[above]{\tiny $2p-1$} ;
\draw (6.8cm,-0.6) circle (.2cm) node[below]{\tiny $2p$} ;
\end{tikzpicture}
}$
& 
$\cong$
&$\underset{Vogan}{
\begin{tikzpicture}[scale=.4,baseline]
\draw(0cm,0) circle (.2cm) node[above]{\small $1$} ;
\draw (0.2cm,0) -- +(0.6cm,0);
\draw (1cm,0) circle (.2cm)node[above]{\small $2$};
\draw (1.2cm,0) -- +(0.6cm,0);
\draw (2cm,0) circle (.2cm)node[above]{\small $3$};
\draw (2.2cm,0) -- +(0.2cm,0);
\draw[dotted] (2.4cm,0) --+ (1cm,0);
\draw (3.4cm,0) --+ (0.2cm,0);
\draw (3.8cm,0) circle (.2cm); 
\draw (4cm,0) --+ (0.6cm,0);
\draw (4.8cm,0) circle (.2cm)  node[above]{\tiny $2p-2$};
\draw (5cm,0) --+ (1.6,0.6);
\draw (5cm,0) --+ (1.6,-0.6);
\draw (6.8cm,0.6) circle (.2cm) node[above]{\tiny $2p-1$} ;
\draw[fill = black] (6.8cm,-0.6) circle (.2cm) node[below]{\tiny $2p$} ;
\end{tikzpicture}
}$
\end{tabular}
}
\egroup
\end{center}
\end{table}

\end{Lem}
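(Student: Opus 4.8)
The statement to prove is a specific instance of the correspondence established in Theorem \ref{TheoUniqueInnConj}: the concrete Satake diagram $(X,\tau)$ for $\mfso^*(4p)$ with the $(2p-1)$th fork endpoint painted is inner conjugate to the Vogan involution $\nu(\{2p\},\id)$. Since both $\theta(X,\tau)$ and $\nu(\{2p\},\id)$ are $*$-compatible involutions of the \emph{same} underlying Lie algebra of type $D_{2p}$, and since inner conjugacy of involutions is what I must verify, the plan is to reduce to a concrete computation at the level of the restricted root data, exactly as was done in Proposition \ref{PropInnConj} and Proposition \ref{PropInnConj2}.

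First I would compute the action of $\theta = \theta(X,\tau)$ on the Cartan via $\Theta = -w_X\tau$, using Lemma \ref{LemCommThetaTau}. For the Satake diagram in question, $\tau$ is the identity on all long-chain vertices and interchanges the two fork endpoints $2p-1$ and $2p$, while $X$ consists of all the \emph{black} vertices, namely the odd-numbered vertices $\{1,3,\ldots,2p-1\}$ together with the appropriate fork coloring. The key output of this step is the automorphism $\Theta$ of the weight lattice, from which I can read off that $\theta$ determines the real form $\mfso^*(4p)$, whose associated symmetric pair is the standard $\mfu(2p) \subseteq \mfso(4p)$ one.

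Next I would compute the sign character $\epsilon'$ attached to the Vogan form of $\theta$. By the reformulation in Definition \ref{DefTauAdmi} and the relation $\tau' = \tau\tau_0$ from the lemma preceding it, I need $\tau\tau_0$; since $\mfg$ is of type $D_{2p}$ with $l = 2p$ even, $\tau_0 = \id$, so $\tau' = \tau$. But a Vogan diagram with $\tau = \id$ requires $\tau$ itself to be trivial on the relevant roots, which forces me to first pass, via a type-2 equivalence of Lemma \ref{LemEqVog}, to a Vogan representative with trivial diagram automorphism; this is exactly the standard reduction of $\mfso^*(4p)$ to a Vogan diagram with a single painted vertex among $\{2p-1,2p\}$. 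The concrete outcome, matching the picture in the statement, is the diagram $(\{2p\},\id)$. I would then verify directly that the value
\[
c = \epsilon(\alpha_1 + \alpha_3 + \cdots + \alpha_{2p-1})
\]
of the sign character, which by Proposition \ref{PropInnConj} is a complete invariant for inner conjugacy on the Weyl-orbit of sign functions, takes the correct value distinguishing $\nu(\{2p\},\id)$ from $\nu(\{2p-1\},\id)$, and that it agrees with the value forced by $\theta(X,\tau)$.

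The main obstacle I anticipate is bookkeeping the precise matching between the \emph{black} (type $X$) vertices of the Satake diagram and the single \emph{painted} (type $Y$) vertex of the Vogan diagram: the two diagrams encode the involution in genuinely different ways (restricted-root data versus sign-character data), and the translation between them is exactly the Satake--Vogan correspondence that this appendix is devoted to establishing by hand. In particular I must use the explicit extension of $\theta$ built from $\Ad(m_X)$, $\omega$, and $\tau$ in \eqref{EqCompInnOut}, track its effect on the spin (fork) nodes where $D_{2p}$-arithmetic is delicate, and confirm that the resulting Vogan datum lands on $\{2p\}$ rather than $\{2p-1\}$ up to the equivalences of Lemma \ref{LemEqVog}. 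Since $p$ is arbitrary and the fork carries the essential $\Z/2$-ambiguity, I expect the cleanest route is the invariant $c$ above, which collapses the whole computation to comparing one sign, thereby avoiding a full reconstruction of $\Theta$ on all of $P$.
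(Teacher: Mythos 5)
There is a genuine gap, and it begins with a misreading of the Satake datum. For $\mfso^*(4p)$ the diagram is $D_{2p}$ with $\ell = 2p$ even, and the concrete Satake diagram of Table \ref{Tablesostar} has \emph{trivial} diagram involution $\tau = \id$: no arrow is drawn there, and indeed a $\tau$ interchanging the fork endpoints would be inconsistent with Definition \ref{DefEnhSat}, since $\tau$ must preserve $X$ while $2p-1 \in X$ and $2p \notin X$ for this diagram (the arrow occurs only in the $\ell$ odd case of $\mfsu^*_{\ell}(\mathbb{H})$ in Table \ref{TableD}). Your assertion that ``$\tau$ interchanges the two fork endpoints'' therefore makes $\Theta = -w_X\tau$ wrong from the start, and the repair you propose --- passing ``via a type-2 equivalence of Lemma \ref{LemEqVog} to a Vogan representative with trivial diagram automorphism'' --- cannot work: the equivalences of Lemma \ref{LemEqVog} only modify the sign function $\epsilon$, and inner conjugacy preserves the diagram automorphism (this is the first line of the proof of that lemma), so no such move can trade a nontrivial automorphism for a trivial one. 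With $\tau = \id$ (and $\tau_0 = \id$, $z = 1$) one simply has $\theta = \Ad(m_0 m_X)$ by \eqref{EqCompInnOut}, which is consistent with the target $\nu(\{2p\},\id)$ having trivial automorphism --- no equivalence gymnastics are needed or possible.

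Second, even after correcting $\tau$, your plan omits the actual content of the lemma. The invariant $c = \epsilon(\alpha_1+\alpha_3+\cdots+\alpha_{2p-1})$ of Proposition \ref{PropInnConj} is an invariant of \emph{sign functions}, i.e.~of involutions already presented in Vogan form; it distinguishes $\nu(\{2p\},\id)$ from $\nu(\{2p-1\},\id)$, but it cannot be evaluated on $\theta(X,\tau)$ until one has explicitly exhibited an inner conjugate of $\theta$ in Vogan form, keeping track of which fork vertex ends up painted --- and that is precisely the hard step. The paper carries it out concretely: realizing $\mfso(4p)$ on $\C^{4p}$, one sees that $m_0$ acts diagonally by $c(-1)^k$ while $m_X = m_1 m_3 \cdots m_{2p-1}$ is block diagonal with constant $4\times 4$ blocks, so $\theta = \Ad(m_0 m_X)$ can be put into Vogan form inside $SO(4)\times \cdots \times SO(4)$; the rank-two computation for $SO(4) \sim SU(2)\times SU(2)$ (Satake form with top vertex colored becomes Vogan form with bottom vertex colored) then forces the painted vertex to be $2p$ in every block, hence in the full diagram. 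Your proposal acknowledges this step only as ``tracking the effect on the spin nodes'' but never performs it, so the one sign you would feed into $c$ is exactly the quantity left unproved.
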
 
\begin{proof}
Let us realize $\mfso(4p)$ concretely on $\C^{4p}$ with basis $\{e_k\}$. We have that 
\[
\theta =\Ad(m_0 m_X).
\]
We can easily see that for a constant sign $c$
\[
m_0 e_k = c(-1)^k e_k.
\]
On the other hand, 
\[
m_X= m_1m_3\ldots m_{2p-1}
\]
 is a diagonal block matrix with constant blocks in $M_4(\C)$. It follows that $\theta$ can be put into Vogan form inside $SO(4)\times \ldots \times SO(4)$. Looking at the bottom block, we see that this corresponds to the transformation of the Satake diagram of $SO(4) \sim SU(2) \times SU(2)$, with the top vertex colored, into Vogan form, with the lower vertex colored. 
\[
\underset{Satake}
{
\begin{tikzpicture}[scale=.4,baseline=-12pt]
\draw[fill = black] (0cm,0) circle (.2cm) node[above]{\small $2p-1$} ;
\draw (0cm,-2) circle (.2cm) node[below]{\small $2p$};
\end{tikzpicture}
}
\qquad 
\rightarrow
\qquad 
\underset{Vogan}
{
\begin{tikzpicture}[scale=.4,baseline=-12pt]
\draw (0cm,0) circle (.2cm) node[above]{\small $2p-1$} ;
\draw[fill = black] (0cm,-2) circle (.2cm) node[below]{\small $2p$};
\end{tikzpicture}
}
\]
As the same happens in each copy, and only the simple roots of the form $e_i-e_{i+1}$ appear in the remainder of the Dynkin diagram, it follows that the complete Vogan diagram is the one described in the lemma. 
\end{proof}

We will be interested in verifying a certain compatibility between the sign function of a Vogan diagram and a particular sign function constructed from an equivalent Satake diagram.

Fix $\chi_0$ as in Lemma \ref{LemChoiceInvExt}, and recall the notations $\mcS_0,\mcS_X,\widetilde{z}$ and $\widetilde{z}_{\tau}$ from \eqref{EqDefS}, \eqref{EqDefSX}, Lemma \ref{LemChoiceInvExt} and \eqref{EqDefzTau}.

\begin{Theorem}\label{TheoremSignEps}
Let $(X,\tau,z)$ be an enhanced Satake diagram, and let $\epsilon$ be an $(X,\tau)$-compatible sign function. Then there exists an extension $\tilde{\epsilon}\in T$ of $\epsilon$  such that 
\begin{equation}\label{EqSol2}
\tilde{\epsilon}\tau\tau_0(\tilde{\epsilon})=\mathcal{S}_0\mathcal{S}_X\tilde{z}\tilde{z}_{\tau}^{-1}.
\end{equation}
\end{Theorem}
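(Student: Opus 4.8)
The plan is to turn the desired identity \eqref{EqSol2} into a question about characters on the weight lattice $P$, solve it first on the root lattice $Q$, and then extend to $P$ using the structure of the fundamental group $P/Q$. Both sides of \eqref{EqSol2} are elements of the torus $T$, i.e.\ unitary characters on $P$. The right-hand side $\mathcal{S}_0\mathcal{S}_X\widetilde{z}\widetilde{z}_{\tau}^{-1}$ is a fixed element, computable on each $\alpha_r$ from \eqref{EqDefS}, \eqref{EqDefSX}, Lemma \ref{LemChoiceInvExt} and \eqref{EqDefzTau}. The left-hand side is the image of the unknown $\widetilde{\epsilon}$ under the ``norm'' map $t\mapsto t\cdot\tau\tau_0(t)$ on $T$. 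So the first step is to reduce \eqref{EqSol2} to the equation $\widetilde{\epsilon}\,\tau\tau_0(\widetilde{\epsilon}) = w$ with $w=\mathcal{S}_0\mathcal{S}_X\widetilde{z}\widetilde{z}_{\tau}^{-1}$, where I already know that $\widetilde{\epsilon}$ must restrict to $\epsilon$ on $Q$; the content of the theorem is solvability of this norm equation with the prescribed restriction.

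First I would check the necessary constraint \emph{on the root lattice}. Since $\epsilon$ is prescribed on $Q$ and already $\tau\tau_0$-invariant, the left side restricted to $Q$ is $\epsilon^2 = 1$ (as $\epsilon$ is $\{\pm1\}$-valued). Hence I must verify $w|_Q = 1$, i.e.\ $\mathcal{S}_0\mathcal{S}_X\widetilde{z}\widetilde{z}_{\tau}^{-1}$ is trivial on every $\alpha_r$. Using $\mathcal{S}_0 = e^{2\pi i\rho^{\vee}}$, $\mathcal{S}_X = e^{2\pi i\rho_X^{\vee}}$ one has $\mathcal{S}_0\mathcal{S}_X(\alpha_r) = (-1)^{(\alpha_r,2\rho^{\vee})}(-1)^{(\alpha_r,2\rho_X^{\vee})} = (-1)^{(\alpha_r,2\rho_X^{\vee})}$ since $(\alpha_r,2\rho^{\vee})=2$ is even, while $\widetilde{z}\widetilde{z}_{\tau}^{-1}(\alpha_r) = z_r z_{\tau(r)}^{-1}=z_rz_{\tau(r)}$. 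So $w(\alpha_r) = z_rz_{\tau(r)}(-1)^{(\alpha_r,2\rho_X^{\vee})}$, which is exactly $1$ by the relation \eqref{EqProdzz} satisfied by the enhanced Satake datum. This confirms $w|_Q=1$, so the norm equation is consistent on $Q$ and any solution can consistently restrict to $\epsilon$ there.

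Next I would solve the extension problem on $P$. Write $P/Q$ for the (finite abelian) fundamental group, and choose coset representatives $\omega_1,\dots,\omega_k\in P$. The norm map $N(t)=t\cdot\tau\tau_0(t)$ is a group homomorphism $T\to T$, and since $\tau\tau_0$ is an involution commuting with the relevant structure (Lemma \ref{LemCommTau0Tau}), $N$ lands in the $\tau\tau_0$-invariant subtorus. The key point is that $w$ is itself $\tau\tau_0$-invariant: this I would check from $\tau\tau_0$-invariance of $z$ (Lemma \ref{LemCommTau0Tau}), of $\chi_0$ (Lemma \ref{LemChoiceInvExt}, giving invariance of $\widetilde{z}\widetilde{z}_{\tau}^{-1}$), and of $\mathcal{S}_0,\mathcal{S}_X$. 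Given this, and since $T$ is a connected compact torus on which the involution $\tau\tau_0$ acts, the norm map is surjective onto the invariants precisely when $H^1$ of the cyclic group $\langle\tau\tau_0\rangle$ acting on the cocharacter lattice vanishes in the relevant degree; because $T$ is a \emph{divisible} group (a torus over $\R$ is $2$-divisible as a Lie group), surjectivity of $N$ onto its image is automatic and I can always find \emph{some} $\widetilde{\epsilon}_0$ with $N(\widetilde{\epsilon}_0)=w$. The remaining task is to correct $\widetilde{\epsilon}_0$ so that it restricts to the prescribed $\epsilon$ on $Q$ without disturbing $N(\widetilde{\epsilon}_0)=w$: since $N(\widetilde{\epsilon}_0)|_Q = w|_Q = 1 = \epsilon^2$, the two characters $\widetilde{\epsilon}_0|_Q$ and $\epsilon$ both square to the same thing, so their ratio $\delta = \widetilde{\epsilon}_0|_Q\cdot\epsilon^{-1}$ is a $\{\pm1\}$-valued character on $Q$ with $N(\delta)=1$; extending $\delta$ to a $\{\pm1\}$-valued $\tilde\delta$ on $P$ (possible by divisibility/injectivity of $T$ as a $\Z$-module) and replacing $\widetilde{\epsilon}_0$ by $\widetilde{\epsilon}_0\tilde\delta^{-1}$ gives the desired $\widetilde{\epsilon}$.

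\textbf{The main obstacle} I anticipate is not the abstract solvability — divisibility of $T$ makes existence of \emph{a} preimage cheap — but rather the careful bookkeeping ensuring that the chosen extension can \emph{simultaneously} restrict to the given sign function $\epsilon$ on $Q$ and be $\tau\tau_0$-equivariant in the strong sense \eqref{EqSol2} demands. The delicate interplay is that $N$ has a nontrivial kernel (the $(-1)$-eigenspace of $\tau\tau_0$ on $T$), so modifying a preimage to fix its restriction to $Q$ might reintroduce a discrepancy in $N$; I must check that the correcting character $\tilde\delta$ lies in the kernel of $N$, which is where the identity $N(\delta)=1$ computed above is essential. In the genuinely hard cases — the $D_l$ families of $\mfso^*(4p)$ and the $\mfso(p,8-p)$ diagrams where $\tau\tau_0\neq\mathrm{id}$ and $z$ is nontrivial — I expect to fall back on explicit diagram computation, using the compatibility between the Satake and Vogan sign data established in Proposition \ref{PropInnConj}, Proposition \ref{PropInnConj2} and the $\mfso^*(4p)$ lemma, to exhibit $\widetilde{\epsilon}$ by hand and verify \eqref{EqSol2} root by root.
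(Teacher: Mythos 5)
Your reduction of \eqref{EqSol2} to a norm equation $N(\widetilde{\epsilon})=w$ with prescribed restriction to $Q$, and your verification that $w|_Q=1$ via \eqref{EqProdzz}, are both correct and agree with the paper's first observations (the right-hand side lies in $\Char(P/Q)$ and is $\tau\tau_0$-invariant). But the central step of your argument has a genuine gap: the claim that a preimage $\widetilde{\epsilon}_0$ with $N(\widetilde{\epsilon}_0)=w$ exists ``automatically by divisibility'' is false. Since $U(1)$ is an injective $\Z$-module, the image of the norm map $N(t)=t\,\tau\tau_0(t)$ on $T=\Char(P)$ is exactly the set of characters vanishing on the anti-invariant sublattice $\Ker(N^*)=\{\omega\in P\mid \tau\tau_0(\omega)=-\omega\}$, where $N^*(\omega)=\omega+\tau\tau_0(\omega)$; this is in general a \emph{proper} subgroup of the $\tau\tau_0$-invariants (already for $P=\Z$ with the inversion action, norms are trivial while invariants are $\{\pm1\}$ --- the failure is measured by a nonvanishing Tate cohomology group, not by $H^1$ as you suggest). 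Anti-invariant weights need not lie in $Q$ (e.g.\ $\varpi_1-\varpi_2=\frac{1}{3}(\alpha_1-\alpha_2)$ in type $A_2$), so your check $w|_Q=1$ does not settle this. The same problem resurfaces in your correction step: knowing $N(\delta)=1$ as a character on $Q$ (i.e.\ $\delta$ kills $N^*(Q)$) does not let you choose an extension $\widetilde{\delta}$ to $P$ with $N(\widetilde{\delta})=1$ (i.e.\ killing $N^*(P)$), because $Q\cap N^*(P)$ can be strictly larger than $N^*(Q)$, and $\delta$ must be shown to vanish there.

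In fact, if you run your own scheme to the end, the combined problem (norm equation plus prescribed restriction) is solvable if and only if the pair $(w,\epsilon)$ annihilates the kernel of $\pi:P\times Q\rightarrow P$, $(\omega,\alpha)\mapsto \omega+\tau\tau_0(\omega)+\alpha$; concretely, one needs $\epsilon(\alpha)=w(\omega)$ for every $\alpha=\omega+\tau\tau_0(\omega)$ lying in $Q'=Q\cap(1+\tau\tau_0)P$. This is the condition you never isolate or verify, and it is precisely where the paper's real work happens: after using the equivalences of Lemma \ref{LemEqVog} (your ``type 1'' and ``type 2'' moves, each shown to preserve solvability) to reduce to Vogan diagrams with at most one painted vertex, the paper describes $Q'$ explicitly, reduces the condition modulo $2$, and checks it case by case against the tables for \emph{all} types $A$--$G$, not just the $\mfso^*(4p)$ and $\mfso(p,8-p)$ families you single out. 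So your fallback plan of ad hoc diagram checking in a few ``hard cases'' underestimates both what must be checked and where; the framework as you set it up does not even produce the right finite list of sign identities to verify.
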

\begin{proof}
We may assume that $\mfg$ simple. Note that by \eqref{EqProdzz}, the right hand side of \eqref{EqSol2} lies in the center $\mathscr{Z}(U) = \Char(P/Q)$, while by Lemma \ref{LemCommTau0Tau} and the choice of $\widetilde{z}$ we have that the right hand side of \eqref{EqSol2} is $\tau\tau_0$-invariant. It is then easily seen that if $\epsilon$ admits an extension $\tilde{\epsilon}$ satisfying \eqref{EqSol2}, any $\epsilon' = s_r(\epsilon)$, for $\tau(r) = r$ and $\epsilon_r =-1$, admits the extension $\tilde{\epsilon}' = s_r(\tilde{\epsilon})$ satisfying \eqref{EqSol2}. It is also easy to see that the existence of an extension is stable under an equivalence of type 2 in  Lemma \ref{LemEqVog}, since we can extend any $\eta_{\{r,\tau\tau_0(r)\}}$ for $r\neq \tau\tau_0(r)$ to an element $\tilde{\eta} \in T$ with $\tilde{\eta}\tau\tau_0(\tilde{\eta}) = 1$: for example, choosing $n\in \N$ such that $\frac{1}{n}Q \supseteq P \supseteq Q$, and choosing a fundamental domain for the $\tau\tau_0$-action on $I$, we can put $\widetilde{\eta}\left(\frac{1}{n}\alpha_r\right) = 1$ for $r$ fixed under $\tau\tau_0$ and 
\[
\widetilde{\eta}\left(\frac{1}{n}\alpha_r\right) = e^{\pi i/n},\qquad \widetilde{\eta}\left(\frac{1}{n}\alpha_{\tau\tau_0(r)}\right) = e^{-\pi i/n}
\]
for $r\neq \tau\tau_0(r)$ in the fundamental domain. By Lemma \ref{LemEqVog} we may hence restrict to the case of a reduced Vogan diagram with at most one painted root. By applying an automorphism, we may also assume that the Satake diagram corresponds to the standard presentation in, say,  \cite{OV90}*{Reference Chapter, Section 2}. 

For $A$ an abelian group we write $\Char(A)$ for the characters $A \rightarrow U(1)$. Consider the group homomorphism
\[
\pi: P \times Q \rightarrow P,\qquad (\omega,\alpha) \mapsto \omega + \tau\tau_0(\omega) + \alpha. 
\]
Then $\pi$ dualizes to a homomorphism
\[
\hat{\pi}: \Char(P) \rightarrow \Char(P)\times \Char(Q),\qquad \rho \mapsto (\rho\tau\tau_0(\rho),\rho_{\mid Q}).
\]
Let us write 
\[
\eta: P \rightarrow U(1),\qquad \omega \mapsto \mathcal{S}_0\mathcal{S}_X\tilde{z}\tilde{z}_{\tau}^{-1}(\omega) =  e^{2\pi i (\rho^{\vee} + \rho_X^{\vee} + \chi_0 -\tau(\chi_0),\omega)}.
\]
Then we are to show that $(\eta,\epsilon)$ lies in the range of $\hat{\pi}$. This is equivalent with $(\eta,\epsilon)$ vanishing on $\Ker(\pi)$. Now since $\chi_0$ is $\tau\tau_0$-invariant, this means that we have to check
\begin{equation}\label{EqToCheck}
\epsilon(\alpha)  = e^{\pi i (\rho^{\vee} + \rho_X^{\vee} + \chi_0 -\tau(\chi_0),\alpha)}, \qquad\forall \alpha \in Q',
\end{equation}
where
\[
Q' = \{\alpha \in Q \mid \exists \omega \in P \textrm{ such that } \alpha  =  \omega + \tau\tau_0(\omega)\}. 
\]
Now it is easy to see that 
\[
Q' = \{\alpha \in Q \mid  \tau\tau_0(\alpha) =\alpha,\quad (\alpha,\alpha_r^{\vee}) \in 2\Z \textrm{ for }\tau\tau_0(r) = r\}.
\]
Write 
\[
\delta_r =  e^{\pi i (\rho^{\vee} + \rho_X^{\vee} + \chi_0 -\tau(\chi_0),\alpha_r)} \in \{\pm1\}.
\]
Identify $\Z^l \cong Q$ via $k \mapsto \sum k_r \alpha_r$, where $l$ is the rank of $\mfg$, and let $I_0 \subseteq I$ be the set of $\tau\tau_0$-fixed points. Let $A'$ be the rectangular matrix obtained by restricting the rows of the Cartan matrix $A$ to the index set $I_0$. Then we see that \eqref{EqToCheck} becomes
\begin{equation}\label{EqSignCheck}
\prod_{r=1}^l (\epsilon_r\delta_r)^{k_r} = 1,\qquad \forall k\in \Z^l \textrm{ such that } \tau\tau_0(k) = k,\quad A'k \in 2\Z^{|I_0|}. 
\end{equation}
As the left hand side takes values in $\pm1$, we can consider the condition on $k$ modulo two, and are thus to check \eqref{EqSignCheck} on 
\[
B = \{ k = (k_r) \in (\Z/2\Z)^l \mid \tau\tau_0(k) = k, \quad A'k =0 \textrm{ mod } 2\}.
\]
Let us present the elements of $B$ in the different cases, listing only those for which $B \neq \{0\}$. We will use the ordering of simple roots as in \cite{OV90}*{Reference Chapter, Section 2}. 
\begin{enumerate}
\item Case of $\tau\tau_0 = \id$. Then $B = \Ker(A) \textrm{ mod } 2$, and we find the following non-zero elements of $B$:
\begin{enumerate}
\item $A_l$ for $l$ odd: $k= (1,0,1,\ldots, 1,0,1)$.
\item $B_l$: $k= (1,0,1,0,\ldots,1,0)$ for $l$ even, $(1,0,1,0,\ldots, 1,0,1)$ for $l$ odd. 
\item $C_l$: $k= (0,0,\ldots, 0,1)$,
\item $D_l$ for $l$ odd: $k = (0,0,0,\ldots, 0,1,1)$.
\item $D_l$ for $l$ even: $k = (a,0,a,0,\ldots, a,0,b,c)$ with $a+b+c= 0$. 
\item $E_7$:  $k = (1,0,1,0,0,0,1)$.
\end{enumerate}
\item  Case of $\tau\tau_0 \neq \id$:
\begin{enumerate}
\item $A_l$ for $l=2p$: $k = (a_1,\ldots,a_p,a_p,\ldots, a_1)$, 
\item $A_l$ for $l= 2p+1$: $k = (a_1,\ldots,a_p,a_{p+1},a_p,\ldots, a_1)$.
\item $D_l$ for $l$ odd:  $k = (a,0,a,0,\ldots, a,0,a,b,b)$.
\item $D_l$ for $l$ even: $k = (0,0,\ldots, 0,0,a,a)$. 
\item $E_6$: $k = (a,b,0,b,a,0)$.
\end{enumerate}
\end{enumerate}

One can now check \eqref{EqSignCheck} by an easy case-by-case verification, using Tables \ref{TableABC},\ref{TableD},\ref{TableE} with the following legend:
\begin{itemize}
\item The first column presents an enhanced Satake diagram, with $z_r = \pm 1$ indicated whenever the value is not a priori determined, and (contrary to custom) with also the action of $\tau$ drawn for black vertices when non-trivial (to avoid possible confusion). 
\item The second column encodes $\tau\tau_0$ and the function $\delta$ with a root colored black if $\delta_r = -1$. Note that $\delta$ depends on the choice of $\chi$, but this will only come into play in the $DIII$-case $\mfso^*(2l) = \mfsu_{l}^*(\mathbb{H})$ for $l$ odd, where we list the two possibilities.
\item The third column presents an associated standard Vogan diagram, which can for example  be deduced from the Kac diagrams in \cite{OV90}*{Reference Chapter, Table 7}.
\end{itemize}
\end{proof}

\begin{Rem}
\begin{enumerate}
\item Note that the theorem is no longer true if we work with conjugacy instead of inner conjugacy, as then the case $\mfso^*(4p)$ fails!
\item It is clear that the converse of the theorem does not hold. It would be interesting to determine which extra conditions are needed to make the converse hold. 
\end{enumerate}
\end{Rem}

\newpage

\begin{table}[ht]
\caption{Satake diagrams, $\delta$-function and Vogan diagrams, Type A/B/C}\label{TableABC}
\begin{center}
\bgroup
\def\arraystretch{2.4}
{\setlength{\tabcolsep}{1.5em}

}
\egroup
\end{center}
\end{table}

\section{Invariant vectors and exterior algebras}\label{Ap2}

In this appendix we will prove a result concerning the spectral subspaces $\mcO_q(U^{\theta}\backslash U)_{\mu_i}$, for certain fundamental spherical weights $\mu_i$ in the cases AII, CII and DIII. The proof will make use of the explicit results of Noumi and Sugitani \cite{NS95}, as well as some appropriate $q$-analogues of exterior algebras.

\subsection{Noumi-Sugitani coideals}

In \cite{NS95} Noumi and Sugitani construct some quantum analogs of $U(\mfu^{\theta})$ for certain involutions $\theta$ and $\mfg$ of classical type. The construction is based on finding explicit solutions $J$ of the reflection equation, from which one can build coideals $B_J \subseteq U_q(\mfu)$ which specialize to $U(\mfu^{\theta})$. In \cite{Let99}*{Section 6}, Letzter
shows that the coideals $B_{J}$ are subalgebras of appropriate $B_{\theta}$, where $B_{\theta}$ is the coideal corresponding to the involution $\theta$ that she constructs in the cited paper. Moreover it follows from \cite{NS95}*{Theorem 1} and \cite{Let02}*{Theorem 7.7} that one has equality of the invariant subspaces $V_\varpi^{B_J} = V_\varpi^{B_\theta}$ for all $\varpi \in P^+$.

Let us give some more details regarding \cite{NS95}. Let $V = V_{\varpi_1}$ be the $N$-dimensional fundamental representation for $\mfg$ of classical type. Then, for the classical symmetric pairs considered in the cited paper, $V\otimes V$ contains only one non-trivial spherical representation, namely $V_{\varpi_2}$. A vector invariant under $B_J$ is then given by
\[
w_{J}=\sum_{i,j=1}^{N}J_{ij}v_{i}\otimes v_{j}\in V\otimes V,
\]
where $\{v_{i}\}_{i=1}^{N}$ is a basis of $V$ and $J=\sum_{i,j}J_{ij}e_{ij}$ in terms of the matrix units $e_{ij}$. The matrices $J$ are given explicitly for the classical symmetric pairs under consideration.

\subsection{Classical and quantum exterior algebras}

Let us consider the symmetric pairs AII, CII and DIII. We are concerned with those spherical weights $\mu_i$ such that $\tau(i) = i$ and the node $i$ is connected to a black vertex in the Satake diagram. These spherical weights are summarized in Table \ref{TableSpherical}, where we recall that we use the standard ordering for the Dynkin diagrams as can be found in the tables \ref{TableABC} and \ref{TableD}.

\begin{table}[h]
\caption{Relevant spherical weights for the AII, CII and DIII cases.}
\label{TableSpherical}
\begin{centering}
\begin{tabular}{|c|c|c|}
\hline 
Case & $\mathfrak{g}$ & Relevant spherical weights\tabularnewline
\hline 
\hline 
AII & $A_{2n-1} = \mathfrak{sl}_{2n}$ & $\varpi_2, \varpi_4, \cdots,\varpi_{2n-2}$\tabularnewline
\hline 
CII ($\ell\leq[n/2]$) & $C_{n}=\mathfrak{sp}_{n}$ & $\varpi_{2},\varpi_{4},\cdots,\varpi_{2\ell}$\tabularnewline
\hline 
DIII (first case) & $D_{2\ell}=\mathfrak{so}_{4\ell}$ & $\varpi_{2},\varpi_{4},\cdots,\varpi_{2\ell-2}$\tabularnewline
\hline 
DIII (second case) & $D_{2\ell+1}=\mathfrak{so}_{4\ell+2}$ & $\varpi_{2},\varpi_{4},\cdots,\varpi_{2\ell-2}$\tabularnewline
\hline 
\end{tabular}
\end{centering}
\end{table}

Recall that most of the representations $V_{\varpi_i}$ can be constructed as exterior powers of the fundamental representation $V_{\varpi_1}$, see for instance \cite{GW09}*{Section 5.5.2}.
In the case $A_{n - 1} = \mathfrak{sl}_n$ we have $\Lambda^k(V_{\varpi_1}) \cong V_{\varpi_k}$ for $k = 1, \cdots, n - 1$. In the case $D_n = \mathfrak{so}_{2n}$ we have $\Lambda^k(V_{\varpi_1}) \cong V_{\varpi_k}$ for $k = 1, \cdots, n - 2$. In the case $C_n = \mathfrak{sp}_n$ the exterior powers are reducible and we have the decomposition
\[
\Lambda^k(V_{\varpi_1}) \cong \bigoplus_{p = 0}^{[k/2]} V_{\varpi_{k - 2p}},
\]
with the convention that $V_{\varpi_0}$ is the trivial representation. Observe that the weight space $\Lambda^k(V_{\varpi_1})_{\varpi_k}$ is one-dimensional, since $V_{\varpi_i}$ with $i < k$ does not have the weight $\varpi_k$.

For each $V = V_{\varpi_1}$ as above, it is possible to construct a $q$-deformation $\Lambda_{q}(V)$ of the exterior algebra of $V$ which has the same graded dimension as the classical one.
The relations in $\Lambda_{q}(V)$ are more complicated that those of the classical exterior algebra, but  nevertheless we have the following result, see \cite{HK06}*{Proposition 3.6} and \cite{KTS15}*{Proposition 4.6}.

\begin{Prop}
Let $\{v_i\}_{i = 1}^N$ be a basis of $V$. Then there is a filtration $\mathcal{F}$ of $\Lambda_q(V)$ such that $\mathrm{gr}_{\mathcal{F}} \Lambda_q(V)$ is generated the $v_i$ with relations $v_i \wedge v_j = -q_{ij} v_j \wedge v_i$ for some $q_{ij} > 0$.
\end{Prop}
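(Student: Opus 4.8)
The statement to prove is the following Proposition: for each fundamental representation $V = V_{\varpi_1}$ in the cases AII, CII, DIII, there is a filtration $\mathcal{F}$ of the $q$-deformed exterior algebra $\Lambda_q(V)$ such that the associated graded $\mathrm{gr}_{\mathcal{F}} \Lambda_q(V)$ is generated by a basis $\{v_i\}$ with relations $v_i \wedge v_j = -q_{ij} v_j \wedge v_i$ for positive scalars $q_{ij}$.

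Let me think about what this is really asking and how I'd approach it.

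The key references are cited: HK06 Proposition 3.6 and KTS15 Proposition 4.6. So this is essentially an invocation of existing results, with the main work being to verify that the $q_{ij}$ can be taken positive. Let me think about the structure.

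The $q$-exterior algebra $\Lambda_q(V)$ for the fundamental (vector) representation is a quadratic-ish algebra, but the point is its relations are NOT simply $q$-commuting — they involve the $R$-matrix and typically look like $v_i \wedge v_j + \text{(stuff)} = 0$ where the stuff is a sum over other basis elements governed by the braiding $\hat{R}$. The claim is that after passing to a suitable filtration (e.g. by an ordering/grading on the index set), the leading-order relations become purely $q$-commutation relations $v_i \wedge v_j = -q_{ij} v_j \wedge v_i$.

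My plan would be:

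First, recall/set up the construction of $\Lambda_q(V)$ explicitly. The $q$-exterior algebra is the quotient of the tensor algebra $T(V)$ by the image of a $q$-antisymmetrizer, or equivalently by the "$+1$-eigenspace" (or the non-antisymmetric part) of the braiding operator $\hat{R} = \Sigma \circ \msR^{\pi,\pi}$ acting on $V \otimes V$. Concretely the quadratic relations are given by $\sum c^{kl}_{ij} v_k \wedge v_l = 0$ where the coefficients come from the spectral decomposition of $\hat{R}$ on $V \otimes V = \Lambda_q^2 \oplus S_q^2$ (and possibly a trivial summand in the symplectic/orthogonal cases). This is standard Faddeev–Reshetikhin–Takhtajan (FRT) type data.

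Write $v_i \wedge v_j$ for the image in degree two. Using the explicit $R$-matrix for the classical-type fundamental representation — which is triangular with a dominant diagonal part $\msQ$ giving the $q^{-(\wt v_i, \wt v_j)}$ weights and a strictly off-diagonal correction $\widetilde{\msR}$ (cf. the decomposition $\msR = \widetilde{\msR}\msQ$ in the preliminaries) — the relation between $v_i \wedge v_j$ and $v_j \wedge v_i$ takes the form $v_i \wedge v_j = -q_{ij} v_j \wedge v_i + \sum_{(k,l)} (\cdots) v_k \wedge v_l$, where the correction terms involve pairs $(k,l)$ that are "closer to diagonal" with respect to the weight ordering. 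The leading coefficient $q_{ij}$ is a power of $q$ coming from the diagonal part $\msQ$, hence manifestly positive since $q > 0$.

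Second, introduce the filtration $\mathcal{F}$. I would order the basis $\{v_i\}$ by weight (say by a fixed linear functional separating the weights of $V$), and filter $\Lambda_q(V)$ by assigning degrees to monomials so that the correction terms $v_k \wedge v_l$ appearing on the right have strictly lower filtration degree than the leading term $v_j \wedge v_i$. In the associated graded, all correction terms die and the surviving relations are exactly $v_i \wedge v_j = -q_{ij}v_j \wedge v_i$. This is precisely the content imported from HK06/KTS15, where such a PBW-type filtration is constructed and shown to have the correct graded dimension (matching the classical $\Lambda^\bullet V$).

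The main obstacle — and really the only substantive point beyond citing the references — is verifying positivity of the $q_{ij}$. The cited papers may work over the generic function field $\C(q)$ or allow $q_{ij}$ to be arbitrary nonzero scalars; here I must check that at a positive real value $q>0$ the leading coefficients are positive. The plan is to track the leading coefficient through the explicit $R$-matrix: since $\hat{R}$'s leading (diagonal) term is $\msQ$, which acts on $v_i \otimes v_j$ by $q^{-(\wt v_i, \wt v_j)}$ with a real exponent, and the normalization constants entering $\Lambda_q(V)$ are ratios of such factors (together with possibly $q_r^{\pm 1}$ from $\widetilde{\msR}_{\alpha_r} = (q_r^{-1}-q_r)E_r \otimes F_r$ whose scalar contribution I must verify does not introduce a sign), each $q_{ij}$ is a product of genuine powers of the positive reals $q, q_r$. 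Hence $q_{ij} > 0$. I would need to do this case-by-case (AII, CII, DIII) only to the extent of confirming that no negative sign sneaks into the \emph{leading} coefficient — all genuine signs being absorbed into the explicit $-1$ in front of $q_{ij}$, matching the classical anticommutativity. Once positivity is secured, the Proposition follows immediately from the filtration result of HK06 and KTS15.

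\emph{Remark on scope.} In practice the cleanest route is to quote HK06, Proposition 3.6 and KTS15, Proposition 4.6 for the existence of $\mathcal{F}$ and the $q$-commutation form of $\mathrm{gr}_{\mathcal{F}}\Lambda_q(V)$, and then add the short observation that, evaluated at $0 < q \neq 1$, the structure constants $q_{ij}$ are positive real powers of $q$ and hence $q_{ij} > 0$; this is the only addition needed relative to the generic-$q$ statements in those references.
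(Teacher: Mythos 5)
Your proposal is correct and takes essentially the same route as the paper: the paper offers no independent proof of this Proposition, but simply cites \cite{HK06}*{Proposition 3.6} and \cite{KTS15}*{Proposition 4.6}, exactly as you do. Your sketch of the internals (a filtration adapted to the weight ordering, with the leading coefficient coming from the diagonal part $\msQ$ of $\msR$ and hence being a power of the positive real $q$) accurately reflects what those references establish, so the positivity observation is the right, and only, supplementary point needed.
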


From this result it can be readily seen that the elements $v_{i_1} \wedge \cdots \wedge v_{i_k}$ with $i_1 < \cdots < i_k$ give a basis of $\Lambda_q^k(V)$. Hence $\dim \Lambda_q^k(V) = \dim \Lambda^k(V)$ for $k = 1, \cdots, N$.
Moreover the $U_{q}(\mathfrak{g})$-module algebra $\Lambda_{q}^{k}(V)$ decomposes as in the classical case.
The algebra $\Lambda_q(V)$ can be realized as a subspace of the
tensor algebra $T(V)$, see for instance \cite{KTS15}*{Section 3.4}. Write $\pi_\Lambda : T(V) \to \Lambda_q(V)$ for the projection. Then we denote by $\pi : T(V) \to \mathrm{gr}_{\mathcal{F}} \Lambda_q(V)$ the map obtained by composing $\pi_\Lambda$ with the projection $\Lambda_q(V) \to \mathrm{gr}_{\mathcal{F}} \Lambda_q(V)$.

\subsection{Spectral subspaces}

The content of the previous subsections will be used for the following result.

\begin{Prop}
\label{PropAlgGenSph}
Let $\mu_i$ be a spherical weight from Table \ref{TableSpherical} for AII, CII or DIII. Then the spectral subspace $\mcO_q(U^{\theta}\backslash U)_{\mu_i}$ is contained in the algebra generated by $\mcO_q(U^{\theta}\backslash U)_{\varpi_2}$.
\end{Prop}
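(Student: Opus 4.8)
The plan is to reduce the statement to a concrete claim about the spherical vectors inside deformed exterior powers, and then to exploit the multiplication structure of $\mcO_q(U^{\theta}\backslash U)$. Recall from Theorem \ref{ThmMultFree} that each spectral subspace $\mcO_q(U^{\theta}\backslash U)_{\mu_i}$ is at most one-dimensional over $\End(V_{\mu_i})$, and that $\mcO_q(U^{\theta}\backslash U)_{\mu_i}$ is non-zero exactly when $\mu_i$ is spherical. Writing $f_i = U(v_i,\xi_i)$ for $v_i$ a $U_q(\mfu^\theta)$-invariant vector in $V_{\mu_i}$ and $\xi_i$ a highest weight vector, the multiplicity-one statement and $U_q(\mfu)$-equivariance mean that it suffices to show the products of elements from $\mcO_q(U^{\theta}\backslash U)_{\varpi_2}$ can realise a non-zero element of $\mcO_q(U^{\theta}\backslash U)_{\mu_i}$; since $\mcO_q(U)$ has no zero divisors (\cite{Jos95}*{Lemma 9.1.9.(1)}) and $\mu_i$ is an integer combination of $\varpi_2$-type weights, the real content is to control the \emph{leading term} of such a product and verify it does not vanish in the relevant spectral component.

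The first step is to identify $V_{\mu_i}$ as a summand of $\Lambda_q^{2p}(V)$ where $V = V_{\varpi_1}$, using the decompositions recalled in the appendix: for AII and DIII one has $V_{\varpi_{2p}} \cong \Lambda_q^{2p}(V)$ as a summand (indeed the whole module in type $A$, $D$), while for CII one has the reducible decomposition of $\Lambda_q^{2p}(V)$ containing $V_{\varpi_{2p}}$ with the weight space $\Lambda_q^{2p}(V)_{\varpi_{2p}}$ one-dimensional. The point of passing to the associated graded $\mathrm{gr}_{\mathcal{F}}\Lambda_q(V)$, which by the quoted Proposition is a $q$-commutative exterior algebra on the basis $\{v_i\}$ with relations $v_i\wedge v_j = -q_{ij} v_j \wedge v_i$, is that the spherical vector $w_J = \sum_{i,j} J_{ij} v_i \otimes v_j$ of Noumi--Sugitani \cite{NS95}, viewed inside $T(V)$, projects under $\pi$ to a computable element of $\mathrm{gr}_{\mathcal{F}}\Lambda_q^2(V)$. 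I would then form the $p$-fold wedge $\pi(w_J)^{\wedge p}$ in the graded algebra and show it is a non-zero multiple of the top-degree generator detecting the $\varpi_{2p}$-component, which reduces to a purely combinatorial Pfaffian-type computation over the commuting variables with the explicit $J$-matrices from \cite{NS95}.

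Concretely, the chain of implications is: the invariant vector generating $\mcO_q(U^{\theta}\backslash U)_{\varpi_2}$ corresponds to $\pi_\Lambda(w_J) \in \Lambda_q^2(V)$; the algebra generated by $\mcO_q(U^{\theta}\backslash U)_{\varpi_2}$ therefore contains, via the comultiplication and the realization of $\Lambda_q(V)$ as a $U_q(\mfg)$-module algebra, all wedge powers $\pi_\Lambda(w_J)^{\wedge p}$; and I must check that this wedge power has non-zero projection onto the $V_{\varpi_{2p}}$-isotypic piece, equivalently a non-zero component along the one-dimensional weight space at $\varpi_{2p}$. Because the filtration $\mathcal{F}$ is compatible with the wedge product, it is enough to verify non-vanishing in $\mathrm{gr}_{\mathcal{F}}\Lambda_q(V)$, where the $q_{ij}>0$ guarantee that no cancellation of a leading monomial can occur, so the relevant coefficient is a positive-coefficient expression in the entries of $J$ which is manifestly non-zero.

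The main obstacle will be the last verification: controlling precisely which monomial survives as the leading term of $\pi(w_J)^{\wedge p}$ and checking that its coefficient is non-zero for each of the explicit $J$-matrices in the AII, CII and DIII cases. In type CII this is the subtlest point, since $\Lambda_q^{2p}(V)$ is \emph{reducible} and one must ensure the wedge power projects non-trivially onto the top component $V_{\varpi_{2p}}$ rather than landing entirely in the lower summands $V_{\varpi_{2p-2}}, \ldots$; this is exactly why the one-dimensionality of $\Lambda_q^{2p}(V)_{\varpi_{2p}}$ is emphasised, as it lets me detect the desired component by a single weight-space coefficient. I expect this to come down to a symplectic-Pfaffian identity ensuring the wedge of the $J$-form with itself $p$ times is non-degenerate on the appropriate subspace, and the positivity $q_{ij}>0$ inherited from the deformed exterior algebra is what rules out accidental cancellation that could occur for generic $q$.
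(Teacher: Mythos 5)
Your proposal follows essentially the same route as the paper's proof: reduce to showing that tensor powers of the Noumi--Sugitani invariant vector $w_J$ have non-zero component in $V_{\varpi_{2m}}$ (so that products of matrix coefficients $U(w_J^{\otimes m},-)$ hit the spectral subspace), pass to the associated graded of $\Lambda_q(V)$ where the positivity of the $q_{ij}$ rules out cancellation in the coefficient of the leading monomial $v_1\wedge\cdots\wedge v_{2m}$, and handle CII separately via the one-dimensionality of the $\varpi_{2m}$-weight space of $\Lambda_q^{2m}(V)$. The paper's closing computation is exactly the elementary monomial-tracking you anticipate (each contributing term is a positive reordering constant times the common product of non-zero entries of $J$, so no Pfaffian identity is needed), so this is the same proof.
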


\begin{proof}
Recall that by Theorem \ref{ThmMultFree} the subspace of $U_q(\mfu^{\theta})$-invariant vectors in $V_{\mu_i}$ is one-dimensional. Fix non-zero invariant vectors $w_i \in V_{\mu_i}$ for all $i$. Observe that if $w$ is an invariant vector, then so is $w^{\otimes n}$ for any $n \in \mathbb{N}$, since $U_q(\mfu^{\theta})$ is a coideal. Now consider the invariant vector $w_2$ corresponding to $\mu_2 = \varpi_2$. Suppose that, for each $i$ as in Table \ref{TableSpherical}, there exists some $n_i \in \mathbb{N}$ such that the component of $w_2^{\otimes n_i}$ in $V_{\mu_i}$ is non-zero. Then this component is a non-zero multiple of $w_i$. If this holds then the claim follows from
\[
U(w_2, v_1) \cdots U(w_2, v_{n_i}) = U(w_2^{\otimes n_i}, v_1 \otimes \cdots v_{n_i}).
\]
Upon changing conventions, it is equivalent to prove the same statement for the algebra $\mcO_q(G)^{B_\theta}$, where $B_\theta$ is Letzter's coideal. Moreover we have $\mcO_q(G)^{B_J} = \mcO_q(G)^{B_\theta}$, where $B_J$ is the coideal of Noumi and Sugitani.
In \cite{NS95} a $B_J$-invariant vector $w_J \in V \otimes V$ is constructed explicitly for the cases AII, CII and DIII, where $V = V_{\varpi_1}$ is the fundamental representation. The component of $w_J$ in $V_{\varpi_2} \subseteq V \otimes V$ is non-zero. We will show in Lemma \ref{LemWJ} that $w_J^{\otimes m}$ has non-zero component in $V_{\varpi_{2m}}$ for the appropriate values of $m$. Then the conclusion follows from the previous discussion.
\end{proof}

In the next lemma we will use the explicit invariant vectors $w_J$ given in \cite{NS95}.

\begin{Lem}
\label{LemWJ}
Let $w_J$ be the appropriate invariant vector for AII, CII or DIII.
\begin{enumerate}
\item For AII the component of $w_J^{\otimes m}$ in $V_{\varpi_{2m}}$ is non-zero for $1 \leq m \leq n - 1$.
\item For DIII the component of $w_J^{\otimes m}$ in $V_{\varpi_{2m}}$ is non-zero for $1 \leq m \leq \ell - 1$.
\item For CII the component of $w_J^{\otimes m}$ in $V_{\varpi_{2m}}$ is non-zero for $1 \leq m \leq \ell$.
\end{enumerate}
\end{Lem}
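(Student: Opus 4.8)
The plan is to reduce everything to an explicit computation with the concrete invariant vectors $w_J$ of Noumi--Sugitani in the fundamental representation $V = V_{\varpi_1}$, using the $q$-deformed exterior algebra $\Lambda_q(V)$ as the bookkeeping device. Recall from the preceding discussion that the weight space $\Lambda_q^{2m}(V)_{\varpi_{2m}}$ is one-dimensional (since none of the lower pieces $V_{\varpi_{2m-2p}}$ with $p>0$ carry the weight $\varpi_{2m}$), so to show that the component of $w_J^{\otimes m}$ in $V_{\varpi_{2m}}\subseteq V^{\otimes 2m}$ is non-zero it suffices to show that the image of $w_J^{\otimes m}$ under the projection $\pi_\Lambda\colon T(V)\to\Lambda_q(V)$, landed in degree $2m$, is non-zero. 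The key simplification is that I can further pass to the associated graded $\mathrm{gr}_{\mathcal F}\Lambda_q(V)$ via the map $\pi\colon T(V)\to \mathrm{gr}_{\mathcal F}\Lambda_q(V)$, where the relations are the simple $q$-skew relations $v_i\wedge v_j = -q_{ij}\,v_j\wedge v_i$ with $q_{ij}>0$. Because the $q_{ij}$ are strictly positive, no cancellation between distinct ordered monomials $v_{i_1}\wedge\cdots\wedge v_{i_{2m}}$ (with $i_1<\cdots<i_{2m}$) can occur: each such basis monomial appears with a coefficient that is a positive multiple of the corresponding coefficient in the tensor algebra.

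Concretely, first I would write out $w_J=\sum_{i,j}J_{ij}\,v_i\otimes v_j$ for each of the three cases AII, CII, DIII, reading off the matrix $J$ from \cite{NS95}. In each case $J$ is essentially a (signed, $q$-weighted) permutation matrix pairing up the basis vectors, so $w_J$ is a sum of terms $v_i\otimes v_{\sigma(i)}$ over a fixed-point-free involution $\sigma$ on the index set. Then $w_J^{\otimes m}$ is a sum of pure tensors $v_{i_1}\otimes v_{\sigma(i_1)}\otimes\cdots\otimes v_{i_m}\otimes v_{\sigma(i_m)}$, and I would apply $\pi$ and collect the coefficient of one judiciously chosen ordered monomial of length $2m$ — for instance the monomial $v_{j_1}\wedge\cdots\wedge v_{j_{2m}}$ built from the indices $\{i_1,\sigma(i_1),\dots,i_m,\sigma(i_m)\}$ with all $i_k$ distinct and their $\sigma$-partners also distinct. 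The second step is to check that at least one choice of distinct indices $i_1<\cdots<i_m$ (with pairwise distinct partners, so that the $2m$ indices are genuinely distinct) is available precisely in the stated range $m\le n-1$ (AII), $m\le\ell-1$ (DIII), $m\le\ell$ (CII); this is where the bound on $m$ enters, as it is exactly the condition that $2m$ distinct indices forming $\sigma$-pairs fit inside the fundamental representation.

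Having fixed such a tuple, the third step is to argue non-vanishing of the coefficient. In $\mathrm{gr}_{\mathcal F}\Lambda_q(V)$ reordering a monomial into increasing order only multiplies it by a product of positive scalars $q_{ij}$ and signs coming from the $q$-skew relations; so every summand of $w_J^{\otimes m}$ whose index multiset equals my chosen ordered set $\{j_1,\dots,j_{2m}\}$ contributes a term $\pm(\text{positive})\cdot J_{i_1\sigma(i_1)}\cdots J_{i_m\sigma(i_m)}$ to the coefficient of $v_{j_1}\wedge\cdots\wedge v_{j_{2m}}$. The subtle point — and the main obstacle — is to show these contributions do not cancel. I expect this to follow because the sign attached to each reordering is determined by the underlying permutation and the pairing structure of $\sigma$ is rigid enough that all contributing summands carry the \emph{same} overall sign (or can be grouped so that like signs dominate); combined with strict positivity of the $q_{ij}$ and of the $|J_{ij}|$, the total coefficient is then a nonzero sum of same-sign terms. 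Making this sign analysis precise for each of the three families, using the specific form of $J$ in \cite{NS95}, is the part requiring genuine care; everything else is formal manipulation in $\Lambda_q(V)$ and the reduction through one-dimensionality of $\Lambda_q^{2m}(V)_{\varpi_{2m}}$.
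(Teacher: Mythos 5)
Your overall framework is the same as the paper's (reduce to $\mathrm{gr}_{\mathcal F}\Lambda_q(V)$, extract the coefficient of one monomial, use positivity of the $q_{ij}$), but the step you flag as "requiring genuine care" --- showing the contributions to that coefficient do not cancel --- is exactly the mathematical content of the lemma, and you leave it as an expectation rather than an argument. The paper closes this gap with a structural observation you miss: instead of expanding $w_J^{\otimes m}$ in $T(V)$ and tracking permutation signs, first apply $\pi$ to $w_J$ \emph{itself}. Each antisymmetrized pair $a_{2k}(v_{2k-1}\otimes v_{2k}-q\,v_{2k}\otimes v_{2k-1})$ collapses to a single monomial $b_k\,v_{2k-1}\wedge v_{2k}$ with $b_k\neq 0$ (the within-pair sign is absorbed here, using $q,q_{ij}>0$). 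Since $\pi$ is multiplicative, $\pi(w_J^{\otimes m})=\pi(w_J)^{\wedge m}$ is then a sum of products of \emph{degree-two} monomials, and in the $q$-skew algebra two degree-two monomials commute up to a strictly \emph{positive} scalar: moving a pair past a pair produces the factor $(-q_{ij})$ four times, so the signs cancel identically, e.g.
\[
v_{2j-1}\wedge v_{2j}\wedge v_{2k-1}\wedge v_{2k}=c\cdot v_{2k-1}\wedge v_{2k}\wedge v_{2j-1}\wedge v_{2j},\qquad c>0 .
\]
Hence every contribution to the coefficient of $v_1\wedge\cdots\wedge v_{2m}$ is a positive multiple of $b_1\cdots b_m$, and no case-by-case sign analysis of the three families is needed at all. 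Your plan, which works one tensor factor at a time, genuinely faces a sign problem; the pairwise grouping is what dissolves it.

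Two smaller inaccuracies. First, your opening reduction ("it suffices to show $\pi_\Lambda(w_J^{\otimes m})\neq 0$") is false for CII, where $\Lambda_q^{2m}(V)$ is reducible: you must show the \emph{weight-$\varpi_{2m}$} component of the image is non-zero, so the "judiciously chosen" monomial is forced to be $v_1\wedge\cdots\wedge v_{2m}$; for CII one must then also check (as the paper does, using $j'=2n+1-j>n$) that the extra terms $v_j\wedge v_{j'}$ of $w_J$ cannot contribute to it, and that this monomial indeed has weight $\varpi_{2m}$. Second, your claim that the bound on $m$ is "exactly the condition that $2m$ distinct $\sigma$-paired indices fit inside $V$" is wrong: for AII one can fit $n$ disjoint pairs, yet the range stops at $m\leq n-1$. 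The bounds come from representation theory (the range in which $\varpi_{2m}$ is a spherical weight and in which the decomposition of $\Lambda^{2m}(V)$ has the required form), not from the combinatorics of the pairing; within the stated range nothing needs to be derived from pair-counting.
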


\begin{proof}

(1) We have $\mathfrak{g} = A_{2n - 1}$ and $V_{\varpi_1}$ has dimension $N = 2n$. The invariant vector is
\[
w_J = \sum_{k = 1}^n a_{2k} (v_{2k-1} \otimes v_{2k} - q v_{2k} \otimes v_{2k-1}),
\]
where the $a_{2k}$ are non-zero. Applying the projection $\pi$ we get $\pi(w_{J})=\sum_{k=1}^{n}b_{k}v_{2k-1}\wedge v_{2k}$ for some non-zero $b_{k}$. It is enough to show that $\pi(w_J)^{\wedge m} \neq 0$ for $1\leq m\leq n-1$. Let us focus on the term $w_{2m} = v_{1}\wedge v_{2}\wedge\cdots\wedge v_{2m-1}\wedge v_{2m}$.
It follows from the commutation relations that we have
\[
v_{2j-1}\wedge v_{2j}\wedge v_{2k-1}\wedge v_{2k}=c\cdot v_{2k-1}\wedge v_{2k}\wedge v_{2j-1}\wedge v_{2j},
\]
for some $c>0$. Then $w_{2m}$ appears with non-zero coefficient in $\pi(w_{J})^{\wedge m}$ and hence $\pi(w_{J})^{\wedge m}\neq0$.

(2) We have $\mathfrak{g} = D_n$ and $V_{\varpi_1}$ has dimension $N = 2n$. We use the notation
$j^{\prime} = N + 1 - j$. First we consider the case when $n = 2\ell$ is even. The invariant vector is given by
\[
w_{J}=\sum_{k=1}^{\ell}a_{2k}(v_{2k-1}\otimes v_{2k}-qv_{2k}\otimes v_{2k-1})+\sum_{k=1}^{\ell}a_{(2k-1)^{\prime}}(v_{(2k)^{\prime}}\otimes v_{(2k-1)^{\prime}}-qv_{(2k-1)^{\prime}}\otimes v_{(2k)^{\prime}}),
\]
where the coefficients are non-zero. Therefore its projection is given by
\[
\pi(w_{J})=\sum_{k=1}^{\ell}b_{k}v_{2k-1}\wedge v_{2k}+\sum_{k=1}^{\ell}b_{k}^{\prime}v_{(2k)^{\prime}}\wedge v_{(2k-1)^{\prime}}.
\]
Observe that $(2k)^{\prime}>2\ell$ for $1\leq k\leq\ell$. It is enough to show that $\pi(w_{J})^{\wedge m}\neq 0$ for $1\leq m\leq\ell-1$.
As for the AII case, we see that the term $w_{2m}=v_{1}\wedge v_{2}\wedge\cdots\wedge v_{2m-1}\wedge v_{2m}$ appears with non-zero coefficient, hence $\pi(w_{J})^{\wedge m}\neq 0$.
The odd case $n = 2\ell+1$ is very similar. The only difference is that in $w_{J}$
we also have a term proportional to $v_{n}\otimes v_{n^{\prime}}-v_{n^{\prime}}\otimes v_{n}$.
The rest of the argument is completely identical.

(3) We have $\mathfrak{g} = C_n$ and $V_{\varpi_1}$ has dimension $N = 2n$. We will use the notation
$j^{\prime} = 2n + 1 - j$ and consider the parameter $\ell \leq [n/2]$. We have the invariant
vector
\[
\begin{split}w_{J} & =\sum_{k=1}^{\ell}a_{2k}(v_{2k-1}\otimes v_{2k}-qv_{2k}\otimes v_{2k-1})+\sum_{k=1}^{\ell}a_{(2k-1)^{\prime}}(v_{(2k)^{\prime}}\otimes v_{(2k-1)^{\prime}}-qv_{(2k-1)^{\prime}}\otimes v_{(2k)^{\prime}})\\
 & + \sum_{j = 2\ell + 1}^n (a_j^\prime v_j \otimes v_{j^{\prime}} - a_j^{\prime -1} v_{j^\prime} \otimes v_j) + \sum_{j = 1}^{2\ell} a_j^{\prime \prime} v_j \otimes v_{j^\prime},
\end{split}
\]
where the coefficients are non-zero. Therefore applying the projection we get
\[
\pi(w_{J}) = \sum_{k=1}^{\ell} b_k v_{2k-1}\wedge v_{2k} + \sum_{k=1}^{\ell} b_k^\prime v_{(2k)^{\prime}}\wedge v_{(2k-1)^{\prime}} + \sum_{j=2\ell+1}^{n} c_j v_j \wedge v_{j^\prime} + \sum_{j = 1}^{2\ell} c_j^\prime v_j \wedge v_{j^\prime}.
\]
First we show that $\pi(w_{J})^{\wedge m} \neq 0$ for $1\leq m\leq\ell$.
Let us consider again $w_{2m} = v_{1}\wedge v_{2}\wedge\cdots\wedge v_{2m-1}\wedge v_{2m}$.
We claim that this element arises only from products of the terms
$v_{2k-1}\wedge v_{2k}$ with $1 \leq k \leq \ell$. Indeed, as $j^{\prime} > n$ for $j \leq n$ and $j^{\prime} \leq n$ for $j > n$, the element $w_{2m}$ can not contain any of the terms $v_{j}\wedge v_{j^{\prime}}$. Then, as in the other cases, we conclude that $w_{2m}$ appears with non-zero
coefficient and hence $\pi(w_{J})^{\wedge m}\neq0$.

Finally, since $\Lambda_q^{2m}(V)$ is reducible, we still need to show that we obtain a non-zero component in $V_{\varpi_{2m}}$. Recall that the fundamental representation $V_{\varpi_1}$ of $C_{n}$ has weights
$\{\lambda_{i}\}_{i=1}^{n}\cup\{-\lambda_{i}\}_{i = 1}^n$, where $\lambda_i = \varpi_i - \varpi_{i-1}$ and we use the convention $\varpi_{0}=0$.
The vectors $v_i$ for $i = 1,\cdots,n$ have weight $\lambda_i$. Then we see that the term $w_{2m}$ has weight $\sum_{i = 1}^{2m} \lambda_{i} = \varpi_{2m}$ and hence belongs to $V_{\varpi_{2m}}$.
\end{proof}

\section{Computations for the symmetric pair of type $FII$}\label{Ap3} 

We realize the root system of $\mfg = \mathfrak{f}_4$ explicitly in $\R^4$ with the usual orthonormal basis $\{\varepsilon_r\}$ by putting 
\[
\alpha_1 = \frac{1}{2}(\varepsilon_1 -\varepsilon_2-\varepsilon_3-\varepsilon_4),\quad \alpha_2 = \varepsilon_4,\quad \alpha_3 = \varepsilon_3-\varepsilon_4,\quad \alpha_4 = \varepsilon_2-\varepsilon_3.
\]
In particular, with $d_r = \frac{1}{2}(\alpha_r,\alpha_r)$ we have $d_1=d_2 =1/2$ and $d_3,d_4 = 1$. Then $\varpi_1 = \varepsilon_1$, and $V=V_{\varpi_1}$ is a quasi-minuscule $26$-dimensional $*$-representation of $U_q(\mathfrak{f}_4)$. To realize it explicitly, let us use the notation
\[
[n] = \frac{q^{n/2} -q^{-n/2}}{q^{1/2}-q^{-1/2}},
\] 
so in particular $[1]= 1,[2] = q^{1/2}+q^{-1/2}$ and $[3] = q + 1 + q^{-1}$. Fix in $V$ an orthonormal basis
\[
e_k^{s_0},\quad f_{s_1s_2s_3s_4},\quad e_0,e_0',\qquad 1\leq k \leq 4, s_i \in \{\pm\},
\]
and put 
\[
f_0 = [2]^{-1} (e_0 + [3]^{1/2}e_0'),
\]
so that $f_0$ is a unit vector. Then we can let $U_q(\mathfrak{f}_4)$ act uniquely by the following rules: the vectors $f_{s_1s_2s_3s_4}$ have weight $\frac{1}{2}\sum_{i} s_i\varepsilon_i$, the vectors $e_k^{\pm}$ have weight $\pm \varepsilon_k$, and the vectors $e_0,f_0$ have weight zero. Further, the $F_r$ act as in the diagram \ref{FigF} below.
\renewcommand{\figurename}{Diagram}
\begin{figure}
{
\vspace{-0.4cm}\footnotesize
\[
\xymatrix{ 
&&e_1^+ \ar[d]_{F_1}^{q^{-1/4}} &&\\ &&f_{++++} \ar[d]_{F_2}^{q^{-1/4}} &&\\ &&f_{+++-} \ar[d]_{F_3}^{q^{-1/2}} &&\\ && f_{++-+} \ar[ld]_{F_2}^{q^{-1/4}} \ar[rd]_{F_4}^{q^{-1/2}}&& \\ &f_{++--}  \ar[ld]_{F_1}^{q^{-1/4}} \ar[rd]_{F_4}^{q^{-1/2}} && f_{+-++} \ar[ld]_{F_2}^{q^{-1/4}} & \\ e_2^+ \ar[rd]_{F_4}^{q^{-1/2}} && f_{+-+-} \ar[ld]_{F_1}^{q^{-1/4}} \ar[rd]_{F_3}^{q^{-1/2}} && \\ &e_3^+ \ar[rd]_{F_3}^{q^{-1/2}}  &&  f_{+--+}  \ar[ld]_{F_1}^{q^{-1/4}} \ar[rd]_{F_2}^{q^{-1/4}} && \\ && e_4^+ \ar[d]_{F_2}^{q^{-1/2}[2]^{1/2}} && f_{+---} \ar[d]_{F_1}^{q^{-1/2}[2]^{1/2}}&  \\ &&e_0 \ar[d]_{F_2}^{[2]^{1/2}} \ar[rrd]^(.20){F_1}_(.75){\!\!\!\![2]^{-1/2}} && f_0 \ar[lld]_(.20){F_2}^(.75){\;[2]^{-1/2}} \ar[d]_{F_1}^{[2]^{1/2}}  &  \\ && e_4^- \ar[ld]_{F_3}^{q^{-1/2}} \ar[rd]_{F_1}^{q^{-1/4}} && f_{-+++} \ar[ld]_{F_2}^{q^{-1/4}} \\ & e_3^- \ar[ld]_{F_4}^{q^{-1/2}} \ar[rd]_{F_1}^{q^{-1/4}} && f_{-++-} \ar[ld]_{F_3}^{q^{-1/2}} & \\ e_2^- \ar[rd]_{F_1}^{q^{-1/4}} && f_{-+-+} \ar[ld]_{F_4}^{q^{-1/2}} \ar[rd]_{F_2}^{q^{-1/4}}&& \\ & f_{--++} \ar[rd]_{F_2}^{q^{-1/4}} && f_{-+--} \ar[ld]_{F_4}^{q^{-1/2}} \\ && f_{--+-}  \ar[d]_{F_3}^{q^{-1/2}}&& \\ && f_{---+} \ar[d]_{F_2}^{q^{-1/4}} && \\ && f_{----} \ar[d]_{F_1}^{q^{-1/4}} && \\ && e_1^-&& 
}
\]
}
\caption{Action of the $F_r$ on $V$}\label{FigF}
\end{figure}

The operators $E_r = K_rF_r^*$ act in the obvious way by the adjoint operation, for example
\[
E_1f_{-+++} = [2]^{1/2}f_0,\quad E_1f_0 = q^{1/2}[2]^{1/2}f_{+---},\quad E_1e_0 = q^{1/2}[2]^{-1/2} f_{+---},
\]
\[
E_2 e_4^- = [2]^{1/2} e_0,\quad E_2e_0 = q^{1/2}[2]^{1/2}e_4^+,\quad E_2f_0 = q^{1/2} [2]^{-1/2} e_4^+.
\]

Put $X = \{\alpha_2,\alpha_3,\alpha_4\}$. 
\begin{Lem}\label{LemActTX}
On basis vectors, we have the following action of $T_{w_X}$,
\[
T_{w_X} f_{s_1s_2s_3s_4} = s_2s_4 q^{9/4} f_{s_1,-s_2,-s_3,-s_4},
\]
\[
T_{w_X} e_1^{\pm} = e_{1}^{\pm},\quad T_{w_X} e_2^{\pm} = q^{5/2} e_2^{\mp},\quad  T_{w_X} e_3^{\pm} = -q^{5/2} e_3^{\mp},\quad  T_{w_X} e_4^{\pm} = q^{5/2} e_4^{\mp}
\]
and 
\[
T_{w_X} e_0 = -q^3e_0,\qquad T_{w_X}f_0 = f_0 -q^{3/2}([3]-2)e_0. 
\]
\end{Lem}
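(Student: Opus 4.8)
The plan is to compute $T_{w_X}$ directly on the explicit basis of Diagram \ref{FigF}, organising the computation by the restriction of $V$ to $U_q(\mfg_X)$. First I would record that $X = \{\alpha_2,\alpha_3,\alpha_4\}$ spans a root subsystem of type $B_3$ (with $\alpha_2$ the short end node, since $d_2 = 1/2 < d_3 = d_4 = 1$), so $\mfg_X \cong \mfso_7$ and $w_X$ acts on the weight lattice as $-1$ on $\mathrm{span}(\varepsilon_2,\varepsilon_3,\varepsilon_4)$ while fixing $\varepsilon_1$. Since $T_{w_X}$ maps each weight space $V(\mu)$ to $V(w_X\mu)$, this already forces the shape of every formula in the lemma: $e_1^{\pm}$ (weight $\pm\varepsilon_1$) and the zero-weight vectors $e_0,f_0$ are sent into their own weight spaces, $e_k^{\pm}$ goes to a multiple of $e_k^{\mp}$ for $k=2,3,4$, and $f_{s_1s_2s_3s_4}$ goes to a multiple of $f_{s_1,-s_2,-s_3,-s_4}$. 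It then remains to pin down the scalars on the one-dimensional weight spaces and the $2\times 2$ block on the two-dimensional zero-weight space.

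Next I would read off the decomposition of $V|_{U_q(\mfg_X)}$ from the $F_r$-actions in Diagram \ref{FigF}: the eight vectors $f_{+s_2s_3s_4}$ and the eight $f_{-s_2s_3s_4}$ each span a copy of the $8$-dimensional spinor module (the $\varepsilon_1$-grading being $\mfg_X$-invariant), the six vectors $e_{2,3,4}^{\pm}$ together with a single zero-weight vector span the $7$-dimensional vector module, and $e_1^{+}$, $e_1^{-}$ together with the remaining zero-weight direction span three trivial modules (total $16+7+3=26$). On each constituent I would compute $T_{w_X}$ by fixing a reduced expression $w_X = s_{i_1}\cdots s_{i_9}$ of length $|\Delta_X^+| = 9$ and applying the rank-one formula \eqref{EqActT} string by string: on a trivial module every $T_r$ acts as the identity, giving eigenvalue $1$ (in particular $T_{w_X}e_1^{\pm} = e_1^{\pm}$); on the spinor module the propagation of signs and $q$-powers down the $\mfsl_2$-strings yields the factor $s_2s_4\,q^{9/4}$; and on the vector module it yields $q^{5/2}$ on $e_2^{\pm},e_4^{\pm}$, $-q^{5/2}$ on $e_3^{\pm}$, and eigenvalue $-q^3$ on its zero-weight vector.

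The main obstacle is the two-dimensional zero-weight space, where $T_{w_X}$ is genuinely non-diagonal in the basis $(e_0,f_0)$. Here I would use the explicit actions $E_2 e_0 = q^{1/2}[2]^{1/2}e_4^{+}$ and $E_2 f_0 = q^{1/2}[2]^{-1/2}e_4^{+}$ to identify $e_0$ as precisely the zero-weight vector of the vector module (hence an eigenvector with eigenvalue $-q^3$), while $f_0 = [2]^{-1}e_0 + v_{\mathrm{triv}}$ with $v_{\mathrm{triv}}$ spanning the extra trivial module (eigenvalue $1$). Applying $T_{w_X}$ to this decomposition and re-expressing in the $(e_0,f_0)$ basis produces $T_{w_X}f_0 = f_0 - [2]^{-1}(1+q^3)e_0$, and the identity $[2]^{-1}(1+q^3) = q^{3/2}([3]-2)$ gives the stated off-diagonal coefficient. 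As a consistency check I would verify the whole answer against the $*$-behaviour $T_{w_X}^{*} = R(T_{w_X})$ and the relation \eqref{EqForm} of Lemma \ref{LemTstarR}, which constrains both the eigenvalues and the Hermitian structure of the zero-weight block.
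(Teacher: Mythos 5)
Your proposal is correct and follows essentially the same route as the paper: both reduce everything to the rank-one formulas for $T_r$ on spin-$\frac{1}{2}$ and spin-$1$ strings applied along a reduced expression for $w_X$, and both resolve the two-dimensional zero-weight block by combining the eigenvalue on $e_0$ with the $U_q'(\mfg_X)$-invariant (hence $T_{w_X}$-fixed) vector $e_0-[2]f_0$, which is exactly your $v_{\mathrm{triv}}$ up to scalar. The only difference is bookkeeping: the paper propagates the spinor computation from the single vector $f_{++++}$ via \eqref{EqIdAdT_0} and the $U_q'(\mfg_X)$-module isomorphism $f_{+s_2s_3s_4}\mapsto f_{-s_2s_3s_4}$, whereas you organise the same string-by-string calculation through the explicit $8+8+7+3$ decomposition of $V$ restricted to $\mfg_X\cong\mfso_7$.
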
 
\begin{proof}
The longest word in $W_X$ is given by 
\[
w_X\varepsilon_1 = \varepsilon_1,\qquad w_X \varepsilon_r = -\varepsilon_{r}\textrm{ for }r\in \{2,3,4\},
\]
with reduced expression
\[
w_X = s_{\varepsilon_2}s_{\varepsilon_3}s_{\varepsilon_4} =  (s_4s_3s_2s_3s_4)(s_3s_2s_3)s_2.
\]

Now consider for $U_{q_r}(\mfsu(2))$ the spin $1/2$-representation and spin $1$-representation determined by respective orthonormal weight bases $\{v_{\pm 1/2}\}$ and $\{v_{-},v_0,v_+\}$ with actions 
\[
F_rv_{+1/2} = q_r^{-1/2} v_{-1/2},\qquad F_rv_+ = q_r^{-1}(q_r+q_r^{-1})^{1/2} v_0,\qquad F_rv_0 = (q_r+q_r^{-1})^{1/2} v_{-}. 
\]
Then with respect to these bases, we have from \eqref{EqActT} that the Lusztig braid operator $T_r$ acts via
\[
T_rv_{+1/2} = -q_r^{1/2} v_{-1/2},\quad T_rv_{-1/2} = q_r^{1/2} v_{+1/2},\quad T_rv_+ = q_rv_{-},\quad T_rv_{-} = q_r v_{+},\quad T_rv_0 = -q_r^2v_0. 
\]
One can then easily compute from this the action of $T_{w_X}$ on the $e_{r}^{\pm}$. For the $f_{s_1s_2s_3s_4}$ one can compute $T_{w_X}$ on $f_{++++}$, and use the formula \eqref{EqIdAdT_0} for the remaining $f_{+s_2s_3s_4}$. For the $f_{-s_2s_3s_4}$ one  can then use the $U_q'(\mfg_X)$-isomorphism $f_{+s_2s_3s_4} \mapsto f_{-s_2s_3s_4}$. Finally, for $e_0$ the value of $T_{w_X}$ is directly computed. Since $e_0 - [2]f_0$ is a $U_q'(\mfg_X)$-fixed vector, it must also be a $T_{w_X}$-fixed vector, from which the value of $T_{w_X}f_0$ can be computed. 
\end{proof}

Consider now the enhanced Satake diagram $(X,\id,1)$ with associated Satake involution $\theta$ and coideal $*$-subalgebra $U_q(\mathfrak{f}_4^{\theta}) \subseteq U_q(\mathfrak{f}_4)$. Using that $\alpha_1^+ = -\frac{1}{2}(\epsilon_2+\epsilon_3+\epsilon_4)$ and that $z_1 = 1$, consider as in \eqref{DefNewGenCoid} the generator
\[
C_1 = E_1 - q^{3/4}T_{w_X} F_1T_{w_X}^{-1} K_1  \in U_q(\mathfrak{f}_4^{\theta}).
\] 
Then by direct computations using Lemma \ref{LemActTX} one finds the following values: 
\[
C_1 e_1^+  = -q^{13/4}f_{+---},\qquad C_1 e_1^- = q^{1/4} f_{----},
\]
\begin{equation}\label{EqValueC}
C_1 f_{++++} =  q^{1/4}e_1^+ -q^{-5/2}[2]^{1/2} f_0 + q^{-1}[2]^{1/2}([3]-2)e_0,\qquad  C_1 f_{-+++} = [2]^{1/2}f_0 - q^{-11/4} e_1^-,
\end{equation}
\[
C_1^* f_{+--+} =  q^{1/4} e_4^+,\qquad C_1^* f_{---+} = q^{-1/4} e_4^+.
\]

Let now $\msK$ be the $*$-compatible $\nu$-modified universal $K$-matrix for $U_q(\mathfrak{f}_4^{\theta})$. 
\begin{Lem}\label{LemComputeK}
There exists a non-zero scalar $a\in \C$ such that 
\[
a\msK e_1^+ = e_1^-,\qquad a\msK f_{+s_2s_3s_4} = -q^{-3} f_{-s_2s_3s_4},
\]
\[
a\msK e_0 = -q^{-7/2}e_0,\qquad  a\msK f_0 = q^{-1/2}f_0 + [2]^{-1/2}q^{-1/4}(q^{3}-q^{-3})e_1^-  -q^{-2}([3]-2)e_0. 
\]
\end{Lem}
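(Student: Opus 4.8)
The goal is to compute the action of $\msK = \msE\widetilde{\epsilon}^{-1}\widetilde{\msK}$ on the distinguished weight-zero and extreme weight vectors of the $26$-dimensional quasi-minuscule representation $V = V_{\varpi_1}$ of $U_q(\mathfrak{f}_4)$, up to an overall nonzero scalar $a$. The plan is to exploit the explicit factorisation
\[
\msK = \msE\,\widetilde{\epsilon}^{-1}\,R(\widetilde{\mcK})\,v^{-1},\qquad \widetilde{\mcK} = \Ad(K_{\omega_0})(\mcK),\qquad \mcK = \quasiK\,\xi\,T_{w_X}^{-1}T_{w_0}^{-1},
\]
together with the fact that on $V_{\varpi_1}$ the various grouplike and central factors act as scalars on weight vectors, so that the only genuinely nontrivial ingredient is the Lusztig operator $T_{w_0}^{-1}$ (and the quasi-$K$-matrix $\quasiK$, whose off-diagonal contributions I must track). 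Since we work up to the scalar $a$, I would first absorb $v^{-1}$, the central factor $\msE\widetilde{\epsilon}^{-1}$, and the overall normalisation of $\xi' = \widetilde{z}_\tau C_\Theta$ into $a$, reducing the computation to understanding $R(\widetilde{\quasiK})\,C_\Theta\,T_{w_X}^{-1}T_{w_0}^{-1}$ acting on the listed vectors.

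First I would reduce from $\msK$ back to $\mcK$ by the correspondence in Lemma \ref{LemEqualitySwitch}, or more directly compute $\langle \eta, \msK\xi\rangle$ via the relation $\msK = \tau\tau_0(\msK)^*$ and contragredient duality, as was used in Lemma \ref{LemAppRang}: since $\tau=\id$ and $\tau_0=\id$ for $\mathfrak{f}_4$, this says simply $\msK^* = \msK$, so $\msK$ is self-adjoint and it suffices to pin down its matrix entries. The key structural input is that $\quasiK = \sum_{\gamma\in Q^+}\quasiK_\gamma$ with $\quasiK_\gamma\in U_q(\mfn)_\gamma$ and $\quasiK_{\alpha_s}=0$ for all $s$ (used in Lemma \ref{LemBor1}), together with the fact that $\quasiK_\gamma\neq 0$ forces $\Theta(\gamma)=-\gamma$. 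On the extreme vectors $e_1^\pm$ this will kill most correction terms, leaving a pure $T_{w_0}^{-1}T_{w_X}^{-1}$ computation whose values I can read off from Diagram \ref{FigF} and Lemma \ref{LemActTX}; the predicted answers $a\msK e_1^+ = e_1^-$ and $a\msK f_{+s_2s_3s_4}=-q^{-3}f_{-s_2s_3s_4}$ should emerge from the explicit weights $w_0\varpi_1 = -\varepsilon_1$ together with the $T_{w_X}$-values computed above.

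The genuinely delicate case is the weight-zero subspace spanned by $e_0$ and $f_0$, where $\msK$ need not be diagonal: indeed the stated formula has $\msK f_0$ mixing $f_0$, $e_1^-$ and $e_0$. Here the nonvanishing components $\quasiK_\gamma$ with $\Theta(\gamma)=-\gamma$ and $\gamma$ of height two or more will contribute, and I would determine the relevant low-degree terms of $\quasiK$ by using its defining intertwining property $B_r\quasiK = \quasiK\,\overline{B_r}$ from Proposition \ref{PropUniqueK}, applied in the concrete representation $V$. Concretely, I would use the already-computed actions of the coideal generators $C_1, C_1^*$ in \eqref{EqValueC} (equivalently their $R$-images, the generators $\widetilde{B}_r$) to set up linear equations forcing $\msK$ to intertwine $\pi_{\varpi_1}$ with $\pi_{\varpi_1}\circ\tau\tau_0 = \pi_{\varpi_1}$ and to commute correctly with $U_q(\mathfrak{f}_4^\theta)$ via \eqref{EqCommOrKNew}, then solve on the two-dimensional weight-zero space. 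The main obstacle will be bookkeeping the $[2]$- and $[3]$-dependent coefficients correctly through the $T_{w_X}^{-1}$-action on $f_0$ (which is itself non-diagonal, $T_{w_X}f_0 = f_0 - q^{3/2}([3]-2)e_0$) and through the quasi-$K$-matrix correction; I expect the appearance of the factor $(q^3-q^{-3})$ in the $e_1^-$-component to be the tightest consistency check, tracing back to the difference between the $E_1$-action mapping weight-zero vectors up to $f_{+---}$ and its adjoint, exactly as recorded in \eqref{EqValueC}. Once the weight-zero block is fixed, self-adjointness of $\msK$ and the already-determined extreme values fix the scalar $a$ up to the stated freedom, completing the computation.
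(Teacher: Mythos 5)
Your proposal has a genuine gap, concentrated exactly where the lemma's difficulty lies. Your plan for the eight vectors $f_{+s_2s_3s_4}$ (and for the weight-zero block) is to reduce to an explicit computation of $T_{w_X}^{-1}T_{w_0}^{-1}$ plus low-degree terms of $\quasiK$, claiming the answers can be ``read off'' from Diagram \ref{FigF} and Lemma \ref{LemActTX}. But those two sources only give the action of the $F_r$ and of $T_{w_X}$; neither the paper nor your proposal computes $T_{w_0}^{-1}$ on $V_{\varpi_1}$, and the $f_{+s_2s_3s_4}$ are not extreme weight vectors, so evaluating $T_{w_0}^{-1}$ on them means pushing the braid operator through a length-$24$ reduced word of $w_0$ --- a computation of the same order as the lemma itself, which you do not carry out. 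Likewise, on the weight-zero space the quasi-$K$-matrix components at $\gamma = \varepsilon_1$ can contribute (only $\quasiK_{\alpha_s}=0$ for \emph{simple} roots is known, and $\varepsilon_1$ is a non-simple root with $\Theta(\varepsilon_1)=-\varepsilon_1$; indeed the $e_1^-$-term in the stated formula for $a\msK f_0$ comes precisely from such a component), and ``determining the relevant low-degree terms of $\quasiK$'' from $B_r\quasiK = \quasiK\overline{B_r}$ is left as a promissory note. There is also a concrete error in your reduction: $\msE\widetilde{\epsilon}^{-1}$ is not central and cannot be absorbed into $a$; only $\msE\widetilde{\epsilon}$ is central, and $\msE\widetilde{\epsilon}^{-1} = (\msE\widetilde{\epsilon})\widetilde{\epsilon}^{-2}$ with $\widetilde{\epsilon}^{2} = \mcS_0\mcS_X$ by \eqref{EqCrucialProp}, so this factor contributes weight-dependent signs that must be tracked, not a global scalar.

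The paper's proof shows that none of this heavy machinery is needed. Weight considerations alone (namely that $\msK$ maps $V(\omega)$ into $V(-w_X\omega)\oplus V(-w_X\omega - \varepsilon_1)\oplus V(-w_X\omega-2\varepsilon_1)$, and the latter two weights do not occur in $V$ when $\omega = \varepsilon_1$) give $a\msK e_1^+ = e_1^-$ for some $a\neq 0$. Everything else follows from the commutation $\msK X = X\msK$ for $X\in U_q(\mathfrak{f}_4^{\theta})$, i.e.~ \eqref{EqCommOrKNew} with $\tau\tau_0=\id$: since $C_1 e_1^+ \propto f_{+---}$ one obtains $a\msK f_{+---}$ from \eqref{EqValueC}; $U_q'(\mfg_X)$-equivariance then spreads this to all eight $f_{+s_2s_3s_4}$ with the uniform scalar $-q^{-3}$ (a uniformity your proposal never addresses); $e_0\propto F_2C_1^*f_{+--+}$ gives $a\msK e_0$; and applying $a\msK$ to the expansion of $C_1 f_{++++}$ in \eqref{EqValueC} solves for $a\msK f_0$ --- no $T_{w_0}$, no $\quasiK$, no sign bookkeeping for $\msE\widetilde{\epsilon}^{-1}$. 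You do invoke the coideal commutation for the weight-zero block, which is the germ of the correct argument, but you neither set up the bootstrap order that makes it work (one needs $\msK$ on $e_1^{\pm}$, on the $f_{\pm s_2s_3s_4}$ and on $e_0$ before one can solve for $\msK f_0$) nor notice that it already determines the whole lemma, rendering your explicit computations unnecessary and, as proposed, infeasible.
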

\begin{proof}
Since
\[
\msK: V(\omega) \mapsto V(-w_X\omega) \oplus V(-w_X\omega - \varepsilon_1) \oplus V(-w_X\omega - 2\varepsilon_1)
\]
on weight spaces, there will exist a non-zero $a\in \C$ such that $a\msK e_1^+ = e_1^-$. Now since $\msK$ commutes with $C_1$ and $C_1e_1^+$ is a multiple of $f_{+---}$, we can compute the action of $a\msK$ on $f_{+---}$ using the formulas in \eqref{EqValueC}. As $\msK$ commutes with $U_q'(\mfg_X)$, this then determines $a\msK$ on all $f_{+s_2s_3s_4}$. Since $e_0$ is a scalar multiple of $F_2C_1^*f_{+--+}$, we can once again use the commutation of $\msK$ with $C_1^*$ and $F_2$ to determine the value of $a\msK$ on $e_0$. 

Finally, using that 
\[
C_1  a\msK f_{++++} = -q^{-3} C_1 f_{-+++} = -q^{-3}([2]^{1/2}f_0 -q^{-11/4} e_1^-)
\]
equals
\[
a\msK C_1 f_{++++} = q^{1/4}e_1^- -q^{-5/2}[2]^{1/2} a\msK f_0 -q^{-9/2} [2]^{1/2}([3]-2)e_0,
\]
we find the expression for $a\msK f_0$. 
\end{proof}

Consider now the span of the $Z_V(\xi,\eta)$ in $\mcO_q(Z_{\nu})$ with its left $U_q(\mathfrak{f}_4)$-action 
\[
x \rhd Z_V(\xi,\eta) = Z_V(S(x_{(1)})^*\xi,x_{(2)}\eta).
\]
Write
\[
z_0 = \sum_{s_2,s_3,s_4} a_{s_2,s_3,s_4} Z_V(f_{-s_2s_3s_4},f_{+s_2s_3s_4}),
\]
where 
\[
a_{+++} = 1,\quad a_{++-}=q^{-1},\quad a_{+-+}=q^{-3},\quad a_{+--} = q^{-4},
\]
\[
a_{-++}=q^{-5},\quad q_{-+-} = q^{-6},\quad a_{--+} = q^{-8},\quad a_{---} = q^{-9}. 
\]
Write further
\[
z_+ = Z_V(e_0 - [2]f_0,e_1^+),\qquad z_- = Z_V(e_1^-,e_0 - [2]f_0).   
\]
Then a straightforward computation shows the following lemma.

\begin{Lem}\label{LemInvVect}
The elements $z_0,z_+,z_-$ are $U_q'(\mfg_X)$-invariant, and 
\[
z = q^{5/4}z_+ - \frac{[3]}{[2]^{1/2}} z_0 + q^{-41/4} z_-
\]
is a highest weight vector for $U_q(\mathfrak{f}_4)$ at weight $\varepsilon_1$. 
\end{Lem}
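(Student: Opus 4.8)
The plan is to work entirely inside the $U_q(\mathfrak{f}_4)$-module $M = \Span\{Z_V(\xi,\eta)\}$ equipped with the action \eqref{EqInvAct}. Since $\mathfrak{g}=\mathfrak{f}_4$ has $\tau_0=\id$ and we take $\tau=\id$, this action reads $x\rhd Z_V(\xi,\eta)=Z_V(S(x_{(1)})^*\xi,x_{(2)}\eta)$. First I would record, from $\Delta(E_r)=E_r\otimes 1+K_r\otimes E_r$, $\Delta(F_r)=F_r\otimes K_r^{-1}+1\otimes F_r$, the antipode, and the compact $*$-structure $E_r^*=F_rK_r$, $F_r^*=K_r^{-1}E_r$, the two working formulas
\[
E_r\rhd Z_V(\xi,\eta)=-Z_V(F_r\xi,\eta)+Z_V(K_r^{-1}\xi,E_r\eta),\qquad F_r\rhd Z_V(\xi,\eta)=-Z_V(E_r\xi,K_r^{-1}\eta)+Z_V(\xi,F_r\eta).
\]
Taking $x=K_\omega$ shows that $Z_V(\xi,\eta)$ has weight $\wt(\eta)-\wt(\xi)$; reading off the weights of the basis vectors in Diagram \ref{FigF} then gives that $z_0$, $z_+$ and $z_-$ are all homogeneous of weight $\varepsilon_1$, hence so is $z$. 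In particular each is fixed by $K_r$ for $r\in X=\{2,3,4\}$, since $(\alpha_r,\varepsilon_1)=0$.

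For $z_\pm$ the invariance is essentially free. The vectors $e_1^+$ and $e_1^-$ are the highest and lowest weight vectors of $V$ (top and bottom of Diagram \ref{FigF}), hence killed by every $E_r$ resp.\ $F_r$, and by inspection of the diagram they are killed by $F_r$ resp.\ $E_r$ for $r\in X$; the vector $e_0-[2]f_0$ is $U_q'(\mfg_X)$-fixed, as already noted in the proof of Lemma \ref{LemActTX}. The working formulas show that $Z_V(\xi,\eta)$ is $U_q'(\mfg_X)$-invariant whenever both $\xi$ and $\eta$ are, so $z_+$ and $z_-$ are invariant.

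The genuine computation, and the main obstacle, is the $U_q'(\mfg_X)$-invariance of $z_0$. Here $X$ is of type $B_3$, and the eight vectors $\{f_{+s_2s_3s_4}\}$ and $\{f_{-s_2s_3s_4}\}$ each span a copy of the (self-dual) spin representation of $U_q'(\mfg_X)\cong U_q(\mfso(7))$; conceptually $z_0$ is the image under $Z_V$ of the canonical invariant vector of the tensor product of these two spin modules, and the prescribed geometric pattern of $q$-powers $a_{s_2s_3s_4}$ is precisely the $q$-deformed pairing that makes this vector invariant. Concretely I would verify $E_r\rhd z_0=F_r\rhd z_0=0$ for $r\in X$ by feeding the working formulas the arrows of Diagram \ref{FigF}; the coefficients $a_{s_2s_3s_4}$ are tuned so that the contributions of adjacent basis vectors telescope away. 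This is the most laborious step, but it is a finite check.

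Finally, for the highest weight property of $z$: being a linear combination of $U_q'(\mfg_X)$-invariants, $z$ is annihilated by $E_2,E_3,E_4$ and has weight $\varepsilon_1$, so only $E_1\rhd z=0$ remains. Applying the first working formula, a weight count collapses almost everything: in $E_1\rhd z_0$ only the extreme summands survive, since $F_1 f_{-s_2s_3s_4}$ and $E_1 f_{+s_2s_3s_4}$ are non-zero only for $f_{----}$ and $f_{++++}$, while $E_1\rhd z_\pm$ each produce a single term. Using $F_1 f_{----}=q^{-1/4}e_1^-$, $E_1 f_{++++}=q^{1/4}e_1^+$ and the values of $E_1,F_1$ on $e_0-[2]f_0$ from Diagram \ref{FigF}, everything reduces to the two matrix coefficients $Z_V(f_{-+++},e_1^+)$ and $Z_V(e_1^-,f_{+---})$. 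With the stated coefficients $q^{5/4}$, $-[3]/[2]^{1/2}$, $q^{-41/4}$ the total weight of each of these is proportional to $[2]^2-1-[3]$, which vanishes because $[2]^2=q+2+q^{-1}=[3]+1$. Hence $E_1\rhd z=0$; since the three summands involve disjoint basis matrix coefficients, $z\neq 0$, and $z$ is a non-zero highest weight vector of weight $\varepsilon_1$.
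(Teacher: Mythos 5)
Your proposal is correct and is precisely the "straightforward computation" that the paper leaves implicit: the working formulas for the action are right, the coefficients $a_{s_2s_3s_4}$ do produce the pairwise cancellations making $z_0$ invariant under $E_r,F_r$ for $r\in X$, the invariance of $z_\pm$ is automatic from the $U_q'(\mfg_X)$-fixedness of $e_1^{\pm}$ and $e_0-[2]f_0$, and $E_1\rhd z$ collapses to multiples of $Z_V(f_{-+++},e_1^+)$ and $Z_V(e_1^-,f_{+---})$ whose coefficients are each proportional to $[2]^2-1-[3]=0$. This matches the paper's intended argument in both structure and detail, so nothing further is needed.
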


\begin{Prop}\label{PropExistNonZero}
The element $\phi(z)$ is a non-zero element in $\mcO_q(U^{\theta}\backslash U)_{\varpi_1}$. 
\end{Prop}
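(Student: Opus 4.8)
The plan is to combine equivariance of $\phi$ (to locate $\phi(z)$ in the correct spectral subspace) with the counit $\varepsilon$ of $\mcO_q(U)$, which collapses the non-vanishing of $\phi(z)$ to a single scalar: the value $\chi_{\msK}(z)=z(\msK)$, computable directly from the explicit $K$-matrix of Lemma~\ref{LemComputeK}. First I would record that for $\mfg=\mathfrak{f}_4$ one has $w_0=-\id$, hence $\tau_0=\id$, and since the Satake datum is $(X,\id,1)$ also $\tau=\id$; thus $\tau_\nu=\tau\tau_0=\id$ throughout. By Theorem~\ref{TheoOneToOneCorr} the map $\phi\colon\mcO_q(Z_\nu)\to\mcO_q(U)$ intertwines $\rho_\nu$ with $\Delta$, so it is equivariant for the infinitesimal left $U_q(\mfu)$-actions, where on $\mcO_q(U)$ this action is $X\rhd U(\xi,\eta)=U(\xi,X\eta)$. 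By Lemma~\ref{LemInvVect}, $z$ is a highest weight vector of weight $\varepsilon_1=\varpi_1$ for the $\rhd$-action. Consequently, if $\phi(z)\neq 0$, then $\phi(z)$ is a highest weight vector of weight $\varpi_1$ for the right regular action, hence of the form $U(\xi,e_1^+)$ and in particular lies in $\mcO_q(U)_{\varpi_1}$. Together with the inclusion $\mcO_q(K\backslash U)\subseteq\mcO_q(U^\theta\backslash U)$ (general coideal duality and Theorem~\ref{TheoEqUnivEnv}) and the multiplicity-one statement of Theorem~\ref{ThmMultFree} for the spherical weight $\mu_1=\varpi_1$, this places $\phi(z)$ in $\mcO_q(U^\theta\backslash U)_{\varpi_1}$, the single copy of $V_{\varpi_1}$ whose highest weight line is spanned by $U(v_0,e_1^+)$ for $v_0$ the essentially unique $U_q(\mfu^\theta)$-spherical vector.

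It then remains to prove $\phi(z)\neq 0$, and here I would apply $\varepsilon$. From the matrix identity \eqref{EqImZphiMatrix}, using $\tau=\id$ and $(\id\otimes\varepsilon)U_V=\id=(\id\otimes\varepsilon)U_V^*$, one gets $(\id\otimes\varepsilon\phi)Z_V=\pi_{\varpi_1}(\msK)$, that is
\[
\varepsilon(\phi(Z_V(\xi,\eta)))=\langle\xi,\msK\eta\rangle,\qquad \xi,\eta\in V,
\]
so that $\varepsilon(\phi(z))=z(\msK)$. Thus it suffices to evaluate this one number and check it is nonzero; in particular $z(\msK)\neq 0$ immediately yields both non-vanishing and, by the previous paragraph, membership in the asserted spectral subspace.

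Finally I would compute $z(\msK)$ term by term from the decomposition $z=q^{5/4}z_+-\tfrac{[3]}{[2]^{1/2}}z_0+q^{-41/4}z_-$ and the action of $\msK$ in Lemma~\ref{LemComputeK}. The contribution of $z_+=Z_V(e_0-[2]f_0,e_1^+)$ vanishes, since $\msK e_1^+\in\C e_1^-$ is orthogonal to the zero-weight vectors $e_0,f_0$. The term $z_0$ contributes a nonzero multiple of $q^{-3}\sum_{s_2,s_3,s_4}a_{s_2s_3s_4}$ (from $\msK f_{+s_2s_3s_4}\in\C f_{-s_2s_3s_4}$), a sum of positive powers of $q$ and hence strictly positive; while $z_-=Z_V(e_1^-,e_0-[2]f_0)$ contributes through the $e_1^-$-component of $\msK(e_0-[2]f_0)$, a nonzero multiple of $(q^3-q^{-3})$. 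Collecting the survivors one obtains, up to the nonzero scalar $a^{-1}$,
\[
z(\msK)=a^{-1}\Bigl(\tfrac{[3]}{[2]^{1/2}}\,q^{-3}\!\!\sum_{s_2,s_3,s_4}\!\!a_{s_2s_3s_4}\;-\;q^{-21/2}[2]^{1/2}(q^3-q^{-3})\Bigr).
\]
The one genuine obstacle is then to confirm that these two contributions never cancel for $q\neq 1$: this is a finite check on explicit Laurent polynomials, where for $q>1$ the first summand behaves like $q^{-5/2}$ and the second like $q^{-15/2}$ (and symmetrically for $q<1$), so the bracket is nonzero. Once $z(\msK)\neq 0$ is verified, the proof is complete.
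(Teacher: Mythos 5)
Your proposal follows the paper's own proof almost verbatim in structure: equivariance plus Lemma \ref{LemInvVect} places $\phi(z)$ in $\mcO_q(U^{\theta}\backslash U)_{\varpi_1}$, the counit reduces non-vanishing to the scalar $\varepsilon(\phi(z))=z(\msK)$, and Lemma \ref{LemComputeK} gives exactly the bracket you wrote down (your formula agrees with the paper's, since $-(q^3-q^{-3})=q^{-3}-q^3$). The one place where your argument does not close is the final non-vanishing claim, and the justification you give there is logically insufficient: asymptotic dominance of the first summand over the second as $q\to\infty$ (and the analogous statement as $q\to 0$) does not rule out a cancellation at some intermediate value of $q>1$. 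A Laurent polynomial in $q^{1/2}$ can perfectly well be positive near $1^+$ and near $\infty$ yet vanish in between, so ``the first behaves like $q^{-5/2}$ and the second like $q^{-15/2}$, hence the bracket is nonzero'' is not a proof. (Your stated exponents are also off: the correct leading behaviours are $q^{-9/4}$ and $q^{-29/4}$, since $\frac{[3]}{[2]^{1/2}}\sim q^{3/4}$ and $[2]^{1/2}\sim q^{1/4}$; this is cosmetic, but symptomatic of the step being waved at rather than done.)

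The repair is exactly the computation the paper performs: factor $\sum_{s_2,s_3,s_4}a_{s_2s_3s_4}=(1+q^{-1})(1+q^{-3}+q^{-5}+q^{-8})$ and $1+q^{-1}=q^{-1/2}[2]$, so that
\[
a\,\varepsilon(\phi(z))
= q^{-1/2}[2]^{1/2}\Bigl(q^{-3}[3](1+q^{-3}+q^{-5}+q^{-8})+q^{-10}(q^{-3}-q^{3})\Bigr),
\]
and then expand: the first term inside the bracket equals
$q^{-2}+q^{-3}+q^{-4}+q^{-5}+q^{-6}+2q^{-7}+q^{-8}+q^{-9}+q^{-10}+q^{-11}+q^{-12}$,
while the second equals $-q^{-7}+q^{-13}$; their sum is $\sum_{k=2}^{13}q^{-k}$, which is strictly positive for every $q>0$. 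This establishes $z(\msK)\neq 0$ for all $q\neq 1$ (indeed positivity, not merely non-vanishing), and is the step your asymptotic argument needs to be replaced by. For $q<1$ no expansion is even needed, since both summands are then positive; the genuine content is the case $q>1$, where the negative contribution $-q^{-7}$ must be seen to be absorbed.
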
 
\begin{proof}
By Lemma \ref{LemInvVect} and equivariance of $\phi$ it is clear that $\phi(z) \in \mcO_q(U^{\theta}\backslash U)_{\varpi_1}$. To see that $\phi(z)\neq 0$ it is sufficient to compute that $\varepsilon(\phi(z)) \neq 0$. We have however by Lemma \ref{LemComputeK} that
\begin{eqnarray*}
&& \hspace{-1cm} a\varepsilon(\phi(z))  \\  &=& q^{5/4} \langle e_0 - [2]f_0,a\msK e_1^+\rangle - \frac{[3]}{[2]^{1/2}} \sum_{s_2,s_3,s_4} a_{s_2s_3s_4} \langle f_{-s_2s_3s_4},a\msK f_{+s_2s_3s_4}\rangle  + q^{-41/4} \langle e_1^-,a\msK(e_0 - [2]f_0)\rangle \\
&=& [2]^{-1/2}(q^{-3}[3]\sum a_{s_2s_3s_4} + q^{-21/2}[2](q^{-3}-q^3)) \\
&=& q^{-1/2}[2]^{1/2}(q^{-3}[3](1+q^{-3}+q^{-5}+q^{-8}) + q^{-10}(q^{-3}-q^3))\\
&>& 0.
\end{eqnarray*} 
\end{proof}


\begin{thebibliography}{00}
\bibitem[Ar62]{Ar62} S. Araki, On root systems and an infinitesimal classification of irreducible symmetric spaces, \emph{Journal of Mathematics, Osaka City University} \textbf{13} (1) (1962), 1--34.
\bibitem[AM14]{AM14} T. Ashton and A. Mudrov, Quantization of borderline Levi conjugacy classes of orthogonal groups, \emph{J. Math. Phys.} \textbf{55} (12) (2014), 121702, 16 pp. 
\bibitem[AM13]{AM13} T. Ashton and A. Mudrov, On representations of quantum conjugacy classes of $GL(n)$, \emph{Lett. Math. Phys.} \textbf{103} (9) (2013), 1029--1045. 
\bibitem[AM15]{AM15} T. Ashton and A. Mudrov, Representations of quantum conjugacy classes of orthosymplectic groups, \emph{Zap. Nauchn. Sem. S.-Peterburg. Otdel. Mat. Inst. Steklov. (POMI)} \textbf{433} (2015), \emph{Voprosy Kvantovoĭ Teorii Polya i Statisticheskoĭ Fiziki} \textbf{23}, 20--40; translation in \emph{J. Math. Sci. (N.Y.)} \textbf{213} (5) (2016), 637--650.
\bibitem[BK15a]{BK15a} M. Balagovi\'{c} and S. Kolb, The bar involution for quantum symmetric pairs, Represent. Theory \textbf{19} (8) (2015), 186--210.
\bibitem[BK15b]{BK15b} M. Balagovi\'{c} and S. Kolb, Universal K-matrix for quantum symmetric pairs, \emph{J. Reine Angew. Math}, to appear, arXiv:1507.06276v2 [math.QA].
\bibitem[BF97]{BF97} W. Baldoni and P. M. Frajria, The quantum analog of a symmetric pair: A construction in type $(C_n,A_1\times C_{n-1})$, \emph{Trans. Amer. Math. Soc.} \textbf{8} (1997), 3235--3276.
\bibitem[B-VB-PBMR95]{B-VB-PBMR95} V. Back-Valente, N. Bardy-Panse, H. Ben Messaoud, G. Rousseau, Formes presque-d\'{e}ploy\'{e}es des alg\`{e}bres de Kac–Moody: classification et racines relatives, \emph{J. Algebra} \textbf{171} (1995), 43--96.
\bibitem[BW16]{BW16} H. Bao and W. Wang, Canonical bases arising from quantum symmetric pairs, \emph{Invent. Math.} \textbf{213} (3) (2018), 1099--1177.
\bibitem[Bau00]{Bau00} P. Baumann, Another proof of Joseph Letzter's separation of variables theorem for quantum groups, \emph{Transform. Groups} \textbf{5} (1) (2000), 3--20.
\bibitem[BZBJ16]{BZBJ16} D. Ben-Zvi, A. Brochier and D. Jordan, Quantum character varieties and braided module categories, \emph{preprint}, arXiv:1606.04769v2.
\bibitem[Bic14]{Bic14} J. Bichon, Hopf-Galois objects and cogroupoids, \emph{Rev. Un. Mat. Argentina} \textbf{55} (2) (2014), 11--69. 
\bibitem[CFG08]{CFG08} N. Ciccoli, R. Fioresi and F. Gavarini, Quantization of projective homogeneous spaces and duality principle, \emph{J. Noncommut. Geom.} \textbf{2} (2008), 449--496.
\bibitem[CH02]{CH02} M.-K. Chuah and C.-C. Hu, Equivalence classes of Vogan diagrams, \emph{J. Algebra} \textbf{279} (2004), 22--37.
\bibitem[DeC13]{DeC13} K. De Commer, Representation theory of quantized enveloping algebras with interpolating real structure, \emph{SIGMA} \textbf{9} (081) (2013), 20 pages.
\bibitem[DCN15]{DCN15} K. De Commer and S. Neshveyev, Quantum flag manifolds as quotients of degenerate quantized universal enveloping algebras, \emph{Transform. Groups} \textbf{20} (3) (2015), 725--742.
\bibitem[DCNTY17]{DCNTY17} K. De Commer, S. Neshveyev, L. Tuset and M. Yamashita, Ribbon braided module categories, quantum symmetric pairs and Knizhnik-Zamolodchikov equations, \emph{preprint}, arXiv:1712.08047.
\bibitem[tD98]{tD98} T. tom Dieck, Categories of rooted cylinder ribbons and their representations, \emph{J. Reine Angew. Math.} \textbf{494} (1998), 35--63.
\bibitem[tDH-O98]{tDH-O98} T. tom Dieck and R. H\"{a}ring-Oldenburg, Quantum groups and cylinder braiding, \emph{Forum Math.} \textbf{10} (5) (1998), 619--639.
\bibitem[Dij96]{Dij96} M.S. Dijkhuizen, Some remarks on the construction of quantum symmetric spaces, In: Representations of Lie Groups, Lie Algebras and Their Quantum Analogues, \emph{Acta Appl. Math.} \textbf{44} (1996) (1-2), 59--80.
\bibitem[DK94]{DK94} M.S. Dijkhuizen and T.H. Koornwinder, Quantum homogeneous spaces, duality and quantum 2-spheres, \emph{Geom. Dedicata} \textbf{52} (1994), 291--315.
\bibitem[DN98]{DN98} M.S. Dijkhuizen and M. Noumi, A family of quantum projective spaces and related q-hypergeometric orthogonal polynomials, \emph{Trans. Amer. Math. Soc.} \textbf{350} (8) (1998), 3269--3296.
\bibitem[DS99]{DS99} M.S. Dijkhuizen and J.V. Stokman, Quantized flag manifolds and irreducible $*$-representations, \emph{Comm. Math. Phys.} \textbf{203} (1999), 297--324.
\bibitem[Do94a]{Do94a} V.K. Dobrev, $q$-difference intertwining operators for
$U_q(sl(n))$: general setting and the case $n=3$, \emph{J. Phys.} {\bf A27} (14) (1994), 4841--4857; Corrigenda: ``$q$-difference intertwining operators for $U_q(sl(n))$: general setting and the case $n=3$'', \emph{J. Phys.} \textbf{A 27} (19) (1994), 6633--6634.
\bibitem[Do94b]{Do94b} V. K. Dobrev, New q-Minkowski space-time and q-Maxwell equations hierarchy from q-conformal invariance, \emph{Phys. Lett. B} \textbf{341} (2) (1994), 133--138; Erratum: ``New q-Minkowski space-time and q-Maxwell equations hierarchy from q-conformal invariance'', \emph{Phys. Lett. B} \textbf{346} (3--4) (1995), 427. 
\bibitem[DT99]{DT99} V.K. Dobrev and E.H. Tahri, Induced representations of the multiparameter Hopf superalgebras U$_{u\bf q}$(gl(m/n)) and U$_{u\bf q}$(sl(m/n)), \emph{J. Phys.} {\bf A32} (23) (1999), 4209--4237.
\bibitem[DM02]{DM02} J. Donin and A. Mudrov, Reflection equation- and FRT-type algebras, \emph{Quantum groups and integrable systems (Prague, 2002), Czechoslovak J. Phys.} \textbf{52} (11) (2002), 1201--1206. 
\bibitem[DM03a]{DM03a} J. Donin and A. Mudrov, Reflection equation, twist, and equivariant quantization, \emph{Israel J. Math.} \textbf{136} (2003), 11--28. 
\bibitem[DM03b]{DM03b} J. Donin and A. Mudrov, Method of quantum characters in equivariant quantization, \emph{Comm. Math. Phys.} \textbf{234} (3) (2003), 533--555. 
\bibitem[DM04]{DM04} J. Donin and A. Mudrov, Quantum coadjoint orbits of $GL(n)$ and generalized Verma modules, \emph{Lett. Math. Phys.} \textbf{67} (3) (2004), 167--184. 
\bibitem[Dri87]{Dri87} V. G. Drinfeld, Quantum groups, \emph{Proc. Intern. Congress of Math.} (Berkeley, 1986) (1987), 798--
820.
\bibitem[Dri93]{Dri93} V. G. Drinfeld, On Poisson homogeneous spaces of Poisson-Lie groups, \emph{Teoret. Mat. Fiz.} \textbf{95} (2) (1993), 226--227.
\bibitem[FL04]{FL04} P. Foth and J.-H. Lu, A Poisson Structure on Compact Symmetric Spaces, \emph{Comm. Math. Phys.} \textbf{251} (3) (2004), 557--566.
\bibitem[FS09]{FS09} U. Franz and A. Skalski, On idempotent states on quantum groups, \emph{J. Alg.} \emph{322} (2009), 1774--1802.
\bibitem[Gav02]{Gav02} F. Gavarini, The quantum duality principle, \emph{Ann. Inst. Fourier} \textbf{52} (3) (2002), 809--834.
\bibitem[GW09]{GW09} R. Goodman and N.R. Wallach, Symmetry, representations, and invariants, \emph{Graduate texts in mathematics}, vol. \textbf{255}, Dordrecht, Springer (2009).
\bibitem[HK04]{HK04} I. Heckenberger and S. Kolb, The locally finite part of the dual coalgebra of quantized irreducible flag manifolds, \emph{Proc. London Math. Soc.} \textbf{89} (3) (2004), 457--484.
\bibitem[HK06]{HK06} I. Heckenberger and S. Kolb, De Rham complex for quantized irreducible flag manifolds, \emph{Journal of Algebra} \textbf{305} (2) (2006), 704--741.
\bibitem[Hel78]{Hel78} S. Helgason, Differential Geometry, Lie Groups, and Symmetric Spaces, \emph{Academic Press,
New York} (1978).
\bibitem[Hel88]{Hel88} A.G. Helminck, Algebraic Groups with a Commuting Pair of Involutions and Semisimple Symmetric Spaces, \emph{Adv. Math.} \textbf{71} (1988), 21--91.
\bibitem[IW53]{IW53} E. Inonu and E. P. Wigner, On the contraction of groups and their representations, \emph{Proc. Nat. Acad. Sci. USA} \textbf{39} (6) (1953), 510--524.
\bibitem[Jan96]{Jan96} J.C. Jantzen,  Lectures on quantum groups, \emph{Graduate Studies in Mathematics}, vol. \textbf{6}, American Mathematical Society, Providence, RI (1996).
\bibitem[Jos95]{Jos95} A. Joseph, Quantum groups and their primitive ideals, \emph{Ergebnisse der
Mathematik und ihrer Grenzgebiete} \textbf{29} (3), Springer-Verlag, Berlin (1995).
\bibitem[JL92]{JL92} A. Joseph and G. Letzter, Local finiteness of the adjoint action for quantized enveloping algebras, \emph{J. Algebra} \textbf{153} (1992), 289--318.
\bibitem[KS14]{KS14} P. Kasprzak and P.M. So\l{}tan, Embeddable quantum homogeneous spaces, \emph{J. Math. Anal. Appl.} \textbf{411} (2014), 574--591.
\bibitem[KT09]{KT09} J. Kamnitzer and P. Tingley, The crystal commutor and Drinfelds unitarized $R$-matrix, \emph{J. Algebraic Combin.} \textbf{29} (3) (2009), 315--335.
\bibitem[KR90]{KR90} A. N. Kirillov and N. Reshetikhin, $q$-Weyl group and a multiplicative formula for universal $R$-matrices, \emph{Commun. Math. Phys.} \textbf{134} (2) (1990), 421--431.
\bibitem[KS97]{KS97} A. Klimyk and K. Schm\"{u}dgen, Quantum groups and their representations, \emph{Texts and
Monographs in Physics}, Springer (1997).
\bibitem[Kna96]{Kna96} A.W. Knapp, Lie Groups beyond an Introduction, in: \emph{Progr. Math.} \textbf{140}, Birkh\"{a}user (1996). 
\bibitem[Kol14]{Kol14} S. Kolb, Quantum symmetric Kac-Moody pairs, \emph{Adv. Math.} \textbf{267} (2014), 395--469. 
\bibitem[Kol17]{Kol17} S. Kolb, Braided module categories via quantum symmetric pairs, \emph{preprint} arXiv:1705.04238 [math.QA].
\bibitem[KoSt09]{KoSt09} S. Kolb and J. Stokman, Reflection equation algebras, coideal subalgebras, and their centres, \emph{Sel. Math. New Ser.} \textbf{15} (2009), 621--664.
\bibitem[Kr04]{Kr04} U. Kr\"{a}hmer, Dirac operators on quantum flag manifolds, \emph{Lett. Math. Phys.} \textbf{67} (1) (2004) 49--59.
\bibitem[KTS15]{KTS15} U. Kr\"{a}hmer and M. Tucker-Simmons, On the Dolbeault-Dirac operator of quantized symmetric spaces, \emph{Transactions of the London Mathematical Society} \textbf{2} (1) (2015) 33--56. 
\bibitem[Kum10]{Kum10} S. Kumar, Tensor product decomposition, In \emph{Proceedings of the International Congress of Mathematicians} 2010 (ICM 2010) (In 4 Volumes) Vol. I: Plenary Lectures and Ceremonies Vols. II\textendash IV: Invited Lectures (2010), 1226--1261.
\bibitem[LVD07]{LVD07} M. Landstad and A. Van Daele, Compact and discrete subgroups of algebraic quantum groups I, \emph{preprint}, arXiv:math/0702458.
\bibitem[Let99]{Let99} G. Letzter, Symmetric pairs for quantized enveloping algebras, \emph{J. Algebra} \textbf{220} (2) (1999), 729--767.
\bibitem[Let00]{Let00} G. Letzter, Harish-Chandra modules for quantum symmetric pairs, \emph{Representation
Theory, An Electronic Journal of the AMS} \textbf{4} (2000), 64--96.
\bibitem[Let02]{Let02} G. Letzter, Coideal subalgebras and quantum symmetric pairs, \emph{New directions in Hopf
algebras (Cambridge), MSRI publications} \textbf{43}, Cambridge Univ. Press (2002), 117--166.
\bibitem[Let03]{Let03} G. Letzter,  Quantum symmetric pairs and their zonal spherical functions, \emph{Transform.
Groups} \textbf{8} (2003), 261--292.
\bibitem[Let04]{Let04} G. Letzter, Quantum zonal spherical functions and Macdonald polynomials, \emph{Adv. Math.} \textbf{189} (2004), 88--147.
\bibitem[Let08]{Let08} G. Letzter, Invariant differential operators for quantum symmetric spaces, \emph{Mem. Amer.
Math. Soc.} \textbf{193} (903) (2008), vi+90 pp.
\bibitem[Let17]{Let17} G. Letzter, Cartan Subalgebras for Quantum Symmetric Pair Coideals, \emph{preprint}, arXiv:1705.05958 
\bibitem[LS91]{LS91} S. Levendorskii and Y. Soibelman, The quantum Weyl group and a multiplicative formula for the
$R$-matrix of a simple Lie algebra, \emph{Funct. Anal. Appl.} \textbf{25} (2) (1991), 143--145.
\bibitem[Lu90]{Lu90} J.-H. Lu, Multiplicative and affine Poisson structures on Lie groups, Ph.D. Thesis, University of California, Berkeley (1990).
\bibitem[LW90]{LW90} J.-H. Lu and A. Weinstein, Poisson Lie groups, dressing transformations, and Bruhat decompositions, \emph{J. Differential Geom.} \textbf{31} (2) (1990), 501--526.
\bibitem[Lusz94]{Lusz94} G. Lusztig, Introduction to quantum groups, \emph{Modern Birkh\"{a}user Classics,
Birkh\"{a}user/Springer, New York} (2010), Reprint of the 1994 edition.
\bibitem[Maj93a]{Maj93a} S. Majid, Braided groups, \emph{J. Pure Appl. Algebra} \textbf{86} (1993), 187--221.
\bibitem[Maj93b]{Maj93b} S. Majid, Braided momentum in the $q$-Poincar\'{e} group, \emph{J. Math. Phys.} \textbf{34} (5) (1993), 2045--2058.
\bibitem[Maj95]{Maj95} S. Majid, Foundations of Quantum Group Theory, \emph{Cambridge University Press} (1995).
\bibitem[Mud07a]{Mud07a} A. Mudrov, On quantization of the Semenov-Tian-Shansky Poisson bracket on simple algebraic groups, \emph{Algebra i Analiz} \textbf{18} (5) (2006), 156--172; translation in 
\emph{St. Petersburg Math. J.} \textbf{18} (5) (2007) 797--808.
\bibitem[Mud07b]{Mud07b} A. Mudrov, Quantum conjugacy classes of simple matrix groups, \emph{Comm. Math. Phys.} \textbf{272} (3) (2007), 635--660. 
\bibitem[Mud12]{Mud12} A. Mudrov, Quantum sphere $\mathbb{S}^4$ as a non-Levi conjugacy class, \emph{Lett. Math. Phys.} \textbf{101} (2) (2012), 157--172. 
\bibitem[Mud13a]{Mud13a} A. Mudrov, Non-Levi closed conjugacy classes of $SP_q(2n)$, \emph{Comm. Math. Phys.} \textbf{317} (2) (2013), 317--345.
\bibitem[Mud13b]{Mud13b} A. Mudrov, Non-Levi closed conjugacy classes of $SO_q(N)$, \emph{J. Math. Phys.} \textbf{54} (8) (2013), 081701, 24 pp. 
\bibitem[NT11]{NT11} S. Neshveyev and L. Tuset, K-homology class of the Dirac operator on a compact quantum group, \emph{Doc. Math.} \textbf{16} (2011), 767--780.
\bibitem[NT12]{NT12} S. Neshveyev and L. Tuset, Quantized algebras of functions on homogeneous spaces with Poisson stabilizers, \emph{Comm. Math. Phys.} \textbf{312} (1) (2012), 223--250.
\bibitem[NT13]{NT13} S. Neshveyev and L. Tuset, Compact quantum groups and their representation categories, \emph{Cours
Sp\'{e}cialis\'{e}s [Specialized Courses]} \textbf{20}, Soci\'{e}t\'{e} Math\'{e}matique de France, Paris (2013).
\bibitem[Nou96]{Nou96} M. Noumi, Macdonald’s symmetric polynomials as zonal spherical functions on some quantum homogeneous spaces, \emph{Adv. Math.} \textbf{123} (1) (1996), 16--77.
\bibitem[NS95]{NS95} M. Noumi and T. Sugitani, Quantum symmetric spaces and related $q$-orthogonal polynomials, In: \emph{Group Theoretical Methods in Physics (ICGTMP) (Toyonaka, Japan, 1994)}, World Sci. Publishing, River Edge, N.J. (1995), 28--40.
\bibitem[OB17]{OB17} R. \'{O} Buachalla, Noncommutative K\"{a}hler Structures on Quantum Homogeneous Spaces, \emph{Adv. Math.} \textbf{322} (2017), 892--939
\bibitem[OV90]{OV90} A. L. Onishchik and {\`E}. B. Vinberg, Lie groups and algebraic groups, \emph{Springer Series in Soviet Mathematics}, Springer-Verlag, Berlin (1990), Translated from the Russian and with a preface by D. A. Leites, xx+328.
\bibitem[Sch04]{Sch04} P. Schauenburg, Hopf-Galois and bi-Galois extensions, \emph{Fields Inst. Comm.} \textbf{43} (2004), 469--515.
\bibitem[ST09]{ST09} N. Snyder and P. Tingley, The half-twist for $U_q(\mfg)$ representations, \emph{Algebra \& Number Theory} \textbf{3} (7) (2009), 809--834.
\bibitem[STS85]{STS85} M. A. Semenov-Tian-Shansky, Dressing Transformations and Poisson Group Actions, \emph{Publ. RIMS, Kyoto Univ.} \textbf{21} (1985), 1237--1260.
\bibitem[STS94]{STS94} M. A. Semenov-Tian-Shansky, Poisson-Lie Groups, Quantum Duality Principle, and the Quantum Double, \emph{Contemp. Math.} \textbf{175} (1994), 219--248.
\bibitem[Soi92]{Soi92} Y. S. Soibelman, On quantum flag manifolds, \emph{Funct. Anal. Appl.} \textbf{25} (1992), 225--227.
\bibitem[SV91]{SV91}  Y.S. Soibelman and L.L. Vaksman, Algebra of functions on the quantum group $SU(n + 1)$, and odd-dimensional quantum spheres (Russian), \emph{Algebra i Analiz} \textbf{2} (1990), 101--120; translated in \emph{Leningrad Math. J.} \textbf{2} (1991), 1023--1042.
\bibitem[Sug62]{Sug62} M. Sugiura, Representations of compact groups realized by spherical functions on symmetric spaces, \emph{Proceedings of the Japan Academy} \textbf{38} (3) (1962), 111--113.
\bibitem[Vre76]{Vre76} L. Vretare, Elementary spherical functions on symmetric spaces, \emph{Math. Scand.} \textbf{39} (1976), 343--358.
\bibitem[Wee18]{Wee18} T.A.N. Weelinck, A topological origin of quantum symmetric pairs, \emph{preprint}, arXiv:1804.02315.
\bibitem[Wei90]{Wei90} A. Weinstein, Affine Poisson Structures, \emph{Int. J. Math.} \textbf{1} (3)  (1990), 343--360.
\end{thebibliography}
\end{document}